    \renewcommand*{\boldsymbol}[1]{#1}%
\makeatletter\@addtoreset{equation}{section}\makeatother
\renewcommand{\theequation}{\thesection.\arabic{equation}}
\newcommand{\Var}{\mbox{Var}}
\def\eps{\varepsilon}
\def \R{\mathbb{R}}
\def \E{\mathbb{E}}
\def \N{\mathbb{N}}
\def \Cov{\mbox{Cov}}
\newcommand{\Ac}{\mathcal{A}}
\newcommand{\Xc}{\mathcal{X}}
\newcommand{\Gc}{\mathcal{G}}
\newcommand{\Ec}{\mathcal{E}}
\newcommand{\Fc}{\mathcal{F}}
\newcommand{\Zc}{\mathcal{Z}}
\newcommand{\Nc}{\mathcal{N}}
\newcommand{\Ic}{\mathcal{I}}
\newcommand{\Jc}{\mathcal{J}}
\newcommand{\Tc}{\mathcal{T}}
\newcommand{\Sc}{\mathcal{S}}
\newcommand{\Pc}{\mathcal{P}}
\newcommand{\Uc}{\mathcal{U}}
\newcommand{\Ps}{\mathscr{P}}
\newcommand{\ba}{\mathbf{a}}
\newcommand{\bb}{\mathbf{b}}
\newcommand{\eee}{\mathbf{e}}
\newcommand{\bj}{\mbox{\boldmath $j$}}
\newcommand{\bl}{\mathbf{l}}
\newcommand{\bv}{\mathbf{v}}
\newcommand{\bu}{\mathbf{u}}
\newcommand{\zz}{\mathbf{z}}
\newcommand{\ZZ}{\mathbf{Z}}
\newcommand{\BB}{\mathbf{B}}
\newcommand{\WW}{\mathbf{W}}
\newcommand{\za}{\boldsymbol\alpha}
\newcommand{\zb}{\boldsymbol\beta}
\newcommand{\zg}{\boldsymbol\gamma}
\newcommand{\bU}{\bm{U}}
\newcommand{\weak}{\rightsquigarrow}
\newcommand{\Pb}{\mathbb{P}}
\newcommand{\Gb}{\mathbb{G}}
\newcommand{\bean}{\begin{eqnarray*}}
\newcommand{\eean}{\end{eqnarray*}}
\newcommand{\bea}{\begin{eqnarray}}
\newcommand{\eea}{\end{eqnarray}}
\newcommand{\be}{\begin{eqnarray}}
\newcommand{\ee}{\end{eqnarray}}
\newcommand{\beq}{\begin{equation}}
\newcommand{\eeq}{\end{equation}}
\renewcommand{\hat}{\widehat}
\renewcommand{\tilde}{\widetilde}
\newcommand{\IF}{\boldsymbol{1}}  
\newtheorem{theo}{Theorem}[section]
\newtheorem{lemma}[theo]{Lemma}
\newtheorem{cor}[theo]{Corollary}
\newtheorem{rem}[theo]{Remark}
\newtheorem{example}[theo]{Example}
\newtheorem{remark}[theo]{Remark}
\renewenvironment{proof}[1][\proofname]{{\noindent\bfseries #1.}}{\qed} 
\DeclareMathOperator*\argmin{\mbox{argmin}}
\begin{document}

\begin{frontmatter}
\title{Distributed Inference for Quantile Regression Processes} 
\runtitle{Distributed Inference for Quantile Regression Processes}
\thankstext{T1}{The authors would like to thank the Editors and two anonymous Referees for helpful comments that helped to considerably improve an earlier version of this manuscript.}
\begin{aug}
\author{\fnms{Stanislav} \snm{Volgushev}\thanksref{m1,t3}},
\author{\fnms{Shih-Kang} \snm{Chao}\thanksref{m2,t1}}
\and
\author{\fnms{Guang} \snm{Cheng}\thanksref{m2,t2,t1}}

%
\thankstext{t1}{Partially supported by Office of Naval Research (ONR N00014-15-1-2331).}
\thankstext{t2}{Partially supported by NSF CAREER Award DMS-1151692, DMS-1418042.}
\thankstext{t3}{Partially supported by discovery grant from Natural Sciences and Engineering Research Council of Canada.}
%
%
\affiliation{University of Toronto\thanksmark{m1} and Purdue University\thanksmark{m2}}
%
%
%
\end{aug}


\begin{abstract}
	The increased availability of massive data sets provides a unique opportunity to discover subtle patterns in their distributions, but also imposes overwhelming computational challenges. To fully utilize the information contained in big data, we propose a two-step procedure: (i) estimate conditional quantile functions at different levels in a parallel computing environment; (ii) construct a conditional quantile regression process through projection based on these estimated quantile curves. Our general quantile regression framework covers both linear models with fixed or growing dimension and series approximation models. We prove that the proposed procedure does not sacrifice any statistical inferential accuracy provided that the number of distributed computing units and quantile levels are chosen properly. In particular, a sharp upper bound for the former and a sharp lower bound for the latter are derived to capture the minimal computational cost from a statistical perspective. As an important application, the statistical inference on conditional distribution functions is considered. Moreover, we propose computationally efficient approaches to conducting inference in the distributed estimation setting described above. Those approaches directly utilize the availability of estimators from sub-samples and can be carried out at almost no additional computational cost. Simulations confirm our statistical inferential theory.
\end{abstract}
	
	\begin{keyword}[class=MSC] 
		\kwd[Primary ]{62F12}
		\kwd{62G15}
		\kwd{62G20} 
	\end{keyword}	
		
	\begin{keyword}
		\kwd{B-spline estimation}
		\kwd{conditional distribution function}
		\kwd{distributed computing}
		\kwd{divide-and-conquer}
		\kwd{quantile regression process}
	\end{keyword}

\end{frontmatter}

\section{Introduction}\label{sec:intro}

The advance of technology in applications such as meteorological and environmental surveillance or e-commerce has led to extremely large data sets which cannot be processed with standalone computers due to processor, memory, or disk bottlenecks. Dividing data and computing among many machines is a common way to alleviate such bottlenecks, and can be implemented by parallel computing systems like Hadoop \citep{W12} with the aid of communication-efficient algorithms. 

In statistics, this approach to estimation has recently been adopted under the name divide-and-conquer, see \cite{LLL12,J13}. Pioneering contributions on theoretical analysis of divide and conquer algorithms focus on mean squared error rates \cite{ZDW13,ZDW15}. The analysis therein does not take into account a core question in statistics -- inferential procedures. In the last two years, such procedures have been developed for various non- and semi-parametric estimation approaches that focus on the mean or other notions of central tendency, this line of work includes \cite{CS15,ZCL15,SLS16,BDS16}, among others. Focusing on the mean tendency illuminates one important aspect of the dependence between predictors and response, but ignores all other aspects of the conditional distribution of the response which can be equally important. Additionally, most of the work cited above assumes homoskedastic errors or sub-Gaussian tails. Both assumptions are easily violated in modern messy and large-scale data sets, and this limits the applicability of approaches that are available to date.  

We propose to use quantile regression in order to extract features of the conditional distribution of the response while avoiding tail conditions and taking heteroskedasticity into account. This approach focuses on estimating the conditional quantile function $x \mapsto Q(x;\tau) := F_{Y|X}^{-1}(\tau|x)$ where $F_{Y|X}$ denotes the conditional distribution of response given predictor. A flexible class of models for conditional quantile functions can be obtained by basis expansions of the form $Q(x;\tau) \approx \ZZ(x)^\top \zb(\tau)$. This includes linear models of fixed or increasing dimension where the approximation is replaced by an equality, as well as a rich set of non- and semi-parametric procedures such as tensor product B-splines or additive models.  

Given data $\{(X_i,Y_i)\}_{i=1}^N$, quantile regression for such models is classically formulated through the following minimization problem:   
\begin{align}
\widehat \zb_{or}(\tau) := \arg\min_{\bb\in\R^m} \sum_{i=1}^N \rho_\tau\{Y_{i} - \bb^\top \ZZ(X_i)\}, \label{eq:or}
\end{align}
where $\rho_\tau(u) := \{\tau-\IF(u \leq 0)\}u$ and $\IF(\cdot)$ is the indicator function. However, solving this minimization problem by classical algorithms requires that the complete data set can be processsed on a single computer. For large samples, this might not be feasible. This motivates us to utilize the divide and conquer approach where the full sample is randomly split across several computers into $S$ smaller sub-samples of size $n$, the minimization in \eqref{eq:or} is solved on each sub-sample, and results are averaged in the end to obtain the final estimator $\overline{\zb}(\tau)$ (see \eqref{eq:zbbar} in Section~\ref{sec:DCgen} for a formal definition). While this approach is easy to implement, the resulting estimator $\overline{\zb}(\tau)$ is typically not a solution to the original minimization problem~\eqref{eq:or}. As illustrated in Figure~\ref{fig:intro}, this can be problematic for inference procedures. More precisely, the left panel of Figure~\ref{fig:intro} depicts the coverage probabilities (on the y-axis) against number of sub-samples (on the x-axis) for asymptotic confidence intervals that are based on asymptotic distributions of $\hat\zb_{or}(\tau)$ but computed using $\overline \zb(\tau)$ for three data generating processes (linear models with different dimension) and a fixed quantile $\tau = 0.1$. This indicates that choosing $S$ too large results in invalid inference, as reflected by a rapid drop in coverage after a certain threshold is reached. For different models this threshold is different and it intrinsically depends on various properties of the underlying model such as dimension of predictors. These observations indicate that developing {\em valid} statistical inference procedures based on $\overline\zb(\tau)$ requires a careful theoretical analysis of divide and conquer procedures. The first major contribution of the present paper is to provide statistical inferential theory for this framework.  

\begin{figure}[!ht]
\centering
\includegraphics[width=4.8cm, height = 3.8cm]{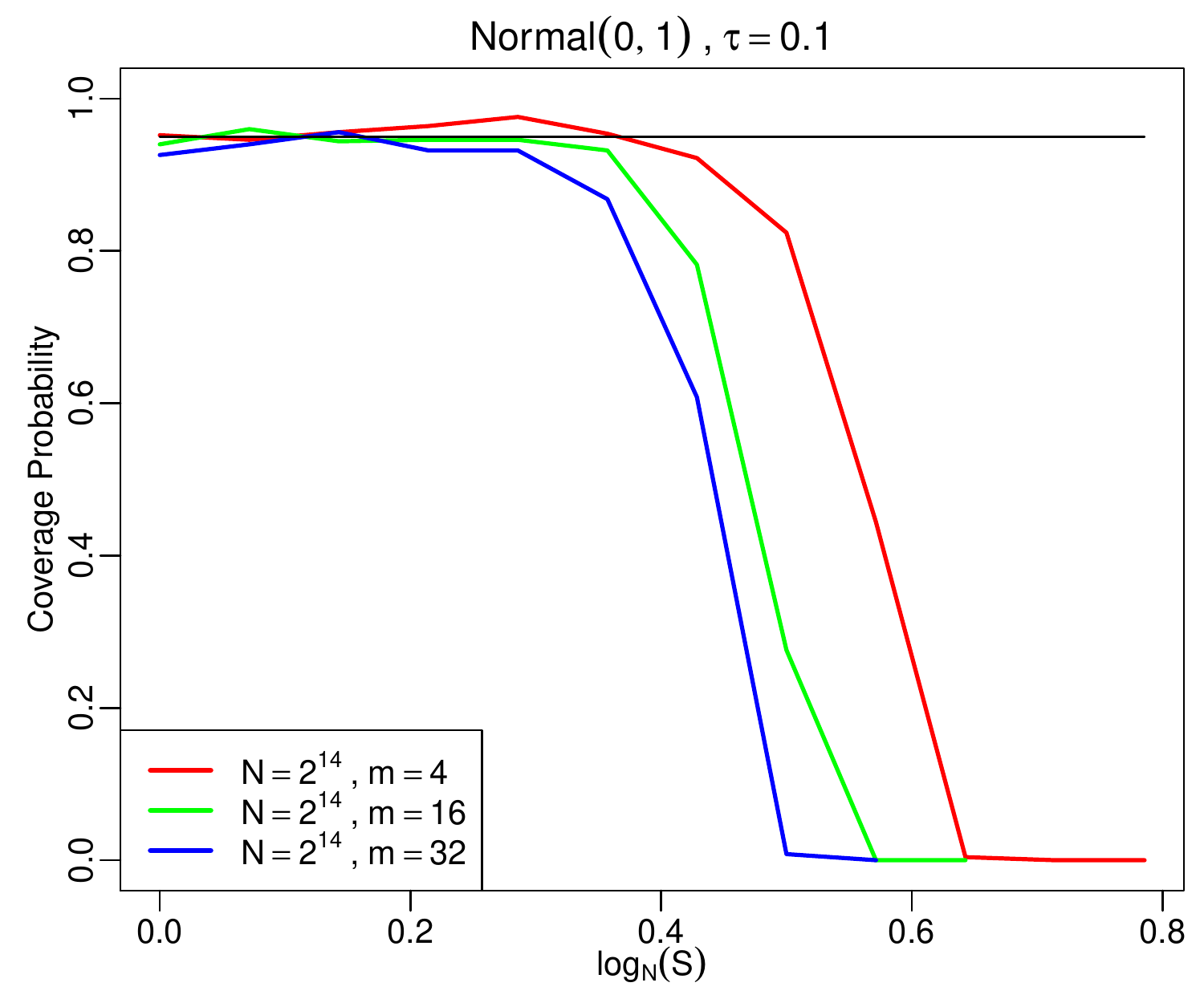}\hspace{1cm}
\includegraphics[width=4.8cm, height = 3.8cm]{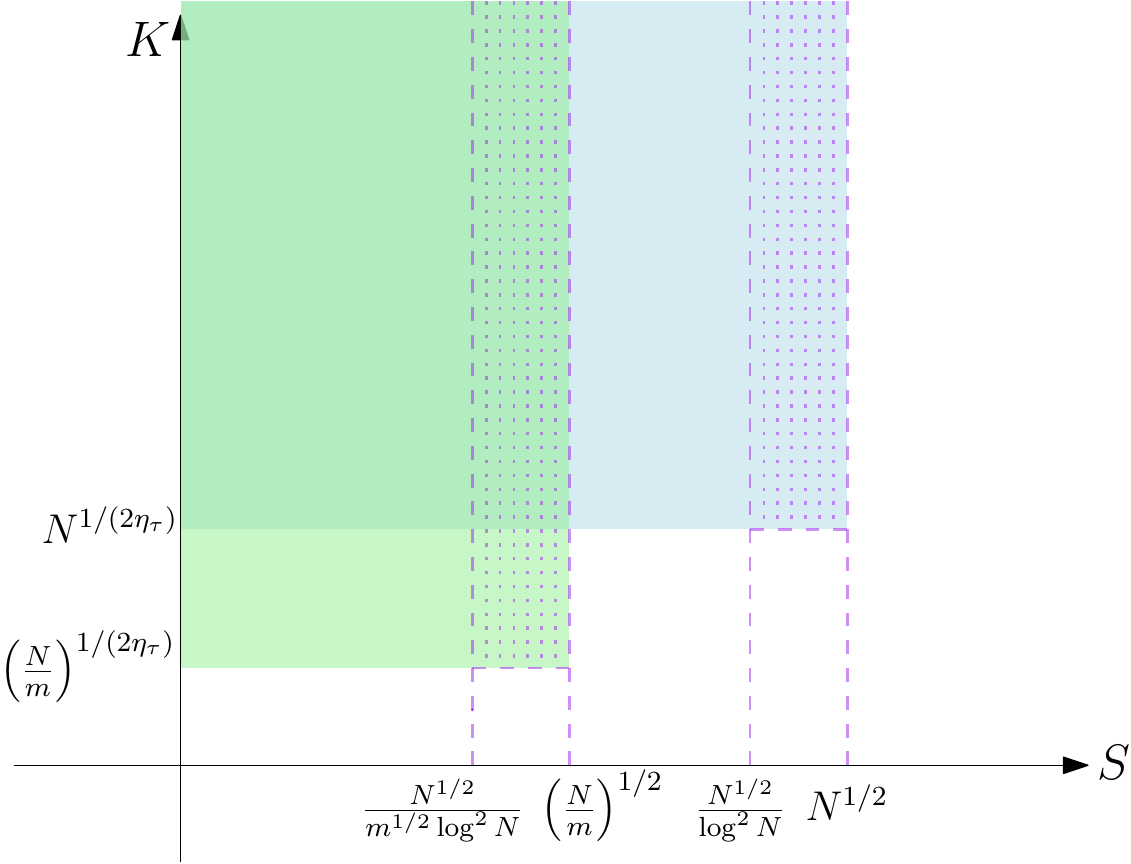}
\caption{Left penal: coverage probabilities (y-axis) of confidence intervals for estimators computed from divide and conquer; x-axis: number of sub-samples $S$ in log-scale. Different colors correspond to linear models with different $m=\dim(\ZZ(X))$
. Right panel: necessary and sufficient conditions on $(S,K)$ for the oracle rule in linear models with fixed dimension (Blue) and B-spline nonparametric models (Green). The dotted region is the discrepancy between the sufficient and necessary conditions. } \label{fig:intro}
\end{figure}

The approach described so far provides a way to estimate the conditional quantile function for a single quantile. To obtain a picture of the entire conditional distribution of response given predictors, estimation of the conditional quantile function at several quantile levels is required. Performing such an estimation for a dense grid of quantiles can be computationally costly as estimation at each new quantile requires running the above algorithm anew. To relax this computational burden, we introduce the two-step \textit{quantile projection algorithm}. In the first step, the divide and conquer algorithm is used to compute $\overline{\zb}(\tau_1),...,\overline{\zb}(\tau_K)$ for a grid of quantile values $\tau_1,...,\tau_K$. In the second step, a matrix $\hat \Xi$ is computed from $\overline{\zb}(\tau_1),...,\overline{\zb}(\tau_K)$ (see~\eqref{eq:hatXi} for a formal definition). Given this matrix $\hat\Xi$, the \textit{quantile projection estimator} $\hat \zb(\tau)$ can be computed for any $\tau$ by performing a multiplication of this matrix with a known vector (depending on $\tau$) without access to the original dataset. Based on $\hat \zb(\tau)$, we can estimate the conditional distribution function; see (\ref{eq:Fhat}) and (\ref{eq:cdf}).

The computational cost of our procedure is determined by the number of sub-samples $S$ and the number of quantile levels $K$. To minimize the computational burden, $K$ should be chosen as small as possible. Choosing $S$ large will allow us to maintain a small sample size $n$ on each local machine, thus enabling us to process larger data sets. At the same time, classical inference procedures for $\hat\zb_{or}$ (e.g. procedures based on asymptotic normality of $\hat\zb_{or}$) should remain valid for $\overline\zb(\tau), \hat\zb(\tau)$. 

A detailed analysis of conditions which ensure this "oracle rule" is conducted in Section~\ref{SEC:THEO}. There, we derive \textit{sufficient} conditions for both $S$ and $K$, which are also proved to be necessary in some cases up to $\log N$ terms. Sufficient conditions of $S$ take the form $S = o\big(N^{1/2} a_{N,m}\big)$ where $a_{N,m} \to 0$. The specific form of $a_{N,m}$ depends on the precise form of the model, with more complex models leading to $a_{N,m}$ converging to zero faster, thus placing more restrictions on $S$. Necessary and sufficient conditions on $K$ take the form $K \gg N^{1/(2\eta)} \|\ZZ(x)\|^{-1/\eta}$, where $\eta>0$ is the degree of H\"older continuity (see \eqref{eq:hoelder1}) of $\tau\mapsto Q(x;\tau)$. In particular, this lower bound is completely independent of $S$. An interesting insight which will be explained in more detail in Section \ref{sec:loc} is that for large values of $\|\ZZ\|$, which typically corresponds to more complex models, the restriction on $K$ becomes weaker; see the paragraph right after Corollary~\ref{th:orlocpr} for a more detailed explanation. 

A graphical summary of the necessary and sufficient conditions on $K$ and $S$ derived in Sections~\ref{sec:lin} and~\ref{sec:loc} is provided in the right panel of Figure~\ref{fig:intro}.  

Deriving such necessary conditions on $S,K$ is a crucial step in understanding the limitations of divide and conquer procedures, as it shows that the interplay between $S,K$ and model complexity is indeed a feature of the underlying problem and not an artifact of the proof; see also~\cite{CS15}. 

While the oracle rules described above provide justification for using the asymptotic distribution of $\hat \zb_{or}$ for inference, this distribution is typically not pivotal. We discuss several ways to overcome this problem. First, we propose simple and computationally efficient approaches which directly make use of the fact that $\bar{\zb}$ is formed by averaging results from independent sub-samples. Those approaches are based on normal and t-approximations as well as a novel bootstrap procedure. Second, we comment on extensions of traditional approaches to inference which rely on estimating the asymptotic variance of $\hat \zb_{or}$ by using kernel smoothing techniques. Simulations demonstrate finite sample  properties of the proposed inference strategies.

The rest of this paper is organized as follows. In Section~\ref{sec:DCgen}, we provide a formal description of all algorithms and estimation procedures discussed in this paper. Section~\ref{SEC:THEO} contains the main theoretical results, with additional results presented in Appendix~\ref{SEC:GEN}. In Section~\ref{SEC:INF}, we propose several practical approaches to inference. In Section~\ref{sec:sim}, we validate the finite sample relevance of our theoretical results through extensive Monte Carlo experiments. Proofs of all theoretical results are deferred to the online supplement. The following notation will be used throughout the paper.

\bigskip
\noindent\textbf{Notation.} 
Let $\Xc := \mbox{supp}(X)$. Let $\ZZ = \ZZ(X)$ and $\ZZ_i = \ZZ(X_i)$ and assume $\Tc=[\tau_L,\tau_U]$ for some $0<\tau_L<\tau_U<1$. Define the true underlying measure of $(Y_i,X_i)$ by $P$ and denote the corresponding expectation as $\E$. Denote by $\|\bb\|$ the $L^2$-norm 
of a vector $\bb$. $\lambda_{\min}(A)$ and $\lambda_{\max}(A)$ are the smallest and largest eigenvalue of a matrix 
$A$. Let $\lfloor \eta \rfloor$ denote the integer part of a real number $\eta$, and $|\bj|=j_1+...+j_d$ for $d$-tuple $\bj=(j_1,...,j_d)$. $\Sc^{m-1}\subset\R^m$ is the unit sphere. $a_n \asymp  b_n$ means that $(|a_n/b_n|)_{n\in \N}$ and $(|b_n/a_n|)_{n\in \N}$ are bounded, and $a_n \gg b_n$ means $a_n/b_n \to \infty$. Define the class of functions
\begin{align}
\Lambda_c^{\eta}(\Tc) :=
\Big\{&f \in \mathcal{C}^{\lfloor \eta \rfloor}(\Tc): \sup_{j \leq \lfloor \eta \rfloor} \sup_{\tau \in \Tc} |D^{j}f(\tau)|\leq c,\notag\\ 
&\sup_{j= \lfloor \eta \rfloor}\sup_{\tau\neq \tau'}\frac{|D^j f(\tau) - D^j f(\tau')|}{\|\tau-\tau'\|^{\eta - \lfloor \eta \rfloor}} \leq c \Big\},\label{eq:hoelder1}
\end{align}
where $\eta$ is called the "degree of H\"older continuity", and $\mathcal{C}^\alpha(\Xc)$ denotes the class of $\alpha$-continuously differentiable functions on a set $\Xc$.


\section{A two-step procedure for computing quantile process}\label{sec:DCgen}
In this section, we formally introduce two algorithms studied throughout the paper - \emph{divide and conquer} and \emph{quantile projection}. The former is used to estimate quantile functions at fixed quantile levels, based on which the latter constructs an estimator for the quantile process. We also note that algorithms presented below can be applied for processing data that are locally stored for any reason and not necessarily large. As an important application, the estimation of conditional distribution functions will be presented.

We consider a general approximately linear model:
\beq\label{eq:series0}
Q(x;\tau) \approx \ZZ(x)^\top \zb_N(\tau),
\eeq
where $Q(x;\tau)$ is the $\tau$-th quantile of the distribution of $Y$ conditional on $X = x\in \R^d$, and $\ZZ(x)\in \R^m$ is a vector of transformation of $x$. This framework \eqref{eq:series0} incorporates various estimation procedures, for instance, series approximation models; see \cite{bechfe2011} for more discussion. In this paper, we will focus on three classes of transformation $\ZZ(x)\in\R^m$ which include many models as special cases: fixed and finite $m$, diverging $m$ with local support structure (e.g. B-splines) and diverging $m$ \emph{without} local support structure. 

The \emph{divide and conquer} algorithm for estimating $\zb_N(\tau)$ at a fixed $\tau\in(0,1)$ is described below:
\begin{enumerate}
\item Divide the data $\{(X_i,Y_i)\}_{i=1}^N$ into $S$ sub-samples of size $n$\footnote{The equal sub-sample size is assumed for ease of presentation, and can be certainly relaxed.}. Denote the $s$-th sub-sample as $\{(X_{is},Y_{is})\}_{i=1}^n$ where $s=1,...,S$.

\item For each $s$ and fixed $\tau$, estimate the sub-sample based quantile regression coefficient as follows
\begin{equation}\label{eq:betas}
\widehat \zb^{s}(\tau) := \arg\,\min_{\zb \in \R^m} \sum_{i=1}^n \rho_\tau\big\{Y_{is} - \zb^\top \ZZ(X_{is})\big\}.
\end{equation}

\item Each local machine sends $\widehat \zb^{s}(\tau)\in\R^m$ to the master that outputs a \emph{pooled estimator}
\begin{align}
\overline\zb(\tau):=S^{-1}\sum_{s=1}^{S}\widehat\zb^s(\tau).\label{eq:zbbar}
\end{align}
\end{enumerate}

\begin{remark} \label{rem:unique} \rm The minimization problem in~\eqref{eq:betas} is in general not strictly convex. Thus, several minimizers could exist. In the rest of this paper, we will only focus on the minimizer with the smallest $\ell_2$ norm. This is assumed for presentational convenience, and a close look at the proofs reveals that all statements remain unchanged if any other minimizer is chosen instead.
\end{remark}

While $\overline\zb(\tau)$ defined in \eqref{eq:zbbar} gives an estimator at a fixed $\tau\in\Tc$, a complete picture of the conditional distribution is often desirable. To achieve this, we propose a two-step procedure. First compute $\overline\zb(\tau_k)\in\mathbb R^m$ for each $\tau_k\in\Tc_K$, where $\Tc_K \subset \Tc=[\tau_L,\tau_U]$ is grid of quantile values in $\Tc$ with $|\Tc_K|=K\in\N$. Second project each component of the vectors $\{\overline\zb(\tau_1),...,\overline\zb(\tau_K)\}$ on a space of spline functions in $\tau$. More precisely, let
\begin{align}
	\hat\za_j := \arg\,\min_{\za\in\R^{q}} \sum_{k=1}^K \big(\overline{\beta}_j(\tau_k)-\za^\top\BB(\tau_k)\big)^2, \quad j=1,...,m. \label{eq:defza}
\end{align}
where $\BB := (B_1,...,B_{q})^\top$ is a B-spline basis defined on $G$ equidistant knots $\tau_L = t_1<...<t_G = \tau_U$ with degree $r_\tau\in\N$ (we use the normalized version of B-splines as given in Definition 4.19 on page 124 in~\cite{schumaker:81}). Here, uniformity of knots is assumed for simplicity, all theoretical results can be extended to knots with quasi uniform partitions. Using $\hat\za_j$, we define the quantile projection estimator $\hat\zb(\cdot):\Tc\to\R^m$: 
\begin{equation} \label{eq:hatbeta}
\hat\zb(\tau) := \hat\Xi^\top \BB(\tau),
\end{equation} 
where $\hat\Xi$ is defined below. 
The $j$th element $\hat\beta_j(\tau)=\hat\za_j^\top\BB(\tau)$ can be viewed as projection, with respect to $\|f\|_K := (\sum_{k=1}^K f^2(\tau_k))^{1/2}$, of $\overline{\beta}_j$ onto the polynomial spline space with basis $B_1,...,B_q$. In what follows, this projection is denoted by $\Pi_K$. Although we focus on B-splines in this paper, other basis functions are certainly possible.

The algorithm for computing the quantile projection matrix $\hat\Xi$ is summarized below, here the divide and conquer algorithm is applied as a subroutine: 
\begin{enumerate}
	\item Define a grid of quantile levels $\tau_k = \tau_L + (k/K)(\tau_U - \tau_L)$ for $k = 1,...,K$. For each $\tau_k$, compute $\overline{\zb}(\tau_k)$ as \eqref{eq:zbbar}.
	
	\item For each $j=1,...,m$, compute 
	\begin{align}
	\hat \za_j = \bigg(\sum_{k=1}^K \BB(\tau_k)\BB(\tau_k)^\top\bigg)^{-1} \bigg(\sum_{k=1}^K \BB(\tau_k) \overline{\beta}_j(\tau_k)\bigg),\label{eq:za}
	\end{align}
	which is a closed form solution of \eqref{eq:defza}.
	\item Set the matrix 
	\begin{align}\label{eq:hatXi}
	\hat\Xi := [\hat\za_1\,\hat\za_2\,...\,\hat\za_m].
	\end{align}
\end{enumerate}
 
A direct application of the above algorithm is to estimate the quantile function for {\em any} $\tau\in\Tc$. 
A more important application of $\hat\zb(\tau)$ is the estimation of the conditional distribution function $F_{Y|X}(y|x)$ for any fixed $x$. More precisely, we consider
\begin{align}
\hat F_{Y|X}(y|x) := \tau_L + \int_{\tau_L}^{\tau_U} \IF\{\ZZ(x)^\top\hat \zb(\tau) < y\} d\tau,\label{eq:Fhat}
\end{align}
where $\tau_L$ and $\tau_U$ are chosen close to 0 and 1. The intuition behind this approach is the observation that
\begin{align}\label{eq:cdf}
\tau_L + \int_{\tau_L}^{\tau_U} \IF\{Q(x;\tau) < y\} d\tau = \left\{
\begin{array}{lcl}
\tau_L & \mbox{if} & F_{Y|X}(y|x) < \tau_L;
\\
F_{Y|X}(y|x) & \mbox{if} & \tau_L \leq F_{Y|X}(y|x) \leq \tau_U;
\\
\tau_U & \mbox{if} & F_{Y|X}(y|x) > \tau_U, 
\end{array}
\right.
\end{align}
see \cite{chfega2010,V13}. The estimator $\hat F_{Y|X}$ is a smooth functional of the map $\tau \mapsto \ZZ(x)^\top\hat \zb(\tau)$ \citep{chfega2010}. Hence, properties of $\hat F_{Y|X}$ depend on the behavior of $\ZZ(x)^\top\hat \zb(\tau)$ as a \emph{process} in $\tau$, which we will study in detail in Section \ref{SEC:THEO}.


\section{Theoretical analysis}\label{SEC:THEO}

The following regularity conditions are needed throughout this paper.
\begin{itemize}
\item[(A1)]\label{A1} Assume that $\|\ZZ_i\| \leq \xi_m < \infty$, where $\xi_m$ is allowed to diverge, and that $1/M\leq\lambda_{\min}(\E [\ZZ \ZZ^\top])\leq\lambda_{\max}(\E [\ZZ \ZZ^\top])\leq M$ holds uniformly in $N$ for some fixed constant $M$.
\item[(A2)]\label{A2} The conditional distribution $F_{Y|X}(y|x)$ is twice differentiable w.r.t. $y$, with the corresponding derivatives $f_{Y|X}(y|x)$ and $f'_{Y|X}(y|x)$. Assume $\bar{f} :=\sup_{y \in \R,x\in \Xc} |f_{Y|X}(y|x)| < \infty$, $\overline{f'} :=\sup_{y \in \R,x\in \Xc} |f'_{Y|X}(y|x)| < \infty$ uniformly in $N$.
\item[(A3)]\label{A3} Assume that uniformly in $N$, there exists a constant $f_{\min} < \overline{f}$ such that
\[
0 < f_{\min} \leq \inf_{\tau \in \Tc} \inf_{x \in \Xc} f_{Y|X}(Q(x;\tau)|x).
\]
\end{itemize}

In these assumptions, we explicitly work with triangular array asymptotics for $\{(X_i,Y_i)\}_{i=1}^N$, where $d=\dim(X_i)$ is allowed to grow as well. 

Assumptions \hyperref[A1]{(A1)} -- \hyperref[A3]{(A3)} are fairly standard in the quantile regression literature, see \cite{bechfe2011}. The eigenvalue condition in \hyperref[A1]{(A1)} is imposed for the purpose of normalization, which in turn determines the value of $\xi_m$. For basis expansions with local support such as B-splines, the right scaling to ensure the eigenvalue condition results is $\xi_m \asymp \sqrt{m}$.


\subsection{Fixed dimensional linear models} \label{sec:lin}

In this section, we assume for all $\tau \in \Tc$ and $x \in \Xc$
\begin{equation} \label{eq:qlin}
Q(x;\tau) = \ZZ(x)^\top \zb(\tau),
\end{equation}
where $\ZZ(x)$ has fixed dimension $m$. This simple model setup allows us to derive a simple and clean bound on the difference between $\overline{\zb}, \widehat{\zb}$ and the oracle estimator $\hat\zb_{or}$ and, leading to a better understanding of resulting oracle rules. Our first main result is as follows.

\begin{theo}\label{th:linub}
Suppose that~\eqref{eq:qlin} and assumptions~\hyperref[A1]{(A1)} -- \hyperref[A3]{(A3)} hold and that $K \ll N^2$, $S = o(N(\log N)^{-1})$. Then
\begin{equation}\label{eq:linubpo}
\sup_{\tau \in \Tc_K} \|\overline{\zb}(\tau) - \hat\zb_{or}(\tau)\| = O_P\Big(\frac{S \log N}{N} + \frac{S^{1/4}(\log N)^{7/4}}{N^{3/4}}\Big) + o_P(N^{-1/2}).
\end{equation}
If additionally $K \gg G \gg 1$ 
we also have
\begin{align*} 
\sup_{\tau \in \Tc} |\ZZ(x_0)^\top(\widehat{\zb}(\tau) - \hat\zb_{or}(\tau))| \leq~& O_P\Big(\frac{S \log N}{N} + \frac{S^{1/2}(\log N)^2}{N}\Big) + o_P(N^{-1/2})
\\
& + \sup_{\tau \in \Tc} |(\Pi_K Q(x_0;\cdot))(\tau) - Q(x_0;\tau)|
\end{align*}
where the projection operator $\Pi_K$ was defined right after~\eqref{eq:hatbeta}.
\end{theo}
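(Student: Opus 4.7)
The plan is to exploit the linearity of the componentwise B-spline least-squares projection $\Pi_K$: since $\ZZ(x_0)^\top\hat\zb(\tau) = \Pi_K[\ZZ(x_0)^\top\overline{\zb}(\cdot)](\tau)$, one can decompose
\begin{align*}
\ZZ(x_0)^\top\big(\hat\zb(\tau) - \hat\zb_{or}(\tau)\big) = \,& \Pi_K\big[\ZZ(x_0)^\top\big(\overline{\zb}(\cdot) - \hat\zb_{or}(\cdot)\big)\big](\tau) \\
& + \Big(\Pi_K\big[\ZZ(x_0)^\top\hat\zb_{or}(\cdot)\big](\tau) - \ZZ(x_0)^\top\hat\zb_{or}(\tau)\Big),
\end{align*}
and then treat the two summands separately before taking the supremum over $\tau\in\Tc$.

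For the first summand, the idea is to redo the analysis underlying the first part of the theorem but applied to the projected process rather than to the raw difference at a grid point. Sub-sample Bahadur representations yield $\hat\zb^s(\tau) - \zb(\tau) = \text{(score term)} + R_s(\tau)$ and similarly for $\hat\zb_{or}$; after subtraction the score parts cancel, leaving $\overline{\zb}(\tau) - \hat\zb_{or}(\tau) = S^{-1}\sum_s R_s(\tau) - R_{or}(\tau)$. This remainder further splits into a systematic bias piece of order $O_P(S\log N/N)$ (which $\Pi_K$ cannot improve) and a centered fluctuation piece. Since $\Pi_K$ maps into a $q\asymp G$-dimensional space with $K\gg G$, one can replace a pointwise $\max_k$ bound on the centered piece with a bound of $L^2$-average type, trading the $S^{1/4}(\log N)^{7/4}/N^{3/4}$ rate from the first inequality for the sharper $S^{1/2}(\log N)^2/N$. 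A maximal/chaining inequality in $(\tau,x_0)$ tailored to the projection image makes this quantitative and supplies the $o_P(N^{-1/2})$ term.

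For the second summand, I would further decompose
$$\Pi_K[\ZZ(x_0)^\top\hat\zb_{or}](\tau) - \ZZ(x_0)^\top\hat\zb_{or}(\tau) = (\Pi_K - \mathrm{id})[Q(x_0;\cdot)](\tau) + (\Pi_K - \mathrm{id})\big[\ZZ(x_0)^\top\hat\zb_{or}(\cdot) - Q(x_0;\cdot)\big](\tau).$$
The first piece on the right is exactly the deterministic approximation-error term appearing in the statement. For the second piece, the uniform Bahadur representation of $\hat\zb_{or}$ together with assumption~\hyperref[A2]{(A2)} shows that $\tau\mapsto \ZZ(x_0)^\top\hat\zb_{or}(\tau) - Q(x_0;\tau)$ is an $O_P(N^{-1/2})$ stochastic process with sample paths of controlled modulus of continuity in $\tau$; Jackson-type bounds for B-spline projection under $K\gg G\gg 1$ then yield an $o_P(N^{-1/2})$ upper bound on $(\Pi_K - \mathrm{id})$ applied to this process, uniformly in $\tau$ (and in $x_0\in\Xc$ via a covering argument).

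The main obstacle will be the first summand: passing from the $\max$-based $S^{1/4}/N^{3/4}$ pointwise rate to the average-based $S^{1/2}/N$ projected rate. A naive Lebesgue-constant bound $\|\Pi_K f\|_\infty \lesssim \max_k |f(\tau_k)|$ is insufficient, since it collapses the projection to a pointwise bound and forgoes the averaging gain from $K\gg G$ quantile levels. One must instead carefully separate the systematic and centered components of the Bahadur remainder process $\tau\mapsto S^{-1}\sum_s R_s(\tau) - R_{or}(\tau)$ and control the latter by empirical-process arguments that specifically exploit the low-dimensional image of $\Pi_K$, rather than treating $\Pi_K$ as a generic bounded operator on $\ell^\infty(\Tc_K)$.
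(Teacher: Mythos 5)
Your overall decomposition is the same one the paper uses: write $\ZZ(x_0)^\top\hat\zb(\tau)=\Pi_K[\ZZ(x_0)^\top\overline{\zb}(\cdot)](\tau)$, split off $\Pi_K[\ZZ(x_0)^\top\hat\zb_{or}(\cdot)]-\ZZ(x_0)^\top\hat\zb_{or}(\cdot)$, and then further split the latter into $(\Pi_K-\mathrm{id})[Q(x_0;\cdot)]$ plus $(\Pi_K-\mathrm{id})$ applied to the centered process $\ZZ(x_0)^\top\hat\zb_{or}(\cdot)-Q(x_0;\cdot)$. Your treatment of that last piece---Bahadur control of the process, modulus of continuity, and a Jackson-type bound for B-spline projection---is exactly the paper's route (Lemma~5.1 of Huang combined with Theorem~6.27 of Schumaker and the stochastic equicontinuity lemma). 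And your treatment of the grid-level bound in the first display via sub-sample Bahadur representations, with the remainder split into a systematic bias part plus a centered fluctuation part that enjoys a $S^{-1/2}$ gain from independence across groups, is also what the paper does; the paper's decisive new technical input there is a sharp bound on the \emph{expectation} of the Bahadur remainder (equation~\eqref{eq:re3bias}), which you invoke implicitly as the ``systematic bias piece of order $O_P(S\log N/N)$.''

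There is, however, a genuine flaw in your reading of the first summand, and it leads you toward substantial unneeded machinery. You claim that a Lebesgue-constant bound $\sup_\tau|\Pi_K f(\tau)|\lesssim \sup_{\tau\in\Tc_K}|f(\tau)|$ is insufficient because it ``forgoes the averaging gain from $K\gg G$ quantile levels,'' and that one must build a new chaining argument exploiting the low-dimensional image of $\Pi_K$ to replace the rate $S^{1/4}(\log N)^{7/4}/N^{3/4}$ by $S^{1/2}(\log N)^2/N$. This is not how the paper obtains the second display, and the problem you describe does not actually exist. The paper just proves $\|\Pi_K\|_\infty<\infty$ (Step~1 of the proof of Theorem~\ref{th:gendiffup}) and plugs the grid bound from the first display directly into the second. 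The reason the $S^{1/4}(\log N)^{7/4}/N^{3/4}$ term does not appear explicitly in the second display is purely arithmetic: under $S=o(N/\log N)$ that term is always dominated. Indeed, $\frac{S^{1/4}(\log N)^{7/4}}{N^{3/4}}\le \frac{S\log N}{N}$ as soon as $S\gtrsim N^{1/3}\log N$, while $\frac{S^{1/4}(\log N)^{7/4}}{N^{3/4}}=o(N^{-1/2})$ whenever $S\lesssim N(\log N)^{-7}$; since $N^{1/3}\log N \ll N(\log N)^{-7}$, these two regimes overlap and together cover the whole range $S=o(N/\log N)$. So the term is already absorbed in $O_P(S\log N/N)+o_P(N^{-1/2})$, and no ``average-based'' improvement from $K\gg G$ is needed (nor, for that matter, achieved in the paper).

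In short: the route you propose would require building a new maximal inequality that is not in the paper and is not necessary; if you simply observe the absorption above and use the bounded operator norm of $\Pi_K$ on $\ell^\infty(\Tc_K)\to\ell^\infty(\Tc)$, the second display follows directly from the first plus your (correct) treatment of the $(\Pi_K-\mathrm{id})$ bias term. You should also make the key moment bound on the Bahadur remainder explicit rather than implicit, since without it the naive estimate $\tfrac{1}{S}\sum_s R_s=O_P(n^{-3/4}\log n)$ gives only the weaker $(S/N)^{3/4}$-type rate and the whole theorem degrades.
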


The proof of Theorem~\ref{th:linub} is given in Section \ref{sec:proof_par}. To obtain this result, we develop a new Bahadur representation for each local estimator $\hat\zb^s(\tau)$, see Section~\ref{sec:ABR}. The main novelty compared to existing results is a sharp bound on the \textit{expectation} of the remainder term. This is crucial to obtain the bound in~\eqref{eq:linubpo}. In contrast, relying on previously available versions of the Bahadur representation would result in a bound of the form $(S/N)^{3/4}(\log N)^{3/4}$, which is somewhat more restrictive. See Remark~\ref{rem:fixeddiminter} for additional details. Theorem~\ref{th:linub} can be generalized to potentially miss-specified linear models with dimension that depends on the sample size. Technical details are provided in the Appendix, Section~\ref{SEC:GEN}.

The bound in~\eqref{eq:linubpo} quantifies the difference between $\overline{\zb}$ and $\hat\zb_{or}$ in terms of the number of sub-samples $S$ and can be equivalently formulated in terms of the sub-sample size $n = N/S$ provided that $\log N \sim \log n$. When considering the projection estimator, an additional bias term $\Pi_K Q(x_0;\cdot) - Q(x_0;\cdot)$ is introduced. Provided that for a given $x_0 \in \Xc$ one has $\tau \mapsto Q(x_0;\tau) \in \Lambda_c^{\eta}(\Tc)$, this bias term can be further bounded by $O(G^{-\eta})$. Note that in the setting of Theorem~\ref{th:linub} the oracle estimator converges to $\zb(\tau)$ at rate $O_P(N^{-1/2})$, uniformly in $\tau \in \Tc$. By combining the results in Theorem~\ref{th:linub} with this bound, upper bounds on convergence rates for $\overline{\zb}(\tau) - \zb(\tau)$ and $\sup_{\tau \in \Tc} |\ZZ(x_0)^\top(\widehat{\zb}(\tau) - Q(x_0;\tau)|$ follow as direct Corollaries.

Theorem~\ref{th:linub} only provides upper bounds on the differences between $\overline{\zb}, \widehat{\zb}$ and $\hat\zb_{or}$. While a more detailed expression for this difference is derived in the proof of Theorem~\ref{th:aggbah}, this expression is very unwieldy and does not lead to useful explicit formulas. However, it is possible to prove that the bounds given in Theorem~\ref{th:linub} are sharp up to $\log N$ factors, which is the main result of the next Theorem. Before stating this result, we need to introduce some additional notation. Denote by $\Ps_1(\xi_m,M,\bar{f},\overline{f'},f_{min})$ all pairs $(P,\ZZ)$ of distributions $P$ and transformations $\ZZ$ satisfying~\eqref{eq:qlin} and \hyperref[A1]{(A1)}-\hyperref[A3]{(A3)} with constants $0 < \xi_m, M,\bar{f},\overline{f'} < \infty$, $f_{min} > 0$. Since $m,\xi_m$ are constant in this section, we use the shortened notation $\Ps_1(\xi,M,\bar{f},\overline{f'},f_{min})$.

\begin{theo}\label{th:lilob}
For any $\tau$ in $\Tc$ there exists $(P,\ZZ) \in \Ps_1(\xi,M,\bar{f},\overline{f'},f_{min})$ and a $C>0$ such that
\begin{equation}\label{eq:lilobpo}
\limsup_{N \to \infty} P\Big(\|\overline{\zb}(\tau) - \hat\zb_{or}(\tau)\| \geq \frac{CS}{N} \Big) > 0. 
\end{equation} 
Moreover for any $c, \eta > 0$ there exist $(P,\ZZ) \in \Ps_1(\xi,M,\bar{f},\overline{f'},f_{min})$ such that $\tau \mapsto \beta_j(\tau) \in \Lambda_c^{\eta}(\Tc), j=1,...,d$ and
\begin{equation}\label{eq:lilobpr}
\limsup_{N \to \infty} P\Big(\sup_{\tau \in \Tc}\|\widehat{\zb}(\tau) - \hat\zb_{or}(\tau)\| \geq \frac{CS}{N} + CG^{-\eta}\Big) > 0. 
\end{equation} 
\end{theo}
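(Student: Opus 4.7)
The plan is to exhibit a single one-dimensional worst-case model in $\Ps_1$ in which each local quantile regression estimator carries a systematic bias of exact order $1/n = S/N$ that survives averaging, while the oracle bias is only $O(1/N)$. For (3.7), the example is further chosen so that the B-spline projection of $\zb \in \Lambda_c^\eta(\Tc)$ itself contributes an unavoidable error of order $G^{-\eta}$.

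Take $m = 1$, $\ZZ \equiv 1$, and let $Y$ be independent of $X$ with a smooth density $f$ bounded above and away from zero on a compact interval and with $f'(\beta(\tau)) \neq 0$, equivalently $\beta''(\tau) \neq 0$ where $\beta := Q(1;\cdot)$. All constants in (A1)--(A3) are then independent of $N$. In this setup $\hat\zb^s(\tau)$ is the $\tau$-sample quantile of sub-sample $s$, and a second-order Bahadur expansion yields
\begin{equation*}
\hat\zb^s(\tau) = \beta(\tau) + \frac{1}{nf(\beta(\tau))}\sum_{i=1}^n\big(\tau - \IF(Y_{is} \leq \beta(\tau))\big) + R^s(\tau),
\end{equation*}
with the classical non-zero sample-quantile bias $\E[R^s(\tau)] = n^{-1}\cdot \tfrac12 \tau(1-\tau)\beta''(\tau) + o(1/n)$ and variance $\Var(R^s) = O(n^{-3/2})$ from Bahadur--Kiefer moment bounds. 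Because the sub-samples partition the full sample, the two linear Bahadur terms for $\overline\zb(\tau)$ and $\hat\zb_{or}(\tau)$ coincide identically, so
\begin{equation*}
\overline\zb(\tau) - \hat\zb_{or}(\tau) = S^{-1}\sum_{s=1}^S R^s(\tau) - R^{or}(\tau)
\end{equation*}
has mean $(S-1)b(\tau)/N \asymp S/N$ and standard deviation $O(N^{-3/4}S^{1/4})$. When $S \gg N^{1/3}$ the mean dominates the standard deviation and a Paley--Zygmund inequality gives $\limsup_N P(|\overline\zb(\tau) - \hat\zb_{or}(\tau)| \geq CS/N) > 0$ for small $C$; in the complementary regime the random fluctuations alone are of order at least $S/N$, so the same conclusion follows by a standard CLT argument. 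This proves (3.6).

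For (3.7), choose $\beta \in \Lambda_c^\eta(\Tc)$ attaining the B-spline saturation rate $\inf_p \|p - \beta\|_\infty \asymp G^{-\eta}$ (where the infimum runs over splines of degree $r_\tau$ on the given knots) while simultaneously maintaining $\beta''$ bounded away from zero on a sub-interval; such $\beta$ are constructed by perturbing a generic element of $\Lambda_c^\eta$ by a small bump concentrated between two knots. Decompose
\begin{equation*}
\widehat\zb(\tau) - \hat\zb_{or}(\tau) = \big((\Pi_K\beta)(\tau) - \beta(\tau)\big) + \Pi_K(\overline\zb - \beta)(\tau) + \big(\beta(\tau) - \hat\zb_{or}(\tau)\big).
\end{equation*}
The last term is $O_P(N^{-1/2})$ uniformly in $\tau$. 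The first is deterministic of magnitude $\asymp G^{-\eta}$ at some $\tau^\star \in \Tc$. Substituting the Bahadur expansion into the middle term and using that $\Pi_K$ preserves smooth functions up to a lower-order error shows that its deterministic part equals $\Pi_K b(\cdot) / n$ at leading order, which has sup-norm $\Theta(S/N)$ at some $\tau^{\star\star} \in \Tc$. Evaluating the supremum at whichever of $\tau^\star$ or $\tau^{\star\star}$ yields the larger contribution and controlling stochastic fluctuations exactly as in the previous paragraph delivers $\sup_\tau \|\widehat\zb(\tau) - \hat\zb_{or}(\tau)\| \geq c(S/N + G^{-\eta})$ with probability bounded away from zero, proving (3.7).

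The technically hardest point is the sharp second-order analysis: identifying $\E[R^s(\tau)]$ with its exact non-zero leading constant and bounding $\Var(R^s)$ tightly enough for Paley--Zygmund or a CLT to succeed, together with verifying that the leading stochastic term really does cancel exactly between $\overline\zb$ and $\hat\zb_{or}$ so that only the $O(S/N)$ remainder survives. For the one-dimensional sample quantile this reduces to classical Edgeworth-style expansions for order statistics, but one must also ensure that the saturation example used for (3.7) lies in $\Ps_1$ with constants independent of $N$, and that one can prevent the two deterministic error contributions from cancelling by appropriate sign control on $\beta''$ and on the spline approximation error.
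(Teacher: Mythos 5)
Your example (one-dimensional $\ZZ \equiv 1$ so that $\hat\zb^s(\tau)$ is a $\tau$-sample quantile, with $\beta'' \neq 0$) is, in spirit, the paper's own construction for the necessity proofs, in which $Y$ is independent of a discrete $X$ and has quantile function $Q_{a,b}(\tau) = a\tau^2 + b\tau$; the paper controls the bias using the \emph{exact} beta law of conditional order statistics rather than Edgeworth-type expansions. But you take a direct route---estimating the mean and variance of $\overline{\zb}(\tau) - \hat\zb_{or}(\tau)$ and applying Paley--Zygmund---whereas the paper proves Theorem~\ref{th:lilob} as a short corollary of the necessity statements in Corollaries~\ref{th:orlipo} and \ref{th:orlipr}: if $\overline{\zb}(\tau) - \hat\zb_{or}(\tau) = o_P(S/N)$ held for \emph{every} $(P,\ZZ)$, then with $S = N^{1/2}$ the estimator $\overline{\zb}$ would inherit the $N^{-1/2}$-rate weak convergence of $\hat\zb_{or}$, contradicting the necessity of $S = o(N^{1/2})$. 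The quantitative bias analysis therefore lives entirely in the necessity proof, which works with the \emph{marginal} bias $|\E[\overline{\beta}_1(\tau)] - \beta_1(\tau)| \gtrsim S/N$ and variance $\lesssim 1/N$ of $\overline{\beta}_1$ alone (via uniform integrability and moment divergence), and never forms the difference with $\hat\zb_{or}$.

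The gap in your argument sits exactly where you cannot avoid that difference. You write $\overline{\zb} - \hat\zb_{or} = S^{-1}\sum_s R^s - R^{or}$, but $R^{or}$ is a function of all $N$ observations and is correlated with every $R^s$. Your mean calculation (bias $\asymp b(\tau)(S-1)/N$) and variance \emph{upper} bound $\lesssim S^{1/2}N^{-3/2}$ are valid and close the case $S \gg N^{1/3}$ by Chebyshev. For $S \lesssim N^{1/3}$ you appeal to ``a standard CLT argument,'' but that requires (i) a \emph{lower} bound on $\Var\big(S^{-1}\sum_s R^s - R^{or}\big)$---which itself needs a lower bound on $\Var(R^1)$ for the sample-quantile Bahadur remainder (nontrivial, not cited) and control of cancellation against $R^{or}$---and (ii) an anti-concentration statement to convert a variance bound into a positive limsup of $P(|\cdot| \geq CS/N)$. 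Neither step is supplied, and the paper's contradiction-from-necessity structure sidesteps both because it never studies the joint distribution of the oracle and the subsample estimators. The analogous issue affects \eqref{eq:lilobpr}: establishing that $\Pi_K(b(\cdot))/n$ does not collapse under projection, and that the $G^{-\eta}$ and $S/N$ contributions do not cancel at the maximizing $\tau$, is precisely the work of the paper's localization lemmas for the projection weights $A_k(\tau)$ and the subsequence argument in $\tau$; your sketch leaves those as assertions.
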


%
%

The proof of Theorem~\ref{th:lilob} is given in Section \ref{sec:proof_par}. The result provided in~\eqref{eq:lilobpo} has an interesting implication: the best possible precision of estimating $Q$ in a divide-and-conquer framework is restricted by $n = N/S$, the sample size that can be processed on a single machine, regardless of the total sample size $N$. A related observation was made in Example~1 of~\cite{ZDW13} who construct a data-generating process where the MSE rate of a divide and conquer estimator is limited by the sample size that can be processed on a single computer. 




As corollaries to the above results, we derive sufficient and necessary conditions on $S$ under which $\overline{\zb}$ and $\ZZ(x)^\top\widehat{\zb}$ satisfy the oracle rule. Note that the asymptotic distribution of  the oracle estimator $\hat\zb_{or}(\tau)$ under various conditions has been known for a long time, see for instance Theorem 4.1 of \cite{K2005}. Under \hyperref[A1]{(A1)}-\hyperref[A3]{(A3)} it was developed in \cite{bechfe2011} and \cite{ChaVolChe2016} who show that
\begin{align}\label{eq:fixedpro}
\sqrt{N}\big(\widehat{\zb}_{or}(\cdot) - \zb(\cdot)\big) \weak \Gb(\cdot) \mbox{ in } (\ell^\infty(\Tc))^d,
\end{align}
where $\Gb$ is a centered Gaussian process with covariance structure 
\begin{multline} \label{eq:covlin}
H(\tau,\tau') := \E\big[\Gb(\tau)\Gb(\tau')^\top\big] 
\\
= J_m(\tau)^{-1} \E\big[\ZZ(X)\ZZ(X)^\top\big] J_m(\tau')^{-1}(\tau\wedge\tau' - \tau\tau').
\end{multline}
where $J_m(\tau) := \E[\ZZ\ZZ^\top f_{Y|X}(Q(X;\tau)|X)]$.

\begin{cor}[Oracle rules for $\overline{\zb}$] \label{th:orlipo} 
A sufficient condition for $\sqrt{N}(\bar \zb(\tau) - \zb(\tau)) \weak \Nc(0,H(\tau,\tau))$ for any $(P,\ZZ)\in \Ps_1(\xi,M,\bar{f},\overline{f'},f_{min})$ is that $S = o(N^{1/2}/\log N)$. A necessary condition for the same result is that $S = o(N^{1/2})$.
\end{cor}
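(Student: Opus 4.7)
The plan is to derive both directions by combining Theorem~\ref{th:linub} (sufficiency), Theorem~\ref{th:lilob} (necessity), and the oracle weak convergence~\eqref{eq:fixedpro}, which yields $\sqrt{N}(\hat\zb_{or}(\tau) - \zb(\tau)) \weak \Nc(0, H(\tau,\tau))$ for every fixed $\tau \in \Tc$ and every $(P,\ZZ) \in \Ps_1(\xi,M,\bar{f},\overline{f'},f_{\min})$.

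For the sufficient direction, I will verify that under $S = o(N^{1/2}/\log N)$ the two deterministic rates on the right hand side of~\eqref{eq:linubpo} are of order $o(N^{-1/2})$. The bound $S \log N / N = o(N^{-1/2})$ is immediate, while $S^{1/4} = o(N^{1/8}/(\log N)^{1/4})$ gives $S^{1/4}(\log N)^{7/4}/N^{3/4} = o((\log N)^{3/2}/N^{5/8}) = o(N^{-1/2})$. Multiplying~\eqref{eq:linubpo} by $\sqrt{N}$ then yields $\sqrt{N}(\overline{\zb}(\tau) - \hat\zb_{or}(\tau)) = o_P(1)$, and Slutsky's theorem transfers the CLT from $\hat\zb_{or}(\tau)$ to $\overline{\zb}(\tau)$.

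For the necessary direction, I will argue by contrapositive. Suppose $S$ is not $o(N^{1/2})$, so along some subsequence $S(N_k) \geq c N_k^{1/2}$ for a constant $c > 0$. Theorem~\ref{th:lilob} supplies $(P,\ZZ) \in \Ps_1$ and $C > 0$ with
\[
\liminf_k P\Big(\sqrt{N_k}\,\|\overline{\zb}(\tau) - \hat\zb_{or}(\tau)\| \geq C c\Big) > 0.
\]
To upgrade this norm lower bound to a failure of the CLT, I will revisit the construction underlying Theorem~\ref{th:lilob}: it is designed so that the \emph{expectation} $\E[\overline{\zb}(\tau) - \hat\zb_{or}(\tau)]$ is of exact order $S/N$ in a specific direction, reflecting the $O(1/n) = O(S/N)$ first-order bias of each sub-sample quantile regression coefficient $\hat\zb^s$ against the $O(1/N)$ bias of the oracle. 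Along $N_k$ the rescaled bias $\sqrt{N_k}\,\E[\overline{\zb}(\tau) - \hat\zb_{or}(\tau)]$ is bounded away from zero, so by uniform integrability (guaranteed by the moment bounds embedded in $\Ps_1$ together with the Bahadur expansion from the proof of Theorem~\ref{th:linub}) the limit of $\sqrt{N_k}(\overline{\zb}(\tau) - \zb(\tau))$ must carry a non-zero mean component in this direction, ruling out the centered Gaussian target.

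The main obstacle is the necessity step: the bare norm inequality~\eqref{eq:lilobpo} does not by itself preclude the desired weak convergence, so the argument must extract from the proof of Theorem~\ref{th:lilob} a nondegenerate bias direction. This bias analysis is what pins down the sharp $S \asymp N^{1/2}$ threshold, and it also clarifies why the gap between the sufficient $o(N^{1/2}/\log N)$ and the necessary $o(N^{1/2})$ is only a logarithmic factor, attributable to slackness in the expectation bound for the Bahadur remainder rather than a genuine phenomenon.
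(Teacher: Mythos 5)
Your sufficiency argument is correct and is essentially the paper's: substitute $S = o(N^{1/2}/\log N)$ into~\eqref{eq:linubpo}, verify that each term is $o_P(N^{-1/2})$, and then hand off to the oracle CLT~\eqref{eq:fixedpro} by Slutsky.

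The necessity direction, however, has a real gap, and you already put your finger on it: the norm bound~\eqref{eq:lilobpo} by itself is compatible with $\bar\zb$ and $\hat\zb_{or}$ both having the same centered Gaussian limit, so invoking Theorem~\ref{th:lilob} cannot finish the job. There is also a structural issue: in the paper, Theorem~\ref{th:lilob} is \emph{derived from} the necessity of Corollary~\ref{th:orlipo} (see Section~\ref{sec:prlob}), so leaning on it to prove the corollary reverses the logical order and would be circular unless you supplied an independent proof of Theorem~\ref{th:lilob}. The paper's actual route bypasses the oracle estimator entirely: it constructs an explicit orthogonal design with $P(X_1 = \sqrt{m}\,\eee_j) = 1/m$ and $Y$ independent of $X$ with quantile function $Q_{a,b}(\tau) = a\tau^2 + b\tau$, so that each $\hat\beta_j^s(\tau)$ is (up to scaling) an order statistic of a sub-sample. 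Via~\eqref{`EQ:TILBET1}--\eqref{`EQ:TILBET2} this gives two facts: (i) $\Var\big(\sqrt{N}\bar\beta_1(\tau)\big)\lesssim 1$, hence $\sqrt{N}(\bar\beta_1 - \E\bar\beta_1)$ is uniformly integrable; and (ii) when $S\gtrsim N^{1/2}$, the bias \emph{against the truth} satisfies $|\E[\bar\beta_1(\tau)] - \beta_1(\tau)|\gtrsim S/N \gtrsim N^{-1/2}$. The argument then splits into two cases that you have not anticipated: if the bias is only $\asymp N^{-1/2}$, uniform integrability and convergence to a centered Gaussian force $\E[\sqrt{N}(\bar\beta_1 - \beta_1)]\to 0$ (Billingsley, Theorem~25.12), contradicting (ii); if instead the bias is $\gg N^{-1/2}$, then (i) forces $N\,\E[(\bar\beta_1 - \beta_1)^2]\to\infty$, so $\sqrt{N}|\bar\beta_1 - \beta_1|\to\infty$ in probability, which also rules out the (tight) Gaussian limit. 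Your sketch captures the uniform-integrability half of this dichotomy but not the second case, and it centers the bias at $\hat\zb_{or}$ rather than at the truth $\beta_1$; the paper's version at $\beta_1$ is what lets the $\Var\lesssim 1$ bound and uniform integrability land cleanly, since one has direct control over the order statistics in the constructed model and never needs a Bahadur expansion for $\hat\zb_{or}$.
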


The necessary and sufficient conditions above match up to a factor of $\log N$. In order to guarantee the oracle rule for the process, we need to impose additional conditions on the smoothness of $\tau \mapsto \zb(\tau)$ and on the number of grid points $G$.    

\begin{cor}[Oracle rules for $\widehat{\zb}$]\label{th:orlipr}  Assume that $\tau \mapsto \beta_j(\tau) \in \Lambda_c^{\eta}(\Tc)$ for $j=1,...,d$ and given $c, \eta > 0$, that $N^2 \gg K \gg G$ and $r_\tau \geq \eta$.
A sufficient condition for $\sqrt{N}(\widehat \zb(\cdot) - \zb(\cdot)) \weak \Gb(\cdot)$ for any $(P,\ZZ)\in \Ps_1(\xi,M,\bar{f},\overline{f'},f_{min})$ satisfying above conditions is $S = o(N^{1/2} (\log N)^{-1})$ and $G \gg N^{1/(2\eta)}$. A necessary condition for the same result is $S = o(N^{1/2})$ and $G \gg N^{1/(2\eta)}$.

\end{cor}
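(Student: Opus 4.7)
The plan is to decompose
\[
\sqrt{N}\bigl(\widehat\zb(\tau) - \zb(\tau)\bigr) = \sqrt{N}\bigl(\widehat\zb(\tau) - \hat\zb_{or}(\tau)\bigr) + \sqrt{N}\bigl(\hat\zb_{or}(\tau) - \zb(\tau)\bigr),
\]
where the second summand already weakly converges to $\Gb(\cdot)$ in $(\ell^\infty(\Tc))^d$ by~\eqref{eq:fixedpro}. By Slutsky's lemma in $(\ell^\infty(\Tc))^d$, sufficiency reduces to showing that the first summand is $o_P(1)$ uniformly in $\tau$; necessity in turn reduces to exhibiting $(P,\ZZ)\in \Ps_1$ for which the first summand fails to vanish.

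For the sufficiency direction I would first pass from the scalar bound of Theorem~\ref{th:linub} to a uniform vector bound. Since $m$ is fixed and $\lambda_{\min}(\E[\ZZ\ZZ^\top])$ is bounded away from zero by \hyperref[A1]{(A1)}, one can select $m$ points $x_0^{(1)},\dots,x_0^{(m)}\in\Xc$ so that $\ZZ(x_0^{(1)}),\dots,\ZZ(x_0^{(m)})$ form a basis of $\R^m$, producing a constant $c_0$ with $\|v\|\le c_0\max_j|\ZZ(x_0^{(j)})^\top v|$ for all $v$. Applying Theorem~\ref{th:linub} at each $x_0^{(j)}$ then yields
\[
\sup_{\tau\in\Tc}\|\widehat\zb(\tau)-\hat\zb_{or}(\tau)\| \le O_P\!\left(\tfrac{S\log N}{N}+\tfrac{S^{1/2}(\log N)^2}{N}\right)+o_P(N^{-1/2})+ c_0\max_j\sup_{\tau}|\ZZ(x_0^{(j)})^\top(\Pi_K\zb-\zb)(\tau)|,
\]
where the bias term used the linear model $Q(x_0^{(j)};\tau)=\ZZ(x_0^{(j)})^\top\zb(\tau)$. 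Multiplying by $\sqrt{N}$, the two stochastic terms are $o_P(1)$ precisely under $S=o(\sqrt{N}/\log N)$. For the bias, each component $\Pi_K\beta_l-\beta_l$ satisfies the standard $L^\infty$ B-spline approximation bound $O(G^{-\eta})$ for the discrete projector defined in~\eqref{eq:za}, provided $r_\tau\ge\eta$ and $K\gg G$ (so that the discrete Gram matrix $\sum_k\BB(\tau_k)\BB(\tau_k)^\top$ is comparable to its continuous counterpart). Thus $\sqrt{N}G^{-\eta}\to 0$ under $G\gg N^{1/(2\eta)}$, and the first summand vanishes uniformly.

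For necessity I would treat the two conditions separately. If $S\ne o(\sqrt{N})$, choose a polynomial $\zb$, which lies in $\Lambda_c^{\eta'}(\Tc)$ for every $\eta'$, and pick $G$ large so that the bias contribution $CG^{-\eta}$ in~\eqref{eq:lilobpr} is negligible relative to $CS/N$; the lower bound of Theorem~\ref{th:lilob} then delivers some $(P,\ZZ)\in \Ps_1$ along whose samples $\sqrt{N}\sup_\tau\|\widehat\zb-\hat\zb_{or}\|\ge C S/\sqrt N$ with positive probability, preventing $\sqrt{N}(\widehat\zb-\zb)$ from having the tight limit $\Gb$. If instead $G=O(N^{1/(2\eta)})$, I invoke the other half of~\eqref{eq:lilobpr}: there exists $(P,\ZZ)\in \Ps_1$ with $\zb\in\Lambda_c^\eta(\Tc)$ such that $\sup_\tau\|\widehat\zb-\hat\zb_{or}\|\gtrsim G^{-\eta}$ with positive probability; since this difference is (to leading order) deterministic and of exact order $G^{-\eta}$, $\sqrt{N}(\widehat\zb-\zb)$ carries an asymptotic mean-shift that is not $o_P(1)$ and cannot converge to the mean-zero Gaussian $\Gb$.

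The main obstacle I anticipate is the controlled transition from the pointwise-in-$x_0$ bound of Theorem~\ref{th:linub} to a genuine uniform bound on $\|\widehat\zb-\hat\zb_{or}\|_{L^\infty(\Tc)}$, together with verifying that the discrete projector $\Pi_K$ inherits the optimal $O(G^{-\eta})$ sup-norm approximation rate on $\Lambda_c^\eta(\Tc)$ under the regime $K\gg G$ and $r_\tau\ge\eta$. Both steps are routine in principle, but require careful book-keeping of constants under \hyperref[A1]{(A1)} so that the argument stays uniform over $(P,\ZZ)\in\Ps_1$. Once these are in hand, the necessity part is essentially a direct reading of Theorem~\ref{th:lilob}.
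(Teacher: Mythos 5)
Your sufficiency argument is essentially the paper's: decompose via $\hat\zb_{or}$, bound the projection bias by the B-spline $L^\infty$ approximation rate $O(G^{-\eta})$ on $\Lambda_c^\eta$ (the paper cites Schumaker, Theorems 2.62 and 6.25, plus Huang's Lemma 5.1 for $\sup$-norm boundedness of the discrete projector $\Pi_K$), and absorb the stochastic part via Theorem~\ref{th:linub}. Your device of spanning $\R^m$ with $m$ design points $\ZZ(x_0^{(j)})$ to go from the scalar bound to $\|\cdot\|$ is a workable minor variant of the paper, which instead applies the bound with the coordinate vector $e_j$ directly in place of $\ZZ(x_0)$ (legitimate because the proof of the bound~\eqref{eq:genubpr} uses $\ZZ(x_0)$ only as a linear functional). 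For a fixed $(P,\ZZ)$ the constant $c_0$ in your step is harmless, so sufficiency is fine.

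The necessity direction has two genuine problems. First, your proof is circular within the paper's development: Theorem~\ref{th:lilob} is \emph{derived from} the necessity statements in Corollaries~\ref{th:orlipo} and~\ref{th:orlipr} (see Section~\ref{sec:prlob}), so it cannot be used as input to prove Corollary~\ref{th:orlipr}. You would have to establish the lower bound of Theorem~\ref{th:lilob} by a free-standing construction to break the circle. Second, and more fundamentally, the implication ``$\sqrt{N}\sup_\tau\|\widehat\zb-\hat\zb_{or}\|\ge c$ with positive probability $\Rightarrow \sqrt{N}(\widehat\zb-\zb)\not\weak\Gb$'' is false. Two sequences can converge to the same Gaussian limit without their difference vanishing: take both $\hat\zb_{or}$ and $\widehat\zb$ equal to $\zb$ plus $N^{-1/2}$ times processes with the same symmetric law but opposite signs. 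To rule out this scenario one needs structural information about the mechanism that inflates $\widehat\zb-\hat\zb_{or}$. The paper's argument supplies exactly this by first constructing an explicit data-generating process (discrete design on scaled basis vectors, $Q_{a,b}(\tau)=a\tau^2+b\tau$ error quantile, so that subsample estimates reduce to order statistics), then computing the \emph{bias} $\E[\bar\beta_1(\tau)]-\beta_1(\tau)\gtrsim S/N$ and \emph{variance} $\Var(\bar\beta_1(\tau))\lesssim 1/N$, and finally invoking uniform integrability to convert the bias inequality into a failure of the Gaussian limit. For the $G$-condition the paper likewise uses an explicit quantile function with a bump component so that $\widehat\beta_j\in\Ps\Ps_{r_\tau}(G_N)$ is separated from $\beta_j$ \emph{almost surely} by $\gtrsim G^{-\eta}$, again followed by a Portmanteau argument rather than by appealing to the lower-bound theorem. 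Your intuition that the projection error is ``(to leading order) deterministic and of exact order $G^{-\eta}$'' is correct, but it is not a consequence of~\eqref{eq:lilobpr}; it is proved from the spline structure, and is the actual content that must be supplied.
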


Corollary~\ref{th:orlipr} characterizes the choice for parameters $(S,K)$ which determine computational cost. The conditions on $S$ remain the same as in Corollary~\ref{th:orlipo}. The process oracle rule requires restrictions on $K$ based on the smoothness of $\tau\mapsto Q(x_0;\tau)$, denoted as $\eta$. Note that, compared to the results in~\cite{bechfe2011}, smoothness of $\tau\mapsto Q(x_0;\tau)$ is the only additional condition of the data that is needed to ensure process convergence of $\sqrt{N}(\widehat \zb(\cdot) - \zb(\cdot))$. Specifically, the lower bound on $K$ in terms of $N$ becomes smaller as $\eta$ increases, which implies that smoother $\tau\mapsto Q(x_0;\tau)$ allow for larger computational savings. Corollary~\ref{th:orlipr} implies that the condition on $K$ is necessary for the oracle rule, no matter how $S$ is chosen.

Next we apply Corollary~\ref{th:orlipr} to the estimation of conditional distribution functions. Define 
\begin{align}\label{eq:hatFor}
\hat F_{Y|X}^{or}(\cdot|x_0):= \tau_L + \int_{\tau_L}^{\tau_U} \IF\{\ZZ(x)^\top\hat \zb_{or}(\tau) < y\} d\tau.
\end{align}
The asymptotic distribution of $\hat F_{Y|X}^{or}(\cdot|x_0)$ was derived in \cite{ChaVolChe2016}.

\begin{cor} \label{cor:fhatli}
Under the same conditions as Corollary~\ref{th:orlipr}, we have for any $x_0 \in \Xc$,
\begin{align*}
\sqrt{N}\big(\hat F_{Y|X}(\cdot|x_0) - F_{Y|X}(\cdot|x_0)\big) \weak -f_{Y|X}&(\cdot|x_0) \ZZ(x_0)^\top\Gb\big(F_{Y|X}(\cdot|x_0)\big)\\
&\mbox{in } \ell^\infty\big((Q(x_0;\tau_L),Q(x_0;\tau_U))\big),
\end{align*}
where $\hat F_{Y|X}(\cdot|x_0)$ is defined in \eqref{eq:Fhat} and $\Gb$ is a centered Gaussian process with covariance given in \eqref{eq:covlin}. The same process convergence result holds with $\hat F_{Y|X}^{or}(\cdot|x_0)$ replacing $\hat F_{Y|X}(\cdot|x_0)$.
\end{cor}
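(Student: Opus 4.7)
The plan is to express $\hat F_{Y|X}(\cdot|x_0)$ as the image of $\tau \mapsto \ZZ(x_0)^\top \hat\zb(\tau)$ under a fixed Hadamard differentiable functional, and then apply the functional delta method using the process convergence already supplied by Corollary~\ref{th:orlipr}. Concretely, define
\begin{equation*}
\phi: \ell^\infty(\Tc) \to \ell^\infty(I), \qquad \phi(q)(y) := \tau_L + \int_{\tau_L}^{\tau_U} \IF\{q(\tau) < y\}\, d\tau,
\end{equation*}
where $I := (Q(x_0;\tau_L), Q(x_0;\tau_U))$. By identity~\eqref{eq:cdf}, $\phi(Q(x_0;\cdot))(y) = F_{Y|X}(y|x_0)$ for $y \in I$, and by construction $\hat F_{Y|X}(\cdot|x_0) = \phi(\ZZ(x_0)^\top \hat\zb)$ and $\hat F_{Y|X}^{or}(\cdot|x_0) = \phi(\ZZ(x_0)^\top \hat\zb_{or})$.

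First, I would transfer the process convergence from Corollary~\ref{th:orlipr} to the scalar process. The linear operator $a(\cdot) \mapsto \ZZ(x_0)^\top a(\cdot)$ is continuous from $(\ell^\infty(\Tc))^d$ to $\ell^\infty(\Tc)$, so the continuous mapping theorem together with $\ZZ(x_0)^\top \zb(\tau) = Q(x_0;\tau)$ yields
\begin{equation*}
\sqrt{N}\bigl(\ZZ(x_0)^\top \hat\zb(\cdot) - Q(x_0;\cdot)\bigr) \weak \ZZ(x_0)^\top \Gb(\cdot) \quad \text{in } \ell^\infty(\Tc),
\end{equation*}
with the sample paths of the limit in $C(\Tc)$ almost surely. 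The analogous statement for $\hat\zb_{or}$ follows from~\eqref{eq:fixedpro}.

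Second, I would verify that $\phi$ is Hadamard differentiable at $q_0 := Q(x_0;\cdot)$ tangentially to $C(\Tc)$. Under \hyperref[A3]{(A3)} the function $q_0$ is strictly increasing and continuously differentiable on $\Tc$ with derivative $q_0'(\tau) = 1/f_{Y|X}(Q(x_0;\tau)|x_0) \in [1/\bar f, 1/f_{\min}]$, and $\phi$ on $I$ coincides with the generalized inverse of $q_0$. Standard Hadamard differentiability results for the inverse map (e.g.\ Lemma~3.9.20 and Lemma~3.9.23 of van~der~Vaart and Wellner, 1996) then give
\begin{equation*}
\phi'_{q_0}(h)(y) = -f_{Y|X}(y|x_0)\, h\bigl(F_{Y|X}(y|x_0)\bigr), \qquad y \in I,
\end{equation*}
as the derivative in $\ell^\infty(I)$. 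The functional delta method (Theorem~3.9.4 of van~der~Vaart and Wellner, 1996) combined with the first step then delivers the stated weak convergence, and the same argument applied to $\hat\zb_{or}$ yields the identical limit for $\hat F_{Y|X}^{or}$.

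The main obstacle is ensuring that the Hadamard differentiability of the inversion map transfers cleanly despite the truncation of the integration domain to $[\tau_L,\tau_U]$. For $y$ in any compact sub-interval of $I$ one has $F_{Y|X}(y|x_0) \in (\tau_L, \tau_U)$, so for small perturbations $q_0 + t_n h_n$ with $h_n \to h \in C(\Tc)$ uniformly, the boundary indicators $\IF\{q_0(\tau_L) + t_n h_n(\tau_L) < y\}$ and $\IF\{q_0(\tau_U) + t_n h_n(\tau_U) < y\}$ stabilize to $1$ and $0$ respectively for all large $n$; this reduces the analysis to the standard unrestricted inversion. Uniform control in $y$ over all of $I$ can then be obtained by the usual partition-and-uniform-continuity argument, exploiting that both $q_0'$ and $f_{Y|X}(\cdot|x_0)$ are bounded and bounded away from zero on the relevant range.
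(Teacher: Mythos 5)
Your overall route mirrors the paper's: rewrite $\hat F_{Y|X}(\cdot|x_0)$ as $\phi\bigl(\ZZ(x_0)^\top\hat\zb(\cdot)\bigr)$ for the explicit operator $\phi$, push the process convergence of Corollary~\ref{th:orlipr} through the continuous linear map $a(\cdot)\mapsto \ZZ(x_0)^\top a(\cdot)$, then invoke Hadamard differentiability of $\phi$ at $q_0 = Q(x_0;\cdot)$ tangentially to $C(\Tc)$ and the functional delta method. The paper states precisely this right after Corollary~\ref{cor:fhatli} and, for the parallel statements in Corollaries~\ref{th:orlocpr} and~\ref{th:orgenpr}, points to Corollary~4.1 of \cite{ChaVolChe2016} for the details. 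Your derivative formula $\phi'_{q_0}(h)(y) = -f_{Y|X}(y|x_0)\,h\bigl(F_{Y|X}(y|x_0)\bigr)$ is the right one.

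The one step that is not secured is the closing claim that ``uniform control in $y$ over all of $I$ can then be obtained by the usual partition-and-uniform-continuity argument.'' As written, Hadamard differentiability of $\phi$ into $\ell^\infty(I)$ tangentially to all of $C(\Tc)$ fails, because the hard truncation of the integral at $\tau_L$ and $\tau_U$ is binding on a $y$-band of width of order $t_n$ adjacent to each endpoint of $I$. Concretely, take $h_n\equiv h\equiv 1$, $t_n\downarrow 0$, and $y_n := Q(x_0;\tau_L)+t_n/2 \in I$. Since $q_0(\tau)\geq Q(x_0;\tau_L) = y_n - t_n/2 > y_n - t_n$ for all $\tau\in\Tc$, the indicator in $\phi(q_0+t_n)(y_n)$ vanishes identically and $\phi(q_0+t_n)(y_n)=\tau_L$, while $\phi(q_0)(y_n)=F_{Y|X}(y_n|x_0)=\tau_L + (t_n/2)f_{Y|X}(Q(x_0;\tau_L)|x_0)+o(t_n)$. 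Hence the difference quotient converges to $-\frac{1}{2}f_{Y|X}(Q(x_0;\tau_L)|x_0)$, whereas $\phi'_{q_0}(h)(y_n)\to -f_{Y|X}(Q(x_0;\tau_L)|x_0)$, so the $\ell^\infty(I)$-norm of the discrepancy stays bounded away from zero. A finer partition does not remove this: the band on which the truncation is active shrinks with $t_n$, but the discrepancy there is of order one. To close this step you must either restrict $y$ to a closed sub-interval of $I$ bounded away from the endpoints, or replace the generic inversion lemma with the version that explicitly handles the truncated integral (this is what the paper effectively does by deferring to the cited corollary). Apart from this boundary subtlety, your argument is sound and follows the paper's intended proof.
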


The proof of Corollary \ref{cor:fhatli} uses the fact that $y \mapsto \hat F_{Y|X}(y|x)$ is a Hadamard differentiable functional of $\tau \mapsto \ZZ(x)^\top\hat\zb(\tau)$ for any fixed $x$. The result of this corollary can be extended to other functionals with this property.


\subsection{Approximate linear models with local basis structure}\label{sec:loc}

In this section, we consider models with $Q(x;\tau) \approx \ZZ(x)^\top \zb(\tau)$ with $m=\dim(\ZZ)\to\infty$ as $N\to\infty$ where the transformation $\ZZ$ corresponds to a basis expansion. The analysis in this section focuses on transformations $\ZZ$ with a specific local support structure, which will be defined more formally in Condition \hyperref[L]{(L)}. Examples of such transformations include polynomial splines or wavelets. 
	
Since the model $Q(x;\tau) \approx \ZZ(x)^\top \zb(\tau)$ holds only approximately, there is no unique "true" value for $\zb(\tau)$. Theoretical results for such models are often stated in terms of the following vector
\beq \label{eq:gamman}
\zg_N(\tau) := \arg\,\min_{\zg\in\R^m} \E\big[(\ZZ^\top\zg - Q(X;\tau))^2f(Q(X;\tau)|X)\big],
\eeq
see \cite{ChaVolChe2016} and Remark~\ref{rem:center}. Note that $\ZZ^\top\zg_N(\tau)$ can be viewed as the (weighted $L_2$) projection of $Q(X;\tau)$ onto the approximation space. The resulting $L_\infty$ approximation error is defined as
\begin{align}\label{def:cn}
c_m(\zg_N) := \sup_{x \in \Xc, \tau \in \Tc} \big|Q(x;\tau) - \zg_N(\tau)^\top\ZZ(x)\big|.
\end{align}

For any $\bv\in\R^m$ define the matrix $\tilde J_m(\bv) := \E[\ZZ\ZZ^\top f(\ZZ^\top \bv|X)]$ . For any $\ba\in \R^m, \bb(\cdot): \Tc \to \R^m$, define $\tilde{\mathcal{E}}(\ba,\bb) := \sup_{\tau \in \Tc} \E[|\ba^\top \tilde J_m^{-1}(\bb(\tau)) \ZZ|]$. Throughout the rest of this subsection, we assume the following condition:

\begin{enumerate}
\item[(L)]\label{L} For each $x \in \Xc$, the vector $\ZZ(x)$ has zeroes in all but at most $r$ consecutive entries, where $r$ is fixed. Furthermore, $\sup_{x\in \Xc}\tilde{\mathcal{E}}(\ZZ(x),\zg_N) = O(1)$.
\end{enumerate} 

Condition \hyperref[L]{(L)} ensures that the matrix {$\tilde J_m(\bv)$ has a band structure for any $\bv \in\R^m$} such that the off-diagonal entries of $\tilde J_m^{-1}(\bv)$ decay exponentially fast (Lemma 6.3 in \cite{zsw98}). Next, we discuss an example of $\ZZ$ which satisfies \hyperref[L]{(L)}.

\begin{example}[Univariate polynomial spline] \rm \label{ex:spline}
Suppose that \hyperref[A2]{(A2)}-\hyperref[A3]{(A3)} hold and that $X$ has a density on $\Xc = [0,1]$ uniformly bounded away from zero and infinity. Let $\tilde \BB(x) = (\tilde B_1(x),...,\tilde B_{J-p-1}(x))^\top$ be a polynomial spline basis of degree $p$ defined on $J$ uniformly spaced knots $ 0 = t_1 <... < t_J = 1$, such that the support of each $\tilde B_j$ is contained in the interval $[t_{j},t_{j+p+1})$ and normalization is as given in Definition 4.19 on page 124 in~\cite{schumaker:81}. Let $\ZZ(x) := m^{1/2}(\tilde B_1(x),...,\tilde B_{J-p-1}(x))^\top$, then there exists a constant $M>1$ such that $M^{-1}<\E[\ZZ \ZZ^\top]<M$ (by Lemma 6.2 of \cite{zsw98}). With this scaling we have $\xi_m \asymp \sqrt{m}$. Moreover, the first part of assumption \hyperref[L]{(L)} holds with $r = p+1$, while the second part, i.e.,  $\sup_{x\in \Xc}\tilde{\mathcal{E}}(\ZZ(x),\zg_N) = O(1)$, is verified in Lemma~\ref{lem:locexp}. 
\end{example}

\begin{theo}\label{th:locub}
Suppose that assumptions~\hyperref[A1]{(A1)} -- \hyperref[A3]{(A3)} and \hyperref[L]{(L)} hold, that $K \ll N^2$ and $S \xi_m^4 \log N = o(N), c_m(\zg_N) = o(\xi_m^{-1}\wedge (\log N)^{-2})$. Then
\begin{multline} 
\sup_{\tau \in \Tc_K} \Big|\ZZ(x_0)^\top(\overline{\zb}(\tau) - \hat\zb_{or}(\tau))\Big| = o_P(\|\ZZ(x_0)\|N^{-1/2})
\\ 
+ O_P\Big(\Big(1+\frac{\log N}{S^{1/2}}\Big)\Big(c_m^2(\zg_N) + \frac{S\xi_m^2 (\log N)^2}{N}\Big)\Big)
\\
+ O_P\Big(\frac{\|\ZZ(x_0)\| \xi_m S \log N}{N}  + \frac{\|\ZZ(x_0)\|}{N^{1/2}}\Big(\frac{S \xi_m^2 (\log N)^{10}}{N}\Big)^{1/4}\Big).  \label{eq:locubpo}
\end{multline}
If additionally $K \gg G \gg 1$ and $c_m^2(\zg_N) = o(N^{-1/2})$ we also have
\begin{multline*} 
\sup_{\tau \in \Tc} |\ZZ(x_0)^\top(\widehat{\zb}(\tau) - \hat\zb_{or}(\tau))| \leq  \|\ZZ(x_0)\|\sup_{\tau \in \Tc_K} \|\overline{\zb}(\tau) - \hat\zb_{or}(\tau)\|  + o_P(\|\ZZ(x_0)\|N^{-1/2})
\\
  + \sup_{\tau \in \Tc} \big\{|(\Pi_K Q(x_0;\cdot))(\tau) - Q(x_0;\tau)| + |\ZZ(x_0)^\top\zg_N(\tau) - Q(x_0;\tau)|\big\} 
\end{multline*}
where the projection operator $\Pi_K$ was defined right after~\eqref{eq:hatbeta}.
\end{theo}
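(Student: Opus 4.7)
The plan is to adapt the strategy behind Theorem~\ref{th:linub} to growing dimension, tracking how the sieve bias $c_m(\zg_N)$ and the envelope $\xi_m$ propagate through a Bahadur analysis, and to exploit (L) crucially at the step where expectation bounds are taken. I take the pseudo-truth $\zg_N(\tau)$ from~\eqref{eq:gamman} as the natural reference, since the model is only approximately linear. The starting point is a refined Bahadur representation for each sub-sample estimator,
\[
\hat\zb^s(\tau) - \zg_N(\tau) = -\tilde J_m^{-1}(\zg_N(\tau))\frac{1}{n}\sum_{i=1}^n \ZZ_{is}\big(\IF\{Y_{is} \leq \ZZ_{is}^\top \zg_N(\tau)\}-\tau\big) + R^s(\tau),
\]
where, crucially, I require sharp bounds both on $|\ZZ(x_0)^\top R^s(\tau)|$ and on $\E|\ZZ(x_0)^\top R^s(\tau)|$. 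Such a representation, including the expectation-norm bound on the remainder, is precisely what the refined Bahadur expansion in Section~\ref{sec:ABR} provides, and it is the improved expectation bound that drives the leading $S\xi_m^2(\log N)^2/N$ and $\xi_m S\log N/N$ rates rather than weaker polynomial-in-$\log N$ rates one would obtain from existing representations.

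Averaging over $s=1,\dots,S$, the linear terms combine to reproduce the leading term of the full-sample oracle expansion, yielding the identity
\[
\overline\zb(\tau)-\hat\zb_{or}(\tau) = \frac{1}{S}\sum_{s=1}^S R^s(\tau) - R^{or}(\tau).
\]
I then project onto $\ZZ(x_0)$ and split each $R^s$ into its mean and a centered piece. For the mean, assumption~(L) — through $\sup_{x\in\Xc}\tilde{\mathcal E}(\ZZ(x),\zg_N)=O(1)$, which itself rests on the exponential off-diagonal decay of $\tilde J_m^{-1}(\bv)$ (Lemma~6.3 of~\cite{zsw98}) — replaces a naive $\xi_m\|\ZZ(x_0)\|$ factor by $\|\ZZ(x_0)\|$, producing the leading $O_P(\|\ZZ(x_0)\|\xi_m S\log N/N)$ term. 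Independence of the $R^s-\E R^s$ across $s$ yields, through a Rosenthal/Bernstein-type argument, the variance contribution $\|\ZZ(x_0)\|N^{-1/2}\bigl(S\xi_m^2(\log N)^{10}/N\bigr)^{1/4}$, whose $(\log N)^{10}$ weight tracks the complexity of chaining uniformly over $\tau$. The model bias $Q(X;\tau)-\ZZ^\top\zg_N(\tau)$ enters because $\E[\IF\{Y\leq\ZZ^\top\zg_N\}-\tau]$ is not exactly zero, and contributes the $c_m^2(\zg_N)$ terms with the $(1+(\log N)/S^{1/2})$ prefactor coming from summing both the mean and the centered fluctuations of these bias-induced errors. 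Uniformity over $\tau\in\Tc_K$ follows from a union bound combined with Lipschitz control of $R^s$ in $\tau$, using $K\ll N^2$ so that $\log K\lesssim\log N$.

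For $\widehat\zb$, I write $\ZZ(x_0)^\top\widehat\zb(\tau)-\ZZ(x_0)^\top\hat\zb_{or}(\tau)=\Pi_K[\ZZ(x_0)^\top(\overline\zb-\hat\zb_{or})](\tau)+(\Pi_K-I)[\ZZ(x_0)^\top\hat\zb_{or}](\tau)$. Because $\Pi_K$ is bounded on $\ell^\infty(\Tc_K)$ under the uniform-knot assumption, the first summand is dominated by the $\Tc_K$-uniform bound from the previous step, giving the $\|\ZZ(x_0)\|\sup_{\tau\in\Tc_K}\|\overline\zb-\hat\zb_{or}\|$ contribution. For the second, I split $\hat\zb_{or}=\zg_N+(\hat\zb_{or}-\zg_N)$: the deterministic part gives $(\Pi_K-I)[\ZZ(x_0)^\top\zg_N(\cdot)]$, which agrees with $\Pi_K Q(x_0;\cdot)-Q(x_0;\cdot)$ up to the model approximation $\ZZ(x_0)^\top\zg_N-Q(x_0;\cdot)$; the stochastic part is $o_P(\|\ZZ(x_0)\|N^{-1/2})$ by the oracle process bound under $c_m^2(\zg_N)=o(N^{-1/2})$. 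The hardest step, and the one where (L) is indispensable, is the expectation bound on $\ZZ(x_0)^\top R^s(\tau)$ with the correct $\|\ZZ(x_0)\|$ scaling; without this structural refinement the remainder would inflate by a factor $\xi_m\asymp m^{1/2}$ and the oracle rule would collapse at much smaller $S$.
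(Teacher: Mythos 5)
Your proposal follows essentially the same route as the paper: a refined sub-sample Bahadur representation centered at $\zg_N$ whose remainder carries both almost-sure and expectation bounds (Theorems~\ref{th:bahsimple_bspl} and~\ref{th:aggbah_loc}), condition~\hyperref[L]{(L)} entering through $\tilde{\mathcal E}(\ZZ(x_0),\zg_N)=O(1)$ and the exponential off-diagonal decay of $\tilde J_m^{-1}$ to avoid an extra $\xi_m$ factor, a mean/fluctuation split of the averaged remainder with a concentration bound across sub-samples, a union bound over $\Tc_K$ under $K\ll N^2$, and for $\widehat\zb$ the $\ell^\infty$-boundedness of $\Pi_K$ plus a deterministic/stochastic split of $\hat\zb_{or}$. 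The only cosmetic difference is that the paper controls the fluctuation of the averaged remainder by truncation plus McDiarmid's inequality rather than a Rosenthal/Bernstein argument; the two are interchangeable here.

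One step you dismiss too quickly: showing that $(\Pi_K-I)$ applied to the stochastic part of $\hat\zb_{or}-\zg_N$ is $o_P(\|\ZZ(x_0)\|N^{-1/2})$ does \emph{not} follow from boundedness of $\Pi_K$ plus a rate bound on that stochastic part (boundedness alone gives only $O_P$, not $o_P$). What is actually needed is the Jackson-type inequality bounding $\|(\Pi_K-I)f\|_\infty$ by the modulus of continuity of $f$ on a mesh of width $O(G^{-1})$ (Lemma~5.1 of \cite{huang2003} and Theorem~6.27 of \cite{schumaker:81}), combined with asymptotic equicontinuity of the process $\ZZ(x_0)^\top\bU_N(\cdot)$ (Lemma~\ref{lem:equicont}); the paper routes exactly through these two facts in the proof of~\eqref{eq:genubpr}, which it then reuses for Theorem~\ref{th:locub}.
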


The strategy for proving Theorem \ref{th:locub} is similar to that for Theorem \ref{th:linub}, the difference is that we now apply an aggregated Bahadur representation which makes explicit use of the local basis structure of $\ZZ$ (Section \ref{sec:ABRloc}). 


Similarly to Theorem~\ref{th:linub}, Theorem~\ref{th:locub} only provides upper bounds on the differences between $\overline{\zb}, \widehat{\zb}$ and $\hat\zb_{or}$. As in the setting of fixed-dimensional linear models, this result can be complemented by a corresponding lower bound which we state next.

\begin{theo}\label{th:loclob}
For any $\tau$ in $\Tc$ there exists a sequence of distributions of $(Y,X)$ and sequence of transformations $\ZZ$ such that assumptions~\hyperref[A1]{(A1)} -- \hyperref[A3]{(A3)} and \hyperref[L]{(L)} hold, that $S \xi_m^4 (\log N)^{10} = o(N), c_m^2(\zg_N) = o(N^{-1/2})$ and there exists a $C>0$ with
\begin{equation}\label{eq:loclobpo}
\limsup_{N \to \infty} P\Big(|\ZZ(x_0)^\top\overline{\zb}(\tau) - \ZZ(x_0)^\top\hat\zb_{or}(\tau)| \geq \frac{CS\xi_m }{N} \Big) > 0. 
\end{equation} 
Moreover for any $c, \eta > 0$ there exists a sequence of distributions of $(Y,X)$ and sequence of transformations $\ZZ$ satisfying the above conditions and an $x_0 \in \Xc$ with $\tau \mapsto Q(x_0;\tau) \in \Lambda_c^{\eta}(\Tc)$ such that
\begin{equation}\label{eq:loclobpr}
\limsup_{N \to \infty} P\Big(\sup_{\tau \in \Tc}|\ZZ(x_0)^\top\widehat{\zb}(\tau) - \ZZ(x_0)^\top\hat\zb_{or}(\tau)| \geq \frac{CS\xi_m}{N} + CG^{-\eta}\Big) > 0. 
\end{equation} 
\end{theo}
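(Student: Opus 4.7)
The plan is to mirror the argument for Theorem~\ref{th:lilob}, adapting it to the growing-dimension local-basis setting: for each $\tau\in\Tc$ I would exhibit a sequence of distributions of $(Y,X)$ together with a B-spline transformation $\ZZ$ satisfying \hyperref[A1]{(A1)}--\hyperref[A3]{(A3)}, \hyperref[L]{(L)} and the scaling conditions in the statement, for which the divide-and-conquer bias does not average away. The goal is to produce a non-vanishing, signed deterministic component of $\ZZ(x_0)^\top(\overline\zb(\tau)-\hat\zb_{or}(\tau))$ of exact order $\xi_m S/N$ originating from the non-linear structure of the quantile loss.

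The starting point is a second-order Bahadur representation of each local estimator,
\begin{align*}
\hat\zb^s(\tau)-\zg_N(\tau) = \tilde J_m^{-1}(\zg_N(\tau))\Bigl(\tfrac{1}{n}\sum_{i=1}^n \psi_{\tau,i}^s + R_n^s(\tau)\Bigr) + o_P(n^{-1}),
\end{align*}
where $\psi_{\tau,i}^s := (\tau-\IF\{Y_{is}\le\ZZ_{is}^\top\zg_N(\tau)\})\ZZ_{is}$ is the quantile score and $R_n^s(\tau)$ is a quadratic-in-score remainder with $\E[R_n^s(\tau)]$ of exact order $1/n$, driven by the curvature of $F_{Y|X}$ at $Q(X;\tau)$. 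Since $\frac{1}{S}\sum_s\frac{1}{n}\sum_i(\cdot) = \frac{1}{N}\sum_{i,s}(\cdot)$, the linear score terms in $\overline\zb(\tau)$ and $\hat\zb_{or}(\tau)$ cancel exactly, and what survives is the difference of quadratic remainders, whose expectation is of order $1/n-1/N\asymp S/N$. Projecting on $\ZZ(x_0)$ and using \hyperref[L]{(L)} (so the rows of $\tilde J_m^{-1}(\zg_N(\tau))$ decay exponentially off the diagonal, by Lemma~6.3 of \cite{zsw98}) preserves this order, producing $|\E[\ZZ(x_0)^\top(\overline\zb(\tau)-\hat\zb_{or}(\tau))]|\asymp \xi_m S/N$ once $x_0$ is chosen with $\|\ZZ(x_0)\|\asymp\xi_m$.

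For the explicit construction I would follow Example~\ref{ex:spline}: take $\ZZ$ to be a uniformly normalized univariate B-spline basis on $[0,1]$ (so $\xi_m\asymp\sqrt m$ and \hyperref[A1]{(A1)}, \hyperref[L]{(L)} hold automatically) with $m$ and $S$ chosen to satisfy $S\xi_m^4(\log N)^{10}=o(N)$, and let $X$ be uniform on $[0,1]$. For the conditional law I would use a location-scale model $Y=g(X)+\sigma(X)\varepsilon$ with $\varepsilon$ independent of $X$, $\varepsilon$ having a smooth density whose derivative at $F_\varepsilon^{-1}(\tau)$ is non-zero, and $g,\sigma$ smooth; this ensures \hyperref[A2]{(A2)}--\hyperref[A3]{(A3)}, produces $Q(x;\tau)=g(x)+\sigma(x)F_\varepsilon^{-1}(\tau)$ with non-trivial curvature, and keeps $c_m(\zg_N)=o(N^{-1/4})$ by standard spline approximation provided $g,\sigma$ are sufficiently smooth. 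I would take $x_0$ at the peak of an interior basis function so $\|\ZZ(x_0)\|\asymp\xi_m$ and so $\ZZ(x_0)$ aligns with the bias direction of $R_n^s(\tau)$. For~\eqref{eq:loclobpr} I would additionally choose $g$ (hence $Q(x_0;\cdot)$) in $\Lambda_c^\eta(\Tc)$ saturating the Jackson-type lower bound, so that $\sup_{\tau\in\Tc}|(\Pi_KQ(x_0;\cdot))(\tau)-Q(x_0;\tau)|\asymp G^{-\eta}$, and adjust signs so the approximation bias does not cancel the divide-and-conquer bias.

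To pass from the expectation to the probability statement~\eqref{eq:loclobpo}, I would bound the variance of $\ZZ(x_0)^\top(\overline\zb(\tau)-\hat\zb_{or}(\tau))$ by controlling the second moment of the surviving quadratic remainder: because the $O_P(n^{-1/2})$ stochastic fluctuation of the linear score cancels in the difference, the standard deviation of what remains can be shown to be of smaller order than the mean $\xi_m S/N$, so that Chebyshev yields $\limsup_N P(\cdot\ge C\xi_m S/N)>0$ (for bounded $S$ one can alternatively argue via weak convergence of the rescaled remainder to a law with non-zero mean). The main obstacle is sharpness in the growing-dimension regime: one needs a genuine \emph{expectation}-level expansion of $R_n^s(\tau)$ with a computable leading coefficient that remains of the right order after the $\tilde J_m^{-1}$ multiplication and the $\ZZ(x_0)^\top$ projection despite $m\to\infty$. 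A secondary obstacle is uniform verification of $\sup_{x\in\Xc}\tilde{\mathcal{E}}(\ZZ(x),\zg_N)=O(1)$ along the constructed sequence, simultaneously with saturation of the approximation-theoretic lower bound used for~\eqref{eq:loclobpr}.
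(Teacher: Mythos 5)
Your proposal takes a genuinely different route from the paper, and unfortunately it leaves the hardest step conjectured rather than proved. The paper does \emph{not} attempt a direct second-order-Bahadur bias computation in a smooth location-scale model. Instead, Theorem~\ref{th:loclob} is derived by contradiction in Section~\ref{sec:prlob} from the necessity statements in Corollaries~\ref{th:orloc} and~\ref{th:orlocpr}, which in turn rest on the very specific construction of Section~\ref{sec:specconst}: $X$ is a point mass on $\{\eee_j\sqrt{m}\}_{j=1}^m$, $Y$ is independent of $X$ with the parabolic quantile function $Q_{a,b}(\tau)=a\tau^2+b\tau$, and $\ZZ(x)=x$. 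This collapses the quantile-regression estimator on each sub-sample to a single order statistic among the $Y_i$'s with $X_i=\eee_j\sqrt{m}$, whose (conditional on $n_j$) distribution is Beta; the bias of $\E[m^{1/2}\tilde\beta_j(\tau)]$ is then computed exactly (equations \eqref{`EQ:TILBET1}--\eqref{`EQ:HATBET2} and Lemmas~\ref{lem:bin}--\ref{lem:localize}) and shown to be of exact order $m/n$, and $c_m(\zg_N)=0$ and \hyperref[L]{(L)} hold trivially because $\tilde J_m$ is diagonal. That explicitness is what makes the lower bound go through; your location-scale/B-spline construction would require exactly the ``expectation-level expansion of $R_n^s(\tau)$ with a computable leading coefficient'' that you yourself flag as the main obstacle, and you give no argument that this coefficient is non-zero and aligned with $\ZZ(x_0)^\top\tilde J_m^{-1}$ as $m\to\infty$. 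So as it stands, the central step of your plan is missing, not merely technical.

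Two further concrete issues. First, in your plan for \eqref{eq:loclobpr} you propose to ``choose $g$ (hence $Q(x_0;\cdot)$) in $\Lambda_c^\eta(\Tc)$ saturating the Jackson-type lower bound,'' but in $Q(x;\tau)=g(x)+\sigma(x)F_\varepsilon^{-1}(\tau)$ the $\tau$-regularity of $Q(x_0;\cdot)$ is governed by $F_\varepsilon^{-1}$, not by $g$; moreover saturating the Jackson bound along a whole sequence $G_N$ requires a careful knot-aware construction (this is exactly what Section~\ref{sec:sharpG} does, placing bumps inside knot intervals $I_k$ at scale $G_{N_k}^{-1}$), and a generic ``$F_\varepsilon^{-1}\in\Lambda_c^\eta$'' will typically give a much smaller interpolation error than $G^{-\eta}$. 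Second, your step from the expectation bound to the probability statement via Chebyshev requires a variance bound for the surviving remainder that is $o((\xi_m S/N)^2)$ uniformly in the growing-dimension regime; you state this as plausible but do not indicate how the stochastic part of $R_n^s(\tau)$ would be controlled after the $\tilde J_m^{-1}$ multiplication and the $\ZZ(x_0)^\top$ projection. The paper avoids this entirely: in Section~\ref{sec:failorpo} the moments are computed exactly for the order-statistic estimator, and the contradiction is then obtained through uniform integrability and Theorem~25.12 in \cite{B95} rather than through a direct Chebyshev argument.
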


Compared to Section~\ref{sec:lin}, we obtain an interesting insight. The sufficient and necessary conditions on $S$ explicitly depend on $m$ (note that under \hyperref[A1]{(A1)} -- \hyperref[A3]{(A3)} we have $\xi_m \gtrsim m$) and become more restrictive as $m$ increases. This shows that there is a fundamental limitation on possible computational savings that depends on the model complexity. In other words more complex models (as measured by $m$) allow for less splitting and require larger sub-samples, resulting in a larger computational burden. 

We conclude this section by providing sufficient and necessary conditions for oracle rules in the local basis expansion setting. To state those results, denote by $\Ps_L(M,\bar f,\overline{f'},f_{min},R)$ the collection of all sequences $P_N$ of distributions of $(X,Y)$ on $\R^{d+1}$ and fixed $\ZZ$ with the following properties: \hyperref[A1]{(A1)}-\hyperref[A3]{(A3)} hold with constants $M,\bar{f},\overline{f'}<\infty,f_{min}>0$, \hyperref[L]{(L)} holds for some $r < R$, $\xi_m^4 (\log N)^6 = o(N), c_m^2(\zg_N) = o(N^{-1/2})$. The conditions in $\Ps_L(M,\bar f,\overline{f'},f_{min},R)$ ensure the weak convergence of the oracle estimator $\hat\zb_{or}(\tau)$, see \cite{ChaVolChe2016}. 

The following condition characterizes the upper bound on $S$ which is sufficient to ensure the oracle rule for $\overline{\zb}(\tau)$. 
\begin{enumerate}
	\item[(L1)]\label{L1} Assume that 
	\[
	S = o\bigg(\frac{N}{m\xi_m^2\log N} \wedge \frac{N}{\xi_m^2(\log N)^{10}} \wedge \frac{N^{1/2}}{\xi_m\log N} \wedge \frac{N^{1/2}\|\ZZ(x_0)\|}{\xi_m^2 (\log N)^2} \bigg).
	\] 
\end{enumerate}

For specific examples, Condition \hyperref[L1]{(L1)} can be simplified. For instance, in the setting of Example \ref{ex:spline}, we can reduce \hyperref[L1]{(L1)} to the form 
$$S = o(N^{1/2} (\log N)^{-2} m^{-1/2} \wedge N (\log N)^{-10} m^{-2}).$$

We now present the sufficient and necessary conditions for the oracle rule of $\overline\zb(\tau)$ under the Condition \hyperref[L]{(L)} for $\ZZ$. 

\begin{cor}[Oracle Rule for $\overline{\zb}(\tau)$] \label{th:orloc} 
Assume that \hyperref[L1]{(L1)} holds and data are generated from $P_N$ with $(P_N,\ZZ) \in \Ps_L(M,\bar f,\overline{f'},f_{min},R)$. Then the pooled estimator $\overline{\zb}(\tau)$ defined in \eqref{eq:zbbar} satisfies for any fixed $\tau \in \Tc, x_0 \in\Xc$, 
\begin{align} \label{eq:weakloc}
\frac{\sqrt{N}\ZZ(x_0)^\top(\overline \zb(\tau) - {\zg_N(\tau)})}{(\ZZ(x_0)^\top J_m(\tau)^{-1}\E[\ZZ\ZZ^\top]J_m(\tau)^{-1}\ZZ(x_0) )^{1/2}} \weak \Nc\big(0,\tau(1-\tau)\big),
\end{align}
where $J_m(\tau)$ is defined in Corollary~\ref{th:orlipo}. This matches the limiting behaviour of $\hat \zb_{or}$. If $S\gtrsim N^{1/2}\xi_m^{-1}$ the weak convergence result (\ref{eq:weakloc}) fails for some $(P_N,\ZZ) \in \Ps_L(1,\bar f,\overline{f'},f_{min},R), x_0 \in \Xc$.  
\end{cor}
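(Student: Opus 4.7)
The approach is to split Corollary~\ref{th:orloc} into sufficiency and necessity. Sufficiency will follow by combining the upper bound in Theorem~\ref{th:locub} with the oracle central limit theorem for $\hat\zb_{or}(\tau)$ established for $(P_N,\ZZ) \in \Ps_L$ in \cite{ChaVolChe2016}, while necessity will be a direct consequence of the lower bound in Theorem~\ref{th:loclob}.

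For the sufficiency part, I would first observe that under \hyperref[A1]{(A1)}--\hyperref[A3]{(A3)} and \hyperref[L]{(L)}, the matrix $J_m(\tau)^{-1}\E[\ZZ\ZZ^\top]J_m(\tau)^{-1}$ has eigenvalues bounded above and below by absolute constants uniformly in $N$, so the normalizer $(\ZZ(x_0)^\top J_m(\tau)^{-1}\E[\ZZ\ZZ^\top]J_m(\tau)^{-1}\ZZ(x_0))^{1/2}$ in \eqref{eq:weakloc} is comparable to $\|\ZZ(x_0)\|$. It therefore suffices to prove
\[
\ZZ(x_0)^\top(\overline{\zb}(\tau) - \hat\zb_{or}(\tau)) = o_P(\|\ZZ(x_0)\|N^{-1/2})
\]
and then conclude by Slutsky combined with the oracle CLT. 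The key step is to verify, term by term, that under \hyperref[L1]{(L1)} together with the defining conditions of $\Ps_L$ (in particular $c_m^2(\zg_N) = o(N^{-1/2})$ and $\xi_m^4(\log N)^6 = o(N)$), each $O_P$-summand on the right-hand side of the bound in Theorem~\ref{th:locub} is $o_P(\|\ZZ(x_0)\|N^{-1/2})$. The four conditions in \hyperref[L1]{(L1)} are placed in one-to-one correspondence with the four dominant summands: $S = o(N/(m\xi_m^2\log N))$ controls the contribution from the local-basis bookkeeping, $S = o(N/(\xi_m^2(\log N)^{10}))$ kills the quartic-root term $\|\ZZ(x_0)\|N^{-1/2}(S\xi_m^2(\log N)^{10}/N)^{1/4}$, $S = o(N^{1/2}/(\xi_m\log N))$ handles $\|\ZZ(x_0)\|S\xi_m\log N/N$, and $S = o(N^{1/2}\|\ZZ(x_0)\|/(\xi_m^2(\log N)^2))$ handles the principal term $S\xi_m^2(\log N)^2/N$ inside the second bracket.

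For the necessity claim, suppose $S \gtrsim N^{1/2}\xi_m^{-1}$, so that $CS\xi_m/N \gtrsim N^{-1/2}$. Theorem~\ref{th:loclob} then produces a sequence $(P_N,\ZZ) \in \Ps_L(1,\bar f,\overline{f'},f_{min},R)$ and an $x_0 \in \Xc$ for which
\[
\limsup_{N\to\infty} P\Big(|\ZZ(x_0)^\top\overline{\zb}(\tau) - \ZZ(x_0)^\top\hat\zb_{or}(\tau)| \geq CS\xi_m/N\Big) > 0.
\]
Since $\hat\zb_{or}$ satisfies the oracle CLT at scale of order $\|\ZZ(x_0)\|N^{-1/2}$ with a non-degenerate Gaussian limit, and since the normalizer in \eqref{eq:weakloc} is of order $\|\ZZ(x_0)\|$, a persistent additive discrepancy of order at least $N^{-1/2}$ between $\ZZ(x_0)^\top\overline{\zb}$ and $\ZZ(x_0)^\top\hat\zb_{or}$ prevents the normalized statistic from converging to $\Nc(0,\tau(1-\tau))$, yielding the failure of \eqref{eq:weakloc}.

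The hard part will be the bookkeeping in the sufficiency step: the bound in Theorem~\ref{th:locub} contains the multiplicative factor $(1+\log N/S^{1/2})$, so verifying that each expanded term still vanishes relative to $\|\ZZ(x_0)\|N^{-1/2}$ requires splitting into the two regimes $S \gtrsim (\log N)^2$ and $S \lesssim (\log N)^2$ and, in the latter regime, carefully using $c_m^2(\zg_N) = o(N^{-1/2})$ together with $\xi_m^4(\log N)^6 = o(N)$ to kill the cross terms $c_m^2\log N/S^{1/2}$ and $\xi_m^2(\log N)^3 S^{1/2}/N$. The necessity argument, by contrast, is essentially immediate once Theorem~\ref{th:loclob} is in hand.
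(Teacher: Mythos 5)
Your sufficiency argument is essentially correct and follows the paper's route: you reduce to showing $\ZZ(x_0)^\top(\overline{\zb}(\tau) - \hat\zb_{or}(\tau)) = o_P(\|\ZZ(x_0)\|N^{-1/2})$ via Theorem~\ref{th:locub} and then invoke the oracle CLT by Slutsky. The paper does the same thing, realising the CLT for the leading Bahadur term through a Lindeberg argument; your term-by-term bookkeeping of the bound in~\eqref{eq:locubpo} against the four rates in~\hyperref[L1]{(L1)} is exactly the ``straightforward calculations'' alluded to in Section~\ref{sec:proof_prop_orloc1}.

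However, your necessity argument has a genuine gap, and is also circular in the architecture of this paper. First, the gap: knowing that $\limsup P\big(|\ZZ(x_0)^\top\overline{\zb}(\tau) - \ZZ(x_0)^\top\hat\zb_{or}(\tau)| \geq CS\xi_m/N\big) > 0$ together with $CS\xi_m/N \gtrsim N^{-1/2}\|\ZZ(x_0)\|$ does \emph{not} let you conclude that the normalized $\overline{\zb}$-statistic fails to converge to $\Nc(0,\tau(1-\tau))$. If $V_N$ is the normalized $\hat\zb_{or}$-statistic and $W_N$ the normalized discrepancy, you need $V_N + W_N \not\weak \Nc(0,\tau(1-\tau))$, but it is entirely consistent with $V_N\weak \Nc(0,\tau(1-\tau))$ and $W_N$ non-negligible to have $V_N + W_N\weak \Nc(0,\tau(1-\tau))$ (take $V_N = -Z$, $W_N = 2Z$); and here $W_N$ is not independent of $V_N$, so nothing forces the discrepancy to blow up the limit. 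The paper's proof (Section~\ref{sec:failorpo}) avoids this by working with the \emph{bias}: it constructs a model on discrete design points where the subsample estimator is a sample order statistic, shows $|\E[\overline\beta_1(\tau)] - \beta_1(\tau)| \gtrsim S/N$ and $\Var(\sqrt{N}\overline\beta_1(\tau)) \lesssim 1$, and then derives a contradiction via a uniform integrability argument (Theorem 25.12 of Billingsley) — because an expectation of order $N^{-1/2}$ or larger cannot survive weak convergence to a centred Gaussian in the presence of bounded second moments. A probability statement about the discrepancy is simply a weaker input than a bias statement, and it is not enough. Second, the circularity: in Section~\ref{sec:prlob} the paper proves Theorem~\ref{th:loclob} \emph{from} the necessity claim in Corollary~\ref{th:orloc}, not the other way around; so appealing to Theorem~\ref{th:loclob} to establish that necessity claim reverses the proof dependency and is circular within this paper. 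If you want a self-contained argument you must reproduce the constructive model and bias/uniform-integrability reasoning of Sections~\ref{sec:specconst}--\ref{sec:failorpo}.
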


To compare the necessary and sufficient conditions in Corollary~\ref{th:orloc}, assume for simplicity that $m \gg N^{\alpha}$ for some $\alpha>0$ and $\|\ZZ(x_0)\| \asymp \xi_m$ (this is, for instance, the case for univariate splines as discussed in Example~\ref{ex:spline}). Since under \hyperref[A1]{(A1)}-\hyperref[A3]{(A3)} we have $m^{1/2} \lesssim \xi_m$, it follows that \hyperref[L1]{(L1)} holds provided that $S = o(N^{1/2}\xi_m^{-1}(\log N)^{-2})$ and the necessary and sufficient conditions match up to a factor of $(\log N)^2$.


%
%


%


Next we discuss sufficient conditions for the process oracle rule.

\begin{cor}
\label{th:orlocpr}
Consider an arbitrary vector $x_0 \in \Xc$. Assume that data are generated from $P_N$ with $(P_N,\ZZ) \in \Ps_L(M,\bar f,\overline{f'},f_{min},R)$, that \hyperref[L1]{(L1)} holds, that $\tau \mapsto Q(x_0;\tau) \in \Lambda_c^{\eta}(\Tc)$, $r_\tau\geq \eta$, $\sup_{\tau \in \Tc} |\ZZ(x_0)^\top\zg_N(\tau) - Q(x_0;\tau)| = o(\|\ZZ(x_0)\|N^{-1/2})$, that $N^2 \gg K \gg G \gg N^{1/(2\eta)}\|\ZZ(x_0)\|^{-1/\eta}$, $c_m^2(\zg_N) = o(N^{-1/2})$ and that the limit 
\begin{align}\label{eq:covloc}
H_{x_0}(\tau_1,\tau_2) := \lim_{N \to \infty} \frac{\ZZ(x_0)^\top J_m^{-1}(\tau_1)\E[\ZZ\ZZ^\top] J_m^{-1}(\tau_2)\ZZ(x_0) (\tau_1 \wedge \tau_2-\tau_1 \tau_2)}{\|\ZZ(x_0)\|^2} 
\end{align}
exists and is non-zero for any $\tau_1,\tau_2 \in \Tc$, where $J_m(\tau)$ is defined in the statement of Corollary~\ref{th:orlipo}. 

1. The projection estimator $\hat\zb(\tau)$ defined in \eqref{eq:hatbeta} satisfies
\begin{align}
\frac{\sqrt{N}}{\|\ZZ(x_0)\|}\big(\ZZ(x_0)^\top\widehat{\zb}(\cdot) - Q(x_0;\cdot)\big) \weak \Gb_{x_0}(\cdot) \mbox{ in } \ell^\infty(\Tc),\label{eq:prloc} 
\end{align}
where $\Gb_{x_0}$ is a centered Gaussian process with $\E[\Gb_{x_0}(\tau)\Gb_{x_0}(\tau')] = H_{x_0}(\tau,\tau')$. The same holds for the oracle estimator $\hat\zb_{or}(\tau)$.

 If $G \lesssim N^{1/(2\eta)}\|\ZZ(x_0)\|^{-1/\eta}$ or $S \gtrsim N^{1/2}\xi_m^{-1}$ the weak convergence in \eqref{eq:prloc} fails for some $(P_N,\ZZ)$ which satisfies the above conditions.

2. 
For $\hat F_{Y|X}(\cdot|x_0)$ defined in \eqref{eq:Fhat} and $\Gb$ defined above we have
\begin{align*}
	\frac{\sqrt{N}}{\|\ZZ(x_0)\|}\big(\hat F_{Y|X}(\cdot|x_0) - F_{Y|X}&(\cdot|x_0)\big) \weak -f_{Y|X}(\cdot|x_0)\Gb_{x_0}\big(F_{Y|X}(\cdot|x_0)\big)\\
	 &\mbox{ in } \ell^\infty\big((Q(x_0;\tau_L),Q(x_0;\tau_U))\big).
\end{align*}
This matches the process convergence for $\hat F_{Y|X}^{or}(\cdot|x_0)$.
\end{cor}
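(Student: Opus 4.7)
The plan is to reduce the whole statement to the already-established process convergence of the oracle estimator, via Theorem~\ref{th:locub}. For Part~1, write
\[
\frac{\sqrt{N}}{\|\ZZ(x_0)\|}\big(\ZZ(x_0)^\top\widehat{\zb}(\tau)-Q(x_0;\tau)\big)
= \frac{\sqrt{N}}{\|\ZZ(x_0)\|}\big(\ZZ(x_0)^\top\widehat{\zb}(\tau)-\ZZ(x_0)^\top\hat\zb_{or}(\tau)\big)
+ \frac{\sqrt{N}}{\|\ZZ(x_0)\|}\big(\ZZ(x_0)^\top\hat\zb_{or}(\tau)-Q(x_0;\tau)\big),
\]
and show the first term converges to zero uniformly in probability. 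The second term, after rescaling, converges weakly in $\ell^\infty(\Tc)$ to $\Gb_{x_0}$ by the process result of \cite{ChaVolChe2016} applied under $(P_N,\ZZ)\in\Ps_L(M,\bar f,\overline{f'},f_{\min},R)$, using that the rescaled covariance is $H_{x_0}(\tau,\tau')$ by assumption \eqref{eq:covloc}. Slutsky's lemma then delivers \eqref{eq:prloc}.

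The core of the sufficiency proof is thus a term-by-term verification that the bound of Theorem~\ref{th:locub} is $o_P(\|\ZZ(x_0)\|N^{-1/2})$. First, I would bound the $\sup_{\tau\in\Tc_K}$ piece using \eqref{eq:locubpo}: each of the four right-hand-side terms reduces, under Condition~\hyperref[L1]{(L1)}, to $o_P(\|\ZZ(x_0)\|N^{-1/2})$ — this is exactly how the four conditions in \hyperref[L1]{(L1)} were engineered (the $S\xi_m\log N/N$ term controls $\|\ZZ(x_0)\|\xi_m S\log N/N$, the $N/(\xi_m^2(\log N)^{10})$ term handles the quartic root term, the $S^{1/2}\xi_m^2(\log N)^3/N$ contribution comes from the $(1+\log N/S^{1/2})$ factor times $S\xi_m^2(\log N)^2/N$, and $c_m^2(\zg_N)=o(N^{-1/2})$ handles the specification piece). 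Then I would control the two extra terms in the $\sup_{\tau\in\Tc}$ bound: the projection bias $\sup_{\tau\in\Tc}|(\Pi_K Q(x_0;\cdot))(\tau)-Q(x_0;\tau)|$ is $O(G^{-\eta})$ by standard polynomial-spline approximation theory (see e.g.\ Theorem~6.27 of \cite{schumaker:81}) because $\tau\mapsto Q(x_0;\tau)\in\Lambda_c^\eta(\Tc)$ and $r_\tau\ge\eta$, and the hypothesis $G\gg N^{1/(2\eta)}\|\ZZ(x_0)\|^{-1/\eta}$ gives $G^{-\eta}=o(\|\ZZ(x_0)\|N^{-1/2})$; the linearization error $\sup_{\tau\in\Tc}|\ZZ(x_0)^\top\zg_N(\tau)-Q(x_0;\tau)|$ is directly assumed to be $o(\|\ZZ(x_0)\|N^{-1/2})$.

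For the necessity claim, I would exhibit counterexamples via Theorem~\ref{th:loclob}. If $G\lesssim N^{1/(2\eta)}\|\ZZ(x_0)\|^{-1/\eta}$, then $G^{-\eta}\gtrsim\|\ZZ(x_0)\|N^{-1/2}$, and \eqref{eq:loclobpr} produces a $(P_N,\ZZ)$ for which $\sup_\tau|\ZZ(x_0)^\top(\widehat\zb(\tau)-\hat\zb_{or}(\tau))|$ fails to be $o_P(\|\ZZ(x_0)\|N^{-1/2})$, ruling out weak convergence to the same Gaussian limit as the oracle estimator. If $S\gtrsim N^{1/2}\xi_m^{-1}$, \eqref{eq:loclobpo} yields a distribution and a fixed $\tau$ where $|\ZZ(x_0)^\top(\overline\zb(\tau)-\hat\zb_{or}(\tau))|\gtrsim S\xi_m/N\gtrsim N^{-1/2}$; taking $K$ large so that $\widehat\zb(\tau)$ is close to $\overline\zb(\tau)$ at the grid points, this discrepancy persists after projection, again breaking the oracle rule for the process.

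Part~2 follows from Part~1 by the functional delta method. The map $g\mapsto \tau_L+\int_{\tau_L}^{\tau_U}\IF\{g(\tau)<y\}\,d\tau$, viewed as a functional on an appropriate subset of $\ell^\infty(\Tc)$ into $\ell^\infty((Q(x_0;\tau_L),Q(x_0;\tau_U)))$, is Hadamard differentiable at $g=Q(x_0;\cdot)$ with derivative $h\mapsto -f_{Y|X}(\cdot|x_0)\,h(F_{Y|X}(\cdot|x_0))$; this is the key calculation of \cite{chfega2010} (and used in Corollary~\ref{cor:fhatli} above). Applying the delta method to \eqref{eq:prloc} with this derivative yields the stated weak limit for $\hat F_{Y|X}(\cdot|x_0)$, and the same derivation applied to the oracle process gives identical behavior for $\hat F_{Y|X}^{or}(\cdot|x_0)$. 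The main obstacle throughout is the careful bookkeeping in the second paragraph: one must track how each of the four sub-conditions in \hyperref[L1]{(L1)} interacts with the corresponding term in Theorem~\ref{th:locub}, and in particular ensure that the relatively subtle $(1+\log N/S^{1/2})(c_m^2+S\xi_m^2(\log N)^2/N)$ term is absorbed — this is where the thresholds $N^{1/2}\xi_m^{-1}$ and $N\xi_m^{-2}$ in \hyperref[L1]{(L1)} are both essential.
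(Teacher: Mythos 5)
Your sufficiency argument for Part~1 and the delta-method derivation of Part~2 follow the paper's route: decompose through the oracle estimator, control $\sup_{\tau\in\Tc_K}|\ZZ(x_0)^\top(\overline\zb-\hat\zb_{or})|$ via Theorem~\ref{th:locub} under Condition~\hyperref[L1]{(L1)}, bound the projection bias by $O(G^{-\eta})$ via spline approximation (the paper pairs Schumaker's Theorem~6.25/6.27 with Lemma~5.1 of \cite{huang2003}, exactly as in the proof of \eqref{eq:bdh_N}), use the assumed negligibility of $\sup_\tau|\ZZ(x_0)^\top\zg_N(\tau)-Q(x_0;\tau)|$, invoke the oracle process limit from \cite{ChaVolChe2016}, and apply Hadamard differentiability of $g\mapsto\tau_L+\int_{\tau_L}^{\tau_U}\IF\{g(\tau)<y\}\,d\tau$. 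This portion is in order.

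The necessity argument has a genuine gap. First, you invoke Theorem~\ref{th:loclob}, but in the paper that theorem is itself \emph{proved from} the necessity statements of Corollaries~\ref{th:orloc} and~\ref{th:orlocpr}: Section~\ref{sec:prlob} derives \eqref{eq:loclobpo} and \eqref{eq:loclobpr} by contradiction with those necessity claims. Using Theorem~\ref{th:loclob} here inverts the logical order and is circular. Second, and independently, even granting Theorem~\ref{th:loclob}, the inference you draw does not follow: knowing that $\sup_\tau|\ZZ(x_0)^\top(\hat\zb-\hat\zb_{or})|\gtrsim G^{-\eta}$ with probability bounded away from zero does not by itself preclude $\hat\zb$ and $\hat\zb_{or}$ from sharing a weak limit (take $X_n\weak Z$ and $Y_n=-X_n$ with $Z$ symmetric; their difference $-2X_n$ does not vanish). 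The paper's actual necessity proofs (Sections~\ref{sec:sharpG} and~\ref{sec:failorpr}) are direct constructions producing a \emph{deterministic}, non-vanishing bias. In Section~\ref{sec:sharpG} the projection $\hat\beta_j(\cdot)$ lies in the piecewise-polynomial space $\Ps\Ps_{r_\tau}(G_N)$, the constructed $Q$ satisfies $\inf_{p\in\Ps\Ps_{r_\tau}(G_N)}\sup_\tau|p(\tau)-Q(\tau)|\geq\varepsilon G_N^{-\eta}$, and $G_N\lesssim N^{1/(2\eta)}$ yields an almost-sure lower bound of order $N^{-1/2}$ on $\sup_\tau|\hat\beta_j(\tau)-\beta_j(\tau)|$, incompatible with $\sup_\tau|\Gb(\tau)|$ having zero left support point; the $S$-counterexample in Section~\ref{sec:failorpr} exhibits a deterministic mean bias and exploits uniform integrability. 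Your closing sketch, ``taking $K$ large so that $\widehat\zb(\tau)$ is close to $\overline\zb(\tau)$ at the grid points,'' also needs justification: the spline projection need not interpolate $\overline\zb(\tau_k)$ exactly, and the claimed persistence of a pointwise discrepancy at a fixed $\tau$ after projecting is not automatic.
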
 

The proof of the sufficient conditions in Corollary~\ref{th:orlocpr} is presented in Section \ref{sec:pforlocpr}, and the collapse of the weak convergence \eqref{eq:prloc} is shown in Section \ref{sec:pfnec}. Similar to the discussion after Corollary~\ref{th:orlipr}, the process oracle rule does not place additional restrictions on the number of sub-samples $S$ besides \hyperref[L1]{(L1)}. However, the process oracle rule requires additional assumptions on the quantile grid $\Tc_K$. An interesting observation is that $G \gg (N/\|\ZZ(x_0)\|^2)^{1/(2\eta)}$ in Theorem \ref{eq:covloc} can be weaker than $G \gg N^{1/(2\eta)}$ from Corollary~\ref{th:orlipr}. For instance, this is true in the setting of Example~\ref{ex:spline} where $\|\ZZ(x_0)\| \asymp m^{1/2}$. Consequently, the computational burden is reduced since $K$ can be chosen smaller. The intuition behind this surprising phenomenon is that the convergence rate for the estimator $\ZZ(x_0)^\top\hat\zb_{or}(\tau)$ in nonparametric models is typically slower. Thus, less stringent assumptions are needed to ensure that the bias induced by quantile projection is negligible compared to the convergence rate of $\hat\zb_{or}(\tau)$. 

The sufficient conditions in this section can be extended to cover approximately linear models with increasing dimension that do not satisfy condition \hyperref[L]{(L)}. Technical details are provided in the Appendix, Section~\ref{SEC:GEN}.

\begin{remark}
\rm 
To the best of our knowledge, the only paper that also studies the sharpness of upper bounds for $S$ that guarantee valid inference in a divide and conquer setting with nonparametric regression is \cite{CS15}. However, \cite{CS15} only consider nonparametric \textit{mean} regression in the smoothing spline setup. 
\end{remark}

\section{Practical aspects of inference}\label{SEC:INF}

In the previous section, we derived conditions which guarantee that the divide and conquer estimator $\overline \zb$ and the quantile projection estimator $\widehat \zb$ share the same asymptotic distribution as the 'oracle' estimator $\widehat \zb_{or}$, so that inference based on the asymptotic distribution of $\widehat \zb_{or}$ remains valid. In practice, this result can not be directly utilized since the asymptotic variance of the oracle estimator $\widehat \zb_{or}$ is in general not pivotal. Classical approaches to inference for this estimator are typically based on estimating its asymptotic variance from the data directly, or conducting bootstrap to approximate the asymptotic distribution. 

Estimating the limiting variance requires the choice of a bandwidth parameter, and existing research indicates that classical rules for bandwidth selection need to be adjusted in a divide and conquer setting (see e.g. \cite{BDS16, xu2016}). We discuss related issues for variance estimation in Section~\ref{sec:asyvar}. 

Conducting bootstrap substantially increases the computational burden of any procedure, which is problematic in a massive data setting we are considering here. While recent proposals by \cite{kleiner2014, sengupta2016} provide a way to reduce the computational cost of bootstrap in a big data setting, the approaches described in those papers are not easily combined with the divide and conquer setting which we consider here. 

As an alternative to variance estimation or classical bootstrap approaches, we propose several simple inference procedures which directly utilize the fact that in a divide and conquer setting estimators from sub-samples are available. Those procedures are very easy to implement, and require only a very small amount of computation on the central computer without additional communication costs. Details are provided in section~\ref{sec:subs}. 

\subsection{Inference utilizing results from subsamples}\label{sec:subs}

We begin by discussing inference at a fixed quantile level $\tau$. The key idea in this section is to make direct use of the fact that $S$ estimators $\hat \zb_s(\tau)$ from subsamples are available. Observe that the estimator $\overline\zb(\tau)$ is simply an average of $\hat \zb_1(\tau),...,\hat \zb_S(\tau)$ which can be seen as iid realizations (provided groups are homogeneous) of a random variable with approximately normal distribution. This suggests two simple options

1. Use the sample covariance matrix, say $\hat \Sigma^D$, of $\hat \zb_1(\tau),...,\hat \zb_S(\tau)$ in order to conduct inference on $\overline\zb(\tau)$ or linear functionals thereof such as $\overline Q(x;\tau) := \ZZ(x)^\top \overline\zb(\tau)$. For example, a simple asymptotic level $\alpha$ confidence interval for $Q(x;\tau)$ is given by
\begin{equation}\label{ci:simple}
\big[\ZZ(x_0)^\top \overline{\zb}(\tau) \pm S^{-1/2} (\ZZ(x_0)^\top \hat \Sigma^D \ZZ(x_0))^{1/2} \Phi^{-1}(1-\alpha/2)\big],
\end{equation} 

2. A refined version of the previous approach is to additionally exploit the fact that a suitably scaled version of $\bu_N^\top\hat \zb_s(\tau)$ should be approximately normal since each $\hat \zb_s(\tau)$ is itself an estimator based on sample of iid data. Hence for small $S$ (say $S \leq 30$) more precise confidence intervals can be obtained by using quantiles of the student $t$ distribution (if $\bu_N$ is a vector) or $F$ distribution (if $\bu_N$ is a fixed-dimensional matrix). For example, a modification of the confidence interval in~\eqref{ci:simple} would take the form
\begin{equation}\label{ci:simple_t}
\big[\ZZ(x_0)^\top \overline{\zb}(\tau) \pm S^{-1/2} (\ZZ(x_0)^\top \hat \Sigma^D \ZZ(x_0))^{1/2} t_{S-1,1-\alpha/2}\big]
\end{equation} 
where $t_{S-1,1-\alpha/2}$ denotes the $1-\alpha/2$-quantile of the t-distribution with $S-1$ degrees of freedom. The asymptotic validity of both intervals discussed above is provided in the following theorem. 

\begin{theo} \label{th:simpleinf} Assume that the conditions of either Corollary~\ref{th:orlipo}, Corollary~\ref{th:orloc} or Corollary~\ref{th:orgenpo} hold, that $c_m(\zg_N) = o(\|\ZZ(x_0)\|N^{-1/2})$ and that $S \geq 2$ (S can be fixed). Then the confidence interval~\eqref{ci:simple_t} has asymptotic (for $N \to \infty$) coverage probability $1-\alpha$. 

If additionally $S \to \infty$, the confidence interval given in~\eqref{ci:simple} also has asymptotic coverage probability $1 - \alpha$. 
\end{theo}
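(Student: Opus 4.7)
The plan is to view the vector of subsample estimators $(\hat{\zb}^1(\tau),\dots,\hat{\zb}^S(\tau))$ as an approximate iid sample from a Gaussian location model, and then reduce the construction of $T_N := \sqrt{S}\,\ZZ(x_0)^\top(\overline{\zb}(\tau) - \zb(\tau))/(\ZZ(x_0)^\top \hat{\Sigma}^D \ZZ(x_0))^{1/2}$ to the textbook t-statistic. Concretely, each subsample has size $n = N/S$ drawn from the same distribution $P$, so the $\hat{\zb}^s(\tau)$ are genuinely iid. Corollaries \ref{th:orlipo}, \ref{th:orloc}, or \ref{th:orgenpo} applied to a single subsample (with total sample size $n$) give, under the oracle conditions being verified at the subsample level rather than the full sample level, that
\begin{equation*}
\sqrt{n}\,\frac{\ZZ(x_0)^\top(\hat{\zb}^s(\tau) - \zg_N(\tau))}{\sigma_{N,x_0}} \weak \Nc(0,\tau(1-\tau)),
\end{equation*}
where $\sigma_{N,x_0}^2 := \ZZ(x_0)^\top J_m(\tau)^{-1}\E[\ZZ\ZZ^\top]J_m(\tau)^{-1}\ZZ(x_0)$ and $\zg_N \equiv \zb$ in the linear case. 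One should check that the sub-sample-size version of the conditions in those Corollaries is implied by the full-sample versions assumed here; this is straightforward for fixed $S$, and for $S \to \infty$ follows from the bounds on $S$ already imposed.

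By independence across $s$ the $S$ marginals above combine into a joint weak limit: for fixed $S$,
\begin{equation*}
\sqrt{n}\,\sigma_{N,x_0}^{-1}\bigl(\ZZ(x_0)^\top(\hat{\zb}^1(\tau) - \zg_N(\tau)),\dots,\ZZ(x_0)^\top(\hat{\zb}^S(\tau) - \zg_N(\tau))\bigr) \weak (\sqrt{\tau(1-\tau)}\,G_1,\dots,\sqrt{\tau(1-\tau)}\,G_S),
\end{equation*}
with the $G_s$ iid standard normals. The centered pivot $\widetilde T_N := \sqrt{S}\,\ZZ(x_0)^\top(\overline{\zb}(\tau) - \zg_N(\tau))/(\ZZ(x_0)^\top \hat{\Sigma}^D \ZZ(x_0))^{1/2}$ is a continuous function (away from the null set where $\hat{\Sigma}^D$ is singular) of the above vector, since both numerator and denominator are built from the sample mean and sample covariance. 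The continuous mapping theorem then yields $\widetilde T_N \weak \sqrt{S}\,\bar G/s_G$ with $\bar G, s_G^2$ the sample mean and sample variance of $G_1,\dots,G_S$, a ratio whose law is exactly Student-$t_{S-1}$. When $S \to \infty$ the same argument combined with the law of large numbers for $s_G^2$ gives $\widetilde T_N \weak \Nc(0,1)$.

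To close the gap between $\widetilde T_N$ and the actual $T_N$ used in~\eqref{ci:simple_t}, the plan is to absorb the approximation bias: the half-width of~\eqref{ci:simple_t} is of order $\|\ZZ(x_0)\| N^{-1/2}$ because $\hat{\Sigma}^D$ estimates a matrix of order $n^{-1}$ times a bounded spectral factor, while the discrepancy between the two centers equals $\ZZ(x_0)^\top \zg_N(\tau) - Q(x_0;\tau)$, bounded by $c_m(\zg_N) = o(\|\ZZ(x_0)\|N^{-1/2})$ by hypothesis. Thus $T_N - \widetilde T_N = o_P(1)$ and Slutsky's lemma transfers the limit law to $T_N$, from which the stated coverage for~\eqref{ci:simple_t} and~\eqref{ci:simple} follows by inverting the quantiles of $t_{S-1}$ and $\Nc(0,1)$ respectively. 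The main obstacle I anticipate is not the Gaussian-location reduction itself but two subtler points: verifying that the oracle-rule regularity conditions pass cleanly from the full sample to a generic subsample (especially in the local-basis setting where $\xi_m$, $c_m(\zg_N)$, and the eigenvalue condition must be controlled in terms of $n$ rather than $N$), and proving a mild non-degeneracy of $\hat{\Sigma}^D$ so that the continuous mapping theorem applies, i.e.\ that $\liminf_N \lambda_{\min}(\text{cov}(\hat{\zb}^s(\tau))) > 0$ in the appropriate normalization—this is delivered by the non-degeneracy of $H(\tau,\tau)$ in the linear case and by~\eqref{eq:covloc} in the local-basis case.
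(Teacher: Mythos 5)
Your reduction to the textbook t-statistic is the right intuition and is essentially what the paper does for fixed $S$: write each subsample estimator as true quantile plus an approximately Gaussian fluctuation plus a negligible remainder, then apply the continuous mapping theorem. But there are two concrete gaps that the paper's argument works around and your sketch does not.

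First, you obtain the marginal limit $\sqrt{n}\,\ZZ(x_0)^\top(\hat\zb^s(\tau)-\zg_N(\tau))/\sigma_{N,x_0}\weak \Nc(0,\tau(1-\tau))$ by invoking Corollary~\ref{th:orlipo}/\ref{th:orloc}/\ref{th:orgenpo} ``with total sample size $n$.'' That requires the growth conditions of those corollaries to hold with $n$ in place of $N$, and this is not automatic. For example, Corollary~\ref{th:orgenpo} assumes $m^4(\log N)^{10}=o(N)$; the bound $S=o(N^{1/2}/(m\xi_m^2(\log N)^2))$ together with $\xi_m^2\gtrsim m$ only gives you $m^4(\log n)^4=o(n)$, which misses the required $m^4(\log n)^{10}=o(n)$ by a factor of $(\log n)^6$. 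The paper avoids this by never re-invoking the oracle-rule corollary at the subsample level. Instead it goes back to the underlying Bahadur representation (Theorem~\ref{th:bahsimple}), which holds under the weaker precondition $m\xi_m^2\log n=o(n)$, and whose explicit exponential tail bound (in $\kappa_n$) permits a union bound over the $S$ subsamples, giving $\sup_s|r_n^s(\tau)|=o_P(\|\ZZ(x_0)\|n^{-1/2})$ directly. You anticipated this obstacle (``verifying that the oracle-rule regularity conditions pass cleanly from the full sample to a generic subsample'') but did not resolve it; switching from the corollary to the Bahadur representation is precisely how one resolves it.

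Second, your $S\to\infty$ case is not an application of ``the same argument'': CMT for a fixed finite-dimensional vector $(G_1,\dots,G_S)$ does not extend to a sequence whose dimension grows with $N$, and ``combined with the law of large numbers'' is not a proof. Moreover, the theorem must cover \emph{arbitrary} sequences $S_N\geq 2$, not just the two extremes, and there is no subsequence of a general $S_N$ to which either of your two cases cleanly applies. The paper handles all three regimes simultaneously by: (i) handling any bounded range $2<S_N\leq\bar S$ by the finite-dimensional CMT argument you describe; (ii) for the remaining range, replacing CMT with a quantitative decomposition of $|p_N-(1-\alpha)|$ into three terms, each controlled by Markov's inequality using the uniformly bounded fourth moments of $\sqrt{n}L_n^s(\tau)/\sigma_N$—this yields $P(|\hat\sigma_{L,N}^2/\sigma_N^2-1|\geq\eps)\lesssim S_N^{-1}\eps^{-2}$—and then optimizing over $\eps$ and the cutoff $\bar S$. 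This explicit fourth-moment step and the resulting $O(S_N^{-1})$ bound is the missing ingredient in your sketch; without it the transition regime and the genuinely diverging regime are not controlled.
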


See Section~\ref{sec:prsimple} for a proof of Theorem~\ref{th:simpleinf}. The main advantage of the two approaches discussed above lies in their simplicity as they do not require any costly computation or communication between machines. There are two main limitations. 

First, for small values of $S$ (say $S\leq 30$) the confidence intervals in~\eqref{ci:simple} will not have the correct coverage while the interval in~\eqref{ci:simple_t} can be substantially wider than the one based on the exact asymptotic distribution since quantiles of the $t$-distribution with few degrees of freedom can be substantially larger than corresponding normal quantiles. Moreover, the approach is not applicable if $S$ is smaller than the dimension of the parameter of interest. Second the approaches are not straightforward to generalize to inference on non-linear functionals of $\zb(\tau)$ such as $\widehat F_{Y|X}(y|x)$ or inference which is uniform over a continuum of quantiles $\tau$. 

The first limitation is not likely to become relevant in a big data setting since here we typically expect that $S$ is large due to computational bottlenecks and memory constraints. For the sake of completeness we discuss this case in the next section. To deal with the second limitation, we propose to use a bootstrap procedure that can be conducted by solely using the subsample estimators $\{\hat \zb_s(\tau_k)\}_{s=1,..,S, k=1,...,K}$ which are stored on the central machine. Details are provided below.

\begin{enumerate}
\item Sample iid random weights $\{\omega_{s,b}\}_{s=1,...,S, b=1,...,B}$ from taking value $1-1/\sqrt{2}$ with probability 2/3 and $1+\sqrt{2}$ with probability 1/3 (i.e. weights are chosen such that $\omega_{s,b}\geq 0$, $\E[\omega_{s,b}] = Var(\omega_{s,b}) = 1$)
and let $\bar \omega_{\cdot,b} := S^{-1}\sum_{s=1}^S \omega_{s,b}$.
\item For $b=1,...,B, k = 1,...,K$ compute the bootstrap estimators 
\beq\label{eq:betaboot}
\overline \zb^{(b)}(\tau_k) := \frac{1}{S} \sum_{s = 1}^S \frac{\omega_{s,b}}{\bar \omega_{\cdot,b}} \hat \zb^s(\tau_k)
\eeq
and define the matrix $\hat\Xi^{(b)}$ from~\eqref{eq:hatXi} with $\overline \zb^{(b)}(\tau_k)$ replacing $\overline \zb(\tau_k)$.
\item Similarly as in~\eqref{eq:hatbeta} compute $\hat\zb^{(b)}(\cdot)$ from the matrix $\hat\Xi^{(b)}$ defined above. For a functional of interest $\Phi$ approximate quantiles of the distribution of $\Phi(\hat\zb(\cdot)) - \Phi(\zb(\cdot))$ by the empirical quantiles of $\{\Phi(\hat\zb^{(b)}(\cdot)) - \Phi(\hat\zb(\cdot))\}_{b=1,...,B}$.
\end{enumerate}   

A formal justification for this bootstrap approach is provided by the following result.
\begin{theo} \label{th:boot}
Let the assumptions of either Corollary~\ref{th:orlipr}, Corollary~\ref{th:orlocpr} or Corollary~\ref{th:orgenpr} hold and assume that additionally $S \to \infty$. Then we have conditionally on the data $(X_{i},Y_{i})_{i=1,...,N}$
\[
\frac{\sqrt{N}}{\|\ZZ(x_0)\|}(\ZZ(x_0)^\top \hat\zb^{(1)}(\cdot) - \ZZ(x_0)^\top \hat\zb(\cdot)) \weak \Gb_{x_0}(\cdot) \quad \mbox{in } \ell^\infty(\mathcal{T}). 
\]
where the limit $\Gb_{x_0}$ denotes the centered Gaussian process from Corollary~\ref{th:orlocpr} or Corollary~\ref{th:orgenpr} under their respective assumptions and $\Gb_{x_0} = \ZZ(x_0)^\top \Gb$ under the assumptions of Corollary~\ref{th:orlipr}. 
\end{theo}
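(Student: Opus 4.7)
The plan is to treat the procedure as a multiplier (weighted) bootstrap applied to the $S$ block estimators $\hat\zb^1(\tau_k),\ldots,\hat\zb^S(\tau_k)$, which are independent across $s$ conditionally on the partition, and to show that the bootstrap reproduces the oracle limit $\Gb_{x_0}$ under each of the three sets of assumptions. Because the B-spline projection $\Pi_K$ is a bounded linear operator applied coordinate-wise, it will suffice to establish weak convergence of $\sqrt{N}(\overline\zb^{(1)}(\tau_k)-\overline\zb(\tau_k))$ (as a process in $k$) conditional on the data and then push it through $\Pi_K$ in exactly the same way as for the non-bootstrap estimator in Corollaries~\ref{th:orlipr},~\ref{th:orlocpr},~\ref{th:orgenpr}.

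First I would use the elementary identity
\[
\overline\zb^{(1)}(\tau_k) - \overline\zb(\tau_k) \;=\; \frac{1}{S\,\bar\omega_{\cdot,1}}\sum_{s=1}^S(\omega_{s,1}-\bar\omega_{\cdot,1})\big(\hat\zb^s(\tau_k)-\overline\zb(\tau_k)\big),
\]
replace $\bar\omega_{\cdot,1}^{-1}$ by $1$ at the cost of a $o_{\Pb}(1)$ term (since $\bar\omega_{\cdot,1}\to 1$ a.s.), and substitute into each $\hat\zb^s(\tau_k)-\overline\zb(\tau_k)$ the Bahadur representation developed in Sections~\ref{sec:ABR}/\ref{sec:ABRloc}. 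This yields, with $\psi_{is}(\tau):=(\tau-\IF\{Y_{is}\le \ZZ(X_{is})^\top\zg_N(\tau)\})\ZZ(X_{is})$,
\[
\sqrt{N}\big(\overline\zb^{(1)}(\tau_k)-\overline\zb(\tau_k)\big) \;=\; \frac{\sqrt{N}}{Sn}\sum_{s=1}^S(\omega_{s,1}-1)\,J_m(\tau_k)^{-1}\!\sum_{i=1}^n\psi_{is}(\tau_k) \;+\; r_N(\tau_k),
\]
where $r_N$ is negligible uniformly over $\tau_k\in\Tc_K$ by the remainder bounds already proved in Theorems~\ref{th:linub} and~\ref{th:locub} together with boundedness of the weights. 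Conditional on the data, the leading term is a sum of $S$ independent mean-zero vectors with conditional covariance $\tfrac{N}{S^2}\sum_s(\hat\zb^s-\overline\zb)(\hat\zb^s-\overline\zb)^\top$; an LLN across the $S$ iid sub-samples, together with $\Var(\hat\zb^s(\tau_k))\sim n^{-1}H(\tau_k,\tau_k)$, shows this converges in probability to $H(\tau_k,\tau_k)$, the target oracle covariance. Boundedness of the $\omega_{s,1}$ reduces the Lyapunov $(2+\delta)$ condition to $S^{-\delta/2}\to0$, which holds since $S\to\infty$. The Cramer--Wold device then yields joint conditional normality for any finite collection of quantile levels, matching the finite-dimensional distributions of $\Gb_{x_0}$.

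To lift this to process convergence in $\ell^\infty(\Tc)$ I would mimic the proof of Corollary~\ref{th:orlocpr}: introduce an oracle-bootstrap process in which $\hat\zb^s$ is replaced by its Bahadur linearization, show via the remainder bounds above that the difference is $o_{\Pb_\omega}(\|\ZZ(x_0)\|N^{-1/2})$ uniformly in $\tau\in\Tc_K$, and invoke a classical multiplier CLT for the linearized weighted sum (whose entropy conditions are already verified for the non-bootstrap version in Sections~\ref{sec:ABR}/\ref{sec:ABRloc}). The projection bias from $\Pi_K$ is absorbed using the spline approximation estimate under $K\gg G\gg N^{1/(2\eta)}\|\ZZ(x_0)\|^{-1/\eta}$, exactly as in the original corollaries, yielding the same limit $\Gb_{x_0}$. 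The main obstacle will be conditional stochastic equicontinuity of the weighted process uniformly over $\tau\in\Tc$, especially in the local-basis setting where $m\to\infty$; I would resolve this by combining symmetrization with the chaining/entropy estimates already established for $\hat\zb_{or}$, exploiting the fact that the bootstrap weights $\{\omega_{s,1}\}$ are iid and bounded so that standard multiplier inequalities (e.g. those in Chapter~2 of van~der Vaart and Wellner) transfer tightness from the original to the bootstrap process.
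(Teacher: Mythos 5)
Your high-level plan---linearize each block estimator via the refined Bahadur representation, establish a CLT for the weighted block sum, then push the result through the spline projection $\Pi_K$---coincides with the paper's step~2 and final assembly in Section~\ref{sec:prboot}. The decomposition using $\bar\omega_{\cdot,1}^{-1}\to 1$, the substitution of the Bahadur linearization into $\hat\zb^s(\tau_k)-\overline\zb(\tau_k)$, and the treatment of the projection bias are essentially identical to the paper's. Where you diverge is in how conditional weak convergence in $\ell^\infty(\Tc)$ is established, and this is where your proposal has a gap.

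The paper never proves a conditional CLT or conditional tightness. Instead, its step~1 establishes \emph{unconditional} joint weak convergence of the linearized processes $(\Gb_N, \Gb_N^{(1)},\dots,\Gb_N^{(B)})$ to $B+1$ independent copies of $\Gb_{x_0}$---via the Lyapunov CLT for the finite-dimensional laws and a fourth-moment chaining argument for equicontinuity analogous to Lemma A.3 of \cite{ChaVolChe2016}---and then invokes Lemma~3.1 of \cite{buko2017}, which converts such unconditional joint convergence to independent copies into conditional (bootstrap-consistency) convergence. You propose to work conditionally throughout: a conditional Lyapunov CLT (which is fine) followed by a transfer of tightness via ``standard multiplier inequalities'' from van der Vaart and Wellner. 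That transfer is the weak link. Those inequalities govern i.i.d.\ multipliers attached to i.i.d.\ observations drawn from a \emph{fixed} Donsker class; here the weights $\omega_{s,1}$ are attached at the \emph{block} level to $S$ block sums $n^{-1}\sum_{i\le n}\psi_{is}(\cdot)$, which form a triangular array in which both the sub-sample size $n=N/S$ and (in the nonparametric settings) the function class through $m\to\infty$ vary with $N$. A block-level, triangular-array conditional multiplier CLT is not off-the-shelf, and the entropy bounds in Sections~\ref{sec:ABR}/\ref{sec:ABRloc} are for the Bahadur remainders, not for the multiplier process itself. To make your route work you would have to carry out the chaining for $\tau\mapsto\Gb_N^{(1)}(\tau)$ directly, as the paper does (see the derivation of \eqref{eq:booth2}--\eqref{eq:booth3} and the event-splitting around \eqref{eq:booth1}), at which point you have effectively reproduced the paper's unconditional step~1 and the Bücher--Kojadinovic equivalence becomes the natural way to close the argument.
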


The proof of Theorem~\ref{th:boot} is given in Section~\ref{sec:prboot}. We conclude this section by remarking that the bootstrap proposed differ from the cluster-robust bootstrap of \cite{hagema2017}. The main difference is that we propose to directly utilize the sub-sample estimators $\hat \zb^s$ while the approach of \cite{hagema2017} requires repeated estimation on the complete sample. 
 

\subsection{Inference based on estimating the asymptotic covariance matrix}
\label{sec:asyvar}
As suggested by a referee, an alternative way to conduct inference is to compute, for each sub-sample, not only the estimator $\hat\zb_s$ but also a variance estimator and pool those estimators. Here, we provide some details on this approach. For the sake of simplicity, we only discuss the case where $m$ is fixed and the model is specified correctly, i.e. the setting of Section~\ref{sec:lin}.
 
It is well known that the asymptotic variance-covariance matrix of the difference $\sqrt{n}(\hat \zb_{or}(\tau) - \zb(\tau))$ (for fixed $m$) takes the 'sandwich' form
\[
\Sigma(\tau) = \tau(1-\tau)J_m(\tau)^{-1} \E[\ZZ\ZZ^\top] J_m(\tau)^{-1} , \quad J_m(\tau) = \E[\ZZ\ZZ^\top f_{Y|X}(Q(X;\tau)|X)].
\] 
The middle part $\E[\ZZ\ZZ^\top]$ is easily estimated by $\frac{1}{nS}\sum_i\sum_s \ZZ_{is}\ZZ_{is}^\top$. Since this is a simple mean of the subsample-based quantities $\frac{1}{n}\sum_i \ZZ_{is}\ZZ_{is}^\top$, implementing this in a distributed computing setting is straightforward. 

The matrix $J_m(\tau)$ involves the conditional density $f_{Y|X}(Q(X;\tau)|X)$ and is more difficult to estimate. A popular approach is based on Powell's estimator (\cite{powell1986})
\begin{equation} \label{eq:powell}
\hat J_{ms}^P (\tau) := \frac{1}{2nh_n} \sum_{i=1}^n \ZZ_{is}\ZZ_{is}^\top \IF \Big\{|Y_{is} - \ZZ_{is}^\top \hat \zb^s(\tau)| \leq h_n \Big\}. 
\end{equation}
Here, $h_n$ denotes a bandwidth parameter that needs to be chosen carefully in order to balance the resulting bias and variance.

There are several possible approaches to estimate $J_{m}^P (\tau)$ in a parallel computing environment. If an additional round of communication is acceptable, it is possible to construct estimators with the same convergence rate and asymptotic distribution as the estimator based on the full sample. Details are provided in section~\ref{sec:powor} in the appendix. If only a single round of communication is allowed, the following algorithm can be used instead. 

\begin{enumerate}
\item For $s=1,...,S$, in the same round as computing $\widehat \zb^s(\tau)$, compute $\hat J_{ms}^P (\tau)$ from~\eqref{eq:powell} and $\hat \Sigma_{1s} := \frac{1}{n}\sum_i \ZZ_{is}\ZZ_{is}^\top$.
\item Along with $(\widehat \zb^s(\tau))_{s=1,...,S}$ send $(\hat J_{ms}^P (\tau),\hat \Sigma_{1s})_{s=1,...,S}$ to the master machine and compute $\overline J_{m}^P (\tau) := \frac{1}{S}\sum_{s=1}^S \hat J_{ms}^P (\tau)$, $\overline\Sigma_{1} := \frac{1}{S}\sum_{s=1}^S \hat \Sigma_{1s}$.
\item The final variance estimator is given by $\bar \Sigma(\tau) = \tau(1-\tau) \overline J_{m}^P (\tau)^{-1} \overline\Sigma_{1} \overline J_{m}^P (\tau)^{-1}$.
\end{enumerate}

\begin{remark}\rm
Note that in the above algorithm we first take averages over the subsampled estimators $\hat J_{ms}^P (\tau)$ and only invert the aggregated matrix $\overline J_{m}^P (\tau)$. An alternative approach would have been to compute the estimator $\hat J_{ms}^P (\tau)^{-1} \hat\Sigma_{s} \hat J_{ms}^P (\tau)^{-1}$ for each subsample and average in the end. However, given that $A \mapsto A^{-1}$ is non-linear, this might result in additional bias since in general for random matrices $(\E[A])^{-1} \neq \E[A^{-1}]$. 
\end{remark}

An important question for implementing the above algorithm is the choice of the bandwidth parameter $h_n$. To gain some intuition about the optimal choice of $h_n$, we will formally discuss the case of a linear model fixed dimension. First, observe that by a Taylor expansion we have almost surely
\begin{multline*}
\overline J_{m}^P (\tau)^{-1} \overline\Sigma_{1} \overline J_{m}^P (\tau)^{-1} - J_{m}(\tau)^{-1} \Sigma_{1} J_{m} (\tau)^{-1}
\\
= - J_{m}(\tau)^{-1}(\overline J_m^P(\tau) - J_{m} (\tau)) J_{m} (\tau)^{-1} \Sigma_{1}J_{m} (\tau)^{-1}
\\ 
-  J_{m}(\tau)^{-1} \Sigma_{1} J_{m}(\tau)^{-1}(\overline J_m^P(\tau) - J_{m} (\tau)) J_{m} (\tau)^{-1}
\\
+ J_{m}(\tau)^{-1} (\Sigma_{1} - \overline\Sigma_{1})J_{m} (\tau)^{-1}
\\
 + O(\|\overline J_{m}^P (\tau) - J_{m}(\tau)\|^2 + \|\overline\Sigma_{1} - \Sigma_1\|\|\overline J_{m}^P (\tau) - J_{m}(\tau)\|).
\end{multline*}
Ignoring higher-order terms and terms that do not depend on $h_n$ it suffices to analyze the properties of $\overline J_{m}^P (\tau) - J_{m}(\tau)$. 

\begin{theo} \label{th:powell} Under assumptions (A1)-(A3), assume that additionally $y\mapsto f_{Y|X}'(y|x)$ is continuously differentiable with first derivative being jointly continuous and uniformly bounded as a function of $x,y$ and that $nh_n (\log n)^{-1}\to \infty.$ Then
\[
\overline J_{m}^P (\tau) - J_m(\tau) = A_n(\tau) + O_p\Big(\frac{\log n}{n h_n}\Big)
\]
where the exact form of $A_n(\tau)$ is given in the proof. Moreover
\[
\E[A_n(\tau)] = \frac{h_n^2}{6} \E\Big[ \ZZ\ZZ^\top f''_{Y|X}(\ZZ^\top \zb(\tau)|X)\Big] + O\Big(\frac{\log n}{n}\Big) + o(h_n^2)
\]
and for $A_{N,(j,k)}$ denoting the entry of $A_N$ in row $j$ and column $k$
\[
\Cov(A_{N,(j,k)}(\tau),A_{N,(u,v)}(\tau)) = \frac{1}{Nh_n} \E[f_{Y|X}(\ZZ^\top \zb(\tau)|X) Z_jZ_kZ_uZ_v] + o\Big( \frac{1}{Nh_n}\Big).
\]
\end{theo}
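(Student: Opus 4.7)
The plan is to introduce an oracle counterpart in which $\hat\zb^s(\tau)$ is replaced by $\zb(\tau)$ and then analyze the two pieces separately. Define
\[
A_n(\tau) := \frac{1}{2Nh_n}\sum_{s=1}^S\sum_{i=1}^n \ZZ_{is}\ZZ_{is}^\top\IF\{|Y_{is} - \ZZ_{is}^\top \zb(\tau)| \leq h_n\} - J_m(\tau),
\]
which is a centered iid average over $N$ observations, and write $\overline J_m^P(\tau) - J_m(\tau) = A_n(\tau) + R_n(\tau)$ where
\[
R_n(\tau) = \frac{1}{2Nh_n}\sum_{s,i} \ZZ_{is}\ZZ_{is}^\top\big(\IF\{|Y_{is} - \ZZ_{is}^\top \hat\zb^s|\leq h_n\} - \IF\{|Y_{is} - \ZZ_{is}^\top \zb(\tau)|\leq h_n\}\big).
\]

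To bound $R_n$, split $R_n = R_n^{(b)} + R_n^{(v)}$ into its conditional mean given $\{X_{is}\}_{s,i}$ and $\{\hat\zb^s\}_s$ and the centered residual. Using a Taylor expansion of $F_{Y|X}(\cdot|X)$ around $Q(X;\tau)$ together with the smoothness of $y\mapsto f'_{Y|X}(y|x)$, the per-summand leading contribution to $R_n^{(b)}$ (after dividing by $2h_n$) is $f'_{Y|X}(Q(X;\tau)|X)\,\ZZ\ZZ^\top\,\ZZ^\top(\hat\zb^s - \zb(\tau))$, whose $s$-average is driven by $\overline\zb(\tau) - \zb(\tau) = O_P(N^{-1/2})$, with a quadratic correction of order $\sup_s\|\hat\zb^s - \zb(\tau)\|^2 = O_P(\log n/n)$ supplied by the sub-sample Bahadur representation in Section~\ref{sec:ABR}. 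For $R_n^{(v)}$, apply a localized maximal inequality to the VC class
\[
\mathcal F_\delta := \Big\{(Y,X)\mapsto \ZZ\ZZ^\top\big(\IF\{|Y-\ZZ^\top\zb|\leq h_n\} - \IF\{|Y-\ZZ^\top\zb(\tau)|\leq h_n\}\big): \|\zb-\zb(\tau)\|\leq\delta\Big\},
\]
noting that the symmetric-difference event has conditional probability of order $|\ZZ^\top(\zb - \zb(\tau))|$, so the $L^2(P)$-variance within $\mathcal F_\delta$ is $O(\delta)$. Plugging in $\delta = \sup_s\|\hat\zb^s - \zb(\tau)\| = O_P(\sqrt{\log n/n})$ (from Section~\ref{sec:ABR} and a union bound over the $S$ sub-samples) and dividing by $2h_n$ yields the stated rate $O_P(\log n/(nh_n))$.

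For the expectation of $A_n(\tau)$, a third-order Taylor expansion of $F_{Y|X}(\cdot|X)$ gives
\[
\E[A_n(\tau)] = \frac{1}{2h_n}\E\big[\ZZ\ZZ^\top\big(F_{Y|X}(\ZZ^\top\zb(\tau)+h_n|X) - F_{Y|X}(\ZZ^\top\zb(\tau)-h_n|X)\big)\big] - J_m(\tau) = \frac{h_n^2}{6}\E\big[\ZZ\ZZ^\top f''_{Y|X}(\ZZ^\top\zb(\tau)|X)\big] + o(h_n^2),
\]
with the claimed $O(\log n/n)$ bookkeeping term absorbing lower-order residuals. For the covariance, since $A_n(\tau)$ is an iid mean, $\Cov(A_{N,(j,k)}, A_{N,(u,v)}) = N^{-1}\Cov(\Psi_{(j,k)}, \Psi_{(u,v)})$ with $\Psi_{(j,k)} := Z_jZ_k\IF\{|Y-\ZZ^\top\zb(\tau)|\leq h_n\}/(2h_n)$; using $\IF^2=\IF$ together with $\E[Z_jZ_kZ_uZ_v\IF\{|Y-\ZZ^\top\zb(\tau)|\leq h_n\}] = 2h_n\E[f_{Y|X}(Q(X;\tau)|X)Z_jZ_kZ_uZ_v] + O(h_n^3)$ and $\E[\Psi_{(j,k)}]\E[\Psi_{(u,v)}] = O(1)$ yields the stated covariance of order $(Nh_n)^{-1}\E[f_{Y|X}(Q|X)Z_jZ_kZ_uZ_v](1+o(1))$.

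The main obstacle is obtaining the sharp rate $O_P(\log n/(nh_n))$ for $R_n^{(v)}$: a naive application of Talagrand's inequality combined with only the pointwise bound $\sup_s\|\hat\zb^s - \zb(\tau)\| = O_P(\sqrt{\log n/n})$ yields the weaker rate $(\log n/n)^{3/4}/h_n$, so the proof must either use a peeling/chaining argument that fully exploits the $\delta$-vanishing variance on $\mathcal F_\delta$, or substitute the leading linear term of the Bahadur representation for $\hat\zb^s - \zb(\tau)$ and analyze the resulting U-process directly. All other steps are relatively standard kernel-estimator calculations.
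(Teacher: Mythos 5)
Your decomposition is different from the paper's in a crucial way, and the difference is not merely cosmetic: your choice of $A_n(\tau)$ leaves a remainder $R_n$ that one cannot show is $O_P(\log n/(nh_n))$. Two concrete problems.

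\textbf{(i) The conditioning step in $R_n^{(b)}$ is not valid.} You split $R_n$ into its conditional mean given $\{X_{is}\}$ and $\{\hat\zb^s\}$ plus a centered residual, and then identify that conditional mean with a difference of $F_{Y|X}$ values using a Taylor expansion. But $\hat\zb^s$ is built from \emph{all} $(X_{is},Y_{is})_{i=1}^n$; conditionally on $X_{is}$ and $\hat\zb^s$, the law of $Y_{is}$ is \emph{not} $F_{Y|X}(\cdot\mid X_{is})$ (the $Y$'s are constrained to reproduce $\hat\zb^s$), so the per-summand conditional expectation is not $\frac{1}{2h_n}\bigl[F_{Y|X}(\ZZ^\top\hat\zb^s+h_n\mid X)-F_{Y|X}(\ZZ^\top\hat\zb^s-h_n\mid X)\bigr]$. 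The paper removes this dependence with a jackknife: it introduces $\tilde\zb^{s,-i}(\tau)$, a linearized leave-one-out estimator that is a function of $\{(X_{js},Y_{js}):j\ne i\}$ only, so that $Y_{is}$ \emph{is} conditionally independent of $\tilde\zb^{s,-i}(\tau)$ given $X_{is}$ and the $F_{Y|X}$ expansion legitimately applies.

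\textbf{(ii) Even granting the expansion, the remainder is too large.} The linear term you identify in $R_n^{(b)}$, namely $f'_{Y|X}(Q(X;\tau)\mid X)\,\ZZ\ZZ^\top\,\ZZ^\top(\hat\zb^s-\zb(\tau))$, after averaging over $s,i$ reduces to (a deterministic tensor applied to) $\overline\zb(\tau)-\zb(\tau)=O_P(N^{-1/2})$ plus smaller corrections. But $N^{-1/2}$ is \emph{not} $O_P(\log n/(nh_n))$ in the regime of interest: with $N=Sn$ and $h_n\sim N^{-1/5}$, $N^{-1/2}\gg \log n/(nh_n)$ whenever $S\lesssim n^{3/7}/(\log n)^{10/7}$, which includes the classical case $S=1$. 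This fluctuation is of the same nature as, but slower than, the target remainder rate, so your $A_n(\tau)$ simply cannot satisfy the theorem's displayed relation. You also identify yourself that $R_n^{(v)}$ controlled via Talagrand with $\delta=\sup_s\|\hat\zb^s-\zb(\tau)\|=O_P(\sqrt{\log n/n})$ only gives $(\log n/n)^{3/4}/h_n$; peeling does not rescue this because $\delta$ is genuinely of that size and the class has $L_2(P)$-diameter $\Theta(\delta)$ (not $\Theta(\delta^2)$), not some crude over-estimate to be sharpened.

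\textbf{What the paper does differently and why it closes the gap.} The paper's $A_n(\tau)$ keeps the leave-one-out quantity $\tilde\zb^{s,-i}(\tau)$ inside the indicator (see the display defining $\tilde\zb^{s,-i}$ in the proof), and the remainder is only the \emph{difference} between using $\hat\zb^s$ and using $\tilde\zb^{s,-i}$. Lemma S.6.4 (in the supplement) gives $\sup_{s,\tau,j}\|\hat\zb^{s,-j}(\tau)-\hat\zb^s(\tau)\|=O_P(\log n/n)$, and a further bound shows $\sup_{s,\tau,j}\|\tilde\zb^{s,-j}(\tau)-\hat\zb^s(\tau)\|=O_P(\log n/n)$; that is, the relevant comparison distance is $O(\log n/n)$, not $O(\sqrt{\log n/n})$. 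Plugging this into the function class $\Gc_3(\xi\delta_n)$ with $\delta_n\asymp\log n/n$ and using the entropy/expectation bound~\eqref{eq:Gexpect} yields a remainder $O_P(\log n/(nh_n))$. The price of keeping $\tilde\zb^{s,-i}$ in $A_n(\tau)$ is that $\E[A_n(\tau)]$ picks up a contribution from $\E[\tilde\zb^{s,-1}(\tau)]-\zb(\tau)=O(\log n/n)$, which is exactly the $O(\log n/n)$ bookkeeping term in the stated bias; your version, being exactly centered at $\zb(\tau)$, avoids this term but pays with a remainder that overshoots the desired rate. If you pursue your second proposed fix (replacing $\hat\zb^s-\zb(\tau)$ by its Bahadur leading term), you would be reinventing the paper's $\tilde\zb^{s,-i}$ device in different clothes, but you would still need the leave-one-out form to make the conditioning valid.
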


The proof of Theorem~\ref{th:powell} is given in Section~\ref{sec:prpowell}. Theorem~\ref{th:powell} has several interesting implications. First, note that the asymptotic MSE of $A_N(\tau)$ is of the order $h_n^4 + (Nh_n)^{-1}$, which is minimized for $h_N \sim N^{-1/5}$ (note that the term $\log n/n$ is negligible). Under the additional condition $S = o(N^{2/5}(\log N)^{-1})$ we have $\log n/(nh_n) = o(N^{-2/5})$, and in this setting the MSE of the first order expansion of $\overline J_{m}^P (\tau)$ matches that of the Powell 'oracle' estimator as derived in \cite{kato2012}. This shows that, despite using estimators from subsamples, the same rate for estimating $J_{m}^P (\tau)$ as from the full sample can be achieved. That requires a stronger condition on $S$ than the oracle rate for the estimator $\hat \zb$. It is not clear if the latter condition is sharp, and we leave an investigation of this issue to future research.
 
Second, the choice $h_n \sim n^{-1/5}$ which would have been optimal for estimating $J_m(\tau)$ based on a subsample of size $n$ does not lead to the optimal error rate for the averaged estimator $\overline J_{m}^P (\tau)$. In fact the optimal bandwidth for $\overline J_{m}^P (\tau)$ is always smaller, which corresponds to undersmoothing. Similar effects were observed in various settings by~\cite{ZDW15}, \cite{BDS16} (see their Appendix A.13) and~\cite{CS15}.

%
%
%
%
%
%
%


\section{Monte Carlo experiments}\label{sec:sim}

In this section, we demonstrate our theory with simulation experiments. Due to space limitations, we restrict our attention to correctly specified linear models with different dimensions of predictors. More precisely, we consider data generated from
\begin{align}
Y_i = 0.21 +  \zb_{m-1}^\top X_i+\varepsilon_i, 
\quad i=1,...,N.\label{eq:simlin}
\end{align}
where $\varepsilon_i \sim \Nc(0,0.01)$ iid and $m \in \{4,16,32\}$. For each $m$, the covariate $X_i$ follows a multivariate uniform distribution $\Uc([0,1]^{m-1})$ with $\textrm{Cov}(X_{ij}, X_{ik})=0.1^2 0.7^{|j-k|}$ for $j,k=1,...,m-1$, and the vector $\zb_{m-1}$ takes the form
\begin{align}
	\begin{split}\label{eq:betaspeci}
		\zb_{3}&=(0.21,-0.89,0.38)^\top;\\
		\zb_{15}&=(\zb_{3}^\top,0.63,0.11,1.01,-1.79,-1.39,0.52,-1.62,
		\\
		&\quad\quad\quad\quad1.26,-0.72,0.43,-0.41,-0.02)^\top;\\
		\zb_{31}&=(\zb_{15}^\top,0.21,\zb_{15}^\top)^\top.
	\end{split}
\end{align}  


Throughout this section, we fix $\Tc = [0.05,0.95]$. Section~\ref{sec:simdc} contains results for the estimator $\overline\zb(\tau)$, while results for $\hat F_{Y|X}(y|x)$ are collected in Section~\ref{sec:simF}. Additional simulations (including models with heteroskedastic errors) are presented in Section~\ref{sec:addsim} of the online supplement.

\subsection{Results for the divide and conquer estimator $\overline\zb(\tau)$} \label{sec:simdc}

We fix the sub-sample size $n$ and consider the impact of the number of sub-samples $S$ on the coverage probabilities of different $95\%$ confidence intervals. To benchmark our results, we use the infeasible asymptotic confidence interval
\begin{align}
\big[x_0^\top \overline{\zb}(\tau) \pm N^{-1/2} \sigma(\tau) \Phi^{-1}(1-\alpha/2)\big],\label{eq:lincovp}
\end{align}
where $\sigma^2(\tau)$ denotes the theoretical asymptotic variance of $\zb^{or}(\tau)$; this CI is valid by the oracle rule but contains unknown quantities. The coverage properties of this interval also indicate whether we are in a regime where the oracle rule holds.

In a first step, we consider the properties of confidence intervals discussed in Section~\ref{sec:subs} which directly utilize the availability of results from sub-samples. We consider the following three types of confidence intervals. 
\begin{enumerate}
\item The normal confidence interval~\eqref{ci:simple}.
\item The confidence interval~\eqref{ci:simple_t} based on quantiles of the t distribution. 
\item The bootstrap confidence interval based on sample quantiles of $(\bar\zb^{(1)}(\tau)-\bar \zb(\tau)),...,(\bar\zb^{(B)}(\tau)-\bar \zb(\tau))$, where $\bar\zb^{(1)}(\tau),...,\bar\zb^{(B)}(\tau)$ are computed as in~\eqref{eq:betaboot}; in this section we set $B=500$. 
\end{enumerate} 

The coverage probabilities of corresponding confidence intervals are summarized in Figure~\ref{fig:simlecoverageSmS} where we fix two sub-sample sizes $n = 512, 2048$ and present two types of plots: coverage probabilities for 'small' values of $S$ ranging from $S=2$ to $S=50$ and coverage probabilities for 'large' values of $S = 2^k, k=1,...,10$. Only $m = 4, 32$ is considered here, $m=16$ can be found in the online supplement.

Plots for small $S$ help to access how different procedures behave if the number of sub-samples is small. As expected from the theory, oracle confidence intervals and simple confidence  intervals based on the t distribution maintain nominal coverage for small values of $S$. Confidence intervals based on normal approximation and the bootstrap under-cover for smaller values of $S$. Starting from about $S=20$, coverage probabilities of all four types of intervals are very close. We also see that for $m=32$, the oracle rule does not apply for a sub-sample size of $n=512$ with any number of sub-samples $S > 10$; the situation improves for $n=2048$. This is in line with our asymptotic oracle theory. For 'larger' values of $S$, there is no difference in the coverage probabilities of different intervals. As predicted by the oracle theory, coverage starts to drop earlier for models with larger $m$.

Next, we analyze the properties of asymptotic confidence intervals which are based on estimating the asymptotic variance of $x_0^\top \zb(\tau)$ from data. We compare three different ways of estimating the asymptotic variance 
\begin{enumerate}
\item A simple pooled estimator which uses the default settings in the package \texttt{quantreg} to obtain estimated variances in each sub-sample and takes the average over all sub-samples (additional details are provided in the online Supplement, Section~\ref{sec:simdet}). 
\item The estimator $\bar \Sigma$ based on the bandwidth $c^*(\tau) n^{-1/5}$ which minimizes the asymptotic MSE of the estimator $J_m$ within sub-samples of size $n$ (additional details on the choice of optimal constant $c^*(\tau)$ are provided in the online Supplement, Section~\ref{sec:simdet}). 
\item The estimator $\bar \Sigma$ based on the bandwidth $c^*(\tau) N^{-1/5}$ which is motivated by the theory developed in Theorem~\ref{th:powell}.
\end{enumerate}   
 
The results are reported in Table~\ref{tab:bwcomp}. Since there is no notable difference between all approaches when $S$ is large, only results for $S \leq 50$ are displayed for the sake of brevity. Interestingly, we do not observe a big difference between the naive bandwidth choice $h_n \sim n^{-1/5}$ and the optimal undersmoothing choice $h_n \sim N^{-1/5}$. This finding is quite intuitive since, once the asymptotic variance is estimated with at most $5-10\%$ relative error, a further increase in estimation accuracy does not lead to substantial improvements in coverage probabilities. The completely automatic choice implemented in the \texttt{quantreg} package also performs reasonably well.

\begin{table}[!h] \scriptsize
\begin{tabular}{c||c|c|c|c||c|c|c|c||c|c|c|c}
\hline
S & 1 & 10 & 30 & 50 & 1 & 10 & 30 & 50 & 1 & 10 & 30 & 50\\
\hline
&\multicolumn{4}{c||}{n = 512, m = 4, $\tau = 0.1$}&\multicolumn{4}{c||}{n = 512, m = 16, $\tau = 0.1$}&\multicolumn{4}{c}{n = 512, m = 32, $\tau = 0.1$}
\\
\hline
or & 94.9 & 94.8 & 94.9 & 94.3 & 94.7 & 94.3 & 92.2 & 90.7 & 94.4 & 92.5 & 88.6 & 85.5\\
def & 92.6 & 93.7 & 93.6 & 92.8 & 92 & 92.3 & 90.1 & 88.4 & 92.5 & 90.6 & 86.2 & 82.9\\
nai & 94.2 & 93.9 & 93.6 & 93.2 & 96.6 & 93.3 & 91 & 89.4 & 98.9 & 92.7 & 88.4 & 85\\
adj & 94.2 & 94.5 & 94.3 & 94.2 & 96.6 & 94.3 & 92.1 & 90.8 & 98.9 & 94.4 & 89.5 & 87.2\\
\hline
&\multicolumn{4}{c||}{n = 512, m = 4, $\tau = 0.5$}&\multicolumn{4}{c||}{n = 512, m = 16, $\tau = 0.5$}&\multicolumn{4}{c}{n = 512, m = 32, $\tau = 0.5$}
\\
\hline
or & 94.7 & 96 & 95.9 & 95 & 94.2 & 95.7 & 94.8 & 95.4 & 95.4 & 95.5 & 94.8 & 95\\
def & 97.8 & 98.2 & 98.2 & 98.2 & 96.7 & 98.2 & 98 & 98 & 97.6 & 97.5 & 97.3 & 97.7\\
nai & 95.9 & 96.8 & 96.4 & 96.3 & 96.7 & 97 & 96.4 & 97 & 99 & 97 & 96.2 & 96.7\\
adj & 95.9 & 96.4 & 96 & 95.2 & 96.7 & 96.4 & 95.7 & 96.2 & 99 & 96.6 & 95.7 & 95.9\\
\hline
&\multicolumn{4}{c||}{n = 512, m = 4, $\tau = 0.9$}&\multicolumn{4}{c||}{n = 512, m = 16, $\tau = 0.9$}&\multicolumn{4}{c}{n = 512, m = 32, $\tau = 0.9$}
\\
\hline
or & 95.4 & 94.6 & 94.2 & 93.6 & 94.6 & 94 & 91.7 & 90.1 & 95.2 & 92.1 & 90.6 & 87.1\\
def & 94 & 93.6 & 92.8 & 92.4 & 92.6 & 92.2 & 90 & 88.1 & 92.2 & 90.2 & 88.2 & 84.3\\
nai & 94.8 & 93.8 & 93.1 & 92.6 & 96.6 & 93.2 & 90.8 & 88.6 & 99 & 92.5 & 90.1 & 86.6\\
adj & 94.8 & 94.3 & 93.8 & 93.7 & 96.6 & 94 & 91.9 & 90.4 & 99 & 93.8 & 91.4 & 88.4\\
\hline
\hline
&\multicolumn{4}{c||}{n = 2048, m = 4, $\tau = 0.1$}&\multicolumn{4}{c||}{n = 2048, m = 16, $\tau = 0.1$}&\multicolumn{4}{c}{n = 2048, m = 32, $\tau = 0.1$}
\\
\hline
or & 95.6 & 94.7 & 94.4 & 94.3 & 95 & 94.2 & 94.4 & 94.4 & 95 & 95 & 94 & 94\\
def & 94.7 & 94.1 & 93.8 & 94 & 93.8 & 93.2 & 93.5 & 93.7 & 93.6 & 94.2 & 92.7 & 93.1\\
nai & 95.1 & 94.1 & 93.7 & 94 & 95.3 & 93.5 & 93.5 & 93.7 & 95.7 & 94.6 & 93.1 & 93.2\\
adj & 95.1 & 94.6 & 94.4 & 94.5 & 95.3 & 94 & 94.1 & 94.4 & 95.7 & 94.9 & 93.8 & 93.8\\
\hline
&\multicolumn{4}{c||}{n = 2048, m = 4, $\tau = 0.5$}&\multicolumn{4}{c||}{n = 2048, m = 16, $\tau = 0.5$}&\multicolumn{4}{c}{n = 2048, m = 32, $\tau = 0.5$}
\\
\hline
or & 94.6 & 94.9 & 95.2 & 95.3 & 95.3 & 94.2 & 94.2 & 95.5 & 95.3 & 95.5 & 95.1 & 95.5\\
def & 96 & 96.2 & 96.7 & 96.2 & 96.7 & 96.2 & 95.4 & 96.7 & 96 & 96.4 & 96.3 & 96.6\\
nai & 95.1 & 95.4 & 95.5 & 95.6 & 96.8 & 95.2 & 95 & 96 & 96.5 & 95.8 & 95.7 & 95.9\\
adj & 95.1 & 95.1 & 95.1 & 95.3 & 96.8 & 95 & 94.7 & 95.6 & 96.5 & 95.6 & 95.2 & 95.5\\
\hline
&\multicolumn{4}{c||}{n = 2048, m = 4, $\tau = 0.9$}&\multicolumn{4}{c||}{n = 2048, m = 16, $\tau = 0.9$}&\multicolumn{4}{c}{n = 2048, m = 32, $\tau = 0.9$}
\\
\hline
or & 94.6 & 95.6 & 95.8 & 94.6 & 95.1 & 95 & 94.5 & 94.7 & 95.2 & 94.4 & 92.9 & 92.9\\
def & 94.2 & 94.9 & 94.9 & 94.1 & 94.3 & 94.5 & 94 & 94.3 & 94.4 & 93.8 & 92.6 & 92.2\\
nai & 94.4 & 94.9 & 94.8 & 94.1 & 95.5 & 94.7 & 94.2 & 94.4 & 96.7 & 94.4 & 92.7 & 92.5\\
adj & 94.4 & 95.3 & 95.6 & 94.6 & 95.5 & 95.1 & 94.6 & 95 & 96.7 & 94.9 & 93.2 & 93.3\\
\hline
\end{tabular}
\caption{Coverage probabilities based on estimating the asymptotic variance. Different rows correspond to different methods for obtaining covariance matrix. or: using true asymptotic variance matrix, def: default choice implemented in quantreg package, nai: asymptotically optimal constant with scaling $h_n \sim n^{-1/5}$, adj: asymptotically optimal constant with scaling $h_n \sim N^{-1/5}$ as suggested by Theorem~\ref{th:powell}. \label{tab:bwcomp} }
\end{table}

\begin{figure}[!h]
\centering
\centering {\scriptsize (a) $m=4$}\\	
\includegraphics[width=13cm]{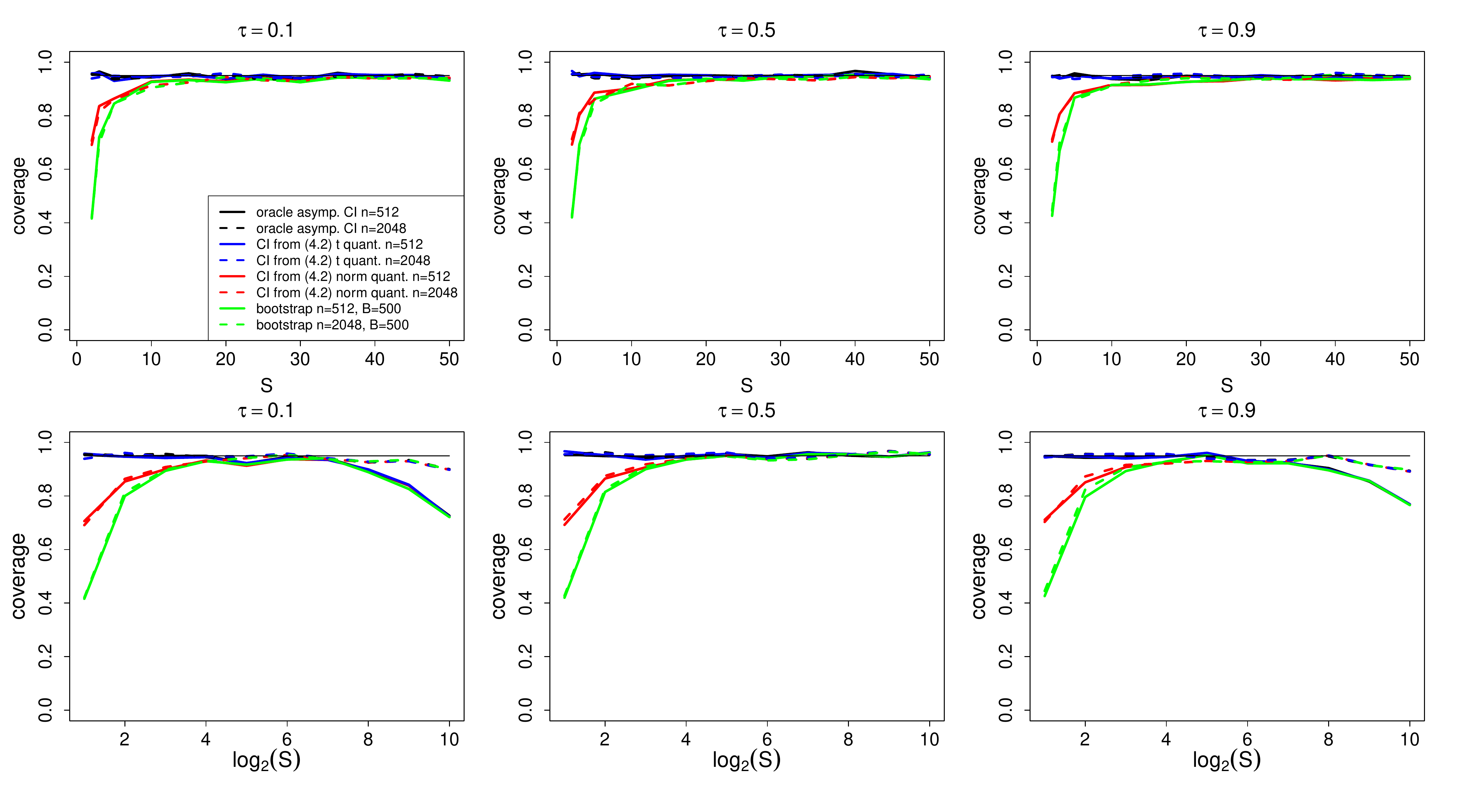}	
\\\vspace{-0.2cm}	
\centering {\scriptsize (b) $m=32$}\\	
\includegraphics[width=13cm]{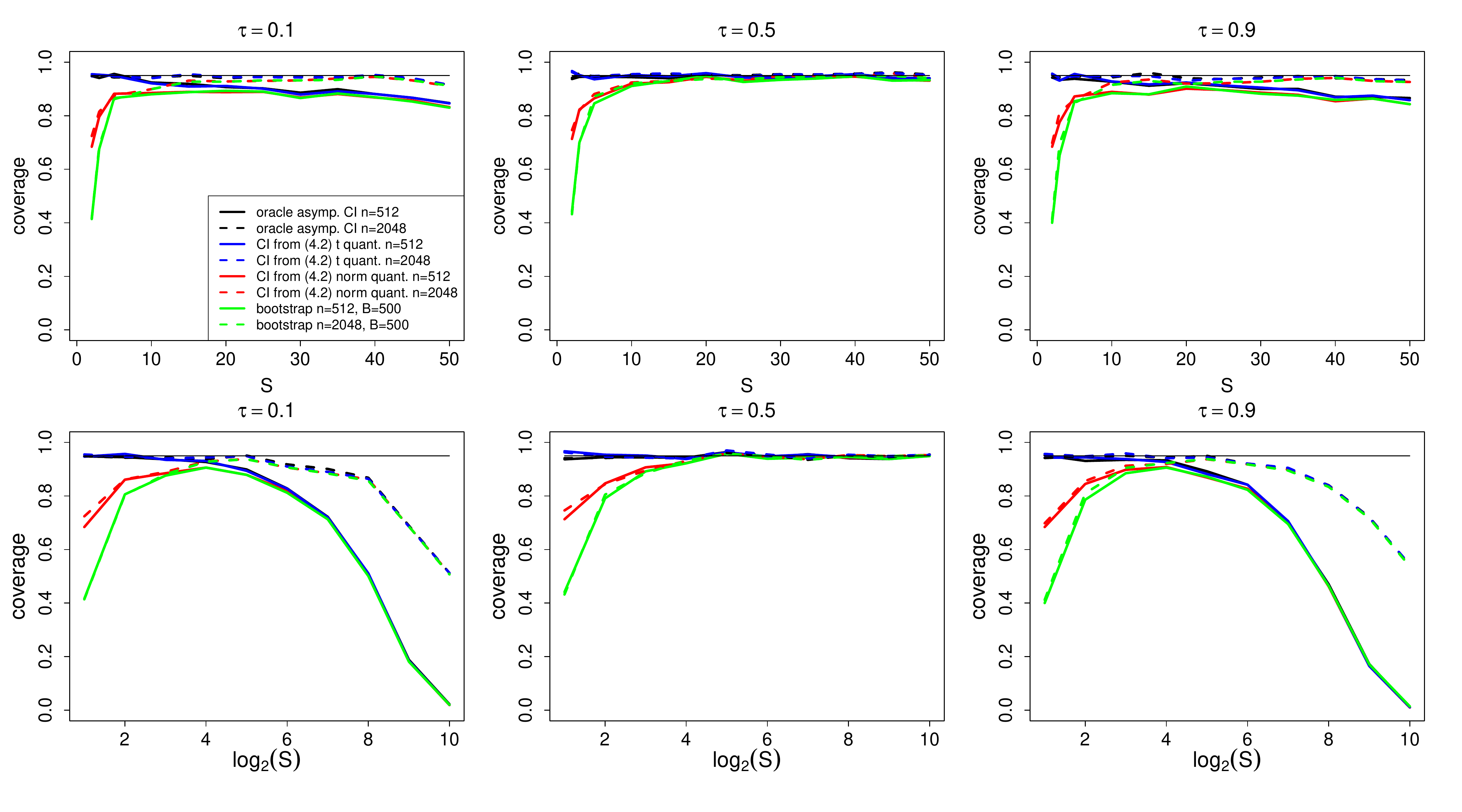}		
\caption{Coverage probabilities for $x_0^\top\zb(\tau)$ for different values of $S$ and $\tau = 0.1, 0.5, 0.9$ (left, middle, right row). Solid lines: $n = 512$, dashed lines: $n = 2048$. Black: asymptotic oracle CI, blue: CI from~\eqref{ci:simple_t} based on t distribution, red: CI from~\eqref{ci:simple} based on normal distribution, green: bootstrap CI.
Throughout $x_0 = (1,...,1)/m^{1/2}$, nominal coverage $0.95$. }\label{fig:simlecoverageSmS}
\end{figure}

Finally, note that the pattern of coverage probabilities varies at different $\tau$. For example, in the linear models with normal errors, the coverage probabilities at tail quantiles ($\tau=0.1,0.9$) drop to zero much faster than those at $\tau=0.5$. These empirical observations are not inconsistent with our theory where only the orders of the upper bound for $S$ are shown to be the same irrespective of the value of $\tau$. Rather, this phenomenon might be explained by different derivatives of the error density that appear in the estimation bias, and is left for future study.

\subsection{Results for the estimator $\hat F_{Y|X}(y|x)$}\label{sec:simF}

In this section, we consider inference on $F_{Y|X}(y|x)$. We compare the coverage probability of the oracle asymptotic confidence interval 
\begin{align}
\big[\hat F_{Y|X}(Q(x_0;\tau)|x_0) \pm N^{-1/2} \sigma^2_F(\tau) \Phi^{-1}(1-\alpha/2)\big].\label{eq:funcovp2}
\end{align}
(here $\sigma^2_F(\tau)$ is the asymptotic variance of the oracle estimator) and the bootstrap confidence interval described above Theorem~\ref{th:boot}. Note that the other approaches described in Section~\ref{SEC:INF} are not directly applicable here since $\hat F_{Y|X}(y|x)$ is a functional of the whole process $\hat\zb(\cdot)$. Since we focus on bootstrap reliability, the number of quantile levels $K = 65$ and knots for spline interpolation $G = 32$ are chosen sufficiently large to ensure nominal coverage of oracle intervals. A detailed study of the impact of $K,G$ on coverage of oracle intervals is provided in Section~\ref{sec:lincdf} of the online supplement. Due to space limitations, we only show the results for small values of $S$, results for large values of $S$ do not give crucial additional insights and are deferred to the online supplement. Coverage probabilities for $m=4,32$ are reported in Figure~\ref{fig:Fhatcov}. For $m=4$, the bootstrap and oracle confidence interval show a very similar performance as soon as $S \geq 20$; this is in line with the coverage properties for the bootstrap for $\bar\zb$. For $m=32$, coverage probabilities of the oracle confidence interval indicate that the sub-sample size $n$ is too small and the oracle rule does not apply, even for $S=2$. Interestingly, coverage probabilities of bootstrap and asymptotic confidence intervals differ in this setting. This does not contradict our theory for the bootstrap since that was only developed under the assumption that we are in a regime where the oracle rule holds.  

Summarizing all results obtained so far, we can conclude that inference based on results from sub-samples is reliable for $S \geq 20$. Since this does not require additional computation, we recommend using the normal approximation for $S>20$ for point-wise inference and the bootstrap if process level results are required. For $S < 20$, estimating the asymptotic variance within sub-samples and aggregating is recommendable. The simplest approach which is based on averaging variance estimators from the \texttt{quanteg} package works well and does not require additional implementation, so we recommend to use this for $S<20$.

\begin{figure}[!h]
\includegraphics[width=13cm]{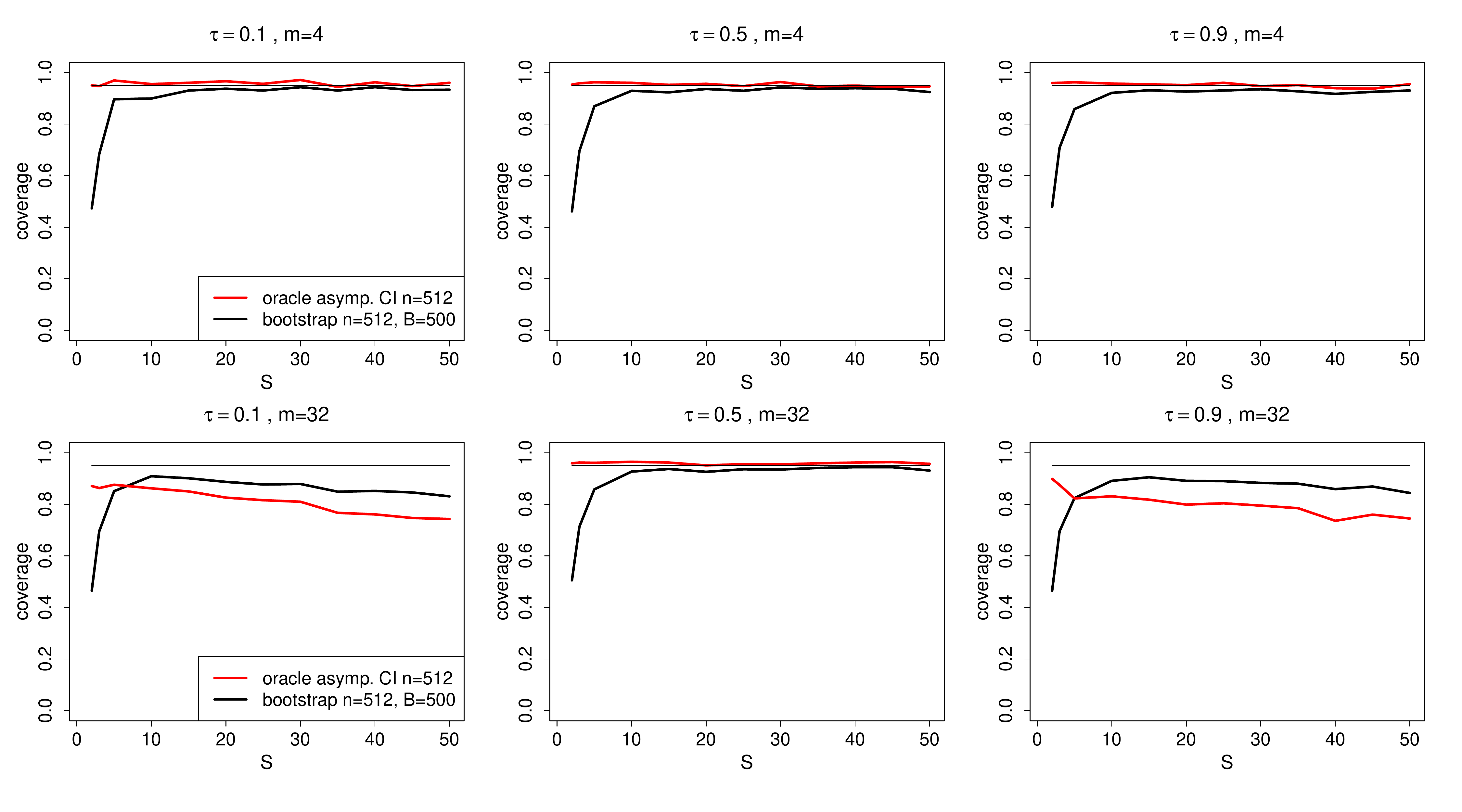}	
\caption{Coverage probabilities for oracle confidence intervals (red) and bootstrap confidence intervals (black) for $F_{Y|X}(y|x_0)$ for $x_0 = (1,...,1)/m^{1/2}$ and $y = Q(x_0;\tau)$, $\tau = 0.1, 0.5, 0.9$. $n = 512$ and nominal coverage $0.95$.}\label{fig:Fhatcov}
\end{figure}


\bibliography{_2017-11-21_BDQR}

\newpage

\appendix

\section{Approximate linear models without local basis structure} \label{SEC:GEN}


In this section, we consider models with transformation $\ZZ$ of increasing dimension that do not have the special local structure considered in Section~\ref{sec:loc}. 
The price for this generality is that we need to assume a more stringent upper bound for $S$ and put additional growth restrictions on $m$ in order to prove the oracle rule. The conditions on $K$ remain the same. 

\begin{theo}\label{th:gendiffup}
Assume that conditions \hyperref[A1]{(A1)}-\hyperref[A3]{(A3)} hold and that additionally $m \xi_m^2 \log N = o(N/S)$, $c_m(\zg_N) = o(\xi_m^{-1})$, $K\ll N^2$. Then
\begin{multline} \label{eq:genubpo}
\sup_{\tau \in \Tc_K} \|\overline{\zb}(\tau) - \hat\zb_{or}(\tau)\| 
\\
=~O_P\Big( \Big(\frac{m c_m^2\log N}{N} \Big)^{1/2} + c_m^2 \xi_m  + \Big(\frac{Sm\xi_m^2 \log N}{N} + c_m^4\xi_m^4\Big)\Big(1 + \frac{\log N}{S^{1/2}} \Big)\Big) 
\\
+ \frac{1}{N^{1/2}}O_P\Big( \Big(m\xi_m^2 c_m^2 (\log N)^3\Big)^{1/2}+\Big(\frac{Sm^3\xi_m^2(\log N)^7}{N}\Big)^{1/4}\Big) 
 + o_P(N^{-1/2}). 
\end{multline}
If additionally $K \gg G \gg 1$, $m^3\xi_m^2(\log N)^3 = o(N), c_m^2(\zg_N)\xi_m = o( N^{-1/2})$ we also have for any $x_0 \in \Xc$
\begin{multline} \label{eq:genubpr}
\sup_{\tau \in \Tc} |\ZZ(x_0)^\top\hat{\zb}(\tau) - \ZZ(x_0)^\top\hat\zb_{or}(\tau)| 
\\
\leq \|\ZZ(x_0)\|\sup_{\tau \in \Tc_K} \|\overline{\zb}(\tau) - \hat\zb_{or}(\tau)\| + \sup_{\tau \in \Tc} |(\Pi_K Q(x_0;\cdot))(\tau) - Q(x_0;\tau)|
\\
+ \sup_{\tau \in \Tc} |\ZZ(x_0)^\top\zb_N(\tau) - Q(x_0;\tau)| + o_P(\|\ZZ(x_0)\|N^{-1/2})  
\end{multline}
where the projection operator $\Pi_K$ was defined right after~\eqref{eq:hatbeta}.
\end{theo}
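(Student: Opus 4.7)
The proof strategy parallels that of Theorems~\ref{th:linub} and~\ref{th:locub}, but without the band structure of $J_m(\tau)$ at our disposal. In particular, every dimension-dependent factor produced by the subsample Bahadur expansion must be tracked globally rather than reduced to $O(r)$-dimensional local contributions, and this is ultimately what forces the more stringent growth conditions $m\xi_m^2 \log N = o(N/S)$ and $c_m(\zg_N) = o(\xi_m^{-1})$.

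The first step is to establish, for each subsample $s$, a Bahadur representation
\begin{equation*}
\hat\zb^s(\tau) - \zg_N(\tau) = J_m(\tau)^{-1}\,\frac{1}{n}\sum_{i=1}^n \psi_\tau\bigl(Y_{is} - \ZZ_{is}^\top \zg_N(\tau)\bigr)\,\ZZ_{is} + R_s(\tau),
\end{equation*}
where $\psi_\tau(u) = \tau - \IF(u\leq 0)$, together with (i) a sharp bound on $\|\E[R_s(\tau)]\|$, (ii) a variance bound on $R_s(\tau)$, and (iii) a uniform-in-$\tau \in \Tc_K$ tail bound. The expectation bound is the crucial ingredient and is obtained by the sharp argument behind Theorem~\ref{th:linub} (see Section~\ref{sec:ABR}), which produces a bias of order $n^{-1}m\xi_m^2\log n + c_m^4\xi_m^4$ rather than the generic $n^{-3/4}$ rate one would get from a naive Bahadur bound.

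Averaging over $s$ and subtracting the oracle expansion (applied to the full sample of size $N$), the leading linear terms cancel exactly and one is left with $S^{-1}\sum_s R_s(\tau) - R_{or}(\tau)$. The systematic part $\E[R_s(\tau)]$ does not benefit from averaging, which produces the $Sm\xi_m^2\log N/N + c_m^4 \xi_m^4$ contribution to \eqref{eq:genubpo}; the centered part does average and yields the $N^{-1/2}$ terms involving $(m\xi_m^2 c_m^2(\log N)^3)^{1/2}$ and $(Sm^3\xi_m^2(\log N)^7/N)^{1/4}$. Taking the supremum over the grid $\Tc_K$ uses $K \ll N^2$ via a union bound combined with a Lipschitz-in-$\tau$ argument on $R_s$, which inflates the bounds only by $\log N$ factors (including the extra $1 + \log N/\sqrt{S}$ factor from controlling $\max_k S^{-1}\sum_s (R_s(\tau_k) - \E R_s(\tau_k))$).

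For the process bound \eqref{eq:genubpr}, decompose
\begin{equation*}
\ZZ(x_0)^\top(\hat\zb(\tau) - \hat\zb_{or}(\tau)) = \ZZ(x_0)^\top \Pi_K(\overline\zb - \hat\zb_{or})(\tau) + \ZZ(x_0)^\top(\Pi_K \hat\zb_{or} - \hat\zb_{or})(\tau).
\end{equation*}
The first piece is controlled by $\|\ZZ(x_0)\|\sup_{\tau \in \Tc_K}\|\overline\zb(\tau) - \hat\zb_{or}(\tau)\|$, using the stability of the B-spline least-squares projection on quasi-uniform design points when $K \gg G$. For the second piece, write $\ZZ(x_0)^\top \hat\zb_{or}(\tau) = Q(x_0;\tau) + \ZZ(x_0)^\top(\hat\zb_{or}(\tau) - \zg_N(\tau)) + [\ZZ(x_0)^\top \zg_N(\tau) - Q(x_0;\tau)]$ and project: the first summand contributes the projection bias $\Pi_K Q(x_0;\cdot) - Q(x_0;\cdot)$, the middle summand is $o_P(\|\ZZ(x_0)\|N^{-1/2})$ uniformly in $\tau$ by a process-level oracle Bahadur bound under the stronger growth $m^3\xi_m^2(\log N)^3 = o(N)$ and $c_m^2\xi_m = o(N^{-1/2})$, and the last summand produces the model-misspecification term.

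The hardest step is the sharp expectation bound for $R_s(\tau)$ and the subsequent uniform control of $S^{-1}\sum_s R_s(\tau)$. Without the band structure of $J_m^{-1}(\tau)$, one cannot reduce to local quantities of bounded dimension; instead the argument must rely on the fine cancellation structure from the quantile loss and on Bernstein-type chaining over $\Tc_K$ that keeps the $m$- and $\xi_m$-dependence inside the target orders. Recovering the oracle threshold under $m\xi_m^2\log N = o(N/S)$, rather than a substantially stronger condition, is the technically most delicate part of the proof.
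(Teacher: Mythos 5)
Your strategy for $\eqref{eq:genubpo}$ matches the paper's. The paper obtains the oracle Bahadur expansion (Theorem~\ref{th:bahsimple}), subtracts it from the aggregated expansion (Theorem~\ref{th:aggbah}), and exploits exact cancellation of the leading linear terms; the sharp expectation bound $\eqref{eq:re3bias}$ on the non-averaging part of the remainder, and McDiarmid concentration for the averaged centered part together with a union bound over $\Tc_K$, produce exactly the $(1+\log N/\sqrt S)$-inflated terms you describe. Your diagnosis that the expectation bound is the crux is correct (see Remark~\ref{rem:fixeddiminter}).

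There is, however, a gap in your argument for $\eqref{eq:genubpr}$. The "middle summand" $\ZZ(x_0)^\top(\hat\zb_{or}(\tau) - \zg_N(\tau))$ is not $o_P(\|\ZZ(x_0)\|N^{-1/2})$ uniformly in $\tau$; by the oracle Bahadur representation it equals $\ZZ(x_0)^\top\bU_N(\tau) + o_P(\|\ZZ(x_0)\|N^{-1/2})$, and $\bU_N$ is a genuine $O_P(N^{-1/2})$ stochastic fluctuation. After applying $\Pi_K - I$, the $o_P$ remainder stays $o_P$ by boundedness of $\|\Pi_K\|_\infty$, but $(\Pi_K - I)[\ZZ(x_0)^\top\bU_N]$ does not vanish for trivial reasons and is not controlled by "a process-level oracle Bahadur bound." The paper bounds it via $\eqref{huanglemma5.1}$--$\eqref{eq:interbaha1}$: Lemma~5.1 of \cite{huang2003} reduces $(\Pi_K-I)$ to the best $\Theta_G$-approximation error, Theorem~6.27 of \cite{schumaker:81} reduces that to the modulus of continuity $\sup_{|\tau-\tau'|\le\delta}|\ZZ(x_0)^\top(\bU_N(\tau)-\bU_N(\tau'))|$ over a mesh width $\delta\to 0$, and a dedicated asymptotic equicontinuity statement (Lemma~\ref{lem:equicont}) shows that this modulus is $o_P(\|\ZZ(x_0)\|N^{-1/2})$ under the stated growth conditions. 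Without this equicontinuity step your proof of $\eqref{eq:genubpr}$ does not close; the conditions $m^3\xi_m^2(\log N)^3=o(N)$ and $c_m^2\xi_m=o(N^{-1/2})$ that you invoke are indeed used, but to justify equicontinuity of the leading linear process, not merely a Bahadur bound.
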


The proof of Theorem \ref{th:gendiffup} is given in Section \ref{sec:proof_dcolin}. The general upper bound provided in~\eqref{eq:genubpo} takes a rather complicated form. Under additional assumptions on $c_m$, major simplifications are possible. Due to space considerations we do not provide additional details here, rather we implicitly carry out the simplifications when proving the following result.

\begin{cor}[Oracle Rule for $\overline{\zb}(\tau)$] \label{th:orgenpo} Assume that conditions \hyperref[A1]{(A1)}-\hyperref[A3]{(A3)} hold and that additionally $m^4(\log N)^{10} = o(N)$, $c_m^2(\zg_N)\xi_m = o(N^{-1/2})$. Provided that additionally $S = o(N^{1/2}/(m\xi_m^2(\log N)^2))$ the estimator $\overline{\zb}(\tau)$ defined in \eqref{eq:zbbar} satisfies
\begin{align}
 \frac{\sqrt{N} \bu_N^\top(\overline \zb(\tau) - \zg_N(\tau))}{(\bu_N^\top J_m^{-1}(\tau)\E[\ZZ\ZZ^\top] J_m^{-1}(\tau)\bu_N)^{1/2}} \weak \Nc\big(0,\tau(1-\tau)\big), \label{eq:dcolin_asym}
\end{align}
for any $\tau \in \Tc, \bu_N \in \R^m$, where $J_m(\tau)$ is defined in the statement of Corollary~\ref{th:orlipo}. The same holds for the oracle estimator $\hat \zb_{or}(\tau)$.
\end{cor}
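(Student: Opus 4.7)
The plan is to decompose
\[
\overline\zb(\tau) - \zg_N(\tau) = \big(\overline\zb(\tau) - \hat\zb_{or}(\tau)\big) + \big(\hat\zb_{or}(\tau) - \zg_N(\tau)\big),
\]
and handle the two pieces separately. For the oracle piece I would invoke the pointwise CLT for $\hat\zb_{or}(\tau)$ developed in \cite{ChaVolChe2016} (already in the paper as the analogous backbone behind Corollary~\ref{th:orlipo}): under \hyperref[A1]{(A1)}--\hyperref[A3]{(A3)} together with $m^4(\log N)^{10} = o(N)$ and $c_m^2 \xi_m = o(N^{-1/2})$, the bias from model misspecification is negligible and
\[
\frac{\sqrt{N}\,\bu_N^\top(\hat\zb_{or}(\tau) - \zg_N(\tau))}{(\bu_N^\top J_m^{-1}(\tau)\E[\ZZ\ZZ^\top]J_m^{-1}(\tau)\bu_N)^{1/2}} \weak \Nc(0,\tau(1-\tau))
\]
for any sequence $\bu_N \in \R^m$. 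The first (divide-and-conquer) piece then needs to be shown to be asymptotically negligible in the same normalization.

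To bound the first piece, Cauchy--Schwarz gives $|\bu_N^\top(\overline\zb(\tau) - \hat\zb_{or}(\tau))| \leq \|\bu_N\|\cdot\|\overline\zb(\tau) - \hat\zb_{or}(\tau)\|$. Because \hyperref[A1]{(A1)} bounds the eigenvalues of $\E[\ZZ\ZZ^\top]$ and \hyperref[A2]{(A2)}--\hyperref[A3]{(A3)} do the same for $J_m(\tau)$, the normalizing denominator above is of exact order $\|\bu_N\|$. Hence it is enough to prove $\|\overline\zb(\tau) - \hat\zb_{or}(\tau)\| = o_P(N^{-1/2})$ pointwise in $\tau$. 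I would obtain this by applying Theorem~\ref{th:gendiffup} with the singleton grid $\Tc_K = \{\tau\}$, so that $K \ll N^2$ is vacuous; the hypotheses $m\xi_m^2\log N = o(N/S)$ and $c_m = o(\xi_m^{-1})$ of that theorem follow from $S = o(N^{1/2}/(m\xi_m^2(\log N)^2))$ and $c_m^2 \xi_m = o(N^{-1/2})$ combined with $\xi_m^2 = o(N^{1/2})$.

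The main obstacle, and really the only nontrivial work, is bookkeeping on the six-term bound \eqref{eq:genubpo}, checking that each term is $o(N^{-1/2})$: the directly imposed $c_m^2\xi_m = o(N^{-1/2})$ disposes of $c_m^2\xi_m$ and, via $c_m^4\xi_m^4 = (c_m^2\xi_m)^2\xi_m^2$ together with $\xi_m^2 = o(N^{1/2})$, of $c_m^4\xi_m^4$; the term $(mc_m^2\log N/N)^{1/2}$ is controlled by combining $c_m^2 \xi_m = o(N^{-1/2})$ with $\xi_m \gtrsim m^{1/2}$ (from \hyperref[A1]{(A1)} via $\E[\|\ZZ\|^2] \geq m/M$) to get $mc_m^2\log N = o(1)$; the upper bound $S = o(N^{1/2}/(m\xi_m^2 (\log N)^2))$ is calibrated precisely so that $(Sm\xi_m^2 \log N/N)(1+\log N/\sqrt{S}) = o(N^{-1/2})$, which is the binding constraint on $S$; and $N^{-1/2}(m\xi_m^2 c_m^2 (\log N)^3)^{1/2}$ together with $N^{-1/2}(Sm^3\xi_m^2(\log N)^7/N)^{1/4}$ are handled by combining the same bounds on $S$ and $c_m$ with the global growth condition $m^4(\log N)^{10} = o(N)$, which is exactly what is needed to force $Sm^3\xi_m^2(\log N)^7 = o(N)$. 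Slutsky's theorem applied to the decomposition then yields \eqref{eq:dcolin_asym}; the analogous statement with $\hat\zb_{or}$ replacing $\overline\zb$ is simply the cited oracle CLT.
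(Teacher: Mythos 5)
Your proposal is correct and takes essentially the same route as the paper: decompose $\overline\zb - \zg_N$ into $(\overline\zb - \hat\zb_{or}) + (\hat\zb_{or} - \zg_N)$, handle the oracle piece via the Bahadur representation/CLT from Chao, Volgushev and Cheng (2016), control the divide-and-conquer piece via Cauchy--Schwarz and the explicit bound~\eqref{eq:genubpo} in Theorem~\ref{th:gendiffup}, and conclude by Slutsky. The paper phrases the final step as an application of the Lindeberg CLT to the leading linearization common to both estimators rather than citing the oracle CLT and Slutsky'ing, but this is purely presentational; your term-by-term bookkeeping of~\eqref{eq:genubpo} matches what the paper calls ``straightforward calculations.''
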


We note that $m^4(\log N)^{10} = o(N)$ imposes restriction on model complexity. In particular, this requires $m = o(N^{1/4}(\log N)^{-5/2})$. An immediate consequence of Corollary \ref{th:orgenpo} is the oracle rule for correctly specified models, including linear quantile regression with increasing dimension as a special case. In this case $c_m(\zg_N) = 0$.

\begin{cor}[Oracle rule for correctly specified models]\label{cor:orinc} 
	Assume that conditions \hyperref[A1]{(A1)}-\hyperref[A3]{(A3)} hold and that the quantile function satisfies $Q(x;\tau) = \ZZ(x)^\top \zg_N(\tau)$ with a transformation $\ZZ(x)$ of possibly increasing dimension $m$ with each entry bounded almost surely. Then
	$\bu_N^\top\overline\zb(\tau)$ satisfies the oracle rule provided that $m^4 (\log N)^{10} = o(N)$ and $S = o(N^{1/2}m^{-2}(\log N)^{-2})$. 
\end{cor}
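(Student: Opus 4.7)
The plan is to derive Corollary \ref{cor:orinc} as a direct consequence of Corollary \ref{th:orgenpo} by verifying that each of its hypotheses is fulfilled under the assumptions made here. The proof reduces to a bookkeeping exercise: show that the bound $\xi_m$ can be controlled by $\sqrt{m}$ under the boundedness of entries of $\ZZ(x)$, then check the three conditions (on $c_m(\zg_N)$, on $m$, and on $S$) in turn.

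First I would note that, since each entry of $\ZZ(x)$ is bounded almost surely by some constant, say $C$, we have
\[
\|\ZZ(x)\|^2 \;=\; \sum_{j=1}^m Z_j(x)^2 \;\leq\; C^2 m \qquad P\text{-a.s.},
\]
so that assumption \hyperref[A1]{(A1)} holds with $\xi_m \leq C\,m^{1/2}$, i.e.\ $\xi_m = O(m^{1/2})$. Next, since the model is correctly specified with $Q(x;\tau) = \ZZ(x)^\top \zg_N(\tau)$, the uniform approximation error defined in \eqref{def:cn} satisfies $c_m(\zg_N) = 0$; in particular the growth condition $c_m^2(\zg_N)\xi_m = o(N^{-1/2})$ required by Corollary~\ref{th:orgenpo} is trivially fulfilled, as is the milder $c_m(\zg_N) = o(\xi_m^{-1})$ used in Theorem~\ref{th:gendiffup}.

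It remains to translate the hypothesis $S = o(N^{1/2} m^{-2} (\log N)^{-2})$ into the form required by Corollary~\ref{th:orgenpo}, namely $S = o(N^{1/2}/(m\xi_m^2 (\log N)^2))$. Using $\xi_m^2 = O(m)$, we have $m\xi_m^2 (\log N)^2 = O(m^2 (\log N)^2)$, and hence
\[
\frac{N^{1/2}}{m\xi_m^2 (\log N)^2} \;\gtrsim\; \frac{N^{1/2}}{m^2 (\log N)^2},
\]
so the assumed bound on $S$ indeed implies the one required in Corollary~\ref{th:orgenpo}. The assumption $m^4 (\log N)^{10} = o(N)$ matches exactly the complexity restriction in that result.

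Having verified all hypotheses of Corollary~\ref{th:orgenpo}, we apply its conclusion \eqref{eq:dcolin_asym} directly for any fixed $\tau \in \Tc$ and any sequence $\bu_N \in \R^m$, giving the desired oracle rule for $\bu_N^\top \overline{\zb}(\tau)$. No single step is a genuine obstacle; the only care needed is in checking that the boundedness of entries of $\ZZ$ yields the correct scaling $\xi_m = O(m^{1/2})$ so that the transcription of the $S$ condition goes through. \qed
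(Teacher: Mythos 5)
Your proof is correct and takes the same route as the paper: observe that $c_m(\zg_N)=0$ under correct specification and $\xi_m = O(m^{1/2})$ from the a.s.\ boundedness of the entries of $\ZZ$, translate the condition on $S$ accordingly, and invoke Corollary~\ref{th:orgenpo}. The paper states this in a single sentence immediately after the corollary; you have simply written out the bookkeeping it leaves implicit.
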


This corollary reduces to Corollary~\ref{th:orlipo} in Section~\ref{sec:lin} when $m$ is fixed. It describes the effect of allowing $m$ to increase on the sufficient upper bound for $S$. We note that $c_m(\zg_N)=0$ and $\xi_m \asymp m^{1/2}$ under the settings of Corollary \ref{cor:orinc}, whose proof follows directly from Corollary \ref{th:orgenpo}. 

Both Corollary~\ref{th:orloc} and Corollary \ref{th:orgenpo} can be applied to local polynomial spline models, but Corollary~\ref{th:orloc} puts less assumptions on $S$ and $m$ than Corollary~\ref{th:orgenpo}, because Corollary~\ref{th:orloc} exploits the local support property of splines. This is illustrated in the following Remark \ref{rem:compspl} for the specific setting of Example~\ref{ex:spline}. 

\begin{remark}[Comparing Corollary~\ref{th:orloc} and Corollary \ref{th:orgenpo} with univariate splines] \label{rem:compspl} \rm 
Let $\ZZ$ denote the univariate splines from Example~\ref{ex:spline} and let $\bu_N := \ZZ(x_0)$ for a fixed $x_0 \in \Xc$. We assume that \hyperref[A2]{(A2)} and \hyperref[A3]{(A3)} hold and that $X$ has a density on $\Xc = [0,1]$ that is uniformly bounded away from zero and infinity.  
We will verify in Section \ref{sec:compsplpro} that \hyperref[A1]{(A1)} holds with $\xi_m \asymp m^{1/2}$. For simplicity, assume that the bias $c_m(\zg_N)$ satisfies $\xi_m c_m(\zg_N)^2 = o(N^{-1/2})$. Corollary~\ref{th:orloc} shows that sufficient conditions for the oracle rule are $m^2 (\log N)^6=o(N)$ and $S = o((Nm^{-1}(\log N)^{-4})^{1/2} \wedge (Nm^{-2}(\log N)^{-10}))$. On the other hand, Corollary~\ref{th:orgenpo} requires the more restrictive conditions $m^4 (\log N)^{10}=o(N)$ and $S = o(N^{1/2}m^{-2}(\log N)^{-2})$.  

\end{remark}  

Remark~\ref{rem:compspl} indicates that Corollary~\ref{th:orloc} gives sharper bounds than Corollary~\ref{th:orgenpo} when both results are applicable. Note however that Corollary~\ref{th:orgenpo} applies to more general settings since, in contrast to Corollary~\ref{th:orloc}, it does not require Condition \hyperref[L]{(L)}. For instance, linear models as discussed in Corollary~\ref{cor:orinc} can in general not be handled by Corollary~\ref{th:orloc}. Finally, we discuss sufficient conditions for the process oracle rule.

\newpage

\begin{cor}[Oracle Rule for $\widehat{\zb}(\tau)$]\label{th:orgenpr}
Let $x_0 \in \Xc$, let the conditions of Corollary~\ref{th:orgenpo} hold. Suppose that $\tau \mapsto Q(x_0;\tau) \in \Lambda_c^{\eta}(\Tc)$, $r_\tau\geq \eta$, $N^2 \gg K \gg G \gg N^{1/(2\eta)}\|\ZZ(x_0)\|^{-1/\eta}$, and $\sup_{\tau \in \Tc} |\ZZ(x_0)^\top\zg_N(\tau) - Q(x_0;\tau)| = o(\|\ZZ(x_0)\|N^{-1/2})$. Let the limit $H_{x_0}(\tau_1,\tau_2)$ defined in \eqref{eq:covloc} exist and be non-zero for any $\tau_1,\tau_2 \in \Tc$ 
\begin{enumerate}
\item The projection estimator $\widehat{\zb}(\tau)$ defined in \eqref{eq:hatbeta} satisfies
\begin{align} \label{eq:weakorgen}
\frac{\sqrt{N}}{\|\ZZ(x_0)\|}\big(\ZZ(x_0)^\top \hat\zb(\cdot) - Q(x_0;\cdot)\big) \weak \Gb_{x_0}(\cdot) \mbox{ in } \ell^\infty(\Tc),
\end{align}
where $\Gb_{x_0}$ is a centered Gaussian process with $\E[\Gb_{x_0}(\tau)\Gb_{x_0}(\tau')] = H_{x_0}(\tau,\tau')$. This matches the process convergence of $\ZZ(x_0)^\top \hat\zb_{or}(\tau)$. 
\item The estimator $\hat F_{Y|X}(\cdot|x_0)$ defined in \eqref{eq:Fhat} satisfies, 
\begin{align*}
\frac{\sqrt{N}}{\|\ZZ(x_0)\|}\big(\hat F_{Y|X}(\cdot|x_0) - F_{Y|X}&(\cdot|x_0)\big) \weak -f_{Y|X}(\cdot|x_0)\Gb_{x_0}\big(F_{Y|X}(\cdot|x_0)\big)\\
 &\mbox{ in } \ell^\infty\big((Q(x_0;\tau_L),Q(x_0;\tau_U))\big),
\end{align*}
where and $\Gb_{x_0}$ is the centered Gaussian process from~\eqref{eq:weakorgen}. The same is true for $\hat F_{Y|X}^{or}$.
\end{enumerate}
\end{cor}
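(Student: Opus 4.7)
The plan is to reduce Corollary~\ref{th:orgenpr} to the process-level behaviour of the oracle estimator $\ZZ(x_0)^\top\hat\zb_{or}(\cdot)$ through the uniform bound~\eqref{eq:genubpr} in Theorem~\ref{th:gendiffup}. Part~1 is then obtained by combining the negligibility of $\ZZ(x_0)^\top(\hat\zb(\cdot) - \hat\zb_{or}(\cdot))$ with process-level weak convergence of the oracle, and part~2 follows by viewing $\hat F_{Y|X}(\cdot|x_0)$ as a Hadamard-differentiable functional of $\tau\mapsto \ZZ(x_0)^\top\hat\zb(\tau)$ and applying the functional delta method, exactly as in the proofs of Corollary~\ref{cor:fhatli} and Corollary~\ref{th:orlocpr}(2).

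First I would verify that the hypotheses of Theorem~\ref{th:gendiffup} are in force. The growth conditions $m\xi_m^2\log N = o(N/S)$ and $c_m(\zg_N) = o(\xi_m^{-1})$ are implied by Corollary~\ref{th:orgenpo}'s hypotheses $S = o(N^{1/2}/(m\xi_m^2(\log N)^2))$ and $c_m^2(\zg_N)\xi_m = o(N^{-1/2})$ together with $\xi_m \gtrsim m^{1/2}$; the additional conditions $m^3\xi_m^2(\log N)^3 = o(N)$ and $K\gg G\gg 1$ needed for the process bound~\eqref{eq:genubpr} follow from $m^4(\log N)^{10} = o(N)$ and the assumption $K \gg G \gg N^{1/(2\eta)}\|\ZZ(x_0)\|^{-1/\eta}$.

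Next I would check that each of the three terms on the right-hand side of~\eqref{eq:genubpr} is $o_P(\|\ZZ(x_0)\|N^{-1/2})$. For the first term, a mechanical exponent-by-exponent check of~\eqref{eq:genubpo} under $S = o(N^{1/2}/(m\xi_m^2(\log N)^2))$ and $c_m^2\xi_m = o(N^{-1/2})$ shows that every summand is $o_P(N^{-1/2})$, so multiplication by $\|\ZZ(x_0)\|$ delivers the required rate. The projection bias $\sup_\tau|(\Pi_K Q(x_0;\cdot))(\tau) - Q(x_0;\tau)|$ is of order $G^{-\eta}$ by standard B-spline approximation theory for $f\in\Lambda_c^\eta(\Tc)$ with spline degree $r_\tau\geq\eta$, which is $o(\|\ZZ(x_0)\|N^{-1/2})$ precisely under $G\gg N^{1/(2\eta)}\|\ZZ(x_0)\|^{-1/\eta}$. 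The basis approximation term $\sup_\tau|\ZZ(x_0)^\top\zg_N(\tau) - Q(x_0;\tau)|$ is $o(\|\ZZ(x_0)\|N^{-1/2})$ by hypothesis. Summing, $\sup_{\tau\in\Tc}|\ZZ(x_0)^\top(\hat\zb(\tau) - \hat\zb_{or}(\tau))| = o_P(\|\ZZ(x_0)\|N^{-1/2})$.

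The main obstacle is then to establish~\eqref{eq:weakorgen} with $\hat\zb_{or}$ in place of $\hat\zb$. Finite-dimensional convergence is delivered by Corollary~\ref{th:orgenpo} applied with $\bu_N = \ZZ(x_0)$ together with the Cram\'er--Wold device across a finite grid $\tau_1,\ldots,\tau_L\in\Tc$; existence of the limit in~\eqref{eq:covloc} identifies the covariance kernel of $\Gb_{x_0}$. Tightness is the hard part: I would derive it from a uniform (in $\tau\in\Tc$) Bahadur representation for $\ZZ(x_0)^\top\hat\zb_{or}(\tau)$ along the lines developed in Section~\ref{sec:ABR} of the supplement, writing the leading linear term as $(\|\ZZ(x_0)\|\sqrt{N})^{-1}\sum_i\ZZ(x_0)^\top J_m^{-1}(\tau)\ZZ_i(\tau - \IF\{Y_i\leq \ZZ_i^\top\zg_N(\tau)\})$ and controlling its $\tau$-modulus of continuity by VC-type bracketing bounds for the class of indicators indexed by $\tau$, with the $\|\ZZ(x_0)\|^{-1}$ normalisation absorbing the growth of $m$. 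Once process convergence of the oracle is in hand, combining with the negligibility above yields part~1, and part~2 follows from the functional delta method applied to the Hadamard-differentiable functional $g\mapsto \tau_L + \int_{\tau_L}^{\tau_U}\IF\{g(\tau) < y\}d\tau$, using the identity~\eqref{eq:cdf} to identify $-f_{Y|X}(\cdot|x_0)\Gb_{x_0}(F_{Y|X}(\cdot|x_0))$ as the limit.
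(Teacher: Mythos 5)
Your proposal is correct in structure and matches the paper's reduction: control $\sup_{\tau}|\ZZ(x_0)^\top(\hat\zb(\tau)-\hat\zb_{or}(\tau))|$ via \eqref{eq:genubpo} and \eqref{eq:genubpr}, check each term is $o_P(\|\ZZ(x_0)\|N^{-1/2})$ under the stated conditions on $S,G$, and then transfer the oracle's process limit; part 2 then drops out of the functional delta method. Your verification that the projection bias $O(G^{-\eta})$ is negligible precisely under $G\gg N^{1/(2\eta)}\|\ZZ(x_0)\|^{-1/\eta}$, and that the Bahadur remainder in \eqref{eq:genubpo} is $o_P(N^{-1/2})$ under Corollary~\ref{th:orgenpo}'s hypotheses, is the same ``straightforward computation'' the paper alludes to.

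The one genuine divergence is how the oracle process CLT is handled. The paper simply cites Theorem~2.1 of \cite{ChaVolChe2016} for
$\frac{\sqrt{N}}{\|\ZZ(x_0)\|}(\ZZ(x_0)^\top\hat\zb_{or}(\cdot)-Q(x_0;\cdot))\weak \Gb_{x_0}(\cdot)$, and for the Hadamard differentiability step in part~2 it cites Corollary~4.1 of the same reference. You instead propose to re-derive the oracle CLT: finite-dimensional via Corollary~\ref{th:orgenpo} plus Cram\'er--Wold, tightness from a uniform (in $\tau$) Bahadur representation with a modulus-of-continuity bound for the leading linear term. That plan is methodologically in the right spirit — it is roughly what the cited prior work does, and the supplement's Lemma~\ref{lem:equicont} is exactly the equicontinuity ingredient you are gesturing at — but it amounts to re-proving a substantial existing theorem rather than invoking it. Your route is more self-contained; the cost is that the tightness argument in the general increasing-$m$ basis setting (no local support structure) is nontrivial and is not a matter of a routine VC/bracketing bound alone, which is precisely why the paper outsources it. Provided you carry out that tightness argument in full, your proof is sound; as written it is a correct outline that, at its hardest step, is pointing at a result one could instead cite.
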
 

The proof of Corollary \ref{th:orgenpr} is given in Section \ref{sec:proof_orgenpr}. Note that the condition on $K$ is the same as in Corollary~\ref{th:orlocpr}. Results along the lines of Corollary~\ref{th:orgenpr} can be obtained for any estimator of the form $\bu_N^\top \hat\zb(\cdot)$, as long as $\bu_N$ satisfies certain technical conditions. For example, the partial derivative, the average derivative and the conditional average derivative of $Q(x;\tau)$ in $x$ fall into this framework. For brevity, we refer the interested reader to Section 2.3 of \cite{bechfe2011} for examples of vectors $\bu_N$ and omit the technical details. 

\begin{rem}\label{rem:center} \rm
The weak convergence in~\eqref{eq:dcolin_asym} was derived with the centering $\zg_N$ for notational convenience. As pointed out in~\cite{ChaVolChe2016}, other choices of centering sequences can be advantageous (for instance, this is the case in the setting of partial linear models, see also~\cite{bechfe2011} for an alternative centering sequence). The conclusions of Theorem~\ref{th:orgenpo} can be generalized by replacing $\zg_N$ in there by other vectors $\zb_N(\tau)$ as long as certain technical conditions are satisfied. However, this requires additional notation and technicalities. To keep the presentation simple we provide those details in Section~\ref{sec:zbN} in the appendix. Similarly, the results in Section~\ref{sec:loc} could be stated in terms of more general sequences $\zb_N$ instead of the one considered in~\eqref{eq:gamman} at the cost of additional notation and conditions. 
\end{rem}

\newpage


\newpage

\newpage

\begin{supplement}[id=suppA]
 \sname{ONLINE SUPPLEMENTARY MATERIAL}
 \stitle{Distributed inference for quantile regression processes}
 \slink[doi]{COMPLETED BY THE TYPESETTER}
 \sdatatype{.pdf}
 \sdescription{The supplementary materials contain all the proofs, additional technical details, and additional simulation results.}
\end{supplement}
\setcounter{page}{1}

\setcounter{section}{0}
\renewcommand{\thesection}{S.\arabic{section}}
\renewcommand{\thetheo}{S.\arabic{section}.\arabic{theo}}
\renewcommand{\theequation}{S.\arabic{section}.\arabic{equation}}
\renewcommand{\thesubsection}{S.\arabic{section}.\arabic{subsection}}
%

\noindent\textbf{Additional Notation.} This notation will be used throughout the supplement. Let $\Sc^{m-1} := \{\bu \in \R^m: \|\bu\| = 1\}$ denote the unit sphere in $\R^m$. For a set $\Ic \subset \{1,...,m\}$, define 
\begin{align}
	\R_\Ic^m &:= \{\bu=(u_1,...,u_m)^\top\in \R^m: u_j \neq 0 \mbox{ if and only if } j \in \Ic \}
	\label{eq:defRI}\\
	\Sc_\Ic^{m-1} &:= \{\bu=(u_1,...,u_m)^\top\in \Sc^{m-1}: u_j \neq 0 \mbox{ if and only if } j \in \Ic \}\label{eq:defSI}
\end{align}
For simplicity, we sometimes write $\sup_{\tau}$($\inf_{\tau}$) and $\sup_{x}$($\inf_{x}$) instead of $\sup_{\tau\in\Tc}$($\inf_{\tau\in\Tc}$) and $\sup_{x\in\Xc}$($\inf_{x\in\Xc}$) throughout the Appendix. $I_k$ is the $k$-dimensional identity matrix for $k\in\N$.

\section{An alternative estimator for $J_{m}^P (\tau)$}\label{sec:powor}

Here, we provide a simple way to estimate the matrix $J_m(\tau)$ in a divide and conquer setup provided that an additional round of communication is acceptable. 

\begin{enumerate}
\item Compute $\overline \zb(\tau)$ on the master computer and send to each distributed unit.
\item On each machine, compute
\begin{equation*} 
\widetilde J_{ms}^P (\tau) := \frac{1}{2nh_n} \sum_{i=1}^n \ZZ_{is}\ZZ_{is}^\top \IF \Big\{|Y_{is} - \ZZ_{is}^\top \overline \zb(\tau)| \leq h_n \Big\} 
\end{equation*}
and send results back to master machine.
\item On the master, compute $\widetilde J_{m}^P (\tau) := \frac{1}{S}\sum_{s=1}^S \widetilde J_{ms}^P (\tau)$. The final variance estimator is given by $\tau(1-\tau) \widetilde J_{m}^P (\tau)^{-1} \overline\Sigma_{1} \widetilde J_{m}^P (\tau)^{-1}$.
\end{enumerate} 
Provided that $\overline\zb(\tau)$ satisfies the oracle rule, the theory developed in \cite{kato2012} for Powell's estimator applies (note that the results developed in \cite{kato2012} only require $\sqrt{N}$ consistency of the estimator $\overline \zb$, which is guaranteed by the oracle rule). In particular, following \cite{kato2012}, the optimal rate for the bandwidth $h_n$ is given by $h_N \sim N^{-1/5}$, and an explicit formula for the optimal bandwidth can be found on page 263 of \cite{kato2012}. In practice, the 'rule of thumb' provided on page 264 in \cite{kato2012} can be used to determine the bandwidth $h_n$. 

The algorithm described above provides a simple way to estimate the asymptotic covariance matrix of $\overline\zb(\tau)$ at oracle rate $N^{-2/5}$ under the same assumptions that guarantee $\overline \zb$ to satisfy the oracle rule. The additional communication and computational costs are also fairly low since only the vectors $\overline \zb$ and the matrices $\widetilde J_{ms}^P (\tau)$ need to be communicated and computation of $\widetilde J_{ms}^P (\tau)$ on each sub-sample does not involve any optimization. 

\vspace{2em}

\section{Aggregated Bahadur representations}\label{SEC:APP_ABR}


\subsection{Aggregated Bahadur representation for general series approximation model}\label{sec:ABR}

Note that Assumptions \hyperref[A1]{(A1)}-\hyperref[A3]{(A3)} imply that for any sequence of $\R^m$-valued functions $\zb_N(\tau)$ satisfying $\sup_{\tau\in\Tc}\sup_x |\zb_N(\tau)^\top\ZZ(x)-Q(x;\tau)| = o(1)$, the smallest eigenvalues of the matrices
\begin{align}
\tilde J_m(\zb_N(\tau)) := \E[\ZZ\ZZ^\top f_{Y|X}(\zb_N(\tau)^\top\ZZ|X)], \quad J_m(\tau) := \E[\ZZ\ZZ^\top f_{Y|X}(Q(X;\tau)|X)]\label{eq:defJm}
\end{align}
are bounded away from zero uniformly in $\tau$ for sufficiently large $n$. Define for any vector of functions $\bb_N:\Tc\to\R^m$,
\begin{align}
g_N(\bb_N) := \sup_{\tau \in \Tc} \big\|\E[\ZZ_i(\IF\{Y_i \leq \ZZ_i^\top  \bb_N(\tau)\}- \tau)]\big\|.
\label{eq:defgn}
\end{align}

\begin{theo}[ABR for General Series Model] \label{th:aggbah} 
	Assume that the underlying distribution of $(X_i,Y_i)$ satisfies conditions \hyperref[A1]{(A1)}-\hyperref[A3]{(A3)} and that $m \xi_m^2 \log n = o(n)$. Consider any sequence $\zb_N(\tau)$ such that $g_N := g_N(\zb_N) = o(\xi_m^{-1}), c_m := c_m(\zb_N) = o(1)$. Then we have
\[
\overline\zb(\tau) - \zb_N(\tau) = -\frac{1}{N}J_m(\tau)^{-1} \sum_{i=1}^N \ZZ_i(\IF\{Y_i \leq \ZZ_i^\top \zb_N(\tau)\}- \tau)  + r_N^{(1)}(\tau) + r_N^{(2)}(\tau)
\]
where, 
for any $\kappa_n \ll n/\xi_m^2$, any sufficiently large $n$, and a constant $C$ independent of $n$
\begin{equation} \label{eq:th1rn1}
P\Big(\sup_{\tau\in\Tc} \|r_N^{(1)}(\tau)\| \geq C R_{1,n}(\kappa_n) \Big) \leq S e^{-\kappa_n}
\end{equation}
with
\beq\label{eq:th1RN1}
R_{1,n}(\kappa_n) := \xi_m \Big( \Big(\frac{m}{n} \log n\Big)^{1/2}  + \Big(\frac{\kappa_n}{n}\Big)^{1/2}\Big)^2 + c_m \Big( \Big(\frac{m}{n} \log n\Big)^{1/2}  + \Big(\frac{\kappa_n}{n}\Big)^{1/2}\Big) + g_N.
\eeq
Moreover there exists a constant $C$ independent of $n,S$ such that for $n$ sufficiently large and $A \geq 1$
\begin{equation} \label{eq:th1rn2}
\sup_{\tau\in\Tc} P\Big(\big\| r_N^{(2)}(\tau)\big\| > C R_{2,n}(A,\kappa_n) \Big) \leq 2Sn^{-A} + 2e^{-\kappa_n^2}
\end{equation}
where 
\begin{multline}\label{eq:th1RN2}
R_{2,n}(A,\kappa_n) := \frac{A \kappa_n}{S^{1/2}} \Big(\frac{m}{n}\Big)^{1/2}\Big( (\xi_m g_N \log n)^{1/2} + \Big(\frac{m\xi_m^2 (\log n)^3}{n} \Big)^{1/4}\Big)
\\
 + \Big(1+ \frac{\kappa_n}{S^{1/2}}\Big)\Big(g_N^2\xi_m^2 + \frac{m \xi_m^2 \log n}{n} \Big).
\end{multline}
\end{theo}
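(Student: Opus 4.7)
The plan is to build the aggregated Bahadur representation by first establishing a per-subsample Bahadur expansion with a carefully decomposed remainder, then averaging over the $S$ independent subsamples in a way that exploits independence to sharpen the effective rate.

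First, for each $s$ I would use near-optimality of the quantile regression subgradient (satisfied up to a jump of order $\xi_m/n$) and expand around $\zb_N(\tau)$. Writing $\frac{1}{n}\sum_i \ZZ_{is}(\IF\{Y_{is}\le \ZZ_{is}^\top\widehat\zb^s(\tau)\}-\tau) = O(\xi_m/n)$, adding and subtracting $\IF\{Y_{is}\le \ZZ_{is}^\top\zb_N(\tau)\}$, and linearizing the conditional mean of the difference via \hyperref[A2]{(A2)}, I obtain
\begin{equation*}
\widehat\zb^s(\tau)-\zb_N(\tau) = -\tilde J_m(\zb_N(\tau))^{-1}\frac{1}{n}\sum_i\ZZ_{is}\bigl(\IF\{Y_{is}\le \ZZ_{is}^\top\zb_N(\tau)\}-\tau\bigr) + \rho^s(\tau),
\end{equation*}
where $\rho^s(\tau)$ collects (i) the empirical process error of the indicator increments centered at their conditional means, (ii) the quadratic-in-$(\widehat\zb^s-\zb_N)$ Taylor remainder from (A2), (iii) the optimization jump term, and (iv) the cost of replacing $\tilde J_m(\zb_N(\tau))$ by $J_m(\tau)$ (absorbed using $c_m$ and the spectral gap guaranteed by \hyperref[A1]{(A1)}--\hyperref[A3]{(A3)}). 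I would feed into this a preliminary uniform consistency bound $\sup_\tau\|\widehat\zb^s(\tau)-\zb_N(\tau)\|\lesssim \xi_m((m\log n/n)^{1/2}+(\kappa_n/n)^{1/2})+g_N+c_m$ with exceptional probability $\lesssim e^{-\kappa_n}$, obtained by a standard convexity argument combined with maximal inequalities on a local ball; this controls the support on which the linearization is valid and bootstraps the rate.

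The crux is the decomposition $\rho^s(\tau) = \E[\rho^s(\tau)] + (\rho^s(\tau)-\E[\rho^s(\tau)])$. The deterministic piece $\E[\rho^s(\tau)]$ is the same for every $s$ and therefore survives averaging, contributing to $r_N^{(1)}$ at rate $R_{1,n}$; this is the novel ingredient of the paper and is bounded not by a high-probability uniform estimate on $\rho^s$ but by direct moment computation using that the conditional expectation of an indicator increment decays quadratically in $\|\widehat\zb^s-\zb_N\|$ together with the Hoelder continuity of $f_{Y|X}$. The centered piece is mean zero and independent across $s$, so the average $\frac{1}{S}\sum_s(\rho^s-\E[\rho^s])(\tau)$ has variance of order $S^{-1}\operatorname{Var}(\rho^1(\tau))$, and applying a Bernstein-type inequality at fixed $\tau$ yields $R_{2,n}(A,\kappa_n)$: the $A\kappa_n/S^{1/2}$ term comes from the subexponential part (producing the $2Sn^{-A}$ probability after union-bounding the truncation events), the $\kappa_n/S^{1/2}$ factor in front of $(g_N^2\xi_m^2 + m\xi_m^2\log n/n)$ comes from the subgaussian part of Bernstein (producing the $2e^{-\kappa_n^2}$ term), and the residual "$1\cdot(g_N^2\xi_m^2+m\xi_m^2\log n/n)$" reflects the inherent variance scale of $\rho^1(\tau)$. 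Aggregation now assembles the theorem: the linear pieces combine to $-N^{-1}J_m(\tau)^{-1}\sum_{s,i}\ZZ_{is}(\IF\{Y_{is}\le\ZZ_{is}^\top\zb_N(\tau)\}-\tau)$; the deterministic remainders give $r_N^{(1)}(\tau)$ controlled uniformly over $\tau$ by a chaining argument on $\Tc$ plus a union bound over $s$ (yielding the $Se^{-\kappa_n}$ probability); the centered remainders form $r_N^{(2)}(\tau)$ controlled pointwise via the Bernstein bound above.

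The main obstacle I expect is the sharp control of $\|\E[\rho^s(\tau)]\|$ and of $\operatorname{Var}(\rho^s(\tau))$, uniformly in $\tau$ in the first case and pointwise in the second. Standard Bahadur proofs give high-probability uniform bounds that, when passed through expectation or variance, lose the square-root-$S$ improvement because expectation and supremum do not commute well enough for the desired rate. Bridging this requires a direct moment analysis: bounding the $L^2(P)$ modulus of the localized indicator-increment empirical process on a small ball around $\zb_N(\tau)$, exploiting the spectral gap of $J_m(\tau)$ and the Hoelder smoothness of $f_{Y|X}$, and peeling the ball in dyadic layers to match the consistency rate from Step~1. Once this is in place, the remainder is a careful accounting exercise combining Bernstein-type inequalities on iid vector sums with the near-linear structure of the Bahadur expansion.
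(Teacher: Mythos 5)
Your high-level plan is the right one: expand each local estimator around $\zb_N(\tau)$, identify the four canonical remainder pieces, split off a deterministic bias and a centered stochastic part, concentrate the centered part across $S$ iid subsamples, and bound the bias by a moment argument rather than a high-probability argument. That is exactly the structure in the paper's proof (which applies Theorem~\ref{th:bahsimple} per subsample, truncates $r_{n,3}^s - \E[r_{n,3}^s]$ at a deterministic level, and invokes McDiarmid's inequality on the bounded iid average). Swapping McDiarmid for a vector Bernstein inequality, as you suggest, is a legitimate cosmetic change. But there are two genuine gaps.

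First, the crucial step --- bounding $\|\E[r_{n,3}(\tau)]\|$, where $r_{n,3}$ is the centered empirical-process increment evaluated at the data-dependent $\hat\zb^s(\tau)$ --- cannot be achieved by the ``direct moment analysis'' you sketch. An $L^2(P)$-modulus bound on the indicator-increment empirical process over a small ball, combined with dyadic peeling, yields a bound of the order of $\Re_3$ (the same as the high-probability bound), not the much smaller quantity $\xi_m^2\big(g_N^2 + m\log n/n\big)$ in \eqref{eq:re3bias}. Because $\hat\zb^s$ depends on the entire subsample, $\E[r_{n,3}(\tau)]$ is not a conditional expectation of an increment at a fixed point; to expose the cancellation you must decouple $\hat\zb^s$ from the individual observation $(X_{js},Y_{js})$. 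The paper does this with a leave-one-out estimator $\breve\zb^{(-j)}(\tau)$: replacing $\hat\zb^s$ by $\hat\zb^{s,(-j)}$ in the $j$-th term makes the conditional mean vanish, and the swap costs only the leave-one-out gap $\sup_{j,\tau}\|\hat\zb^{s,(-j)}(\tau)-\hat\zb^s(\tau)\|$, which is shown to be $O\big(g_N^2\xi_m + m\xi_m\log n/n\big)$ with high probability (Lemma~\ref{lem:leaveout}). Without this device there is no route to the improved rate. Second, you misassign the bias: you put $\E[\rho^s(\tau)]$ entirely into $r_N^{(1)}$ with rate $R_{1,n}$, but $\E[r_{n,3}]$ is of order $\xi_m^2(g_N^2 + m\log n/n)$, which is \emph{not} dominated by $R_{1,n}$ (its leading term is $\xi_m\,m\log n/n$, smaller by a factor $\xi_m$ once $\xi_m$ diverges). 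In the paper, $r_N^{(1)}$ collects the secondary remainders $r_{n,1},r_{n,2},r_{n,4}$ via a union bound over $s$ (hence the $Se^{-\kappa_n}$ probability, which would make no sense for a deterministic quantity), while the bias $\E[r_{n,3}]$ is placed inside $R_{2,n}$ as the ``$1\cdot(g_N^2\xi_m^2 + m\xi_m^2\log n/n)$'' term --- the term you attribute to ``the inherent variance scale'' is actually the bias.
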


See Section \ref{sec:proof_aggbah} of the online Appendix for a proof for Theorem \ref{th:aggbah}. We give a simpler expression for the remainder terms in the following remark.

\begin{rem}
	For subsequent applications of the Bahadur representation Theorem \ref{th:aggbah}, we will often set $\kappa_n = A \log n$ for a sufficiently large constant $A>0$, and assume the remainder terms $r_N^{(1)}(\tau)+r_N^{(2)}(\tau)$, defined in Theorem \ref{th:aggbah}, are bounded with certain order. 
	
	Under the conditions $\kappa_n = A \log n$, $(\log n)^{2}S^{-1}=o(1)$ and $m \geq 1$, $R_{1,n}(\kappa_n)+R_{2,n}(\kappa_n)$ can be bounded by (neglecting a positive constant)
\begin{multline}
c_m \Big(\frac{m}{n} \log n\Big)^{1/2} + g_N+\frac{\log n}{S^{1/2}} \Big(\frac{m}{n}\Big)^{1/2}\Big( (\xi_m g_N \log n)^{1/2} + \Big(\frac{m\xi_m^2 (\log n)^3}{n} \Big)^{1/4}\Big)
\\
+ g_N^2\xi_m^2 + \frac{m \xi_m^2 \log n}{n}.\label{eq:bahsimres}
\end{multline}
	We note that without the condition $(\log n)^{2}S^{-1}=o(1)$, we have an additional $\log n$ factor to the last two terms in \eqref{eq:bahsimres}.
\end{rem}

\begin{rem} \label{rem:fixeddiminter} 
A result along the lines of Theorem~\ref{th:aggbah} can also be obtained by a direct application of the classical Bahadur representation (e.g. Theorem~5.1 in \cite{ChaVolChe2016}), but that leads to bounds on remainder terms that are less sharp compared to the ones provided in \eqref{eq:th1rn1}-\eqref{eq:th1RN2}. To illustrate this point, assume that $Q(x;\tau) = x^\top\zb(\tau)$ with $x$ denoting a covariate of fixed dimension. In this case $g_N(\zb_N) = c_m(\zb_N) = 0$ and $\xi_m, m$ are constant. The classical Bahadur representation in Theorem~5.1 of \cite{ChaVolChe2016} yields an expansion of the form 
	\[
	\hat \zb^s(\tau) - \zb_N(\tau) = -\frac{1}{n}J_m(\tau)^{-1} \sum_{i=1}^n X_{is}(\IF\{Y_{is} \leq Q(X_{is};\tau)\}- \tau) + r_n^s 
	\]
	where $J_m(\tau) := \E[XX^\top f_{Y|X}(Q(X;\tau)|X)]$, $r_n^s = O_P(n^{-3/4}\log n)$ and the exponent of $n$ in the bound on $r_n^s$ can in general not be improved. Applying this expansion to the estimator in each group we obtain
	\begin{align}
	\overline{\zb}(\tau) - \zb_N(\tau) = -\frac{1}{N}J_m(\tau)^{-1} \sum_{i=1}^N X_i(\IF\{Y_i \leq Q(X_i;\tau)\}- \tau) + \frac{1}{S}\sum_{s=1}^S r_n^s.\label{eq:agglinfix}
	\end{align}
	Without additional information on the moments of $r_n^s$, the bound $r_n^s = O_P(n^{-3/4}\log n)$ would at best provide a bound of the form $\frac{1}{S}\sum_{s=1}^S r_n^s = O_P(n^{-3/4}\log n)$. 
	Thus by the representation \eqref{eq:agglinfix} the oracle rule holds provided that $n^{-3/4}\log n = o(N^{-1/2})$, which is equivalent to $S = o(N^{1/3}(\log N)^{-4/3})$ for an arbitrarily small $\eps>0$. In contrast, for $c_m(\zb_N) = g_N(\zb_N) = 0$ the bound in \eqref{eq:bahsimres} reduces to $n^{-1} \log n +n^{-3/4}S^{-1/2}(\log n)^{7/4}$. Since $N = Sn$ this is of order $o(N^{-1/2})$ if $S = o(N^{1/2}(\log N)^{-2})$. This is a considerably less restrictive condition than $S = o(N^{1/3}(\log N)^{-4/3})$ which was derived by the 'direct' approach above. Similar arguments apply in a more general setting of a linear model with increasing dimension or series expansions.

\end{rem} 

\subsection{Aggregated Bahadur representation for local basis functions}\label{sec:ABRloc}

\begin{theo}[ABR for local basis functions] \label{th:aggbah_loc}  Assume that the underlying distribution satisfies conditions \hyperref[A1]{(A1)}-\hyperref[A3]{(A3)} and let Condition \hyperref[L]{(L)} hold. Assume that $m\xi_m^2\log n = o(n)$, $c_m(\zg_N) = o(1)$ and that $c_m(\zg_N)^2 = o(\xi_m^{-1})$.
Then we have for $\bu_N \in \Sc_\Ic^{m-1}$ (defined in \eqref{eq:defSI}) with $\Ic$ consisting of $L = O(1)$ consecutive integers ($\Ic$ is allowed to depend on $n$)
\begin{multline}\label{eq:ag_loc1}
\bu_N^\top \overline\zb(\tau) - \bu_N^\top \zg_N(\tau) = -\frac{1}{N} \bu_N^\top \tilde J_m(\zg_N(\tau))^{-1} \sum_{i=1}^N \ZZ_i(\IF\{Y_i \leq \ZZ_i^\top \zg_N(\tau)\}- \tau)
\\
+ r_N^{(1)}(\tau,\bu_N) + r_N^{(2)}(\tau,\bu_N)
\end{multline}
where, for any $\kappa_n \ll n/\xi_m^2$, any sufficiently large $n$, and a constant $C$ independent of $n$
\begin{equation} \label{eq:th11rn1}
P\Big(\sup_{\tau\in\Tc} \sup_{\bu_N\in\Sc_\Ic^{m-1}}\|r_N^{(1)}(\tau,\bu_N)\| \geq C R_{1,n}^{(L)}(\kappa_n)\Big) \leq S e^{-\kappa_n}.
\end{equation}
where
\beq\label{eq:R1NL}
R_{1,n}^{(L)}(\kappa_n) := \frac{\xi_m \log n}{n}+ \sup_{\bu_N \in \Sc_\Ic^{m-1}}\tilde{\mathcal{E}}(\bu_N,\zg_N)\Big(c_m(\zg_N)^2 + \frac{\xi_m^2(\log n)^2 + \xi_m^2\kappa_n}{n} \Big).
\eeq
Moreover there exists a constant $C$ independent of $n,S$ such that for $n$ sufficiently large and $A \geq 1$
\begin{equation} \label{eq:th11rn2}
\sup_\tau P\Big(\sup_{\bu_N \in \Sc_\Ic^{m-1}}\Big| r_N^{(2)}(\tau,\bu_N)\Big| > C R_{n,2}^{(L)}(A,\kappa_n) \Big) \leq 2Sn^{-A} + 2e^{-\kappa_n^2}
\end{equation}
where 
\begin{align} \nonumber
R_{n,2}^{(L)}(A,\kappa_n) :=\  & A \sup_{\bu_N \in \Sc_\Ic^{m-1}}\tilde{\mathcal{E}}(\bu_N,\zg_N)\Big(1+\frac{\kappa_n}{S^{1/2}}\Big) \Big(c_m(\zg_N)^4 + \frac{\xi_m^2 (\log n)^2}{n}\Big) 
\\
&+ A \frac{\kappa_n}{n^{1/2}S^{1/2}}\Big(c_m(\zg_N)(\kappa_n^{1/2}\vee \log n) + \frac{\xi_m^{1/2}(\kappa_n^{1/2}\vee \log n)^{3/2}}{n^{1/4}}\Big). \label{eq:DnLK}
\end{align}
\end{theo}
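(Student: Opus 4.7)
\textbf{Proof proposal for Theorem~\ref{th:aggbah_loc}.}
The plan is to mirror the overall strategy of Theorem~\ref{th:aggbah}, but at each step exploit (i) the exponential off-diagonal decay of $\tilde J_m(\zg_N(\tau))^{-1}$ guaranteed by Condition~\hyperref[L]{(L)} via Lemma~6.3 of~\cite{zsw98}, and (ii) the local support of $\bu_N\in\Sc_\Ic^{m-1}$. First, I would establish a single-subsample Bahadur expansion for $\hat\zb^s(\tau)$ centered at $\zg_N(\tau)$. Using the first-order optimality condition for~\eqref{eq:betas} and a Knight-type identity, write
\begin{equation*}
\hat\zb^s(\tau) - \zg_N(\tau) = -\tilde J_m(\zg_N(\tau))^{-1}\frac{1}{n}\sum_{i=1}^n \ZZ_{is}(\IF\{Y_{is}\le \ZZ_{is}^\top\zg_N(\tau)\}-\tau) + \rho^s_N(\tau),
\end{equation*}
with $\rho^s_N(\tau)$ decomposed into a deterministic bias (driven by $c_m(\zg_N)^2$ via the Taylor expansion of $F_{Y|X}$ around the projection) and an empirical process remainder. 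Averaging over $s=1,\ldots,S$ yields the oracle linear part in~\eqref{eq:ag_loc1} plus $S^{-1}\sum_s \rho^s_N(\tau)$, which I split into $r_N^{(1)}(\tau,\bu_N)=S^{-1}\sum_s \E[\rho^s_N(\tau)^\top \bu_N\mid \text{design}]$-type terms and the mean-zero part $r_N^{(2)}(\tau,\bu_N)$.

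The deterministic piece $r_N^{(1)}$ is bounded as in~\eqref{eq:R1NL} by combining three ingredients: (a) a uniform-in-$\tau$ rate $\|\hat\zb^s(\tau)-\zg_N(\tau)\|=O_P(\xi_m((m/n)\log n + \kappa_n/n)^{1/2})$ obtained from a convexity/concentration argument on the QR criterion, (b) the expectation bound $|\bu_N^\top \tilde J_m^{-1}\E[\ZZ(\IF\{\cdot\}-\tau)]|\le \tilde{\mathcal{E}}(\bu_N,\zg_N)\cdot\{c_m(\zg_N)^2+\xi_m^2(\log n)^2/n\}$ obtained by Taylor expanding the conditional distribution, and (c) the uniform-in-$\tau$ control via chaining for a finite grid of $\tau$'s followed by a Lipschitz argument, which produces the $\xi_m\log n/n$ term. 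The union bound over $s$ yields the $Se^{-\kappa_n}$ tail.

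For the stochastic remainder $r_N^{(2)}$ I would first write it as the centered average of $S$ independent, mean-zero quantities $\bu_N^\top(\rho^s_N(\tau)-\E\rho^s_N(\tau))$. The idea is to apply Bernstein's inequality across $s$, which produces the $S^{-1/2}$ improvement in~\eqref{eq:DnLK}. The per-summand variance proxy comes from a second-order expansion of the QR criterion and is controlled by $\tilde{\mathcal{E}}(\bu_N,\zg_N)$ times terms like $\xi_m^2(\log n)^2/n$ and $c_m(\zg_N)^4$, which again rely crucially on the fact that $\bu_N^\top\tilde J_m(\zg_N(\tau))^{-1}\ZZ_i$ is \emph{effectively localized} (its $\ell_1$ norm is $O(\tilde{\mathcal{E}}(\bu_N,\zg_N))$) thanks to exponential decay of $\tilde J_m^{-1}$. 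The cross term in $R_{n,2}^{(L)}$ (with $\kappa_n/(n^{1/2}S^{1/2})$ and the mixed $c_m(\zg_N)\kappa_n^{1/2}$ plus $\xi_m^{1/2}\kappa_n^{3/4}n^{-1/4}$ factors) will come from a higher-order term in the Bahadur expansion, to be controlled by a maximal inequality for the empirical process $\{\IF\{Y_{is}\le\ZZ_{is}^\top\bb\}-\IF\{Y_{is}\le\ZZ_{is}^\top\zg_N(\tau)\}\}$ indexed by $\bb$ in a shrinking ball around $\zg_N(\tau)$; the $n^{-A}$ tail arises from the exponential inequality for this local empirical process at level $A\log n$.

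The hardest step, I expect, is the uniformity in $\bu_N\in\Sc_\Ic^{m-1}$ while keeping the bound proportional to $\tilde{\mathcal{E}}(\bu_N,\zg_N)$ rather than $\sup_{\bu}\tilde{\mathcal{E}}(\bu,\zg_N)\cdot\sqrt L$, and simultaneously preserving the $1/\sqrt{S}$ gain from averaging. Because $\Ic$ has $|\Ic|=L=O(1)$ consecutive coordinates, an $\eps$-net on $\Sc_\Ic^{m-1}$ has cardinality $(1/\eps)^{O(L)}=O(1)$-polynomial, so a union bound costs nothing. The real subtlety is that the cross-subsample variance of the higher-order remainder must be bounded by $\tilde{\mathcal{E}}(\bu_N,\zg_N)^2$ times the \emph{sub-sample}-level quantities, which requires carefully tracking how $\bu_N^\top\tilde J_m^{-1}\ZZ_i$ localizes under the band-plus-decay structure; this is exactly where Lemma~6.3 of~\cite{zsw98} does the heavy lifting. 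Once these localized variance and $\ell_\infty$ bounds are established, the Bernstein step across $s$ combined with union bounds over the $\eps$-net and a finite grid in $\tau$ completes the proof.
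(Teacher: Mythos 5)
Your overall plan—single-subsample Bahadur expansion centered at $\zg_N$, separation into a deterministic and a stochastic remainder, and then a concentration argument across subsamples to harvest the $S^{-1/2}$ gain—is the right skeleton and matches the paper's general approach. The McDiarmid-vs-Bernstein distinction is a minor methodological choice: the paper truncates each $r_{n,3}^s(\tau)$ at a deterministic threshold $D_n^{(L)}(A)$ and then applies the bounded-differences inequality to the functional $(x_1,\ldots,x_S)\mapsto\sup_{\bu_N}|\bu_N^\top S^{-1}\sum_s x_s|$; your Bernstein variant would also work provided you perform an analogous truncation to control the $\ell_\infty$ envelope. Your comment on the $\eps$-net over $\Sc_\Ic^{m-1}$ being cheap because $|\Ic|=O(1)$ is also correct.

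However, there is a genuine gap: you never explain how to bound $\E[r_{n,3}^s(\tau)]$, the expectation of the empirical-process remainder, and this term does \emph{not} vanish under centering. The expectation bound you do cite—$|\bu_N^\top\tilde J_m^{-1}\E[\ZZ(\IF\{\cdot\}-\tau)]|\lesssim \tilde{\mathcal{E}}(\bu_N,\zg_N)c_m^2$—is the easy bias of the score evaluated at the deterministic point $\zg_N(\tau)$; it contributes to $r_N^{(1)}$. The hard object is $\E\big[n^{-1/2}\Gb_n^s\big(\psi(\cdot;\breve\zb^s(\tau),\tau)-\psi(\cdot;\zg_N(\tau),\tau)\big)\big]$, whose argument depends on the random estimator $\breve\zb^s(\tau)$, so a naive expectation bound inherited from $\|r_{n,3}^s\|=O_P(\xi_m^{1/2}n^{-3/4}(\log n)^{3/4})$ is too crude and destroys the averaging gain. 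The paper's key device (eq.~\eqref{eq:re3bias_a} of Theorem~\ref{th:bahsimple_bspl}, proved via leave-one-out estimators $\breve\zb^{(-j)}$ in Section~\ref{sec:proofbr_bspl}) establishes the sharper bound $\sup_\tau|\bu_N^\top\E[r_{n,3}^s(\tau)]|\lesssim\tilde{\mathcal{E}}(\bu_N,\zg_N)\big(c_m^4 + \xi_m^2(\log n)^2/n\big)$, which is exactly what produces the first line of \eqref{eq:DnLK} with the $(1+\kappa_n S^{-1/2})$ prefactor. Without this sharp expectation bound—and the leave-one-out machinery to derive it—the rate in \eqref{eq:DnLK} does not follow, and the averaged estimator retains the unimproved single-sample error. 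This was emphasized as the main novelty relative to prior Bahadur representations (see Remark~\ref{rem:fixeddiminter}), so omitting it is more than a technical loose end.
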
 

See Section \ref{sec:proof_aggbah_loc} of the online Appendix for a proof of Theorem \ref{th:aggbah_loc}. 

The following corollary gives an ABR representation which centered at the true function.

\begin{cor}\label{cor:aggbah_loctrue}
	If the conditions in Theorem \ref{th:aggbah_loc} hold, and additionally for a fixed $x_0\in\Xc$, $c_m(\zg_N) = o(\|\ZZ(x_0)\|N^{-1/2})$  
	then for any $\tau \in \Tc$, \eqref{eq:ag_loc1} in Theorem \ref{th:aggbah_loc} holds with $\bu_N$ and $\bu_N^\top \zg_N(\tau)$ being replaced by $\ZZ(x_0)$ and $Q(x_0;\tau)$.
\end{cor}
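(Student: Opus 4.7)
The plan is to reduce Corollary~\ref{cor:aggbah_loctrue} to a direct application of Theorem~\ref{th:aggbah_loc} with a carefully chosen unit vector $\bu_N$, followed by a simple deterministic substitution on the left-hand side. By Condition~\hyperref[L]{(L)}, the fixed vector $\ZZ(x_0)$ has non-zero entries only in at most $r = O(1)$ consecutive positions, so the normalized vector $\bu_N := \ZZ(x_0)/\|\ZZ(x_0)\|$ belongs to $\Sc_\Ic^{m-1}$ for some set $\Ic$ of consecutive indices with $|\Ic| \leq r$. Moreover, by the definition of $\tilde{\mathcal{E}}$, we have $\|\ZZ(x_0)\|\,\tilde{\mathcal{E}}(\bu_N,\zg_N) = \tilde{\mathcal{E}}(\ZZ(x_0),\zg_N) = O(1)$ by Condition~\hyperref[L]{(L)}; this identity will be used to translate the remainder bounds cleanly after rescaling.

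First I would apply Theorem~\ref{th:aggbah_loc} to this $\bu_N$ and multiply both sides of the expansion \eqref{eq:ag_loc1} through by $\|\ZZ(x_0)\|$. This yields an expansion of $\ZZ(x_0)^\top\overline\zb(\tau) - \ZZ(x_0)^\top\zg_N(\tau)$ in terms of the linearized empirical process $-\tfrac{1}{N}\ZZ(x_0)^\top \tilde J_m(\zg_N(\tau))^{-1}\sum_{i=1}^N \ZZ_i(\IF\{Y_i \leq \ZZ_i^\top\zg_N(\tau)\} - \tau)$ plus a rescaled remainder $\|\ZZ(x_0)\|(r_N^{(1)}(\tau,\bu_N)+r_N^{(2)}(\tau,\bu_N))$. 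Since the factor $\sup_{\bu_N\in\Sc_\Ic^{m-1}}\tilde{\mathcal{E}}(\bu_N,\zg_N)$ entering $R_{1,n}^{(L)}$ and $R_{n,2}^{(L)}$ is multiplied by $\|\ZZ(x_0)\|$, it collapses to $\tilde{\mathcal{E}}(\ZZ(x_0),\zg_N) = O(1)$, so the rescaled remainder retains the same asymptotic structure and satisfies tail bounds of the form given in \eqref{eq:th11rn1}--\eqref{eq:th11rn2}.

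Second, I would replace the centering $\ZZ(x_0)^\top\zg_N(\tau)$ on the left-hand side by $Q(x_0;\tau)$. By the definition of the sup-norm approximation error in \eqref{def:cn}, this substitution introduces a deterministic discrepancy of magnitude at most $c_m(\zg_N)$. Under the additional hypothesis $c_m(\zg_N) = o(\|\ZZ(x_0)\|N^{-1/2})$, this discrepancy is of strictly smaller order than the leading $N^{-1/2}\|\ZZ(x_0)\|$ scale of the linearization and can therefore be absorbed into the composite remainder without altering the bounds in \eqref{eq:th11rn1}--\eqref{eq:th11rn2}. Putting the two steps together produces the claimed representation with $\bu_N$ and $\bu_N^\top\zg_N(\tau)$ in \eqref{eq:ag_loc1} replaced by $\ZZ(x_0)$ and $Q(x_0;\tau)$ respectively.

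The main obstacle is conceptual bookkeeping rather than any new stochastic argument: one has to confirm that the quantity $\tilde{\mathcal{E}}(\bu_N,\zg_N)$ appearing inside $R_{1,n}^{(L)}$ and $R_{n,2}^{(L)}$ is the one attached to the \emph{normalized} unit vector, so that after multiplication by $\|\ZZ(x_0)\|$ the $O(1)$ bound furnished by Condition~\hyperref[L]{(L)} can be invoked. Once this rescaling is handled, the corollary follows with no additional concentration, Bahadur expansion, or empirical-process control beyond what Theorem~\ref{th:aggbah_loc} already provides.
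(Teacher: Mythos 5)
Your proof is correct and is essentially the argument the paper has in mind (no explicit proof is spelled out for this corollary, precisely because the reduction is this direct). The two steps — normalize to $\bu_N = \ZZ(x_0)/\|\ZZ(x_0)\|\in\Sc_\Ic^{m-1}$, rescale, and then replace the centering at $\ZZ(x_0)^\top\zg_N(\tau)$ by $Q(x_0;\tau)$ at a deterministic cost $c_m(\zg_N) = o(\|\ZZ(x_0)\|N^{-1/2})$ — are exactly what is needed. One small piece of bookkeeping is slightly imprecise: the bounds in \eqref{eq:R1NL} and \eqref{eq:DnLK} carry the factor $\sup_{\bu\in\Sc_\Ic^{m-1}}\tilde{\mathcal{E}}(\bu,\zg_N)$, a supremum over all unit vectors supported on $\Ic$, not the value at the particular direction $\ZZ(x_0)/\|\ZZ(x_0)\|$. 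So after multiplying by $\|\ZZ(x_0)\|$ the relevant quantity is $\|\ZZ(x_0)\|\sup_{\bu\in\Sc_\Ic^{m-1}}\tilde{\mathcal{E}}(\bu,\zg_N)$ rather than $\tilde{\mathcal{E}}(\ZZ(x_0),\zg_N)$ itself; this is still $O(1)$, but via $\|\ZZ(x_0)\|\le\xi_m$ together with $\sup_{\bu\in\Sc_\Ic^{m-1}}\tilde{\mathcal{E}}(\bu,\zg_N)=O(\xi_m^{-1})$ (the fact invoked in the proof of Theorem~\ref{th:locub}), not by the bare identity $\tilde{\mathcal{E}}(\ZZ(x_0),\zg_N)=O(1)$ from Condition~(L). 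This does not affect the conclusion, and the rest of the argument stands.
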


\begin{lemma}\label{lem:locexp}
In setting of Example~\ref{ex:spline} we have
\begin{align*}
\sup_{\tau \in \Tc, x\in [0,1]}\E|\ZZ(x)\tilde J_m^{-1}(\zg_N(\tau))\ZZ| &= O(1)
\\
\sup_{\tau \in \Tc, x\in [0,1]}\E|\ZZ(x) J_m^{-1}(\tau)\ZZ| &= O(1)
\end{align*}
\end{lemma}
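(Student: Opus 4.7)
\textbf{Proof proposal for Lemma~\ref{lem:locexp}.}

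The plan is to combine three ingredients: the banded structure of $J_m(\tau)$ and $\tilde J_m(\zg_N(\tau))$ inherited from the local support of the B-spline basis, uniform-in-$\tau$ eigenvalue bounds on these matrices, and the exponential decay of entries of the inverse of a banded matrix with bounded eigenvalues (Lemma 6.3 in~\cite{zsw98}). First I would establish the banded structure: since $\tilde B_i(x)\tilde B_j(x) = 0$ whenever $|i-j|>p$, we have $Z_i(X)Z_j(X) = 0$ almost surely for $|i-j|>p$, so both $J_m(\tau) = \E[\ZZ\ZZ^\top f_{Y|X}(Q(X;\tau)|X)]$ and $\tilde J_m(\zg_N(\tau)) = \E[\ZZ\ZZ^\top f_{Y|X}(\zg_N(\tau)^\top\ZZ|X)]$ are $(2p+1)$-banded, with bandwidth independent of $N$.

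Next I would show that the eigenvalues of both matrices are bounded above and away from zero uniformly in $\tau \in \Tc$ and $N$. For $J_m(\tau)$, assumptions~\hyperref[A2]{(A2)}--\hyperref[A3]{(A3)} yield the sandwich $f_{\min}\,\E[\ZZ\ZZ^\top] \preceq J_m(\tau) \preceq \bar f\,\E[\ZZ\ZZ^\top]$, so the eigenvalues lie in $[f_{\min}/M,\bar f M]$ by~\hyperref[A1]{(A1)}. For $\tilde J_m(\zg_N(\tau))$, the bound $|\zg_N(\tau)^\top\ZZ(x) - Q(x;\tau)| \leq c_m(\zg_N) = o(1)$ together with the Lipschitz continuity of $y\mapsto f_{Y|X}(y|x)$ (from $\overline{f'}<\infty$) gives $f_{Y|X}(\zg_N(\tau)^\top\ZZ|X) = f_{Y|X}(Q(X;\tau)|X) + o(1)$ uniformly, which transfers the sandwich to $\tilde J_m(\zg_N(\tau))$ for all $N$ sufficiently large. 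Lemma 6.3 in~\cite{zsw98} then yields constants $C>0$ and $\gamma\in(0,1)$, independent of $\tau$ and $N$, such that
\begin{equation*}
\max\bigl\{|(J_m^{-1}(\tau))_{ij}|,\, |(\tilde J_m^{-1}(\zg_N(\tau)))_{ij}|\bigr\} \leq C\gamma^{|i-j|}.
\end{equation*}

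Finally I would put everything together. For any $x\in[0,1]$, the vector $\ZZ(x)$ has at most $r=p+1$ nonzero entries, say with indices in $\Ic(x)$, and each nonzero entry is bounded by $m^{1/2}$ (B-splines in the sense of Definition~4.19 of~\cite{schumaker:81} take values in $[0,1]$). For the random vector $\ZZ$, since $\tilde B_j$ is supported on an interval of length $O(m^{-1})$ and $X$ has bounded density on $[0,1]$, one has $\E|Z_j| = m^{1/2}\E|\tilde B_j(X)| \leq m^{1/2}\,\|f_X\|_\infty\int \tilde B_j(x)\,dx = O(m^{-1/2})$. Hence
\begin{equation*}
\E\bigl|\ZZ(x)^\top \tilde J_m^{-1}(\zg_N(\tau))\ZZ\bigr| \leq \sum_{i\in\Ic(x)}|Z_i(x)| \sum_{j=1}^m |(\tilde J_m^{-1}(\zg_N(\tau)))_{ij}|\,\E|Z_j| \leq (p+1)\,m^{1/2}\cdot \frac{C}{1-\gamma}\cdot C'm^{-1/2},
\end{equation*}
which is $O(1)$ uniformly in $\tau$ and $x$. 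The argument for $J_m^{-1}(\tau)$ is identical.

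The main obstacle I anticipate is the uniform eigenvalue lower bound on $\tilde J_m(\zg_N(\tau))$: it relies on $c_m(\zg_N)\to 0$ (which is assumed in Example~\ref{ex:spline} by standard spline approximation theory under mild smoothness of $Q$) and on the Lipschitz behaviour of $f_{Y|X}$. Everything else (banded structure, exponential decay, and the counting argument using local support) is essentially bookkeeping once the eigenvalue bounds are in place.
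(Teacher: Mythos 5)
Your proposal is correct, and it is the canonical argument for this type of result. The paper itself omits the proof of Lemma~\ref{lem:locexp}, deferring to the proof of Example 2.3 in \cite{ChaVolChe2016}, which follows the same three-part strategy you outline: (i) locality of the B-spline basis gives a $(2p+1)$-banded $J_m(\tau)$ and $\tilde J_m(\zg_N(\tau))$; (ii) the sandwich $f_{\min}\E[\ZZ\ZZ^\top]\preceq J_m(\tau)\preceq \bar f\,\E[\ZZ\ZZ^\top]$, transferred to $\tilde J_m(\zg_N(\tau))$ via $c_m(\zg_N)=o(1)$ and boundedness of $f'_{Y|X}$, gives uniform-in-$\tau,N$ eigenvalue bounds; and (iii) Lemma~6.3 of \cite{zsw98} yields $|(J_m^{-1})_{ij}|\le C\gamma^{|i-j|}$ with $C,\gamma$ depending only on the bandwidth and spectral bounds. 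Your final counting step is also the right one: the $O(m^{1/2})$ size of the $p+1$ nonzero entries of $\ZZ(x)$ is exactly cancelled by $\E|Z_j|=O(m^{-1/2})$, which follows from $\sup_j\tilde B_j\le 1$, the $O(m^{-1})$-length support of each $\tilde B_j$, and the boundedness of the density of $X$; summing the geometric tail then gives the uniform $O(1)$ bound.
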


The proof of Lemma~\ref{lem:locexp} follows by similar arguments as the proof of Example 2.3 on page 3279 in \cite{ChaVolChe2016} and is omitted for brevity. 



\section{Proofs for Section \ref{SEC:THEO} and Section \ref{SEC:GEN}}

\subsection{Proofs for Section \ref{sec:lin}} \label{sec:proof_par}

Here we provide proofs for Theorem~\ref{th:linub} and the sufficient conditions for the oracle rule in Corollary~\ref{th:orlipo} and Corollary~\ref{th:orlipr}. Theorem~\ref{th:lilob} and the necessary conditions of Corollary~\ref{th:orlipo} and Corollary~\ref{th:orlipr} will be established in Section~\ref{sec:pfnec}.  

Theorem~\ref{th:linub} follows from Theorem~\ref{th:gendiffup} by noting that for any $P \in \Ps_1$, the quantities $m,\xi_m$ are constants that do not depend on $N$. Moreover, since $J_m(\tau)$ is invertible by assumptions \hyperref[A1]{(A1)}-\hyperref[A3]{(A3)}, $\zg_N \equiv \zb$, and additionally we have $c_m(\zg_N) = g_N(\zg_N) = 0$. 

The sufficiency of $S = o(N^{1/2}(\log N)^{-1})$ for weak convergence of $\bar\zb$ in Corollary~\ref{th:orlipo} follows from the corresponding weak convergence of $\hat\zb_{or}$ and~\eqref{eq:linubpo} which implies that under $S = o(N^{1/2}(\log N)^{-1})$ we have $\bar \zb - \hat\zb_{or} = o_P(N^{-1/2})$. 

It remains to prove the 'sufficient' part of Corollary~\ref{th:orlipr}. As a direct consequence of the second part of Theorem~\ref{th:linub} we obtain 
\[
\sup_{\tau\in\Tc} |\hat \beta_j(\tau) - \hat\beta_{or,j}(\tau)) | \leq \|\Pi_K \beta_j(\cdot) - \beta_{j}(\cdot)\|_\infty + o_P(N^{-1/2}).
\]

Next, note that by assumption, $\beta_j \in \Lambda_c^{\eta}(\Tc)$. Applying Theorem~2.62 and Theorem~6.25 of \cite{schumaker:81}, we find that
\begin{align}
\inf_{g \in \Theta_G} \|\beta_j(\cdot) - g\|_{\infty} \lesssim G^{-\eta},\label{eq:ha1}
\end{align}
where $\Theta_G$ denotes the space of splines of degree $r_\tau$ over the grid $t_1,...,t_G$ on $\Tc=[\tau_L,\tau_U]\subset(0,1)$. This together with Lemma~5.1 of \cite{huang2003} 
shows that
\begin{align}
\|\Pi_K \beta_j(\cdot) - \beta_j(\cdot)\|_\infty = O(G^{-\eta}).\label{eq:bdh_N}
\end{align}
Thus the 'sufficient' part of the statement of Corollary~\ref{th:orlipr} follows. \hfill $\Box$



\subsection{Proofs for Section \ref{sec:loc}}\label{sec:proof_loc}

This section contains the proofs of Theorem~\ref{th:locub} and the sufficient conditions for weak convergence in Corollary~\ref{th:orloc} and Corollary~\ref{th:orlocpr}. The proof of Theorem~\ref{th:loclob} and the necessary conditions in Corollary~\ref{th:orloc} and Corollary~\ref{th:orlocpr} are given in section~\ref{sec:pfnec}.

\subsubsection{Proof of Theorem~\ref{th:locub}}\label{sec:proof_locub} 

We begin with a proof of~\eqref{eq:locubpo}. Define
\begin{multline}\label{eq:RNS}
R_n(S) := \Big(1+\frac{\log N}{S^{1/2}}\Big)\Big(c_m^2(\zg_N) + \frac{S\xi_m^2 (\log N)^2}{N}\Big)
\\
+ \frac{\|\ZZ(x_0)\| \xi_m S \log N}{N}  + \frac{\|\ZZ(x_0)\|}{N^{1/2}}\Big(\frac{S \xi_m^2 (\log N)^{10}}{N}\Big)^{1/4}.
\end{multline}
From Theorem~\ref{th:bahsimple_bspl} in the supplementary material we obtain the following representation for the oracle estimator
\begin{align}\label{eq:bahgen1_loc}
\sup_{\tau\in\Tc} \Big| \frac{\ZZ(x_0)^\top}{\|\ZZ(x_0)\|}(\hat\zb_{or}(\tau) - \zg_N(\tau)) - \tilde\bU_{1,N}(\tau)\Big|
= O_P(R_N(1))
\end{align} 
where 
\begin{align}
	\tilde \bU_{1,N}(\tau):=- N^{-1} \frac{\ZZ(x_0)^\top}{\|\ZZ(x_0)\|} \tilde J_m(\zg_N(\tau))^{-1}\sum_{i=1}^N \ZZ_i(\IF\{Y_i \leq \ZZ_i^\top \zg_N(\tau)\}- \tau).\label{eq:tU1N}
\end{align}
Let $\bu_N := \frac{\ZZ(x_0)}{\|\ZZ(x_0)\|}$ and apply Theorem~\ref{th:aggbah_loc} to obtain 
\begin{align}
\bu_N^\top(\overline\zb(\tau) - \zg_N(\tau)) = \tilde\bU_{1,N}(\tau) + r_N^{(1)}(\tau,\bu_N) + r_N^{(2)}(\tau,\bu_N)
\end{align}
with 
\begin{align*}
&P\Big(\sup_{\tau\in\Tc_K} |r_N^{(1)}(\tau,\bu_N) + r_N^{(2)}(\tau,\bu_N)|\geq \frac{C}{2}R_{1,n}^{(L)}(A\log n) + \frac{C}{2}R_{2,n}^{(L)}(A,A\log n) \Big)
\\
 \leq&  2N^2(3S+2)N^{-A}
\end{align*}
and $R_{1,n}^{(L)}(A\log n), R_{2,n}^{(L)}(A,A\log n)$ defined in Theorem~\ref{th:aggbah_loc}. Note that under assumption~\hyperref[L]{(L)} we have $\tilde \Ec(\bu_N,\zg_N) = O(\xi_m^{-1})$. Now a straightforward but tedious calculation shows that under the assumptions of Theorem~\ref{th:locub} we have, for any fixed constant $A>0$,
\begin{align*}
R_{1,n}^{(L)}(A\log n) + R_{2,n}^{(L)}(A,A\log n) = O(R_n(S)).
\end{align*} 
Choosing $A=3$, a combination of~\eqref{eq:bahgen1_loc} and the bound above yields~\eqref{eq:locubpo}. 

To prove the second claim we first shall prove that 
\beq\label{eq:unloc}
\sup_{\tau \in \Tc} \big\|\tilde\bU_{1,N}(\tau)-\bU_N(\tau) \big\| = o_P\big(N^{-1/2}\big),
\eeq
where
\begin{align}
	\bU_N(\tau):=- N^{-1} \frac{\ZZ(x_0)^\top}{\|\ZZ(x_0)\|} J_m(\tau)^{-1}\sum_{i=1}^N \ZZ_i(\IF\{Y_i \leq Q(X_i;\tau)\}- \tau).\label{eq:UN}
\end{align}
{This follows by similar arguments as Step 1 in the proof of Theorem 2.4 in Section A.2 on pages 3294-3296 of  \cite{ChaVolChe2016}, by noting that the conditions required there follow exactly by our \hyperref[A1]{(A1)}-\hyperref[A3]{(A3)}, \hyperref[L]{(L)}, \hyperref[L1]{(L1)} where we use that $\xi_m^4 (\log N)^6 = o(N)$, $c_m(\zg_N)^2=o(N^{-1/2})$ by \hyperref[L1]{(L1)}, and $\Ic$ and $L$ in the statement of Theorem 2.4 of \cite{ChaVolChe2016} being the coordinates of the non-zero components of $\ZZ(x_0)$ and $r$, respectively.} 

The proof of the second claim is very similar to the proof of~\eqref{eq:genubpr}. Use~\eqref{eq:unloc} instead of~\eqref{eq:barh} and~\eqref{eq:bahgen1_loc} instead of~\eqref{eq:bahgen1} and similar arguments as in the proof of~\eqref{eq:genubpr} to obtain    
\begin{multline*}
\sup_{\tau \in \Tc} \Big|(\Pi_K \ZZ(x_0)^\top \hat\zb_{or}(\cdot))(\tau) - \ZZ(x_0)^\top \hat \zb_{or}(\tau)\Big| \leq o_P(\|\ZZ(x_0)\|N^{-1/2})
\\ 
+ \sup_{\tau \in \Tc} \Big|[\Pi_K Q(x_0;\cdot)](\tau) - Q(x_0;\tau)\Big| + \sup_{\tau \in \Tc} |\ZZ(x_0)^\top\zg_N(\tau) - Q(x_0;\tau)|
\\
+ \sup_{\tau \in \Tc} \Big|[\Pi_K \ZZ(x_0)^\top \bU_N](\tau) - \ZZ(x_0)^\top \bU_N(\tau)\Big|, 
\end{multline*}
and bound the last term exactly as in the proof of~\eqref{eq:genubpr}. 

\subsubsection{Proof of \eqref{eq:weakloc} and the weak convergence of $\hat\zb_{or}(\tau)$ in Corollary~\ref{th:orloc}}\label{sec:proof_prop_orloc1} 

Combining~\eqref{eq:bahgen1_loc} and~\eqref{eq:unloc} we obtain the representation
\begin{multline*}
\ZZ(x_0)^\top(\hat\zb_{or}(\tau) - \zg_N(\tau))
\\
 = - N^{-1} \ZZ(x_0)^\top J_m(\tau)^{-1}\sum_{i=1}^N \ZZ_i(\IF\{Y_i \leq Q(X_i;\tau)\}- \tau)
 + o_P\Big(\frac{\|\ZZ(x_0)\|}{N^{1/2}}\Big). 
\end{multline*}
Moreover, by straightforward calculations bounding the terms in~\eqref{eq:locubpo} we obtain under assumption~\hyperref[L1]{(L1)} the representation $\ZZ(x_0)^\top(\hat\zb_{or}(\tau)-\hat\zb(\tau)) = o_P(\|\ZZ(x_0)\|N^{-1/2})$.

To prove the weak convergence result for both $\hat\zb_{or}$ and $\overline{\zb}$, it thus suffices to prove that 
\[
\frac{1}{\sqrt{N}} \sum_{i=1}^N \frac{\ZZ(x_0)^\top J_m(\tau)^{-1}\ZZ_i(\IF\{Y_i \leq Q(X_i;\tau)\} -\tau)}{(\ZZ(x_0)^\top J_m(\tau)^{-1}\E[\ZZ\ZZ^\top]J_m(\tau)^{-1}\ZZ(x_0))^{1/2}} \weak \Nc(0,\tau(1-\tau)).
\] 
This follows by an application of the Lindeberg CLT. The verification of the Lindeberg condition here is a simple modification from finite dimensional joint convergence in the proof of (2.4) in Section A.1.2 in \cite{ChaVolChe2016} to pointwise convergence. \hfill$\qed$

\subsubsection{Proof of~\eqref{eq:prloc} and part 2 of Corollary \ref{th:orlocpr}}\label{sec:pforlocpr}

The proof of the process convergence of $\hat\zb_{or}(\tau)$ follows directly by Theorem 2.4 of \cite{ChaVolChe2016}. Moreover, the bound 
\[
\sup_{\tau\in \Tc}|\ZZ(x_0)(\hat \zb(\tau) - \hat \zb_{or}(\tau))| = o_P(N^{-1/2}\|\ZZ(x_0)\|)
\]
follows from Theorem~\ref{th:locub} after some simple computations to bound the term $\sup_{\tau\in \Tc_K}|\ZZ(x_0)(\overline{\zb}(\tau) - \hat \zb_{or}(\tau))|$ and using similar arguments as in the proof of~\eqref{eq:bdh_N} to obtain the bound 
\[
\sup_{\tau \in \Tc} \Big|[\Pi_K Q(x_0;\cdot)](\tau) - Q(x_0;\tau)\Big|  = O(G^{-\eta}).
\]
This implies~\eqref{eq:prloc}. The proof of the second part follows by similar arguments as the proof of Corollary~4.1 in \cite{ChaVolChe2016} and is omitted for the sake of brevity.  \hfill $\qed$


\subsection{Proof of Theorem~\ref{th:lilob}, Theorem~\ref{th:loclob} and necessary conditions for $S,G$ in Section~\ref{sec:lin} and Section~\ref{sec:loc}}
\label{sec:pfnec}

~

First, we will prove in Section~\ref{sec:sharpG} that the oracle rule in Corollary~\ref{th:orlipr} fails whenever $G \lesssim N^{1/(2\eta)}$ and that the oracle rule in Corollary~\ref{th:orlocpr} fail whenever $G \lesssim N^{1/(2\eta)}\|\ZZ(x_0)\|^{-1/\eta}$, no matter how $S$ is chosen. 

Second, in Section~\ref{sec:specconst} we will construct a special class of data generation processes. This construction will be utilized in Section~\ref{sec:failorpo} to prove that the oracle rules in Corollary~\ref{th:orlipo} and Corollary~\ref{th:orloc} fail if $S \gtrsim N^{1/2}$ or $S \gtrsim N^{1/2}\xi_m^{-1}$, respectively. 

Third, in Section~\ref{sec:failorpr} we will prove that, provided $G \gg N^{1/(2\eta_\tau)}$ and $S \gtrsim N^{1/2}$, the oracle rule in Corollary~\ref{th:orlipr} fails and that, for $G \gg N^{1/(2\eta)}\|\ZZ(x_0)\|^{-1/\eta_\tau}$ and $S \gtrsim N^{1/2}\xi_m^{-1}$, the oracle rule in Corollary~\ref{th:orlocpr} fails.

Fourth, in Section~\ref{sec:prlob} we will derive the lower bounds in Theorem~\ref{th:lilob} and Theorem~\ref{th:loclob} from the necessity of the conditions in Corollary~\ref{th:orloc} and \ref{th:orlocpr}. 

\subsubsection{Sharpness of the conditions on $G$ in Corollary \ref{th:orlipr} and \ref{th:orlocpr}}\label{sec:sharpG}

Fix an integer $\eta_\tau \geq 2$ and assume that $G = G_N \lesssim N^{1/(2\eta_\tau)}$. We will now prove that weak convergence in~\eqref{eq:fixedpro} fails for some $(P,\ZZ) \in \Ps_1(d^{1/2},1,\overline{f},\overline{f'},f_{min})$, independently of the choice of $S$.  

We begin by noting that it is possible to construct a function $g$ on $\R$ with the following properties: $g$ has support $[-1,1]$, $g$ is $\eta_\tau$ times continuously differentiable on $\R$ with absolute values of all the function and derivatives bounded uniformly by 1, $g$ does not vanish on $\R$. From the properties listed here it is clear that $g$ is not a polynomial and hence, $d_{r_\tau} := \inf_{p\in \Ps_{r_\tau}} \sup_{x\in [-1,1]}|g(x)-p(x)|>0$ for any fixed $r_\tau\geq \eta_\tau$ and $r_\tau\in\N$, where $\Ps_{r_\tau}$ is the space of polynomials of degree $r_\tau$. 

We now construct a function $Q$ on $\Tc= [\tau_L,\tau_U]$ that is a proper quantile function with the following properties: the corresponding distribution function $F$ is twice continuously differentiable, its density $f$ satisfies \hyperref[A2]{(A2)}-\hyperref[A3]{(A3)} and there exists a subsequence $N_k$ and $\eps > 0$ such that for all $k$
\begin{align}
G_{N_k}^{\eta_\tau} \inf_{p \in \Ps\Ps_{r_\tau}(G_{N_k})} \sup_{\tau\in \Tc}|Q(\tau) - p(\tau)| \geq \eps,\label{eq:lowbdd}
\end{align}
where $\Ps\Ps_{r_\tau}(G_N)$ is the set of piecewise polynomials of degree $r_\tau$ defined on a the equally spaced grid $\tau_L = t_{1,N} < ... < t_{G_N,N} = \tau_U$. Begin by considering the case $\limsup_{N \to \infty} G_N = \infty$. Pick a subsequence $N_k$ of $N$ and a sequence of integers $\nu_k \leq G_{N_k}$ such that
\begin{multline*}
I_k := \Big[\tau_L + \frac{\nu_k(\tau_U - \tau_L)}{G_{N_k}}, \tau_L + \frac{(\nu_k+1)(\tau_U - \tau_L)}{G_{N_k}} \Big]
\\
\subset \Big(\tau_L + (\tau_U - \tau_L)\sum_{j = 1}^{k-1} 2^{-j}, \tau_L + (\tau_U - \tau_L)\sum_{j = 1}^{k} 2^{-j} \Big).
\end{multline*} 
By construction, the intervals $I_k$ are mutually disjoint and each interval is contained between two adjacent knots in the grid $t_{1,N_k},...,t_{G_{N_k},N_k}$.

Define for some $a,c>0$ (specified later): 
\begin{align}
Q(\tau) := a\tau + c \sum_{k=2}^\infty (2G_{N_k})^{-\eta_\tau} g(2G_{N_k}(\tau - s_k)/(\tau_U - \tau_L))\label{eq:Q1}
\end{align}
where $s_k$ denotes the midpoint of $I_k$. Note that by construction the supports of $\tau \mapsto g(2G_{N_k}(\tau - s_k)/(\tau_U - \tau_L))$ do not overlap for different $k$, so for each fixed $\tau$ only one summand in the sum above does not vanish. Moreover, again by construction, the support of $\tau \mapsto g(2G_{N_k}(\tau - s_k)/(\tau_U - \tau_L))$ is contained between two knots of the form $(\tau_U - \tau_L)\nu_k/G_{N_k}, (\tau_U - \tau_L)(\nu_k+1)/G_{N_k}$ for some $\nu_k\in\N$. Hence, \eqref{eq:Q1} implies that
\begin{align}
\inf_{p \in \Ps\Ps_{r_\tau}(G_{N_k})} \sup_{\tau\in \Tc}|Q(\tau) - p(\tau)| \geq c(2G_{N_k})^{-\eta_\tau} d_{r_\tau}.\label{eq:lowbd1}
\end{align}

Suppose now that $\limsup_{N\to\infty}G_N < \infty$. Since in this case $G_N$ is a bounded sequence of natural numbers, it must take at least one value, say $N_0$, infinitely often. Let $s := \tau_L + (\tau_U - \tau_L)/(2G_{N_0})$. Define for $a,c>0$,
\begin{align}
Q(\tau) := a\tau + c (2G_{N_0})^{-\eta_\tau} g(2G_{N_0}(\tau - s)/(\tau_U - \tau_L)). \label{eq:Q2}
\end{align}
Similar to \eqref{eq:lowbd1}, we have for $N_k \equiv N_0$ 
\begin{align}
\inf_{p \in \Ps\Ps_{r_\tau}(G_{N_k})} \sup_{\tau\in \Tc}|Q(\tau) - p(\tau)| \geq c(2G_{N_k})^{-\eta_\tau} d_{r_\tau}.\label{eq:lowbd2}
\end{align}

Simple computations show that for the $Q$ defined in~\eqref{eq:Q1} or~\eqref{eq:Q2} $\sup_{\tau} |Q'(\tau)| \leq a + c, \sup_{\tau} |Q''(\tau)| \leq c, \inf_{\tau} Q'(\tau) \geq a - c$. Hence $a,c>0$ can be chosen such that \hyperref[A2]{(A2)}-\hyperref[A3]{(A3)} hold for $F = Q^{-1}$. Thus we have established~\eqref{eq:lowbdd}.  

Now let $P(X_1 = m^{1/2}\eee_j) = 1/m$ for $j=1,...,m$ and let $\ZZ(x) \equiv x$ and assume $X_1,...,X_N$ are i.i.d. Let $Y_1,...,Y_N$ be independent of $\{X_i\}_{1=1,...,N}$ and have quantile function $Q$ for which~\eqref{eq:lowbdd} holds and \hyperref[A2]{(A2)}-\hyperref[A3]{(A3)} hold for $F = Q^{-1}$. To simplify notation, assume that~\eqref{eq:lowbdd} holds with $N \equiv N_k$. Observe that by definition $\tau \mapsto \hat\beta_j(\tau) \in \Ps\Ps_{r_\tau}(G_N)$ for any $j=1,...,d$, where we recall that $\BB(\tau)$ is a vector of B-spline basis with degree $r_\tau$. Thus we have almost surely for all $N$
\begin{align}
G_{N}^{\eta_\tau} \sup_{\tau\in \Tc}|\hat\beta_j(\tau) - \beta_j(\tau)| \geq G_{N}^{\eta_\tau} \inf_{p \in \Ps\Ps_{r_\tau}(G_{N_k})}\sup_{\tau\in \Tc}| p(\tau)-Q(\tau)| \geq \eps > 0. \label{eq:lowboundbias1}
\end{align}
Note that \eqref{eq:lowboundbias1} holds regardless of the number of groups $S$. Now, since $G_N \lesssim N^{1/(2\eta_\tau)}$, we have $C := \sup_{N\geq 1} G_{N}^{\eta_\tau}/N^{1/2} < \infty$ and thus~\eqref{eq:lowboundbias1} implies that for all $N$
\begin{align}\label{eq:lowboundbias2}
N^{1/2} \sup_{\tau\in \Tc}|\hat\beta_j(\tau) - \beta_j(\tau)| \geq \eps/C > 0 \quad a.s.
\end{align}
Now assume that the weak convergence \eqref{eq:fixedpro} holds. We have by the continuous mapping theorem 
\begin{align}
\sqrt{N} \sup_{\tau\in \Tc}|\hat\beta_j(\tau) - \beta_j(\tau)| \weak \sup_{\tau\in \Tc} |\Gb(\tau)|, \label{eq:cmt}
\end{align}
where $\Gb$ is a tight, centered Gaussian process with continuous sample paths. By the arguments given on pages 60-61 of \cite{leta1991}, it follows that $\sup_{\tau\in \Tc} |\Gb(\tau)|$ has a continuous distribution with left support point zero, and thus $P(\sup_{\tau\in \Tc} |\Gb(\tau)| < \delta) > 0$ for all $\delta > 0$. This also implies that $[\eps/(2C),\infty)$ is a continuity set of the distribution of $\sup_{\tau\in \Tc} |\Gb(\tau)|$, and thus the weak convergence together with the Portmanteau Theorem implies that 
\[
\lim_{N \to \infty} P\Big(\sqrt{N} \sup_{\tau\in \Tc}|\hat\beta_j(\tau) - \beta_j(\tau)| \geq \eps/(2C) \Big) = P\Big(\sup_{\tau\in \Tc} |\Gb(\tau)| \geq \eps/(2C) \Big) < 1.
\]
This contradicts~\eqref{eq:lowboundbias2}, and thus the weak convergence \eqref{eq:fixedpro} can not hold. Arguments for the case in Corollary \ref{th:orlocpr} with increasing $m$ are similar as above, except that the rate of weak convergence in \eqref{eq:cmt} is $\sqrt{N}\|\ZZ(x_0)\|^{-1}$, and $G_N\lesssim N^{1/(2\eta_\tau)}\|\ZZ(x_0)\|^{-1/\eta_\tau}$. \hfill $\qed$

\subsubsection{A special class of data generation processes}\label{sec:specconst}

Consider a family of quantile functions $Q_{a,b}(\tau) = a \tau^2 + b\tau$, $\tau \in (0,1)$ indexed by $a\geq 0,b>0$. The corresponding distribution functions $F_{a,b}$ have support $[0,a+b]$ and take the form 
\begin{align}\label{eq:Fab}
F_{a,b}(y) = 
\begin{cases}
\frac{1}{2a}(-b + (b^2+4ay)^{1/2}),&\mbox{ }a>0,b>0;\\
\frac{y}{b},&\mbox{ }a=0,b>0.
\end{cases}
\end{align}
Note that the first two derivatives of the function $F_{a,b}$ defined in \eqref{eq:Fab} are 
\[
F_{a,b}'(y) = (b^2+4ay)^{-1/2}, \quad F_{a,b}''(y) = -2a(b^2+4ay)^{-3/2}
\]
and in particular 
\[
\inf_{y\in[0,a+b]} F_{a,b}'(y) \geq (b+2a)^{-1} \quad \sup_{y\in[0,a+b]} |F_{a,b}'(y)| \leq b^{-1},\quad  \sup_{y\in[0,a+b]} |F_{a,b}''(y)| \leq 2ab^{-3}.
\] 
Choosing $b = 1/\overline{f}$ and $a$ such that
\begin{equation}\label{eq:condamax}
0 \leq 2a < \Big(1/f_{min} - 1/\overline{f}\Big) \wedge \Big(\overline{f'} (\overline{f})^{-3}\Big)
\end{equation}
ensures that
\begin{equation}\label{eq:assa1a3}
\inf_{y\in[0,a+b]} F_{a,b}'(y) \geq f_{min}, \quad \sup_{y\in[0,a+b]} |F_{a,b}'(y)| \leq \overline{f}, \quad \sup_{y\in[0,a+b]} |F_{a,b}''(y)| \leq \overline{f'}.
\end{equation}
Let $Y_1,...,Y_n$ be i.i.d. with distribution function $F_{a,b}$ and $X_1,...,X_n$ i.i.d., independent of $Y_1,...,Y_n$, and such that $P(X_1 = \eee_j\sqrt{m}) = 1/m, j=1,...,m$ where $\eee_j \in \R^m$ denotes the $j$th unit vector. In this model, we have
\[
\sum_{i=1}^n \rho_\tau(Y_i - \zb^\top X_i) = \sum_{j=1}^m \sum_{i: X_i = \eee_j}\rho_\tau(Y_i - \sqrt{m}\beta_j).
\] 
Define $\Ac_j := \{i: X_i = \eee_j\}$, $n_j := \# \Ac_j$. Letting 
\begin{equation}\label{eq:tildebeta}
\tilde\zb(\tau) = \argmin_{\zb \in \R^m} \sum_{i=1}^n\rho_\tau(Y_i - \zb^\top X_i)
\end{equation}
	we find that $\tilde\beta_j(\tau) = \argmin_{b \in \R}\sum_{i: X_i = \eee_j}\rho_\tau(Y_i - \sqrt{m}b)$, and by p.9 in \cite{K2005} the solution of this minimization problem with smallest absolute value is $\tilde \beta_j(\tau) = m^{-1/2}Y_{(\lceil n_j\tau\rceil)}^{j}\IF\{n_j> 0\}$ where $Y_{(k)}^{j}$ is the $k$-th order statistic of the sample $\{Y_i\}_{i\in\Ac_j}$ and $\lceil u\rceil$ denotes the smallest integer greater or equal $u$. 
	
Now assume that $m,n,K,G$ are sequences of positive integers indexed by $N \to\infty$ such that $n \geq 2, m\leq n, n = N/S$ for an integer $S$.

In Section~\ref{sec:prooftilbeta} of the online Appendix we shall prove the following facts about $\tilde \beta_j(\tau)$ defined above 
\begin{enumerate}
\item For any fixed $\tau \in\Tc$, $b > 0$ and any $a_{max} > 0$ there exists $a \in [0,a_{max}]$ such that
\begin{align}
\limsup_{N\to\infty} \frac{n}{m}\Big| \E[m^{1/2}\tilde\beta_j(\tau)] - Q_{a,b}(\tau) \Big| > 0 \label{`EQ:TILBET1}
\\
\frac{n}{m}\Var(m^{1/2}\tilde\beta_j(\tau)) \lesssim 1 \label{`EQ:TILBET2}
\end{align}
\item Assume that $G \gg (N/m)^\alpha$ for some $\alpha > 0$ and $N/m \geq 1$.
Let $\check \beta_j(\tau) := \sum_{k=1}^K A_k(\tau_k) \tilde\beta_j(\tau_k)$ where 
\begin{align}
A_k(\tau) := \BB(\tau)^\top \bigg(\sum_{k=1}^K \BB(\tau_k)\BB(\tau_k)^\top \bigg)^{-1} \BB(\tau_k),\label{eq:Ak}
\end{align}
and we recall that $\BB = (B_1,...,B_q)^\top$ where $\{B_1,...,B_q\}$ B-spline basis with equidistant knots $\tau_L = t_1 < ... < t_G = \tau_U$ and degree $r_\tau\geq\eta_\tau$. For any closed interval $\Tc_0 \subset \Tc$ with non-empty interior and any $a_{max}, b>0$ there exists $a \in [0,a_{max}]$ and a sequence $\tau_N$ in $\Tc_0$ such that 
\begin{align}
\limsup_{N\to\infty} \frac{n}{m}\Big| \E[m^{1/2}\check\beta_j(\tau_N)] - Q_{a,b}(\tau_N) \Big| > 0 \label{`EQ:HATBET1}
\\
\frac{n}{m}\Var(m^{1/2}\check\beta_j(\tau_N)) \lesssim 1. \label{`EQ:HATBET2}
\end{align} 
\end{enumerate} 


\subsubsection{Necessity of conditions of oracle rules in Corollary~\ref{th:orlipo} and Corollary~\ref{th:orloc}} \label{sec:failorpo}

Fix an arbitrary $\tau \in \Tc$. Let $Y_1,...,Y_N$ be i.i.d. with distribution function $F_{a,b}$ defined in \eqref{eq:Fab} and independent of $X_1,...,X_N$ where $X_i$ i.i.d. with $P(X_1 = \eee_j\sqrt{m}) = 1/m, j=1,...,m$. Let $\ZZ(x) := x$. Define $b = 1/\overline{f}$ and pick $a$ satisfying~\eqref{eq:condamax} such that \eqref{`EQ:TILBET1}-\eqref{`EQ:TILBET2} hold. 

We begin by considering the case where $m = d$ is fixed. Denote by $P$ the measure corresponding to $X_1, Y_1$. Due to the choice of $a,b$ and the distribution of $X$ the pair $(\ZZ,P)$ is an element of $\Ps_1(d^{1/2},1,\overline f, \overline{f'},f_{min})$. Moreover, by construction, $\{\hat \beta_j^s(\tau)\}_{s=1,...,S}$ are i.i.d. and $\hat \beta_j^1(\tau)$ has the same distribution as $\tilde \beta_j(\tau)$ defined in~\eqref{eq:tildebeta}. 

Next, we shall prove that weak convergence of $\overline\zb$ in Corollary~\ref{th:orlipo} fails for $\bu = \eee_1$. Given~\eqref{`EQ:TILBET2}, a simple computation shows that 
\begin{equation}\label{eq:varbb_h1}
\E[(\sqrt{N}(\overline\beta_1(\tau) - \E[\overline\beta_1(\tau)]))^2] = \frac{N}{S} \Var(\tilde\beta_1(\tau)) = n \Var(\tilde\beta_1(\tau)) \lesssim 1
\end{equation}
for any fixed $\tau \in \Tc$. Thus the sequence $\sqrt{N}(\overline\beta_1(\tau) - \E[\overline\beta_1(\tau)])$ is uniformly integrable.

Now assume that the that weak convergence of $\overline\zb$ in Corollary~\ref{th:orlipo} holds. By~\eqref{`EQ:TILBET1} we know that for $S \gtrsim N^{1/2}$ and any fixed $\tau \in \Tc$
\begin{equation}\label{eq:biasbb_h1}
\big|\E[\overline\beta_1(\tau)] - \beta_1(\tau)\big| \gtrsim 1/n = S/N \gtrsim N^{-1/2}.
\end{equation} 

If $\big|\E[\overline\beta_1(\tau)] - \beta_1(\tau)\big| \asymp N^{-1/2}$, uniform integrability of $\sqrt{N}(\overline\beta_1(\tau) - \E[\overline\beta_1(\tau)])$ implies uniform integrability of $\sqrt{N}(\overline\beta_1(\tau) - \beta_1(\tau))$. In that case weak convergence of $\overline\zb$ implies $\E[\sqrt{N}(\overline\beta_1(\tau) - \beta_1(\tau))] \to 0$ by Theorem 25.12 on p.338 of \cite{B95}, which contradicts~\eqref{eq:biasbb_h1}.

If $\big|\E[\overline\beta_1(\tau)] - \beta_1(\tau)\big| \gg N^{-1/2}$, \eqref{eq:varbb_h1} implies that $N\E[(\overline\beta_1(\tau)-\beta_1(\tau))^2]\to\infty$, and thus $\sqrt{N}\big|\overline\beta_1(\tau) - \beta_1(\tau)\big|$ diverges to infinity in probability, which contradicts weak convergence of $\overline\zb$.

Now consider the setting of Section~\ref{sec:loc} where $m \to \infty$. With the choice of $X$ described in Section \ref{sec:specconst} we have $\zg_N(\tau) = m^{-1/2} Q_{a,b}(\tau)(1,...,1)^\top \in \R^m$, $\xi_m = m^{1/2}$, $c_m(\zg_N) = 0$. The matrix $\E[\ZZ\ZZ^\top]$ is the $m\times m$ identity matrix while $J_m(\tau), \tilde J_m(\zg_N(\tau))$ are diagonal with entries bounded away from zero and infinity. A simple computation shows that~\hyperref[L]{(L)} holds with $r=1$. Denote by $P_N$ the sequence of measures corresponding to $X_i, Y_i$. Picking $m$ such that $m^2(\log N)^6 = o(N)$ ensures that $(\ZZ,P_N)$ lies in $\Ps_L(1,\overline f,\overline{f'},f_{min},R)$ for any $R \geq 1$. Letting $x_0 = \ZZ(x_0) = \eee_1 \in \R^m$ the weak convergence in~\eqref{eq:weakloc} takes the form
\[
\sqrt{\frac{N}{m}}(m^{1/2}\overline{\beta}_1(\tau) - Q_{a,b}(\tau)) \weak \Nc(0,\sigma(\tau))
\]
for some constant $\sigma(\tau)$. Moreover,~\eqref{`EQ:TILBET2} yields
\begin{multline*}
\E[(\sqrt{N}m^{-1/2}(m^{1/2}\overline\beta_1(\tau) - m^{1/2}\E[\overline\beta_1(\tau)]))^2]
\\
 = \frac{N}{Sm} \Var(m^{1/2}\tilde\beta_1(\tau)) = \frac{n}{m} \Var(m^{1/2}\tilde\beta_1(\tau)) \lesssim 1
\end{multline*}
while~\eqref{`EQ:TILBET1} implies that for $S \gtrsim N^{1/2}/m^{1/2} = N^{1/2}\xi_m^{-1}$
\[
\big|\E[m^{1/2}\overline\beta_1(\tau)] - Q_{a,b}(\tau)\big| \gtrsim m/n = mS/N \gtrsim m^{1/2}N^{-1/2}.
\] 
Now failure of weak convergence in~\eqref{eq:weakloc} can be established for $x_0 = \ZZ(x_0) = \eee_1 \in \R^m$ by exactly the same arguments as given in the first part of the proof for the fixed $m$ case, details are omitted for the sake of brevity.   

\subsubsection{Necessity of conditions for oracle rules in Corollary~\ref{th:orlipr} and Corollary~\ref{th:orlocpr}} \label{sec:failorpr}

Fix a subset $\Tc_0 \subset \Tc$ which is closed and has non-empty interior. Let $Y_1,...,Y_N$ be i.i.d. with distribution function $F_{a,b}$ defined in \eqref{eq:Fab} and independent of $X_1,...,X_N$ where $X_i$ i.i.d. with $P(X_1 = \eee_j\sqrt{m}) = 1/m, j=1,...,m$. Let $b = 1/\overline{f}$ and pick $a$ satisfying~\eqref{eq:condamax} and a sequence $\tau_N$ in $\Tc_0$ such that \eqref{`EQ:HATBET1}-\eqref{`EQ:HATBET2} hold. Since $\tau_N$ is a sequence in a compact set, it must have a convergent subsequence $\tau_{N_k} \to \tau_0 \in \Tc_0$. From now on, we will without loss of generality assume that $N = N_k$.

Assume that~\eqref{eq:fixedpro} holds. Process convergence in~\eqref{eq:fixedpro}, along with the continuity of the sample paths of the limiting processes, implies that the sequence of random variables $\sqrt{N}(m^{1/2}\hat\beta_{1}(\tau_{N}) - Q_{a,b}(\tau_{N}))$ converges to a centered normal random variable. This can be led to a contradiction with \eqref{`EQ:HATBET1}-\eqref{`EQ:HATBET2} by similar arguments as in Section~\ref{sec:failorpo} after observing that by definition $\hat \beta_1(\tau) = S^{-1} \sum_{s=1}^S \check \beta_1(\tau)$.  

The proof that~\eqref{eq:prloc} fails can be done similarly, and details are omitted for the sake of brevity. \hfill $\Box$

\subsubsection{Proof of Theorem~\ref{th:loclob} and Theorem~\ref{th:lilob}}
\label{sec:prlob}

The proofs of both results follow by the same type of arguments, and thus we only give details for the proof of Theorem~\ref{th:lilob}. This proof will be based on the necessary conditions in Corollary~\ref{th:orlipo} and Corollary~\ref{th:orlipr}. 

First we derive~\eqref{eq:lilobpo} by contradiction. Assume that for some $\tau\in \Tc$ we have for any $(P,\ZZ)\in\Pc_1$ and $C>0$ that $P(\|\bar\zb(\tau) - \hat\zb_{or}(\tau)\| > CS/N) \to 0$. Then $\bar\zb(\tau) - \hat\zb_{or}(\tau) = o_P(S/N)$ and thus $\sqrt{N}(\bar\zb(\tau) - \zb(\tau)) \weak \Nc(0,H(\tau,\tau))$ even if $S = N^{1/2}$. However, this contradicts the necessity statement in Corollary~\ref{th:orlipo} (in the proof of Corollary~\ref{th:orlipo} we show that for any $\tau \in \Tc$ if $S/N^{1/2} \gtrsim 1$ then there exists $(P,\ZZ)\in\Pc_1$ such that this weak convergence fails). 

The claim in~\eqref{eq:lilobpr} will be derived from the 'necessary' part of Corollary~\ref{th:orlipr} by similar arguments. Assume that~\eqref{eq:lilobpr} does not hold, i.e. there exist $c,\eta>0$ such that for any $(P,\ZZ)\in\Pc_1$ and any $x_0 \in \Xc$ with $\tau\mapsto Q(x_0;\tau) \in \Lambda_c^\eta$ we have for any $C>0$
\[
P\Big(\sup_{\tau \in \Tc}\|\widehat{\zb}(\tau) - \hat\zb_{or}(\tau)\| \geq \frac{CS}{N} + CG^{-\eta}\Big) \to 0. 
\]
This implies 
\[
\sup_{\tau \in \Tc}\|\widehat{\zb}(\tau) - \hat\zb_{or}(\tau)\| = o_P\Big( \frac{S}{N} + G^{-\eta}\Big),
\] 
and the process convergence $\sqrt{N}(\hat\zb_{or} - \zb) \weak \Gb$ implies $\sqrt{N}(\hat\zb - \zb) \weak \Gb$. This contradicts the necessity of $S = o(N^{1/2})$ \textit{and} $G \gg N^{1/(2\eta)}$ in Corollary~\ref{th:orlipr}. Hence~\eqref{eq:lilobpr} follows.  \hfill $\Box$


\subsection{Proofs for Section \ref{SEC:GEN}}

Recall the definition of $g_N(\bb_N)$ in \eqref{eq:defgn} and $c_m(\bb_N)$ in \eqref{def:cn}. As mentioned in Remark~\ref{rem:center}, the results in Section~\ref{SEC:GEN} hold for general 'centerings' $\zb_N$ (which includes $\zg_N$ as a special case) provided that certain technical conditions are satisfied. The precise form of those conditions is discussed in section~\ref{sec:zbN}. There, we show that $\zg_N$ satisfies those general conditions. In sections~\ref{sec:proof_dcolin} and~\ref{sec:proof_orgenpr}, all theoretical results will be proved for $\zb_N$ instead of $\zg_N$ under the general conditions described in section~\ref{sec:zbN}. The results with $\zg_N$ in Section~\ref{SEC:GEN} follow as a special case.   



\bigskip

\subsubsection{Details of Remark~\ref{rem:center}}\label{sec:zbN}
Assume that \hyperref[A1]{(A1)}-\hyperref[A3]{(A3)} hold. Corollary~\ref{th:orgenpo} holds for any sequence of centering coefficients $\zb_N$ under the following conditions: $S = o(N^{1/2}/(m\xi_m^2(\log N)^2))$, $S c_m^2(\zb_N) m \log N = o(1)$, and
\begin{enumerate}
\item[(B1)]\label{B1} $g_N(\zb_N) = o(N^{-1/2})$ and $m^4(\log N)^{10} + m^2\xi_m^4 = o(N)$. 
\item[(B2)]\label{B2} $m c_m(\zb_N) \log N = o(1)$ and the sequence $\bu_N$ (in Corollary~\ref{th:orgenpo}) satisfies
\end{enumerate}
\begin{align*}
\chi_N(\bu_N ,\zb_N)&:= \sup_{\tau \in \Tc} \Big| \bu_N^\top J_m(\tau)^{-1} \E\big[\ZZ_i\big(\IF\{Y_i \leq Q(X_i;\tau)\} - \IF\{Y_i \leq \ZZ^\top_i \zb_N(\tau) \}\big)\big] \Big|
\\
& = o(\|\bu_N\|N^{-1/2}).
\end{align*}

Furthermore, Corollary~\ref{th:orgenpr} holds under the additional restriction $c_m(\zb_N) = o(\|\ZZ(x_0)\|N^{-1/2})$ and the other conditions on $Q, G, K$ stated in Corollary~\ref{th:orgenpr}.

In the sequel, we verify that, assuming the conditions stated in Corollary~\ref{th:orgenpo} and Corollary~\ref{th:orgenpr} respectively, $\zg_N$ satisfies the conditions given above. Note that some conditions in these theorems overlap with the conditions stated above, so we only need to verify that the conditions $c_m^2(\zg_N)\xi_m = o(N^{-1/2})$ and $S = o(N^{1/2}/(m\xi_m^2(\log N)^2))$ in both theorems imply $g_N(\zg_N) = o(N^{-1/2})$, $\chi_N(\bu_N ,\zg_N)=o(\|\bu_N\|N^{-1/2})$, $S c_m^2(\zg_N) m \log N = o(1)$ and moreover $m c_m(\zg_N) \log N = o(1)$.

Under the definition of $\zg_N$ in \eqref{eq:gamman} and \hyperref[A1]{(A1)}-\hyperref[A3]{(A3)}, we have $g_N(\zg_N) = O(\xi_mc_m^2(\zg_N))$. To see this, note that under \hyperref[A2]{(A2)},
\begin{align}
\ZZ(F_{Y|X}(\ZZ^\top \zg_N(\tau)|X) - \tau) &= \ZZ f_{Y|X}(Q(X;\tau)|X)(\ZZ^\top \zg_N(\tau) - Q(X;\tau)) \notag  
\\
& + \frac{1}{2}\ZZ f'_{Y|X}(\zeta_n(X,\tau)|X)(\ZZ^\top \zg_N(\tau) - Q(X;\tau))^2,\label{eq:compsplprot}
\end{align}
where $\zeta_n(X,\tau)$ is a value between $\ZZ(X)^\top \zg_N(\tau)$ and $Q(X;\tau)$. Thus
\begin{multline}\label{eq:compsplpro2}
g_N(\zg_N)  = \sup_{\tau \in \Tc}\Big\| \E[\ZZ(F_{Y|X}(\ZZ^\top \zg_N(\tau)|X) - \tau)] \Big\|
\\
\leq \frac{\overline{f'}c_m(\zg_N)^2}{2} \|\ZZ\| = O(\xi_m c_m^2(\zg_N)), 
\end{multline}
where the second inequality follows from the first order condition 
\[
\E[\ZZ f_{Y|X}(Q(X;\tau)|X)(\ZZ^\top \zg_N(\tau) - Q(X;\tau))] = 0
\]
by the definition of $\zg_N$ in \eqref{eq:gamman} and \eqref{eq:compsplprot}. Moreover, 
\[
\chi_N(\bu_N,\zg_N) = \sup_{\tau\in \Tc} \big| \bu_N^\top J_m(\tau)^{-1}\mu(\zg_N(\tau),\tau) \big| \lesssim  \|\bu_N\|_2 g_N(\zg_N).
\]
Hence, $c_m^2(\zg_N)\xi_m = o(N^{-1/2})$ implies $g_N(\zg_N) = o(N^{-1/2})$ and $\chi_N(\bu_N ,\zg_N)= o(\|\bu_N\|N^{-1/2})$. In addition, 
\begin{align*}
	S c_m^2(\zg_N) m \log N = o(S N^{-1/2} \xi_m^{-1} m \log N) = o(1),
\end{align*}
where the first equality is from $c_m^2(\zg_N)\xi_m = o(N^{-1/2})$ and the second from $S = o(N^{1/2}/(m\xi_m^2(\log N)^2))$. Finally 
\[
m c_m(\zg_N) \log N = o(m N^{-1/4} \log N) = o((m^4 N^{-1} (\log N)^4)^{1/4}) = o(1)
\]
where the first bound follows from $c_m^2(\zg_N)\xi_m = o(N^{-1/2}), \xi_m \gtrsim 1$ and the third bound from $m^4 (\log N)^{10}=o(N)$.

\hfill $\qed$

\subsubsection{Proof of Theorem~\ref{th:gendiffup}}\label{sec:proof_dcolin}


We begin by proving of~\eqref{eq:genubpo}. From Theorem~\ref{th:bahsimple} in the supplementary material, we obtain the following representation for the oracle estimator
\begin{align}\label{eq:bahgen1}
\hat\zb_{or}(\tau) - \zb_N(\tau) =& - J_m(\tau)^{-1} \frac{1}{N}\sum_{i=1}^N \ZZ_i(\IF\{Y_i \leq \ZZ_i^\top \zb_N(\tau)\}- \tau)
 + r_n^{or}(\tau) 
\end{align} 
where $\sup_{\tau \in \Tc} \| r_n^{or}(\tau)\| = o_P(R_N^{or})$ and
\begin{multline*}
R_N^{or} : = c_m\Big(\frac{m\log N}{N} \Big)^{1/2} + c_m^2 \xi_m  + \Big(\frac{m\xi_m^2 \log N}{N} + c_m^4\xi_m^4\Big)
\\
 + \frac{1}{N^{1/2}}\Big( \Big(m\xi_m^2 c_m^2 (\log N)^3\Big)^{1/2}+\Big(\frac{m^3\xi_m^2(\log N)^7}{N}\Big)^{1/4}\Big).
\end{multline*}
Next we will prove that under the assumptions of Theorem~\ref{th:gendiffup} we also have
\begin{align}\label{eq:bahgen1_new}
\sup_{\tau \in \Tc_K}\Big\|\overline\zb(\tau) - \zb_N(\tau) + J_m(\tau)^{-1} \frac{1}{N}\sum_{i=1}^N \ZZ_i(\IF\{Y_i \leq \ZZ_i^\top \zb_N(\tau)\}- \tau)\Big\| = O_P(R_N)
\end{align}
where 
\begin{align}
R_N := ~&  c_m\Big(\frac{m\log N}{N} \Big)^{1/2} + c_m^2 \xi_m  + \Big(\frac{Sm\xi_m^2 \log N}{N} + c_m^4\xi_m^4\Big)\Big(1 + \frac{\log N}{S^{1/2}} \Big) \nonumber
\\
& + \frac{1}{N^{1/2}}\Big( \Big(m\xi_m^2 c_m^2 (\log N)^3\Big)^{1/2}+\Big(\frac{Sm^3\xi_m^2(\log N)^7}{N}\Big)^{1/4}\Big). \nonumber
\end{align}
To this end, apply Theorem~\ref{th:aggbah}. A simple computation shows that the quantities $R_{1,n},R_{2,n}$ defined in \eqref{eq:th1RN1} and \eqref{eq:th1RN2} of Theorem~\ref{th:aggbah} satisfy 
\[
R_{1,n}(A\log n) + R_{2,n}(A,A\log n) \leq  C R_N 
\]  
for any fixed $A>0$. Thus we obtain from \eqref{eq:th1rn1} and \eqref{eq:th1rn2} that there exists $N_0$ such that for all $N \geq N_0$ and any $\tau \in \Tc$
\begin{align}
P\big(\big\|r_N^{(1)}(\tau) + r_N^{(2)}(\tau)\big\| > \tilde CR_N \big) \leq 2n^{-A}(3S+1), 
\end{align}
where
\[
\overline\zb(\tau) - \zb_N(\tau) + J_m(\tau)^{-1} \frac{1}{N}\sum_{i=1}^N \ZZ_i(\IF\{Y_i \leq \ZZ_i^\top \zb_N(\tau)\}- \tau) = r_N^{(1)}(\tau) + r_N^{(2)}(\tau).
\]
Choosing $A$ large enough ensures that
\begin{align}
\sup_{\tau \in \Tc_K} \big\|r_N^{(1)}(\tau) + r_N^{(2)}(\tau)\big\| = o_P(R_N), \label{eq:rmd_sqrtn}
\end{align}
and we obtain~\eqref{eq:bahgen1_new}.

\medskip

Next we prove~\eqref{eq:genubpr}. Recall the definition of the projection operator $\Pi_K$ right after~\eqref{eq:hatbeta}.

\bigskip

\noindent\textbf{Step 1: $\|\Pi_K\|_\infty := \sup_{\|f\|_\infty = 1} \|\Pi_K f\|_\infty<C$ for a constant $C>0$ independent of $K$.}\\ 
We will apply Theorem A.1 of \cite{huang2003}, and for that we need to validate conditions A.1-A.3 of \cite{huang2003}. Note that \emph{none} of these conditions is associated with the functions being projected on the spline space -- all conditions are associated with the design and the selection of basis functions.

We first show that Condition A.1 of \cite{huang2003} holds in our context. By the definition of grids $t_1,...,t_G$ and $\tau_1,...,\tau_K$, each interval $[t_k,t_{k+1}]$ contains $c_{k,K}$ spaced grid points of the form $\tau_U + l(\tau_L-\tau_U)/K$. Hence, applying Lemma~\ref{lem:poly}, we obtain for $j=0,...,G-1$ and any $f \in \Theta_G$
\begin{align*}
 K^{-1} \sum_{k:\tau_k\in(t_j,t_{j+1}]} f^2(\tau_k) \leq \Big(1+\frac{C_{r_\tau}}{c_{j,K}-1}\Big) \int_{t_j}^{t_{j+1}} f^2(x)dx,
\end{align*}
and
\begin{align*}
 K^{-1} \sum_{k:\tau_k\in(t_j,t_{j+1}]} f^2(\tau_k) \geq \Big(1-\frac{C_{r_\tau}}{c_{j,K}-1}\Big) \int_{t_j}^{t_{j+1}} f^2(x)dx,
\end{align*}
for a constant $C_{r_\tau}$ that depends only on $r_\tau$. Since $\min_{k=1,...,K}c_{j,K} \to \infty$ it follows that for sufficiently large $K$ Condition A.1 of \cite{huang2003} holds with $\gamma_1= 1/2$ and $\gamma_2=2$.

Conditions A.2 (i) and (ii) of \cite{huang2003} easily follow from the fact that $\Tc$ is bounded. Finally, the definition of the knots ensures that A.2 (iii) in \cite{huang2003} holds. Condition A.3 holds since B-spline basis is used in the projection procedure, cf. the discussion following Condition A.3 on page 1630 of \cite{huang2003}. 

Thus conditions A.1-A.3 in \cite{huang2003} are satisfied, and Theorem A.1 of \cite{huang2003} shows the existence of a constant $C$ such that $\|\Pi_K f\|_\infty \leq C \|f\|_\infty$ for all functions $f\in\ell^\infty(\Tc)$. This completes the proof of step 1.\\

From the definition of $\hat\zb$ and linearity of $\Pi_K$ we obtain 
\[
\ZZ(x_0)^\top\hat \zb(\tau) = (\Pi_K \ZZ(x_0)^\top \overline{\zb}(\cdot))(\tau)
\]
and thus
\begin{align*}
\ZZ(x_0)^\top (\hat \zb(\tau) - \hat \zb_{or}(\tau)) =~& (\Pi_K \{\ZZ(x_0)^\top [\overline{\zb}(\cdot)-\hat \zb_{or}(\cdot)]\})(\tau)
\\
& + (\Pi_K \{\ZZ(x_0)^\top \hat\zb_{or}(\cdot)\})(\tau) - \ZZ(x_0)^\top \hat \zb_{or}(\tau).
\end{align*}
Observe that
\[
\sup_{\tau\in\Tc} \Big|(\Pi_K \{\ZZ(x_0)^\top [\overline{\zb}(\cdot)-\hat \zb_{or}(\cdot)]\})(\tau)\Big| \lesssim \sup_{\tau\in \Tc_K} \Big| \ZZ(x_0)^\top [\overline{\zb}(\tau)-\hat \zb_{or}(\tau)] \Big|.
\]
Hence it remains to prove that
\begin{multline*}
\sup_{\tau\in \Tc} \Big|(\Pi_K \ZZ(x_0)^\top \hat\zb_{or}(\cdot))(\tau) - \ZZ(x_0)^\top \hat \zb_{or}(\tau)\Big|
\\
 \leq \sup_{\tau\in \Tc} \Big|(\Pi_K Q(x_0;\cdot))(\tau) - Q(x_0;\tau)\Big| + o_P(\|\ZZ(x_0)\|N^{-1/2}).
\end{multline*}

From Theorem 2.1 of \cite{ChaVolChe2016} we obtain
\beq \label{eq:barh}
\ZZ(x_0)^\top\hat\zb_{or}(\tau) = Q(x_0;\tau) + \ZZ(x_0)^\top \bU_N(\tau) + r_{N}(\tau)
\eeq
where 
\begin{align}
\bU_N(\tau) := -N^{-1} J_m(\tau)^{-1} \sum_{i=1}^N \ZZ_i (\IF\{Y_i \leq Q(X_i;\tau)\}-\tau) \label{eq:bU}
\end{align}
and $\sup_{\tau \in \Tc} |r_{N}(\tau)| \leq \sup_{\tau \in \Tc} |\ZZ(x_0)^\top\zg_N(\tau) - Q(x_0;\tau)| + o_P(\|\ZZ(x_0)\|N^{-1/2})$. This implies
\begin{multline*}
\sup_{\tau \in \Tc} \Big|(\Pi_K \ZZ(x_0)^\top \hat\zb_{or}(\cdot))(\tau) - \ZZ(x_0)^\top \hat \zb_{or}(\tau)\Big| \leq o_P\Big(\frac{\|\ZZ(x_0)\|}{N^{1/2}}\Big)
\\ 
+ \sup_{\tau \in \Tc} \Big|[\Pi_K Q(x_0;\cdot)](\tau) - Q(x_0;\tau)\Big| + \sup_{\tau \in \Tc} |\ZZ(x_0)^\top\zg_N(\tau) - Q(x_0;\tau)|
\\
+ \sup_{\tau \in \Tc} \Big|[\Pi_K \ZZ(x_0)^\top \bU_N](\tau) - \ZZ(x_0)^\top \bU_N(\tau)\Big|. 
\end{multline*}
and it remains to bound the last term. From Lemma~5.1 of \cite{huang2003} we obtain that 
\begin{align}
\sup_{\tau \in \Tc}\big|(\Pi_K \ZZ(x_0)^\top \bU_N)(\tau)-\ZZ(x_0)^\top \bU_N(\tau)|
\leq C \inf_{\tilde f \in \Theta_G} \sup_{\tau \in \Tc}|\ZZ(x_0)^\top \bU_N(\tau)-\tilde f(\tau)|, \label{huanglemma5.1}
\end{align}
where $C$ is an absolute constant and $\Theta_G$ is the space of splines of degree $r_\tau$ with knots $t_1,...,t_G$ on $\Tc=[\tau_L,\tau_U]\subset(0,1)$. The right hand side of \eqref{huanglemma5.1} can be related to the \emph{modulus of continuity}. Indeed, using Theorem~6.27 of \cite{schumaker:81} we have for an absolute constant $C>0$,
\begin{multline}
\inf_{\tilde f \in \Theta_G} \sup_{\tau \in \Tc}\big|\ZZ(x_0)^\top \bU_N(\tau)-\tilde f(\tau)\big|
\\
 \leq C \sup_{\tau,\tau' \in \Tc, |\tau - \tau'| \leq \delta}|\ZZ(x_0)^\top \bU_N(\tau)-\ZZ(x_0)^\top \bU_N(\tau')|=o_P\Big(\frac{\|\ZZ(x_0)\|}{N^{1/2}}\Big), \label{eq:interbaha1}
\end{multline}
where $\delta = \max_{0 \leq j \leq K-1}(t_{j+1}-t_j) = o(1)$, and the last equality follows from Lemma~\ref{lem:equicont}. Thus the proof of~\eqref{eq:genubpr} is complete. \hfill $\Box$

\subsubsection{Proof of Corollary~\ref{th:orgenpo}}

From Theorem 2.1 of \cite{ChaVolChe2016} we obtain the representation
\begin{multline*}
\ZZ(x_0)^\top(\hat\zb_{or}(\tau) - \zg_N(\tau))
\\
 = - \ZZ(x_0)^\top J_m(\tau)^{-1} \frac{1}{N}\sum_{i=1}^N \ZZ_i(\IF\{Y_i \leq Q(X_i;\tau)\}- \tau)
 + o_P\Big(\frac{\|\ZZ(x_0)\|}{N^{1/2}}\Big). 
\end{multline*}
Moreover, by straightforward calculations bounding the terms in~\eqref{eq:genubpo} we obtain the representation $\ZZ(x_0)^\top(\hat\zb_{or}(\tau)-\overline\zb(\tau)) = o_P(\|\ZZ(x_0)\|N^{-1/2})$. Hence to prove the weak convergence result for both $\hat\zb_{or}$ and $\overline{\zb}$, it suffices to prove that 
\[
\frac{1}{\sqrt{N}} \sum_{i=1}^N \frac{\ZZ(x_0)^\top J_m(\tau)^{-1}\ZZ_i(\IF\{Y_i \leq Q(X_i;\tau)\} -\tau)}{(\ZZ(x_0)^\top J_m(\tau)^{-1}\E[\ZZ\ZZ^\top]J_m(\tau)^{-1}\ZZ(x_0))^{1/2}} \weak \Nc(0,\tau(1-\tau)).
\] 
This follows by an application of the Lindeberg CLT. The verification of the Lindeberg condition here is a simple modification from finite dimensional joint convergence in the proof of (2.4) on page 3292 in \cite{ChaVolChe2016} to pointwise convergence. \hfill $\qed$

\subsubsection{Details for Remark~\ref{rem:compspl}} \label{sec:compsplpro} 

In the proof below, we will use the notations in Example \ref{ex:spline}: polynomial spline $\tilde B_j$, $j$'th knot $t_j$ and degree $q$. 

We first verify \hyperref[A1]{(A1)}. The bounded spectrum $\E[\ZZ\ZZ^\top]$ follows by the argument in Example \ref{ex:spline}. We note that every univariate polynomial spline function satisfies $|\tilde B_j| \leq C$ almost surely for some constant $C>0$ for all $j \leq m$. Moreover, the support of $\tilde B_j$ and $\tilde B_k$ is disjoint unless $|j-k|\leq q+1$. Hence, we have almost surely
\begin{multline}
\|\ZZ_i\| = m^{1/2} \bigg(\sum_{j=1}^{k-q-1} \tilde B_j(X_i)^2\bigg)^{1/2}\\
 \lesssim m^{1/2} \bigg(\sum_{j=1}^{k-q-1} \IF\{t_j \leq X_i \leq t_{j+q+1}\}\bigg)^{1/2} \lesssim m^{1/2}. \label{eq:compsplpro1}
\end{multline}
On the other hand, since each $\tilde B_j(X_i)$ is nonzero almost surely, $\|\ZZ_i\|\gtrsim m^{1/2}$ and $\xi_m \asymp m^{1/2}$.

We now verify the sufficient conditions of Corollary~\ref{th:orloc} and Corollary~\ref{th:orgenpo} under the conditions made in this remark. Condition \hyperref[L]{(L)} holds given the discussion in Example~\ref{ex:spline}. \hyperref[L1]{(L1)} and the conditions $\xi_m^4(\log N)^6 = o(N), c_m^2(\zg_N) = o(N^{-1/2})$ hold under the assumptions of this remark. On the other hand, the sufficient conditions of Corollary~\ref{th:orgenpo} can be easily verified by the assumptions of this remark since $\xi_m \asymp m^{1/2}$. 

\subsubsection{Proof of Corollary~\ref{th:orgenpr}}\label{sec:proof_orgenpr}

Similar to Section \ref{sec:proof_dcolin}, we will prove the weak convergence of $\hat\zb(\tau)$ in models with centering coefficient $\zb_N(\tau)$ from a class specified in Section \ref{sec:zbN}.

The process convergence of $\frac{\sqrt{N}}{\|\ZZ(x_0)\|}\big(\ZZ(x_0)^\top \hat\zb_{or}(\cdot) - Q(x_0;\cdot)\big)$ is exactly Theorem 2.1 of \cite{ChaVolChe2016}. A straightforward computation shows that under the assumptions of Corollary~\ref{th:orgenpr} the results in~\eqref{eq:genubpo} and~\eqref{eq:genubpr} yield
\[
\sup_{\tau\in \Tc} \frac{\sqrt{N}}{\|\ZZ(x_0)\|}\Big|\ZZ(x_0)^\top \hat\zb_{or}(\tau) - \ZZ(x_0)^\top \hat\zb(\tau)\Big| = o_P(1).
\] 
This yields the claim in the first part. The proof of the second part follows by similar arguments as the proof of Corollary~4.1 in \cite{ChaVolChe2016}, and is therefore omitted for brevity. \hfill $\qed$


\section{Proofs for Section~\ref{SEC:INF}}

\subsection{Proof of Theorem~\ref{th:simpleinf}} \label{sec:prsimple}

Since the proofs under the assumptions of Corollary~\ref{th:orlipr}, Corollary~\ref{th:orlocpr} and Corollary~\ref{th:orgenpr} are similar, we will only give a proof under the conditions of Corollary~\ref{th:orgenpr}. Define 
\[
W_N := \sqrt{n}(\ZZ(x_0)^\top \bar\zb(\tau) - Q(x_0;\tau)), \quad \hat \sigma^2 := n \ZZ(x_0)^\top \hat\Sigma^D \ZZ(x_0) 
\]
and
\[
p_{N} := P\Big( S^{1/2}|W_N| \leq \hat\sigma t_{S-1,1-\alpha/2}\Big). 
\]

Throughout this proof, we will write $S_N$ instead of $S$ in order to emphasize that $S$ depends on $N$. First, observe that along any sub-sequence $N_k$ such that $S_{N_k}$ only takes the constant value $S_0>2$ we have
\[
\frac{\sqrt{n}}{\sigma_n}(\ZZ(x_0)^\top \hat\zb^1(\tau) - Q(x_0;\tau),...,\ZZ(x_0)^\top \hat\zb^{S_0}(\tau)- Q(x_0;\tau)) \weak \Nc(0, I_{S_0})
\]
where $I_{S_0}$ denotes the $S_0\times S_0$ identity matrix and
\[
\sigma_n^2 = \tau(1-\tau) \frac{\ZZ(x_0)^\top J_m(\tau)^{-1}\E[\ZZ\ZZ^\top] J_m(\tau)^{-1}\ZZ(x_0)}{\|\ZZ(x_0)\|^2}.
\] 
Observe that $\ZZ(x_0)^\top \hat\Sigma^D \ZZ(x_0)$ is the sample variance of $\{\ZZ(x_0)^\top \hat\zb^s\}_{s=1,...,S}$. As a direct consequence of the continuous mapping theorem, we obtain $p_{N_k} \to (1-\alpha)$. By the definition of convergence of a sequence of real numbers to a limit, this also implies that for any fixed $S_0 > 2$
\[
\lim_{N_0 \to \infty} \sup_{N \geq N_0: S_N = S_0} |p_N - (1-\alpha)| = 0,
\]  
which in turn yields that for any fixed $\bar S$
\[
\lim_{N_0 \to \infty} \sup_{N \geq N_0: 2 < S_N \leq \bar S} |p_N - (1-\alpha)| = 0.
\]
Next observe that for any fixed $\eps>0$
\begin{align*}
\Big| p_N - (1-\alpha) \Big|
\leq &~ 
P\Big(\Big|\frac{\hat\sigma t_{S-1,1-\alpha/2}}{\sigma_N \Phi^{-1}(1-\alpha/2)} - 1\Big| \geq \eps \Big)
\\
&~+ P\Big( \Big|S^{1/2}\sigma_N^{-1}|W_N| - \Phi^{-1}(1-\alpha/2) \Big| \leq \eps \Phi^{-1}(1-\alpha/2) \Big)
\\
&~+\Big| P\Big(  S^{1/2}\sigma_N^{-1}|W_N| \leq  \Phi^{-1}(1-\alpha/2)\Big) - (1-\alpha) \Big|.
\end{align*}
Under the assumptions of Corollary~\ref{th:orgenpr}, we have the decomposition
\[
\ZZ(x_0)^\top\hat \zb^s(\tau) = Q(x_0;\tau) + L_n^s(\tau) + r_n^s(\tau)
\]
where $\sup_{s=1,...,S} |r_n^s(\tau)| = o_P(\|\ZZ(x_0)\|n^{-1/2})$ and 
\[
L_n^s(\tau) := \frac{1}{n}\sum_{i=1}^n \ZZ(x_0)^\top J_m(\tau)^{-1}\ZZ(X_{is})(\IF\{Y_{is} \leq Q(X_{is};\tau)\} - \tau).
\]
This implies
\begin{align*}
\hat \sigma^2 &= \frac{n}{S_N-1}\sum_{s=1}^{S_N} \Big(\ZZ(x_0)^\top \hat\zb^s(\tau) - \ZZ(x_0)^\top \bar\zb(\tau)\Big)^2
\\
& = \frac{n}{S_N-1}\sum_{s=1}^{S_N} \Big(L_n^s(\tau) - S_N^{-1}\sum_{s=1}^S L_n^s(\tau) \Big)^2 + o_P(1)
\\
& = \frac{1}{S_N-1}\sum_{s=1}^{S_N} (\sqrt{n}L_n^s(\tau))^2 - \frac{1}{S_N(S_N-1)}\Big(\sum_{s=1}^{S_N} \sqrt{n}L_n^s(\tau) \Big)^2 + o_P(1)
\\
& =: \hat \sigma_{L,N}^2 + o_P(\|\ZZ(x_0)\|^2)
\end{align*} 
where the $o_P(1)$ term is for $N\to \infty$ for arbitrary sequences $S_N$. Noting that $\E[(\sqrt{n}L_n^s(\tau))^4/\sigma_N^4]$ is bounded uniformly in $n,s$ and that $\E[L_n^s(\tau)] = 0$, $\E[(\sqrt{n}L_n^s(\tau))^2] = \sigma_N^2$, an application of the Markov inequality shows that there exists a constant $C$ such that for any fixed $\eps>0$ and any $S_N >2$
\[
P\Big(\Big|\frac{\hat \sigma_{L,N}^2}{\sigma_N^2 } - 1\Big| \geq \eps \Big) \leq C S_N^{-1} \eps^{-2}. 
\] 
Next, observe that $|\hat \sigma_{L,N}^2/\sigma_N^2 - 1| \geq |\hat \sigma_{L,N}/\sigma_N - 1|$ and thus
\begin{multline*}
\Big|\frac{\hat\sigma t_{S-1,1-\alpha/2}}{\sigma_N \Phi^{-1}(1-\alpha/2)} - 1\Big|
\leq
\Big|\frac{\hat\sigma}{\sigma_N} - 1\Big|\frac{t_{1,1-\alpha/2}}{\Phi^{-1}(1-\alpha/2)} + \Big|\frac{t_{S-1,1-\alpha/2}}{\Phi^{-1}(1-\alpha/2)} - 1\Big|
\\
\leq
\Big|\frac{\hat\sigma^2}{\sigma_N^2} - 1\Big|\frac{t_{1,1-\alpha/2}}{\Phi^{-1}(1-\alpha/2)} + \Big|\frac{t_{S-1,1-\alpha/2}}{\Phi^{-1}(1-\alpha/2)} - 1\Big|.
\end{multline*}
It follows that for any fixed $\eps>0$ and $\bar S$ with $|t_{\bar S - 1,1-\alpha/2}/\Phi^{-1}(1-\alpha/2) - 1| <\eps/2$ we have for some constant $\tilde C$
\begin{multline*}
\limsup_{N\to\infty: S_N \geq \bar S} P\Big(\Big|\frac{\hat\sigma t_{S-1,1-\alpha/2}}{\sigma_N \Phi^{-1}(1-\alpha/2)} - 1\Big| \geq \eps\Big)
\leq \tilde C \bar S^{-1} \Big(\frac{t_{1,1-\alpha/2}}{\Phi^{-1}(1-\alpha/2)}\Big)^2 \eps^{-2} 
\end{multline*}
Finally, noting that $S^{1/2}\sigma_N^{-1}|W_N| \weak |\Nc(0,1)|$, we have
\begin{multline*}
\limsup_{N\to\infty} P\Big( \Big|S^{1/2}\sigma_N^{-1}|W_N| - \Phi^{-1}(1-\alpha/2) \Big| \leq \eps \Phi^{-1}(1-\alpha/2) \Big)
\\
= P\Big( \Big||\Nc(0,1)| - \Phi^{-1}(1-\alpha/2) \Big| \leq \eps \Phi^{-1}(1-\alpha/2) \Big)
\end{multline*}
and
\[
\Big| P\Big(  S^{1/2}\sigma_N^{-1}|W_N| \leq  \Phi^{-1}(1-\alpha/2)\Big) - (1-\alpha) \Big| \to 0.
\]
Combining all results so far, we obtain for any fixed $\eps>0$ and $\bar S$ with $|t_{\bar S - 1,1-\alpha/2}/\Phi^{-1}(1-\alpha/2) - 1| <\eps/2$
\begin{align*}
\limsup_{N\to\infty} \Big| p_N - (1-\alpha) \Big| \leq & \tilde C \bar S^{-1} \Big(\frac{t_{1,1-\alpha/2}}{\Phi^{-1}(1-\alpha/2)}\Big)^2 \eps^{-2}
\\
& + P\Big( \Big||\Nc(0,1)| - \Phi^{-1}(1-\alpha/2) \Big| \leq \eps \Phi^{-1}(1-\alpha/2) \Big).
\end{align*}
The right-hand side can be made arbitrarily small by the choice of fixed values for $\eps, \bar S$, and thus $p_N \to (1-\alpha)$. This completes the proof of the first part of Theorem~\ref{th:simpleinf}. The second part follows easily since $t_{S_N - 1,1-\alpha/2} \to \Phi^{-1}(1-\alpha/2) $ if $S \to \infty$. \hfill $\Box$


\subsection{Proof of Theorem~\ref{th:boot}} \label{sec:prboot}

Since the arguments under the assumptions of Corollary~\ref{th:orlocpr} are very similar to the arguments under the assumptions of Corollary~\ref{th:orgenpr}, we will only give a proof in the latter case. Note also that Corollary~\ref{th:orlipr} follows from Corollary~\ref{th:orgenpr}. Define
\[
\Gb_N^{(1)}(\tau) := - \frac{1}{N} \sum_{s=1}^S (\omega_{s,1}-1) \ZZ(x_0)^\top J_m^{-1}(\tau)\sum_{i=1}^n \ZZ(X_{is})(\IF\{Y_{is} \leq Q(X_{is};\tau)\} - \tau)
\]
and
\[
\Gb_N(\tau) := - \frac{1}{N} \sum_{s=1}^S \ZZ(x_0)^\top J_m^{-1}(\tau)\sum_{i=1}^n \ZZ(X_{is})(\IF\{Y_{is} \leq Q(X_{is};\tau)\} - \tau).
\]
Throughout the proof, let $C_\omega := 1 + \sqrt{2}$ and note that $\omega_{s,b} \leq C_\omega$ almost surely. The proof will rely on the following two key steps:

\textbf{Step 1}: prove that for any fixed $B \in \N$
\begin{equation} \label{eq:wjboot}
\frac{\sqrt{N}}{\|\ZZ(x_0)\|}(\Gb_N(\cdot),\Gb_N^{(1)}(\cdot),\dots,\Gb_N^{(B)}(\cdot)) \weak (\Gb_{x_0}(\cdot),\Gb_{x_0}^{(1)}(\cdot),...,\Gb_{x_0}^{(B)}(\cdot))
\end{equation}
as elements in $(\ell^\infty(\Tc))^{B+1}$ where the processes $\Gb_{x_0}^{(1)},...,\Gb_{x_0}^{(B)}$ iid copies of the process $\Gb_{x_0}$ defined in~\eqref{eq:weakorgen} and that $\Gb_N^{(1)}(\cdot)$ is asymptotically uniformly equicontinuous in probability.

\textbf{Step 2}: prove that uniformly in $\tau \in \Tc_k$ we have for $j=1,...,B$  
\begin{equation} \label{eq:bahgen2_boot}
\ZZ(x_0)^\top ( \overline \zb^{(j)}(\tau) - \overline \zb(\tau) ) 
= \Gb_N^{(j)}(\tau) + o_P\Big(\frac{\|\ZZ(x_0)\|}{N^{1/2}}\Big).
\end{equation}

Given the results in step 1 and step 2, exactly the same arguments as those in the proof of Corollary~\ref{th:orgenpr} (replacing~\eqref{eq:barh} by~\eqref{eq:bahgen2_boot} and~\eqref{eq:interbaha1} by uniform asymptotic equicontinuity of $\Gb_{x_0}^{(1)}$; note that $\Gb_{x_0}^{(1)}$ in the present setting corresponds to $\ZZ(x_0)^\top \bU_N(\tau)$ in the proof of Corollary~\ref{th:orgenpr}) show that for $j=1,...,B$ 
\[
\sup_{\tau \in \Tc} \Big|\ZZ(x_0)^\top ( \widehat \zb^{(j)}(\tau) - \hat \zb(\tau) ) - \Gb_N^{(j)}(\tau)\Big| = o_P\Big(\frac{\|\ZZ(x_0)\|}{N^{1/2}}\Big),
\]
and together with Step 2,~\eqref{eq:wjboot} and Lemma 3.1 from \cite{buko2017} the assertion follows.

It remains to establish the claims in step 1 and step 2. 

\textbf{Proof of step 1} Finite-dimensional convergence follows by a simple application of the triangular array Lyapunov CLT with $\delta = 2$ after observing that $\frac{\sqrt{N}}{\|\ZZ(x_0)\|}\Gb_N^{(1)}(\tau) = \sum_{s=1}^S V_{N,s}^{(1)}(\tau)$
where
\[
V_{N,s}^{(1)}(\tau) := \frac{(\omega_{s,1}-1)}{\sqrt{N}\|\ZZ(x_0)\|}  \ZZ(x_0)^\top J_m^{-1}(\tau)\sum_{i=1}^n \ZZ(X_{is})(\IF\{Y_{is} \leq Q(X_{is};\tau)\} - \tau)
\] 
are independent summands. Hence it suffices to establish asymptotic uniform equicontinuity, which entails tightness. The proof is based on similar ideas as the proof of Lemma A.3 in \cite{ChaVolChe2016}, and we will only stress the main differences. Following the latter paper, define
\begin{align*}
U_N(\tau) &:= \frac{1}{S} \sum_{s=1}^S (\omega_{s,1} - 1) J_m(\tau)^{-1} \frac{1}{n}\sum_{s=1}^S\ZZ(X_{is})(\IF\{Y_{is} \leq Q(X_{is};\tau)\} - \tau)
\\
\Delta_{is}(\tau,\eta) &:= \IF\{Y_{is} \leq Q(X_{is};\tau)\} - \IF\{Y_{is} \leq Q(X_{is};\eta)\} - (\tau - \eta).
\end{align*}
Similarly to the proof of equations (A.34) and (A.35) in the latter paper we find that for any vector $\bu_N \in \R^m$ with $\|\bu_N\| = 1$ 
\[
\E\Big[|(\omega_{s,1} - 1)\bu_N^\top J_m(\tau)^{-1} \ZZ(X_{is}) \Delta_{is}(\tau,\eta)|^4\Big] \lesssim \xi_m^2 |\tau - \eta|
\]
and
\begin{align*}
& \E\Big[|(\omega_{s,1} - 1)\bu_N^\top (J_m^{-1}(\tau) - J_m^{-1}(\eta)) \ZZ(X_{is})(\IF\{Y_{is} \leq Q(X_{is};\tau)\} - \tau)|^4\Big]
\\
 & \lesssim \xi_m^2 |\tau - \eta|^2
\end{align*}
where the constants on the right-hand side are independent of $\tau,\eta, n$. Next, similar computations as on page 3307 (above the derivation of equation (A.37)) in \cite{ChaVolChe2016} yield for any $\tau, \eta \in \Tc$
\begin{equation}\label{eq:booth2}
\E\Big[|N^{1/2}\bu_N^\top( U_N(\tau) - U_N(\eta))|^4\Big] \lesssim |\tau-\eta|^2 + \frac{\xi_m^2}{N} |\tau - \eta|. 
\end{equation}
In particular we have for $|\tau - \eta| \geq \xi_m^6/N^3$
\begin{equation}\label{eq:booth3}
\E\Big[|N^{1/2}\bu_N^\top( U_N(\tau) - U_N(\eta))|^4\Big] \lesssim |\tau-\eta|^{4/3}. 
\end{equation}
Apply Lemma A.1 of \cite{kvdh2015} with $T = \Tc$, $d(s,t) = |t-s|^{1/3}$, $\bar \eta_n = \xi_m^2/N$, $\Psi(x) = x^4$. Note that with the notation introduced in the latter reference $D(\eps,d) \lesssim \eps^{-3}$ and thus for any $v \geq \xi_m^2/N$
\begin{multline*}
\sup_{|\tau-\eta|\leq \delta } |N^{1/2}\bu_N^\top( U_N(\tau) - U_N(\eta))|
\\
\leq S_1(v) + 2 \sup_{|\tau-\eta|^{1/3} \leq \xi_m^2/N, \tau \in \tilde T} |N^{1/2}\bu_N^\top( U_N(\tau) - U_N(\eta))|
\end{multline*}
where the set $\tilde T$ contains at most $O(\xi_m^{-6}N^3)$ points and the random variable $S_1(v)$ satisfies
\[
\|S_1(v)\|_4 \lesssim \int_0^v \eps^{-3/4} d\eps + (\delta^{1/3} + 2\xi_m^2/N)v^{-6/4}.
\]
Later we will prove that 
\begin{equation}\label{eq:booth1}
\sup_{|\tau-\eta|^{1/3} \leq \xi_m^2/N, \tau \in \tilde T} |N^{1/2}\bu_N^\top( U_N(\tau) - U_N(\eta))| = o_P(1).
\end{equation} 
Given~\eqref{eq:booth1}, it follows that for any $c,v > 0$
\begin{align*}
&\lim_{\delta \downarrow 0}\limsup_{N\to\infty}P\Big(\sup_{|\tau-\eta|\leq \delta } |N^{1/2}\bu_N^\top( U_N(\tau) - U_N(\eta))| \geq c N^{-1/2} \Big)
\\
\leq &\limsup_{N\to\infty}P\Big(\sup_{|\tau-\eta|^{1/3} \leq \xi_m^2/N, \tau \in \tilde T} |N^{1/2}\bu_N^\top( U_N(\tau) - U_N(\eta))| \geq c N^{-1/2}/2 \Big)
\\
& + \lim_{\delta \downarrow 0}\limsup_{N\to\infty} P(S_1(v) \geq c/2)
\\
\lesssim & \lim_{\delta \downarrow 0}\limsup_{N\to\infty} \frac{16}{c^4}\Big[\int_0^v \eps^{-3/4} d\eps + (\delta^{1/3} + 2\xi_m^2/N)v^{-6/4} \Big]^{4} 
\\ 
= & \frac{16}{c^4}\Big[\int_0^v \eps^{-3/4} d\eps\Big]^{4}. 
\end{align*}
Since the right-hand side can be made arbitrarily small by choosing $v$ small, it follows that 
\[
\lim_{\delta \downarrow 0}\limsup_{N\to\infty}P\Big(\sup_{|\tau-\eta|\leq \delta } |N^{1/2}\bu_N^\top( U_N(\tau) - U_N(\eta))| \geq c \Big) = 0,
\]
and hence it remains to prove~\eqref{eq:booth1}. To do so, consider the decomposition
\begin{align*}
& \bu_N^\top( U_N(\tau) - U_N(\eta)) 
\\
= & \frac{1}{N} \bu_N^\top (J_m^{-1}(\tau) - J_m^{-1}(\eta))\sum_{s,i} (\omega_{s,1} - 1) \ZZ(X_{is})(\IF\{Y_{is} \leq Q(X_{is};\tau)\} - \tau)
\\ 
& + \frac{1}{N} \bu_N^\top J_m^{-1}(\eta) \sum_{s,i} (\omega_{s,1} - 1) \ZZ(X_{is}) \Delta_{is}(\tau,\eta).
\end{align*}
Similarly as in the proof of Lemma A.3 in \cite{ChaVolChe2016} (in the calculation below (A.34) on page 3306) we obtain 
\[
\|J_m^{-1}(\tau) - J_m^{-1}(\eta)\|_{op} \lesssim |\tau - \eta|
\]
where $\|\cdot\|_{op}$ denotes the operator norm, and thus for $|\tau-\eta| \leq \xi_m^6N^{-3}$  
\begin{align}
&\Big|\frac{1}{N} \bu_N^\top (J_m^{-1}(\tau) - J_m^{-1}(\eta))\sum_{s,i} (\omega_{s,1} - 1) \ZZ(X_{is})(\IF\{Y_{is} \leq Q(X_{is};\tau)\} - \tau)\Big| \nonumber
\\
\lesssim & |\tau-\eta|\xi_m = O(\xi_m^7 N^{-3}) = o(N^{-1}) \quad a.s. \label{eq:pbh10}
\end{align}
Next observe that for $\tau \leq \eta$
\[
|\Delta_{is}(\tau,\eta)| \leq \IF\{Q(X_{is};\tau) \leq Y_{is} \leq Q(X_{is};\eta)\}+ |\tau-\eta|.
\]
This implies that for any fixed $\tau$ we have a.s. for a constant $C$ independent of $\tau, \eta, N$
\begin{align*}
& \sup_{|\tau-\eta|\leq \eps_N} \Big| \bu_N^\top J_m^{-1}(\eta) N^{-1}\sum_{s,i} (\omega_{s,1} - 1) \ZZ(X_{is}) \Delta_{is}(\tau,\eta)\Big|
\\
\leq & C \frac{\xi_m}{N}\sum_{i,s} \Big[\eps_N + \IF\{Q(X_{is};\tau-\eps_N) \leq Y_{is} \leq Q(X_{is};\tau+\eps_N)\}\Big].
\end{align*}
Thus 
\begin{align*}
& \sup_{\tau \in \tilde T}\sup_{|\tau-\eta|\leq \xi_m^6 N^{-3}} \Big| \bu_N^\top J_m^{-1}(\eta) N^{-1/2}\sum_{s,i} (\omega_{s,1} - 1) \ZZ(X_{is}) \Delta_{is}(\tau,\eta)\Big|
\\
\lesssim & \frac{\eps_N \xi_m}{N} + \xi_m \sup_{\tau \in \tilde T}B_{N}(\tau,\eps_N)
\end{align*}
where 
\[
B_{N}(\tau,\eps_N) := \sum_{i,s} \IF\{Q(X_{is};\tau-\eps_N) \leq Y_{is} \leq Q(X_{is};\tau+\eps_N)\}.
\] 
Since $B_{N}(\tau,\eps_N) \sim Bin(N,2\eps_N)$ distribution and by applying the multiplicative Chernoff bound for Binomial random variables 
we find that for any $t \geq 1$
\[
P\Big(B_{N}(\tau,\eps_N) > N (1+t) \eps_N \Big) \leq \exp(-2tN\eps_N/3).
\]
Plugging in $\eps_N = \xi_m^6N^{-3}$ and $t = 3 A N^2 \log N / (2\xi_m^6) \gg 1$ we obtain
\[
P\Big(B_{N}(\tau,2\xi_m^6N^{-3}) > 2\xi_m^6N^{-2} + 3 A \log N \Big) \leq N^{-A}.
\]
By choosing $A = 3$ and applying the union bound for probabilities we finally obtain for $N$ sufficiently large and a constant $0<c<\infty$
\begin{align*}
&P\Big(\sup_{\tau \in \tilde T}\sup_{|\tau-\eta|\leq \xi_m^6 N^{-3}} \Big| \bu_N^\top J_m^{-1}(\eta) \sum_{s,i} (\omega_{s,1} - 1) \ZZ(X_{is}) \Delta_{is}(\tau,\eta)\Big| \geq c \xi_m\log N \Big)
\\
\leq & |\tilde T| N^{-3} = o(1).
\end{align*}
Combining this with~\eqref{eq:pbh10}, \eqref{eq:booth1} follows.

\textbf{Proof of step 2}  
Observe that
\begin{align*}
&\ZZ(x_0)^\top ( \overline \zb^{(1)}(\tau) - \overline \zb(\tau) )
\\
=~& \ZZ(x_0)^\top \frac{1}{S} \sum_{s=1}^S \Big( \frac{\omega_{s,1}}{\bar\omega_{\cdot,1}} - 1 \Big) \widehat \zb^s(\tau)
\\ 
=~& \frac{1}{\bar\omega_{\cdot,1}}\ZZ(x_0)^\top \frac{1}{S} \sum_{s=1}^S \Big( \omega_{s,1} - \bar\omega_{\cdot,1} \Big) \Big(\widehat \zb^s(\tau) - \zb_N(\tau)\Big) 
\\ 
=~& \frac{1}{\bar\omega_{\cdot,1}}\ZZ(x_0)^\top \frac{1}{S} \sum_{s=1}^S \Big( \omega_{s,1} - 1\Big) \Big(\widehat \zb^s(\tau) - \zb_N(\tau)\Big) 
\\ 
&- \frac{\bar\omega_{\cdot,1} - 1}{\bar\omega_{\cdot,1}}\ZZ(x_0)^\top \frac{1}{S} \sum_{s=1}^S\Big(\widehat \zb^s(\tau) - \zb_N(\tau)\Big) 
\\
=~& \frac{1}{\bar\omega_{\cdot,1}}\ZZ(x_0)^\top \frac{1}{S} \sum_{s=1}^S \Big( \omega_{s,1} - 1\Big) \Big(\widehat \zb^s(\tau) - \zb_N(\tau)\Big)  + o_P\Big(\frac{\|\ZZ(x_0)\|}{N^{1/2}}\Big).
\end{align*}
Note that under the assumptions on the weights $\bar\omega_{\cdot,1} = 1 + o_P(1)$. Thus~\eqref{eq:bahgen2_boot} will follow from step 1 once we prove that, uniformly in $\tau \in \Tc_K$,
\begin{equation}\label{eq:bahgen2_boot2}
\ZZ(x_0)^\top \frac{1}{S} \sum_{s=1}^S \Big( \omega_{s,1} - 1\Big) \Big(\widehat \zb^s(\tau) - \zb_N(\tau)\Big) = \Gb_N^{(1)}(\tau) + o_P\Big(\frac{\|\ZZ(x_0)\|}{N^{1/2}}\Big).
\end{equation}
To this end apply Theorem~\ref{th:bahsimple}, which implies that
\begin{multline*}
\ZZ(x_0)^\top \frac{1}{S} \sum_{s=1}^S ( \omega_{s,1} - 1) \Big(\widehat \zb^s(\tau) - \zb_N(\tau)\Big) - \tilde \Gb_N^{(1)}(\tau)
\\
= \ZZ(x_0)^\top \frac{1}{S} \sum_{s=1}^S ( \omega_{s,1} - 1)\Big(r_{n,1}^s(\tau) + r_{n,2}^s(\tau) + r_{n,3}^s(\tau) + r_{n,4}^s(\tau)\Big)
\end{multline*}
where
\[
\tilde \Gb_N^{(1)}(\tau) := - \frac{1}{N} \sum_{s=1}^S (\omega_{s,1}-1) \ZZ(x_0)^\top J_m^{-1}(\tau)\sum_{i=1}^n \psi(Y_{is},\ZZ(X_{is});\zb_N(\tau),\tau)
\] 
and the remainder terms $r_{n,j}^s(\tau)$ correspond to $r_{n,j}(\tau)$ defined in Theorem~\ref{th:bahsimple}. We will prove that
\begin{align}
&\sup_{\tau \in \Tc_K} \Big| \ZZ(x_0)^\top \frac{1}{S} \sum_{s=1}^S ( \omega_{s,1} - 1)\Big(r_{n,1}^s(\tau) + r_{n,2}^s(\tau) + r_{n,3}^s(\tau) + r_{n,4}^s(\tau)\Big)\Big| \label{eq:booth4}
\\ 
& = o_P(\|\ZZ(x_0)\|N^{-1/2}), \nonumber
\\
&\sup_{\tau \in \Tc_K} \Big| \tilde \Gb_N^{(1)}(\tau) - \Gb_N^{(1)}(\tau)\Big| = o_P(\|\ZZ(x_0)\|N^{-1/2}). \label{eq:booth5}
\end{align}
We begin by a proof of~\eqref{eq:booth4}. Applying Theorem~\ref{th:bahsimple} with $\kappa_n = A \log n$ and $A$ sufficiently large we find that
\begin{align*}
& \Big| \ZZ(x_0)^\top \frac{1}{S} \sum_{s=1}^S ( \omega_{s,1} - 1)\Big(r_{n,1}^s(\tau) + r_{n,2}^s(\tau) + r_{n,4}^s(\tau)\Big) \Big|
\\
\leq &\sup_s \sup_{\tau \in \Tc} \Big|\ZZ(x_0)^\top(r_{n,1}^s(\tau) + r_{n,2}^s(\tau) + r_{n,4}^s(\tau))\Big|  \frac{1}{S}\sum_{j=1}^S |\omega_{j,1} - 1|
\\
= & o_P\Big(\frac{\|\ZZ(x_0)\|}{N^{1/2}}\Big).
\end{align*}
Next, note that by independence between $\omega_{s,1}$ and the sample we have $\E[( \omega_{s,1} - 1)r_{n,3}^s(\tau)] = 0$ for all $s,\tau$. Let 
\[
D_n(A) := C\Re_3(A\log n) 
\]
where $\Re_3(\cdot)$ is defined in~\eqref{eq:re3}, and $C$ is the constant appearing in~\eqref{eq:retail}. 
Consider the decomposition
\begin{align*}
(\omega_{s,1} - 1)\ZZ(x_0)^\top r_{n,3}^s(\tau) = &\ZZ(x_0)^\top (\omega_{s,1} - 1)r_{n,3}^s(\tau)\IF\{|r_{n,3}^s(\tau)| \leq D_n(A)C_\omega\}
\\
&+ \ZZ(x_0)^\top (\omega_{s,1} - 1)r_{n,3}^s(\tau)\IF\{|r_{n,3}^s(\tau)| > D_n(A)C_\omega\}
\\
=: & R_{n,1}^{(s)}(\tau) + R_{n,2}^{(s)}(\tau)
\end{align*}
By \eqref{eq:retail}, \eqref{eq:re3bias} in Theorem~\ref{th:bahsimple}, we see that
\begin{align}
P\Big(\sup_{s=1,...,S}\sup_\tau \| R_{n,2}^{(s)}(\tau) \| \neq 0\Big) \leq 2Sn^{-A}.\label{eq:th1h1_boot}
\end{align}
Moreover, by independence of the weights $\omega_{s,1}$ from the sample $\E[R_{n,1}^{(s)}(\tau)] = 0$ and by construction $|R_{n,1}^{(s)}(\tau)| \leq \|\ZZ(x_0)\| C_\omega D_N(A)$ a.s. Apply Hoeffding's inequality to obtain 
\[
P\Big( \Big|\frac{1}{S} \sum_{s=1}^S R_{n,1}^{(s)}(\tau) \Big| \geq t \|\ZZ(x_0)\| \Big) \leq 2\exp\Big( - \frac{2St^2}{4C_\omega^2 D_N^2(A)}\Big)
\]
Letting $t = 4 \sqrt{\log n} S^{-1/2} D_N(6) C_\omega$ and combining this with~\eqref{eq:th1h1_boot} shows that
\begin{align*}
& P\Big( \sup_{\tau \in \Tc_K} \Big|\ZZ(x_0)^\top \frac{1}{S} \sum_{s=1}^S (\omega_{s,1} - 1)r_{n,3}^s(\tau)\Big| \geq 4 \|\ZZ(x_0)\| \sqrt{\log n} S^{-1/2} D_N(6) C_\omega \Big)
\\
\leq & N^2\Big(2 n^{-8} + 2S n^{-6}\Big) = o(1). 
\end{align*}
Straightforward but tedious computations show that
\[
\sqrt{\log n} S^{-1/2} D_N(6) = o(N^{-1/2}),
\]
and thus the proof of~\eqref{eq:booth4} is complete. Next we prove~\eqref{eq:booth5}. Note that
\begin{align*}
~&\Big| \tilde \Gb_N^{(1)}(\tau) - \Gb_N^{(1)}(\tau)\Big| 
\\
\leq ~& C_\omega \frac{1}{N}\sum_{i,s} |\ZZ(x_0)^\top J_m^{-1}(\tau) \ZZ(X_{is})| \IF\{|Y_{is} - Q(X_{is};\tau)| \leq c_m(\zg_N)\}
\\
=: ~& C_\omega \|\ZZ(x_0)\|\frac{1}{N}\sum_{i,s} W_{is}(\tau).
\end{align*}
We have 
\[
\sup_{\tau \in \Tc} \E[W_{is}(\tau)] = O(c_m(\zg_N)) = o(N^{-1/2}\|\ZZ(x_0)\|)
\]
and $\sup_{\tau\in\Tc}|W_{is}(\tau)| \leq C \xi_m$ almost surely for some constant $C<\infty$ . Moreover, the $W_{is}(\tau)$ are iid across $i,s$ and 
\[
\sup_{\tau \in \Tc} \Var(W_{11}(\tau)) = O(c_N(\zg_N)) = O(\|\ZZ(x_0)\|N^{-1/2}) =  o(N^{-1/4}).
\]
Hence Bernstein's inequality and the union bound for probabilities yields
\begin{align*}
&P\Big(\sup_{\tau \in \Tc_K} \Big| \sum_{i,s} W_{is}(\tau) - \E[W_{is}(\tau)]\Big| \geq N^{1/2} (\log N)^{-1} \Big)
\\
\leq & 2 |\Tc_K| \exp\Big( - \frac{N(\log N)^{-2}/2}{N \sup_{\tau \in \Tc} \Var(W_{11}(\tau)) + C \xi_m N^{1/2}(\log N)^{-1}/3} \Big) = o(1).
\end{align*}
Thus the proof of~\eqref{eq:booth5} is complete.

 \hfill $\Box$
 

\subsection{Proof of Theorem~\ref{th:powell}} \label{sec:prpowell}

Define the class of functions
\[
\Gc_3(\delta) := \Big\{ (z,y) \mapsto \|zz^\top\|\IF \Big\{|y - z^\top \bb + t | \leq \delta \Big\}\IF\{\|z\|\leq \xi\}  \Big| t\in \R, \bb\in \R^m \Big\}.
\]
Let 
\[
\tilde \zb^{s,-i}(\tau) := \zb(\tau) - (n-1)^{-1/2} J_m(\tau)^{-1}\Gb_n^{s,-i} \psi(\cdot;\hat \zb^{s,-i}(\tau),\tau) 
\]
where $\Gb_n^{s,-i}$ is the empirical process corresponding to sub-sample $s$ with i'th observation removed and
\[
\hat \zb^{s,-j}(\tau) := \arg\min_{\bb \in \R^m} \sum_{i=1, i\neq j}^n \rho_\tau(Y_{is} - \ZZ_{is}^\top \bb).
\]
By Lemma~\ref{lem:leaveout} we obtain for $A \geq 1$ and some constant $C$
\[
P\Big( \sup_{s,\tau,j} \|\hat \zb^{s,-j}(\tau) - \hat \zb^s(\tau)\| \geq C A \frac{\log n}{n}\Big) \leq 2Sn^{1-A}.
\]
Following the arguments in the proof of Lemma~\ref{lem:leaveout} we also find that for some constant $C_1$ we have almost surely
\[
\Big\| \tilde \zb^{s,-j}(\tau) - \hat \zb^{s,-j}(\tau) \Big\| \leq C_1\Big(\frac{1}{n} + \|\hat \zb^{s,-j}(\tau) - \zb(\tau)\|^2\Big),
\]
so that by~\eqref{eq:omega1} there exists a constant $C_2$ such that
\begin{equation} \label{eq:powhe1}
P\Big( \sup_{s,\tau,j} \|\tilde \zb^{s,-j}(\tau) - \hat \zb^s(\tau)\| \geq C_2 A \frac{\log n}{n}\Big) \leq Sn^{1-A}.
\end{equation}
Moreover, the arguments in the proof of Theorem~\ref{th:bahsimple} show that 
\begin{align} \nonumber
&\sup_{s,i,\tau} \Big\| \E[\tilde \zb^{s,-i}(\tau)] - \zb(\tau) \Big\|
\\
 =& \sup_{s,i,\tau} \Big\| \E\Big[ n^{-1/2} J_m(\tau)^{-1}\Gb_n^{s,-i} \psi(\cdot;\hat \zb^{s,-i}(\tau),\tau)\Big] \Big\| = O\Big(\frac{\log n}{n}\Big). \label{eq:powhe2}
\end{align}
Define the events
\[
\Omega_{1,n}^{s,j} := \Big\{ \|\tilde \zb^{s,-j}(\tau) - \hat \zb^s(\tau)\| \leq C_2 4 \frac{\log n}{n} \Big\}, \quad \Omega_{1,n} := \cap_{s,j} \Omega_{1,n}^{s,j}. 
\] 
and observe that for $\delta_n := 4 C_2 \frac{\log n}{n}$
\begin{align*}
& \frac{1}{2nh_n} \Big\|\sum_{i=1}^n \ZZ_{is}\ZZ_{is}^\top \Big[\IF \Big\{|Y_{is} - \ZZ_{is}^\top \hat \zb^s(\tau)| \leq h_n \Big\} - \IF \Big\{|Y_{is} - \ZZ_{is}^\top \tilde \zb^{s,-i}(\tau)| \leq h_n \Big\}\Big] \Big\|
\\
\leq~& \frac{1}{h_n} \sup_{\bb \in \R^m, t\in \R} \frac{1}{n} \sum_{i=1}^n \|\ZZ_{is}\ZZ_{is}^\top \| \IF \Big\{|Y_{is} - \ZZ_{is}^\top \bb - t| \leq \xi\delta_n \Big\} + \frac{\xi^2}{h_n} \IF_{\Omega_{1,n}^C} 
\\
\leq~&  \frac{1}{h_n} \sup_{\bb \in \R^m, t\in \R} \E\Big[\|\ZZ_{1s}\ZZ_{1s}^\top \| \IF \Big\{|Y_{1s} - \ZZ_{1s}^\top \bb - t| \leq \xi\delta_n \Big\} \Big]
\\
& + \frac{1}{h_n} \|\Pb_{n,s}-P\|_{\Gc_3(\delta_n)} + \frac{\xi^2}{h_n} \IF_{\Omega_{1,n}^C} 
\\
=~& \frac{1}{h_n} \|\Pb_{n,s}-P\|_{\Gc_3(\xi\delta_n)} + O(h_n^{-1}\delta_n) + \frac{\xi^2}{h_n} \IF_{\Omega_{1,n}^C}.  
\end{align*}
Standard arguments show that $\Gc_3(\xi\delta_n)$ satisfies~\eqref{eq:entr} with fixed constant $V$ and thus by~\eqref{eq:Gexpect} we have
\[
\sup_s \E\|\Pb_{n,s}-P\|_{\Gc_3(\xi\delta_n)} = O\Big(\frac{\log n}{n}\Big).
\]
This implies
\[
\sup_\tau \Big\| \overline J_{m}^P (\tau) - \frac{1}{S}\sum_{s=1}^S \frac{1}{2nh_n}\sum_{i=1}^n \ZZ_{is}\ZZ_{is}^\top \IF \Big\{|Y_{is} - \ZZ_{is}^\top \tilde \zb^{s,-i}(\tau)| \leq h_n \Big\}\Big\| = O_P\Big(\frac{\log n}{n h_n}\Big).
\] 
Next note that by a Taylor expansion we have
\begin{align*}
F_{Y|X}(a + h_n|x) - F_{Y|X}(a - h_n|x) = 2h_n f_{Y|X}(a|x) + \frac{h_n^3}{3}f''_{Y|X}(a|x) + o(h_n^3)
\end{align*}
where the remainder term is uniform in $a \in \R$. Thus we have by~\eqref{eq:powhe2}
\begin{align*}
&\E\Big[\frac{1}{2h_n} \ZZ_{1s}\ZZ_{1s}^\top \IF \Big\{|Y_{1s} - \ZZ_{1s}^\top \tilde \zb^{s,-1}(\tau)| \leq h_n \Big\}\Big]
\\
=~& \int \frac{1}{2h_n} \ZZ(x)\ZZ(x)^\top \E\Big[\Big(F_{Y|X}(\ZZ(x)^\top \tilde \zb^{s,-1}(\tau) + h_n|x)
\\
& \quad\quad\quad\quad - F_{Y|X}(\ZZ(x)^\top \tilde \zb^{s,-1}(\tau) - h_n|x) \Big)\Big] dP^X(x) 
\\
=~& \int \ZZ(x)\ZZ(x)^\top \E\Big[\Big(f_{Y|X}(\ZZ(x)^\top \tilde \zb^{s,-1}(\tau)|x)
\\
& \quad\quad\quad\quad + \frac{h_n^2}{6}f''_{Y|X}(\ZZ(x)^\top \tilde \zb^{s,-1}(\tau)|x) \Big)\Big] dP^X(x) + o(h_n^2)
\\
=~& \int \ZZ(x)\ZZ(x)^\top \Big[\Big(f_{Y|X}(\ZZ(x)^\top \zb(\tau)|x) + \frac{h_n^2}{6}f''_{Y|X}(\ZZ(x)^\top \zb(\tau)|x) \Big)\Big] dP^X(x)
\\
&  + O\Big(\frac{\log n}{n}\Big) + o(h_n^2).
\end{align*}
Similar but simpler computations show that for 
\[
\WW_s(\bb) := \mbox{vec}\Big( \frac{1}{2h_n} \ZZ_{1s}\ZZ_{1s}^\top \IF \Big\{|Y_{1s} - \ZZ_{1s}^\top \bb| \leq h_n \Big\} \Big)
\] 
we have uniformly in $s$
\[
\Var(\WW_s(\tilde \zb^{s,-1}(\tau))) = \Var(\WW_s(\zb(\tau))) + o(h_n^{-1}).
\]
This completes the proof. \hfill $\Box$


\vskip 2em

%

\vskip 2em

\section{Proofs for Appendix \ref{SEC:APP_ABR}}\label{sec:s_pfappA}

\setcounter{subsection}{0}
\setcounter{equation}{0}
\setcounter{theo}{0}

%

\subsection{Proof of Theorem~\ref{th:aggbah}}\label{sec:proof_aggbah}

From Theorem~\ref{th:bahsimple} we obtain the representation
\begin{multline*}
\widehat\zb^s(\tau) - \zb_N(\tau) = -  n^{-1/2}J_m(\tau)^{-1}\Gb_n^s(\psi(\cdot;\zb_N(\tau),\tau)) 
\\
+ r_{n,1}^s(\tau) + r_{n,2}^s(\tau) + r_{n,3}^s(\tau) + r_{n,4}^s(\tau).
\end{multline*}
where the quantities $r_{n,j}^1,...,r_{n,j}^S$ are i.i.d. for each $j=1,...,4$ and 
\[
\Gb_n^s(\psi(\cdot;\zb_N(\tau),\tau)) = n^{-1/2}\sum_{i=1}^n \psi(Y_{is},\ZZ(X_{is});\zb_N(\tau),\tau).
\]
Letting 
\[
r_{N}^{(1)}(\tau) := \frac{1}{S} \sum_{s=1}^S \Big(r_{n,1}^s(\tau) + r_{n,2}^s(\tau) + r_{n,4}^s(\tau)\Big) 
\]
we find that the bound in~\eqref{eq:th1rn1} is a direct consequence of Theorem~\ref{th:bahsimple}, the union probability bound and the observation that $\xi_m g_N^2 = o(g_N)$ by the assumption $g_N=o(\xi_m^{-1})$. 

Next, let 
\[
r_{N}^{(2)}(\tau) := \frac{1}{S} \sum_{s=1}^S r_{n,3}^s(\tau).
\]
Define $\tilde r_{n,3}^s(\tau) := r_{n,3}^s(\tau) - \E[r_{n,3}^s(\tau)]$ and consider the decomposition
\begin{multline*}
r_{n,3}^s(\tau) = \tilde r_{n,3}^s\IF\big\{\|\tilde r_{n,3}^s(\tau)\| \leq D_n(A)\big\}
\\
 + \tilde r_{n,3}^s(\tau)\IF\big\{\|\tilde r_{n,3}^s(\tau)\| > D_n(A)\big\} + \E[r_{n,3}^s(\tau)] =: \sum_{j=1}^3 R_{n,j}^{(s)}(\tau)
\end{multline*}
where, for a sufficiently large constant $C$, 
\[
D_n(A) := C \Re_3(A\log n) + C_{3,2} \xi_m\Big(g_N^2\xi_m + \frac{m \xi_m \log n}{n}\Big), 
\]
$\Re_3(\cdot)$ is defined in \eqref{eq:re3} and $C_{3,2}$ denotes the constant appearing in \eqref{eq:re3bias}. By \eqref{eq:retail}, \eqref{eq:re3bias} in Theorem~\ref{th:bahsimple}, we see that
\begin{align}
P\Big(\sup_{s=1,...,S}\sup_\tau \| R_{n,2}^{(s)}(\tau) \| \neq 0\Big) \leq 2Sn^{-A};\label{eq:th1h1}
\end{align}
and by \eqref{eq:re3bias} in Theorem~\ref{th:bahsimple},
\begin{align}
\sup_\tau \| R_{n,3}^{(s)}(\tau) \| \leq  C_{3,2} \xi_m\Big(g_N^2\xi_m + \frac{m \xi_m \log n}{n}\Big).\label{eq:th1h2}
\end{align}
Next we deal with $R_{n,1}^{(s)}(\tau)$. Consider the function $g: (\R^m)^S \to \R$, 
\[
g(x_1,...,x_S) := \Big\|\frac{1}{S}\sum_{s=1}^S x_s\Big\|. 
\]
Direct computations show that 
\[
|g(x_1,...,x_S) - g(x_1,...,x_i',...,x_S)| \leq \frac{\|x_i'-x_i\|}{S}.
\]
Note that by construction $\|R_{n,1}^{(s)}(\tau)\| \leq D_n(A)$ almost surely. Apply McDiarmid's inequality (Lemma 1.2 on page 149 in \cite{MD89}) to $g(R_{n,1}^{(1)}(\tau),..., R_{n,1}^{(S)}(\tau))$ to obtain for all $t>0$
\[
\sup_\tau P\Big(\Big\|\frac{1}{S}\sum_{s=1}^S R_{n,1}^{(s)}(\tau)\Big\| \geq t \Big) \leq 2\exp\Big(\frac{-St^2}{2D_n^2(A)} \Big)
\]
Letting $t = \sqrt{2}\kappa_n S^{-1/2} D_n(A)$ and combining this with \eqref{eq:th1h1} and \eqref{eq:th1h2} shows that
\[
\sup_\tau P\Big(\big\| r_N^{(2)}(\tau)\big\| > \sqrt{2}\kappa_n S^{-1/2} D_n(A) + C_{3,2} \xi_m\Big(g_N^2\xi_m + \frac{m \xi_m \log n}{n} \Big)\Big) \leq 2Sn^{-A} + 2e^{-\kappa_n^2}.
\]
Finally, from an elementary but tedious calculation and the definition of $\Re_3(A\log n)$ in \eqref{eq:re3}, we obtain
\[
\kappa_n S^{-1/2} \Re_3(A \log n) \lesssim A \frac{\kappa_n}{S^{1/2}} \Big(\frac{m}{n}\Big)^{1/2} \Big( (\xi_m g_N \log n)^{1/2} + \Big(\frac{m\xi_m^2 (\log n)^3}{n} \Big)^{1/4}\Big) .
\]
Thus the proof of Theorem~\ref{th:aggbah} is complete. \hfill $\qed$

\subsection{Proof of Theorem \ref{th:aggbah_loc}}\label{sec:proof_aggbah_loc} 

The proof proceeds similarly to the proof of Theorem~\ref{th:aggbah}. From Theorem~\ref{th:bahsimple_bspl} we obtain the representation
\begin{align*}
& \bu_N^\top(\widehat\zb^s(\tau) - \zg_N(\tau))
\\
= & -  n^{-1}\bu_N^\top J_m(\zg_N(\tau))^{-1} \sum_{i=1}^n \ZZ_i^s (\IF\{Y_i^s \leq (\ZZ_i^s)^\top \zg_N(\tau)\}- \tau)
+ \bu_N^\top \sum_{k=1}^4 r_{n,k}^s(\tau)
\\
& + \bu_N^\top J_m(\zg_N(\tau))^{-1}\E[\ZZ(F_{Y|X}(\ZZ^\top \zg_N(\tau)|X) - \tau)].
\end{align*}
where the quantities $r_{n,j}^1(\tau),...,r_{n,j}^S(\tau)$ are i.i.d. for each $j=1,...,4$ and 
\[
\Gb_n^s(\psi(\cdot;\zg_N(\tau),\tau)) = n^{-1/2}\sum_{i=1}^n \psi(Y_{is},\ZZ(X_{is});\zg_N(\tau),\tau).
\]
Next, note that
\begin{multline*}
\ZZ(F_{Y|X}(\ZZ^\top \zg_N(\tau)|X) - \tau) = \ZZ f_{Y|X}(Q(X;\tau)|X)(\ZZ^\top \zg_N(\tau) - Q(X;\tau)) 
\\
+ \frac{1}{2}\ZZ f'_{Y|X}(\zeta_n(X,\tau)|X)(\ZZ^\top \zg_N(\tau) - Q(X;\tau))^2
\end{multline*}
where $\zeta_n(X,\tau)$ is a value between $\ZZ(X)^\top \zg_N(\tau)$ and $Q(X;\tau)$. From the definition of $\zg_N$ as \eqref{eq:gamman}, we obtain $\E[\ZZ f_{Y|X}(Q(X;\tau)|X)(\ZZ^\top \zg_N(\tau) - Q(X;\tau))] = 0$ so that 
\begin{align*}
\sup_{\tau \in \Tc}\Big| \bu_N^\top \tilde J_m(\zg_N(\tau))^{-1}\E[\ZZ(F_{Y|X}(\ZZ^\top \zg_N(\tau)|X) - \tau)] \Big|
\lesssim \tilde \Ec(\bu_N,\zg_N) c_n^2(\zg_N).
\end{align*}
Letting 
\begin{multline*}
r_{N}^{(1)}(\tau,\bu_N) := \Big|\frac{1}{S} \bu_N^\top \sum_{s=1}^S \Big(r_{n,1}^s(\tau)
+ r_{n,2}^s(\tau) + r_{n,4}^s(\tau)\Big)
\\
 + \bu_N^\top J_m(\zg_N(\tau))^{-1}\E[\ZZ(F_{Y|X}(\ZZ^\top \zg_N(\tau)|X) - \tau)] \Big| 
\end{multline*}
we find that the bound on $r_{N}^{(1)}(\tau,\bu_N)$ is a direct consequence of Theorem~\ref{th:bahsimple_bspl}. Next, let 
\[
r_{N}^{(2)}(\tau,\bu_N) := \frac{1}{S} \sum_{s=1}^S \bu_N^\top r_{n,3}^s(\tau).
\]
Define $\tilde r_{n,3}^s(\tau) := r_{n,3}^s(\tau) - \E[r_{n,3}^s(\tau)]$ and consider the decomposition
\begin{align*}
r_{n,3}^s(\tau)  =~& \tilde r_{n,3}^s\IF\Big\{\sup_{\bu_N \in \Sc_\Ic^{m-1}} |\bu_N^\top \tilde r_{n,3}^s(\tau)| \leq D_n^{(L)}(A)\Big\}
\\
& + \tilde r_{n,3}^s(\tau)\IF\Big\{\sup_{\bu_N \in \Sc_\Ic^{m-1}} |\bu_N^\top \tilde r_{n,3}^s(\tau)| > D_n^{(L)}(A)\Big\} + \E[r_{n,3}^1(\tau)]
\\
=:~& \sum_{j=1}^3 R_{n,j}^{(s)}(\tau)
\end{align*}
where for a constant $A>0$, and a sufficiently large constant $C$
\[
D_n^{(L)}(A) := C\Re^{(L)}_3(A\log n) + C_{3,2} \sup_{\bu_N \in \Sc_\Ic^{m-1}}\tilde{\Ec}(\bu_N,\zg_N)\Big(c_m^4(\zg_N) + \frac{\xi_m^2 (\log n)^2}{n}\Big),
\] 
$\Re^{(L)}_3$ is defined in \eqref{eq:re3_a}, $C_{3,2}$ denotes the constant appearing in \eqref{eq:re3bias_a}.

By \eqref{eq:retail_a} in Theorem \ref{th:bahsimple_bspl},
\begin{align}
P\Big(\sup_{s=1,...,S}\sup_\tau \sup_{\bu_N \in \Sc_\Ic^{m-1}} | \bu_N^\top R_{n,2}^{(s)}(\tau) | \neq 0\Big) \leq 2Sn^{-A}.\label{eq:th11h1}
\end{align}
Applying \eqref{eq:re3bias_a} in Theorem~\ref{th:bahsimple_bspl} yields
\begin{align}
\sup_\tau \sup_{\bu_N \in \Sc_\Ic^{m-1}} |\bu_N^\top R_{n,3}^{(s)}(\tau)| \leq C_{3,2} \sup_{\bu_N \in \Sc_\Ic^{m-1}}\tilde{\mathcal{E}}(\bu_N,\zg_N)\Big(c_m^4(\zg_N) + \frac{\xi_m^2 (\log n)^2}{n}\Big).\label{eq:th11h2}
\end{align}
Next consider he function $g: (\R^m)^S \to \R$, 
\[
g(x_1,...,x_S) := \sup_{\bu_N \in \Sc_\Ic^{m-1}} \Big|\bu_N^\top \frac{1}{S}\sum_{s=1}^S x_s\Big| = \Big\|\Big(\frac{1}{S}\sum_{s=1}^S x_s\Big)^{(\Ic)}\Big\|.
\]
The reverse triangle inequality shows that  
\[
|g(x_1,...,x_S) - g(x_1,...,x_i',...,x_S)| \leq \sup_{\bu_N \in \Sc_\Ic^{m-1}} \frac{|\bu_N^\top(x_i'-x_i)|}{S}.
\]
Note that by construction $\sup_{\bu_N \in \Sc_\Ic^{m-1}} |\bu_N^\top R_{n,1}^{(s)}(\tau)| \leq D_n^{(L)}(A)$ almost surely. Apply McDiarmid's inequality (Lemma 1.2 on page 149 in \cite{MD89}) to $g(R_{n,1}^{(1)}(\tau),..., R_{n,1}^{(S)}(\tau))$ to obtain for all $t>0$
\[
\sup_\tau P\Big(\sup_{\bu_N \in \Sc_\Ic^{m-1}} \Big|\frac{1}{S}\bu_N^\top \sum_{s=1}^S R_{n,1}^{(s)}(\tau)\Big| \geq t \Big) \leq 2\exp\bigg(\frac{-St^2}{2(D_n^{(L)}(A))^2} \bigg).
\]
Letting $t = \sqrt{2}\kappa_n S^{-1/2} D_n^{(L)}(A)$ and combining this with \eqref{eq:th11h1}-\eqref{eq:th11h2} shows that
\begin{multline*}
\sup_\tau P\Big(\sup_{\bu_N \in \Sc_\Ic^{m-1}}\Big| r_N^{(2)}(\tau,\bu_N)\Big| > \sqrt{2}\kappa_n S^{-1/2} D_n^{(L)}(A)
\\
+ C_{3,2} \sup_{\bu_N \in \Sc_\Ic^{m-1}}\tilde{\mathcal{E}}(\bu_N,\zg_N)\Big(c_m^4(\zg_N) + \frac{\xi_m^2 (\log n)^2}{n} \Big)\Big)
\\
\leq 2Sn^{-A} + 2e^{-\kappa_n^2}.
\end{multline*}
Thus the proof of Theorem~\ref{th:aggbah_loc} is complete. \hfill $\qed$


\setcounter{subsection}{0}
\setcounter{equation}{0}
\setcounter{theo}{0}

\section{Refined Bahadur representations for sub-samples and their proofs}\label{sec:s_br}

In this section, we consider a triangular array $\{(X_{is},Y_{is})\}_{i=1}^{n}$ from the $s$-th group. In order to keep the notation simple, the sample will be denoted by $\{(X_i,Y_i)\}_{i=1}^{n}$. In addition to the notation introduced in the main part of the manuscript, we will make use of the following notation throughout the appendix. Denote the empirical measure of $n$ samples $(Y_i,\ZZ_i)$ by $\Pb_n$. For a function $x \mapsto f(x)$ define $\Gb_n(f) := n^{1/2}\int f(x) (d\Pb_n(x)-dP(x))$ and $\|f\|_{L_p(P)} = (\int |f(x)|^p dP(x))^{1/p}$ for $0<p < \infty$. For a class of functions $\Gc$, let $\|\Pb_n-P\|_\Gc := \sup_{f \in \Gc} |\Pb_nf - Pf|$. For any $\epsilon>0$, the covering number $N(\epsilon,\Gc,L_p)$ is the minimal number of balls of radius $\epsilon$ (under $L_p$-norm) that is needed to cover $\Gc$. The bracketing number $N_{[ ~]}(\epsilon,\Gc,L_p)$ is the minimal number of $\epsilon$-brackets that is needed to cover $\Gc$. An $\epsilon$-bracket refers to a pair of functions within an $\epsilon$ distance: $\|u-l\|_{L_p} < \epsilon$. Throughout the proofs, $C,C_1,C_j$ etc. will denote constants which do not depend on $n$ but may have different values in different lines. For $\tau\in\Tc$, define $\psi(Y_i,\ZZ_i;\bb,\tau) := \ZZ_i(\IF\{Y_i \leq \ZZ_i^\top  \bb\}- \tau)$ and $\mu(\bb,\tau) := \E\big[\psi(Y_i,\ZZ_i;\bb,\tau)\big]=\E\big[\ZZ_i\big\{F_{Y|X}(\ZZ_i^\top  \bb|X)-\tau\big\}\big]$.

The aim of this section is to provide precise bounds on the remainder terms in the Bahadur representation of the estimator $\breve\zb(\tau)$ which is defined as
\[
\breve\zb(\tau) := \arg\,\min_{\bb\in\R^m} \sum_{i=1}^n \rho_\tau(Y_{i} - \bb^\top\ZZ(X_{i})), \quad \tau \in \Tc.
\]\\
\begin{theo} \label{th:bahsimple} Suppose Conditions \hyperref[A1]{(A1)}-\hyperref[A3]{(A3)} hold. Assume that additionally $m \xi_m^2 \log n = o(n)$. Then, for any $\zb_N(\cdot)$ satisfying $g_N(\zb_N) = o(\xi_m^{-1})$ and $c_m(\zb_N) = o(1)$, we have
\begin{multline}
\breve\zb(\tau) - \zb_N(\tau) = -  n^{-1/2}J_m(\tau)^{-1}\Gb_n(\psi(\cdot;\zb_N(\tau),\tau))
\\
+ r_{n,1}(\tau) + r_{n,2}(\tau) + r_{n,3}(\tau) + r_{n,4}(\tau).\label{eq:bahadur_bn}
\end{multline}
The remainder terms $r_{n,j}$'s can be bounded as follows:
\beq \label{eq:brn1}
\sup_\tau \|r_{n,1}(\tau)\| \leq  C_1 \frac{m\xi_m}{n} \quad a.s.
\eeq
for a constant $C_1$ independent of $n$. Moreover, we have for any $\kappa_n \ll n/\xi_m^2$, all sufficiently large $n$, and a constant $C$ independent of $n$
\begin{align}
P\Big( \sup_\tau \|r_{n,j}(\tau)\| \leq C \Re_j(\kappa_n) \Big) \geq 1 - 2e^{-\kappa_n}, \quad j=2,3,4,
\label{eq:retail}
\end{align}
where
\begin{align}
\Re_2(\kappa_n) &:= \xi_m \Big( \Big(\frac{m}{n} \log n\Big)^{1/2}  + n^{-1/2}\kappa_n^{1/2} + g_N \Big)^2, \label{eq:re2}
\\
\Re_3(\kappa_n) &:= \Big(\Big(\frac{m \log n}{n}\Big)^{1/2} + \Big(\frac{\kappa_n}{n}\Big)^{1/2} + g_N \Big)^{1/2} \bigg(\Big(\frac{m \xi_m \log n}{n}\Big)^{1/2} + \Big(\frac{\xi_m \kappa_n}{n} \Big)^{1/2}\bigg),\quad\quad \label{eq:re3}
\\
\Re_4(\kappa_n) &:= c_m(\zb_N) \Big( \Big(\frac{m}{n} \log n\Big)^{1/2}  + \frac{\kappa_n^{1/2}}{n^{1/2}}\Big) + g_N . \label{eq:re4}
\end{align}
Additionally, 
\begin{align} 
\sup_\tau \|r_{n,3}(\tau)\| \leq C_{3,1}\xi_m, \quad a.s., \label{eq:re3asbound}
\end{align}
and it holds that for sufficiently large $n$,
\begin{align} 
\sup_{\tau} \| \E[r_{n,3}(\tau)] \| \leq C_{3,2} \xi_m^2\Big(g_N^2 + \frac{m \log n}{n}\Big) \label{eq:re3bias}
\end{align}
where $C_{3,1},C_{3,2}$ are constants that are independent of $n$.
\end{theo}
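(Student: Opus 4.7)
The plan is to follow the standard strategy for deriving Bahadur-type expansions of M-estimators with non-smooth objective functions while carefully tracking the dependence on $m$, $\xi_m$, $g_N$, and $c_m(\zb_N)$, with particular care devoted to the \emph{expectation} bound \eqref{eq:re3bias}, which is the novel contribution of this theorem. First, I would establish uniform-in-$\tau$ consistency of $\breve\zb(\tau)$: applying the Hjort–Pollard convexity lemma to $\bb \mapsto \sum_i[\rho_\tau(Y_i - \ZZ_i^\top \bb) - \rho_\tau(Y_i - \ZZ_i^\top \zb_N(\tau))]$, combined with Knight's identity and a maximal inequality over the VC-class of affine half-spaces $\{\IF\{Y \leq \ZZ^\top \bb\}\}$ (whose VC dimension scales with $m$), should yield $\sup_\tau \|\breve\zb(\tau) - \zb_N(\tau)\| \lesssim (m\log n/n)^{1/2} + (\kappa_n/n)^{1/2} + g_N$ with probability at least $1 - 2e^{-\kappa_n}$.

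The second step exploits the subgradient optimality of $\breve\zb(\tau)$: since $\rho_\tau$ is piecewise linear, the score $\sum_i \ZZ_i(\IF\{Y_i \leq \ZZ_i^\top \breve\zb\} - \tau)$ need not vanish, but at most $m$ observations can satisfy $Y_i = \ZZ_i^\top \breve\zb$, so its norm is at most $m\xi_m$, producing the deterministic estimate \eqref{eq:brn1} for $r_{n,1}$. A Taylor expansion of $\bb \mapsto P\psi(\cdot;\bb,\tau)$ around $\zb_N(\tau)$ combined with assumption~\hyperref[A2]{(A2)} on the second derivative of $F_{Y|X}$ isolates the linear term $J_m(\tau)(\breve\zb - \zb_N)$ with a quadratic residual of order $\xi_m\|\breve\zb - \zb_N\|^2$; substituting the consistency rate yields the bound \eqref{eq:re2} on $r_{n,2}$, while coupling the bias $\mu(\zb_N,\tau)$ (controlled by $g_N$) with the first-order approximation error $c_m(\zb_N)$ yields \eqref{eq:re4} for $r_{n,4}$. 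The stochastic equicontinuity term $r_{n,3}(\tau) := J_m(\tau)^{-1} n^{-1/2}\Gb_n[\psi(\cdot;\breve\zb(\tau),\tau) - \psi(\cdot;\zb_N(\tau),\tau)]$ will be controlled by restricting to the ball $\{\|\bb - \zb_N(\tau)\| \leq \delta_n\}$ from the consistency step, observing via \hyperref[A2]{(A2)} that the indicator-difference class has $L_2(P)$-variance of order $\xi_m \delta_n$ and envelope of order $\xi_m$, and applying a Talagrand-type concentration inequality together with a bracketing maximal inequality; this delivers the high-probability estimate \eqref{eq:retail} with rate $\Re_3(\kappa_n)$, while the envelope bound gives \eqref{eq:re3asbound}.

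The hard part will be the sharp expectation bound \eqref{eq:re3bias}. Naive integration of the tail in \eqref{eq:retail} only yields $\xi_m\Re_3(\log n)$, which is the rate delivered by the classical Bahadur representation and is too coarse to survive averaging over $S$ sub-samples (exactly the point emphasized in Remark~\ref{rem:fixeddiminter}). To sharpen it, I would split $\E[r_{n,3}]$ into the contribution on the high-probability event $\{\|\breve\zb - \zb_N\| \lesssim \delta_n\}$ and its complement; the complement is swallowed by the $\xi_m$ envelope times the exponentially small failure probability. The crucial observation for the main term is that even though $\E\Gb_n f = 0$ at each fixed $f$, the random argument $\breve\zb$ causes $\E[r_{n,3}]$ to scale \emph{quadratically} in $\|\breve\zb - \zb_N\|_{L_2}$ rather than linearly: one way to formalize this is to apply a maximal inequality to $\E\sup_{\bb \in B(\zb_N,\delta_n)} \|\Gb_n[\psi(\cdot;\bb) - \psi(\cdot;\zb_N)]\|^2$ with variance scale $\xi_m \delta_n$, which produces the announced $\xi_m^2(g_N^2 + m\log n/n)$ rate once one identifies $\delta_n^2 \lesssim g_N^2 + m\log n/n$. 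Reconciling this $L^2$-to-expectation passage with the non-smoothness of the indicator functions—so that the apparent $\xi_m^{1/2}\delta_n^{1/2}$ rate in $\Gb_n$ squares to the correct $\xi_m\delta_n$ scale—is the most delicate point of the argument, and is what necessitates the refined decomposition into four remainder terms rather than the classical two.
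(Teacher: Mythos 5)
Your decomposition into the four remainder terms, and your treatment of $r_{n,1}$ (subgradient bound), $r_{n,2}$ (Taylor remainder), $r_{n,4}$ (bias/misspecification), and the high-probability bound \eqref{eq:retail} on $r_{n,3}$ via Talagrand's inequality over a shrinking ball, all match the paper's proof in substance. The envelope bound \eqref{eq:re3asbound} is also immediate, as you say.

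However, your proposed argument for the crucial expectation bound \eqref{eq:re3bias} has a genuine gap. You correctly identify that naive integration of the tail only gives $\Re_3(\log n)$, and you correctly intuit that the assertion is really that $\|\E[r_{n,3}(\tau)]\|$ (not $\E\|r_{n,3}(\tau)\|$) is small, owing to the fact that $\Gb_n f$ is centered at each \emph{fixed} $f$. But your proposed formalization—applying a maximal inequality to $\E\sup_{\bb\in B(\zb_N,\delta_n)}\|\Gb_n[\psi(\cdot;\bb,\tau)-\psi(\cdot;\zb_N(\tau),\tau)]\|^2$—does not realize that intuition. A sup-over-the-ball bound, whether of the process or its square, is always an upper bound on $\E\|r_{n,3}\|^2$ and hence (via Cauchy–Schwarz) on $\E\|r_{n,3}\|$, discarding entirely the centering gain you are trying to exploit. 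Concretely, with variance scale $\xi_m\delta_n$, envelope $\xi_m$, VC-type complexity $m$, and $\delta_n\asymp(m\log n/n)^{1/2}$, that computation yields $\E\|r_{n,3}\|\lesssim \xi_m^{1/2}(m\log n/n)^{3/4}+m\xi_m\log n/n$, which for fixed $m$ is the classical $n^{-3/4}(\log n)^{3/4}$ rate—exactly the rate Remark~\ref{rem:fixeddiminter} explains is too slow.

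What is actually needed is a decoupling argument that separates $\breve\zb$ from the observation entering each summand. The paper does this via the leave-one-out estimator $\breve\zb^{(-j)}$: writing $r_{n,3}$ as $n^{-1}\sum_j\big[\psi(Y_j,\ZZ_j;\breve\zb^{(-j)},\tau)-\mu(\breve\zb^{(-j)},\tau)\big]$ plus correction terms, the main sum is exactly mean-zero because $\breve\zb^{(-j)}$ is independent of $(Y_j,\ZZ_j)$, and the corrections are controlled by $\sup_{j,\tau}\|\breve\zb^{(-j)}(\tau)-\breve\zb(\tau)\|$, which is of order $m\xi_m\log n/n+\xi_m g_N^2$ by Lemma~\ref{lem:leaveout}. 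Without a device of this kind—a jackknife, a self-normalized martingale decomposition, or some other explicit exploitation of conditional independence—you cannot pass from the $(m\log n/n)^{3/4}$ rate to $m\log n/n$. Your observation that the bias ``scales quadratically'' is the right heuristic, but a maximal inequality alone will not deliver it.
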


The statements in \eqref{eq:brn1}-\eqref{eq:re4} are from Theorem 5.1 of \cite{ChaVolChe2016} and reproduced here for the sake of completeness. The bound in \eqref{eq:re3bias} is new and crucial for the ABR in Theorem~\ref{th:aggbah} in the main manuscript. See Section \ref{sec:proofbr} in the supplementary material for a proof.

The next result provides a refined version of the previous theorem for local basis functions that satisfy Condition \hyperref[L]{(L)}. 

\begin{theo} \label{th:bahsimple_bspl} Suppose Conditions \hyperref[A1]{(A1)}-\hyperref[A3]{(A3)} and \hyperref[L]{(L)} hold. Assume that additionally $m \xi_m^2 \log n = o(n)$. Assume that the set $\Ic$ consists of at most $L \geq 1$ \textit{consecutive} integers. Then, for $\zg_N$ defined in~\eqref{eq:gamman} 
	we have
\[
\breve\zb(\tau) - \zg_N(\tau) = -  n^{-1/2} \tilde J_m(\zg_N(\tau))^{-1}\Gb_n(\psi(\cdot;\zg_N(\tau),\tau))
+ \sum_{k=1}^4 r_{n,k}^{(L)}(\tau).
\]
where remainder terms $r_{n,j}$'s can be bounded as follows ($C_1,C_4$ are constants that do not depend on $n$):
\begin{align} \label{eq:brn1_a}
\sup_{\ba \in \Sc_\Ic^{m-1}}\sup_\tau |\ba^\top r_{n,1}^{(L)}(\tau)| &\leq C_{1}\frac{\xi_m\log n}{n} 
\quad a.s.,
\\
\sup_{\ba \in \Sc_\Ic^{m-1}} \sup_\tau |\ba^\top r_{n,4}^{(L)}(\tau)| &\leq \frac{1}{n} + C_4 c_m^2(\zg_N) \sup_{\ba \in \Sc_\Ic^{m-1}}\tilde{\mathcal{E}}(\ba,\zg_N) \quad a.s. \label{eq:re4_a}
\end{align}
Moreover, we have for any $\kappa_n \ll n/\xi_m^2$, all sufficiently large $n$, and a constant $C$ independent of $n$
\begin{align}
P\Big( \sup_{\ba \in \Sc_\Ic^{m-1}} \sup_\tau |\ba^\top r_{n,j}^{(L)}(\tau)| \leq C \Re_j^{(L)}(\kappa_n) \Big) \geq 1 - n^2 e^{-\kappa_n}, \quad j=2,3\label{eq:retail_a}
\end{align}
where 
\begin{align} 
\Re_2^{(L)}(\kappa_n) &:=  \sup_{\ba \in \Sc_\Ic^{m-1}}\tilde{\mathcal{E}}(\ba,\zg_N)\Big(\frac{\xi_m (\log n + \kappa_n^{1/2})}{n^{1/2}} + c_m^2(\zg_N)\Big)^2, \label{eq:re2_a}
\\
\Re_3^{(L)}(\kappa_n) &:=  \Big(c_m(\zg_N)\frac{\kappa_n^{1/2} \vee \log n}{n^{1/2}} + \frac{\xi_m^{1/2}(\kappa_n^{1/2} \vee \log n)^{3/2}}{n^{3/4}}\Big),  \label{eq:re3_a}
\end{align}
and $\tilde{\mathcal{E}}(\ba,\zg_N):=\sup_{\tau\in\Tc}\E[\ba^\top \tilde J_m(\zg_N(\tau))^{-1} \ZZ]$. Additionally, 
\begin{equation}  
\sup_\tau \sup_{\ba \in \Sc_\Ic^{m-1}}|\ba^\top r_{n,3}^{(L)}(\tau)| \leq C_{3,1}\xi_m \quad a.s.  \label{eq:re3asbound_a}
\end{equation}
 and for sufficiently large $n$ we have
\begin{equation} \label{eq:re3bias_a}
\sup_{\ba \in \Sc_\Ic^{m-1}}\sup_{\tau} | \ba^\top\E[r_{n,3}^{(L)}(\tau)] | \leq C_{3,2} \sup_{\ba \in \Sc_\Ic^{m-1}}\tilde{\mathcal{E}}(\ba,\zg_N)\Big(c_m^4(\zg_N) + \frac{\xi_m^2 (\log n)^2}{n}\Big). 
\end{equation}
where $C_{3,1},C_{3,2}$ are constants that do not depend on $n$. 
\end{theo}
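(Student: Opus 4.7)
The plan is to follow the structure of the proof of Theorem~\ref{th:bahsimple} (Theorem~5.1 in~\cite{ChaVolChe2016}), but to systematically replace coarse bounds involving $\|J_m^{-1}\ZZ\|$ by the sharper linear functional bounds $|\ba^\top \tilde J_m(\zg_N(\tau))^{-1}\ZZ|$, exploiting (i) Condition~\hyperref[L]{(L)} together with Lemma~6.3 of \cite{zsw98} (which yields exponential decay of off-diagonal entries of $\tilde J_m^{-1}$), (ii) the fact that $\ba$ has at most $L = O(1)$ consecutive non-zero entries, and (iii) the quantity $\tilde\Ec(\ba,\zg_N) = \sup_\tau \E|\ba^\top \tilde J_m^{-1}(\zg_N(\tau))\ZZ|$. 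First I would establish a preliminary rate $\sup_\tau\|\breve\zb(\tau)-\zg_N(\tau)\| = O_P((m\xi_m^2\log n/n)^{1/2})$ by standard convexity/maximal-inequality arguments, so that in the sequel $\breve\zb(\tau)$ can be localized to a shrinking $L_2$-neighborhood of $\zg_N(\tau)$.

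Next, from the quantile regression subgradient conditions write $\Pb_n\psi(\cdot;\breve\zb(\tau),\tau) = \ba^\top r_{n,1}^{(L)}(\tau)\cdot(\text{normalization})$, where $r_{n,1}^{(L)}$ absorbs the finite-sample non-smoothness of the subgradient. The bound~\eqref{eq:brn1_a} of order $\xi_m\log n/n$ (rather than the $m\xi_m/n$ in~\eqref{eq:brn1}) comes from noting that the contribution is essentially determined by the $O(L) = O(1)$ indices in $\Ic$, with the $\log n$ factor absorbing a union bound over $n$ observations. Adding and subtracting $P\psi$, and Taylor-expanding $P\psi(\cdot;\breve\zb(\tau),\tau)$ around $\zg_N(\tau)$, gives the decomposition with:
\begin{itemize}
\item $\ba^\top r_{n,2}^{(L)}(\tau)$ corresponding to the second-order Taylor remainder, bounded by $\tilde\Ec(\ba,\zg_N)\cdot\|\breve\zb(\tau)-\zg_N(\tau)\|^2$ plus higher-order interactions, which yields~\eqref{eq:re2_a} after plugging in the localization rate and the $L_\infty$ error $c_m(\zg_N)$;
\item $\ba^\top r_{n,4}^{(L)}(\tau)$ corresponding to the bias $\ba^\top\tilde J_m^{-1}(\zg_N(\tau))\,\E[\ZZ(F_{Y|X}(\ZZ^\top\zg_N(\tau)|X)-\tau)]$, which by the first-order optimality of $\zg_N$ in~\eqref{eq:gamman} has the leading linear piece annihilated, leaving the quadratic remainder $c_m^2(\zg_N)\tilde\Ec(\ba,\zg_N)$ in~\eqref{eq:re4_a}.
\end{itemize}

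The main obstacle is controlling the stochastic equicontinuity remainder $\ba^\top r_{n,3}^{(L)}(\tau) = n^{-1/2}\ba^\top\tilde J_m^{-1}(\zg_N(\tau))(\Gb_n\psi(\cdot;\breve\zb(\tau),\tau)-\Gb_n\psi(\cdot;\zg_N(\tau),\tau))$, uniformly in $\tau$ and $\ba \in \Sc_\Ic^{m-1}$. Here I would apply a maximal inequality to the function class $\{(y,z)\mapsto \ba^\top\tilde J_m^{-1}(\zg_N(\tau))z(\IF\{y\leq z^\top\bb\}-\tau): \|\bb-\zg_N(\tau)\|\leq \delta_n,\,\ba \in \Sc_\Ic^{m-1},\,\tau \in \Tc\}$, where $\delta_n$ is the localization radius. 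The key observation is that by Lemma~6.3 of \cite{zsw98}, the exponential decay of $\tilde J_m^{-1}$ together with the local support of $\ba$ and Condition~\hyperref[L]{(L)} on $\ZZ$ imply that the envelope of this class is $O(\xi_m^{1/2})$ up to logarithmic factors in the relevant $L_2(P)$-neighborhood, rather than $O(\xi_m)$. A peeling/chaining argument then delivers $\Re_3^{(L)}(\kappa_n)$ in~\eqref{eq:re3_a}. The almost-sure bound~\eqref{eq:re3asbound_a} follows trivially from $\|\ZZ\|\leq \xi_m$.

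For the expectation bound~\eqref{eq:re3bias_a}, which is the genuinely new ingredient needed for the aggregated Bahadur representation, I would use a truncation and symmetrization strategy: write $r_{n,3}^{(L)} = r_{n,3}^{(L)}\IF\{\sup_{\ba}|\ba^\top r_{n,3}^{(L)}|\leq C\Re_3^{(L)}(A\log n)\} + (\text{tail})$, use~\eqref{eq:retail_a} with $\kappa_n = A\log n$ to show that the tail part contributes $O(\xi_m\cdot n^{-A})$ on expectation (via~\eqref{eq:re3asbound_a}), and for the bounded part combine a symmetrization inequality with the local envelope bound described above. The resulting expectation is of order $\tilde\Ec(\ba,\zg_N)(c_m^4(\zg_N)+\xi_m^2(\log n)^2/n)$, which is sharper than the high-probability bound and crucial because, when averaging over $S$ subsamples, variance terms in $r_{n,3}$ shrink by $S^{-1/2}$ while the bias term does not. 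The hard part throughout is carefully tracking the interaction between the band-decay of $\tilde J_m^{-1}$ and the localized support of $\ba$ within all the chaining and covering-number computations so that $\tilde\Ec(\ba,\zg_N)$—rather than a cruder $\xi_m$-type bound—appears in every remainder estimate.
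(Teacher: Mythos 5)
Your outline for the high-probability and almost-sure bounds \eqref{eq:brn1_a}, \eqref{eq:re4_a}, \eqref{eq:retail_a}, \eqref{eq:re3asbound_a} is consistent with the paper's approach (which cites and extends Theorem~5.2 of \cite{ChaVolChe2016}): the decomposition into a subgradient term $r_{n,1}^{(L)}$, a Taylor remainder $r_{n,2}^{(L)}$, a stochastic equicontinuity term $r_{n,3}^{(L)}$, and a centering bias $r_{n,4}^{(L)}$, with the band-decay of $\tilde J_m^{-1}$ (from Lemma~6.3 of \cite{zsw98}) and the small support of $\ba$ tracked through the maximal inequality. That much is essentially correct.

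However, your plan for the genuinely new expectation bound \eqref{eq:re3bias_a} has a concrete gap and as stated would not give the target rate. Truncating $r_{n,3}^{(L)}$ at the high-probability level $\Re_3^{(L)}(A\log n)$ and then applying symmetrization to the bounded piece cannot yield a bound sharper than $\Re_3^{(L)}(A\log n)$ itself: symmetrization is a device for bounding $\E\|\Pb_n - P\|_{\Gc_2(\delta_n,\Ic,\Ic')}$, and this expectation is already what produces the rate $\Re_3^{(L)}(\kappa_n)$ (the Talagrand concentration part only adds fluctuation on top). The quantity $\Re_3^{(L)}(A\log n) \asymp c_m(\zg_N)\frac{\log n}{n^{1/2}} + \frac{\xi_m^{1/2}(\log n)^{3/2}}{n^{3/4}}$ is strictly larger (by a diverging factor, since $\xi_m^2\log n = o(n)$) than the claimed bound $\tilde\Ec(\ba,\zg_N)\big(c_m^4(\zg_N) + \frac{\xi_m^2(\log n)^2}{n}\big)$, which is of order $\frac{c_m^4}{\xi_m} + \frac{\xi_m(\log n)^2}{n}$ when $\tilde\Ec \asymp \xi_m^{-1}$. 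The reason the expectation can be smaller than the deviation is that $r_{n,3}^{(L)}(\tau)$ equals the centered empirical process evaluated at the \emph{data-dependent} point $\breve\zb(\tau)$; the non-trivial expectation arises only from the correlation between $\breve\zb(\tau)$ and $\Pb_n$, and truncation plus symmetrization has no mechanism for isolating or exploiting this cancellation.

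What the paper actually does is a leave-one-out (jackknife) decomposition: it introduces the estimator $\breve\zb^{(-j)}(\tau)$ with the $j$th observation removed, and writes $R_{n,3}(\tau,\zg_N(\tau))$ as (i) the centered empirical process at the \emph{fixed} point $\zg_N(\tau)$ (zero mean), (ii) $n^{-1}\sum_j\big(\psi(Y_j,\ZZ_j;\breve\zb^{(-j)}(\tau),\tau) - \mu(\breve\zb^{(-j)}(\tau),\tau)\big)$, which is mean-zero because $\breve\zb^{(-j)}$ is independent of $(X_j,Y_j)$, and (iii) a remainder governed by $\sup_j\|\breve\zb^{(-j)}(\tau) - \breve\zb(\tau)\|$, which is of much smaller order (Lemma~\ref{lem:leaveout_bspl}). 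The bias in (iii) enters through $|F_{Y|X}(\cdot + \tilde\delta_n) - F_{Y|X}(\cdot - \tilde\delta_n)|\lesssim\tilde\delta_n$, giving the $c_m^4 + \xi_m^2(\log n)^2/n$ rate after multiplying by $\tilde\Ec(\ba,\zg_N)$. This decoupling is the essential ingredient that your strategy omits, and without it the expectation bound does not improve on the high-probability bound, which is exactly the refinement the aggregated Bahadur representation in Theorem~\ref{th:aggbah_loc} requires.
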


Similarly to the setting in Theorem~\ref{th:bahsimple}, the statements in \eqref{eq:brn1_a}-\eqref{eq:re3_a} are proved in Theorem 5.2 of \cite{ChaVolChe2016} and reproduced here for the sake of completeness. The bound in \eqref{eq:re3bias_a} is new and crucial for the ABR in Theorem~\ref{th:aggbah_loc}. 

In both proofs, we will repeatedly use the following leave-one-out estimator 
\beq \label{eq:betaminusj}
\breve\zb^{(-j)}(\tau) := \arg\,\min_{\bb\in\R^m} \sum_{i \neq j} \rho_\tau(Y_{i} - \bb^\top\ZZ(X_{i})), \quad \tau \in \Tc, j=1,...,n
\eeq

\subsection{Proof of Theorem \ref{th:bahsimple}}\label{sec:proofbr}

Observe the representation
\begin{multline*}
\breve\zb(\tau) - \zb_N(\tau) = -  n^{-1/2} J_m(\tau)^{-1}\Gb_n(\psi(\cdot;\zb_N(\tau),\tau))
\\
+ r_{n,1}(\tau) + r_{n,2}(\tau) + r_{n,3}(\tau) + r_{n,4}(\tau)
\end{multline*}
where
\begin{eqnarray*}
r_{n,1}(\tau) &:=& \tilde J_m(\zb_N(\tau))^{-1}\Pb_n \psi(\cdot;\breve\zb(\tau),\tau),
\\
r_{n,2}(\tau) &:=& - \tilde J_m(\zb_N(\tau))^{-1}\Big(\mu(\breve\zb(\tau),\tau) - \mu(\zb_N(\tau),\tau)
\\
&& \quad\quad\quad - \tilde J_m(\zb_N(\tau))(\breve\zb(\tau) - \zb_N(\tau))\Big),
\\
r_{n,3}(\tau) &:=& -n^{-1/2}\tilde J_m(\zb_N(\tau))^{-1}\Big(\Gb_n(\psi(\cdot;\breve\zb(\tau),\tau)) - \Gb_n(\psi(\cdot;\zb_N(\tau),\tau))\Big),
\\
r_{n,4}(\tau) &:=& -n^{-1/2}(J_m(\tau)^{-1} - \tilde J_m(\zb_N(\tau))^{-1})\Gb_n(\psi(\cdot;\zb_N(\tau),\tau))
\\
&& - \tilde J_m(\zb_N(\tau))^{-1} \mu(\zb_N(\tau),\tau).
\end{eqnarray*}
The bounds in \eqref{eq:brn1}-\eqref{eq:re4} follow similarly as in the proof of Theorem 5.1 in \cite{ChaVolChe2016}. The bound in \eqref{eq:re3asbound} follows from the definition of $r_{n,3}$. It remains to prove \eqref{eq:re3bias}. Note that the expectation of $r_{n,3}(\tau)$ is not zero in general. Write
\begin{align*}
& r_{n,3}(\tau) 
\\
 =  & - \tilde J_m(\zb_N(\tau))^{-1}\Big[ - n^{-1/2}\Gb_n \psi(\cdot;\zb_N(\tau),\tau)
\\ 
& \quad\quad + n^{-1} \sum_{j=1}^n\Big( \psi(Y_j,\ZZ_j;\breve\zb^{(-j)}(\tau),\tau) - \mu(\breve\zb^{(-j)}(\tau),\tau) \Big)
\\ 
& - n^{-1} \sum_{j=1}^n \Big(\psi(Y_j,\ZZ_j;\breve\zb^{(-j)}(\tau),\tau) - \psi(Y_j,\ZZ_j;\breve\zb(\tau),\tau)
\\
& \quad\quad \quad\quad \quad\quad - \mu(\breve\zb^{(-j)}(\tau),\tau) + \mu(\breve\zb(\tau),\tau)\Big) \Big].
\end{align*}
By definition of $\breve\zb^{(-j)}(\tau)$ in \eqref{eq:betaminusj}, the expectation of the first line in the above representation is zero. To bound the expectation of the second line, observe that for 
\[
\delta_n := \tilde C \xi_m\Big(g_N^2(\zb_N) + \frac{m  \log n}{n}\Big)
\] 
with $\tilde C$ sufficiently large we have
\begin{align*}
& \sup_{\tau\in\Tc}\Big\| \E\Big[n^{-1} \sum_{j=1}^n \psi(Y_j,\ZZ_j;\breve\zb^{(-j)}(\tau),\tau) - \psi(Y_j,\ZZ_j;\breve\zb(\tau),\tau)\Big] \Big\|
\\
= & \sup_{\tau\in\Tc}\sup_{\|\ba\|=1} \Big|\E\Big[n^{-1} \sum_{j=1}^n \ba^\top \ZZ_j\big(\IF\{Y_j \leq \ZZ_j^\top \breve\zb^{(-j)}(\tau)\} - \IF\{Y_j \leq \ZZ_j^\top \breve\zb(\tau)\}\big) \Big]\Big|
\\
\leq & \sup_{\tau\in\Tc}\sup_{\|\ba\|=1} n^{-1} \sum_{j=1}^n \E\Big[ |\ba^\top \ZZ_j| \IF\big\{ |Y_j - \ZZ_j^\top \breve\zb^{(-j)}(\tau) |\leq \xi_m\delta_n \big\} 
\\
& \quad\quad\quad\quad\quad\quad\quad\quad\quad\quad \times \IF\big\{\sup_j \big|\ZZ_j^\top \breve\zb(\tau) - \ZZ_j^\top \breve\zb^{(-j)}(\tau)\big| \leq \xi_m\delta_n\big\} \Big]
\\
& + \xi_m \sup_{\tau\in\Tc}P\Big(\sup_j \big|\ZZ_j^\top \breve\zb(\tau) - \ZZ_j^\top \breve\zb^{(-j)}(\tau)\big| > \xi_m\delta_n\Big)
\\
\leq & \sup_{\tau\in\Tc}\sup_{\|\ba\|=1} n^{-1} \sum_{j=1}^n \E\Big[ |\ba^\top \ZZ_j| \big|F_{Y|X}(\ZZ_j^\top \breve\zb^{(-j)}(\tau) + \xi_m\delta_n |X_j)
\\
& \quad\quad\quad\quad \quad\quad\quad\quad\quad\quad\quad\quad- F_{Y|X}(\ZZ_j^\top \breve\zb^{(-j)}(\tau) - \xi_m\delta_n |X_j)\big| \Big]
\\
& + \xi_m P\Big(\sup_{\tau\in\Tc}\sup_j \big|\ZZ_j^\top \breve\zb(\tau) - \ZZ_j^\top \breve\zb^{(-j)}(\tau)\big| > \xi_m\delta_n\Big)
\\
\leq & \xi_m P\Big(\sup_{\tau\in\Tc}\sup_j \big|\ZZ_j^\top \breve\zb(\tau) - \ZZ_j^\top \breve\zb^{(-j)}(\tau)\big| > \xi_m\delta_n\Big)
\\
& + 2 \xi_m \delta_n \bar f \sup_{\|\ba\|=1} n^{-1} \sum_{j=1}^n \E\big[ |\ba^\top \ZZ_j| \big]
\\
\leq & \ 2 \xi_m \delta_n \bar f \lambda_{\max}(\E[\ZZ\ZZ^\top])^{1/2}
 + \xi_m P\Big(\sup_{\tau\in\Tc}\sup_j \big|\ZZ_j^\top \breve\zb(\tau) - \ZZ_j^\top \breve\zb^{(-j)}(\tau)\big| > \xi_m\delta_n\Big)
\\
\leq & C_{3,2} \xi_m^2\Big(g_N^2(\zb_N) + \frac{m \log n}{n}\Big) 
\end{align*}
for all sufficiently large $n$ and a constant $C_{3,2}$ independent of $\tau,n$. Here, the last line follows from Lemma~\ref{lem:leaveout} and the definition of $\delta_n$ provided that $\tilde C$ in the definition of $\delta_n$ is sufficiently large. Thus it remains to bound the expectation of $\mu(\breve\zb^{(-j)}(\tau),\tau) - \mu(\breve\zb(\tau),\tau)$. A Taylor expansion shows that
\begin{align*}
\sup_\tau \|\mu(\bb_1,\tau) - \mu(\bb_2,\tau)\| & \leq \sup_{\|\ba\|=1}\E\big[ |\ba^\top \ZZ| \IF\big\{ |Y - \ZZ^\top \bb_1 |\leq |\ZZ^\top(\bb_1-\bb_2)| \big\} \big]
\\
& \leq 2\bar f \sup_{\|\ba\|=1}\E\big[ |\ba^\top \ZZ| |\ZZ^\top(\bb_1-\bb_2)|\big]\\
&\leq 2\bar f \lambda_{\max}(\E[\ZZ\ZZ^\top])\|\bb_1 - \bb_2\|,
\end{align*}
where the last inequality follows by Cauchy-Schwarz inequality. On the other hand $\|\mu(\bb_1,\tau) - \mu(\bb_2,\tau)\| \leq 2\xi_m$. Thus we have for any $\delta_n > 0$,
\begin{multline*}
\sup_{\tau,j} \Big\|\E\big[\mu(\breve\zb^{(-j)}(\tau),\tau) - \mu(\breve\zb(\tau),\tau)\big]\Big\|
\\ 
\leq 2\bar f \lambda_{\max}(\E[\ZZ\ZZ^\top])\delta_n + 2\xi_m P\Big(\sup_{\tau,j} \|\breve\zb^{(-j)}(\tau)-\breve\zb(\tau)\| > \delta_n\Big).
\end{multline*}
Choosing $\delta_n$ as above completes the proof.
\hfill $\qed$

\begin{lemma} \label{lem:leaveout}

Under the assumptions of Theorem \ref{th:bahsimple} we have for any $\kappa_n \ll n/\xi_m^2$, all sufficiently large $n$, and a constant $C$ independent of $n$ we have for $\breve\zb^{(-j)}$ defined in~\eqref{eq:betaminusj}
\[
P\Big( \max_{j=1,...,n}\sup_{\tau} \|\breve\zb^{(-j)}(\tau) - \breve\zb(\tau)\| \geq C\Big(g_N^2(\zb_N)\xi_m + \frac{\xi_m\kappa_n}{n} + \frac{m \xi_m \log n}{n}\Big) \Big) \leq 2ne^{-\kappa_n}. 
\]
\end{lemma}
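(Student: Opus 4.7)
The plan is to reduce the bound on the leave-one-out difference to a comparison of Bahadur representations from Theorem \ref{th:bahsimple}, one for $\breve\zb$ based on the full sample of size $n$, and one for each $\breve\zb^{(-j)}$ based on the i.i.d. leave-one-out sample of size $n-1$. Both expansions can be carried out around the same centering $\zb_N(\tau)$, since the assumptions $g_N(\zb_N)=o(\xi_m^{-1})$ and $c_m(\zb_N)=o(1)$ depend only on the distribution of $(X,Y)$. Writing
\[
\breve\zb(\tau)-\zb_N(\tau) = -n^{-1/2}J_m(\tau)^{-1}\Gb_n\psi(\cdot;\zb_N(\tau),\tau) + \sum_{k=1}^4 r_{n,k}(\tau),
\]
\[
\breve\zb^{(-j)}(\tau)-\zb_N(\tau) = -(n-1)^{-1/2}J_m(\tau)^{-1}\Gb_{n-1}^{(-j)}\psi(\cdot;\zb_N(\tau),\tau) + \sum_{k=1}^4 r_{n-1,k}^{(-j)}(\tau),
\]
and subtracting, I would control the leading linear term and the four remainder pairs separately.

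For the linear term difference, a short computation gives
\[
-\tfrac{1}{n}J_m(\tau)^{-1}\psi(Y_j,\ZZ_j;\zb_N(\tau),\tau) + \bigl(\tfrac{1}{n-1}-\tfrac{1}{n}\bigr)J_m(\tau)^{-1}\!\!\sum_{i\ne j}\psi(Y_i,\ZZ_i;\zb_N(\tau),\tau),
\]
whose norm is $O(\xi_m/n)$ almost surely using $\|\psi\|\le\xi_m$, $\|J_m^{-1}\|\le C$, and the trivial bound on the second average. This is well within the target rate.

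For the remainder differences, I would exploit a crucial cancellation: the deterministic piece $-\tilde J_m(\zb_N(\tau))^{-1}\mu(\zb_N(\tau),\tau)$ inside the definition of $r_{n,4}$ is identical in the full and leave-one-out expansions, so it drops out exactly; only the $n^{-1/2}(J_m^{-1}-\tilde J_m(\zb_N)^{-1})\Gb_n(\psi)$ piece survives, whose norm is dominated by $\Re_2$ under the given assumptions. The contributions from $r_{n,1}$ are deterministic of size $O(m\xi_m/n)$ and are absorbed into $m\xi_m\log n/n$. The stochastic remainders $r_{n,2},r_{n,3}$ are handled directly via~\eqref{eq:retail} in Theorem \ref{th:bahsimple}: expanding $\Re_2(\kappa_n)=\xi_m(g_N+\sqrt{m\log n/n}+\sqrt{\kappa_n/n})^2\lesssim g_N^2\xi_m+\xi_m m\log n/n+\xi_m\kappa_n/n$ matches the target form, and $\Re_3(\kappa_n)$ is controlled by applying AM--GM to its product form together with $g_N\xi_m=o(1)$ to show it is of smaller order.

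Finally, I would apply the union bound over the $n+1$ estimators ($\breve\zb$ and $\breve\zb^{(-1)},\dots,\breve\zb^{(-n)}$) and the three stochastic remainder pieces of Theorem \ref{th:bahsimple}, producing an overall failure probability of at most $c(n+1)e^{-\kappa_n}$, which can be written as $2ne^{-\kappa_n}$ after absorbing the constant $c$ into $\kappa_n$ (replacing $\kappa_n$ by $\kappa_n+\log(c/2)$) or adjusting $C$. The main obstacle is the last point: verifying that each summand in $\Re_3$ is dominated by the three target terms $g_N^2\xi_m$, $\xi_m m\log n/n$, $\xi_m\kappa_n/n$. This is where the interplay between the bounds $g_N\xi_m=o(1)$ and $\kappa_n\ll n/\xi_m^2$ becomes essential, and it is the only step requiring more than routine estimation.
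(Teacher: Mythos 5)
Your reduction to a comparison of Bahadur expansions, your treatment of the leading linear term, of $r_{n,1}$, $r_{n,2}$, and the cancellation inside $r_{n,4}$ all match the paper, and the final union bound is standard. The fatal gap is the $r_{n,3}$ step: bounding $\|r_{n,3}^{(-j)}(\tau)-r_{n,3}(\tau)\|$ by the triangle inequality and invoking~\eqref{eq:retail} is too coarse by a polynomial factor, and no AM--GM manipulation can repair this. Concretely, in the simplest regime $g_N=0$, $\kappa_n\asymp\log n$, expanding~\eqref{eq:re3} gives
\[
\Re_3(\kappa_n) \;\asymp\; \Big(\frac{m\log n}{n}\Big)^{1/4}\Big(\frac{m\xi_m\log n}{n}\Big)^{1/2}
\;=\; \Big(\frac{n}{m\xi_m^2\log n}\Big)^{1/4}\cdot \frac{m\xi_m\log n}{n},
\]
and the prefactor $(n/(m\xi_m^2\log n))^{1/4}$ diverges precisely because $m\xi_m^2\log n = o(n)$. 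So $\Re_3$ is of strictly larger order than the target $\frac{m\xi_m\log n}{n}$, and there is nothing in the $g_N^2\xi_m$ or $\xi_m\kappa_n/n$ terms to absorb it; this is exactly the $n^{-3/4}$-type loss that Remark~\ref{rem:fixeddiminter} identifies as the obstacle motivating the whole construction.

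What you are missing is that $r_{n,3}^{(-j)}-r_{n,3}$ must not be controlled term by term but as a \emph{difference}, exploiting that $\breve\zb^{(-j)}$ and $\breve\zb$ are close. The paper writes
\[
\sup_{j,\tau}\|r_{n,3}^{(-j)}(\tau)-r_{n,3}(\tau)\| \;\lesssim\; \frac{\xi_m}{n} + \|\Pb_n-P\|_{\Gc_2(\delta_n)},
\qquad \delta_n := \sup_{j,\tau}\|\breve\zb^{(-j)}(\tau)-\breve\zb(\tau)\|,
\]
where $\Gc_2(\delta)$ is the class of indicator differences over $\ell_2$-balls of radius $\delta$ defined in~\eqref{eq:defgc2}, and then uses the radius-dependent concentration bound~\eqref{eq:bgn23} of Lemma~\ref{lem:gc}, which gives a contribution scaling like $\delta_n^{1/2}$. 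This turns the estimate into a self-referential inequality $\delta_n \leq a_n + b_n\delta_n^{1/2}$, whose solution $\delta_n \leq 4\max(a_n,b_n^2)$ yields precisely the stated rate; the quantities $a_n,b_n$ collect the contributions from $\Re_2$, the $r_{n,1}$ and $r_{n,4}$ differences, and the radius-dependent empirical-process term. Without this self-improving argument -- plugging the unknown $\delta_n$ back into the bound via $\Gc_2(\delta_n)$ -- the remainder $r_{n,3}$ could not be driven below $n^{-3/4}$, and Lemma~\ref{lem:leaveout} (and hence the sharp moment bound~\eqref{eq:re3bias} and the aggregated Bahadur representation) would fail.
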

\noindent
\textbf{Proof of Lemma \ref{lem:leaveout}} \\
Let $\Pb_n^{(-j)}(f) := (n-1)^{-1}\sum_{i \neq j} f(X_i,Y_i)$, $\Gb_n^{(-j)}(f) := \sqrt{n-1}(\Pb_n^{(-j)}(f) - \E f(X_1,Y_1))$. Similarly to the decomposition considered previously \eqref{eq:bahadur_bn}, we have
\begin{equation} \label{eq:bahadurohne}
\breve\zb^{(-j)}(\tau) - \zb_N(\tau) = -  (n-1)^{-1/2} J_m(\tau)^{-1}\Gb_n^{(-j)}(\psi(\cdot;\zb_N(\tau),\tau))
+ \sum_{k=1}^4 r_{n,k}^{(-j)}(\tau)
\end{equation}
where
\begin{eqnarray*}
r_{n,1}^{(-j)}(\tau) &:=& \tilde J_m(\zb_N(\tau))^{-1}\Pb_n^{(-j)} \psi(\cdot;\breve\zb^{(-j)}(\tau),\tau),
\\
r_{n,2}^{(-j)}(\tau) &:=& - \tilde J_m(\zb_N(\tau))^{-1}\Big(\mu(\breve\zb^{(-j)}(\tau),\tau) - \mu(\zb_N(\tau),\tau)
\\
&& \quad\quad\quad\quad\quad\quad\quad\quad - \tilde J_m(\zb_N(\tau))(\breve\zb^{(-j)}(\tau) - \zb_N(\tau))\Big),
\\
r_{n,3}^{(-j)}(\tau) &:=& \frac{-\tilde J_m(\zb_N(\tau))^{-1}}{\sqrt{n-1}}\Big(\Gb_n^{(-j)}(\psi(\cdot;\breve\zb^{(-j)}(\tau),\tau)) - \Gb_n^{(-j)}(\psi(\cdot;\zb_N(\tau),\tau))\Big),
\\
r_{n,4}^{(-j)}(\tau) &:=& -(n-1)^{-1/2}(J_m(\tau)^{-1} - \tilde J_m(\zb_N(\tau))^{-1})\Gb_n^{(-j)}(\psi(\cdot;\zb_N(\tau),\tau))
\\
&& - \tilde J_m(\zb_N(\tau))^{-1} \mu(\zb_N(\tau),\tau).
\end{eqnarray*}
By similar arguments as in the proof for $r_{n,2}$ in Theorem 5.1 in \cite{ChaVolChe2016} there exists 
$C$ independent of $n$ such that $P(\Omega_{1,n}^{(j)}(C)) \leq e^{-\kappa_n}, j=1,...,n$ where we defined the event
\begin{align}
\Omega_{1,n}^{(j)}(C) := \Big\{ \sup_{\tau} \|\breve\zb^{(-j)}(\tau) - \zb_N(\tau)\| \geq C \Big( g_N(\zb_N) + \frac{\kappa_n^{1/2}}{n^{1/2}} + \Big(\frac{m \log n}{n}\Big)^{1/2}\Big) \Big\}.\label{eq:omega1}
\end{align}
Combining \eqref{eq:bahadur_bn} and \eqref{eq:bahadurohne} we find that, almost surely,
\[
\sup_j\sup_\tau \Big\| \breve\zb^{(-j)}(\tau) - \breve\zb(\tau) - \sum_{k=1}^4 \Big(r_{n,k}^{(-j)}(\tau) - r_{n,k}(\tau) \Big) \Big\| \lesssim \frac{\xi_m}{n}. 
\]
As in the proof of Lemma 34 on page 106 in \cite{bechfe2011}, arguments from linear optimization lead to the bound
\[
\sup_j \sup_\tau \|r_{n,1}^{(-j)}(\tau) - r_{n,1}(\tau)\| \leq  \frac{2}{\inf_\tau \lambda_{min}(\tilde J_m(\zb_N(\tau)))}\frac{m\xi_m}{n-1} \quad a.s.
\]
Direct calculations show that
\begin{eqnarray*}
\sup_j \sup_\tau \|r_{n,4}^{(-j)}(\tau) - r_{n,4}(\tau)\| \leq C_1 \frac{\xi_m}{n},
\end{eqnarray*}
and from Lemma~\ref{lem:taylormu} we obtain for a constant $C$ independent of $j,\tau,n$
\[
\|r_{n,2}^{(-j)}(\tau) - r_{n,2}(\tau)\| \leq C\xi_m \Big(\|\breve\zb^{(-j)}(\tau) - \zb_N(\tau)\|^2 + \|\breve\zb(\tau) - \zb_N(\tau)\|^2\Big).
\]
The probability of the event
\begin{multline*}
\Omega_{1,n}(C) := \Big\{ \sup_{\tau,j} \Big(\|\breve\zb^{(-j)}(\tau) - \zb_N(\tau)\|^2 + \|\breve\zb(\tau) - \zb_N(\tau)\|^2\Big)
\\
\geq C\Big( g_N^2(\zb_N) + \frac{\kappa_n}{n} + \frac{m \log n}{n}\Big)\Big\}
\end{multline*}
can be bounded by $(n+1)e^{-\kappa_n}$ if $C$ is chosen suitably, this follows from the bound $P(\Omega_{1,n}^{(j)}(C)) \leq e^{-\kappa_n}$ in \eqref{eq:omega1}. Finally, define
\begin{multline}
\label{eq:defgc2}
\Gc_2(\delta) := \Big\{ (z,y) \mapsto \ba^\top z(\IF\{y \leq z^\top \bb_1\} - \IF\{y \leq z^\top \bb_2\})\IF\{\|z\| \leq \xi_m\} \Big|
\\
\bb_1,\bb_2 \in \R^m, \|\bb_1 - \bb_2\| \leq \delta, \ba\in\Sc^{m-1}\Big\}.
\end{multline}
Letting $\delta_n := \sup_j\sup_\tau\|\breve\zb(\tau)^{(-j)} - \breve\zb(\tau)\|$, we have 
\[
\sup_j \sup_\tau \|r_{n,3}^{(-j)}(\tau) - r_{n,3}(\tau)\| \leq C_3 \Big(\frac{\xi_m}{n} + \|\Pb_n - P\|_{\Gc_2(\delta_n)} \Big).
\]
Summarizing the bounds obtained so far we find for a constant $C_4$ independent of $n$
\[
\delta_n \leq C_4\Big(\frac{m\xi_m}{n} + \|\Pb_n - P\|_{\Gc_2(\delta_n)} + \sup_{\tau,j}\xi_m \big(\|\breve\zb^{(-j)}(\tau) - \zb_N(\tau)\|^2 + \|\breve\zb(\tau) - \zb_N(\tau)\|^2\big) \Big).
\]
Consider the event 
\[
\Omega_{2,n}(C) := \Big\{ \sup_{0 \leq t\leq 1} \frac{\|\Pb_n - P\|_{\Gc_2(t)}}{\chi_n(t,\kappa_n)} \geq C \Big\},
\]
where 
$$
\chi_n(t,\kappa_n) = \xi_m^{1/2}t^{1/2} \Big(\frac{m}{n} \log (\xi_m \vee n)\Big)^{1/2} + \frac{m\xi_m}{n} \log (\xi_m \vee n) + \bigg(\frac{\xi_m t \kappa_n}{n}\bigg)^{1/2} + \frac{\xi_m \kappa_n}{n}
$$
defined as \eqref{eq:chi_n} in Lemma \ref{lem:gc}. It follows from \eqref{eq:bgn23} that $P(\Omega_{2,n}(C)) \leq e^{-\kappa_n} \log_2 n$ if we choose $C$ suitably. Thus, $\delta_n$ satisfies the inequality 
\begin{align*}
\delta_n  \leq & \ C_8 \xi_m \Big( g_N^2(\zb_N) + \frac{\kappa_n}{n} + \frac{m \log n}{n}\Big) + C_8\chi_n(\delta_n,\kappa_n) 
\\
\leq & \ C_8 \xi_m \Big( g_N^2(\zb_N) + \frac{\kappa_n}{n} + \frac{m \log n}{n}\Big) + C_8\frac{m\xi_m}{n} \log (\xi_m \vee n) + C_8 \frac{\xi_m \kappa_n}{n}
\\
& \ + C_9 \delta_n^{1/2}\Big(\frac{\xi_m m}{n} \log (\xi_m \vee n) + \frac{\xi_m \kappa_n }{n}\Big)^{1/2}
\\
:=& \ a_n + \delta_n^{1/2}b_n
\end{align*}
with probability at least $1 - 2ne^{-\kappa_n}$ (note that $(n+1)e^{-\kappa_n}+\log_2 n e^{-\kappa_n} \leq 2n e^{-\kappa_n}$).
A simple calculation shows that the inequality $0 \leq \delta_n \leq a_n +b_n\delta_n^{1/2}$ implies $0 \leq \delta_n \leq 4 \max(a_n,b_n^2)$. Straightforward calculations show that under the assumption of the theorem we have
\[
a_n + b_n^2 \lesssim g_N^2(\zb_N)\xi_m + \frac{\xi_m\kappa_n}{n} + \frac{m \xi_m \log n}{n}
\]
provided that $\kappa_n \to \infty$. Thus we have for some constant $C$ independent of $n$
\[
P\Big(\delta_n \geq C\Big(g_N^2(\zb_N)\xi_m + \frac{\xi_m\kappa_n}{n} + \frac{m \xi_m \log n}{n}\Big) \Big) \leq 2ne^{-\kappa_n}. 
\]
This completes the proof. \hfill $\qed$

%

\begin{lemma} \label{lem:gc}
Consider the class of functions $\Gc_2(\delta_n)$ defined in~\eqref{eq:defgc2}. Under assumptions \hyperref[A1]{(A1)}-\hyperref[A3]{(A3)} and $\xi_m \delta_n \gg n^{-1}, \xi_m = O(n^b)$ for some $b<\infty$, we have for some constant $C$ independent of $n$, sufficiently large $n$ and arbitrary $\kappa_n>0$,
\begin{equation} \label{eq:bgn22}
P\Big( \|\Pb_n-P\|_{\Gc_2(\delta_n)} \geq C \chi_n(\delta_n,\kappa_n) \Big) \leq e^{-\kappa_n}
\end{equation}
and for $\kappa_n \geq 1$
\begin{equation} \label{eq:bgn23}
P\Big( \sup_{0 \leq t \leq 1} \frac{\|\Pb_n-P\|_{\Gc_2(t)}}{\chi_n(t,\kappa_n)} \geq 2 C  \Big) \leq  e^{-\kappa_n} \log_2 n
\end{equation}
where
\begin{align}
\chi_n(t,\kappa_n) := \Big(\frac{m\xi_m}{n} \log (\xi_m \vee n)t\Big)^{1/2} + \frac{m\xi_m}{n} \log (\xi_m \vee n) + \Big(\frac{\xi_m \kappa_n t}{n}\Big)^{1/2} +\frac{\xi_m \kappa_n}{n}.\label{eq:chi_n}
\end{align}
\end{lemma}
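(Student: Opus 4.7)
\textbf{Proof plan for Lemma~\ref{lem:gc}.}

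The plan is to treat this as a standard local maximal inequality for a VC-type class and then apply Talagrand/Bousquet concentration. First I would verify the three structural ingredients of $\Gc_2(\delta)$: (i) envelope $F(z,y) := 2\xi_m$ (since $|\ba^\top z|\leq \|z\|\leq \xi_m$ on the support of the indicator); (ii) variance bound: for any $g\in\Gc_2(\delta)$, using $|\IF\{y\leq z^\top\bb_1\}-\IF\{y\leq z^\top\bb_2\}|\leq \IF\{|y-z^\top\bb_1|\leq |z^\top(\bb_1-\bb_2)|\}$, assumption \hyperref[A2]{(A2)}, and Cauchy--Schwarz gives $Pg^2 \leq 2\bar f\, \xi_m\delta\, \lambda_{\max}(\E[\ZZ\ZZ^\top]) \lesssim \xi_m\delta$; (iii) uniform entropy: the halfspace class $\{\IF\{y\leq z^\top\bb\}:\bb\in\R^m\}$ is VC with index $O(m)$, hence so is its difference class, and multiplying by the linear functions $\ba^\top z$ (with $\ba\in\Sc^{m-1}$) keeps the class VC-subgraph of dimension $O(m)$. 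Standard entropy bounds then yield $\sup_Q \log N(\eps\|F\|_{Q,2},\Gc_2(\delta),L_2(Q)) \lesssim m\log(1/\eps)$.

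Second, I would control the expectation $\E\|\Pb_n-P\|_{\Gc_2(\delta_n)}$ via a Koltchinskii-type local maximal inequality (e.g.\ Theorem~2.14.1 or the refinements in van der Vaart and Wellner, 1996, or Giné--Nickl, 2016). Plugging in $\sigma^2 \lesssim \xi_m\delta_n$, envelope $U = 2\xi_m$ and VC dimension $\lesssim m$, one obtains
\[
\E\|\Pb_n-P\|_{\Gc_2(\delta_n)} \lesssim \sqrt{\frac{m\xi_m\delta_n \log(\xi_m\vee n)}{n}} + \frac{m\xi_m \log(\xi_m\vee n)}{n},
\]
where the logarithmic factor comes from the ratio $\log(U^2/\sigma^2)$ combined with $\xi_m = O(n^b)$.

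Third, I would apply Bousquet's version of Talagrand's inequality to the centered supremum $Z := \|\Pb_n-P\|_{\Gc_2(\delta_n)}$: for every $t>0$,
\[
P\!\left(Z \geq \E Z + \sqrt{\tfrac{2(\sigma^2+2U\E Z)t}{n}} + \tfrac{Ut}{3n}\right) \leq e^{-t}.
\]
Taking $t=\kappa_n$ and combining with the expectation bound above gives exactly the two extra terms $\sqrt{\xi_m\kappa_n\delta_n/n}$ and $\xi_m\kappa_n/n$ appearing in $\chi_n(\delta_n,\kappa_n)$; the condition $\xi_m\delta_n\gg n^{-1}$ ensures the four terms in $\chi_n$ absorb the mixed terms $\sqrt{U\E Z \cdot \kappa_n/n}$ up to constants. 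This proves \eqref{eq:bgn22}.

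For \eqref{eq:bgn23} I would use a standard peeling argument. Since $\Gc_2(t)\subset\Gc_2(t')$ whenever $t\leq t'$, and $\chi_n(t,\kappa_n)$ is monotone increasing in $t$ with $\chi_n(t,\kappa_n)\asymp \chi_n(2t,\kappa_n)$ up to absolute constants, I partition $(0,1]$ into dyadic shells $I_k := (2^{-(k+1)},2^{-k}]$ for $k=0,1,\ldots,\lfloor\log_2 n\rfloor$, plus a terminal shell $(0,1/n]$. On each shell $\sup_{t\in I_k}\|\Pb_n-P\|_{\Gc_2(t)}\leq \|\Pb_n-P\|_{\Gc_2(2^{-k})}$, to which \eqref{eq:bgn22} applies; on the terminal shell the envelope-only deterministic bound $\|\Pb_n-P\|_{\Gc_2(t)}\leq 2\xi_m/n + \xi_m\kappa_n/n$ suffices because $\chi_n(t,\kappa_n)\gtrsim \xi_m\kappa_n/n$ always. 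A union bound over the $O(\log_2 n)$ shells then yields \eqref{eq:bgn23}. The main delicate point I expect is keeping the logarithmic factor in the entropy integral consistent with the $\log(\xi_m\vee n)$ in $\chi_n$, which is where the hypothesis $\xi_m=O(n^b)$ is used to absorb $\log\xi_m$ into $\log n$ up to constants.
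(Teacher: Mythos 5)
Your plan is essentially the paper's route: the paper itself defers \eqref{eq:bgn22} to Lemma~C.3 of \cite{ChaVolChe2016} (which is proved along exactly the lines you sketch -- VC-type entropy, variance bound $Pg^2 \lesssim \xi_m\delta$, and a Talagrand-type concentration step), and proves \eqref{eq:bgn23} by dyadic peeling with a union bound over roughly $\log_2 n$ shells plus a terminal piece, just as you do. Your fill-in of the entropy/variance/Bousquet details for \eqref{eq:bgn22} is correct, and so is the observation that $\chi_n$ changes only by an absolute constant under $t \mapsto 2t$.

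There is, however, a real error on the terminal shell $(0,n^{-1}]$. There is no ``envelope-only deterministic bound $\|\Pb_n-P\|_{\Gc_2(t)} \leq 2\xi_m/n + \xi_m\kappa_n/n$'': the only envelope-based deterministic bound is $\|\Pb_n-P\|_{\Gc_2(t)}\leq 4\xi_m$, which is far too large. Even for $t$ tiny, a single observation $Y_i$ can be pinched between $Z_i^\top\bb_1$ and $Z_i^\top\bb_2$, so $|\Pb_n g|$ can be of order $\xi_m/n$ and the supremum over $(\bb_1,\bb_2)$ is a genuine empirical-process quantity, not a deterministic one of order $\xi_m/n$. The fix is the one the paper uses: for $t\leq n^{-1}$ exploit monotonicity $\Gc_2(t)\subset\Gc_2(n^{-1})$ to get
\[
\frac{\|\Pb_n-P\|_{\Gc_2(t)}}{\chi_n(t,\kappa_n)} \leq \frac{\|\Pb_n-P\|_{\Gc_2(n^{-1})}}{\chi_n(0,\kappa_n)} \leq \frac{2\,\|\Pb_n-P\|_{\Gc_2(n^{-1})}}{\chi_n(n^{-1},\kappa_n)},
\]
the last step using $\chi_n(0,\kappa_n)\geq\chi_n(n^{-1},\kappa_n)/2$, and then apply \eqref{eq:bgn22} at $\delta_n = n^{-1}$ as one more event in the union bound. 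This is a small but necessary change; as written your terminal-shell step would not survive scrutiny.
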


\noindent \textbf{Proof of Lemma \ref{lem:gc}.} 
For a proof of \eqref{eq:bgn22} see Lemma C.3 in the Appendix of \cite{ChaVolChe2016}. For a proof of \eqref{eq:bgn23}, observe that for $\kappa_n \geq 1$ we have $\chi_n(t/2,\kappa_n) \geq \chi_n(t,\kappa_n)/2, \chi_n(0,\kappa_n) \geq \chi_n(n^{-1},\kappa_n)/2$, and thus
\begin{align*}
& \sup_{0 \leq t \leq 1} \frac{\|\Pb_n-P\|_{\Gc_2(t)}}{\chi_n(t,\kappa_n)}
\\
& \leq \Big(\sup_{0 \leq t \leq n^{-1}}\frac{\|\Pb_n-P\|_{\Gc_2(t)}}{\chi_n(t,\kappa_n)}\Big) \vee \Big(\max_{k: n^{-1} \leq 2^{-k} \leq 1} \sup_{2^{-k-1} \leq t \leq 2^{-k}}\frac{\|\Pb_n-P\|_{\Gc_2(t)}}{\chi_n(t,\kappa_n)} \Big)
\\
&\leq  \Big(\max_{k: n^{-1} \leq 2^{-k} \leq 1} \frac{ 2\|\Pb_n-P\|_{\Gc_2(2^{-k})}}{\chi_n(2^{-k},\kappa_n)}\Big) \vee \Big(\frac{ 2\|\Pb_n-P\|_{\Gc_2(n^{-1})}}{\chi_n(n^{-1},\kappa_n)}\Big)
\end{align*}
Now the set $\{k: n^{-1} \leq 2^{-k} \leq 1\}$ contains not more than $\log_2 n$ elements, and \eqref{eq:bgn23} thus follows from \eqref{eq:bgn22}.\hfill $\qed$

\begin{lemma} \label{lem:taylormu}
Under assumptions \hyperref[A1]{(A1)}-\hyperref[A3]{(A3)} we have
\begin{eqnarray*}
&&\sup_{\tau \in \Tc} \sup_{\|\bb - \zb_N(\tau)\| \leq \omega} \| \mu(\bb,\tau) - \mu(\zb_N(\tau),\tau) - \tilde J_m(\zb_N(\tau))(\bb - \zb_N(\tau)) \|
\\
&\leq& \lambda_{\max}(\E[\ZZ\ZZ^\top]) \overline{f'} \omega^{2} \xi_m,
\end{eqnarray*}
\end{lemma}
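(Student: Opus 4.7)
The plan is to expand $F_{Y|X}(\ZZ^\top\bb|X)$ around the point $\ZZ^\top\zb_N(\tau)$ using a second-order Taylor expansion. By Assumption \hyperref[A2]{(A2)}, $y \mapsto F_{Y|X}(y|x)$ is twice differentiable, so for each realization of $(X,Y)$ there exists an intermediate point $\zeta$ between $\ZZ^\top\zb_N(\tau)$ and $\ZZ^\top\bb$ such that
\begin{align*}
F_{Y|X}(\ZZ^\top\bb|X) - F_{Y|X}(\ZZ^\top\zb_N(\tau)|X)
=&~ f_{Y|X}(\ZZ^\top\zb_N(\tau)|X)\,\ZZ^\top(\bb-\zb_N(\tau))
\\
&+ \tfrac{1}{2} f'_{Y|X}(\zeta|X)\,\big(\ZZ^\top(\bb-\zb_N(\tau))\big)^{2}.
\end{align*}

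Multiplying by $\ZZ$ and taking expectations, the linear term reproduces exactly $\tilde J_m(\zb_N(\tau))(\bb-\zb_N(\tau))$ by the definition of $\tilde J_m$ in~\eqref{eq:defJm}. The quantity to control is therefore the remainder
\[
R(\bb,\tau) := \tfrac{1}{2}\E\big[\ZZ\, f'_{Y|X}(\zeta|X)\,(\ZZ^\top(\bb-\zb_N(\tau)))^{2}\big].
\]

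To bound $\|R(\bb,\tau)\|$, I would use duality: $\|R\| = \sup_{\ba\in\Sc^{m-1}} |\ba^\top R|$. For any unit vector $\ba$, Assumption \hyperref[A2]{(A2)} gives $|f'_{Y|X}(\zeta|X)|\leq \overline{f'}$ and Assumption \hyperref[A1]{(A1)} gives $|\ba^\top\ZZ|\leq\|\ZZ\|\leq\xi_m$, so
\[
|\ba^\top R(\bb,\tau)|\leq \tfrac{1}{2}\overline{f'}\,\xi_m\,\E\big[(\ZZ^\top(\bb-\zb_N(\tau)))^{2}\big] = \tfrac{1}{2}\overline{f'}\,\xi_m\,(\bb-\zb_N(\tau))^\top\E[\ZZ\ZZ^\top](\bb-\zb_N(\tau)).
\]
Bounding the quadratic form by $\lambda_{\max}(\E[\ZZ\ZZ^\top])\,\|\bb-\zb_N(\tau)\|^{2}$ and using $\|\bb-\zb_N(\tau)\|\leq\omega$ on the sup-set yields the claimed estimate (with a harmless factor $1/2$ to spare). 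This argument is entirely pointwise in $\tau$, so the supremum over $\tau\in\Tc$ causes no additional difficulty. There is essentially no obstacle: the lemma is a standard quadratic Taylor bound, and the only minor care needed is to note that $\zeta$ can depend on $(X,\bb,\tau)$ but only the uniform bound $\overline{f'}$ on $f'_{Y|X}$ is actually used.
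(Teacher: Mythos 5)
Your proof is correct and is exactly the standard second-order Taylor argument one would expect; the paper itself does not spell out a proof (it points to Lemma C.1 of Chao, Volgushev and Cheng (2016)), but that proof is the same computation you give. The key steps — Taylor-expand $y\mapsto F_{Y|X}(y|x)$ to order two around $\ZZ^\top\zb_N(\tau)$, match the first-order term with $\tilde J_m(\zb_N(\tau))$, and then bound the quadratic remainder by peeling off $\overline{f'}$, one factor $\xi_m$ via $|\ba^\top \ZZ|\le\|\ZZ\|$, and $\lambda_{\max}(\E[\ZZ\ZZ^\top])$ from the remaining quadratic form — all match, and your factor $1/2$ is simply absorbed into the stated bound. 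Note in passing that \hyperref[A3]{(A3)} is not actually invoked, only \hyperref[A1]{(A1)} and \hyperref[A2]{(A2)}, which is consistent with the lemma being a purely analytic Taylor estimate.
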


\noindent
\textbf{Proof of Lemma \ref{lem:taylormu}} see the proof of Lemma C.1 in the Appendix of \cite{ChaVolChe2016}. \hfill $\qed$

\begin{lemma} \label{lem:cons1}
Let assumptions \hyperref[A1]{(A1)}-\hyperref[A3]{(A3)} hold. Then, for any $t > 1$
\begin{multline*}
\Big\{ \sup_{\tau \in \Tc} \|\breve\zb(\tau) - \zb_N(\tau)\| \leq \frac{2 t (s_{n,1} + g_N(\zb_N))}{\inf_{\tau\in\Tc}\lambda_{min}(\tilde J_m(\zb_N(\tau)))} \Big\}
\\
\supseteq \Big\{ (s_{n,1} + g_N(\zb_N)) < \frac{\inf_{\tau\in\Tc}\lambda_{min}^2(\tilde J_m(\zb_N(\tau)))}{4 t \xi_m \overline{f'}\lambda_{\max}^2(\E[\ZZ \ZZ^\top])}\Big\}.
\end{multline*}
where $s_{n,1} := \|\Pb_n-P\|_{\Gc_1}$ and
\[
\Gc_1 := \Big\{ (z,y) \mapsto \ba^\top z(\IF\{y \leq z^\top \bb\} - \tau)\IF\{\|z\| \leq \xi_m\} \Big| \tau \in \Tc, \bb \in \R^m, \ba \in \Sc^m \Big\}
\]
\end{lemma}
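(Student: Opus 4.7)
The approach is a one-dimensional convexity argument along the direction from $\zb_N(\tau)$ to $\breve\zb(\tau)$. Fix $\tau \in \Tc$; if $\breve\zb(\tau) = \zb_N(\tau)$ the bound is trivial, so assume otherwise and set $\bv^* := (\breve\zb(\tau) - \zb_N(\tau))/\|\breve\zb(\tau) - \zb_N(\tau)\| \in \Sc^{m-1}$ and $r^* := \|\breve\zb(\tau) - \zb_N(\tau)\|$. The univariate function $f(t) := \Pb_n \rho_\tau(Y - (\zb_N(\tau) + t\bv^*)^\top \ZZ)$ is convex on $[0,\infty)$ and, by global optimality of $\breve\zb(\tau)$, is minimized there at $t = r^*$. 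Consequently, if the right derivative $f'_+(r)$ is strictly positive at some $r>0$, convexity forces $r^* \leq r$. I would therefore pick $r$ equal to the target bound $2t(s_{n,1}+g_N(\zb_N))/L$ with $L := \inf_\tau \lambda_{\min}(\tilde J_m(\zb_N(\tau)))$ and verify $f'_+(r) > 0$ under the stated hypothesis.

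The right derivative equals $f'_+(r) = \bv^{*\top}\Pb_n \psi(\cdot; \bb_r, \tau)$ with $\bb_r := \zb_N(\tau) + r\bv^*$, up to a tie-breaking correction of order $\xi_m/n$ arising from observations with $Y_i = \bb_r^\top \ZZ_i$. Decomposing $\Pb_n \psi(\cdot; \bb_r, \tau) = (\Pb_n - P)\psi(\cdot; \bb_r, \tau) + \mu(\bb_r, \tau)$, the centered part is bounded by $s_{n,1}$: the class $\Gc_1$ supremizes over all $\bb$, $\tau$, and $\ba \in \Sc^{m-1}$, and $\IF\{\|\ZZ\| \leq \xi_m\}$ is a.s.\ one under \hyperref[A1]{(A1)}. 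For the mean part, I would use $\|\mu(\zb_N(\tau),\tau)\| \leq g_N(\zb_N)$ together with Lemma~\ref{lem:taylormu} applied to $\mu(\bb_r, \tau) - \mu(\zb_N(\tau),\tau) = r \tilde J_m(\zb_N(\tau))\bv^* + R_r$, where $\|R_r\| \leq \overline{f'}\lambda_{\max}(\E[\ZZ\ZZ^\top])\xi_m r^2$. Combining these gives
\[
f'_+(r) \;\geq\; -s_{n,1} - g_N(\zb_N) + rL - \overline{f'}\lambda_{\max}(\E[\ZZ\ZZ^\top]) \xi_m r^2.
\]

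Substituting $r = 2t(s_{n,1}+g_N(\zb_N))/L$ reduces $f'_+(r) > 0$ to the elementary inequality $(2t-1)L^2 > 4 t^2 \overline{f'}\lambda_{\max}(\E[\ZZ\ZZ^\top])\xi_m (s_{n,1}+g_N(\zb_N))$, and the hypothesis of the lemma is amply sufficient to ensure this (the extra factor of $\lambda_{\max}(\E[\ZZ\ZZ^\top])$ relative to the tight calculation provides harmless slack). Uniformity in $\tau$ is immediate, since $\bv^*$ and $r^*$ vary with $\tau$ but all bounds used depend on $\tau$ only through the uniform quantities $s_{n,1}$, $g_N(\zb_N)$, $L$, $\lambda_{\max}(\E[\ZZ\ZZ^\top])$, $\overline{f'}$, and $\xi_m$.

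The main obstacle I anticipate is bookkeeping rather than conceptual. The function $f$ is only one-sided differentiable, so one must carefully justify that replacing $f'_+(r)$ by $\bv^{*\top}\Pb_n \psi(\cdot;\bb_r,\tau)$ incurs an error that is genuinely $O(\xi_m/n)$ and is therefore absorbed by the slack built into the factor $t > 1$ and the smallness hypothesis on $s_{n,1} + g_N(\zb_N)$. Modulo this convex-analytic accounting, the argument is entirely routine.
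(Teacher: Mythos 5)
Your convexity-along-a-ray argument is the standard proof for consistency bounds of this type, and it is almost certainly the argument behind the lemma the paper defers to (Lemma~C.2 in the cited Chao, Volgushev and Cheng reference; the paper itself gives no proof). The skeleton is sound: restrict the empirical risk to $t \mapsto \Pb_n\rho_\tau\{Y-(\zb_N(\tau)+t\bv^*)^\top\ZZ\}$, observe that convexity plus a strictly positive right derivative at a point $r>0$ forces $\|\breve\zb(\tau)-\zb_N(\tau)\|\leq r$, and lower-bound that derivative using the $\Gc_1$-supremum together with a Taylor expansion of $\mu$ around $\zb_N(\tau)$.

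Two concrete remarks. First, the tie-breaking step you flag as the ``main obstacle'' is actually not an obstacle, and the sign is in your favor: checking the one-sided derivative of $\rho_\tau(u_i(t))$ at observations with $Y_i=\bb_r^\top\ZZ_i$ shows it always dominates $\bv^{*\top}\ZZ_i(\IF\{Y_i\leq\ZZ_i^\top\bb_r\}-\tau)$ (the discrepancy only occurs when $\bv^{*\top}\ZZ_i<0$ at a tie, and there the true right derivative is larger). Hence $f'_+(r)\geq\bv^{*\top}\Pb_n\psi(\cdot;\bb_r,\tau)$ holds exactly, with no $O(\xi_m/n)$ correction needed. Second, the closing arithmetic is not ``amply sufficient'' as stated. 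Setting $A:=s_{n,1}+g_N(\zb_N)$ and $L:=\inf_\tau\lambda_{\min}(\tilde J_m(\zb_N(\tau)))$, substituting $r=2tA/L$ reduces $f'_+(r)>0$ to
\[
A \;<\; \frac{(2t-1)L^2}{4t^2\,\overline{f'}\,\lambda_{\max}(\E[\ZZ\ZZ^\top])\,\xi_m},
\]
whereas the hypothesis supplies $A<L^2/(4t\,\overline{f'}\,\lambda_{\max}^2(\E[\ZZ\ZZ^\top])\,\xi_m)$. The latter implies the former if and only if $\lambda_{\max}(\E[\ZZ\ZZ^\top])\geq t/(2t-1)$, a threshold that tends to $1$ as $t\downarrow 1$. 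Assumption~(A1) only gives $1/M\leq\lambda_{\min}\leq\lambda_{\max}\leq M$ and does not by itself force $\lambda_{\max}(\E[\ZZ\ZZ^\top])\geq 1$, so the ``harmless slack'' is not automatic; you should either verify this normalization holds in the setting at hand (it typically does, e.g.\ whenever $\ZZ$ contains an intercept or is scaled as in the paper's spline example) or track the constant more carefully rather than asserting ample sufficiency.
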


\noindent
\textbf{Proof of Lemma \ref{lem:cons1}.} See the proof of Lemma C.2 in the Appendix of \cite{ChaVolChe2016}. \hfill $\qed$

\subsection{Proof of Theorem~\ref{th:bahsimple_bspl}}\label{sec:proofbr_bspl}
We begin by introducing some notation and useful preliminary results which will be used throughout this section. For a vector $\bv \in \R^m$ and a set $\Ic \subset \{1,...,m\}$ let $\bv^{(\Ic)} \in \R^m$ denote the vector that has entries $v_i$ for $i \in \Ic$ and zero otherwise. For a vector $\ba \in \R^m$ let $\|\ba\|_0$ denote the number of non-zero entries in this vector, let $k_\ba$ denote the position of the first non-zero entry and define
\begin{align}
	\Ic_\ba(D):=\big\{i\in\{1,...,m\}:|i-k_\ba| \leq \|\ba\|_0+D\big\}.
\end{align}

Under \hyperref[A1]{(A1)}-\hyperref[A3]{(A3)} and \hyperref[L]{(L)} we obtain by similar arguments as in Lemma A.1 in \cite{ChaVolChe2016} 
\begin{equation}\label{eq:locbas2_a}
\|\ba^\top \tilde J_m^{-1}(\zg_N(\tau)) - (\ba^\top \tilde J_m^{-1}(\zg_N(\tau)))^{(\Ic_\ba(D))} \| \leq C \|\ba\|_\infty \|\ba\|_0 \lambda^{D}
\end{equation}
for constants $\lambda\in(0,1), C > 0$ independent of $n, \tau$. 

In the sequel, we will make use of the following decomposition which holds for any $\bb \in \R^m$ such that $\tilde J_m(\bb)$ is invertible
\beq \label{eq:bahadur_gen}
\breve\zb(\tau) - \bb = -n^{-1/2}\tilde J_m^{-1}(\bb)\Gb_n(\psi(\cdot; \bb,\tau))  + \tilde J_m^{-1}(\bb) \sum_{k=1}^4 R_{n,k}(\tau,\bb)
\eeq
where
\begin{eqnarray}
\quad R_{n,1}(\tau,\bb) &:=& \Pb_n \psi(\cdot;\breve\zb(\tau),\tau), \label{eq:Rn1}
\\
\quad R_{n,2}(\tau,\bb) &:=& - \Big(\mu(\breve\zb(\tau),\tau) - \mu(\bb,\tau) - \tilde J_m(\bb)(\breve\zb(\tau) - \bb)\Big), \label{eq:Rn2}
\\
\quad R_{n,3}(\tau,\bb) &:=& -n^{-1/2} \Big(\Gb_n(\psi(\cdot;\breve\zb(\tau),\tau)) - \Gb_n(\psi(\cdot;\bb,\tau))\Big), \label{eq:Rn3} 
\\
\quad R_{n,4}(\tau,\bb) &:=&  - \mu(\bb,\tau). \label{eq:Rn4}
\end{eqnarray}

Let $r_{n,k}^{(L)}(\tau) := \tilde J_m^{-1}(\zg_N(\tau)) R_{n,k}(\tau,\zg_N(\tau)), k=1,...,4$. The bounds in \eqref{eq:brn1_a}-\eqref{eq:re4_a} follow similarly to the bounds (5.6)-(5.9) in Theorem 5.2 in \cite{ChaVolChe2016}. The bound in \eqref{eq:re3asbound_a} follows from the definition of $r_{n,3}$. Thus it remains to establish \eqref{eq:re3bias_a}. Write
\begin{align*}
R_{n,3}(\tau,\zg_N(\tau)) =~&  - \Big[ - n^{-1/2}\Gb_n \psi(\cdot;\zg_N(\tau),\tau)
\\
& \quad\quad\quad + n^{-1} \sum_{j=1}^n\Big( \psi(Y_j,\ZZ_j;\breve\zb^{(-j)}(\tau),\tau) - \mu(\breve\zb^{(-j)}(\tau),\tau) \Big)
\\ 
&  - n^{-1} \sum_{j=1}^n \Big(\psi(Y_j,\ZZ_j;\breve\zb^{(-j)}(\tau),\tau) - \psi(Y_j,\ZZ_j;\breve\zb(\tau),\tau)
\\
& \quad\quad\quad\quad \quad\quad\quad\quad - \mu(\breve\zb^{(-j)}(\tau),\tau) + \mu(\breve\zb(\tau),\tau)\Big) \Big].
\end{align*}
Define
$$
\tilde\delta_n := \tilde C\Big(c_m(\zg_N)^4 + \frac{\xi_m^2(\log n)^2}{n}\Big).
$$
and observe that for any $\ba \in \Sc_\Ic^{m-1}$, $\tau\in\Tc$,
\begin{align*}
& \Big| (\ba^\top \tilde J_m(\zg_N(\tau))^{-1})\E\Big[n^{-1} \sum_{j=1}^n \psi(Y_j,\ZZ_j;\breve\zb^{(-j)}(\tau),\tau) - \psi(Y_j,\ZZ_j;\breve\zb(\tau),\tau)\Big] \Big|
\\
= & \Big|\E\Big[n^{-1} \sum_{j=1}^n (\ba^\top \tilde J_m(\zg_N(\tau))^{-1}) \ZZ_j(\IF\{Y_i \leq \ZZ_j^\top \breve\zb^{(-j)}(\tau)\} - \IF\{Y_i \leq \ZZ_j^\top \breve\zb(\tau)\}) \Big]\Big|
\\
\lesssim &\ n^{-1} \sum_{j=1}^n \E\Big[ |(\ba^\top \tilde J_m(\zg_N(\tau))^{-1}) \ZZ_j| \IF\Big\{ |Y_j - \ZZ_j^\top \breve\zb^{(-j)}(\tau) |\leq \tilde\delta_n \Big\}
\\
& \quad\quad\quad\quad\quad\quad\quad\quad\quad\quad\quad\quad \times \IF\Big\{\sup_j \big|\ZZ_j^\top \breve\zb(\tau) - \ZZ_j^\top \breve\zb^{(-j)}(\tau)\big| \leq \tilde\delta_n\Big\} \Big]
\\
& + \xi_m P\Big(\sup_j \big|\ZZ_j^\top \breve\zb(\tau) - \ZZ_j^\top \breve\zb^{(-j)}(\tau)\big| > \tilde\delta_n\Big)
\\
\leq &\ n^{-1} \sum_{j=1}^n \E\Big[ |(\ba^\top \tilde J_m(\zg_N(\tau))^{-1}) \ZZ_j| \Big(F_{Y|X}(\ZZ_j^\top \breve\zb^{(-j)}(\tau) + \tilde\delta_n |X_j)
\\
& \quad\quad\quad\quad\quad\quad\quad\quad\quad\quad\quad\quad\quad\quad\quad\quad\quad  - F_{Y|X}(\ZZ_j^\top \breve\zb^{(-j)}(\tau) - \tilde\delta_n |X_j)\Big) \Big]
\\
& + \xi_m P\Big(\sup_j \big|\ZZ_j^\top \breve\zb(\tau) - \ZZ_j^\top \breve\zb^{(-j)}(\tau)\big| > \tilde\delta_n\Big)
\\
\leq & \ 2 \tilde\delta_n \bar f n^{-1} \sum_{j=1}^n \E\Big[ |(\ba^\top \tilde J_m(\zg_N(\tau))^{-1}) \ZZ_j| \Big]
 + \xi_m P\Big(\sup_j \big|\ZZ_j^\top \breve\zb(\tau) - \ZZ_j^\top \breve\zb^{(-j)}(\tau)\big| > \tilde\delta_n\Big)
\\
\leq & C_{3,2} \tilde{\mathcal{E}}(\ba,\zg_N) \Big(c_m^4(\zg_N) + \frac{\xi_m^2(\log n)^2}{n}\Big) 
\end{align*}
for all sufficiently large $n$ and a constant $C_{3,2}$ independent of $\tau,n,\ba$. Here, the last line follows from Lemma~\ref{lem:leaveout_bspl}
with $\tilde C$ in the definition of $\tilde\delta_n$ chosen sufficiently large. Thus it remains to bound $(\ba^\top \tilde J_m(\zg_N(\tau))^{-1})\E[\mu(\breve\zb^{(-j)}(\tau),\tau) - \mu(\breve\zb(\tau),\tau)]$. A Taylor expansion shows that for $\bb_1,\bb_2 \in \R^m$
\begin{align*}
&\sup_\tau |(\ba^\top \tilde J_m(\zg_N(\tau))^{-1})\mu(\bb_1,\tau) - \mu(\bb_2,\tau)|
\\
 & \leq \E\Big[ |(\ba^\top \tilde J_m(\zg_N(\tau))^{-1})\ZZ_j| \IF\Big\{ |Y_j - \ZZ_j^\top \bb_1 |\leq |\ZZ_j^\top(\bb_1-\bb_2)| \Big\} \Big]
\\
& \leq 2\bar f \tilde{\mathcal{E}}(\ba,\zg_N) \sup_{x\in\Xc} |\ZZ(x)^\top(\bb_1-\bb_2)|.
\end{align*}
On the other hand $\|\mu(\bb_1,\tau) - \mu(\bb_2,\tau)\| \leq 2\xi_m$. Thus we have for any $\tilde\delta_n > 0, \ba \in \Sc_\Ic^{m-1}$
\begin{multline*}
\sup_{\tau,j} \Big\|(\ba^\top \tilde J_m(\zg_N(\tau))^{-1})\E[\mu(\breve\zb^{(-j)}(\tau),\tau) - \mu(\breve\zb(\tau),\tau)]\Big\|
\\
\lesssim \tilde{\mathcal{E}}(\ba,\zg_N) \tilde\delta_n + \xi_m P\Big(\sup_{\tau,j} \|\ZZ_j^\top \breve\zb(\tau) - \ZZ_j^\top \breve\zb^{(-j)}(\tau)\| > \tilde\delta_n\Big).
\end{multline*}
Choosing $\tilde\delta_n$ as above completes the proof.
\hfill $\qed$

\begin{lemma} \label{lem:leaveout_bspl}
Under the assumptions of Theorem \ref{th:bahsimple_bspl} we have for any $\kappa_n \ll n/\xi_m^2$, all sufficiently large $n$, and a constant $C$ independent of $n$
\begin{align*}
& P\Big(\sup_{j,\tau,x} \|\ZZ(x)^\top \breve\zb^{(-j)}(\tau) - \ZZ(x)^\top\breve\zb(\tau)\| \geq C\Big(c_m^4(\zg_N) + \frac{\xi_m^2 (\kappa_n + (\log n)^2)}{n}\Big) \Big)
\\
\leq &3m(n+1)e^{-\kappa_n}. 
\end{align*}
\end{lemma}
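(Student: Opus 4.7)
The plan is to mimic the argument of Lemma \ref{lem:leaveout}, but contracting the leave-one-out difference of Bahadur expansions against $\ZZ(x)^\top$ and then exploiting the local support of $\ZZ(x)$ together with the near-band decay \eqref{eq:locbas2_a} of $\tilde J_m^{-1}(\zg_N(\tau))$. First, apply decomposition \eqref{eq:bahadur_gen} centered at $\bb = \zg_N(\tau)$ to both $\breve\zb(\tau)$ and $\breve\zb^{(-j)}(\tau)$, subtract, and multiply from the left by $\ZZ(x)^\top$ to obtain an identity of the form
\[
\ZZ(x)^\top\bigl(\breve\zb^{(-j)}(\tau)-\breve\zb(\tau)\bigr) = \ZZ(x)^\top \tilde J_m^{-1}(\zg_N(\tau))\sum_{k=1}^4\bigl(R_{n,k}^{(-j)}(\tau,\zg_N(\tau))-R_{n,k}(\tau,\zg_N(\tau))\bigr) + E_n(\tau,j),
\]
where $E_n(\tau,j)=O(\xi_m/n)$ almost surely arises from the difference between $\Gb_n$ and $\Gb_n^{(-j)}$ at the centering $\zg_N(\tau)$. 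By \eqref{eq:locbas2_a}, up to an $O(n^{-a})$ error for any prescribed $a>0$, we may replace $\ZZ(x)^\top \tilde J_m^{-1}(\zg_N(\tau))$ by its restriction to $\Ic_{\ZZ(x)}(D)$ with $D\asymp\log n$, which has only $O(\log n)$ nonzero entries.

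The contributions from $k=1,4$ are handled by direct computation. The KKT-type linear-programming bound used in Lemma 34 of \cite{bechfe2011} shows that $R_{n,1}^{(-j)}-R_{n,1}$ involves only finitely many observations and has sup-norm $O(\xi_m/n)$; after contracting with the $O(\log n)$-sparse weight we get $O(\xi_m\log n/n)$ almost surely. The population term $R_{n,4}^{(-j)}-R_{n,4}$ is proportional to $\mu(\zg_N(\tau),\tau)/n$, which is $O(g_N(\zg_N)/n)=O(\xi_m c_m^2(\zg_N)/n)$ by the computation in Section~\ref{sec:zbN}, well within the target. For $k=2$, a second-order Taylor expansion (Lemma \ref{lem:taylormu}) together with the banded structure yields
\[
\bigl|\ZZ(x)^\top\tilde J_m^{-1}(\zg_N(\tau))(R_{n,2}^{(-j)}-R_{n,2})\bigr|\lesssim \tilde{\mathcal{E}}(\ZZ(x),\zg_N)\bigl(\|\breve\zb(\tau)-\zg_N(\tau)\|^2+\|\breve\zb^{(-j)}(\tau)-\zg_N(\tau)\|^2\bigr),
\]
and the squared rates derived from Theorem \ref{th:bahsimple_bspl} (or equivalently from Lemma \ref{lem:cons1}) give $O\bigl(c_m^4(\zg_N)+\xi_m^2(\kappa_n+(\log n)^2)/n\bigr)$ on an event of probability at least $1-2me^{-\kappa_n}$, using \hyperref[L]{(L)} to absorb $\tilde{\mathcal{E}}$ into a constant.

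The main obstacle is term $k=3$, where we must uniformly control
\[
\sup_{j,\tau,x}\Bigl|\ZZ(x)^\top \tilde J_m^{-1}(\zg_N(\tau))\bigl[n^{-1/2}\Gb_n\psi(\cdot;\breve\zb(\tau),\tau)-(n-1)^{-1/2}\Gb_n^{(-j)}\psi(\cdot;\breve\zb^{(-j)}(\tau),\tau)\bigr]\Bigr|.
\]
After the banded truncation, this reduces to an empirical process indexed by functions $\bw^\top \ZZ_i(\IF\{Y_i\le\ZZ_i^\top\bb_1\}-\IF\{Y_i\le\ZZ_i^\top\bb_2\})$ with $\bw$ supported on $O(\log n)$ consecutive coordinates and $\|\ZZ^\top(\bb_1-\bb_2)\|_\infty\le \delta_n^\star$, where $\delta_n^\star:=\sup_{j,\tau,x}|\ZZ(x)^\top(\breve\zb^{(-j)}(\tau)-\breve\zb(\tau))|$. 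A local analogue of Lemma \ref{lem:gc} applied to this much smaller class (effective dimension $O(\log n)$ instead of $m$) yields a uniform bound of the form $C\bigl((\xi_m\delta_n^\star(\log n+\kappa_n)/n)^{1/2}+\xi_m(\log n+\kappa_n)/n\bigr)$ on an event of probability at least $1-e^{-\kappa_n}\log_2 n$.

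Combining all four bounds produces a self-bounding inequality $\delta_n^\star\le a_n+b_n(\delta_n^\star)^{1/2}$ with
\[
a_n\lesssim c_m^4(\zg_N)+\frac{\xi_m^2(\kappa_n+(\log n)^2)}{n},\qquad b_n^2\lesssim \frac{\xi_m^2(\kappa_n+\log n)}{n},
\]
valid on an event whose complement, after a union bound over $j\in\{1,\ldots,n\}$ and the "full sample" case, has probability at most $3m(n+1)e^{-\kappa_n}$ (the extra factor $m$ accounts for the union bound over the possible positions of the support of $\ZZ(x)$ given its prescribed local structure, with density arguments reducing $x$ to at most $m$ equivalence classes). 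Solving $\delta_n^\star\le a_n+b_n(\delta_n^\star)^{1/2}$ gives $\delta_n^\star\le 4\max(a_n,b_n^2)$, which matches the claimed rate. The hard part is executing the localized empirical-process bound for term $k=3$ with an exponent independent of $m$; once that localization is in place the remaining steps follow the template of Lemma \ref{lem:leaveout} verbatim.
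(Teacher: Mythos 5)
Your proposal follows the paper's route closely: leave-one-out Bahadur decompositions centred at $\zg_N(\tau)$, contraction against $\ZZ(x)^\top$, exploitation of the near-band decay of $\tilde J_m^{-1}$ to trim to $O(\log n)$ coordinates, bounding each difference $R_{n,k}^{(-j)}-R_{n,k}$ separately, and a self-bounding inequality with the claimed probability budget $3m(n+1)e^{-\kappa_n}$. The localization idea for $r_{n,3}$ and the union bound over the $O(m)$ equivalence classes of $\Ic(x)$ are exactly what the paper does.

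There are, however, two slips worth fixing. First, for the $k=2$ term you invoke Lemma~\ref{lem:taylormu}, which bounds $\|\mu(\bb,\tau)-\mu(\zg_N(\tau),\tau)-\tilde J_m(\zg_N(\tau))(\bb-\zg_N(\tau))\|$ by $\lambda_{\max}\overline{f'}\,\|\bb-\zg_N(\tau)\|^2\xi_m$. After contracting with $\ZZ(x)^\top\tilde J_m^{-1}$ this gives order $\xi_m^2\|\breve\zb(\tau)-\zg_N(\tau)\|^2\gtrsim\xi_m^2\big(m\log n/n+g_N^2\big)$. Since $g_N(\zg_N)\asymp\xi_m c_m^2(\zg_N)$, the $g_N^2$ portion alone is order $\xi_m^4 c_m^4$, which overshoots the target $c_m^4 + \xi_m^2(\kappa_n+\log^2 n)/n$. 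The correct lemma is~\ref{lem:taylormu_bspl}, which directly bounds the \emph{contracted} quantity by $\overline{f'}\,\sup_{x}|\ZZ(x)^\top(\bb-\zg_N(\tau))|^2\,\tilde{\mathcal E}(\ZZ(x),\zg_N)$; combined with the sup-norm rate for $\ZZ(x)^\top(\breve\zb-\zg_N)$ from Lemma~\ref{lem:bsplrate}, this produces the right order. Your sketch conflates $\|\breve\zb-\zg_N\|$ with $\sup_x|\ZZ(x)^\top(\breve\zb-\zg_N)|$, and the factor $\xi_m^2$ you lose in doing so is not recoverable. Second, the replacement of $\ZZ(x)^\top\tilde J_m^{-1}(\zg_N(\tau))$ by its $\Ic(x)$-restriction is not a deterministic $O(n^{-a})$ correction: the error term involves $(\ZZ(x)^\top\tilde J_m^{-1})^{(\Ic(x))^c}(\breve\zb^{(-j)}(\tau)-\zg_N(\tau))$, whose second factor is random and has unbounded support. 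The paper isolates this as $r_{n,5}^{(-j)}$ and controls it on an extra event $\Omega_{3,n}$ of probability $\geq 1-e^{-\kappa_n}$; your proposal glosses this over but the extra $e^{-\kappa_n}$ needs to appear in the accounting. As a minor third point, your $k=4$ bound is off: $R_{n,4}(\tau,\bb)=-\mu(\bb,\tau)$ does not depend on the empirical measure, so $R_{n,4}^{(-j)}-R_{n,4}\equiv 0$; it is not $O(g_N/n)$. None of this changes the architecture, but the $k=2$ lemma choice in particular must be corrected for the rate to come out.
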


\noindent\textbf{Proof of Lemma \ref{lem:leaveout_bspl}} \\
Recall that for a vector $\ba \in \R^m$ $\|\ba\|_0$ denotes the number of non-zero entries in $\ba$ and $k_\ba$ denotes the position of the first non-zero entry. Consider the following additional notation which is used exclusively in this proof:
\begin{align}
		\Ic(x) &:= \{i \in \{1,...,m\}: |i - k_{\ZZ(x)}| \leq r+ D\log n\},\\
		\Ic'(x) &:= \{i \in \{1,...,m\}: \exists\,j\in\Ic(x) \mbox{ such that }|i - j| \leq r\},
\end{align}
where $r$ is defined in \hyperref[L]{(L)}, and we suppress the dependence of $\Ic(x)$ and $\Ic'(x)$ on $D$ for the sake of a simpler notation.

Let $\Pb_n^{(-j)} f := (n-1)^{-1}\sum_{i \neq j} f(X_i,Y_i)$, $\Gb_n^{(-j)} f := \sqrt{n-1}(\Pb_n^{(-j)} f - \E f(X_1,Y_1))$. We have
\begin{multline} \label{eq:bahadurohne_bspl}
\ZZ(x)^\top (\breve\zb^{(-j)}(\tau) - \zg_N(\tau))
\\
= -  (n-1)^{-1/2} (\ZZ(x)^\top \tilde J_m(\zg_N(\tau))^{-1})^{(\Ic(x))}\Gb_n^{(-j)}(\psi(\cdot;\zg_N(\tau),\tau))
+ \sum_{k=1}^5 r_{n,k}^{(-j)}(\tau,x)
\end{multline}
where $r_{n,k}^{(-j)}(\tau,x) := (\ZZ(x)^\top \tilde J_m(\zg_N(\tau))^{-1})^{(\Ic(x))} R_{n,k}^{(-j)}(\tau,\zg_N(\tau)), k=1,...,4$, $R_{n,k}^{(-j)}$ is defined exactly as $R_{n,k}$ in \eqref{eq:Rn1}-\eqref{eq:Rn4} with $n,\breve\zb, \Pb_n,\Gb_n$ replaced by $n-1,\breve\zb^{(-j)}, \Pb_n^{(-j)},\Gb_n^{(-j)}$, respectively, and
\begin{align*}
& r_{n,5}^{(-j)}(\tau,x) 
\\
:= & - \Big[(\ZZ(x)^\top \tilde J_m(\zg_N(\tau))^{-1})^{(\Ic(x))} - (\ZZ(x)^\top \tilde J_m(\zg_N(\tau))^{-1})\Big] 
\\
& \times  - \Big[ - (n-1)^{-1/2}\Gb_n^{(-j)}(\psi(\cdot;\zg_N(\tau),\tau)) + \sum_{k=1}^4 R_{n,k}^{(-j)}(\tau,\zg_N(\tau))\Big]\\
= &\Big[(\ZZ(x)^\top \tilde J_m(\zg_N(\tau))^{-1})^{(\Ic(x))} - (\ZZ(x)^\top \tilde J_m(\zg_N(\tau))^{-1})\Big](\breve\zb^{(-j)}(\tau)-\zg_N(\tau)),
\end{align*}
where the last expression follows from \eqref{eq:bahadur_gen}.

Similarly we have
\begin{multline*}
\ZZ(x)^\top (\breve\zb(\tau) - \zg_N(\tau))
\\
= -n^{-1/2} (\ZZ(x)^\top \tilde J_m(\zg_N(\tau))^{-1})^{(\Ic(x))}\Gb_n(\psi(\cdot;\zg_N(\tau),\tau))
+ \sum_{k=1}^5 r_{n,k}(\tau)
\end{multline*}
where
\begin{align*}
& r_{n,5}(\tau,x) 
\\
:= & - \Big[(\ZZ(x)^\top \tilde J_m(\zg_N(\tau))^{-1})^{(\Ic(x))} - (\ZZ(x)^\top \tilde J_m(\zg_N(\tau))^{-1})\Big] 
\\
& \times - \Big[ - n^{-1/2}\Gb_n(\psi(\cdot;\zg_N(\tau),\tau)) + \sum_{k=1}^4 R_{n,k}(\tau,\zg_N(\tau))\Big].\\
=&\Big[(\ZZ(x)^\top \tilde J_m(\zg_N(\tau))^{-1})^{(\Ic(x))} - (\ZZ(x)^\top \tilde J_m(\zg_N(\tau))^{-1})\Big](\breve\zb(\tau)-\zg_N(\tau)),
\end{align*}
where the last expression follows from \eqref{eq:bahadur_gen}.

We obtain the representation
\begin{multline*}
\sup_{j,\tau,x} \Big| \ZZ(x)^\top\breve\zb^{(-j)}(\tau) - \ZZ(x)^\top\breve\zb(\tau) - (r_{n,5}^{(-j)}(\tau,x) - r_{n,5}(\tau,x)) 
\\
-  \sum_{k=1}^{3} \Big(r_{n,k}^{(-j)}(\tau,x) - r_{n,k}(\tau,x) \Big) \Big| \lesssim \frac{\xi_m^2}{n} \quad a.s. 
\end{multline*}
Observe that $r_{n,4}^{(-j)}(\tau) - r_{n,4}(\tau)=0$ for all $\tau$ because $r_{n,4}$ does not depend on $\breve\zb(\tau)$ or $\breve\zb^{(-j)}(\tau)$.

The results in Lemma~\ref{lem:subgr} show that
\[
\sup_{j,\tau,x} \|r_{n,1}^{(-j)}(\tau,x) - r_{n,1}(\tau,x)\| \lesssim  \frac{\xi_m^2 \log n}{n} \quad a.s.
\]
From Lemma~\ref{lem:taylormu_bspl} and condition (L) we obtain for a constant $C$ independent of $j,\tau,n,x$
\begin{multline*}
\|r_{n,2}^{(-j)}(\tau) - r_{n,2}(\tau)\|
\\
\leq C\Big(|\ZZ(x)^\top\breve\zb^{(-j)}(\tau) - \ZZ(x)^\top\zg_N(\tau)|^2 + |\ZZ(x)^\top\breve\zb(\tau) - \ZZ(x)^\top\zg_N(\tau)|^2\Big).
\end{multline*}
Let $w_n := \Big(c_m^4(\zg_N) + \frac{\xi_m^2(\log n)^2 + \xi_m^2\kappa_n}{n} \Big)$ and define the event 
\[
\tilde\Omega_{1,n}(C) := \Big\{ \sup_{\tau,j,x} \Big(|\ZZ(x)^\top (\widehat\zb^{(-j)}(\tau) - \zg_N(\tau))|^2 + |\ZZ(x)^\top(\breve\zb(\tau) - \zg_N(\tau))|^2\Big) \geq Cw_n \Big\}.
\]
By an application of Lemma~\ref{lem:bsplrate}, $P(\tilde\Omega_{1,n}(C))$ can be bounded by $ m(n+1)e^{-\kappa_n}$ if $C$ is chosen suitably. 

Next, let $\tilde\delta_n := \sup_{j,\tau,x}\|\ZZ(x)^\top\breve\zb(\tau)^{(-j)} - \ZZ(x)^\top\breve\zb(\tau)\|$. We obtain for the class of functions $\Gc_2$ defined in \eqref{eq:defgc2_bspl} and using (L)
\[
\sup_{j,\tau,x} |r_{n,3}^{(-j)}(\tau,x) - r_{n,3}(\tau,x)| \leq C_3 \Big(\frac{\xi_m^2}{n} + \xi_m \sup_x \|\Pb_n - P\|_{\Gc_2(\tilde\delta_n,\Ic(x),\Ic'(x))} \Big).
\]
Consider the event [here $\tilde\chi_n(t,\Ic_1,\Ic_j,\kappa_n)$ is defined as \eqref{eq:chi2} in Lemma \ref{lem:rn3_bspl}]
\[
\tilde\Omega_{2,n}(C) := \Big\{ \sup_x \sup_{0 \leq t\leq 1} \frac{\|\Pb_n - P\|_{\Gc_2(t,\Ic(x),\Ic'(x))}}{\tilde \chi_n(t,\Ic(x),\Ic'(x),\kappa_n)} \geq C \Big\}.
\]
Since for $x \in \Xc$ the pair of sets $(\Ic(x),\Ic'(x))$ takes at most $m$ distinct values, it follows from \eqref{eq:bgn23_bspl} that $P(\tilde\Omega_{2,n}(C)) \leq m e^{-\kappa_n} \log_2 n$ if we choose $C$ suitably. 

Finally, apply \eqref{eq:locbas2_a} to find that
\[
\sup_{j,\tau,x} |r_{n,5}(\tau,x)| + |r_{n,5}^{(-j)}(\tau,x)| \lesssim \xi_m \lambda^{D\log n} \Big(\|\breve\zb(\tau) - \zg_N(\tau)\| + \|\breve\zb^{(-j)}(\tau) - \zg_N(\tau)\|\Big),
\]
almost surely. Applying Theorem \ref{th:bahsimple} with $\zb_N = \zg_N$ allows to bound the probability of the event $\Omega_{3,n}(C) := \{\|\breve\zb(\tau) - \zg_N(\tau)\| + \|\breve\zb^{(-j)}(\tau) - \zg_N(\tau)\| > C\}$ by $e^{-\kappa_n}$, and by choosing $D$ sufficiently large it follows that $\sup_{j,\tau,x} |r_{n,5}(\tau,x)| + |r_{n,5}^{(-j)}(\tau,x)| \leq n^{-1}$ with probability at least $1 - e^{-\kappa_n}$. 

Summarizing the bounds obtained so far, there is a constant $C_j$ independent of $n$ such that $P(\tilde\Omega_{1,n}(C_2)\cap \tilde\Omega_{2,n}(C_2)\cap \Omega_{3,n}(C_2)) \leq m(n+1)e^{-\kappa_n}+m \log_2 n e^{-\kappa_n} +e^{-\kappa_n}\leq 3m(n+1)e^{-\kappa_n}$. On $\tilde\Omega_{1,n}(C_2)\cap \tilde\Omega_{2,n}(C_2)\cap \Omega_{3,n}(C_2)$,
\begin{align*}
\delta_n  \leq & \ C_2 \Big( c_m^4(\zg_N) + \frac{\xi_m^2\kappa_n}{n} + \frac{\xi_m^2 (\log n)^2}{n}\Big) + C_2 \sup_x \xi_m\chi_n(\delta_n,\Ic(x),\Ic'(x),\kappa_n) 
\\
\leq & \ C_2 \Big( c_m^4(\zg_N) + \frac{\xi_m^2\kappa_n}{n} + \frac{\xi_m^2 (\log n)^2}{n}\Big)  + C_2\Big(\frac{\xi_m^2\log^2 (\xi_m \vee n)}{n}  + \frac{\xi_m^2 \kappa_n }{n}\Big)
\\
& \ + C_2 \delta_n^{1/2}\Big(\frac{\xi_m^2\log^2 (\xi_m \vee n)}{n}  + \frac{\xi_m^2 \kappa_n }{n}\Big)^{1/2}
\\
:=& \ a_n + \delta_n^{1/2}b_n.
\end{align*}
A simple calculation shows that the inequality $0 \leq \delta_n \leq a_n +b_n\delta_n^{1/2}$ implies $0 \leq \delta_n \leq 4 \max(a_n,b_n^2)$. Straightforward calculations show that under the assumption of the theorem we have
\[
a_n + b_n^2 \lesssim c_m^4(\zg_N) + \frac{\xi_m^2\log^2 (\xi_m \vee n)}{n}  + \frac{\xi_m^2 \kappa_n }{n}.
\]
Thus we have for some constant $C_3$ independent of $n$
\[
P\Big(\delta_n \geq C_3\Big(c_m^4(\zg_N) + \frac{\xi_m^2\log^2 (\xi_m \vee n)}{n}  + \frac{\xi_m^2 \kappa_n }{n}\Big) \Big) \leq 3m(n+1)e^{-\kappa_n}.
\]
This completes the proof. \hfill $\qed$

\begin{lemma}\label{lem:bsplrate}
Under the assumptions of Theorem~\ref{th:bahsimple_bspl} we have for sufficiently large $n$ and any $\kappa_n \ll n/\xi_m^2 $
\[
P\Big( \sup_{\tau,x} |\ZZ(x)^\top(\breve\zb(\tau) - \zg_N(\tau))| \geq C\Big(\frac{\xi_m \log n + \xi_m\kappa_n^{1/2}}{n^{1/2}} + c_m^2(\zg_N)\Big)\Big) \leq (m+1) e^{-\kappa_n}.
\] 
where the constant $C$ does not depend on $n$. 

\end{lemma}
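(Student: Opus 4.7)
}
The plan is to combine the local-basis Bahadur representation of Theorem~\ref{th:bahsimple_bspl} with a uniform concentration estimate for the leading linear term, using the local support structure of $\ZZ(x)$ to keep the effective dimension under control. First, for fixed $x\in\Xc$ set $\ba_x:=\ZZ(x)/\|\ZZ(x)\|$. Under Condition~\hyperref[L]{(L)}, $\ba_x\in\Sc_{\Ic_x}^{m-1}$ where $\Ic_x$ consists of at most $r=O(1)$ consecutive indices, there are at most $m$ distinct such $\Ic_x$ as $x$ varies, $\|\ZZ(x)\|\leq\xi_m$, and $\tilde{\mathcal{E}}(\ba_x,\zg_N)=O(\|\ZZ(x)\|^{-1})$. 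Applying Theorem~\ref{th:bahsimple_bspl} with $\ba=\ba_x$ and multiplying by $\|\ZZ(x)\|$ gives
\[
\ZZ(x)^\top(\breve\zb(\tau)-\zg_N(\tau)) = -n^{-1/2}\ZZ(x)^\top\tilde J_m(\zg_N(\tau))^{-1}\Gb_n(\psi(\cdot;\zg_N(\tau),\tau))+\sum_{k=1}^4 \ZZ(x)^\top r_{n,k}^{(L)}(\tau).
\]

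Second, the four remainder pieces are bounded directly using \eqref{eq:brn1_a}--\eqref{eq:re3_a} and \eqref{eq:re4_a}, after multiplying by $\|\ZZ(x)\|\leq\xi_m$ and noting $\tilde{\mathcal{E}}(\ba_x,\zg_N)\lesssim\xi_m^{-1}$. Under the standing conditions $m\xi_m^2\log n=o(n)$ and $c_m(\zg_N)^2=o(\xi_m^{-1})$, one checks that each of $\|\ZZ(x)\|\cdot\Re_2^{(L)}(\kappa_n)$, $\|\ZZ(x)\|\cdot\Re_3^{(L)}(\kappa_n)$, $\xi_m^2\log n/n$, and $\xi_m\cdot(1/n+c_m^2\xi_m^{-1})$ is dominated by $\xi_m(\log n+\kappa_n^{1/2})/n^{1/2}+c_m^2(\zg_N)$. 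The probabilistic bounds in \eqref{eq:retail_a} hold uniformly over $\ba\in\Sc_\Ic^{m-1}$ for any fixed $\Ic$ of size $\leq r$, so taking a union bound over the $\leq m$ admissible index sets $\Ic_x$ accounts for the supremum over $x$ at a price of only $me^{-\kappa_n}$.

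Third, the leading linear term must be controlled uniformly in $(\tau,x)$. For fixed $(\tau,x)$, the summands $\ZZ(x)^\top\tilde J_m^{-1}(\zg_N(\tau))\ZZ_i(\IF\{Y_i\leq\ZZ_i^\top\zg_N(\tau)\}-\tau)$ are mean zero, have variance bounded by $\ZZ(x)^\top\tilde J_m^{-1}\E[\ZZ\ZZ^\top]\tilde J_m^{-1}\ZZ(x)\leq C\|\ZZ(x)\|^2\lesssim\xi_m^2$ via the eigenvalue conditions, and are bounded in absolute value by $C\xi_m^2$ (using $\|\ZZ(x)\|_0\leq r=O(1)$ together with the exponential decay \eqref{eq:locbas2_a} of $\tilde J_m^{-1}$). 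Bernstein's inequality then gives, for each fixed $(\tau,x)$, a tail of order $\exp(-\kappa_n)$ at deviation $C(\xi_m\kappa_n^{1/2}/n^{1/2}+\xi_m^2\kappa_n/n)$; the second term is absorbed by the first because $\xi_m\leq\sqrt{n}/\log n$ under the standing conditions. To pass from pointwise to uniform, I will discretize $\tau$ on a polynomial-in-$n$ grid (monotonicity of $\tau\mapsto\IF\{Y_i\leq\ZZ_i^\top\zg_N(\tau)\}$ and Lipschitz dependence of $\tilde J_m^{-1}(\zg_N(\tau))$ in $\tau$ reduce the discretization error to $O(n^{-1})$) and, for each fixed index set $\Ic$ of size $\leq r$, cover the $r$-dimensional sphere $\Sc_\Ic^{m-1}$ by a $(1/n)$-net of cardinality $n^{O(r)}$; then a union bound over $\Ic$, the $\tau$-grid, and the $\ba$-net replaces $\kappa_n$ by $\kappa_n+\log n$ and yields $\sup_{\tau,x}n^{-1/2}|\ZZ(x)^\top\tilde J_m^{-1}(\zg_N(\tau))\Gb_n(\psi)|\lesssim\xi_m(\log n+\kappa_n^{1/2})/n^{1/2}$ with probability at least $1-me^{-\kappa_n}$.

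Putting together Steps~2 and~3 yields the claimed bound with total failure probability $(m+1)e^{-\kappa_n}$. The main obstacle is Step~3: the pointwise Bernstein deviation has a $\xi_m^2$-scale summand bound, so a naive chaining argument over an $m$-dimensional sphere would produce a spurious $\sqrt{m}$ factor and destroy the target rate $\xi_m\log n/\sqrt{n}$. It is precisely the local support of $\ZZ(x)$ together with the exponential decay~\eqref{eq:locbas2_a} that lets one restrict to an $O(1)$-dimensional sphere for each admissible $\Ic$, converting what would have been a $\sqrt{m}$ dependence into only a logarithmic factor from the union bound across the $O(m)$ possible $\Ic$'s.
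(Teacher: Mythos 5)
The paper's own ``proof'' of Lemma~\ref{lem:bsplrate} is a one-line citation to Lemma~C.4 of \cite{ChaVolChe2016}, whose machinery (visible from the neighboring Lemmas~\ref{lem:gc} and~\ref{lem:rn3_bspl} and Section~\ref{sec:s_mislan}) is empirical-process based: one bundles the indicator functionals into VC-type classes $\Gc_1(\Ic,\Ic')$, uses the Koltchinskii moment inequality~\eqref{eq:Gexpect} together with Talagrand/Massart concentration~\eqref{eq:Gcontract}, and unions over the $O(m)$ admissible index sets $\Ic$. Your reconstruction shares the skeleton---the Bahadur decomposition from Theorem~\ref{th:bahsimple_bspl}, the observation that $\ba_x=\ZZ(x)/\|\ZZ(x)\|$ lives on an $O(1)$-dimensional sphere indexed by at most $m$ distinct $\Ic_x$, and the use of the off-diagonal decay~\eqref{eq:locbas2_a} to justify the effective dimension reduction---but replaces the empirical-process step for the leading linear term by pointwise Bernstein plus a $\tau$-grid and an $\eps$-net over $\Sc_\Ic^{m-1}$. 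This is a legitimate, more elementary route, and you correctly diagnose that the local support structure is what prevents a spurious $\sqrt m$ factor from creeping in.

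There is, however, a genuine gap in your Step~3: you invoke ``monotonicity of $\tau\mapsto\IF\{Y_i\le\ZZ_i^\top\zg_N(\tau)\}$'' to control the discretization error between grid points. But $\ZZ_i^\top\zg_N(\tau)$ is the \emph{weighted $L^2$ projection} of $Q(X_i;\tau)$ defined in~\eqref{eq:gamman}, not the quantile function itself, and a projection of an increasing family need not be increasing pointwise; only $\tau\mapsto\IF\{Y_i\le Q(X_i;\tau)\}$ is guaranteed monotone under~\hyperref[A3]{(A3)}. If $\ZZ_i^\top\zg_N(\cdot)$ is allowed to oscillate on a mesh interval, the indicator can flip many times there and the ``$O(n^{-1})$ discretization error'' claim is unsupported. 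The standard fix---and what a careful discretization argument would need---is to split $\IF\{Y_i\le\ZZ_i^\top\zg_N(\tau)\}=\IF\{Y_i\le Q(X_i;\tau)\}+\bigl(\IF\{Y_i\le\ZZ_i^\top\zg_N(\tau)\}-\IF\{Y_i\le Q(X_i;\tau)\}\bigr)$, run the monotone-envelope argument on the first piece, and control the second piece uniformly in $\tau$ as a band-count process (the difference is dominated by $\IF\{|Y_i-Q(X_i;\tau)|\le c_m(\zg_N)\}$); this in turn requires its own concentration bound. This is exactly the sort of bookkeeping that the VC/Koltchinskii route in the cited reference avoids, since indicators of half-lines form a VC class regardless of monotonicity in $\tau$. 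Your overall strategy is sound, but this step needs to be filled in for the argument to close.
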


\noindent\textbf{Proof of Lemma~\ref{lem:bsplrate}.} See the proof for Lemma C.4 in the Appendix of \cite{ChaVolChe2016}.
\hfill $\qed$

\begin{lemma} \label{lem:rn3_bspl}
Let $\Zc := \{\ZZ(x)|x\in\Xc\}$ where $\Xc$ is the support of $X$. For arbitrary index sets $\Ic,\Ic'\subset \{1,...,m\}$, define the classes of functions
\begin{multline}\label{eq:defgc1_bspl}
\Gc_1(\Ic,\Ic') := \Big\{ (z,y) \mapsto \ba^\top z^{(\Ic)}(\IF\{y \leq z^\top \bb^{(\Ic')}\} - \tau)\IF\{\|z\| \leq \xi_m\}
\\
 \Big| \tau \in \Tc, \bb \in \R^m, \ba \in \Sc^{m-1} \Big\},
\end{multline}
\begin{multline}
\label{eq:defgc2_bspl}
\Gc_2(\delta,\Ic,\Ic') := \Big\{ (z,y) \mapsto \ba^\top z^{(\Ic)}(\IF\{y \leq z^\top \bb_1^{(\Ic')}\} - \IF\{y \leq z^\top \bb_j^{(\Ic')}\})\IF\{z \in \Zc\} \Big|
\\
\bb_1,\bb_j \in \R^m, \sup_{v\in\Zc}\|v^\top \bb_1 - v^\top \bb_j\| \leq \delta, \ba \in \Sc^{m-1} \Big\}.
\end{multline}
Under assumptions \hyperref[A1]{(A1)}-\hyperref[A3]{(A3)} we have
\begin{multline} \label{eq:bgn21_bspl}
P\Big( \|\Pb_n-P\|_{\Gc_1} \geq C\Big[\Big(\frac{\max(|\Ic|,|\Ic'|)}{n} \log \xi_m\Big)^{1/2}
\\
+ \frac{\max(|\Ic|,|\Ic'|) \xi_m}{n} \log \xi_m  + \frac{\kappa_n^{1/2}}{n^{1/2}} + \frac{\xi_m\kappa_n}{n} \Big] \Big) \leq e^{-\kappa_n}
\end{multline}
and for $\xi_m \delta_n \gg n^{-1}$ we have for sufficiently large $n$ and arbitrary $\kappa_n>0$
\begin{equation} \label{eq:bgn22_bspl}
P\Big( \|\Pb_n-P\|_{\Gc_2(\delta,\Ic,\Ic')} \geq C \tilde\chi_n(\delta_n,\Ic,\Ic',\kappa_n) \Big) \leq e^{-\kappa_n}
\end{equation}
and for $\kappa_n \geq 1$
\begin{equation} \label{eq:bgn23_bspl}
P\Big( \sup_{0 \leq t \leq 1} \frac{\|\Pb_n-P\|_{\Gc_2(t,\Ic,\Ic')}}{\tilde\chi_n(t,\Ic,\Ic',\kappa_n)} \geq 2 C  \Big) \leq  e^{-\kappa_n} \log_2 n
\end{equation}
where
\begin{multline}
\tilde\chi_n(t,\Ic,\Ic',\kappa_n) := t^{1/2} \Big(\frac{\max(|\Ic|,|\Ic'|)}{n} \log (\xi_m \vee n)\Big)^{1/2} 
\\
+ \frac{\max(|\Ic|,|\Ic'|)\xi_m}{n} \log (\xi_m \vee n) + \bigg(\frac{t \kappa_n}{n}\bigg)^{1/2} + \frac{\xi_m\kappa_n}{n} .\label{eq:chi2}
\end{multline}
\end{lemma}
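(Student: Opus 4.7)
My plan is to combine a VC-type entropy bound for the two function classes with Talagrand's concentration inequality (Bousquet's version), and then obtain the supremum-in-$t$ statement in~\eqref{eq:bgn23_bspl} by a standard dyadic peeling argument. The key observation, which drives the appearance of $\max(|\Ic|,|\Ic'|)$ rather than $m$ in the bounds, is that although the parameters $(\ba,\bb)$ live in ambient dimension $m$, the maps $z\mapsto \ba^\top z^{(\Ic)}$ and $z\mapsto \IF\{y\le z^\top \bb^{(\Ic')}\}$ depend on at most $|\Ic|$, respectively $|\Ic'|$, coordinates of $(\ba,\bb)$. Hence $\Gc_1(\Ic,\Ic')$ and $\Gc_2(\delta,\Ic,\Ic')$ are VC-subgraph classes (or unions of $\mathrm{poly}(\max(|\Ic|,|\Ic'|))$ such classes) with VC-index $O(\max(|\Ic|,|\Ic'|))$, envelopes bounded by $\xi_m$, and with the usual uniform-entropy bound $\log N(\eps\|F\|_{L_2(Q)},\Gc,L_2(Q))\lesssim \max(|\Ic|,|\Ic'|)\log(1/\eps)$ holding uniformly in $Q$.

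For~\eqref{eq:bgn21_bspl}, I would apply Bousquet's inequality to $\Gc_1$ with envelope $U=\xi_m$ and variance proxy $\sigma^2\lesssim 1$ (since $|\ba^\top z^{(\Ic)}|\le\|z\|\lesssim\xi_m$ but second moment is $O(1)$ by \hyperref[A1]{(A1)}). This yields
\[
P\bigl(\|\Pb_n-P\|_{\Gc_1}\ge \E\|\Pb_n-P\|_{\Gc_1}+ C_1\bigl(\sigma(\kappa_n/n)^{1/2}+\xi_m\kappa_n/n\bigr)\bigr)\le e^{-\kappa_n},
\]
and $\E\|\Pb_n-P\|_{\Gc_1}$ is controlled via symmetrization and Dudley's entropy integral, giving the leading terms $(\max(|\Ic|,|\Ic'|)\log\xi_m/n)^{1/2}+\max(|\Ic|,|\Ic'|)\xi_m\log\xi_m/n$. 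Summing the two contributions gives~\eqref{eq:bgn21_bspl}.

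For~\eqref{eq:bgn22_bspl}, the envelope is still $\xi_m$ but now the variance proxy is much smaller: since $\sup_{v\in\Zc}|v^\top(\bb_1-\bb_2)|\le\delta$, assumption \hyperref[A2]{(A2)} gives $\sigma^2\lesssim \bar f\,\delta$ in the relevant $L_2(P)$ sense. Applying Bousquet's inequality with this sharper $\sigma^2$ and combining with the entropy integral bound on $\E\|\Pb_n-P\|_{\Gc_2(\delta,\Ic,\Ic')}$ (which now carries the factor $\sigma$ in its leading term) produces exactly $\tilde\chi_n(\delta,\Ic,\Ic',\kappa_n)$. To pass from pointwise $t$ to the supremum statement~\eqref{eq:bgn23_bspl}, I would use the monotonicity $\Gc_2(t,\cdot,\cdot)\subset\Gc_2(t',\cdot,\cdot)$ for $t\le t'$ together with the elementary inequality $\tilde\chi_n(t/2,\cdot,\cdot,\kappa_n)\ge\tilde\chi_n(t,\cdot,\cdot,\kappa_n)/2$ for $\kappa_n\ge 1$, exactly as in the proof of Lemma~\ref{lem:gc}\eqref{eq:bgn23}: decompose $[0,1]$ into dyadic intervals $[2^{-k-1},2^{-k}]$ for $k=0,\dots,\lfloor\log_2 n\rfloor$ plus a residual $[0,n^{-1}]$, apply~\eqref{eq:bgn22_bspl} to each of the $O(\log_2 n)$ endpoints, and take a union bound.

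The main obstacle I anticipate is bookkeeping the entropy estimate so that only $\max(|\Ic|,|\Ic'|)$ (and not $m$) appears: the half-space indicator depends on all coordinates of $\bb^{(\Ic')}$, so one must argue that the VC-subgraph class $\{y\le z^\top\bb^{(\Ic')}:\bb\in\R^m\}$ has VC-dimension $|\Ic'|+1$ (parameters outside $\Ic'$ being irrelevant), and likewise for the linear factor $\ba^\top z^{(\Ic)}$. Once this is in place, the variance calculation for $\Gc_2$ is standard: bound $\E[(\ba^\top z^{(\Ic)})^2(\IF\{y\le z^\top\bb_1^{(\Ic')}\}-\IF\{y\le z^\top\bb_2^{(\Ic')}\})^2]$ by $\sup_{v\in\Zc}\E[|v^\top(\bb_1-\bb_2)|f_{Y|X}(\cdot|X)]\cdot\|\ba\|^2\lambda_{\max}(\E[\ZZ\ZZ^\top])\lesssim\delta$ via \hyperref[A1]{(A1)}--\hyperref[A2]{(A2)}. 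The rest is a careful but routine application of the concentration+entropy machinery already used repeatedly in the paper.
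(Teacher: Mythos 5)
Your proposal takes essentially the same route as the paper. The paper's own proof of~\eqref{eq:bgn21_bspl}--\eqref{eq:bgn22_bspl} simply defers to Lemma~C.5 in \cite{ChaVolChe2016}, which is established via exactly the machinery you outline: uniform entropy/VC bounds scaling with $\max(|\Ic|,|\Ic'|)$ combined with the expectation bound~\eqref{eq:Gexpect} (Koltchinskii) and the Talagrand-type concentration inequality~\eqref{eq:Gcontract} (Massart's version rather than Bousquet's, but interchangeable here), with the sharper variance proxy $\sigma^2\lesssim\delta$ for $\Gc_2$ coming from \hyperref[A2]{(A2)}; and~\eqref{eq:bgn23_bspl} is obtained by the identical dyadic-peeling-plus-union-bound argument you describe, which the paper spells out in the proof of~\eqref{eq:bgn23} in Lemma~\ref{lem:gc} and then invokes here by analogy.
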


\noindent\textbf{Proof of Lemma \ref{lem:rn3_bspl}. } See the proof for Lemma C.5 in the Appendix of \cite{ChaVolChe2016}. The proof of \eqref{eq:bgn23_bspl} is similar to the proof for \eqref{eq:bgn23}. \hfill $\qed$

\begin{lemma} \label{lem:taylormu_bspl}
Under assumptions \hyperref[A1]{(A1)}-\hyperref[A3]{(A3)} we have for any $\ba \in \R^m$
\begin{eqnarray*}
&&\Big|\ba^\top\tilde J_m(\zg_N(\tau))^{-1}\mu(\bb,\tau) - \ba^\top\tilde J_m(\zg_N(\tau))^{-1}\mu(\zg_N(\tau),\tau) - \ba^\top(\bb - \zg_N(\tau)) \Big|
\\
&\leq& \overline{f'} \sup_{x} |\ZZ(x)^\top \bb - \ZZ(x)^\top \zg_N(\tau)|^2 \E[|\ba^\top\tilde J_m(\zg_N(\tau))^{-1}\ZZ|].
\end{eqnarray*}
\end{lemma}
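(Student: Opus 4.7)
The plan is to prove this lemma by a second-order Taylor expansion of the conditional distribution function appearing in $\mu(\bb,\tau) - \mu(\zg_N(\tau),\tau)$. Recall that
\[
\mu(\bb,\tau) - \mu(\zg_N(\tau),\tau) = \E\bigl[\ZZ\bigl(F_{Y|X}(\ZZ^\top\bb|X) - F_{Y|X}(\ZZ^\top\zg_N(\tau)|X)\bigr)\bigr].
\]
Under assumption \hyperref[A2]{(A2)}, $F_{Y|X}(\cdot|x)$ is twice continuously differentiable in the first argument, so I can expand
\[
F_{Y|X}(\ZZ^\top\bb|X) - F_{Y|X}(\ZZ^\top\zg_N(\tau)|X) = f_{Y|X}(\ZZ^\top\zg_N(\tau)|X)\,\ZZ^\top(\bb - \zg_N(\tau)) + \frac{1}{2} f'_{Y|X}(\zeta|X)\bigl(\ZZ^\top(\bb - \zg_N(\tau))\bigr)^2
\]
for some $\zeta = \zeta(X,\bb,\tau)$ between $\ZZ^\top\zg_N(\tau)$ and $\ZZ^\top\bb$.

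Substituting this back, the linear term contributes $\E[\ZZ\ZZ^\top f_{Y|X}(\ZZ^\top\zg_N(\tau)|X)](\bb-\zg_N(\tau)) = \tilde J_m(\zg_N(\tau))(\bb-\zg_N(\tau))$, so that after multiplying by $\tilde J_m(\zg_N(\tau))^{-1}$ and subtracting $(\bb - \zg_N(\tau))$, only the remainder $R := \tfrac{1}{2}\E[\ZZ f'_{Y|X}(\zeta|X)(\ZZ^\top(\bb - \zg_N(\tau)))^2]$ is left. Thus
\[
\ba^\top\tilde J_m(\zg_N(\tau))^{-1}\bigl(\mu(\bb,\tau) - \mu(\zg_N(\tau),\tau)\bigr) - \ba^\top(\bb - \zg_N(\tau)) = \ba^\top \tilde J_m(\zg_N(\tau))^{-1} R.
\]

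To bound this final expression, I will pull the scalar $(\ZZ^\top(\bb - \zg_N(\tau)))^2$ out in supremum form, use the uniform bound $|f'_{Y|X}| \le \overline{f'}$ from \hyperref[A2]{(A2)}, and apply the triangle inequality inside the expectation, yielding
\[
|\ba^\top \tilde J_m(\zg_N(\tau))^{-1} R| \leq \tfrac{1}{2}\overline{f'} \sup_{x}\bigl(\ZZ(x)^\top(\bb - \zg_N(\tau))\bigr)^2 \E\bigl[|\ba^\top\tilde J_m(\zg_N(\tau))^{-1}\ZZ|\bigr],
\]
which is stronger than the claimed bound (the statement of the lemma absorbs the $1/2$ into the constant). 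There is no real obstacle here since everything is pointwise and the eigenvalue condition from \hyperref[A1]{(A1)}$+$\hyperref[A2]{(A2)}$+$\hyperref[A3]{(A3)} is only needed to ensure $\tilde J_m(\zg_N(\tau))$ is invertible, which holds automatically when $c_m(\zg_N) = o(1)$; the only mildly careful point is simply tracking that the Taylor remainder uses $f'_{Y|X}$ evaluated at an intermediate point, so the $\sup$ over $y$ is what justifies replacing it with $\overline{f'}$.
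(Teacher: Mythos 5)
Your proposal is correct and is essentially the standard (and presumably intended) argument: a second-order Taylor expansion of $F_{Y|X}$ in its first argument around $\ZZ^\top\zg_N(\tau)$, recognition that the linear term contributes exactly $\tilde J_m(\zg_N(\tau))(\bb-\zg_N(\tau))$, and a pointwise bound on the Lagrange remainder using $|f'_{Y|X}|\leq\overline{f'}$ together with pulling $\sup_x(\ZZ(x)^\top(\bb-\zg_N(\tau)))^2$ out of the expectation, yielding the claimed bound with an unneeded extra factor $1/2$. The paper defers the proof to Lemma C.6 of \cite{ChaVolChe2016}, so there is no separate derivation in this manuscript to compare step by step, but your route is the natural one and any differences would be cosmetic.
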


\noindent\textbf{Proof of Lemma \ref{lem:taylormu_bspl}} See the proof for Lemma C.6 in the Appendix of \cite{ChaVolChe2016}. \hfill $\qed$

\begin{lemma} \label{lem:subgr}
Under assumptions \hyperref[A1]{(A1)}-\hyperref[A3]{(A3)} and (L) we have for any $\ba \in \R^m$ having zero entries everywhere except at $L$ consecutive positions:
\begin{eqnarray*}
|\ba^\top \Pb_n \psi(\cdot;\breve\zb(\tau),\tau) | \leq \frac{(L +2r)\|\ba\|\xi_m}{n}.
\end{eqnarray*}
\end{lemma}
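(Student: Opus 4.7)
My plan is to combine the KKT subgradient condition at the QR minimizer with the local support structure in Condition~\hyperref[L]{(L)}, together with a degree-of-freedom count for an appropriately reduced QR problem. The first step will be to derive a standard identity from convex subdifferential calculus: at any minimizer $\breve\zb(\tau)$ there exist weights $\omega_i \in \partial \rho_\tau(Y_i - \ZZ_i^\top \breve\zb(\tau))$ with $\sum_i \omega_i \ZZ_i = 0$, $\omega_i$ being pinned to $\tau$ or $\tau - 1$ on non-ties and free in $[\tau - 1, \tau]$ on the tie set $I_= := \{i : Y_i = \ZZ_i^\top \breve\zb(\tau)\}$. Using that $(\IF\{Y_i \leq \ZZ_i^\top \breve\zb(\tau)\} - \tau) + \omega_i = 0$ for $i \notin I_=$, a short manipulation will yield
\[
\sum_{i=1}^n \ZZ_i\big(\IF\{Y_i \leq \ZZ_i^\top \breve\zb(\tau)\} - \tau\big) = \sum_{i \in I_=} \ZZ_i \,(1 - \tau + \omega_i), \quad 1-\tau+\omega_i \in [0,1].
\]
Cauchy--Schwarz with $\|\ZZ_i\| \leq \xi_m$ from \hyperref[A1]{(A1)} will then reduce the claim to bounding the effective tie count $|I_=^{\ba}| := |\{i \in I_= : \ba^\top \ZZ_i \neq 0\}|$ by $L + 2r$.

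In the second step I will use Condition~\hyperref[L]{(L)} to localize the ties to a small coordinate window. Writing $\mathrm{supp}(\ba) = \{k, \ldots, k + L - 1\}$ and $\mathrm{supp}(\ZZ_i) \subset \{j_i, \ldots, j_i + r - 1\}$, the condition $\ba^\top \ZZ_i \neq 0$ forces $j_i \in \{k - r + 1, \ldots, k + L - 1\}$, so $\mathrm{supp}(\ZZ_i) \subset \Ic^+ := \{k - r + 1, \ldots, k + L + r - 2\}$, a window of $L + 2r - 2$ indices. Using the freedom in choosing the minimizer granted by Remark~\ref{rem:unique}, I will fix $\breve\zb(\tau)$ on $(\Ic^+)^c$ and replace its $\Ic^+$-subvector with a vertex (basic feasible) solution of the resulting reduced QR problem with $|\Ic^+|$ parameters; this remains a global minimizer of the full objective because $\breve\zb_{\Ic^+}(\tau)$ was already optimal on the restricted affine slice. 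For observations with $\ba^\top \ZZ_i \neq 0$, the fitted value $\ZZ_i^\top \breve\zb(\tau)$ depends only on $\bb_{\Ic^+}$, so $|I_=^{\ba}|$ equals the number of exact fits in the reduced LP.

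The final step is a degree-of-freedom count at a basic feasible solution of the reduced QR LP. In the standard formulation with $u_i, v_i \geq 0$ and $\bb = \bb^+ - \bb^-$ with $\bb^\pm \geq 0$, the $n$ equality constraints leave $n + 2p$ non-basic variables (at their zero bounds), where $p = |\Ic^+|$. Pairwise complementarity ($u_i v_i = \bb_j^+ \bb_j^- = 0$) already contributes $n + p$ mandatory zeros, and the remaining $p$ ``extra'' zeros decompose as $|I_=^{\ba}| + \#\{j \in \Ic^+ : \breve\beta_j = 0\} = p$, yielding $|I_=^{\ba}| \leq p \leq L + 2r - 2 \leq L + 2r$; see Theorem~2.1 in \cite{K2005} for a closely related statement. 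Combining this with the identity from the first step closes the argument. The main technical delicacy I anticipate is this counting argument, and in particular the treatment of degenerate vertices (coincidental data ties), which can be handled either by an infinitesimal perturbation of $Y_i$ or by observing that degeneracy can only \emph{decrease} the tie count relative to the non-degenerate case; the slack of $2$ between $L + 2r - 2$ and the stated bound $L + 2r$ comfortably absorbs any residual edge effects.
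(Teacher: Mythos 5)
Your Steps 1 and 2 are correct and essentially the right framing: the subgradient identity reduces the claim to bounding the tie count $|I_\ba^=| := |\{i : Y_i = \ZZ_i^\top\breve\zb(\tau),\ \ba^\top\ZZ_i\neq 0\}|$, and condition~\hyperref[L]{(L)} localizes the relevant $\ZZ_i$ to a window $\Ic^+$ of $L+2r-2$ consecutive coordinates. The gap is in Step 3, which is genuinely flawed in two ways.

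First, the quantity $\Pb_n\psi(\cdot;\bb,\tau)$ depends on $\bb$ through the indicator $\IF\{Y_i \le \ZZ_i^\top\bb\}$, and it is \emph{not} constant on the solution polytope of the QR problem. For example, with $m=1$, $\ZZ_i\equiv 1$, $Y_1=0$, $Y_2=1$, $\tau=1/2$, every $\bb\in[0,1]$ is a minimizer; yet $n\Pb_n\psi$ equals $0$ for $\bb\in[0,1)$ and equals $1$ at the vertex $\bb=1$. So when you fix $\breve\zb(\tau)$ on $(\Ic^+)^c$ and replace its $\Ic^+$-coordinates by a vertex $\bb^*_{\Ic^+}$ of the reduced LP, you have indeed produced another global minimizer, but the bound you obtain is for $\ba^\top\Pb_n\psi(\cdot;\bb^*,\tau)$, which need not equal $\ba^\top\Pb_n\psi(\cdot;\breve\zb(\tau),\tau)$. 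Remark~\ref{rem:unique} does not license you to swap the minimizer: it asserts that the lemma's conclusion should hold for \emph{every} minimizer, not that you may choose whichever one is convenient.

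Second, the degeneracy escape hatch is wrong in direction. At a degenerate vertex of the reduced LP (equivalently, when more than $p=|\Ic^+|$ of the reduced data points lie on a common affine hyperplane), the number of exact fits can \emph{exceed} $p$: in the basic-feasible-solution count you set up, extra zeros among basic variables inflate the ``exact fits plus zero coefficients'' tally beyond $p$, it does not deflate it. Your first suggestion (``infinitesimal perturbation of $Y_i$'') also changes the minimizer and hence the indicator set, so it does not yield the bound for the unperturbed data.

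The missing ingredient is assumption~\hyperref[A2]{(A2)}: because $F_{Y|X}$ has a density uniformly in $x$, almost surely no $p+1$ observations whose $\ZZ_i$ are supported in a common window $\Ic^+$ lie on a single affine hyperplane $y = \zz_{\Ic^+}^\top\bb_{\Ic^+}$ (any fixed $p+1$ would give a nontrivial linear constraint on the continuously distributed $(Y_{i_1},\dots,Y_{i_{p+1}})$ conditional on the design, which is a null event; intersect over the finitely many choices of $p+1$ observations and the $O(m)$ windows). Once you have this almost-sure general-position statement, the tie count in $\Ic^+$ is at most $p=L+2r-2$ for \emph{every} $\bb\in\R^m$ and \emph{every} $\tau\in\Tc$ simultaneously—including $\breve\zb(\tau)$ itself—and your Steps 1 and 2 then give $|\ba^\top\Pb_n\psi(\cdot;\breve\zb(\tau),\tau)| \le (L+2r-2)\|\ba\|\xi_m/n \le (L+2r)\|\ba\|\xi_m/n$. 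In short: replace the LP-vertex argument by the general-position consequence of~\hyperref[A2]{(A2)}; as written, Step 3 is the point where the proposal breaks.
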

\noindent\textbf{Proof of Lemma \ref{lem:subgr}.} See the proof for Lemma C.7 in the Appendix of \cite{ChaVolChe2016}. \hfill $\qed$


\setcounter{subsection}{0}
\setcounter{equation}{0}
\setcounter{theo}{0}

\section{Proof of \eqref{`EQ:TILBET1}-\eqref{`EQ:TILBET2} and \eqref{`EQ:HATBET1}-\eqref{`EQ:HATBET2}} \label{sec:prooftilbeta}

Let $U_i := F_{a,b}(Y_i)$. By strict monotonicity of $F_{a,b}$ on its support we have $U_i \sim U[0,1]$. Moreover $Y_{(k)}^{j} = Q_{a,b}(U_{(k)}^{j})$ where $U_{(k)}^{j}$ denotes the $k$th order statistic of the sample $\{U_i\}_{i\in\Ac_j}$ if $\Ac_j$ is non-empty and $U_{(k)}^{j} := 0$ if $\Ac_j$ is empty. By independence of $\{Y_1,...,Y_n\}$ and $\{X_1,...,X_n\}$ we have conditionally on $\{X_1,...,X_n\}$, 
\begin{equation} \label{eq:udistr}
U_{(k)}^{j} \sim \IF\{n_j > 0\} Beta(k, n_j+ 1 - k).
\end{equation}
Throughout this section we use the notation introduced in section~\ref{sec:specconst}.

\subsection{Proof of \eqref{`EQ:TILBET1} and \eqref{`EQ:HATBET1} (bias)}\label{sec:pfbias}

\textbf{Proof of~\eqref{`EQ:TILBET1}}: note that 
\[
m^{1/2}\tilde\beta_j(\tau)=Y_{(\lceil n_j\tau\rceil)}^{j}\IF\{n_j> 0\}=Q_{a,b}(U_{(\lceil n_j\tau\rceil)}^{j})\IF\{n_j> 0\} = Q_{a,b}(U_{(\lceil n_j\tau\rceil)}^{j})
\] 
where $n_j \sim Bin(n,1/m)$ and the last equality uses the definition of $U_{(\lceil n_j\tau\rceil)}^{j}$ and the fact that $Q_{a,b}(0) = 0$. Now we distinguish two cases. 

If $\limsup_{N\to\infty}(n/m)\big|\E[U_{(\lceil n_j\tau\rceil)}^{j}- \tau]\big|>0$, using the definition of $Q_{a,b}$: 
\begin{align}
\big|\E[Q_{a,b}(U_{(\lceil n_j\tau\rceil)}^{j}) - Q_{a,b}(\tau)]\big| =\big|a (\E[(U_{(\lceil n_j\tau\rceil)}^{j})^2 - \tau^2]) + b(\E[U_{(\lceil n_j\tau\rceil)}^{j} - \tau])\big|, \label{eq:sharpStmp1_new}
\end{align}
so \eqref{`EQ:TILBET1} holds for any $b>0$ and $a=0$. 

If $\limsup_{N\to\infty}(n/m)\big|\E[U_{(\lceil n_j\tau\rceil)}^{j}- \tau]\big|=0$ apply \eqref{eq:udistr} and \eqref{eq:bin1} in Lemma \ref{lem:bin} to obtain
\begin{align}
\limsup_{N\to\infty}\frac{n}{m}\Big| \E[Q_{a,b}(U_{(\lceil n_j\tau\rceil)}^{j})] - Q_{a,b}(\tau) \Big|
&\geq \frac{a \tau(1-\tau)}{200} > 0 
\label{eq:tilbet1tmp1}
\end{align}
since $\Tc=[\tau_L,\tau_U]$ with $0<\tau_L<\tau_U<1$. Thus \eqref{`EQ:TILBET1} holds for any $b>0$ and $0<a<a_{max}$. 

\bigskip
\noindent
\textbf{Proof of~\eqref{`EQ:TILBET2}}: note that $Q_{a,b}$ is a polynomial of degree 2 and 
\[
\bigg\|\sum_{k=1}^K A_k(\cdot)Q_{a,b}(\tau_k)-Q_{a,b}(\cdot)\bigg\|_\infty = \big\|\Pi_K Q_{a,b} - Q_{a,b}\big\|_\infty
\]
where $\Pi_K f$ denotes the projection, with respect to the inner product $\langle f,g \rangle := \sum_k f(\tau_k)g(\tau_k)$, of a function $f$ onto the space of functions spanned by $B_1,...,B_q$. Since $B_1,...,B_q$ are splines of degree  $r_\tau\geq\eta_\tau\geq 2$, $Q_{a,b}$ lies in the space spanned by $B_1,...,B_q$ (see p.111 of \cite{schumaker:81}). In other words, $Q_{a,b} \in \Theta_G$ where $\Theta_G$ is defined in the beginning of Section \ref{sec:proof_orgenpr}. Therefore, by similar arguments as in Section \ref{sec:proof_orgenpr}
\begin{align}
\big\|\Pi_K Q_{a,b} - Q_{a,b}\big\|_\infty \lesssim \inf_{p\in\Theta_G}\|p-Q_{a,b}\|_\infty = 0. 
\label{eq:biaspj}
\end{align}
We have from \eqref{eq:biaspj},
\begin{align}
& \E[m^{1/2}\check\beta_j(\tau) - Q_{a,b}(\tau)] \notag
\\
&= \sum_{k=1}^K A_k(\tau)\E[m^{1/2}\tilde\beta_j(\tau_k) - Q_{a,b}(\tau_k)] 
\notag\\
&= \sum_{k: |\tau_k -\tau|\leq (\log G)^2/G}  A_k(\tau)\E[Q_{a,b}(U_{(\lceil n_j\tau_k\rceil)}^{j}) - Q_{a,b}(\tau_k)] + o(m/n),\label{eq:hatbiasdec_new}
\end{align}
where the $o(m/n)$ is uniform in $\tau\in\Tc$. The first equality above is from \eqref{eq:biaspj}, and the second follows from the fact that $\sup_\tau\big|\E[m^{1/2}\tilde\beta_j(\tau) - Q_{a,b}(\tau)]\big|\leq 2(a+b)$ and 
\begin{multline}
\bigg|\sum_{k: |\tau_k -\tau|> (\log G)^2/G} A_k(\tau)\E\big[m^{1/2}\tilde\beta_j(\tau_k) - Q_{a,b}(\tau_k)\big]\bigg|
\\
\leq 2(a+b)\sum_{k: |\tau_k -\tau|> (\log G)^2/G} |A_k(\tau)| \lesssim \gamma^{(\log G)^2/2} = o(m/n),\label{eq:trunc}
\end{multline}
where $0<\gamma<1$; in the second above inequality we apply the bound \eqref{eq:Slin2} in Lemma \ref{lem:Ak}.

Hence, by \eqref{eq:hatbiasdec_new}, it suffices to prove that for $b > 0$ and any $a_{max} > 0$, there exists $a \in [0,a_{max}]$ and a sequence $\tau_N$ such that
\begin{align}
\limsup_{N\to\infty} \frac{n}{m}\bigg| \sum_{k: |\tau_k -\tau_N|\leq (\log G)^2/G}  A_k(\tau_N)\E\big[Q_{a,b}(U_{(\lceil n_j\tau_k\rceil)}^{j}) - Q_{a,b}(\tau_k)\big] \bigg| > 0. \label{eq:tilbet1check}
\end{align}
In the following, we will distinguish the cases $\limsup_{N\to\infty}(n/m)< \infty$ and $\limsup_{N\to\infty}(n/m)=\infty$. 

First consider the case $\limsup_{N\to\infty}(n/m) < \infty$. Invoke~\eqref{eq:localize01} in Lemma \ref{lem:localize} to see that there exist a sequence $\tau_N$ in $\Tc$ such that
\begin{align*}
		\limsup_{N\to\infty} \frac{n}{m}\bigg|&\sum_{k: |\tau_k -\tau_N|\leq (\log G)^2/G}  A_k(\tau_N)\E\big[Q_{a,b}(U_{(\lceil n_j\tau_k\rceil)}^{j}) - Q_{a,b}(\tau_k)\big]\bigg| \\
		&\geq\limsup_{N\to\infty}\frac{n}{m}\bigg|\E\big[Q_{a,b}(U_{(\lceil n_j\tau_N\rceil)}^{j}) - Q_{a,b}(\tau_N)\big]\bigg|
\end{align*}

If $\limsup_{N\to\infty}(n/m) \big|\E[U_{(\lceil n_j\tau_N\rceil)}^{j}- \tau_N]\big| \neq 0$, use \eqref{eq:sharpStmp1_new}, to see that the right-hand side in the equation above is non-zero for any $b>0$ and $a=0$. 

If $\limsup_{N\to\infty}(n/m)\big|\E[U_{(\lceil n_j\tau_N\rceil)}^{j}- \tau_N]\big|=0$ use \eqref{eq:bin1} in Lemma \ref{lem:bin} to obtain
$$
\limsup_{N\to\infty}\frac{n}{m}\bigg|\E\big[Q_{a,b}(U_{(\lceil n_j\tau_N\rceil)}^{j}) - Q_{a,b}(\tau_N)\big]\bigg| \geq \limsup_{N\to\infty}\frac{a \tau_N(1-\tau_N)}{200} 
> 0
$$
where the last bound follows since $0 < \tau_L \leq \tau_N \leq \tau_U < 1$ for all $N$. Thus we have established \eqref{eq:tilbet1check} for the case $\limsup_{N\to\infty}(n/m) > 0$.

\bigskip

Next consider the case $\limsup_{N\to\infty}(n/m)=\infty$. By~\eqref{eq:localize02} in Lemma \ref{lem:localize} there exist a sequence $\tau_N \in \Tc$ such that
\begin{multline*}
\limsup_{N\to\infty} \frac{n}{m}\bigg|\sum_{k: |\tau_k -\tau_N|\leq (\log G)^2/G}  A_k(\tau_N)\E\big[Q_{a,b}(U_{(\lceil n_j\tau_k\rceil)}^{j}) - Q_{a,b}(\tau_k)\big]\bigg| 
\\
\geq \limsup_{N\to\infty} \frac{n}{m} \bigg| a\frac{\tau_N(1-\tau_N)}{(n/m)+1}
+ (b+2a\tau_N) \sum_{k: |\tau_k -\tau_N|\leq (\log G)^2/G} A_k(\tau_N)\Big(\E[U_{(\lceil n_j\tau_k\rceil)}^{j}] - \tau_N\Big)\bigg|.
\end{multline*}
We distinguish two cases. 

If $\limsup_{N\to\infty}(n/m) \Big|\sum_{k: |\tau_k -\tau_N|\leq (\log G)^2/G} A_k(\tau_N)\Big(\E[U_{(\lceil n_j\tau_k\rceil)}^{j}] - \tau_N\Big)\Big| > 0$, \eqref{eq:tilbet1check} holds for $\tau_N$ with any $b>0$ and $a=0$. 

If $\limsup_{N\to\infty}(n/m)\Big|\sum_{k: |\tau_k -\tau_N|\leq (\log G)^2/G} A_k(\tau_N)\Big(\E[U_{(\lceil n_j\tau_k\rceil)}^{j}] - \tau_N\Big)\Big| = 0$, we can pick an arbitrary $0<a\leq a_{max}$ since $\tau_N\in\Tc=[\tau_L,\tau_U]$ with $0<\tau_L<\tau_U<1$ and thus
\begin{multline*}
\limsup_{N\to\infty}\frac{n}{m}\bigg|a\frac{\tau_N(1-\tau_N)}{(n/m)+1}\bigg| \geq \frac{1}{2}\limsup_{N\to\infty}a \tau_N(1-\tau_N)
\\
\geq \frac{1}{2} a\min\big\{\tau_L(1-\tau_L),\tau_U(1-\tau_U)\big\}>0.
\end{multline*}
Thus, \eqref{eq:tilbet1check} holds when $\limsup_{N\to\infty}(n/m)=\infty$. Combining the two cases above completes the proof of \eqref{eq:tilbet1check}. \hfill $\Box$


\subsection{Proof of \eqref{`EQ:TILBET2} and \eqref{`EQ:HATBET2}}

We begin by proving a slightly stronger version of \eqref{`EQ:TILBET2} for any $j$:
\begin{equation} \label{eq:tilbet2_un}
\sup_{\tau \in \Tc} \Var(\tilde\beta_j(\tau)) \lesssim 1/n.
\end{equation}
Observe that, as discussed in te beginning of Section~\ref{sec:pfbias}, $m^{1/2}\tilde\beta_j(\tau)=Q_{a,b}(U_{(\lceil n_j\tau\rceil)}^{j})$, and
\begin{align*}
	&\Var\big(m^{1/2}\tilde\beta_j(\tau)\big)
	\\
	&=\Var\big(Q_{a,b}(U_{(\lceil n_j\tau\rceil)}^{j})\big)\\
	&=\E\big[\big\{a\big((U_{(\lceil n_j\tau\rceil)}^{j})^2-\E[(U_{(\lceil n_j\tau\rceil)}^{j})^2]\big)+b\big(U_{(\lceil n_j\tau\rceil)}^{j}-\E[U_{(\lceil n_j\tau\rceil)}^{j}]\big)\big\}^2\big]\\
	&=\E\big[\big\{a\big((U_{(\lceil n_j\tau\rceil)}^{j})^2-\E[U_{(\lceil n_j\tau\rceil)}^{j}]^2-\Var(U_{(\lceil n_j\tau\rceil)}^{j})\big)+b\big(U_{(\lceil n_j\tau\rceil)}^{j}-\E[U_{(\lceil n_j\tau\rceil)}^{j}]\big)\big\}^2\big]\\
	&\leq 9\big\{(4a^2+b^2) \Var\big(U_{(\lceil n_j\tau\rceil)}^{j}\big)+a^2\Var\big(U_{(\lceil n_j\tau\rceil)}^{j}\big)^2\big\}.
\end{align*}
Therefore, it is enough to show that 
\begin{align}
	\sup_{\tau \in \Tc}\Var\big(U_{(\lceil n_j\tau\rceil)}^{j}\big) \lesssim m/n.\label{eq:varu}
\end{align}
Distinguish two cases. If $m/n$ is bounded away from zero, this follows since $|U_{(\lceil n_j\tau\rceil)}^{j}| \leq 1$ and thus $\Var\big(U_{(\lceil n_j\tau\rceil)}^{j}\big)$ is bounded by a constant independent of $\tau$. We will thus without loss of generality assume that $m/n \to 0$. Observe the decomposition
\begin{align*}
\Var(U_{(\lceil n_j\tau\rceil)}^j)&=\E\Big[\Big(U_{(\lceil n_j\tau\rceil)}^j - \E[U_{(\lceil n_j\tau\rceil)}^j]\Big)^2\Big] \\
&=	\E\Big[\Big(U_{(\lceil n_j\tau\rceil)}^j - \E[U_{(\lceil n_j\tau\rceil)}^j|n_j]\Big)^2\Big]+\E\Big[\Big(\E[U_{(\lceil n_j\tau\rceil)}^j|n_j]-\E[U_{(\lceil n_j\tau\rceil)}^j]\Big)^2\Big].
\end{align*}
First observe that
\begin{align}
	&\hspace{-0.5cm}\E\Big[\Big(U_{(\lceil n_j\tau\rceil)}^j - \E[U_{(\lceil n_j\tau\rceil)}^j|n_j]\Big)^2\Big] \notag
	\\
	&= \E\Big[\big(U_{(\lceil n_j\tau\rceil)}^j - \E[U_{(\lceil n_j\tau\rceil)}^j|n_j]\big)^2\IF\Big\{\Big|n_j-\frac{n}{m}\Big|\leq 6\sqrt{ \frac{n}{m} \log \Big(\frac{n}{m}\Big)}\Big\}\Big]+O\big((n/m)^{-2}\big)\notag
	\\
	&\leq \max_{l:|l-(n/m)|\leq 6\sqrt{(n/m) \log (n/m)}} \E\Big[\big(U_{(\lceil l\tau\rceil)}^j - \E[U_{(\lceil l\tau\rceil)}^j]\big)^2\Big]
	+O\big((n/m)^{-2}\big)\notag
	\\
	&= \max_{l:|l-(n/m)|\leq 6\sqrt{(n/m) \log (n/m)}}\Var\big(U_{(\lceil l\tau\rceil)}\big)+O\big((n/m)^{-2}\big),\notag
	\\
	&\asymp m/n, \label{eq:U4momtmp1new_varu}
\end{align}
uniformly in $\tau\in\Tc$. Here, the first equality follows since, $\Big|\big(U_{(\lceil n_j\tau\rceil)}^j - \E[U_{(\lceil n_j\tau\rceil)}^j|n_j]\big)\Big|^2 \leq 1$ a.s., $n_j\sim Bin(n,1/m)$ and the following result which is a consequence of Bernstein's inequality
\begin{align}
	P\Big(\Big|n_j - \frac{n}{m}\Big| \geq C\sqrt{\frac{n}{m}\log \Big(\frac{n}{m}\Big)}\Big) \leq 2\Big(\frac{n}{m}\Big)^{-C/3} \quad \mbox{for all }C\geq 1 
	\label{eq:hoef}
\end{align}
The final inequality is from \eqref{eq:udistr}.

Next, observe that, almost surely, 
\[
\Big|\E[U_{(\lceil n_j\tau\rceil)}^j|n_j] - \tau\Big| \leq \frac{1}{n_j + 1}
\]
by~\eqref{eq:udistr}. This implies $|\E[U_{(\lceil n_j\tau\rceil)}^j] - \tau| \leq \E[(1+n_j)^{-1}]$, and thus
\begin{equation} \label{eq:conduncond}
	\E\Big[\Big(\E[U_{(\lceil n_j\tau\rceil)}^j|n_j] - \E[U_{(\lceil n_j\tau\rceil)}^j]\Big)^2\Big] \lesssim \E\big[(1+n_j)^{-2}\big] + (\E[(1+n_j)^{-1}])^2 = O((m/n)^2)
	\end{equation} 
by an application of~\eqref{eq:hoef}. Combining \eqref{eq:U4momtmp1new_varu} and \eqref{eq:conduncond} finishes the proof of \eqref{eq:varu}.

To prove~\eqref{`EQ:HATBET2}, note that by H\"older's inequality and Lemma~\ref{lem:Ak}
\begin{align*}
\E[|\check\beta_j(\tau) - \E[\check\beta_j(\tau)]|^2] &= \E\Big[ \Big(\sum_{k=1}^K A_k(\tau)\{\tilde\beta_j(\tau_k) - \E[\tilde\beta_j(\tau_k)] \} \Big)^2\Big]
\\
& \leq \E\Big[\Big(\sum_{k=1}^K |A_k(\tau)| \Big) \Big(\sum_{k=1}^K |A_k(\tau)| |\tilde\beta_j(\tau_k) - \E[\tilde\beta_j(\tau_k)]|^2 \Big)\Big]
\\
& \lesssim 
\sup_{k=1,...,K} \Var(\tilde\beta_j(\tau_k)) \\
&\lesssim 1/n,
\end{align*}
where the last inequality is an application of \eqref{eq:tilbet2_un}. \hfill $\Box$

\subsection{Auxiliary Lemmas}
\begin{lemma}\label{lem:bin}
Assume that $W_n \sim Bin(n,1/m)$ and that $U_\tau|W_n=w \sim Beta(\lceil w\tau\rceil, w + 1 - \lceil w\tau\rceil)$ if $w > 0$ and $U_\tau = 0$ a.s. conditional on $W_n = 0$. Let $Q_{a,b}(x) := ax^2 + bx$. Then for any $a,b \geq 0$ and $n,m$ such that $n \geq 2$, $n/m \geq 1$, we have for all $\tau\in (0,1)$ that 
\begin{align}
\frac{n}{m} \big|\E[Q_{a,b}(U_\tau) - Q_{a,b}(\tau)]\big| &\geq \frac{a \tau(1-\tau)}{200} - \frac{n}{m} b\big|\E[U_{\tau}- \tau]\big| - 2a\frac{n}{m} \big|\E[U_\tau- \tau]\big|. \label{eq:bin1}
\end{align}
\end{lemma}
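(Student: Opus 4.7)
The plan is to decompose $\E[Q_{a,b}(U_\tau) - Q_{a,b}(\tau)]$ into a variance term plus a bias term, and then extract a lower bound on the variance term of the required form. Since $Q_{a,b}(x)=ax^2+bx$, a direct computation gives
\[
\E[Q_{a,b}(U_\tau) - Q_{a,b}(\tau)] = a\,\Var(U_\tau) + a(\E[U_\tau]-\tau)(\E[U_\tau]+\tau) + b(\E[U_\tau]-\tau).
\]
Since $U_\tau \in [0,1]$ and $\tau \in (0,1)$, we have $|\E[U_\tau]+\tau|\leq 2$, and the reverse triangle inequality yields
\[
|\E[Q_{a,b}(U_\tau) - Q_{a,b}(\tau)]| \geq a\,\Var(U_\tau) - 2a|\E[U_\tau]-\tau| - b|\E[U_\tau]-\tau|.
\]
Multiplying through by $n/m$, it remains only to show the variance lower bound $(n/m)\,\Var(U_\tau) \geq \tau(1-\tau)/200$.

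Next I would use the law of total variance to get $\Var(U_\tau) \geq \E[\Var(U_\tau\mid W_n)]$. For $W_n=w\geq 1$, the Beta$(k,w+1-k)$ distribution with $k=\lceil w\tau\rceil$ has variance
\[
\Var(U_\tau\mid W_n=w) = \frac{k(w+1-k)}{(w+1)^2(w+2)}.
\]
Using $w\tau \leq k \leq w\tau+1$ together with $\tau<1$, one checks that $k \geq w\tau$ and $w+1-k \geq w(1-\tau)$, so for $w\geq 1$, the ratios $k/(w+1)$ and $(w+1-k)/(w+1)$ are each at least $\tau/2$ and $(1-\tau)/2$ respectively (since $w/(w+1)\geq 1/2$). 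Hence $\frac{k(w+1-k)}{(w+1)^2} \geq \tau(1-\tau)/4$, which gives
\[
\Var(U_\tau\mid W_n) \geq \frac{\tau(1-\tau)}{4(W_n+2)}\IF\{W_n\geq 1\}.
\]

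Finally, write $\lambda := n/m \geq 1$. By Jensen's inequality applied to the convex map $x\mapsto 1/(x+2)$, $\E[(W_n+2)^{-1}]\geq 1/(\lambda+2)$, so
\[
\lambda\,\E\Big[\frac{1}{W_n+2}\IF\{W_n\geq 1\}\Big] \geq \frac{\lambda}{\lambda+2} - \frac{\lambda}{2}P(W_n=0) \geq \frac{1}{3} - \frac{\lambda e^{-\lambda}}{2},
\]
where I used $\lambda/(\lambda+2)\geq 1/3$ for $\lambda \geq 1$ together with $P(W_n=0)=(1-1/m)^n \leq e^{-\lambda}$. Since $\lambda\mapsto \lambda e^{-\lambda}$ is maximized on $[1,\infty)$ at $\lambda=1$ with value $e^{-1}$, the right-hand side is at least $1/3 - e^{-1}/2 > 1/50$, so combining the pieces
\[
\frac{n}{m}\,\Var(U_\tau) \geq \frac{\tau(1-\tau)}{4} \cdot \frac{1}{50} = \frac{\tau(1-\tau)}{200}.
\]
Plugging this into the bound obtained at the end of the first paragraph yields the claim. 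The main (mild) obstacle is step three, where one has to juggle Jensen's inequality against the event $\{W_n=0\}$ uniformly in $\lambda\geq 1$; the conservative constant $200$ in the statement absorbs any slack in these estimates.
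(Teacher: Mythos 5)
Your proof is correct and follows essentially the same decomposition as the paper: expand $\E[Q_{a,b}(U_\tau)-Q_{a,b}(\tau)]$ into a variance term plus a bias term, drop the bias to a modulus bound via $|\E[U_\tau]+\tau|\leq 2$, apply the law of total variance, and exploit the conditional Beta variance $\frac{k(w+1-k)}{(w+1)^2(w+2)}\geq \frac{\tau(1-\tau)}{4(w+2)}$ for $k=\lceil w\tau\rceil$. Where you diverge from the paper is the very last step: to lower-bound $\E\big[(W_n+2)^{-1}\IF\{W_n\geq 1\}\big]$ the paper restricts $W_n$ to the event $\{1\leq W_n\leq 3n/m\}$ via a Bernstein tail bound and the crude estimate $P(W_n>0)\geq 1/2$ (this is where $n\geq 2$ is actually used), whereas you apply Jensen's inequality directly to $x\mapsto(x+2)^{-1}$ and then subtract the $\{W_n=0\}$ atom using $P(W_n=0)\leq e^{-n/m}$. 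Your route is both simpler and slightly sharper — it yields $\E\big[(W_n+2)^{-1}\IF\{W_n\geq 1\}\big]\gtrsim 0.149\, (m/n)$ rather than $\approx 0.0066\,(m/n)$ from the paper's bound — and it also works for $n=1$. Since both land comfortably above the target constant $1/200$, the difference is cosmetic, but your Jensen argument would be the one I keep.
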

\noindent
\textbf{Proof of Lemma~\ref{lem:bin}.} 
For a proof of \eqref{eq:bin1}, note that 
\[
|(\E[U_\tau])^2 - \tau^2| \leq 2 |\E[U_\tau] - \tau|.
\]
Note also that
\[
\Var(U_\tau) = \E[\Var(U_\tau|W)] + \Var(\E[U_\tau|W]) \geq \E[\Var(U_\tau|W)].
\]
Thus with an application of triangle inequality, for $a,b \geq 0$, 
\begin{align}
\big|\E[Q_{a,b}(U_\tau) - Q_{a,b}(\tau)]\big| &=\big|a (\E[U_\tau^2 - \tau^2]) + b(\E[U_\tau - \tau])\big|\notag\\
&\geq a\E[\Var(U_\tau|W)] - b\big|\E[U_{\tau}- \tau]\big| - 2a \big|\E[U_\tau- \tau]\big|. \label{eq:hhh7}
\end{align}
Next we will prove a lower bound on $\E[\Var(U_\tau|W)]$. The following tail bound on binomial probabilities follows from the Bernstein inequality
\[
P\Big(W_n \geq (1+x)\frac{n}{m} \Big) \leq \exp\Big(-\frac{x^2}{2+x}\frac{n}{m}\Big) \quad \forall x\geq 0
\]
In particular for $n/m\geq 1$ setting $C = x-1$ yields for $C\geq 1$ 
\begin{equation} \label{eq:hhh5}
P\Big( W_n \geq C \frac{n}{m} \Big) \leq \exp\Big(-\frac{(C-1)^2}{1+C}\Big)
\end{equation}
Next note that 
\[
\E[\Var(U_\tau|W)] = \sum_{w = 1}^\infty P(W_n = w) \frac{\lceil w\tau \rceil(w + 1 - \lceil w\tau \rceil)}{(w+1)^2(w+2)},
\]
and moreover for any $w > 0$ 
\[
\frac{\tau(1-\tau)}{4(w+2)} \leq \frac{\lceil w\tau \rceil(w + 1 - \lceil w\tau \rceil)}{(w+1)^2(w+2)} \leq \frac{1}{w+2}.
\]
Thus from~\eqref{eq:hhh5} we obtain for $n/m \geq 1$ by setting $C=3$
\begin{align}
\E[\Var(U_\tau|W)] &\geq \frac{\tau(1-\tau)}{4(3(n/m)+2)}\Big(P(W_n > 0)- e^{-1}\Big) \geq \frac{m}{n}\frac{\tau(1-\tau)}{20}\Big( \frac{1}{2} - e^{-1}\Big) \notag
\\
&\geq \frac{m}{n}\frac{\tau(1-\tau)}{200}. \label{eq:hhh6}
\end{align}
where we used the fact that for $n\geq 2, n/m \geq 1$ we have $P(W_n > 0) \geq 1/2$. The result follows by combining this with~\eqref{eq:hhh7}. 

\hfill $\qed$


\begin{lemma}[Localization]\label{lem:localize}
Let all conditions in Lemma \ref{lem:bin} hold and assume that additionally $N \geq m$, $G \gg (N/m)^\alpha$ for some $\alpha > 0$ and $S \gg (N/m)^{1/2}$. Recall the definitions of $A_k, Q_{a,b}$ given in Section~\ref{sec:specconst} and assume that $r_\tau \geq 2$.
\begin{enumerate}
\item If $\limsup_{N\to\infty} n/m < \infty$ there exists a sequence $\tau_N$ in $\Tc$ such that
\begin{multline}
\limsup_{N\to\infty} \frac{n}{m}\bigg|\sum_{k: |\tau_k -\tau_N|\leq (\log G)^2/G}  A_k(\tau_N)\E[Q_{a,b}(U_{\tau_k}) - Q_{a,b}(\tau_k)]\bigg|
\\
\geq \underset{N\to\infty}{\limsup}\,\, (n/m)\big|\E[Q_{a,b}(U_{\tau_N}) - Q_{a,b}(\tau_N)]\big| \label{eq:localize01}
\end{multline}
\item If $\limsup_{N\to\infty} n/m = \infty$ there exists a sequence $\tau_N$ in $\Tc$ such that
\begin{multline}
	\hspace{-0.5cm}\limsup_{N\to\infty} \frac{n}{m}\bigg|\sum_{k: |\tau_k -\tau_N|\leq (\log G)^2/G}  A_k(\tau_N)\E[Q_{a,b}(U_{\tau_k}) - Q_{a,b}(\tau_k)]\bigg|
\\
\geq \limsup_{N\to\infty} \frac{n}{m} \bigg| a\frac{\tau_N(1-\tau_N)}{(n/m)+1} + (b+2a\tau_N) \sum_{k: |\tau_k -\tau_N|\leq (\log G)^2/G} A_k(\tau_N)\Big(\E[U_{\tau_k}] - \tau_N\Big)\bigg| \label{eq:localize02}
	\end{multline}
\end{enumerate}

\end{lemma}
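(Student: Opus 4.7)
Let $g(\tau) := \E[Q_{a,b}(U_\tau)]$. Since $Q_{a,b}$ is a polynomial of degree $2 \leq r_\tau$, it lies in the spline space $\Theta_G$, hence the projection reproduces it: $\sum_{k=1}^K A_k(\tau) Q_{a,b}(\tau_k) = Q_{a,b}(\tau)$ for every $\tau \in \Tc$. By the exponential decay of $|A_k(\tau)|$ in $|k-k^\ast(\tau)|$ supplied by Lemma~\ref{lem:Ak} (the same estimate used to obtain~\eqref{eq:trunc}), the tail mass $\sum_{k:\,|\tau_k - \tau_N| > (\log G)^2/G} |A_k(\tau_N)|$ decays like $\gamma^{(\log G)^2/2}$ for some $\gamma\in(0,1)$, which is $o(m/n)$ under $G \gg (N/m)^\alpha$. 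Since $g$ and $Q_{a,b}$ are bounded, the truncated sum in the LHS of both claims equals $\Pi_K g(\tau_N) - Q_{a,b}(\tau_N) + o(m/n)$, so the task reduces to producing a sequence $\tau_N \in \Tc$ for which this projection error is at least as large as the respective right-hand side.

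\textbf{Part 1 ($n/m$ bounded).} Conditional on $W=w$, the law of $U_\tau$ depends on $\tau$ only through $\lceil w\tau\rceil$, so $g$ is piecewise constant with jumps contained in $\{j/w:\,1\leq j\leq w\leq n\}$, a set of cardinality $\leq n^2$. The total Lebesgue measure of the $(\log G)^2/G$-neighborhoods of these jumps is $O(n^2(\log G)^2/G)$; using $n\lesssim m$ together with $S \gg (N/m)^{1/2}$ (which forces $n^2 \ll N/m$) and $G \gg (N/m)^\alpha$, this measure is $o(1)$, so one can choose $\tau_N\in\Tc$ at distance strictly greater than $(\log G)^2/G$ from every jump of $g$. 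On this window $g$ is exactly constant, so $\sum_{k:|\tau_k-\tau_N|\leq (\log G)^2/G} A_k(\tau_N) g(\tau_k) = g(\tau_N)(1+o(1))$, which combined with the polynomial reproduction for $Q_{a,b}$ yields $\Pi_K g(\tau_N) - Q_{a,b}(\tau_N) = g(\tau_N) - Q_{a,b}(\tau_N) + o(1)$. Multiplying by $n/m = O(1)$ and passing to $\limsup$ gives \eqref{eq:localize01}.

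\textbf{Part 2 ($n/m\to\infty$).} A Taylor expansion around $\tau_k$ (using $Q''_{a,b}\equiv 2a$) gives
$$
\E[Q_{a,b}(U_{\tau_k}) - Q_{a,b}(\tau_k)] = Q'_{a,b}(\tau_k)(\mu_{\tau_k}-\tau_k) + a\,\E[(U_{\tau_k}-\tau_k)^2],
$$
with $\mu_\tau := \E[U_\tau]$. By the decomposition $\sigma_\tau^2 = \E[\Var(U_\tau\mid W)] + \Var(\E[U_\tau\mid W])$ together with the binomial concentration~\eqref{eq:hoef} (which yields $\Var(\E[U_\tau\mid W]) = O((m/n)^2)$ as in~\eqref{eq:conduncond}) and $\E[\Var(U_\tau\mid W)] = \tau(1-\tau)/[(n/m)+1]\cdot(1+o(1))$, one obtains $\sigma_{\tau_k}^2 = \tau_k(1-\tau_k)/[(n/m)+1] + o(m/n)$, and the Jensen-like term $(\mu_{\tau_k}-\tau_k)^2 = O((m/n)^2)$ is negligible against $m/n$. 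Aggregating over the truncated window with weights $A_k(\tau_N)$: for the variance part, use $|\tau_k-\tau_N|\leq (\log G)^2/G$ to replace $\tau_k(1-\tau_k)$ by $\tau_N(1-\tau_N)$ at an $o(m/n)$ cost, and use $\sum_k A_k(\tau_N)=1$ up to the $o(m/n)$ tail correction, producing the term $a\tau_N(1-\tau_N)/[(n/m)+1]$. For the mean-correction part, decompose $Q'_{a,b}(\tau_k) = Q'_{a,b}(\tau_N) + 2a(\tau_k-\tau_N)$; the cross-term is $O((\log G)^2/G \cdot m/n) = o(m/n)$, and polynomial reproduction on linear functions (valid since $r_\tau\geq 1$) justifies replacing $\mu_{\tau_k}-\tau_k$ by $\mu_{\tau_k}-\tau_N$ inside the truncated sum. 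The remainder is exactly the factor $(b+2a\tau_N)\sum_k A_k(\tau_N)(\mu_{\tau_k}-\tau_N)$ appearing on the RHS of \eqref{eq:localize02}. Pick $\tau_N$ to be any fixed interior point of $\Tc$ (so $\tau_N(1-\tau_N)\geq \tau_L(1-\tau_L)\wedge \tau_U(1-\tau_U)>0$), and the LHS dominates the RHS by $o(m/n)$.

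\textbf{Expected main obstacle.} The delicate step is verifying, in Part 1, that $\tau_N$ can indeed be chosen outside all $(\log G)^2/G$-neighborhoods of the $O(n^2)$ jumps of $g$: this is exactly where the joint quantitative hypotheses $n\lesssim m$, $S \gg (N/m)^{1/2}$, and $G \gg (N/m)^\alpha$ are needed to ensure $n^2(\log G)^2 \ll G$. In Part 2, the analogous bookkeeping challenge is that the asymptotic expression $\sigma_{\tau_k}^2 = \tau_k(1-\tau_k)/[(n/m)+1] + o(m/n)$ must be uniform over $k$ in the window and survive multiplication by $n/m$; sharp control of $\E[(W+2)^{-1}]$ via binomial concentration is what makes the residual $o(m/n)$ genuinely negligible against the leading $O(m/n)$ variance term.
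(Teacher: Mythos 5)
Your Part 2 argument is correct and runs essentially parallel to the paper's: both reduce to the exact second-order expansion of $Q_{a,b}$ (the paper via the law of iterated expectations through $v_{1,n},v_{2,n}$; you via a direct Taylor expansion around $\tau_k$), both use $\E[\Var(U_\tau\mid W_n)] = \tau(1-\tau)/[(n/m)+1]+o(m/n)$ with binomial concentration to control $\Var(\E[U_\tau\mid W_n])$, and both absorb the passage from the full sum $\sum_{k=1}^K$ to the window $|\tau_k-\tau_N|\le(\log G)^2/G$ into an $o(m/n)$ error using the exponential decay of $A_k(\tau_N)$ and the reproduction of polynomials of degree $\le 2$ by $\Pi_K$.

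Your Part 1, however, has a genuine gap. You observe that $g(\tau)=\E[Q_{a,b}(U_\tau)]$ is piecewise constant with jumps in $\{j/w:1\le j\le w\le n\}$, a set of cardinality $O(n^2)$, and you then try to show that their $(\log G)^2/G$-neighborhoods have total measure $o(1)$. That would require $n^2(\log G)^2 = o(G)$, and this is \emph{not} implied by the hypotheses. First, the claimed inference ``$S \gg (N/m)^{1/2}$ forces $n^2 \ll N/m$'' is wrong: from $S\gg(N/m)^{1/2}$ and $n=N/S$ one gets $n \ll N/(N/m)^{1/2}=(Nm)^{1/2}$, i.e.\ $n^2\ll Nm$, which is much weaker than $n^2\ll N/m$. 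Second, even combined with ``$n\lesssim m$'' (from $\limsup n/m<\infty$) one only gets $n^2\lesssim m^2$, and $m^2$ can be vastly larger than $(N/m)^\alpha$ for the (arbitrary, possibly tiny) constant $\alpha>0$ in the hypothesis $G\gg(N/m)^\alpha$: take $m=N^{1/2}$ and $\alpha=10^{-4}$, say. The failure is therefore not a bookkeeping issue — the unconditional $g$ simply has too many jump points.

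The paper's fix, which you are missing, is to \emph{truncate the conditioning on $W_n$}. They split off the contribution of $\{W_n\le w_N\}$ with $w_N := \log G$, defining $S_{w_N}(\tau)=\sum_{w=1}^{w_N}P(W_n=w)\bigl(a\E[U_\tau^2\mid W_n=w]+b\E[U_\tau\mid W_n=w]\bigr)$. This truncated function has only $O(w_N^2)=O((\log G)^2)$ jump points, and by the integer-fraction argument every interval of constancy has length at least $w_N^{-2}=(\log G)^{-2}$, which eventually dominates the window width $(\log G)^2/G$. So one can take $\tau_N$ to be the midpoint of any such interval, with the window entirely inside it no matter how $n$ grows. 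The truncation error is $\sup_\tau|r_G(\tau)|\lesssim P(W_n>w_N)$; since $n/m$ is bounded in Part 1 and $w_N\to\infty$, this is $o(1)=o(m/n)$, so it is absorbed. Without this truncation the jump count depends on $n$ rather than $\log G$, and your measure argument cannot be salvaged under the stated conditions.
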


\begin{proof}[Proof of Lemma \ref{lem:localize}] For a proof of~\eqref{eq:localize01}, for arbitrary $w_0\in\N$, observe that the sum
	\begin{align}
		S_{w_0}(\tau):=\sum_{w=1}^{w_0} P(W_n = w) \big( a \E[U_\tau^2 |W_n = w] 
		+ b\E[U_\tau|W_n = w]\big) \label{eq:defSw}
	\end{align}
	is a piecewise constant function of $\tau$. This function is constant on intervals of the form $[l_1/w_1,l_2/w_2]$ for some positive integers $l_1,l_2,w_1,w_2 \leq w_0$ satisfying $\tau_L \leq l_1/w_1<l_2/w_2\leq \tau_U$. Thus, the length of each interval where $S_{w_0}$ is constant can be bounded from below as follows
	\begin{align}
		\frac{l_1}{w_1}-\frac{l_2}{w_2} = \frac{l_1 w_2 - l_2w_1}{w_1w_2} \geq w_0^{-2}, \label{eq:intervallength}
	\end{align}
	since $l_1 w_2 - l_2w_1 \geq 1$ because both $l_1 w_2$ and $l_2 w_1$ are positive integers, and $w_1,w_2 \leq w_0$. Let $w_N = \log G$. Then
	\begin{align}
	& a (\E[U_\tau^2 - \tau^2]) + b(\E[U_\tau - \tau])
	=  S_{w_N}(\tau)- P(W_n\leq w_N)(b\tau + a\tau^2)  + r_G(\tau) \label{eq:hhh3}
	\end{align}
	where the remainder satisfies $\sup_{\tau\in\Tc}|r_G(\tau)| \leq 2(a+b) P(W_n > w_N)$, and by \eqref{eq:hhh5}
	\begin{align}
	  P(W_n > w_N) = o(1), \quad \sup_{\tau\in\Tc}|r_G(\tau)| = o(m/n) \label{eq:poitail}
	\end{align}
	since $w_N \to \infty$ and we are in the case $n/m$ bounded. For each $N$, select $\tau_N$ as the midpoint of any open interval $\Jc_N\subset \Tc$ on which $S_{w_N}(\tau)$ is a constant. From the choice of $\tau_N$ and $w_N$, we have when $N$ is sufficiently large, 
	\begin{align}
	S_{w_N}(\tau_k)=S_{w_N}(\tau_N) \mbox{ for all $k:|\tau_k -\tau_N|\leq 1/(3w_N^2)$}
	\label{eq:int}
	\end{align}
and in particular $S_{w_N}(\tau_k)=S_{w_N}(\tau_N)$ for $k:|\tau_k -\tau_N|\leq (\log G)^2/G$ since $G \to \infty$.	Hence, for each $N$, we get from \eqref{eq:hhh3} that
	\begin{align}
		&\sum_{k: |\tau_k -\tau_N|\leq (\log G)^2/G}  A_k(\tau_N)\E[Q_{a,b}(U_{\tau_k}) - Q_{a,b}(\tau_k)]\notag\\
	& = \sum_{k: |\tau_k -\tau_N|\leq (\log G)^2/G}  A_k(\tau_N) \big[a (\E[U_{\tau_k}^2 - \tau_k^2]) + b(\E[U_{\tau_k} - \tau_k])\big]
	\notag\\
	& = \sum_{k: |\tau_k -\tau_N|\leq (\log G)^2/G}  A_k(\tau_N)\Big\{  S_{w_N}(\tau_k)+ \Big(- P(W_n\leq w_N)(b\tau_k + a\tau_k^2) + r_G(\tau_k)\Big)\Big\} .\label{eq:pjmain}
\end{align}
	Note that by similar argument as \eqref{eq:biaspj} and \eqref{eq:trunc}, we have 
	\begin{align}
		&\hspace{-1cm}\bigg|\sum_{k: |\tau_k -\tau_N|\leq (\log G)^2/G}  \Big\{A_k(\tau_N) (b\tau_k + a\tau_k^2)\Big\}-(b\tau_N + a\tau_N^2)\bigg|\notag\\
		&\stackrel{\eqref{eq:trunc}}{=}\bigg|\sum_{k=1}^K  \Big\{A_k(\tau_N) (b\tau_k + a\tau_k^2)\Big\}-(b\tau_N + a\tau_N^2)\bigg|+o(m/n) \stackrel{\eqref{eq:biaspj}}{=} o(m/n), \label{eq:binpj1}
	\end{align}
	and 
	\begin{align}
	\bigg|\sum_{k: |\tau_k -\tau_N|\leq (\log G)^2/G} A_k(\tau_N) r_G(\tau_k)\bigg|\stackrel{\eqref{eq:trunc}}{=}\bigg|\sum_{k=1}^K A_k(\tau_N) r_G(\tau_k) \bigg|+o(m/n) = o(m/n).\label{eq:binpj2}
	\end{align}
	where the last identity follows from~\eqref{eq:hhh3} and Lemma~\ref{lem:Ak}. Hence, we obtain from \eqref{eq:pjmain} that
	\begin{align*}
		\limsup_{N\to\infty}&\frac{n}{m}\bigg|\sum_{k: |\tau_k -\tau_N|\leq (\log G)^2/G}  A_k(\tau_N)\E[Q_{a,b}(U_{\tau_k}) - Q_{a,b}(\tau_k)]\bigg|
		\\
		\stackrel{\eqref{eq:binpj1}\atop\eqref{eq:binpj2}}{=} &\limsup_{N\to\infty}\frac{n}{m}\bigg|\sum_{k: |\tau_k -\tau_N|\leq (\log G)^2/G}  \Big\{A_k(\tau_N) S_{w_N}(\tau_k)\Big\}-P(W_n\leq w_N)(b\tau_N + a\tau_N^2)\bigg|
		\\
		\stackrel{\eqref{eq:int}}{=}&\limsup_{N\to\infty}\frac{n}{m}\bigg|S_{w_N}(\tau_N) \sum_{k: |\tau_k -\tau_N|\leq (\log G)^2/G}  \Big\{A_k(\tau_N)\Big\} -P(W_n\leq w_N)(b\tau_N + a\tau_N^2)\bigg|
		\\
		\stackrel{\eqref{eq:trunc}}=&\limsup_{N\to\infty}\frac{n}{m}\bigg|S_{w_N}(\tau_N) \sum_{k=1}^K  \Big\{A_k(\tau_N)\Big\} -P(W_n\leq w_N)(b\tau_N + a\tau_N^2)\bigg|
		\\
		= \hspace{0.4cm}&\limsup_{N\to\infty}\frac{n}{m}\big|S_{w_N}(\tau_N) -P(W_n\leq w_N)(b\tau_N + a\tau_N^2) + r_G(\tau_N)\big|.
	\end{align*}
Here the last line follows from Lemma~\ref{lem:Ak}. Apply~\eqref{eq:hhh3} to complete the proof of~\eqref{eq:localize01}.

For a proof of \eqref{eq:localize02}, note that by the law of iterated expectation,
		\begin{align}
			\E\big[Q_{a,b}(U_\tau) - Q_{a,b}(\tau)\big] & = a(\E[v_{2,n}(\tau)] + \E[v_{1,n}^2(\tau)] - \tau^2) + b( \E[v_{1,n}(\tau)] - \tau)\label{eq:sharpStmp0_new}
		\end{align}
		where
		\begin{align}
			\begin{split} \label{eq:defvn_new}
			v_{1,n}(\tau) &:= \E[U_\tau|W_n] = \IF\{W_n > 0\}\frac{\lceil W_n\tau \rceil}{W_n+1};\\
		  v_{2,n}(\tau) &:= \Var(U_\tau|W_n) = \IF\{W_n > 0\}\frac{\lceil W_n\tau \rceil(W_n + 1 - \lceil W_n\tau \rceil)}{(W_n+1)^2(W_n+2)}.
		  \end{split}
		\end{align}
The following bound for binomial random variables can be obtained by Bernstein's inequality for $n/m \geq 1$: 
	\[
	P\Big( |W_n - n /m| \geq C \sqrt{(n/m)\log (n/m)}\Big) \leq 2(n/m)^{-C/3} \quad \mbox{for all } C\geq 1
	\]
	Using this bound, straightforward calculations show that 
	\begin{align*}
	\E[v_{1,n}(\tau)] &=  \tau + O(m/n)
	\\
	\E[v_{2,n}(\tau)] &= \frac{\tau(1-\tau)}{(n/m)+1} + o(m/n)
	\\
	\E[v_{1,n}^2(\tau) -\tau^2] &= 2\tau\big(\E[v_{1,n}(\tau)]-\tau\big) + o(m/n)
\end{align*}
where all remainder terms are uniform in $\tau\in \Tc$. Here, the first equation is a consequence of the fact that $|v_{1,n}(\tau) - \tau| \leq (1+W_n)^{-1}$ while the third equation follows since
\begin{align*}
\E[v_{1,n}^2(\tau) -\tau^2] - 2\tau\big(\E[v_{1,n}(\tau)]-\tau\big) = \E[(v_{1,n}(\tau) - \tau)^2].
\end{align*}
Thus, from \eqref{eq:sharpStmp0_new},
\begin{align}
\E\big[Q_{a,b}(U_\tau) - Q_{a,b}(\tau)\big] = a\frac{\tau(1-\tau)}{(n/m)+1} + (b+2a\tau)\big(\E[v_{1,n}(\tau)] - \tau\big) + r_{1,N}(\tau) \label{eq:Slin1_new}
\end{align}
where $r_{1,N}(\tau)=o((n/m)^{-1})$ uniformly in $\tau\in\Tc$. Pick $\tau_N \equiv \tau$ for an arbitrary $\tau \in \Tc$. From Lemma~\ref{lem:Ak} we obtain
	\begin{align}
	\bigg|\sum_{k: |\tau_k -\tau_N|\leq (\log G)^2/G} A_k(\tau_N) r_{1,N}(\tau_k)\bigg| \leq \sup_{k=1,...,K} |r_{1,N}(\tau_k)| \sum_{k=1}^K |A_k(\tau)|= o(m/n).\label{eq:binpj3}
	\end{align}
Hence, combine \eqref{eq:binpj3} and \eqref{eq:Slin1_new} to get
	\begin{align*}
	&\limsup_{N\to\infty} \frac{n}{m}\bigg|\sum_{k: |\tau_k -\tau_N|\leq (\log G)^2/G}  A_k(\tau_N)\E\big[Q_{a,b}(U_{\tau_k}) - Q_{a,b}(\tau_k)\big]\bigg|\\
	&\stackrel{\eqref{eq:Slin1_new}\atop\eqref{eq:binpj3}}{=} \limsup_{N\to\infty} \frac{n}{m} \bigg|\sum_{k: |\tau_k -\tau_N|\leq (\log G)^2/G}  A_k(\tau_N)\Big\{ a\frac{\tau_k(1-\tau_k)}{(n/m)+1} + (b+2a\tau_k)\big(\E[v_{1,n}(\tau_k)] - \tau_k\big)\Big\}\bigg|\\
	&\stackrel{\eqref{eq:trunc}\atop\eqref{eq:biaspj}}{=} \limsup_{N\to\infty} \frac{n}{m} \bigg| a\frac{\tau_N(1-\tau_N)}{(n/m)+1} + (b+2a\tau_N) \sum_{k: |\tau_k -\tau_N|\leq (\log G)^2/G} A_k(\tau_N)\big(\E[v_{1,n}(\tau_k)] - \tau_N\big)\bigg|
	\end{align*}
	where the last inequality follows since for all $k$ in the sum $|\tau_k -\tau_N| = O((\log G)^2/G) = o(1)$ and $|\E[v_{1,n}(\tau_k)]| = \tau_k + O(m/n)$ uniformly in $\tau_k$. This completes the proof of~\eqref{eq:localize02}.
\end{proof}

\begin{lemma}\label{lem:Ak}
Assume that $K \geq G$. Recall the definition of $A_k(\tau)$ in \eqref{eq:Ak}. There exist constants $c,C>0$ independent of $K,G$ such that 
\[
\sup_{k,\tau\in\Tc}|A_k(\tau)| \leq c, \quad \sup_{\tau \in \Tc}\sum_{k=1}^K |A_k(\tau)| \leq C.
\]
Moreover, for any $\tau \in \Tc$, $\sum_{k=1}^K A_k(\tau) = 1$.  
\end{lemma}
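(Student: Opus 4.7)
The plan is to identify $A_k(\tau)$ as the reproducing kernel of the discrete projection $\Pi_K$ and reduce all three claims to (i) the partition-of-unity property of the normalized B-spline basis and (ii) the $L^\infty$ boundedness of $\Pi_K$ already established in Step~1 of the proof of Theorem~\ref{th:gendiffup}. From the closed form~\eqref{eq:za}, one has the kernel representation $(\Pi_K f)(\tau) = \sum_{k=1}^K A_k(\tau) f(\tau_k)$ for any bounded $f$ on $\Tc$; evaluating this at the constant function $f\equiv 1$ will control $\sum_k A_k(\tau)$, while a dual choice of $f$ with $f(\tau_k) = \mathrm{sgn}(A_k(\tau))$ will control $\sum_k |A_k(\tau)|$.

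For the identity $\sum_{k=1}^K A_k(\tau) = 1$, I use that the normalized B-spline basis of Definition~4.19 in~\cite{schumaker:81} satisfies the partition-of-unity relation $\BB(\tau)^\top \mathbf{1}_q = \sum_{j=1}^q B_j(\tau) = 1$ on $\Tc$. Consequently,
\begin{equation*}
\sum_{k=1}^K \BB(\tau_k) \;=\; \sum_{k=1}^K \BB(\tau_k)\big(\BB(\tau_k)^\top \mathbf{1}_q\big) \;=\; \Big(\sum_{k=1}^K \BB(\tau_k)\BB(\tau_k)^\top\Big)\mathbf{1}_q,
\end{equation*}
and left-multiplying by $\BB(\tau)^\top \big(\sum_k \BB(\tau_k)\BB(\tau_k)^\top\big)^{-1}$ yields $\sum_{k=1}^K A_k(\tau) = \BB(\tau)^\top \mathbf{1}_q = 1$.

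For $\sup_\tau \sum_k |A_k(\tau)| \leq C$, fix $\tau \in \Tc$ and pick any $f_\tau \in L^\infty(\Tc)$ with $\|f_\tau\|_\infty \leq 1$ and $f_\tau(\tau_k) = \mathrm{sgn}(A_k(\tau))$ for every $k$. Then $\sum_k |A_k(\tau)| = (\Pi_K f_\tau)(\tau) \leq \|\Pi_K f_\tau\|_\infty$, so the desired bound reduces to $\|\Pi_K\|_{L^\infty \to L^\infty} \leq C$. This last estimate is precisely what Step~1 of the proof of Theorem~\ref{th:gendiffup} extracts from Theorem~A.1 of~\cite{huang2003}, whose conditions A.1--A.3 are checked there for our equidistant-knot, equispaced-grid setup. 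The pointwise bound $\sup_{k,\tau}|A_k(\tau)| \leq c$ then follows immediately from $|A_k(\tau)| \leq \sum_{k'} |A_{k'}(\tau)| \leq C$.

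The main obstacle will be to ensure that the constant in $\|\Pi_K\|_{L^\infty \to L^\infty}$ is genuinely independent of $K$ and $G$ under the weaker hypothesis $K \geq G$ of the lemma, whereas Step~1 implicitly used $K/G \to \infty$ in order to drive the quadrature ratios $C_{r_\tau}/(c_{j,K}-1)$ appearing in Lemma~\ref{lem:poly} to zero. A quantitative refinement of that argument---noting that each spline knot interval $[t_j, t_{j+1}]$ already contains $c_{j,K} \geq \lfloor K/G\rfloor \geq 1$ equispaced grid points, so that by Lemma~\ref{lem:poly} the discrete and continuous $L^2$ norms on $\Theta_G$ are equivalent with constants depending only on $r_\tau$---should suffice to apply Huang's Theorem~A.1 with absolute constants $\gamma_1,\gamma_2$ and close the argument.
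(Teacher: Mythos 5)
Your partition-of-unity derivation of $\sum_{k=1}^K A_k(\tau)=1$ is correct and, in fact, cleaner than the paper's: the paper argues that constants lie in $\Theta_G$ and so are reproduced by $\Pi_K$, whereas you exploit $\BB(\tau)^\top\mathbf{1}_q\equiv 1$ directly, without needing the projection to be exact on any function class. This is a legitimate alternative.

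The remaining two claims are where your route and the paper's route diverge, and here there is a real gap in your argument. Observe first that your reduction of $\sup_\tau\sum_k|A_k(\tau)|$ to $\|\Pi_K\|_{L^\infty\to L^\infty}$ is not so much a reduction as an identity: since $(\Pi_K f)(\tau)=\sum_k A_k(\tau)f(\tau_k)$, one has $\|\Pi_K\|_{L^\infty\to L^\infty}=\sup_\tau\sum_k|A_k(\tau)|$. So the proposal does not produce the bound from a prior fact; it simply renames it, and everything hinges on proving $\|\Pi_K\|_\infty\le C$ under the stated hypothesis $K\ge G$. You correctly flag that Step~1 of the proof of Theorem~\ref{th:gendiffup} (via Conditions A.1--A.3 of \cite{huang2003}) establishes this only by driving the quadrature ratios to zero, i.e.\ under $K/G\to\infty$. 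But your proposed repair does not close this: when $c_{j,K}=\lfloor K/G\rfloor=1$, Lemma~\ref{lem:poly} gives a bound of the form $C_{r_\tau}/(c_{j,K}-1)=C_{r_\tau}/0$, so the discrete and continuous $L^2$ norms are \emph{not} shown to be equivalent with constants depending only on $r_\tau$. The norm-equivalence constant degenerates exactly in the regime $K\asymp G$ that the lemma allows, so Huang's Theorem~A.1 cannot be invoked in that regime by this quadrature argument.

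The paper avoids this entirely. For $\sum_k|A_k(\tau)|$ it does not pass through $\|\Pi_K\|_\infty$ at all. Instead it uses Lemma~6.3 of \cite{zsw98} to obtain the banded, exponentially decaying inverse Gram bound
\[
\Big(\textstyle\sum_{k}\BB(\tau_k)\BB(\tau_k)^\top\Big)^{-1}_{ij}\le c_1 K^{-1}G\,\gamma^{|i-j|}\le c_1\gamma^{|i-j|},
\]
the second inequality being exactly where $K\ge G$ is used. Combining this with the fact that $\BB(\tau)$ and $\BB(\tau_k)$ are each supported on $r_\tau+1$ consecutive indices, and that for each integer shift $l$ at most $O(r_\tau)$ of the index sets $\Ic_k$ overlap $\Ic(\tau)\pm l$, one sums a geometric series over $l$ and over $k$ to land on $\sup_\tau\sum_k|A_k(\tau)|\le 6c_1(r_\tau+1)\sum_l\gamma^l<\infty$. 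Since this computation is what $\|\Pi_K\|_\infty\le C$ \emph{means}, it is in fact the proof of $L^\infty$ boundedness under the weaker $K\ge G$ hypothesis, and the pointwise bound $\sup_{k,\tau}|A_k(\tau)|\le c$ then follows trivially as you noted. So your high-level architecture is not wrong, but the missing ingredient—a bound on $\|\Pi_K\|_\infty$ that survives $K\asymp G$—is supplied by the banded-inverse argument, not by a quantitative refinement of the Riemann-sum comparison.
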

\begin{proof}[Proof of Lemma \ref{lem:Ak}]

We start with proving $\sum_{k=1}^K A_k(\tau) = 1$. Observe that $\sum_{k=1}^K A_k(\tau)=\Pi_K g_1$, where $\Pi_K$ is defined in Section \ref{sec:proof_orgenpr} and $g_1(x)\equiv 1$. Since the degree of the piecewise polynomials in $\BB$ is greater or equal to 2 and $g_1$ is an order 0 polynomial, similar arguments as used in the proof of~\eqref{eq:biaspj} show that $\|\Pi_K g_1 - 1\|_\infty=0$. This implies $\sum_{k=1}^K A_k(\tau)=1$ for all $\tau\in\Tc$.

To obtain a bound for $\sup_{k,\tau\in\Tc}|A_k(\tau)|$, note that
		\begin{align*}
			\sup_{k,\tau\in\Tc}\big|A_k(\tau)\big| = \sup_k \|\Pi_K \overline{e}_k \|_\infty \leq \|\Pi_K\|_\infty \sup_k\|\overline{e}_k \|_\infty,
		\end{align*}
		where $\overline{e}_k(\cdot):\Tc\to [0,1]$ is the function interpolating $\big\{(\tau_1,0)$,..., $(\tau_{k-1},0)$, $(\tau_k,1)$, $(\tau_{k+1},0)$..., $(\tau_K,0)\big\}$, so $\sup_k\|\overline{e}_k \|_\infty=1$. In the first step in Section \ref{sec:proof_orgenpr} we have shown that $\|\Pi_K\|_\infty<C'$ for some constant $C'>0$ independent of $K$. Hence, taking $c=\|\Pi_K\|_\infty$ finishes the first claim.

Next we show that $\sup_{\tau \in \Tc}\sum_{k=1}^K |A_k(\tau)| \leq C$. For any $\tau\in\Tc$ and $\tau_k\in\Tc_K$, define the index sets 
		\begin{align*}
				\Ic(\tau)&:= \big\{j \in \{1,...,q\}: B_j(\tau)\neq 0\big\};\\
				\Ic_k&:=\big\{j \in \{1,...,q\}: B_j(\tau_k)\neq 0\big\}.
		\end{align*}
Note that $|\Ic(\tau)|=|\Ic(\tau_k)|=r_\tau+1$ for all $\tau\in\Tc$ and $k$, and the elements in both sets are consecutive positive integers by the connective support of $B_j$.

				By Lemma 6.3 of \cite{zsw98} (note that their $\mathbf{N}$, $n$ and $G_{k,n}$ are our $\BB$, $K$ and $K^{-1}\sum_{k=1}^K \BB(\tau_k)\BB(\tau_k)^\top$, while their $h$ is of order $1/G$ see page 1761, 1762 and 1765 of \cite{zsw98}), there exist constants $0<\gamma<1$ and $c_1>0$ independent of $K,G,\tau$ such that the $ij$'th element of the matrix satisfies
		\begin{align}
			\Big(\sum_{k=1}^K \BB(\tau_k)\BB(\tau_k)^\top \Big)_{ij}^{-1} \leq c_1 K^{-1} G \gamma^{|j-i|} \leq c_1 \gamma^{|j-i|} \label{eq:invbd}
		\end{align}
		where the second inequality follows from our assumption that $K \geq G$.
		
Recall that our $B_j$ is defined as given in Definition 4.19 on page 124 of \cite{schumaker:81}. Thus, by equation (4.31) in the same reference, 
\[
\sup_{\tau\in\Tc}\max_{j\leq q}|B_j(\tau)|^2\leq 1,
\]
and by \eqref{eq:invbd}, 
\begin{align}
\big|A_k(\tau)\big| &= \bigg|\BB(\tau)^\top \Big(\sum_{k=1}^K \BB(\tau_k)\BB(\tau_k)^\top \Big)^{-1} \BB(\tau_k)\bigg| \leq c_1 \sum_{j\in \Ic(\tau),j'\in \Ic_k} \gamma^{|j-j'|}\label{eq:bddAk}\\
			&\leq c_1 \sum_{l=0}^\infty \big(\big|(\Ic(\tau)+l)\cap \Ic_k\big|+\big|(\Ic(\tau)-l)\cap \Ic_k\big|\big) \gamma^l,\label{eq:Slin2}
		\end{align}
		where $\Ic(\tau)-l=\{j-l:j\in\Ic(\tau)\}$, $\Ic(\tau)+l=\{j+l:j\in\Ic(\tau)\}$. 
		Now, we make a key observation that $\big(\big|(\Ic(\tau)+l)\cap \Ic_k\big|+\big|(\Ic(\tau)-l)\cap \Ic_k\big|\big) \neq 0$ for at most $6(r_\tau+1)$ many $k$ at each fixed $l\in\N$. This is due to the fact that both $\Ic(\tau)$ and $\Ic_k$ consist of $r_\tau+1$ \emph{consecutive} positive integers, and the same is true for both $\Ic(\tau)-l$ and $\Ic(\tau)+l$. Hence, from \eqref{eq:Slin2} and Fubini's theorem,
		\begin{align*}
			\sum_{k=1}^K\big|A_k(\tau)\big| &\leq c_1 \sum_{l=0}^\infty \sum_{k=1}^K\big(\big|(\Ic(\tau)+l)\cap \Ic_k\big|+\big|(\Ic(\tau)-l)\cap \Ic_k\big|\big) \gamma^l
			\\
			&\leq 6c_1 (r_\tau+1) \sum_{l=0}^\infty \gamma^l <\infty.
		\end{align*}
		This completes the proof.
\end{proof}



\vskip 2em 
%

\section{Technical results from empirical process theory}\label{sec:s_mislan}
In this section, we collect some basic results from empirical process theory that we use throughout the proofs. Denote by $\Gc$ a class of functions that satisfies $|f(x)| \leq F(x) \leq U$ for every $f \in \Gc$ and let $\sigma^2 \geq \sup_{f \in \Gc} Pf^2$. Additionally, let for some $A>0,V>0$ and all $\eps > 0$
\beq \label{eq:entr}
N(\eps,\Gc,L_2(\Pb_n)) \leq \Big(\frac{A \|F\|_{L^2(\Pb_n)}}{\eps} \Big)^V.
\eeq
In that case, the symmetrization inequality and inequality (2.2) from \cite{kolt2006} yield
\beq \label{eq:Gexpect}
\E \|\Pb_n - P\|_{\Gc} \leq c_0 \Big[\sigma \Big(\frac{V}{n} \log \frac{A \|F\|_{L^2(P)}}{\sigma}\Big)^{1/2} + \frac{VU}{n} \log \frac{A \|F\|_{L^2(P)}}{\sigma}  \Big]
\eeq
for a universal constant $c_0>0$ provided that $1 \geq \sigma^2 > \text{const} n^{-1}$ [in fact, the inequality in \cite{kolt2006} is for $\sigma^2 = \sup_{f \in \Gc} Pf^2$. However, this is not a problem since we can replace $\Gc$ by $\Gc \sigma / (\sup_{f \in \Gc} Pf^2)^{1/2}$]. The second inequality (a refined version of Talagrand's concentration inequality) states that for any countable class of measurable functions $\Fc$ with elements mapping into $[-M,M]$
\beq \label{eq:Gcontract}
P\Big\{\|\Pb_n - P\|_{\Fc} \geq 2 \E\|\Pb_n - P\|_{\Fc} + c_1 n^{-1/2 }\Big(\sup_{f \in \Fc} Pf^2\Big)^{1/2} \sqrt{t} + n^{-1}c_j M t \Big\} \leq e^{-t},
\eeq
for all $t>0$ and universal constants $c_1,c_j >0$. This is a special case of Theorem 3 in \cite{mass2000} [in the notation of that paper, set  $\eps = 1$].

\begin{lemma} \label{lem:equicont}
Under \hyperref[A1]{(A1)}-\hyperref[A3]{(A3)} and $\xi_m^2(\log n)^2 = o(N)$ we have for any $\delta > 0$ and $\bu_N \in \R^m$,
\begin{equation}\label{eq:equicont}
\lim_{\delta \to 0} \limsup_{N \to \infty} P\Big( \|\bu_N\|_2^{-1} N^{1/2} \sup_{\tau_1,\tau_2\in\Tc,|\tau_1-\tau_2|\leq \delta} \Big|\bu_N^\top \bU_N(\tau_1)- \bu_N^\top \bU_N(\tau_2) \Big| > \eps\Big) = 0,
\end{equation}
where $\bU_N(\tau)$ is defined in \eqref{eq:bU}.
\end{lemma}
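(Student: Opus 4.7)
The plan is to follow the same template used in Step~1 of the proof of Theorem~\ref{th:boot} (which establishes asymptotic uniform equicontinuity of the bootstrap version $\Gb_N^{(1)}$), adapted to the unweighted process $\bU_N$. First I would reduce the claim to a statement about the empirical-process-like object $\bu_N^\top\bU_N(\tau)$ by using the decomposition
\begin{align*}
\bu_N^\top(\bU_N(\tau)-\bU_N(\eta))
&= \bu_N^\top\bigl(J_m^{-1}(\tau)-J_m^{-1}(\eta)\bigr)\cdot\frac{1}{N}\sum_{i=1}^N \ZZ_i(\IF\{Y_i\leq Q(X_i;\tau)\}-\tau)
\\ &\quad + \bu_N^\top J_m^{-1}(\eta)\cdot\frac{1}{N}\sum_{i=1}^N \ZZ_i \Delta_i(\tau,\eta),
\end{align*}
where $\Delta_i(\tau,\eta):=\IF\{Y_i\leq Q(X_i;\tau)\}-\IF\{Y_i\leq Q(X_i;\eta)\}-(\tau-\eta)$. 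Lipschitz continuity $\|J_m^{-1}(\tau)-J_m^{-1}(\eta)\|_{op}\lesssim|\tau-\eta|$ (which follows from~\hyperref[A1]{(A1)}--\hyperref[A3]{(A3)} by a standard argument, cf.\ the proof of Lemma A.3 in~\cite{ChaVolChe2016}) shows that the first summand is negligible once $|\tau-\eta|$ is small.

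Next I would control the fluctuations of the second summand by computing a fourth-moment bound. A direct calculation, using $|\Delta_i(\tau,\eta)|\leq\IF\{Q(X_i;\tau\wedge\eta)\leq Y_i\leq Q(X_i;\tau\vee\eta)\}+|\tau-\eta|$, yields
\begin{equation*}
\E\Bigl[\bigl|N^{1/2}\|\bu_N\|^{-1}\bu_N^\top(\bU_N(\tau)-\bU_N(\eta))\bigr|^4\Bigr]
\lesssim |\tau-\eta|^2 + \frac{\xi_m^2}{N}|\tau-\eta|,
\end{equation*}
which for $|\tau-\eta|\geq\xi_m^6/N^3$ reduces to the $d$-Lipschitz bound $|\tau-\eta|^{4/3}$ with pseudometric $d(\tau,\eta)=|\tau-\eta|^{1/3}$. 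I would then apply the chaining-type maximal inequality (Lemma~A.1 of~\cite{kvdh2015}) with $\Psi(x)=x^4$; since the covering number of $(\Tc,d)$ satisfies $D(\epsilon,d)\lesssim\epsilon^{-3}$, the chaining integral $\int_0^v\epsilon^{-3/4}d\epsilon$ is finite and can be made arbitrarily small by choosing $v$ small. This delivers asymptotic equicontinuity on scales $|\tau-\eta|^{1/3}>\xi_m^2/N$.

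Finally, the hard part is handling the residual ``fine-scale'' supremum $\sup\{|N^{1/2}\bu_N^\top(\bU_N(\tau)-\bU_N(\eta))|:|\tau-\eta|\leq\xi_m^6/N^{3},\tau\in\tilde T\}$ over a discretization $\tilde T$ with $O(\xi_m^{-6}N^3)$ points, since at these scales the indicator increments are no longer smooth. Here I would split the summand into a Lipschitz piece (bounded almost surely by $\xi_m\cdot|\tau-\eta|=O(\xi_m^7 N^{-3})=o(N^{-1})$) and the indicator piece, for which I would use the fact that
\begin{equation*}
B_N(\tau,\epsilon_N):=\sum_{i=1}^N\IF\{Q(X_i;\tau-\epsilon_N)\leq Y_i\leq Q(X_i;\tau+\epsilon_N)\}\sim\mathrm{Bin}(N,2\epsilon_N)
\end{equation*}
and control its tail by a multiplicative Chernoff bound. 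With $\epsilon_N=\xi_m^6/N^3$ and $t\asymp N^2(\log N)/\xi_m^6$, combined with a union bound over the $O(\xi_m^{-6}N^3)$ grid points and the assumption $\xi_m^2(\log N)^2=o(N)$, the resulting bound is $o(1)$. Combining the chaining estimate with this fine-scale estimate gives the claim. The main technical obstacle is the discontinuity of the indicator function, which forces the use of the multiplicative Chernoff/Bernstein bound in Step~3 rather than a purely moment-based chaining argument.
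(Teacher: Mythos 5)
Your proposal is correct and takes essentially the same approach as the paper. The paper's own proof of Lemma~\ref{lem:equicont} simply cites Lemma~A.3 of~\cite{ChaVolChe2016}, and that external argument is exactly the one you reconstruct: a fourth-moment bound $\E|N^{1/2}\|\bu_N\|^{-1}\bu_N^\top(\bU_N(\tau)-\bU_N(\eta))|^4 \lesssim |\tau-\eta|^2 + \xi_m^2 N^{-1}|\tau-\eta|$, a chaining step on scales $|\tau-\eta|\gtrsim \xi_m^6 N^{-3}$ via Lemma~A.1 of~\cite{kvdh2015} with pseudometric $d=|\cdot|^{1/3}$, and a fine-scale control of the indicator increments via a multiplicative Chernoff bound over an $O(\xi_m^{-6}N^3)$-point grid — precisely the template the paper itself writes out in Step~1 of the proof of Theorem~\ref{th:boot} (where it explicitly says the argument is ``based on similar ideas as the proof of Lemma~A.3 in~\cite{ChaVolChe2016}''). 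Your identification of where the hypothesis $\xi_m^2(\log N)^2 = o(N)$ enters (turning the $c\,\xi_m\log N / N^{1/2}$ fine-scale bound into $o(1)$) is also the right one.
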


\noindent\textbf{Proof of Lemma \ref{lem:equicont}. } See the proof for Lemma A.3 in \cite{ChaVolChe2016}. \hfill $\qed$

\begin{lemma} \label{lem:poly}
Let $p$ be a polynomial of degree $q$. Then for arbitrary intervals $[a,b]$and $\alpha \in [0,1)$ we have
\[
\Big|\int_a^b p^2(x)dx - \frac{b-a}{J}\sum_{j: j + \alpha\leq J} p^2\Big(a + \frac{j + \alpha}{J}(b-a)\Big) \Big| \leq \frac{C_q}{J-1} \int_a^b p^2(x)dx
\]
for a constant $C_q>1$ that depends only on $q$. 
\end{lemma}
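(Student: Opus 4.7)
The plan is to reduce to the case $[a,b]=[0,1]$ by the affine substitution $x = a+(b-a)t$, under which both sides of the desired inequality scale by the same factor of $(b-a)$ and the polynomial $p$ is replaced by another polynomial of the same degree $q$. So it suffices to prove
\[
\Big| \int_0^1 p^2(x)\,dx - \frac{1}{J}\sum_{j:\, j+\alpha\leq J} p^2\Big(\tfrac{j+\alpha}{J}\Big) \Big| \leq \frac{C_q}{J-1}\int_0^1 p^2(x)\,dx
\]
for every $p$ of degree $q$. The sum is a Riemann-type sum for $\int_0^1 p^2$ with mesh $1/J$, so the task is to bound a standard quadrature error for a polynomial integrand.

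Setting $x_j := (j+\alpha)/J$ and $I_j := [(j-1+\alpha)/J, (j+\alpha)/J]$ for the valid indices $j$, I would split the error as
\[
\int_0^1 p^2 \,dx - \frac{1}{J}\sum_j p^2(x_j) \;=\; \int_{[0,1]\setminus \bigcup_j I_j} p^2(x)\,dx \;+\; \sum_j\int_{I_j}\bigl(p^2(x)-p^2(x_j)\bigr)\,dx.
\]
For the interior sum, the fundamental theorem of calculus gives $|p^2(x)-p^2(x_j)| \leq \int_{I_j}|(p^2)'(t)|\,dt$ for $x\in I_j$, whence this contribution is bounded in absolute value by $\tfrac{1}{J}\|(p^2)'\|_{L^1([0,1])}$. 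Writing $(p^2)'=2pp'$ and applying Cauchy--Schwarz yields $\|(p^2)'\|_{L^1}\leq 2\|p\|_{L^2}\|p'\|_{L^2}$.

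The crucial input is then a Markov-type inequality for polynomials: on the finite-dimensional space $\mathcal{P}_q$ of polynomials of degree at most $q$, all norms are equivalent with equivalence constants depending only on $q$. In particular, there exist constants $M_q, M_q'$ depending only on $q$ with $\|p'\|_{L^2([0,1])}\leq M_q\|p\|_{L^2([0,1])}$ and $\|p\|_{L^\infty([0,1])}\leq M_q'\|p\|_{L^2([0,1])}$. The first inequality bounds the interior sum by $\tfrac{2M_q}{J}\int_0^1 p^2\,dx$. For the boundary integral, the complement $[0,1]\setminus\bigcup_j I_j$ has Lebesgue measure at most $1/J$, so it is bounded by $\|p^2\|_\infty/J \leq (M_q')^2 J^{-1}\int_0^1 p^2\,dx$.

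Combining these two bounds gives an overall estimate of the form $\tfrac{C_q'}{J}\int_0^1 p^2\,dx$ with $C_q' = 2M_q + (M_q')^2$ depending only on $q$. Using $J/(J-1)\leq 2$ for $J\geq 2$ (the case $J=1$ is trivial since then $C_q/(J-1)$ is interpreted as $+\infty$), one arrives at the stated bound with $C_q := 2C_q' \vee 2 > 1$. There is no real obstacle here beyond invoking the Markov-type inequality on $\mathcal{P}_q$; everything else is routine Riemann-sum bookkeeping.
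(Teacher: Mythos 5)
Your proof is correct and follows essentially the same approach as the paper: reduce to $[0,1]$ by affine scaling, split the quadrature error into a boundary term of measure $O(1/J)$ and an interior sum controlled by the derivative of $p^2$, and close the argument by invoking equivalence of norms on a finite-dimensional polynomial space. The paper packages the final step as boundedness of the linear map $f\mapsto f'+f$ from $(\mathcal{P}_{2q},\|\cdot\|_1)$ to $(\mathcal{P}_{2q},\|\cdot\|_\infty)$, whereas you split it into Cauchy--Schwarz plus Markov-type inequalities on $\mathcal{P}_q$, but this is a cosmetic difference and both yield a constant depending only on $q$.
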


\noindent\textbf{Proof of Lemma \ref{lem:poly}. } 
We begin by observing that it suffices to prove that for any polynomial $p$ of degree $q$ we have
\beq\label{eq:poly01}
\Big|\int_0^1 p^2(x)dx - \frac{1}{J}\sum_{j: j + \alpha\leq J} p^2\Big(\frac{j+\alpha}{J}\Big) \Big| \leq \frac{C_q}{J} \int_0^1 p^2(x)dx.
\eeq
To see this, note that for any polynomial $p$ of degree $q$ the function $\tilde p_{a,b}(x) := p(a+x(b-a))(b-a)^{1/2}$ is also a polynomial of degree $q$ and we have $\int_0^1 \tilde p_{a,b}^2(x) dx = \int_a^b p^2(x)dx$ {by a change of variables} and 
\[
\frac{b-a}{J}\sum_{j: j + \alpha\leq J} p^2\Big(a + \frac{j+\alpha}{J}(b-a)\Big) = \frac{1}{J}\sum_{j: j + \alpha\leq J} \tilde p_{a,b}^2\Big(\frac{j+\alpha}{J}\Big).
\]
To prove \eqref{eq:poly01}, consider the bound 
\[
\Big|\int_0^1 p^2(x)dx - \frac{1}{J}\sum_{j:j+\alpha \leq J} p^2\Big(\frac{j+\alpha}{J}\Big) \Big| \leq \frac{1}{J-1}\sup_{x\in [0,1]} \Big|\frac{dp^2(x)}{dx}\Big| + \frac{2}{J}\sup_{x\in [0,1]} p^2(x),
\]
{where the first term on the right is from the difference of the two integrals on the interior partitions, and the second term handles the boundary partitions which are not included in the summation on the left.} Thus we need to bound $\sup_{x\in [0,1]} \Big\{\Big|\frac{dp^2(x)}{dx}\Big| + p^2(x)\Big\}$ by $C_q \int_0^1p^2(x)dx $. To this end, denote by $\Pc_q$ the space of polynomials on $[0,1]$ with degree less or equal to $q$. This is a finite-dimensional vector space. Note that the operator $\Phi: f \mapsto f' + f$ is a linear operator between the finite-dimensional, normed spaces $(\Pc_{2q},\|\cdot\|_1)$ and $(\Pc_{2q},\|\cdot\|_\infty)$. Such an operator must be bounded, and hence there exists a constant $C_q$ with $\|\Phi(p^2)\|_\infty \leq C_q \|p^2\|_1$. This completes the proof. \hfill $\qed$



\section{Technical Details for Simulation and additional simulation results}\label{sec:addsim}

This part of the supplement contains additional results and some technical details for the simulation study in Section~\ref{sec:sim}. Section~\ref{sec:simdet} contains some technical details related to the simulation settings in Section~\ref{sec:sim}, and additional figures (the case $m =16$ and coverage probabilities of confidence intervals for $F_{Y|X}(y|x_0)$ and large values of $S$). In Section \ref{sec:silin}, using the same linear model as in Section \ref{sec:sim}, we provide additional simulation results for the oracle properties of the divide and conquer estimator $\overline\zb(\tau)$ and the distribution function estimator $\hat F_{Y|X}(y|x)$; the emphasis of this section is on illustrating our oracle theory. In Section \ref{sec:sinp}, we consider a non-linear model and illustrate oracle properties of the divide and conquer estimator $\overline\zb(\tau)$ and the distribution function estimator $\hat F_{Y|X}(y|x)$. Some practical conclusions from the simulations are summarized in Section~\ref{sec:simprac}


\subsection{Details for Section~\ref{sec:sim} and additional simulation results} \label{sec:simdet}

The simple pooled estimator for the asymptotic covariance matrix of $\zb_{or}(\tau) - \zb(\tau)$ was computed by first using the options \texttt{se='ker'}, \texttt{covariance=TRUE} in the \texttt{rq} function in \texttt{quantreg} \citep{K16_quantreg} for each sub-sample, and then averaging the results over sub-samples. 

The values of the optimal constant $c^*(\tau)$ were determined from the theory developed in~\cite{kato2012}. {More precisely, we use the following formula given on its page 264 of the latter reference,
\begin{align}
	c^*(\tau) = \sigma \bigg(\frac{4.5 \sum_{j,k=1}^m \E[Z_j^2 Z_k^2]}{\alpha(\tau) \sum_{j,k=1}^m (\E[Z_j Z_k])^2}\bigg)^{1/5}, \label{eq:cst}
\end{align}
where $\alpha(\tau) = (1-\Phi^{-1}(\tau)^2)^2\phi(\Phi^{-1}(\tau))$ (note that this is a corrected version of Kato's formula). Note that $c^*(\tau)$ is symmetric around $\tau=0.5$. In order to save computation time in the simulation, we simulated an independent data set to compute $c^*(\tau)$, and used the same $c^*(\tau)$ (based on $m=4$) for different $m$ in all the simulations, because the exact value of $c^*(\tau)$ does not change much with $m$ under our design of $\ZZ$; see Table \ref{tab:cst}.}
\begin{table}[!h] 
\begin{tabular}{rccc}
	\hline\hline
					&$m = 4$		&$m = 16$	&$m = 32$\\
	\hline
$\tau = 0.1$	&0.242		&0.252		&0.254\\
		  0.5		&0.173		&0.179		&0.180\\
	\hline\hline	  
\end{tabular}
\caption{Values of $c^*(\tau)$ based on \eqref{eq:cst}.}\label{tab:cst}
\end{table}

{In the following, we show additional simulation results. Figure \ref{fig:simlecoverageSmS16} shows the coverage probability of the confidence interval for $Q(x_0;\tau)$ based on $x_0^\top \bar\zb(\tau)$ for dimension $m=16$.}
\begin{figure}[!h]
\centering
\includegraphics[width=13cm]{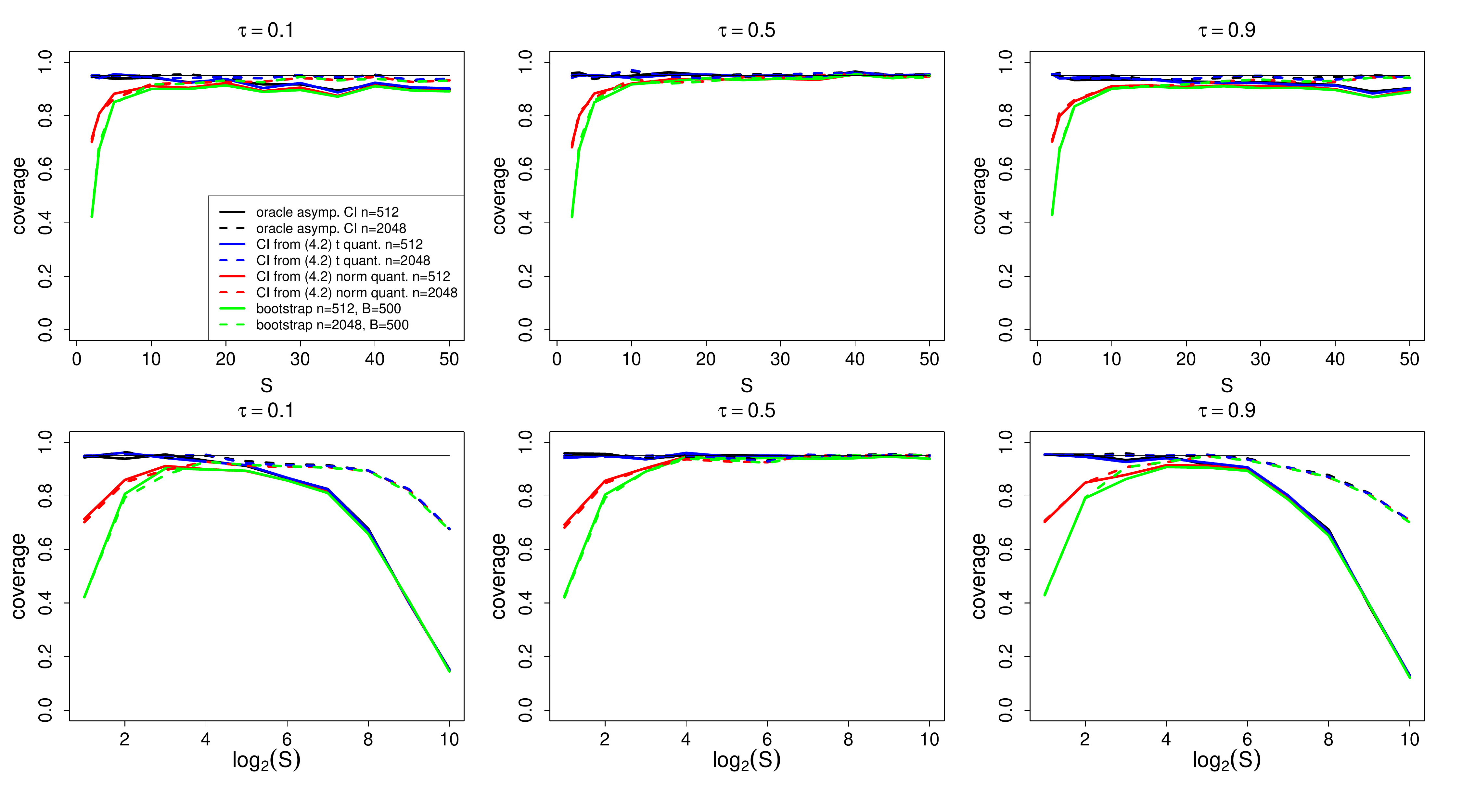}
\caption{Coverage probabilities for $Q(x_0;\tau)$ for $m=16$ in homoskedastic model~\eqref{eq:simlin}. Coverage probabilities for $x_0^\top\zb(\tau)$ for different values of $S$ and $\tau = 0.1, 0.5, 0.9$ (left, middle, right row). Solid lines: $n = 512$, dashed lines: $n = 2048$. Black: asymptotic oracle CI, blue: CI from~\eqref{ci:simple_t} based on t distribution, red: CI from~\eqref{ci:simple} based on normal distribution, green: bootstrap CI.
Throughout $x_0 = (1,...,1)/m^{1/2}$, nominal coverage $0.95$.}\label{fig:simlecoverageSmS16}
\end{figure}
{For additional simulations for $F_{Y|X}(y|x_0)$ under the same settings as that of Figure \ref{fig:Fhatcov} in Section \ref{sec:sim}, Figure \ref{fig:Fhatcov_bigS} shows the coverage probability of the confidence interval with large $S$ for $m=4$ and 32 and Figure \ref{fig:Fhatcov_m16} shows the coverage probability of the confidence interval for $m=16$.}
\begin{figure}[!h]
\centering
\includegraphics[width=13cm]{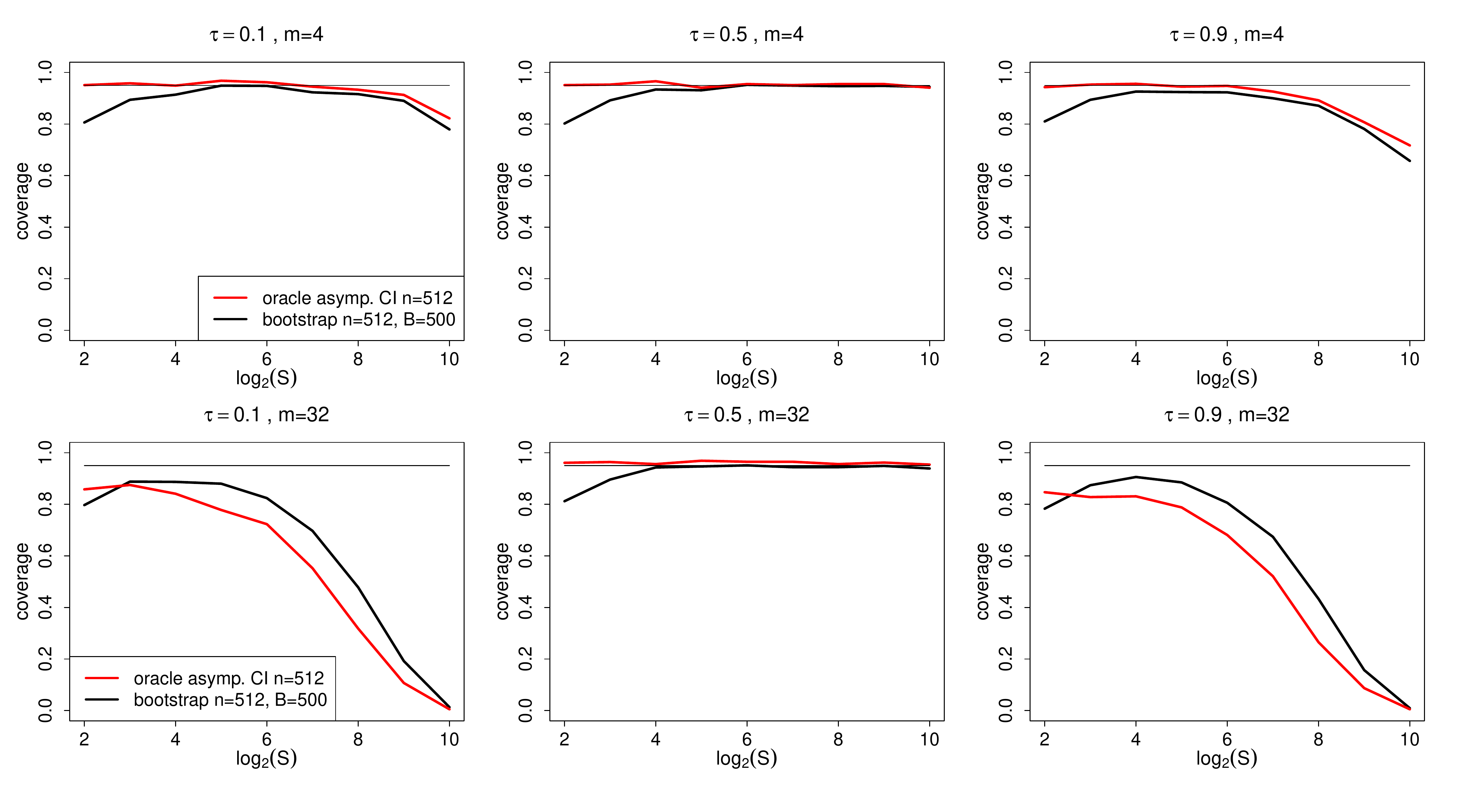}
\caption{Coverage probabilities for $F_{Y|X}(y|x_0)$ in homoskedastic model~\eqref{eq:simlin}  with $m=4,32$ with large number of subsamples $S$, $n = 512$ and nominal coverage $0.95$. Red: oracle asymptotic CI, black: bootstrap CI with $B=500$.}\label{fig:Fhatcov_bigS}
\end{figure}
\begin{figure}[!h]
\centering
\includegraphics[width=13cm]{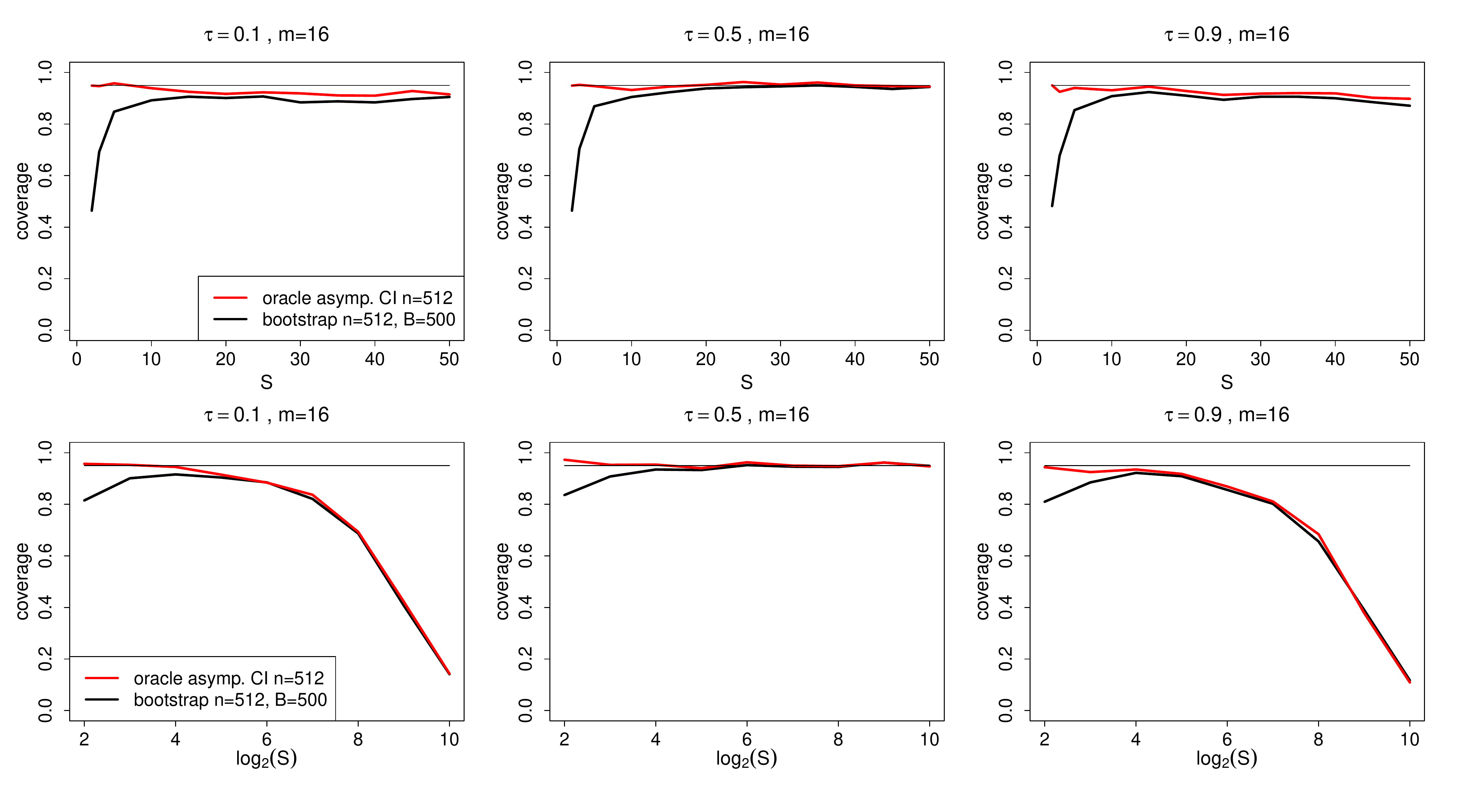}
\caption{Coverage probabilities in homoskedastic model~\eqref{eq:simlin} for $F_{Y|X}(y|x_0)$ for $m=16$, $n = 512$ and nominal coverage $0.95$. Red: oracle asymptotic CI, black: bootstrap CI with $B=500$.} \label{fig:Fhatcov_m16}
\end{figure}

\subsection{Heteroskedastic model}

We consider a linear location-scale shift model,
\begin{align}
Y_i = 0.21 +  \zb_{m-1}^\top X_i+(1+\zz_{m-1}^\top X_i)\varepsilon_i, 
\quad i=1,...,N.\label{eq:simlin_het}
\end{align}
where $m\in\{4,16,32\}$, $\zb_{m-1}$ is chosen as \eqref{eq:betaspeci}, $X_i$ and $\varepsilon_i$ are chosen as model \eqref{eq:simlin}, and 
the vector $\zz_{m-1}$ takes the form
\begin{align}
	\begin{split}\label{eq:betaspeci}
		\zz_{3}&=(0.69,0.56,0.35)^\top;\\
		\zz_{15}&=(\zz_{3}^\top,\zz_{3}^\top,\zz_{3}^\top,\zz_{3}^\top,\zz_{3}^\top)^\top;\\
		\zz_{31}&=(\zb_{15}^\top,\zb_{15}^\top,0.69)^\top.
	\end{split}
\end{align}  

\subsubsection{Results for the divide and conquer estimator $\bar\zb(\tau)$}

We fix the sub-sample size $n$ and consider the impact of $S$ on the coverage probabilities of various 95\% confidence intervals as in Section 5.1. We use the infeasible asymptotic confidence interval~\eqref{eq:lincovp} as benchmark (note that the asymptotic variance is different in the homoskedastic and heteroskedastic model).

In the first step, as in Section~\ref{sec:simdc}, we consider the three types of confidence intervals in Section~\ref{sec:subs}. The coverage probabilities of the corresponding confidence intervals are presented in Figure \ref{fig:simlecoverageSmS_het}. The general patterns are similar to those in Figure \ref{fig:simlecoverageSmS}, but coverage starts to drop earlier for both $m=4,32$. The same conclusion holds for the coverage probabilities for dimension $m=16$ when comparing Figure \ref{fig:simlecoverageSmS16_het} to Figure \ref{fig:simlecoverageSmS16} for the homoskedastic model.

Next, we analyse the asymptotic confidence intervals using the empirical asymptotic variance of $x_0^\top\zb(\tau)$ from data, which is estimated with the three ways described on p.22 in Section 5.1. The constants $c^*(\tau)$ there are replaced by $c_g^*(\tau)$, which is computed by adapting the formula on page 263 of \cite{kato2012} to the $\Nc(0,\sigma^2)$ location-scale shift model \eqref{eq:simlin_het}. This yields the formula:
\begin{align}
	c_g^*(\tau) = \sigma\bigg(\frac{4.5 \sum_{j,k=1}^m \E[(1+\zz_{m-1}^\top X)^{-1}Z_j^2 Z_k^2]}{\alpha(\tau) \sum_{j,k=1}^m (\E[(1+\zz_{m-1}^\top X)^{-3}Z_j Z_k])^2}\bigg)^{1/5}, \label{eq:cst_het}
\end{align}
where $\alpha(\tau) = (1-\Phi^{-1}(\tau)^2)^2\phi(\Phi^{-1}(\tau))$. The values are listed in Table \ref{tab:cst_het}.
\begin{table}[!h] 
\begin{tabular}{rrrr}
	\hline\hline
					&$m = 4$		&$m = 16$	&$m = 32$\\
	\hline
$\tau = 0.1$	&0.4206		&1.2118		&2.3222\\
		  0.5		&0.2990		&0.8614		&1.6508\\
	\hline\hline	  
\end{tabular}
\caption{Values of $c_g^*(\tau)$ based on \eqref{eq:cst_het} of the linear Gaussian location-scale shift model~\eqref{eq:simlin} .}\label{tab:cst_het}
\end{table}
The results for the coverage probabilities are reported in Table \ref{tab:bwcomp_het}. The overall patterns are similar to those in Table \ref{tab:bwcomp}, but the coverage probabilities start to drop earlier, this effect is similar to what we observed in Figure~\ref{fig:simlecoverageSmS_het}.

\begin{figure}[!h]
\centering {\scriptsize (a) $m=4$}\\
\includegraphics[width=13cm]{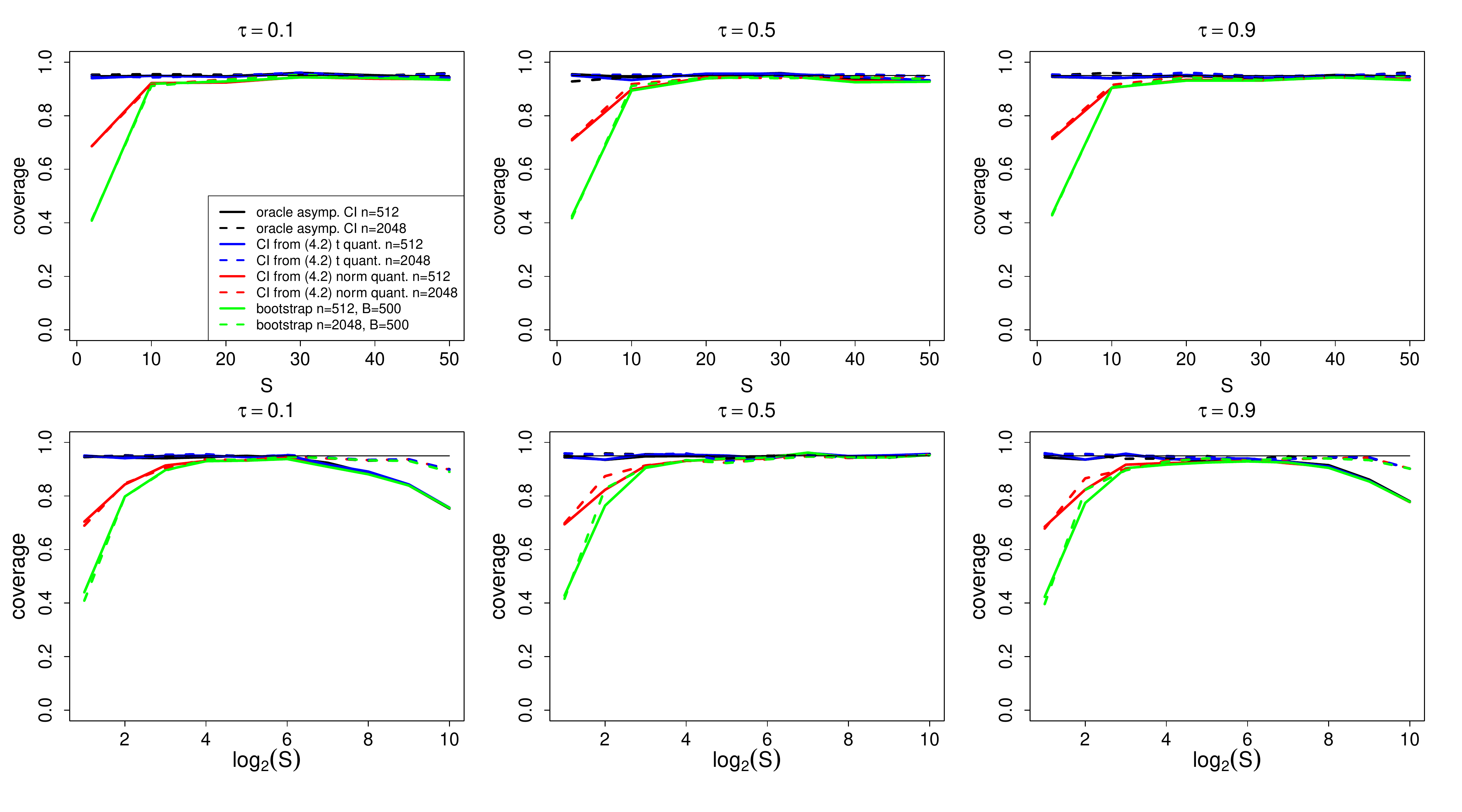}
\\\vspace{-0.2cm}
\centering {\scriptsize (b) $m=32$}\\
\includegraphics[width=13cm]{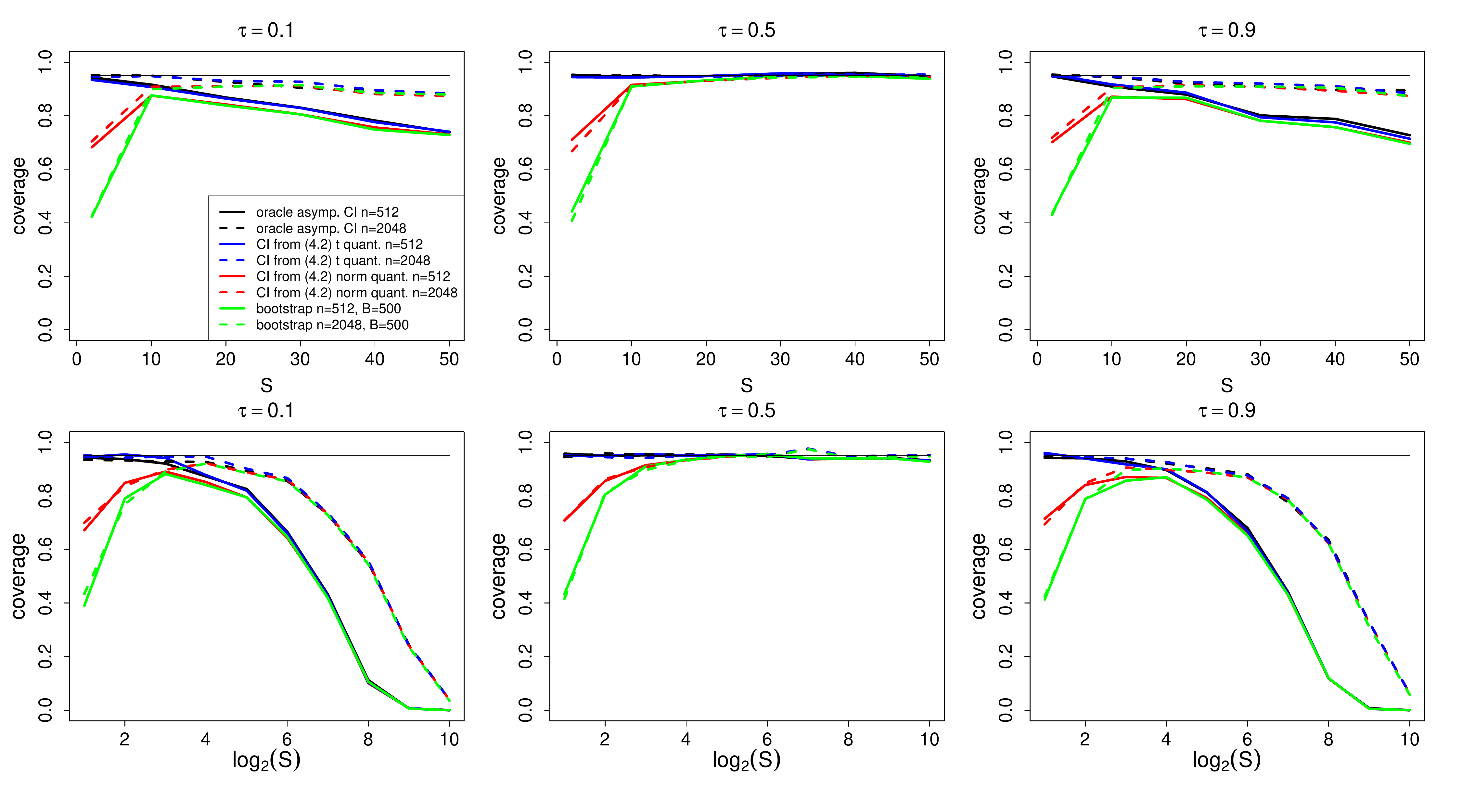}		
\caption{Coverage probabilities under the linear Gaussian location-scale shift model~\eqref{eq:simlin_het} for $x_0^\top\zb(\tau)$ for different values of $S$ and $\tau = 0.1, 0.5, 0.9$ (left, middle, right row). Solid lines: $n = 512$, dashed lines: $n = 2048$. Black: asymptotic oracle CI, blue: CI from~\eqref{ci:simple_t} based on t distribution, red: CI from~\eqref{ci:simple} based on normal distribution, green: bootstrap CI.
Throughout $x_0 = (1,...,1)/m^{1/2}$, nominal coverage $0.95$.}\label{fig:simlecoverageSmS_het}
\end{figure}

\begin{figure}[!h]
\centering
\includegraphics[width=13cm]{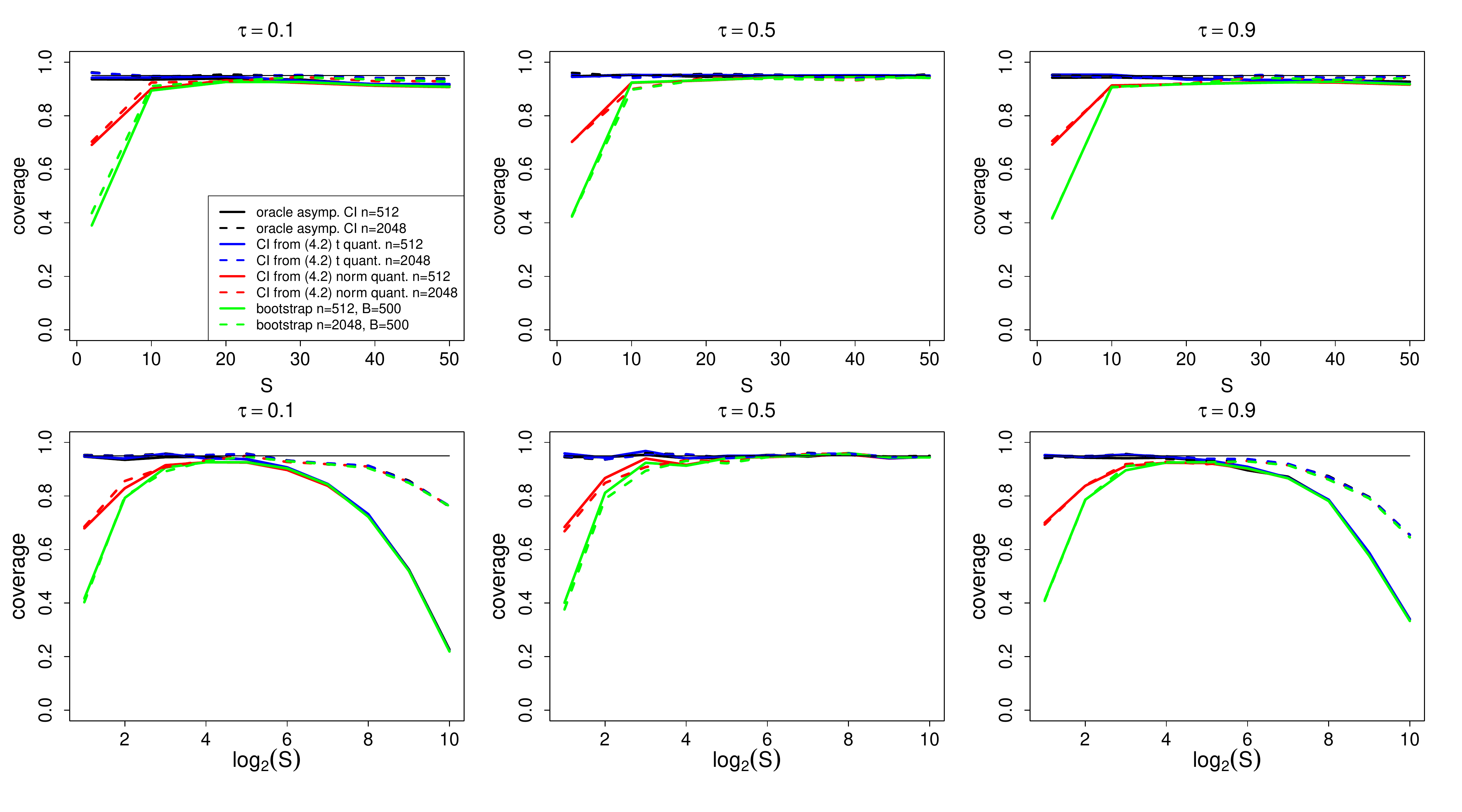}
\caption{Coverage probabilities for $Q(x_0;\tau)$ for $m=16$ under the linear Gaussian location-scale shift model~\eqref{eq:simlin_het}. Coverage probabilities for $x_0^\top\zb(\tau)$ for different values of $S$ and $\tau = 0.1, 0.5, 0.9$ (left, middle, right row). Solid lines: $n = 512$, dashed lines: $n = 2048$. Black: asymptotic oracle CI, blue: CI from~\eqref{ci:simple_t} based on t distribution, red: CI from~\eqref{ci:simple} based on normal distribution, green: bootstrap CI.
Throughout $x_0 = (1,...,1)/m^{1/2}$, nominal coverage $0.95$.}\label{fig:simlecoverageSmS16_het}
\end{figure}

\begin{table}[!h] \scriptsize
\begin{tabular}{c||c|c|c|c||c|c|c|c||c|c|c|c}
\hline
S & 1 & 10 & 30 & 50 & 1 & 10 & 30 & 50 & 1 & 10 & 30 & 50\\
\hline
&\multicolumn{4}{c||}{n = 512, m = 4, $\tau = 0.1$}&\multicolumn{4}{c||}{n = 512, m = 16, $\tau = 0.1$}&\multicolumn{4}{c}{n = 512, m = 32, $\tau = 0.1$}
\\
\hline
or  & 94.9 & 94.1 & 95.0 & 93.9 & 95.1 & 94.0 & 92.7 & 90.5 & 94.3 & 91.6 & 83.0 & 73.8 \\ 
def & 92.8 & 92.7 & 93.8 & 92.6 & 94.9 & 93.5 & 91.9 & 89.8 & 94.6 & 90.4 & 80.2 & 71.3 \\ 
nai & 93.7 & 93.1 & 94.4 & 93.0 & 97.7 & 93.2 & 91.2 & 88.5 & 99.4 & 91.6 & 80.9 & 72.0 \\ 
adj & 93.7 & 93.6 & 94.9 & 93.8 & 97.7 & 94.1 & 92.9 & 90.9 & 99.4 & 93.4 & 83.7 & 75.6 \\ 
\hline
&\multicolumn{4}{c||}{n = 512, m = 4, $\tau = 0.5$}&\multicolumn{4}{c||}{n = 512, m = 16, $\tau = 0.5$}&\multicolumn{4}{c}{n = 512, m = 32, $\tau = 0.5$}
\\
\hline
or  & 95.4 & 94.7 & 95.5 & 94.9 & 95.4 & 96.2 & 95.3 & 94.3 & 95.3 & 94.3 & 95.7 & 94.7 \\ 
def & 97.4 & 97.0 & 97.8 & 97.4 & 99.3 & 99.3 & 98.8 & 99.0 & 98.4 & 98.6 & 98.6 & 98.2 \\ 
nai & 96.0 & 95.6 & 96.1 & 95.8 & 98.6 & 98.0 & 97.3 & 97.1 & 99.0 & 97.5 & 97.2 & 97.4 \\ 
adj & 96.0 & 95.2 & 95.6 & 95.1 & 98.6 & 97.4 & 96.1 & 95.6 & 99.0 & 96.7 & 96.4 & 96.1 \\
\hline
&\multicolumn{4}{c||}{n = 512, m = 4, $\tau = 0.9$}&\multicolumn{4}{c||}{n = 512, m = 16, $\tau = 0.9$}&\multicolumn{4}{c}{n = 512, m = 32, $\tau = 0.9$}
\\
\hline
or  & 95.0 & 95.2 & 93.9 & 94.5 & 95.3 & 94.2 & 92.3 & 91.6 & 94.7 & 90.9 & 80.1 & 72.7 \\ 
def & 92.9 & 94.5 & 92.5 & 93.6 & 94.0 & 93.8 & 91.6 & 90.8 & 94.0 & 88.5 & 79.2 & 69.8 \\ 
nai & 93.8 & 95.0 & 93.0 & 93.7 & 97.1 & 93.3 & 90.8 & 89.8 & 99.4 & 90.0 & 80.1 & 70.6 \\ 
adj & 93.8 & 95.2 & 93.6 & 94.5 & 97.1 & 94.6 & 92.4 & 91.8 & 99.4 & 91.6 & 83.2 & 73.9 \\
\hline
\hline
&\multicolumn{4}{c||}{n = 2048, m = 4, $\tau = 0.1$}&\multicolumn{4}{c||}{n = 2048, m = 16, $\tau = 0.1$}&\multicolumn{4}{c}{n = 2048, m = 32, $\tau = 0.1$}
\\
\hline
or  & 95.1 & 96.3 & 94.9 & 95.1 & 95.6 & 94.1 & 94.8 & 93.8 & 95.2 & 95.0 & 90.5 & 88.2 \\ 
def & 94.0 & 95.8 & 94.2 & 94.8 & 94.6 & 93.5 & 93.7 & 92.8 & 94.7 & 94.3 & 89.3 & 87.0 \\ 
nai & 94.6 & 95.8 & 94.4 & 94.8 & 95.7 & 93.1 & 93.2 & 92.4 & 95.6 & 94.2 & 89.1 & 86.8 \\ 
adj & 94.6 & 96.0 & 94.6 & 95.0 & 95.7 & 93.8 & 94.4 & 93.4 & 95.6 & 94.9 & 90.2 & 87.9 \\
\hline
&\multicolumn{4}{c||}{n = 2048, m = 4, $\tau = 0.5$}&\multicolumn{4}{c||}{n = 2048, m = 16, $\tau = 0.5$}&\multicolumn{4}{c}{n = 2048, m = 32, $\tau = 0.5$}
\\
\hline
or  & 95.1 & 94.4 & 95.6 & 94.6 & 94.8 & 95.0 & 95.4 & 95.1 & 95.2 & 95.1 & 95.2 & 95.0 \\ 
def & 95.9 & 95.5 & 96.6 & 95.7 & 97.6 & 97.8 & 98.0 & 98.0 & 97.5 & 97.1 & 97.6 & 97.4 \\ 
nai & 95.4 & 94.9 & 96.2 & 95.0 & 96.6 & 96.8 & 96.6 & 96.4 & 97.0 & 96.4 & 96.4 & 96.4 \\ 
adj & 95.4 & 94.6 & 95.8 & 94.8 & 96.6 & 95.9 & 95.8 & 95.6 & 97.0 & 96.0 & 95.4 & 95.6 \\
\hline
&\multicolumn{4}{c||}{n = 2048, m = 4, $\tau = 0.9$}&\multicolumn{4}{c||}{n = 2048, m = 16, $\tau = 0.9$}&\multicolumn{4}{c}{n = 2048, m = 32, $\tau = 0.9$}
\\
\hline
or  & 95.1 & 94.9 & 95.6 & 94.6 & 95.0 & 94.8 & 94.2 & 93.2 & 95.3 & 94.5 & 91.0 & 89.4 \\ 
def & 94.2 & 94.4 & 95.1 & 94.1 & 94.0 & 94.2 & 93.4 & 92.1 & 94.8 & 93.8 & 90.1 & 88.2 \\ 
nai & 94.6 & 94.5 & 95.1 & 94.1 & 95.4 & 93.9 & 93.0 & 91.8 & 95.6 & 93.8 & 90.0 & 87.6 \\ 
adj & 94.6 & 94.7 & 95.6 & 94.4 & 95.4 & 94.4 & 94.0 & 93.2 & 95.6 & 94.8 & 91.1 & 89.4 \\ 
\hline
\end{tabular}
\caption{Coverage probabilities under the linear Gaussian location-scale shift mode~\eqref{eq:simlin_het} based on estimating the asymptotic variance. Different rows correspond to different methods for obtaining covariance matrix. or: using true asymptotic variance matrix, def: default choice implemented in quantreg package, nai: asymptotically optimal constant with scaling $h_n \sim n^{-1/5}$, adj: asymptotically optimal constant with scaling $h_n \sim N^{-1/5}$ as suggested by Theorem~\ref{th:powell}.}\label{tab:bwcomp_het}
\end{table}

\subsubsection{Results for the estimator $\hat F_{Y|X}(y|x)$}
In this section, we compare the coverage probabilities of the bootstrap confidence intervals for $F_{Y|X}(Q(x_0;\tau)|x_0)=\tau$ with the oracle asymptotic confidence interval~\eqref{eq:funcovp2}. To estimate $\hat F_{Y|X}$, we use the same number of quantile grid $K=65$ and the same number of equidistant knots $G=32$ for spline interpolation as in Section \ref{sec:simF}, which are sufficiently large to ensure the nominal coverage of the oracle confidence intervals. Coverage probabilities for $m=4,32$ are reported in Figure \ref{fig:Fhatcov_het}. Results for large values of $S$ and dimension $m=16$ are presented in Figure \ref{fig:Fhatcov_bigS_het} and \ref{fig:Fhatcov_m16_het}. The general patterns are similar to the homoskedastic model with overall coverage starting to drop earlier.

\begin{figure}[H]
\includegraphics[width=13cm]{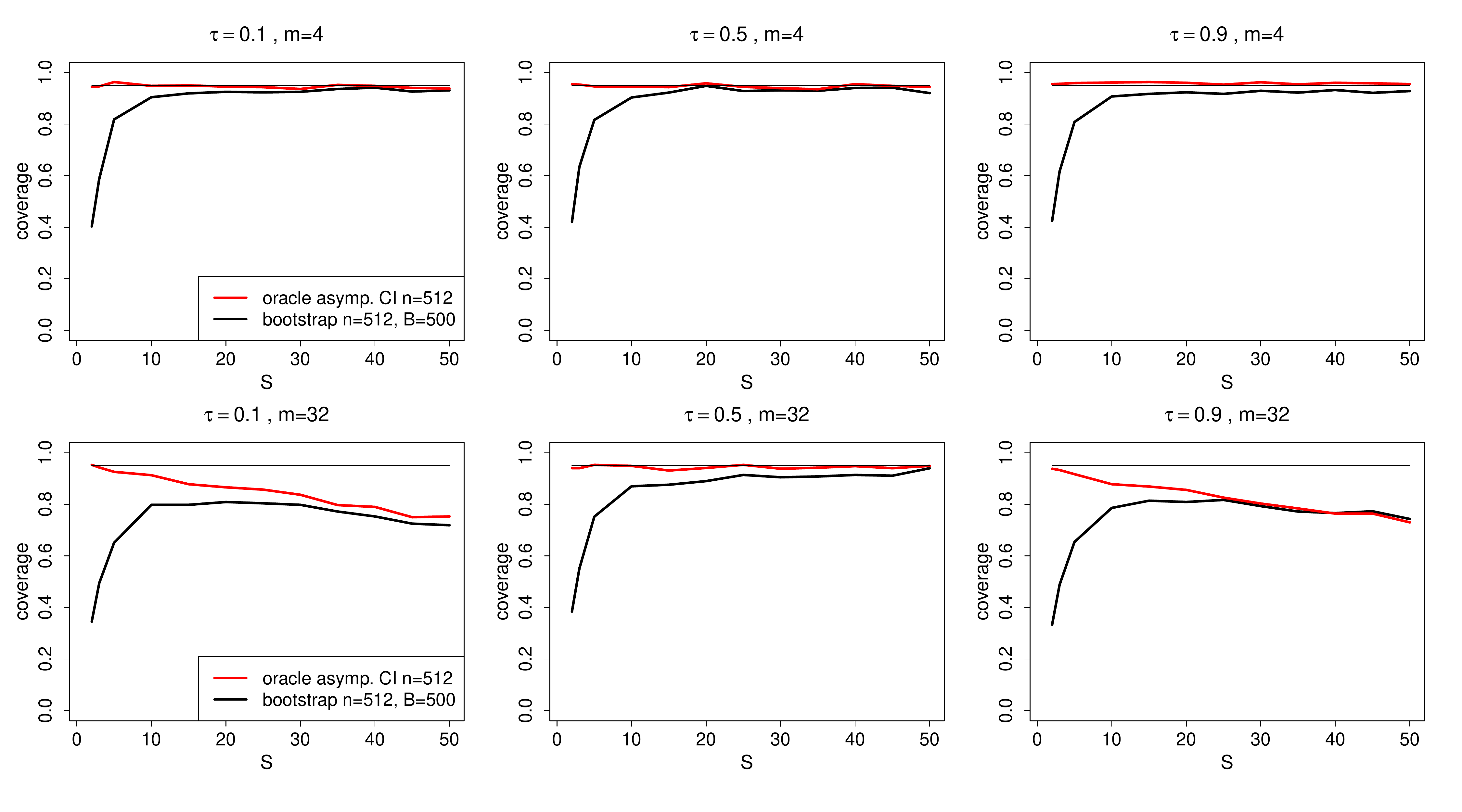}	
\caption{Coverage probabilities for oracle confidence intervals (red) and bootstrap confidence intervals (black) for $F_{Y|X}(Q(x_0;\tau)|x_0)=\tau$ under the linear Gaussian location-scale shift model~\eqref{eq:simlin_het} for $x_0 = (1,...,1)/m^{1/2}$ and $y = Q(x_0;\tau)$, $\tau = 0.1, 0.5, 0.9$. $n = 512$ and nominal coverage $0.95$.}\label{fig:Fhatcov_het}
\end{figure}

\begin{figure}[H]
\centering
\includegraphics[width=13cm]{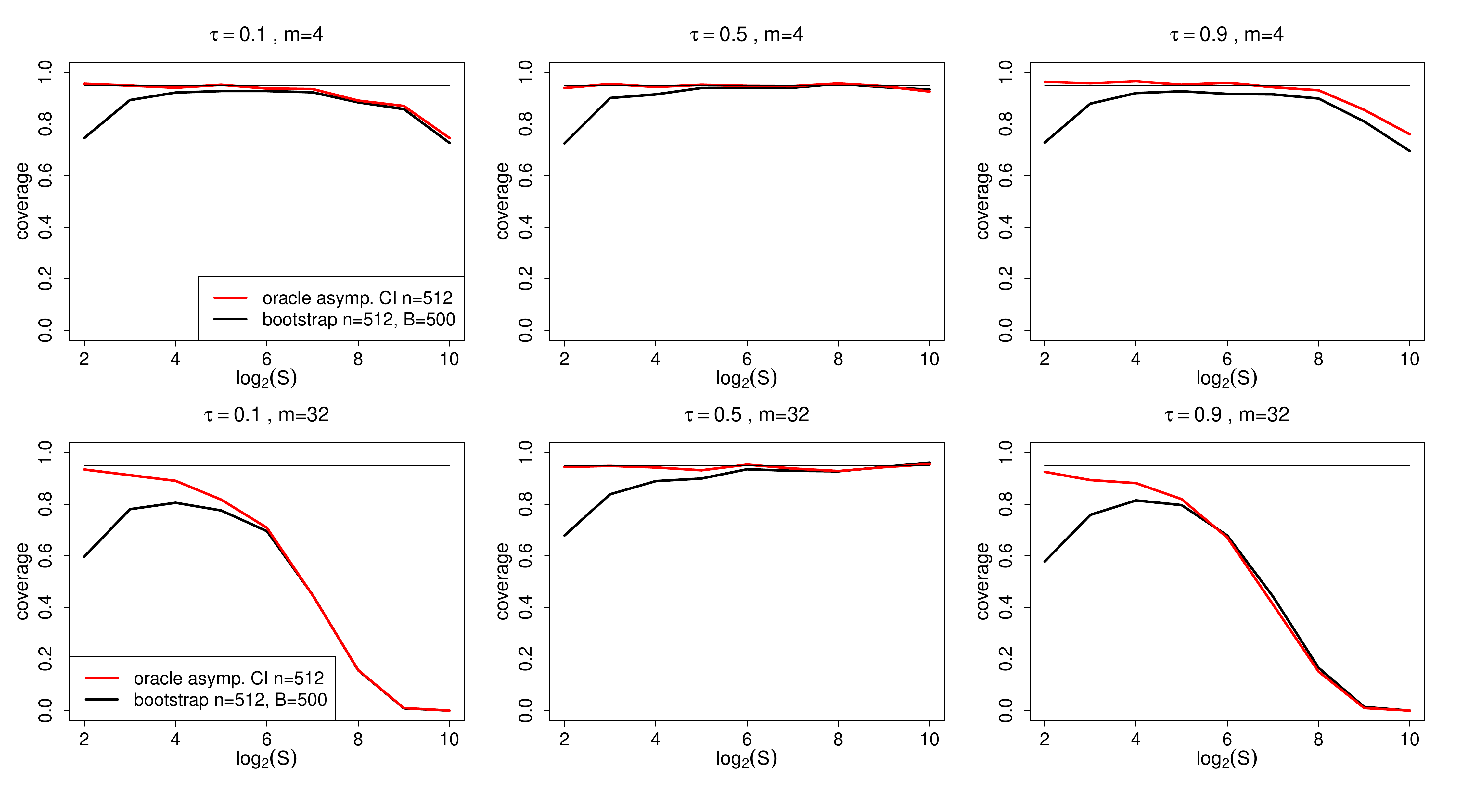}
\caption{Coverage probabilities for $F_{Y|X}(Q(x_0;\tau)|x_0)=\tau$ under the linear Gaussian location-scale shift model~\eqref{eq:simlin_het} for $m=4,32$ with large number of subsamples $S$, $n = 512$ and nominal coverage $0.95$. Red: oracle asymptotic CI, black: bootstrap CI with $B=500$.}\label{fig:Fhatcov_bigS_het}
\end{figure}

\begin{figure}[H]
\centering
\includegraphics[width=13cm]{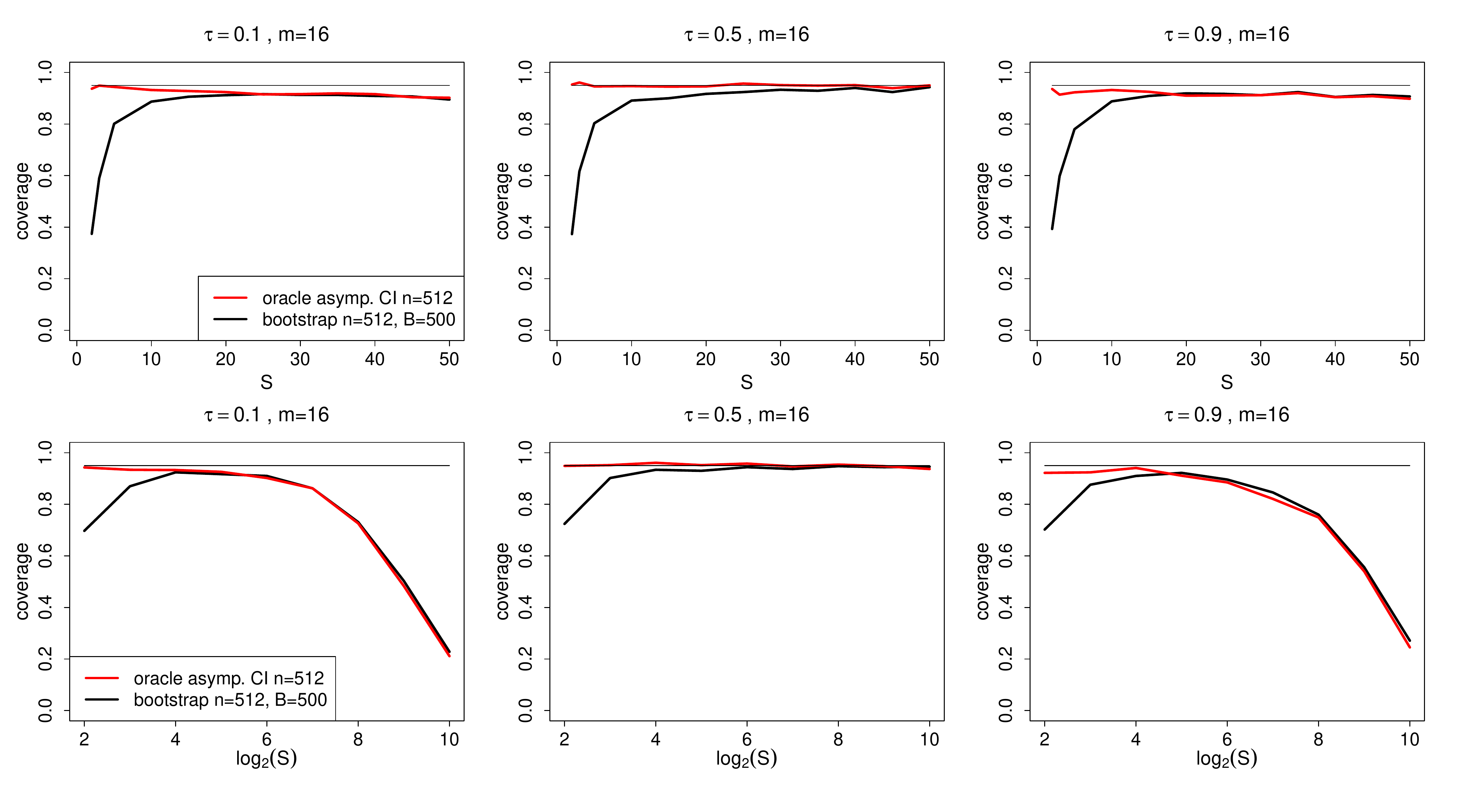}
\caption{Coverage probabilities for $F_{Y|X}(Q(x_0;\tau)|x_0)=\tau$ under the linear Gaussian location-scale shift model~\eqref{eq:simlin_het} for $m=16$, $n = 512$ and nominal coverage $0.95$. Red: oracle asymptotic CI, black: bootstrap CI with $B=500$.} \label{fig:Fhatcov_m16_het}
\end{figure}

\subsection{Additional simulations for the linear model}\label{sec:silin}

The purpose of the simulations in this section is to show the thresholds for the number of sub-samples $S^*$ and the number of quantile grids $K^*$ that guarantee the oracle properties of $\overline\zb(\tau)$ and $\hat\zb(\tau)$ predicted by the sufficient and necessary conditions in the theorems in Section \ref{SEC:THEO}. Besides normal location shift models, we also consider models with exponential error distributions. Using the linear model \eqref{eq:simlin} in Section \ref{sec:sim}, we show the coverage probabilities of the oracle asymptotic confidence intervals, i.e. the confidence intervals based on the true asymptotic variances. Similar to Section \ref{sec:sim}, we select $\tau=0.1,0.5,0.9$, $m=4,16,32$ and $\Tc = [0.05,0.95]$. Section \ref{sec:linfix} contains results for $\overline{\zb}(\tau)$ defined in \eqref{eq:zbbar}, while Section \ref{sec:lincdf} discusses $\hat F_{Y|X}(y|x)$ defined in \eqref{eq:Fhat}. 

%

\subsubsection{Oracle rule for $\overline\zb(\tau)$}\label{sec:linfix}

Recall that from Corollary \ref{th:orlipo}, an asymptotic $1-\alpha$ confidence interval for $x_0^\top \zb(\tau)$ is 
	\begin{align}
	\big[x_0^\top \overline{\zb}(\tau) \pm N^{-1/2} f_{\varepsilon,\tau}^{-1}\sqrt{\tau(1-\tau) x_0^\top \Sigma_X^{-1}x_0}\Phi^{-1}(1-\alpha/2)\big],\label{eq:lincovp2}
	\end{align}
	where $f_{\varepsilon,\tau}=f_\epsilon(F_\epsilon^{-1}(\tau))$, $f_\epsilon$ is the error density and $\Sigma_X = \E[(1,X_i^\top)^\top(1,X_i^\top)]$. Here, we set $x_0 = (1,(m-1)^{-1/2}\bl_{m-1}^\top)^\top$, where $\bl_{m-1}$ is an $(m-1)$ vector with each entry equal to 1. 
	
	We verify the oracle rule by checking whether the empirical coverage probability of \eqref{eq:lincovp} equals $1-\alpha=95\%$. Figure \ref{fig:simlincovp} shows the coverage probabilities for $\varepsilon\sim\Nc(0,\sigma^2)$. In all plots, the coverage probabilities reach the nominal level $95\%$ for $S<S^*$, and then drop quickly to 0 for $S > S^*$ for some $S^*$. When $N$ increases, $S^*$ shifts toward $N^{1/2}$, which is the sharp upper bound in Corollary \ref{th:orlipo}. Also, when $N$ is fixed, a larger dimensionality $m$ leads to a smaller $S^*$. For $\varepsilon\sim \mbox{Exp}(\lambda)$, Figure \ref{fig:simlincovp_exp} shows that the coverage probabilities are no longer symmetric in $\tau$. When $\tau$ is small, $N$ needs to be large enough to attain the same magnitude of $S^*$ due to the skewness of $\mbox{Exp}(\lambda)$. 

\subsubsection{Oracle rule for $\widehat F_{Y|X}(y|x)$}\label{sec:lincdf}

We compute $\hat F_{Y|X}(y_0|x_0)$ defined in \eqref{eq:Fhat} discretely with an equidistant partition of size $1000$ on $[\tau_L,\tau_U]$. The involved $\hat\zb(\tau)$ is computed as in \eqref{eq:hatbeta} with $\BB$ being cubic B-spline with dimension $q$ defined on $G=4+q$ knots. The knots form a partition on $[\tau_L,\tau_U]$ with repetitions on the boundary; see Corollary 4.10 of \cite{schumaker:81}. For simplicity, we set $y_{0} = Q(x_0;\tau)$ so that $F_{Y|X}(y_0|x_0)=\tau$, where $x_0$ is chosen as in Section \ref{sec:linfix}. Recall from Corollary \ref{cor:fhatli} that an asymptotic $1-\alpha$ confidence interval for $F_{Y|X}(Q(x_0;\tau)|x_0) = \tau$ is 
	\begin{align}
	\big[\hat F_{Y|X}(Q(x_0;\tau)|x_0) \pm N^{-1/2} \sqrt{\tau(1-\tau) x_0^\top \Sigma_X^{-1}x_0} \Phi^{-1}(1-\alpha/2)\big].\label{eq:funcovp}
	\end{align}
We fix $N=2^{14}$ in this section.

We demonstrate here the impact of number of subsamples $S$, model dimension $m$, number of basis $q$ and number of quantile grid points $K$ on the coverage probability of \eqref{eq:funcovp}. We note that $q$ and $K$ have similar impact on coverage probabilities given that other parameters are held fixed. 
In Figure \ref{fig:simlincovp_fun} with $\varepsilon\sim N(0,0.1^2)$, we note that an increase in $q=\dim(\BB)$ improves the coverage probabilities given that $m,S$ are held fixed. Also, at a fixed $S$, $q$ can be chosen smaller if $m$ is larger. This is consistent with Corollary \ref{th:orlocpr} that requires $q\gg N^{1/(2\eta)}\|\ZZ(x_0)\|^{-\eta}$. Similar to Section \ref{sec:linfix}, an increase of the dimensionality $m$ leads to a smaller $S^*$ for any fixed $q$. For $\varepsilon\sim\mbox{Exp}(0.8)$ shown in Figure \ref{fig:simlincovp_funexp}, a difference to the normal case is that the performance of the coverage probability is better when $\tau$ is small.

\subsection{Nonparametric model}\label{sec:sinp}
In this section, we consider a nonlinear model
\begin{align}
		Y_i = 2.5 + \sin(2X_i)+2\exp(-16X_i^2)+0.7\epsilon_i, \quad X_i \sim \Uc(-1,1), \quad \epsilon_i \sim \Nc(0,1),\label{eq:npmodel}
\end{align}	
where the function $x\mapsto 2.5 + \sin(2x)+2\exp(-16x^2)$ is plotted in Figure \ref{fig:npfun}. The basis $\ZZ$ is set as cubic B-spline defined at $m+4$ knots. The knots form a partition on $[-1,1]$ with repetitions on the boundary; see Corollary 4.10 of \cite{schumaker:81}. The model \eqref{eq:npmodel} implies the quantile of $Y$ is $Q(x;\tau)=2.5 + \sin(2x)+2\exp(-16x^2)+0.7\Phi^{-1}(\tau)$ for $0<\tau<1$. 
 
Section \ref{sec:npfix} concerns the coverage probabilities of the confidence intervals for $Q(x;\tau)$ for fixed $\tau$. Section \ref{sec:npcdf} deals with the coverage probabilities of the confidence intervals for $F_{Y|X}(y|x)$. In both sections, we fix $N=2^{16}$.

\subsubsection{Oracle rule for $\overline{\zb}(\tau)$}\label{sec:npfix}
According to Corollary \ref{th:orloc}, an asymptotic $1-\alpha$ confidence interval for $Q(x_0;\tau)$ is
\begin{align}
	\big[\ZZ(x_0)^\top \overline{\zb}(\tau) \pm N^{-1/2} \phi_{\sigma}^{-1}(\Phi_\sigma^{-1}(\tau))\sqrt{\tau(1-\tau)} \sigma_0(\ZZ) \Phi^{-1}(1-\alpha/2)\big],\label{eq:npcovp}
	\end{align}
	where $\sigma_0^2(\ZZ) = \ZZ(x_0)^\top \E[\ZZ(X) \ZZ^\top(X)]^{-1}\ZZ(x_0)$ and $\phi_{\sigma}$ is the density of $\Nc(0,\sigma^2)$. 
	
Our simulation results show how $S$ and $m$ (the number of basis functions $m=\dim(\ZZ)$) influence the coverage probabilities of \eqref{eq:npcovp}. For all the plots in Figure \ref{fig:simnpcovp_fix}, the coverage probabilities corresponding to $m=13$ at $x_0=0$ (the solid green curves) performs the worst. This is partly caused by the large bias from the large curvature of $Q(x_0; \tau)$ at $x_0=0$ (see Figure \ref{fig:npfun}). This bias can be reduced by setting greater $m$, but the increase of $m$ results in a smaller $S^*$.

\subsubsection{Oracle rule for $\widehat F_{Y|X}(y|x)$}\label{sec:npcdf}
We compute $\hat F_{Y|X}(y|x)$ in a similar way as Section \ref{sec:lincdf} by taking $y_0 = Q(x_0;\tau)$, which gives $F_{Y|X}(y_0|x_0)=\tau$. By Corollary \ref{th:orlocpr}, an $1-\alpha$ confidence interval for $\tau$ is
	\begin{align}
\big[\hat F_{Y|X}(Q(x_0;\tau)|x_0) \pm N^{-1/2} \sqrt{\tau(1-\tau)} \sigma_0(\ZZ)\Phi^{-1}(1-\alpha/2)\big],\label{eq:npfuncovp}
	\end{align}
where $\sigma_0^2(\ZZ)$ is defined as in \eqref{eq:npcovp}. 

We show the impact of number of subsamples $S$, model dimension $m$, number of basis $q$ and number of quantile grid points $K$ on the coverage probability of \eqref{eq:npfuncovp} in Figure \ref{fig:simnpcovp_inter}. We only show $\tau=0.1$ and omit the results for $\tau=0.5, 0.9$, as they do not show additional insights to Figure \ref{fig:simlincovp_fun}. It can be seen from Figure \ref{fig:simnpcovp_inter} that larger $m$ lead to a smaller estimation bias, and thus improves performance under fixed $S,q$ and $K$. However, such an increase of $m$ also results in smaller $S^*$. Under a fixed $m$, the influence from $S,q$ and $K$ on the coverage probabilities is similar to the linear models with normal errors in Section \ref{sec:lincdf}. These findings are consistent with Corollary \ref{th:orlocpr}.

\subsection{Practical conclusions from simulations} \label{sec:simprac}

In this section, we briefly comment on some practical recommendations regarding the choice of $S,K$. Our general recommendation would be to choose $S$ as small as possible and $K$ as large as possible since the only advantage of choosing large $S$ or small $K$ are computational savings. Both theory and simulations suggest that for models with more predictors, particular care in choosing $S$ is necessary. Giving very specific default recommendations is difficult since, as seen from the simulations, the exact value of $\log_N(S) $ when coverage starts to drop below nominal values depends on the quantile at hand and (unknown) conditional density of response given predictors.

As a rule of thumb, we would recommend to ensure that the 'effective sub-sample size' $n/m$ is at least $50$ and that $S$ does not exceed $n/m$. Additionally, if the spread of conditional quantiles as a function of $\tau$ changes rapidly, extra caution needs to be taken. This recommendation is based on Figure~\ref{fig:simlincovp_exp} where the conditional density of response given predictors is exponential and coverage starts to drop earlier for smaller quantiles, which corresponds to regions where the second derivative of the conditional quantile function is larger. This is also in line with our derivation of lower bonds for the oracle rule, where the second derivative of the conditional quantile function enters as constant $a$ (see Section~\ref{sec:specconst})  

Regarding $K$, $K > N^{1/4}$ worked well in all simulations we considered. Again, the precise constant will depend on the roughness of the function $\tau \mapsto Q(x;\tau)$. The choice of $G$ should depend on $K$. Although the formal statement of our theorem requires $K/G \to \infty$, a look at the proof reveals that for cubic splines $K/G > c$ for some constant $c$ is sufficient. Simulations indicate that $G = K/2$ combined with cubic splines is a reasonable choice.


\newpage
	\begin{figure}[!ht]
	\centering
	\includegraphics[width=4cm, height = 3.2cm]{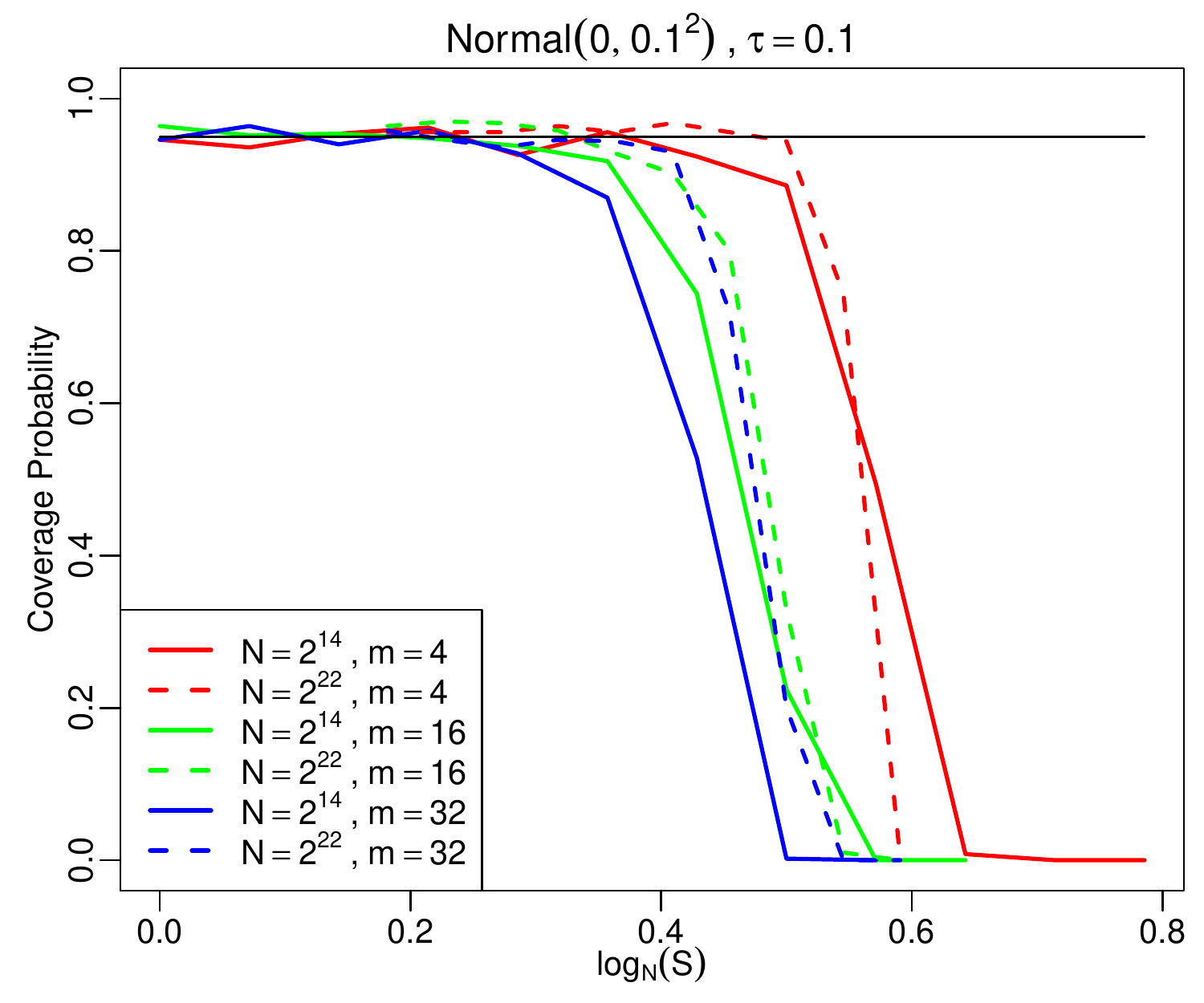}
	\includegraphics[width=4cm, height = 3.2cm]{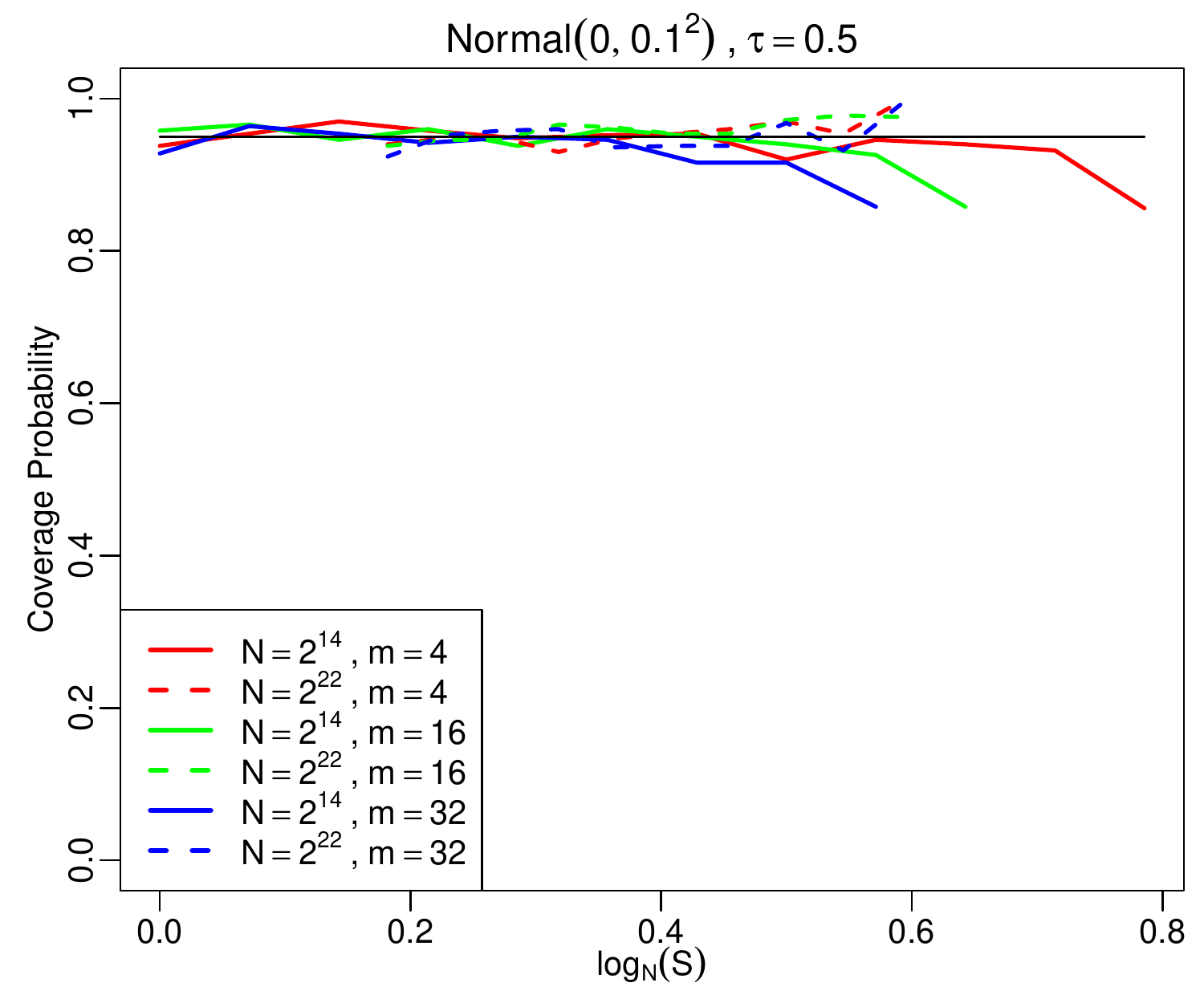}
	\includegraphics[width=4cm, height = 3.2cm]{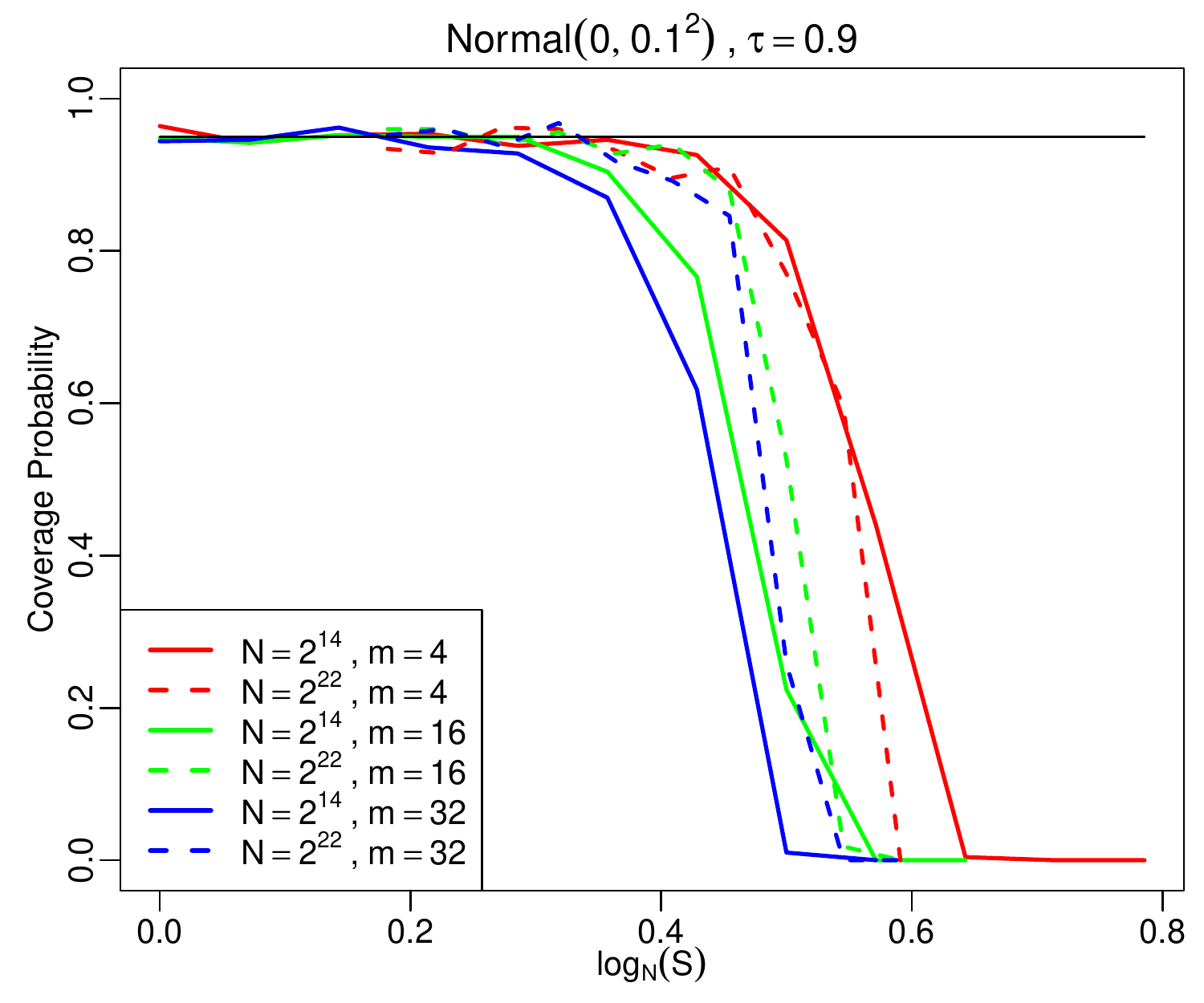}\\
	\includegraphics[width=4cm, height = 3.2cm]{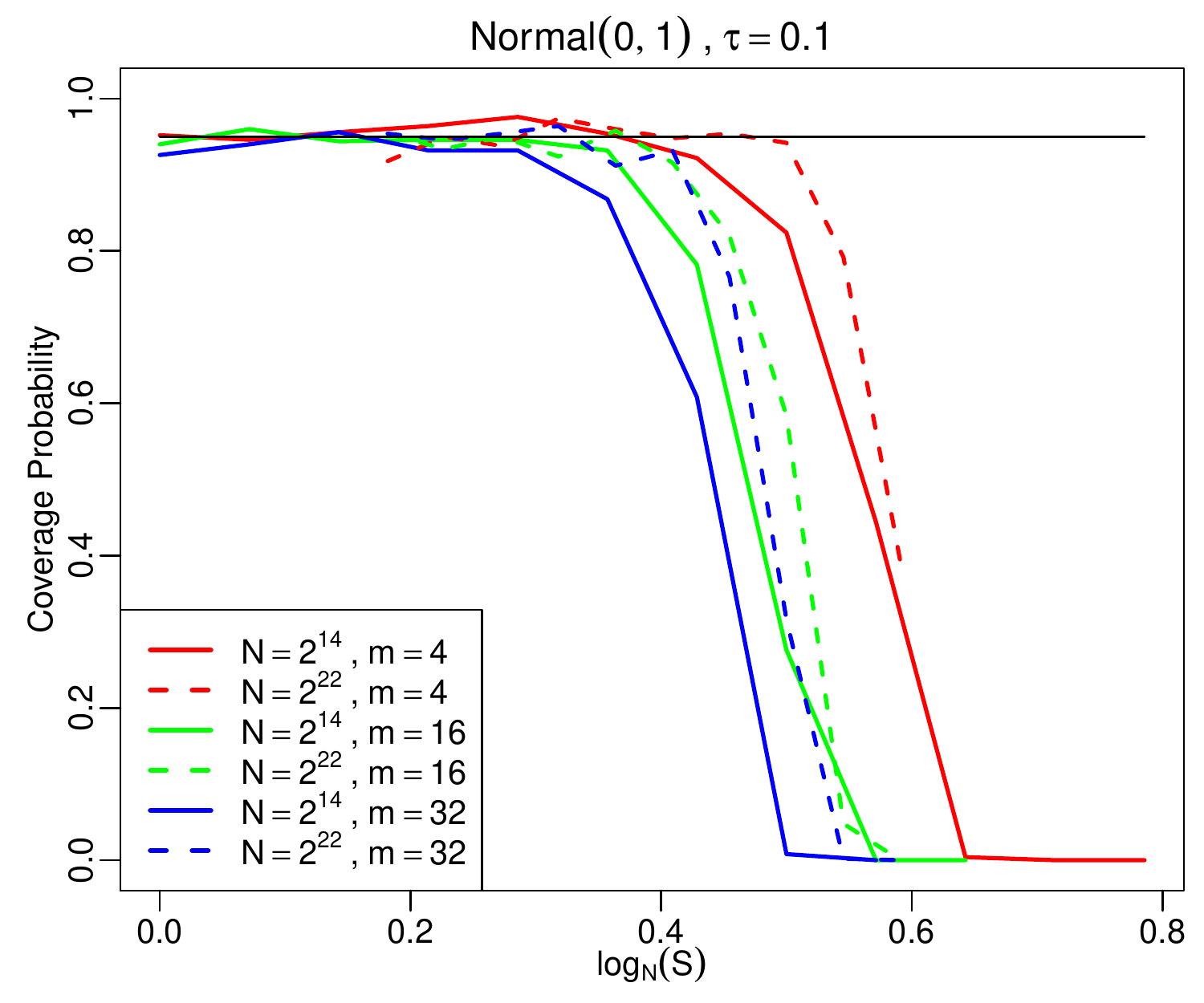}
	\includegraphics[width=4cm, height = 3.2cm]{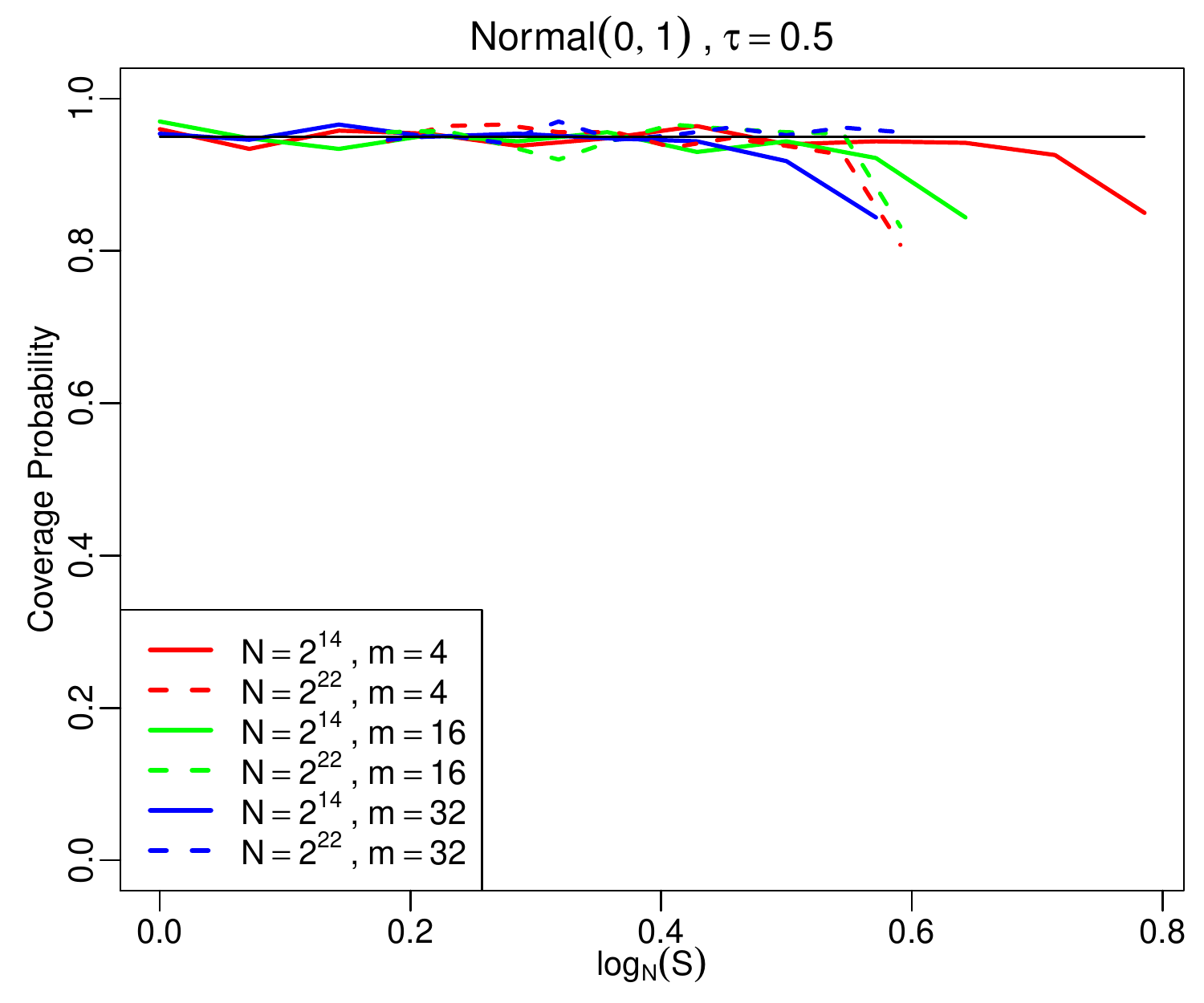}
	\includegraphics[width=4cm, height = 3.2cm]{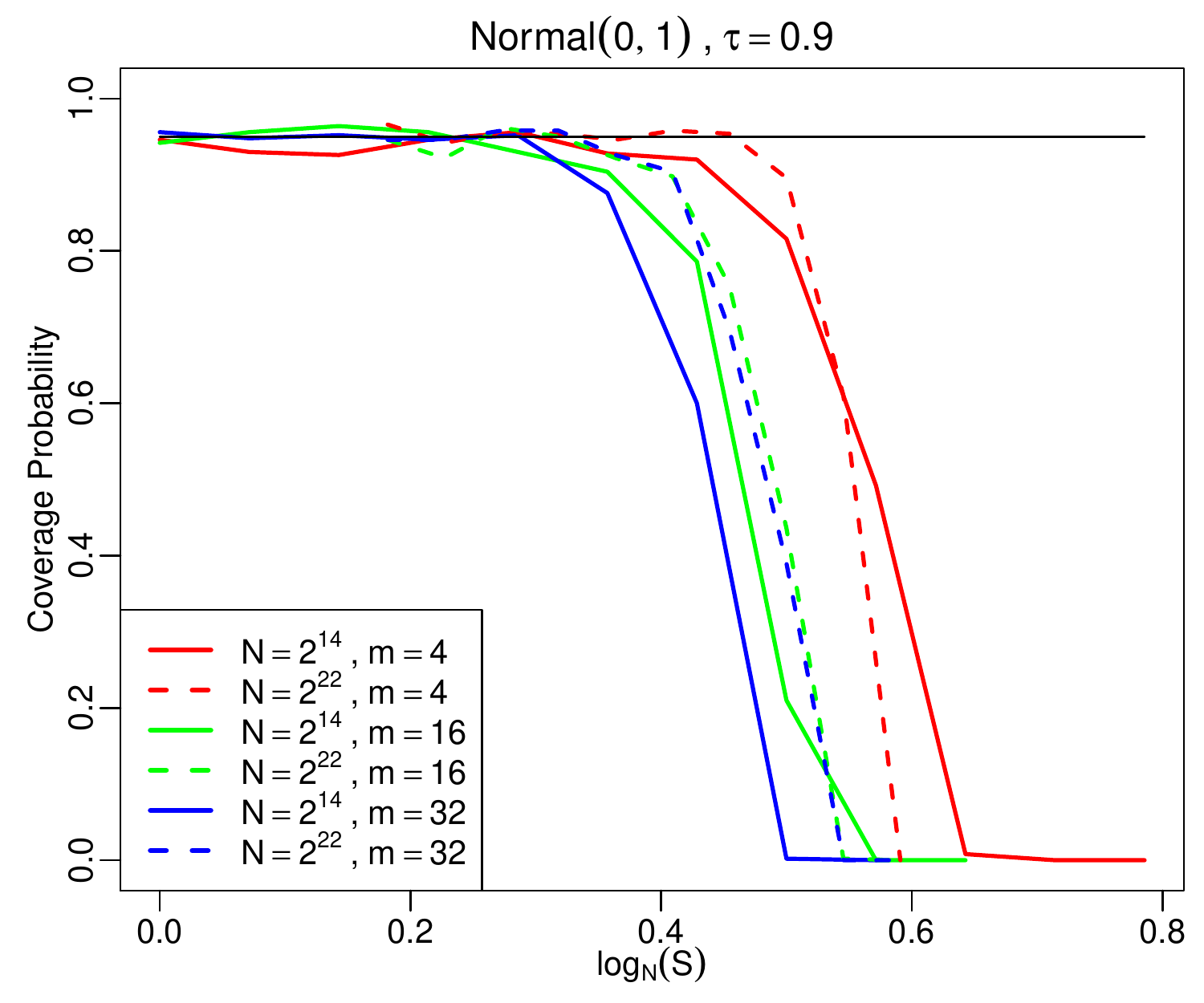}
	\caption{Coverage probability of the $1-\alpha=95\%$ confidence interval for $Q(x_0;\tau)$ in \eqref{eq:lincovp}: $\big[x_0^\top \overline{\zb}(\tau) \pm  f_{\varepsilon,\tau}^{-1}(\tau(1-\tau) x_0^\top \Sigma_X^{-1}x_0/N)^{1/2}\Phi^{-1}(1-\alpha/2)\big]$ under the model $Y_i = 0.21 +  \zb_{m-1}^\top X_i+\varepsilon_i$ with $\zb_{m-1}$ as \eqref{eq:betaspeci} and $\varepsilon\sim\Nc(0,\sigma^2)$, where $N$ is the total sample size, $S$ is the number of subsamples and $m=1+\dim(\zb_{m-1})$.}\label{fig:simlincovp}
	\end{figure}	
	
	\begin{figure}[!ht]
	\centering
	\includegraphics[width=4cm, height = 3.2cm]{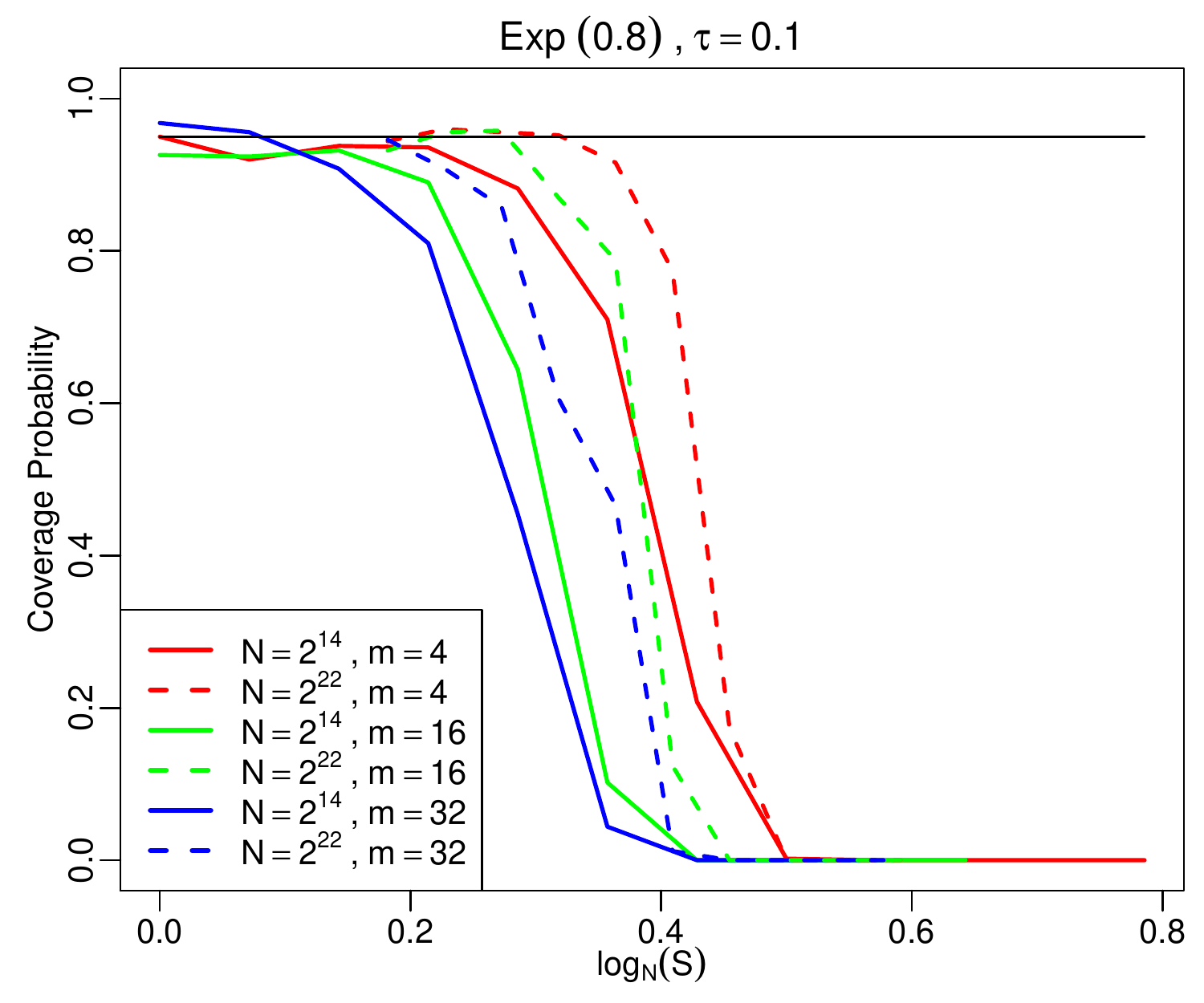}
	\includegraphics[width=4cm, height = 3.2cm]{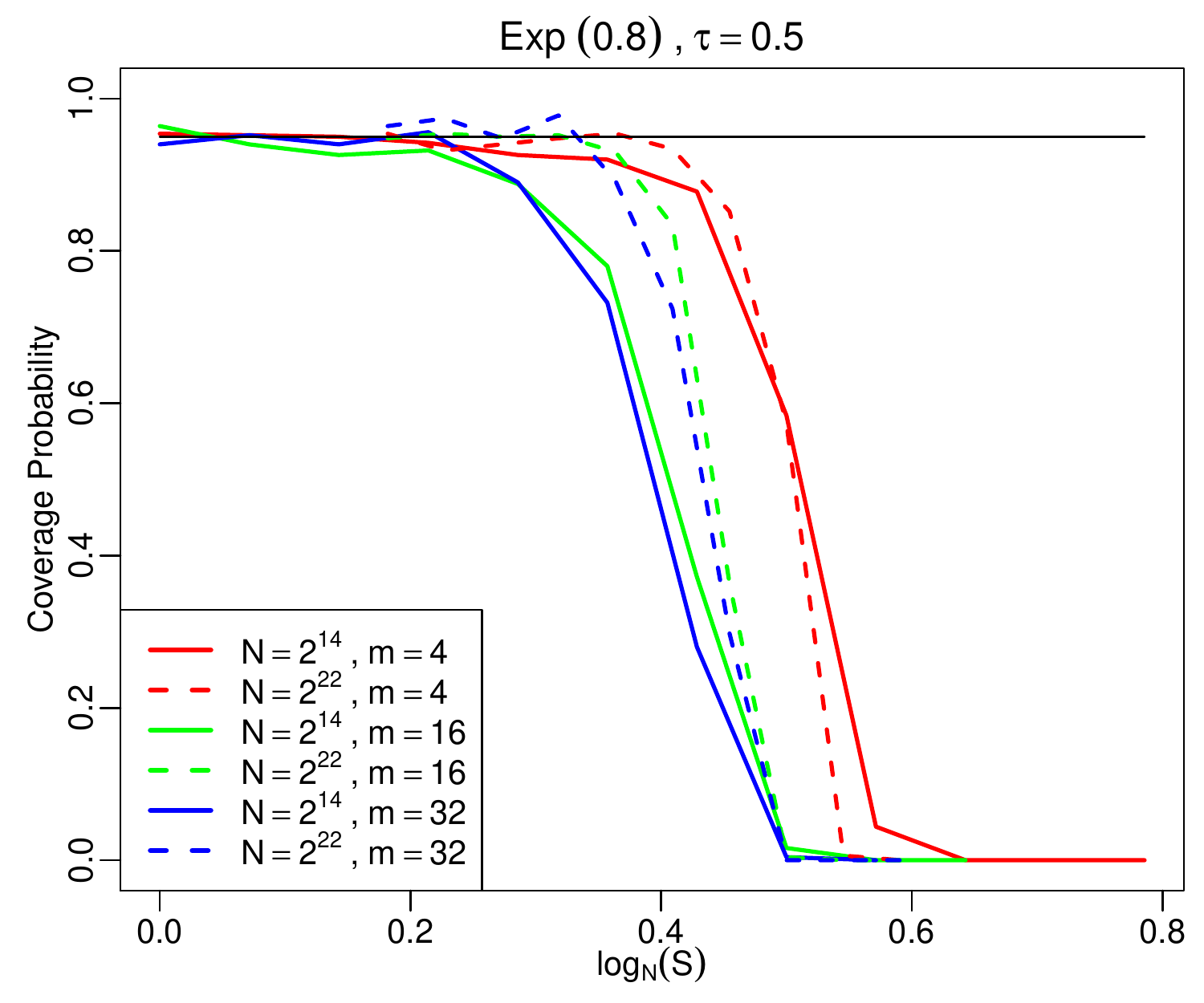}
	\includegraphics[width=4cm, height = 3.2cm]{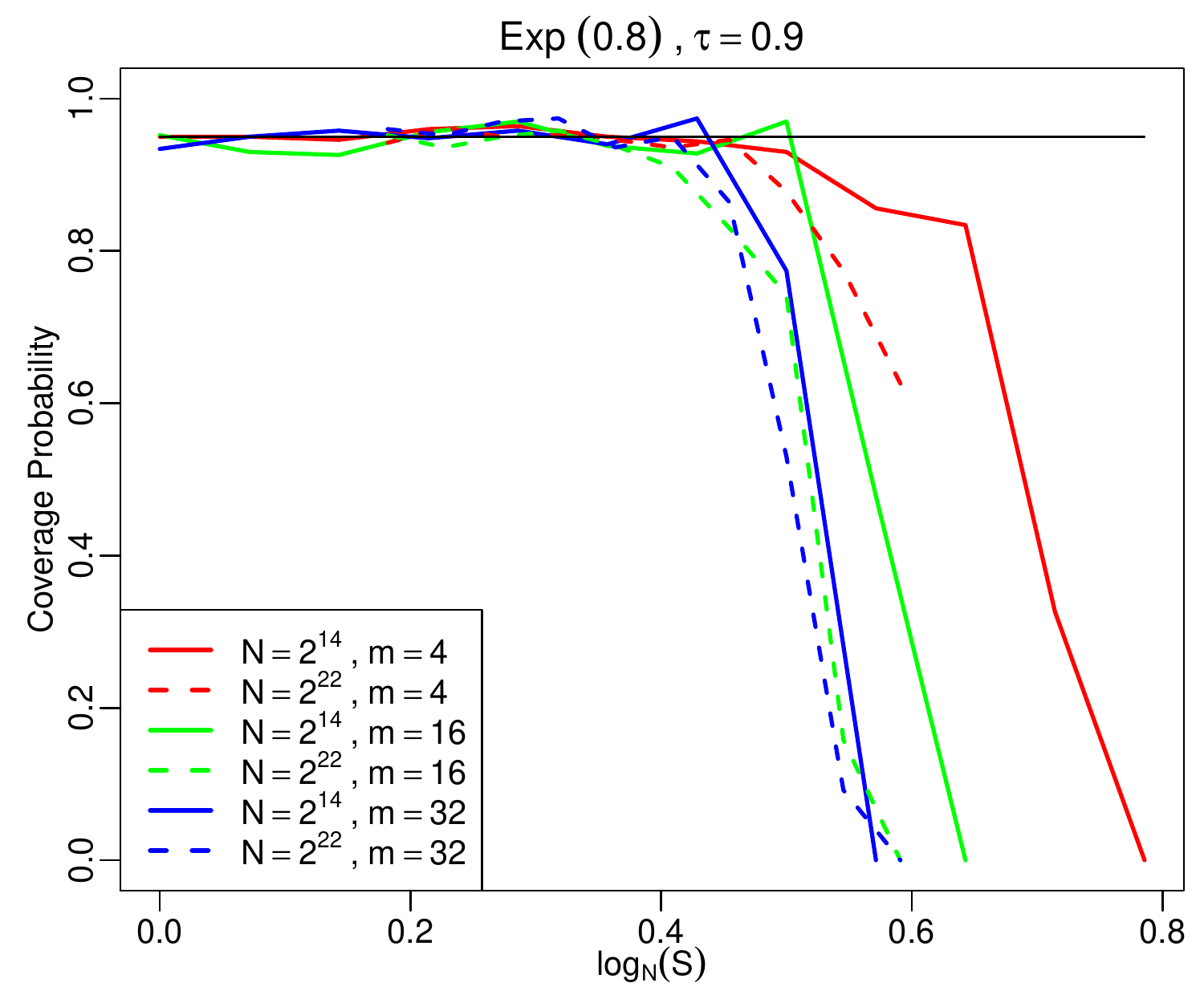}\\
	\includegraphics[width=4cm, height = 3.2cm]{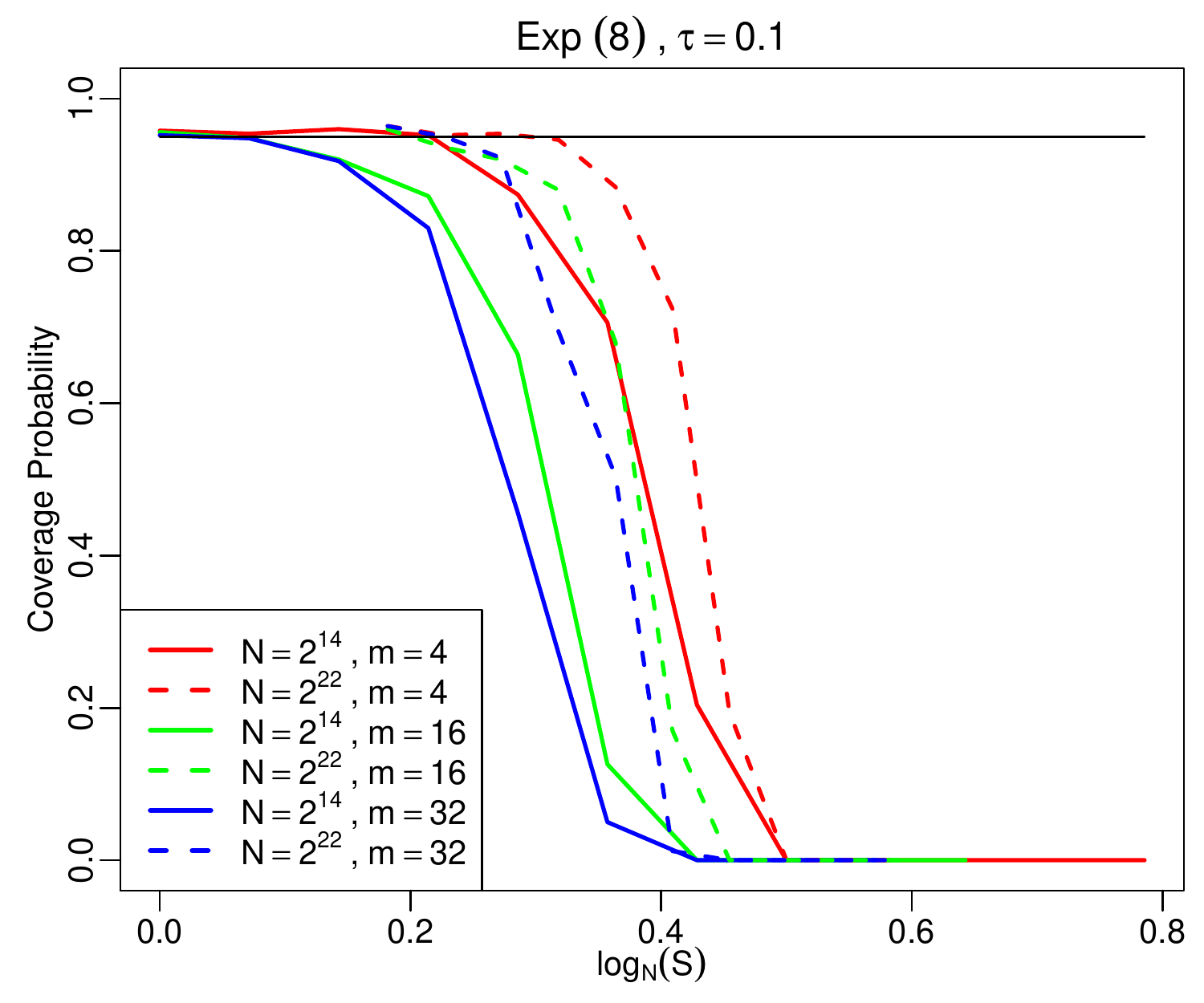}
	\includegraphics[width=4cm, height = 3.2cm]{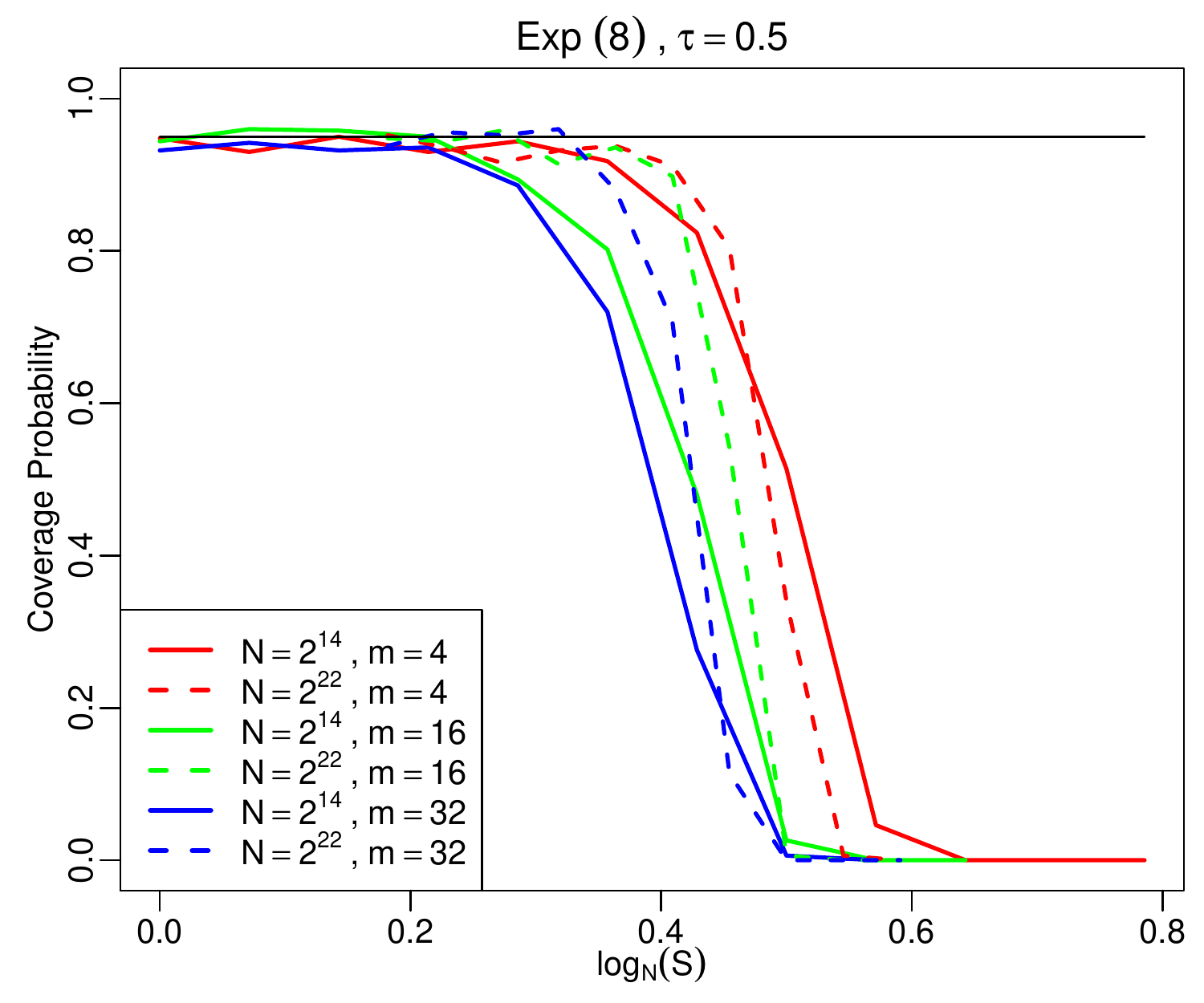}
	\includegraphics[width=4cm, height = 3.2cm]{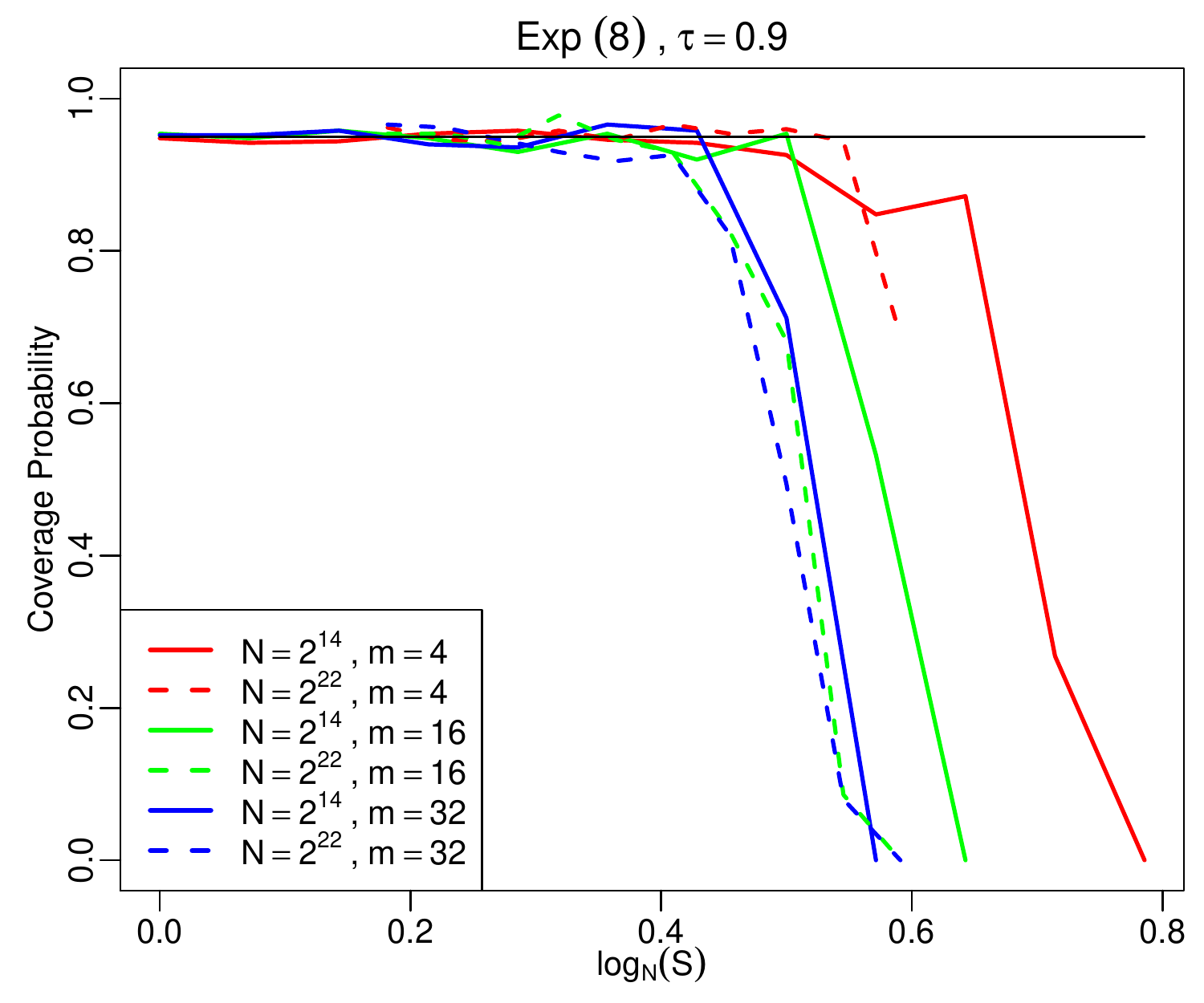}
	\caption{Coverage probability under exactly the same setting as Figure \ref{fig:simlincovp}, only except for $\varepsilon\sim\mbox{Exp}(\lambda)$.}\label{fig:simlincovp_exp}
	\end{figure}
	
\newpage

\begin{figure}[!ht]
\centering
\centering {\scriptsize (a) $m=4$}\\
\includegraphics[width=4cm, height = 3.2cm]{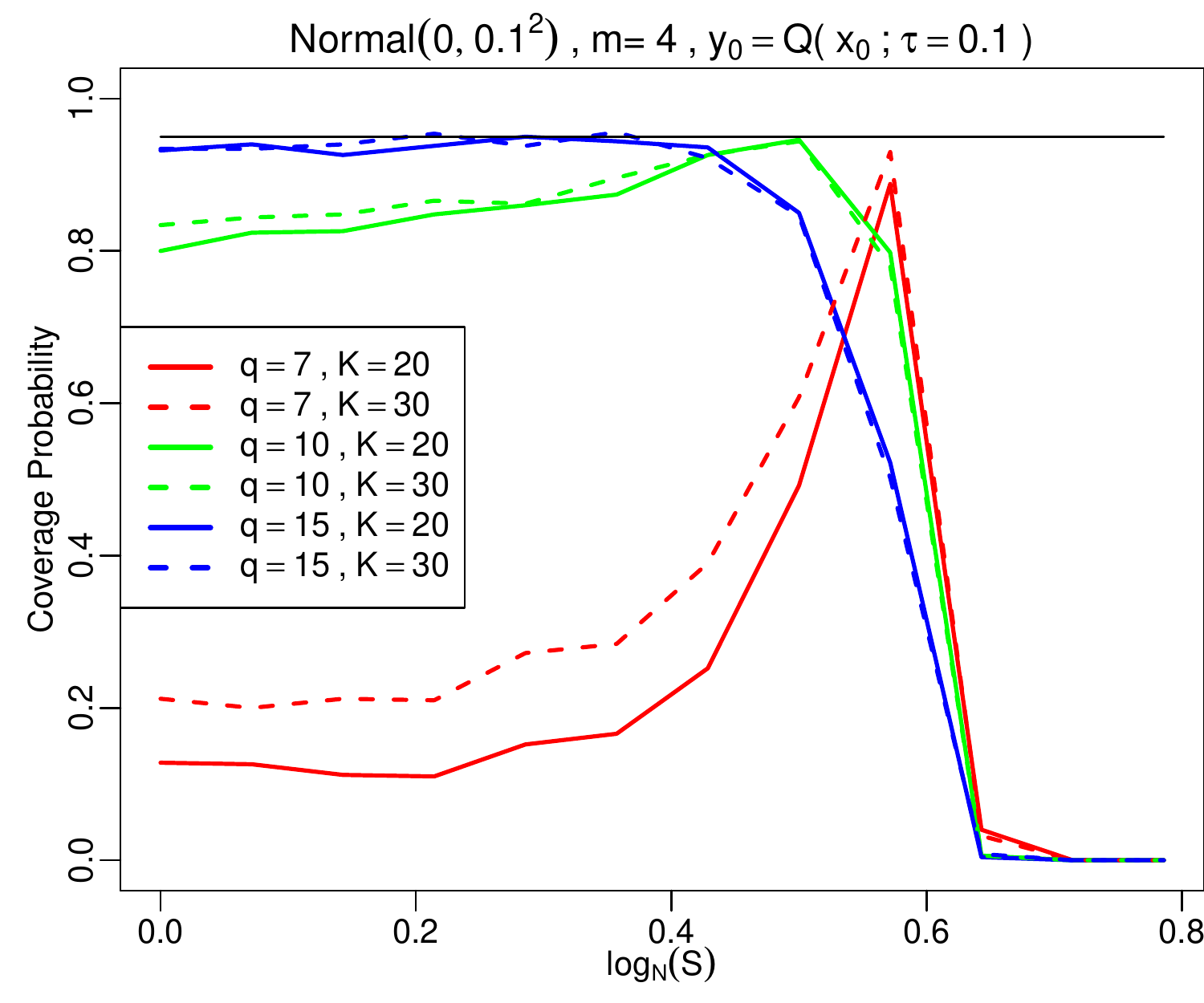}
\includegraphics[width=4cm, height = 3.2cm]{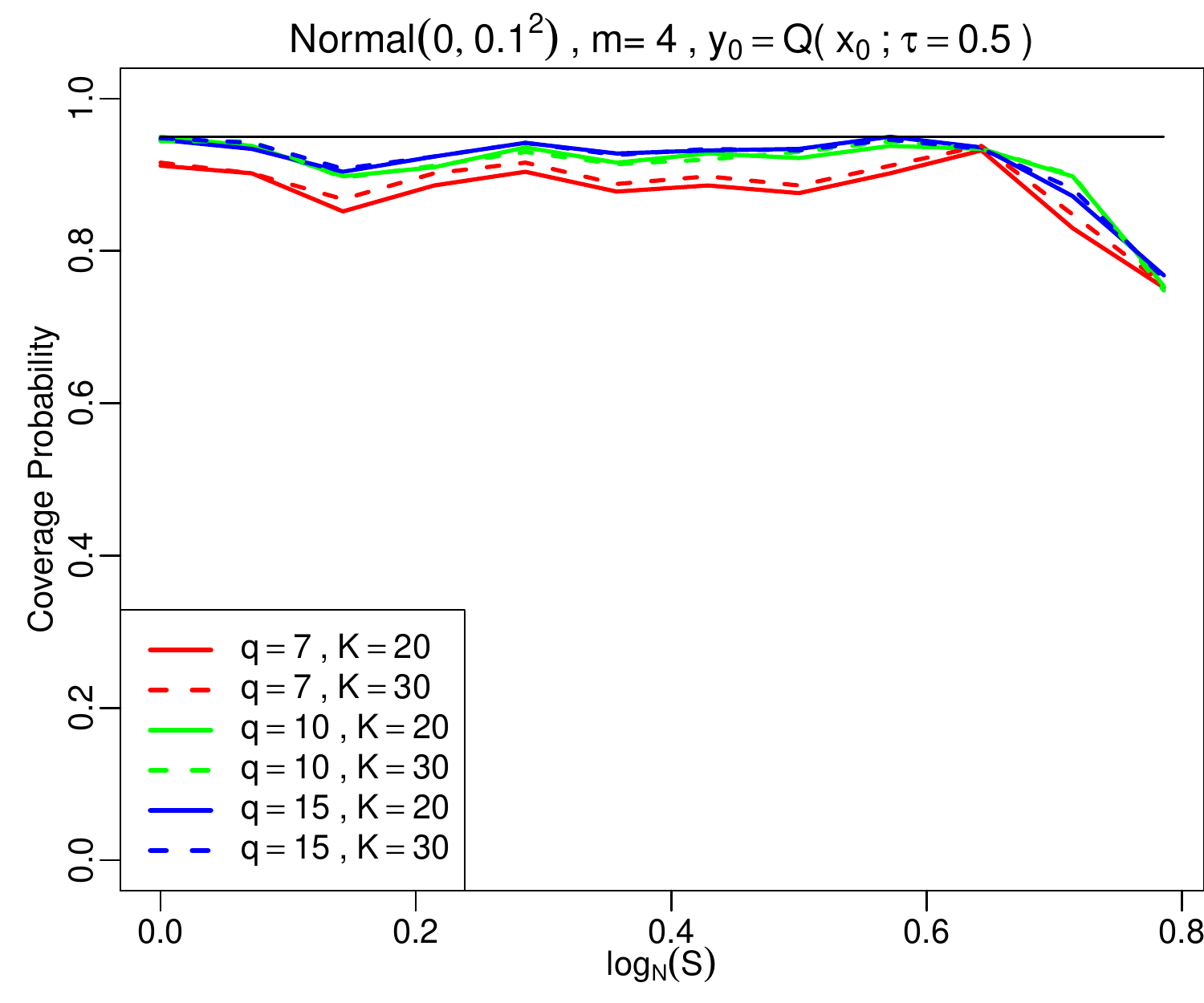}
\includegraphics[width=4cm, height = 3.2cm]{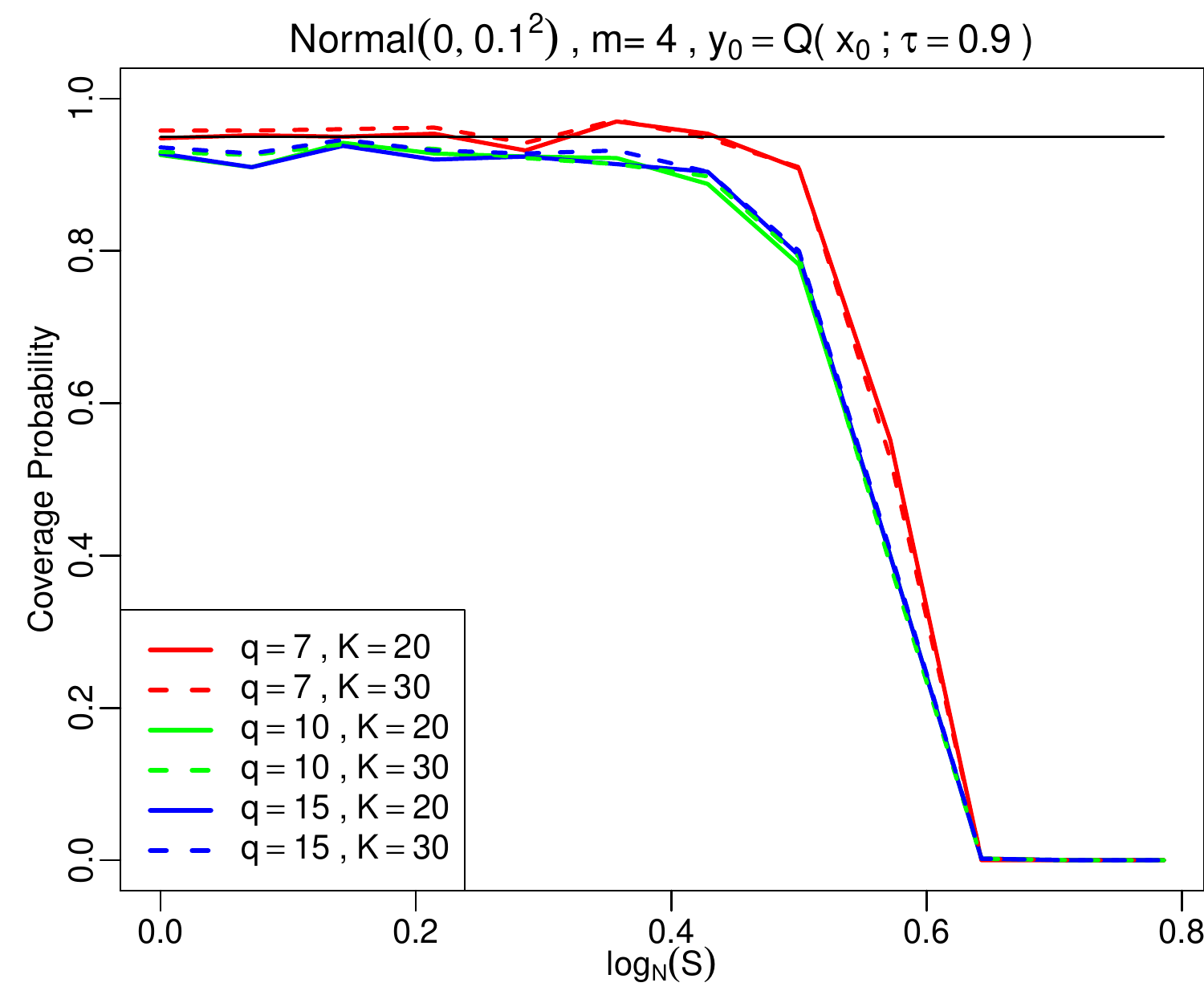}\\
\centering {\scriptsize (b) $m=16$}\\
\includegraphics[width=4cm, height = 3.2cm]{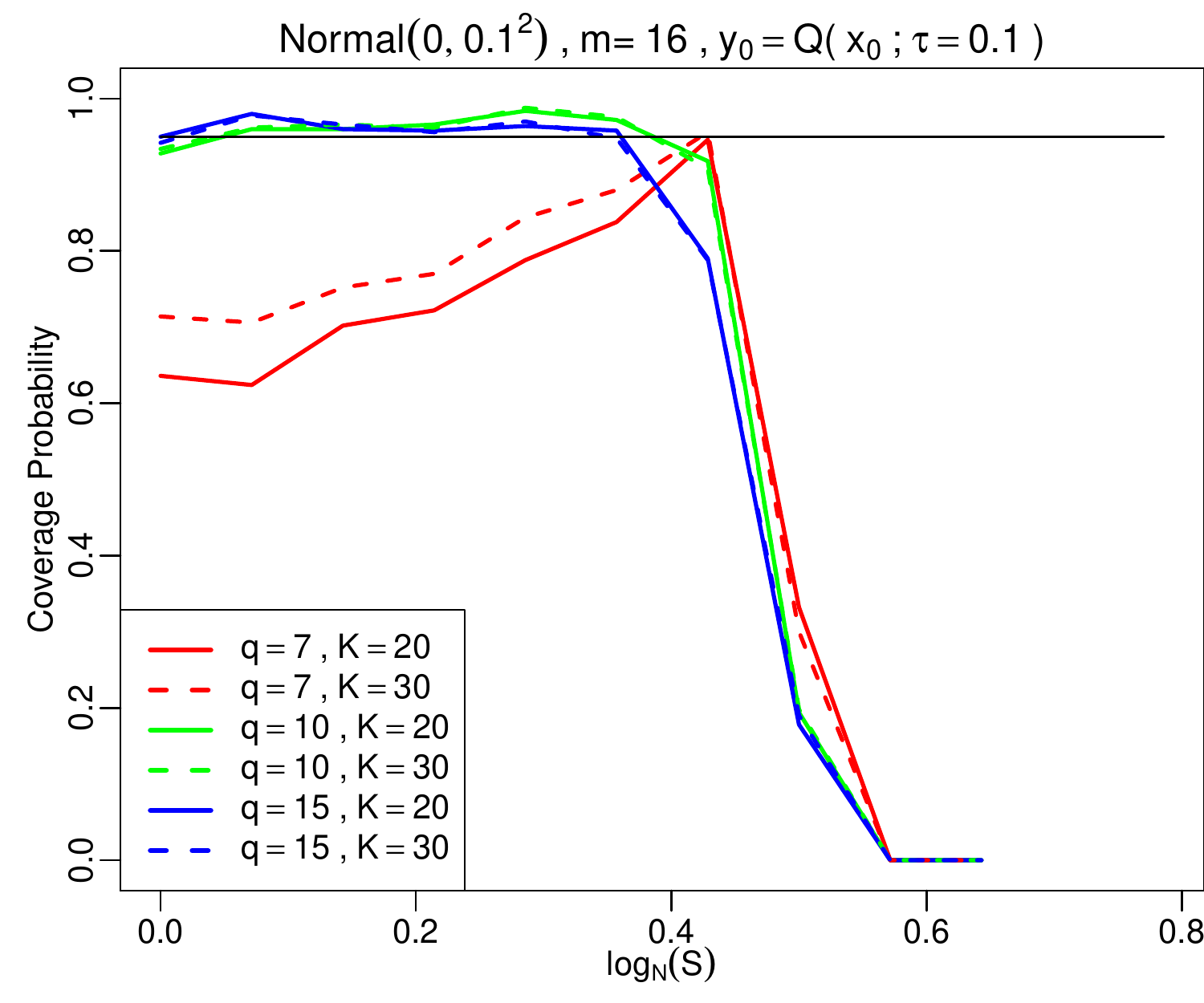}
\includegraphics[width=4cm, height = 3.2cm]{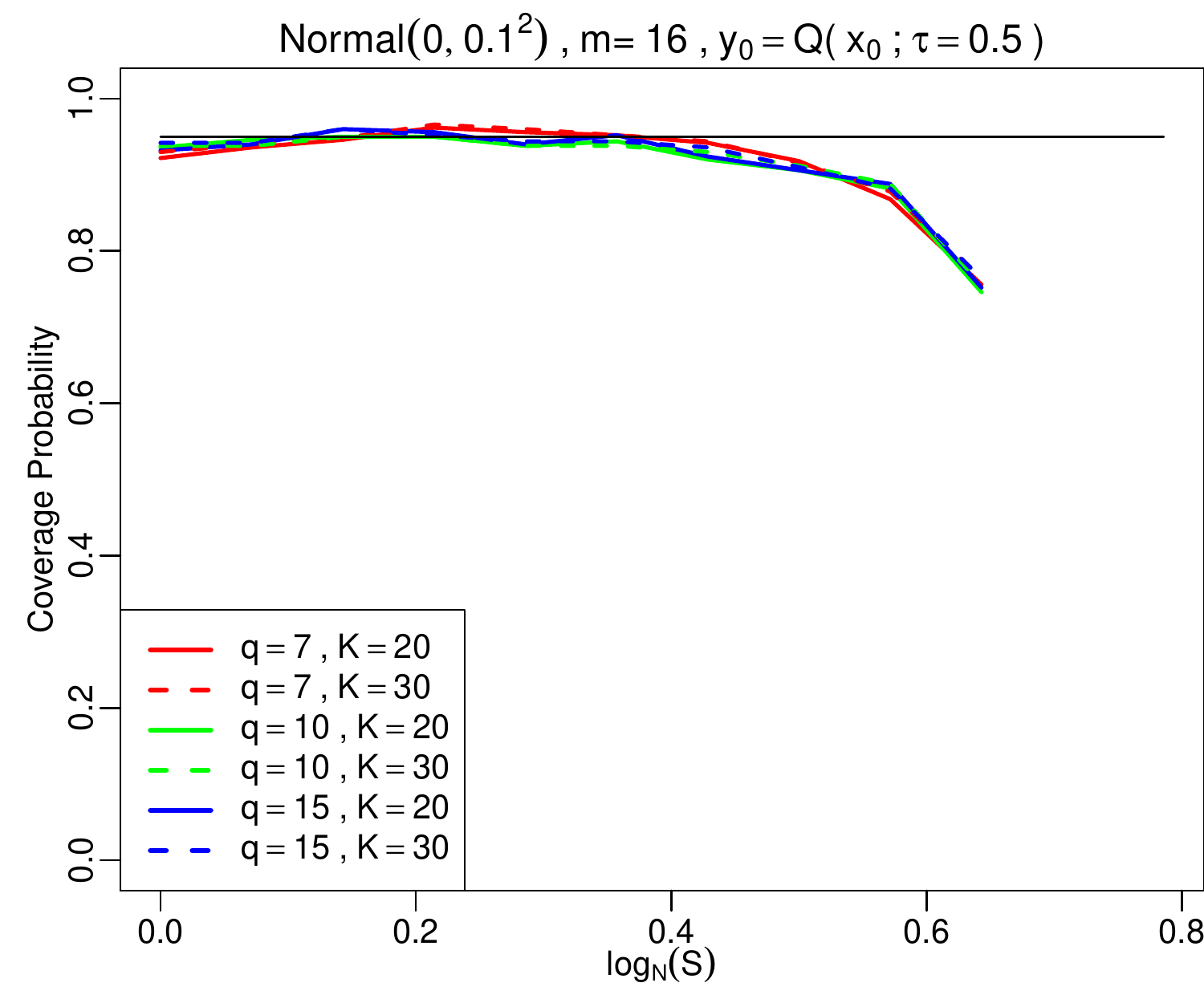}
\includegraphics[width=4cm, height = 3.2cm]{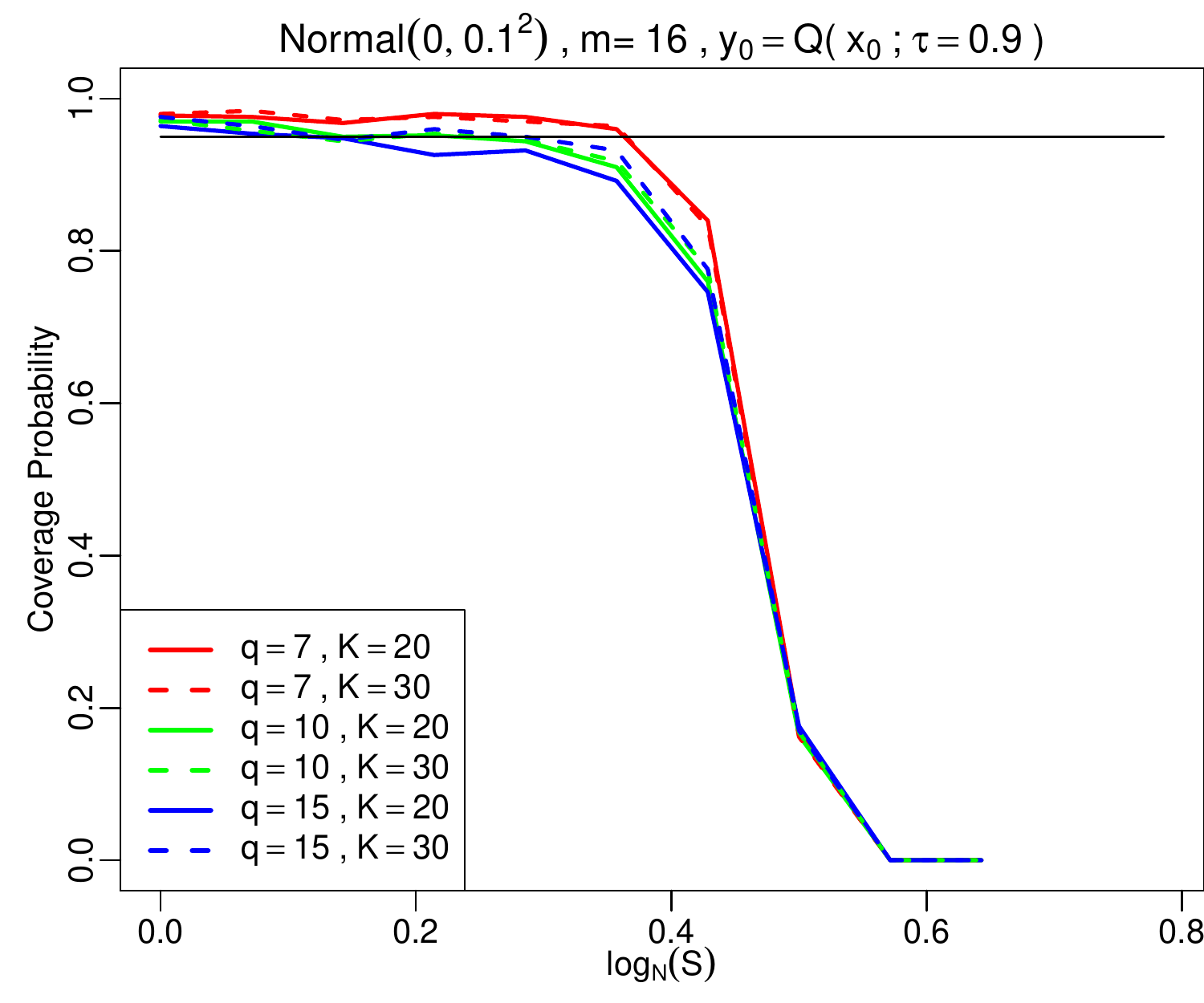}\\
\centering {\scriptsize (c) $m=32$}\\
\includegraphics[width=4cm, height = 3.2cm]{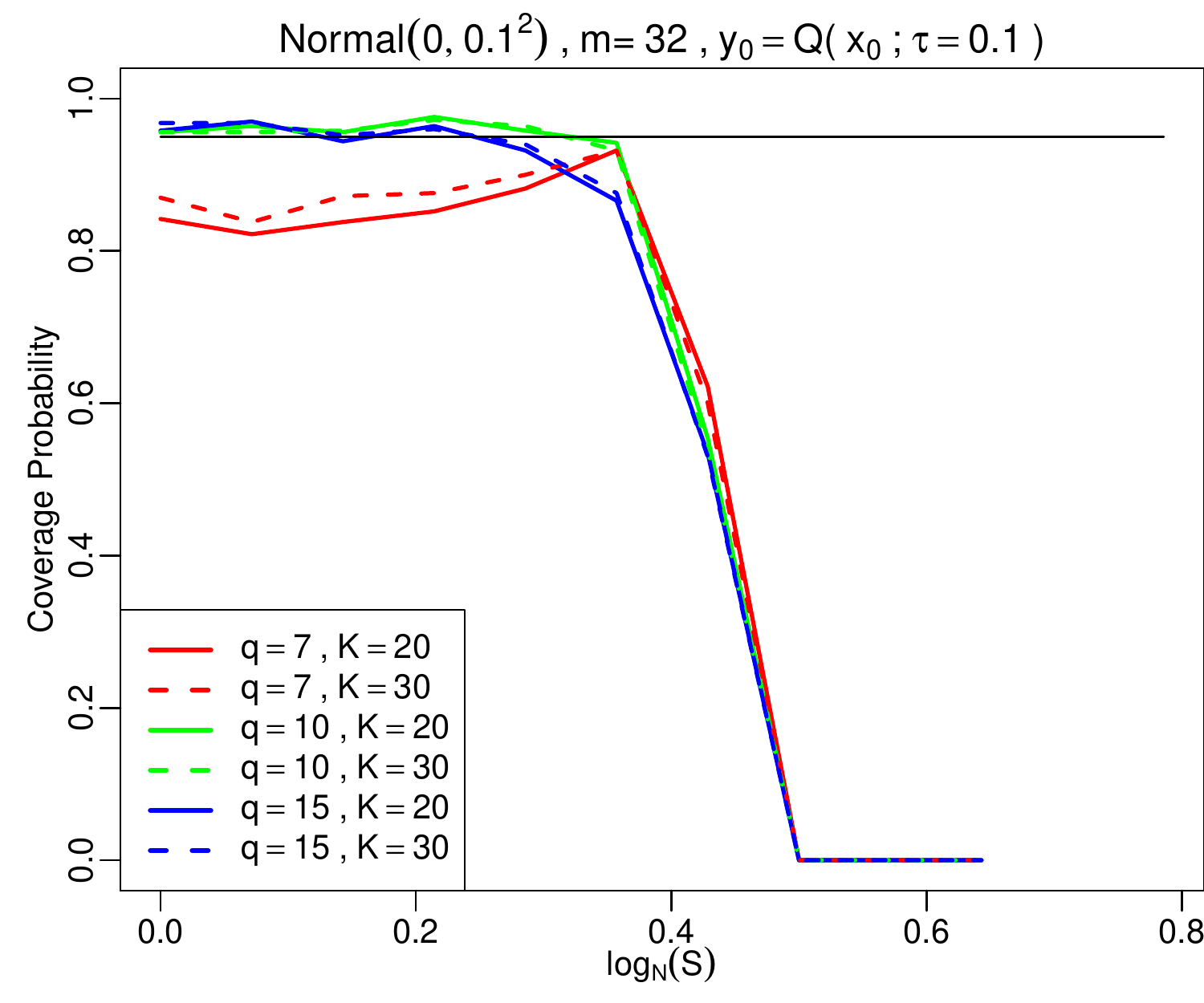}
\includegraphics[width=4cm, height = 3.2cm]{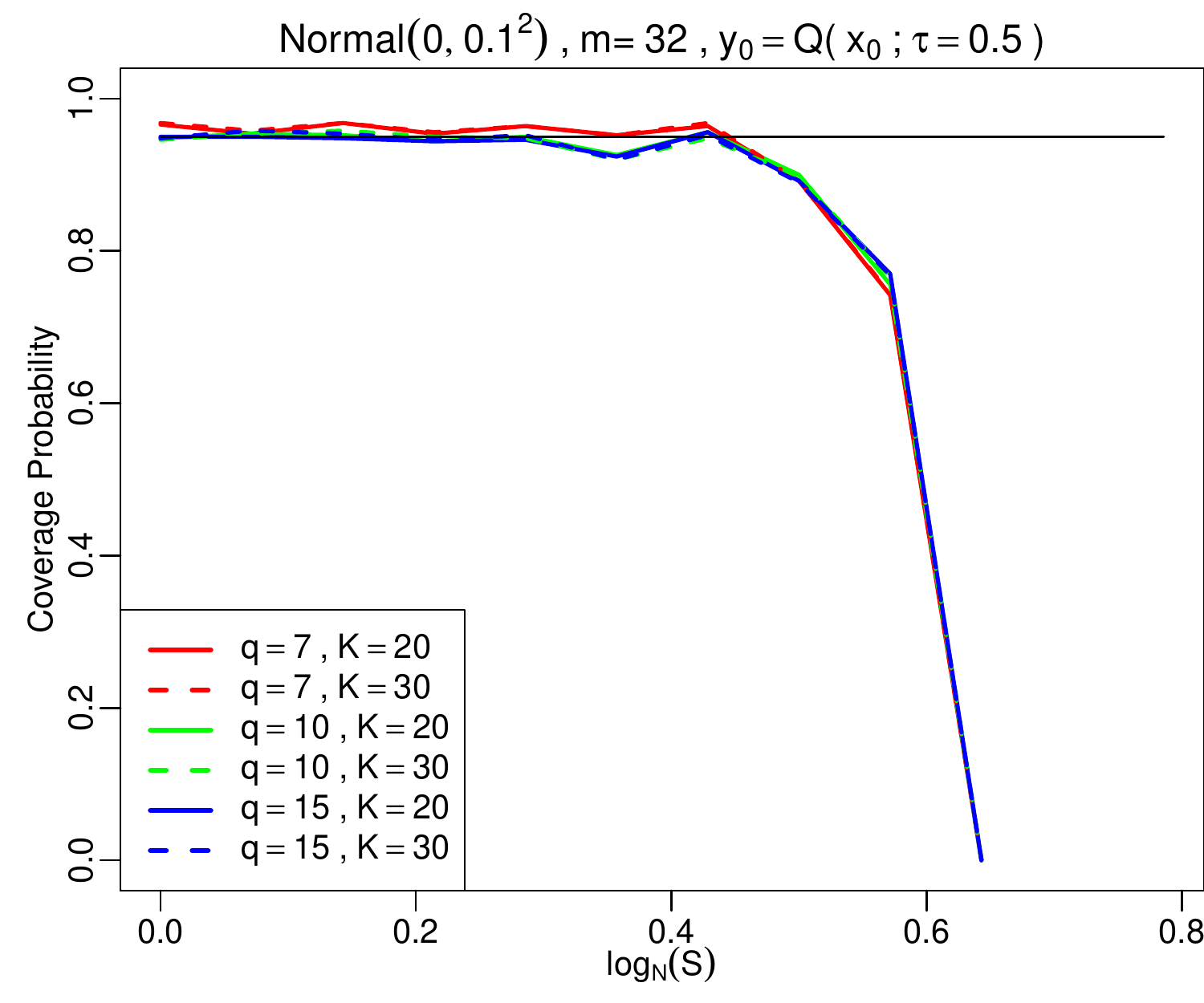}
\includegraphics[width=4cm, height = 3.2cm]{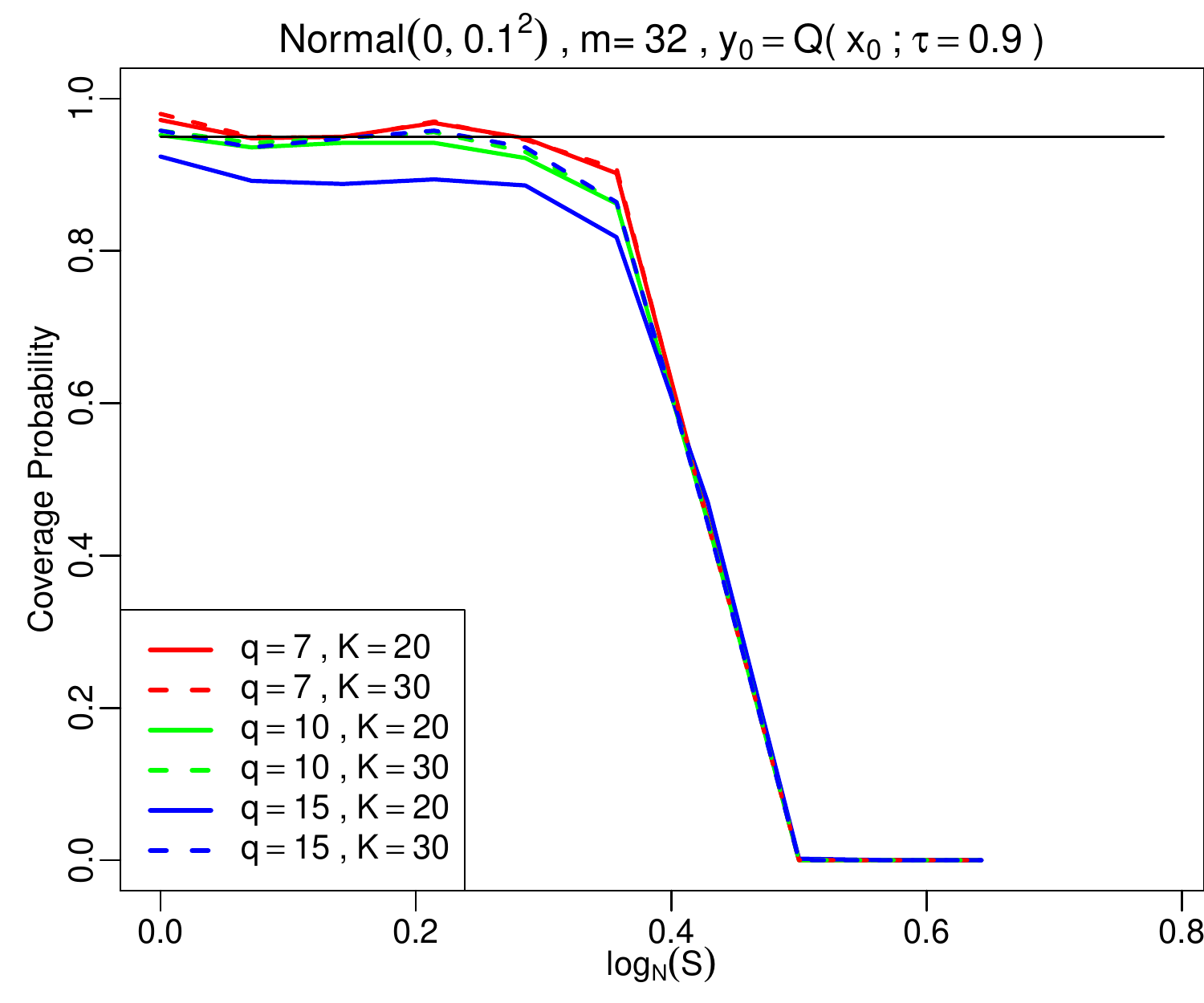}\\
\caption{Coverage probability of the $1-\alpha=95\%$ confidence interval for $F(y_0|x_0)$ in \eqref{eq:funcovp}: $\big[\hat F_{Y|X}(Q(x_0;\tau)|x_0) \pm \big(\tau(1-\tau) x_0^\top \Sigma_X^{-1}x_0/N\big)^{1/2} \Phi^{-1}(1-\alpha/2)\big]$ for $Y_i = 0.21 +  \zb_{m-1}^\top X_i+\varepsilon_i$ with $\zb_{m-1}$ as \eqref{eq:betaspeci} and $\varepsilon\sim\Nc(0,0.1^2)$, where the total number of samples is $N=2^{14}$, $S$ is the number of subsamples, $m=1+\dim(\zb_{m-1})$, $q = \dim(\BB)$ (corresponds to projection in $\tau$ direction) and $K$ is the number of the quantile grid points.}\label{fig:simlincovp_fun}
\end{figure}

\newpage

\begin{figure}[!ht]
\centering
\centering {\scriptsize (a) $m=4$}\\
\includegraphics[width=4cm, height = 3.2cm]{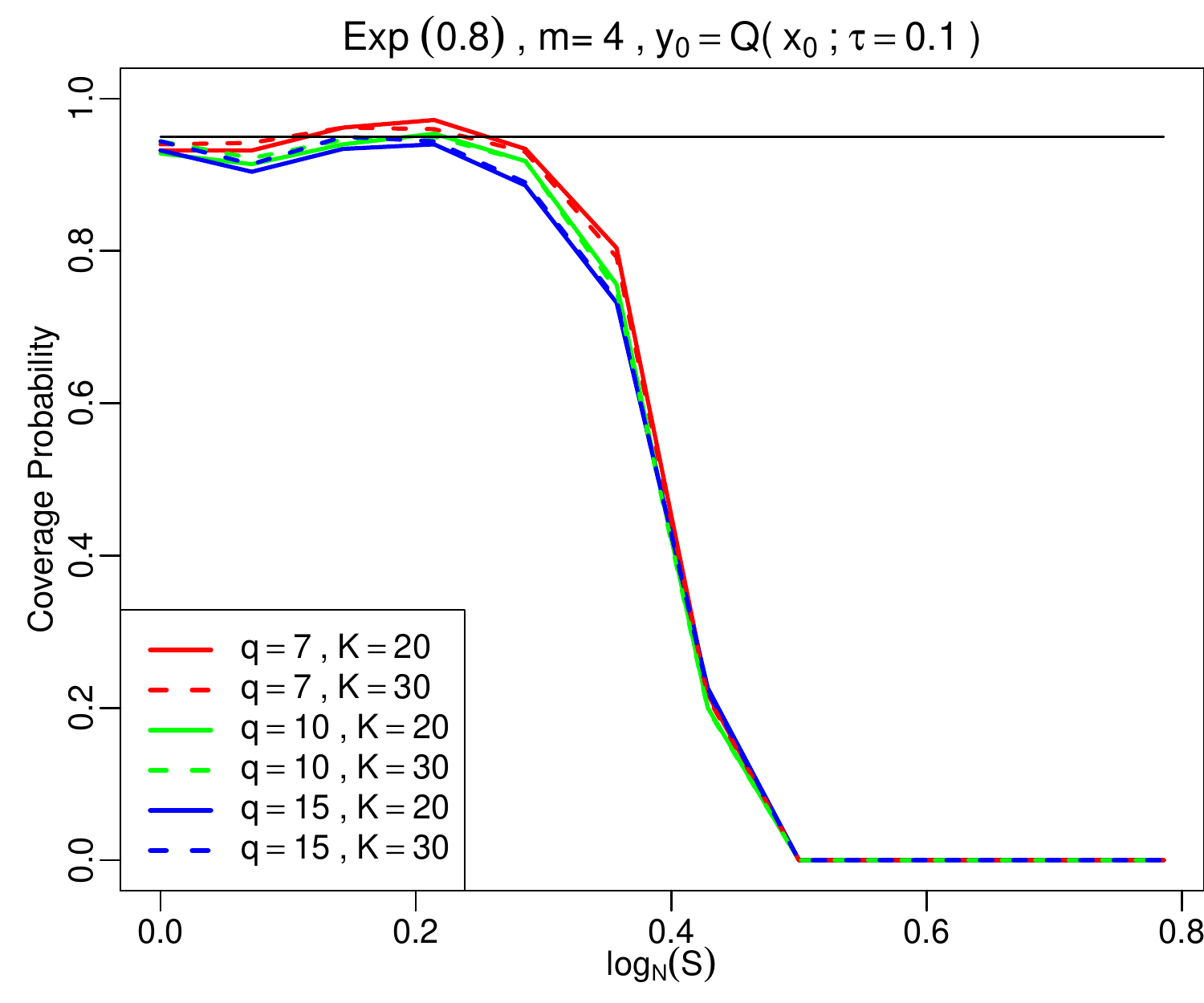}
\includegraphics[width=4cm, height = 3.2cm]{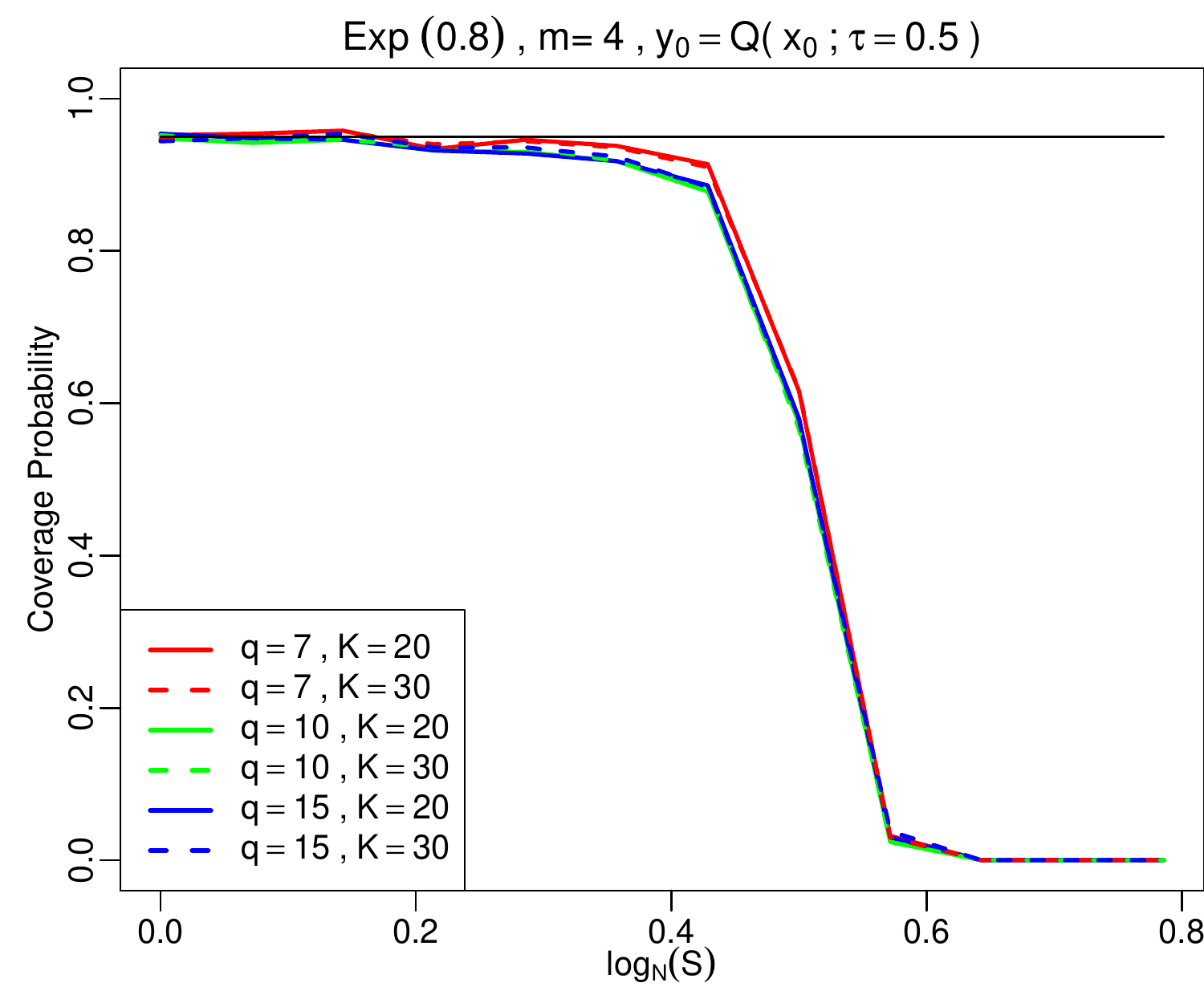}
\includegraphics[width=4cm, height = 3.2cm]{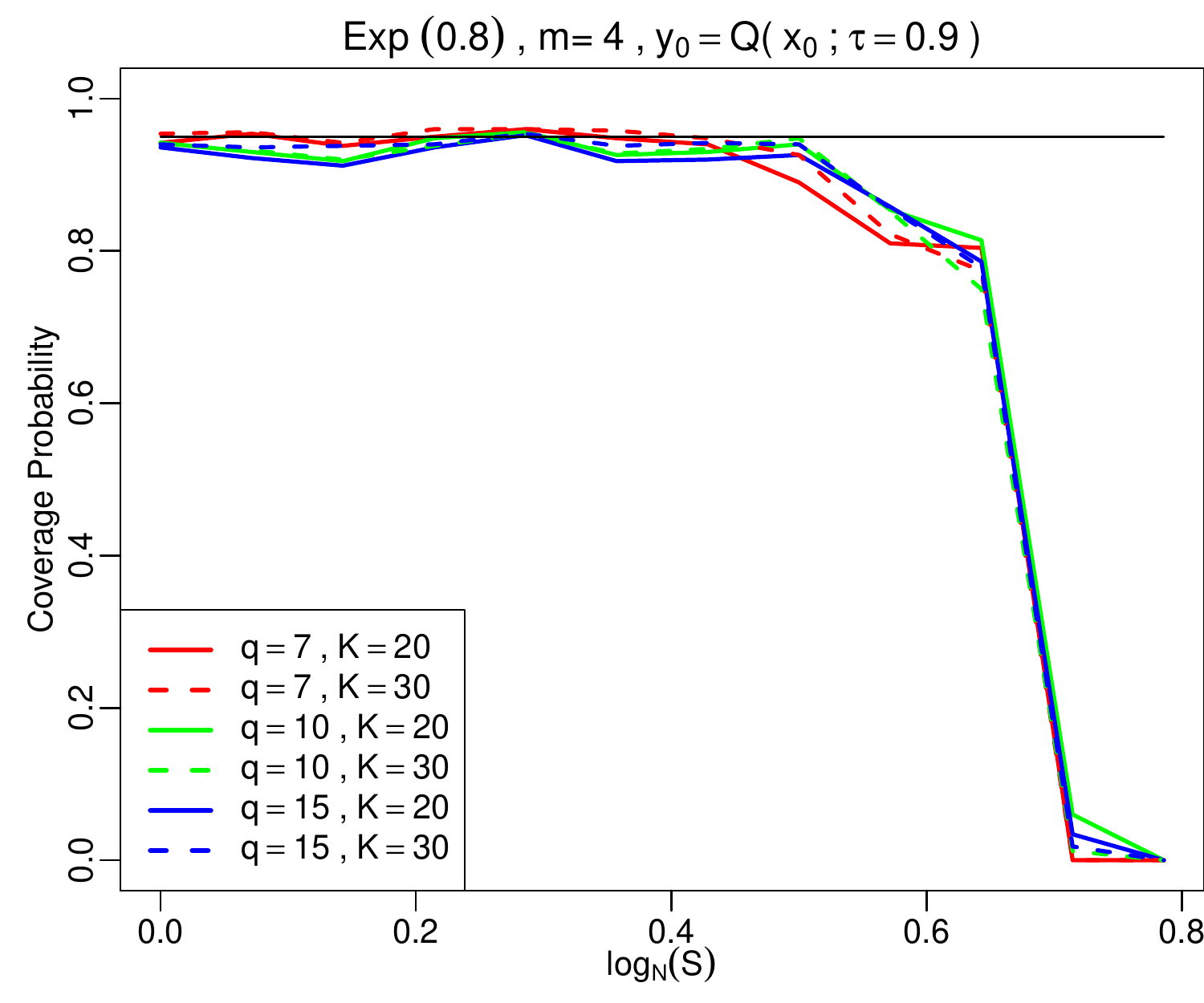}\\
\centering {\scriptsize (b) $m=16$}\\
\includegraphics[width=4cm, height = 3.2cm]{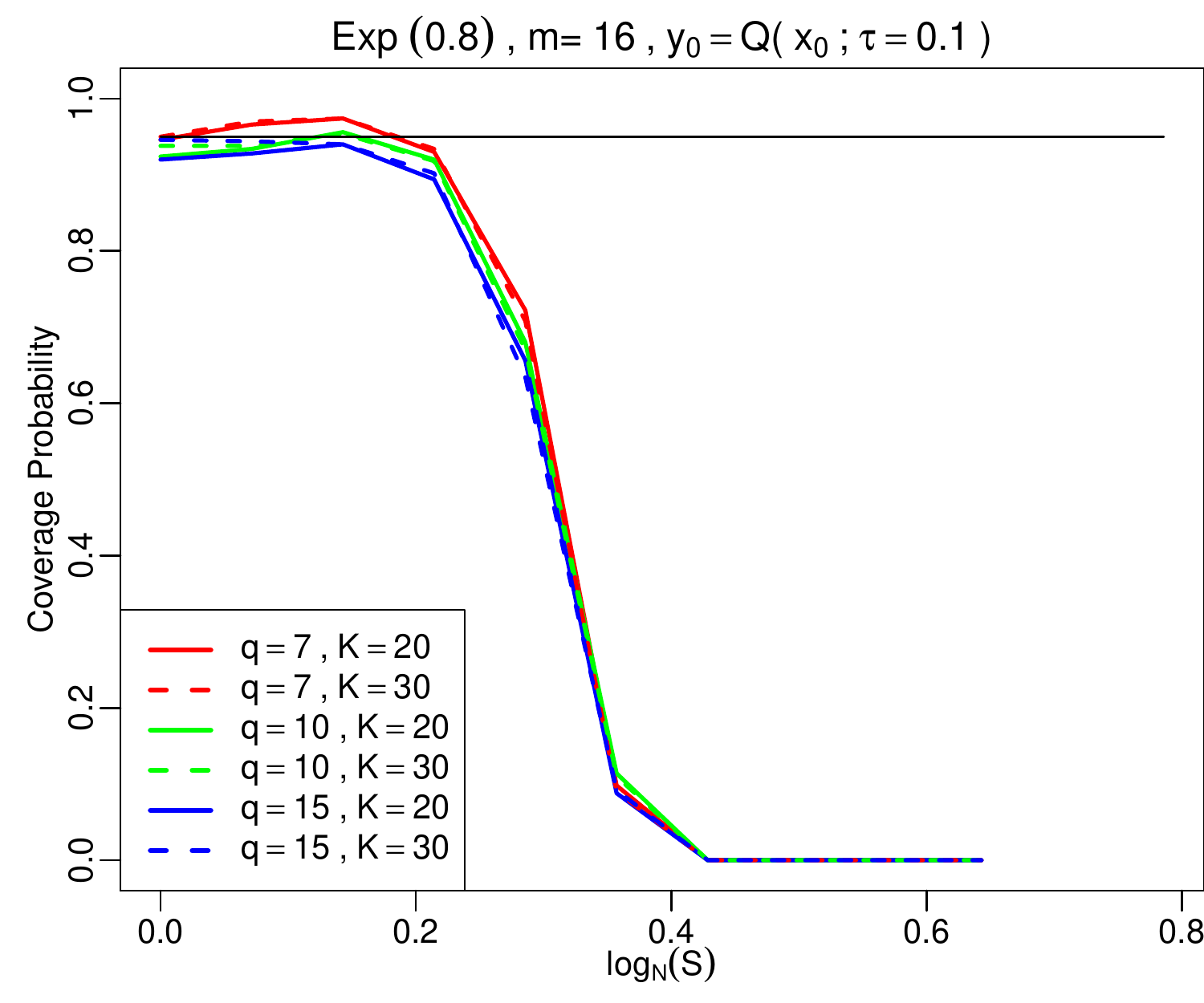}
\includegraphics[width=4cm, height = 3.2cm]{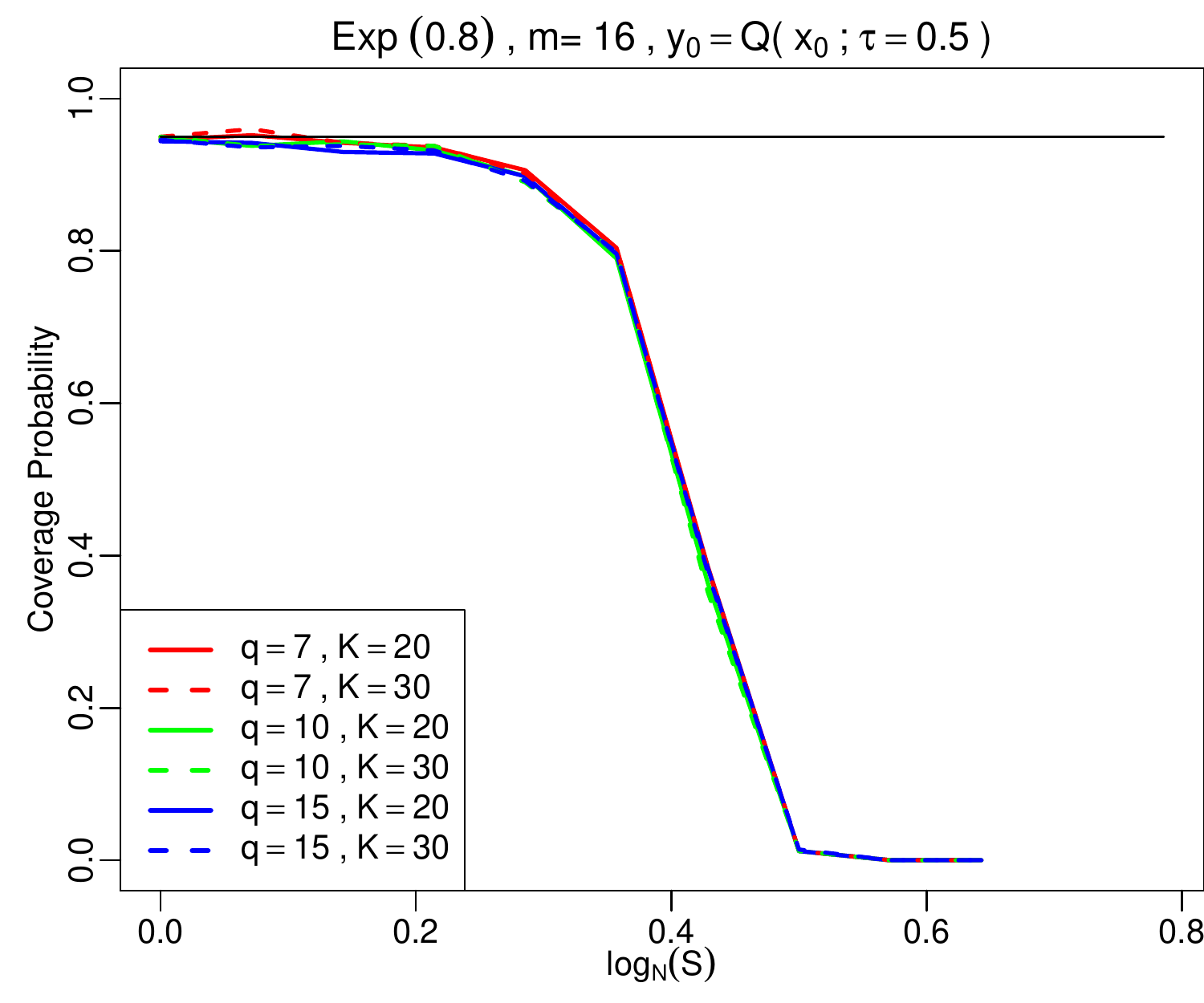}
\includegraphics[width=4cm, height = 3.2cm]{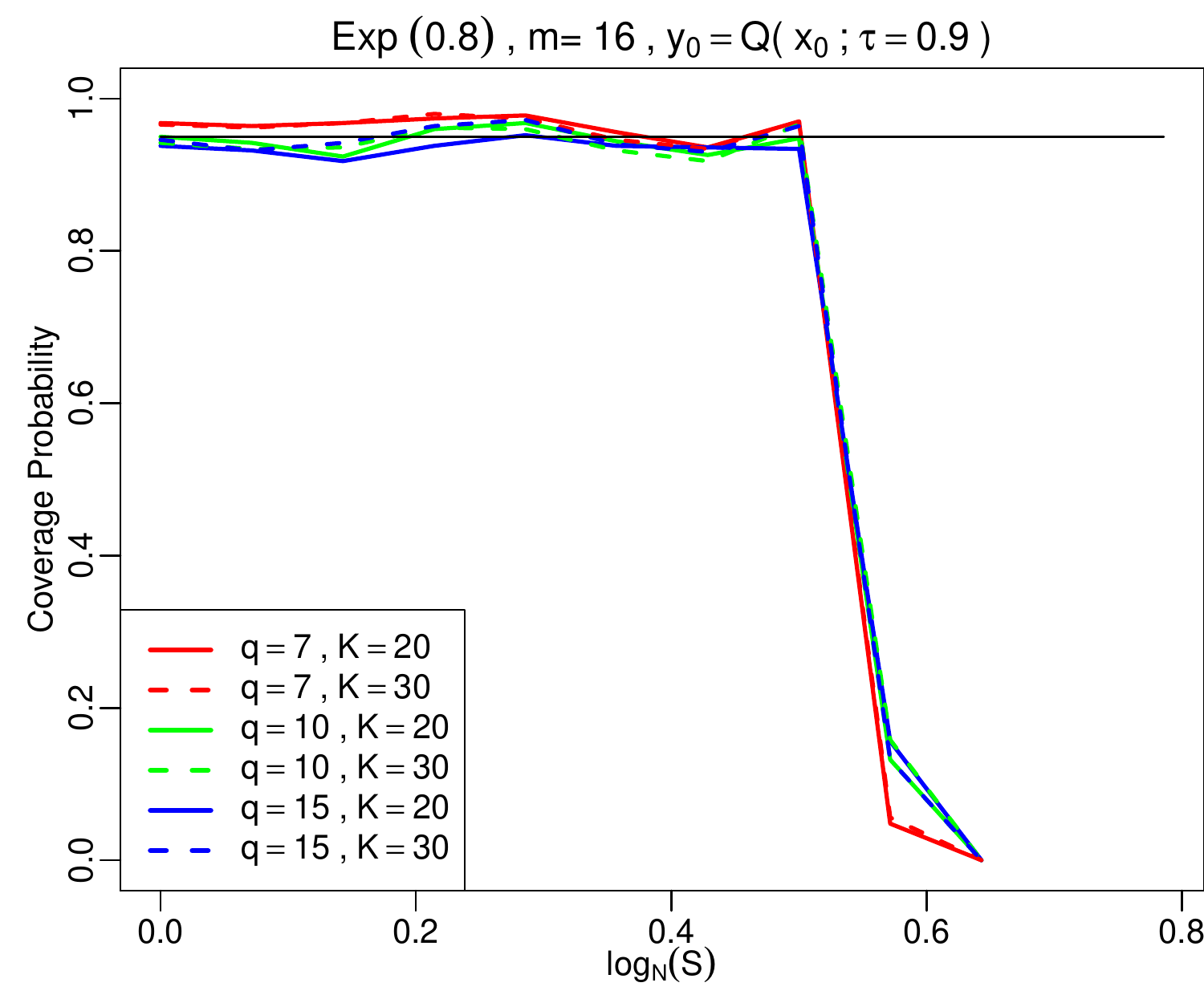}\\
\centering {\scriptsize (c) $m=32$}\\
\includegraphics[width=4cm, height = 3.2cm]{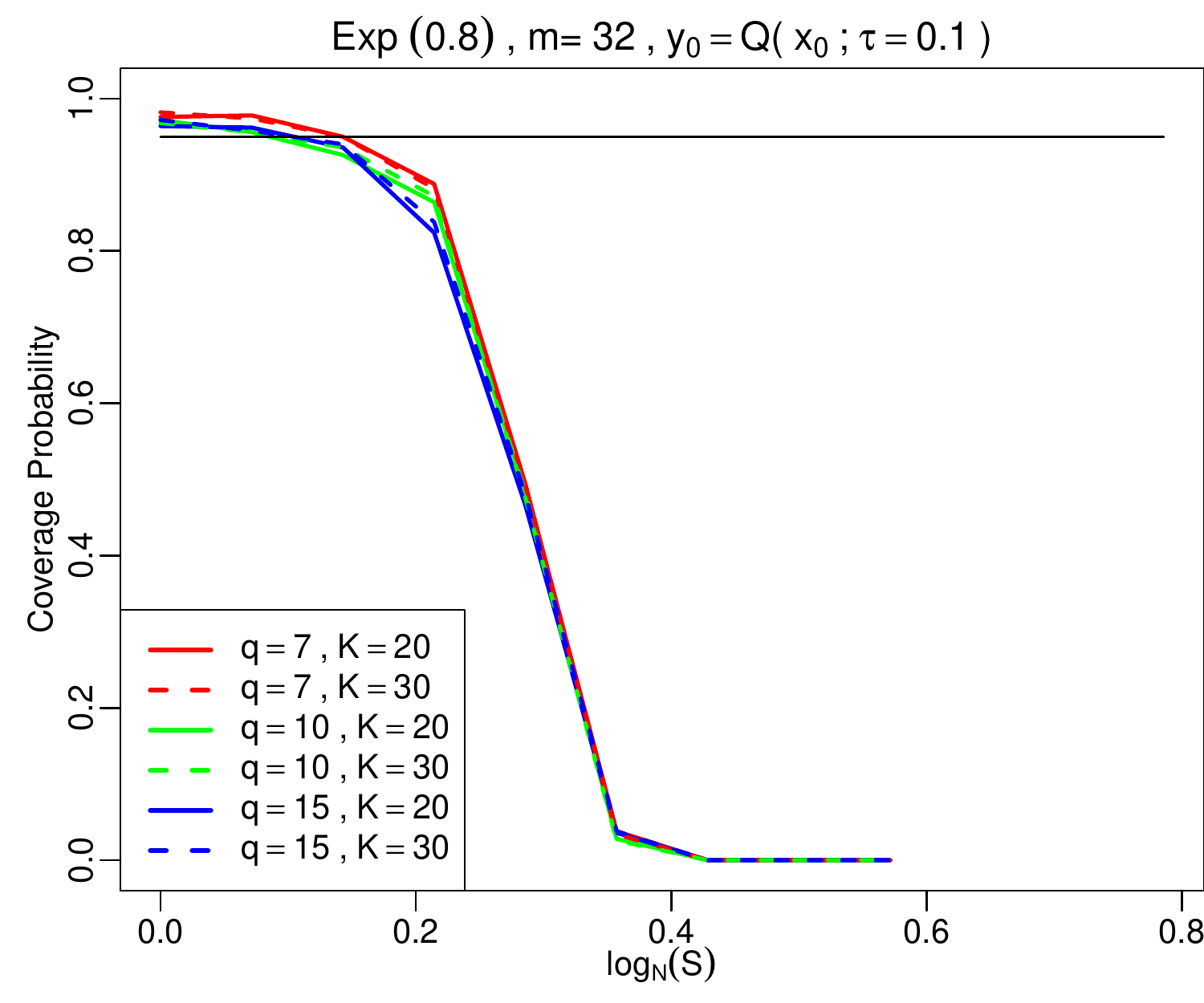}
\includegraphics[width=4cm, height = 3.2cm]{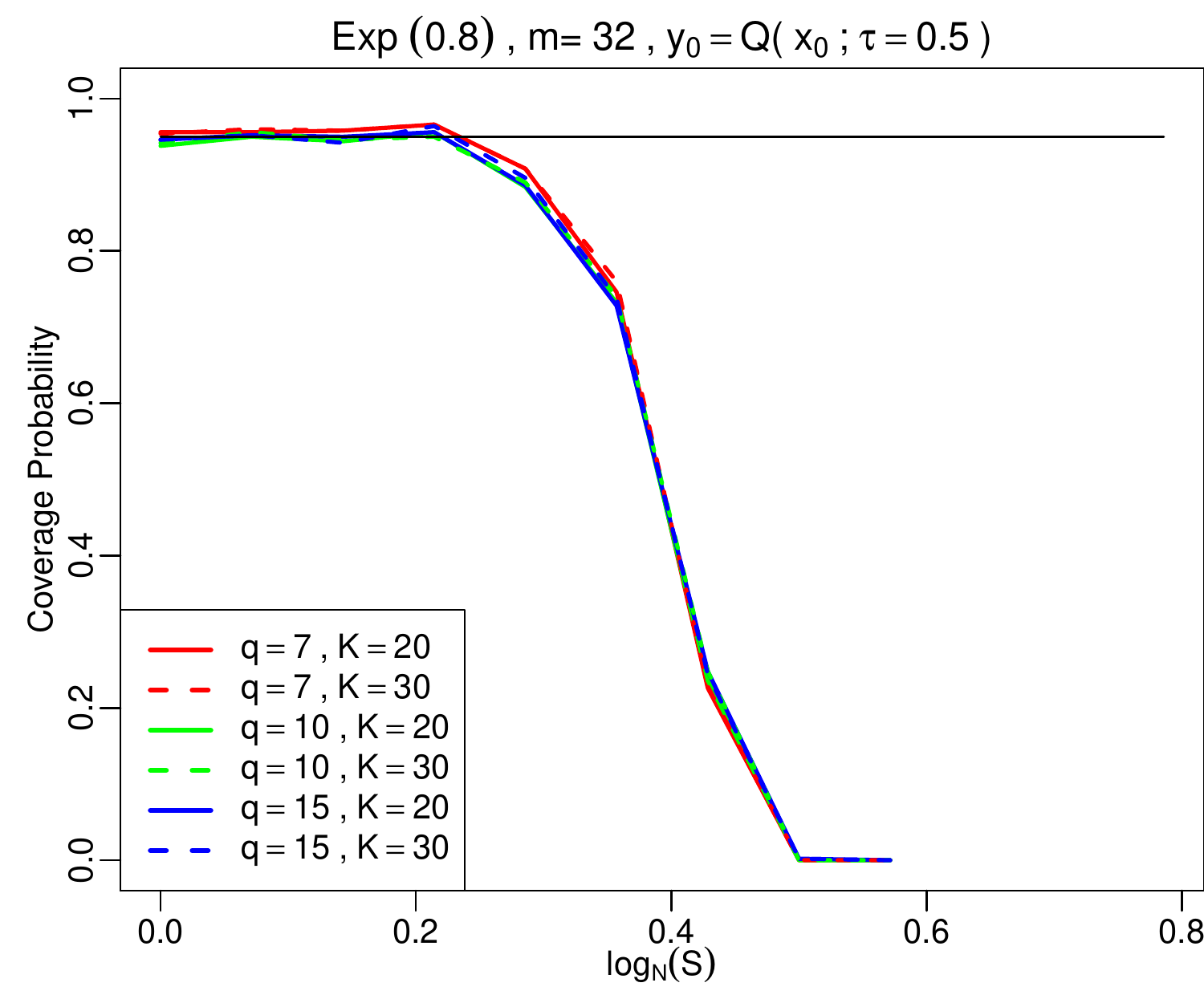}
\includegraphics[width=4cm, height = 3.2cm]{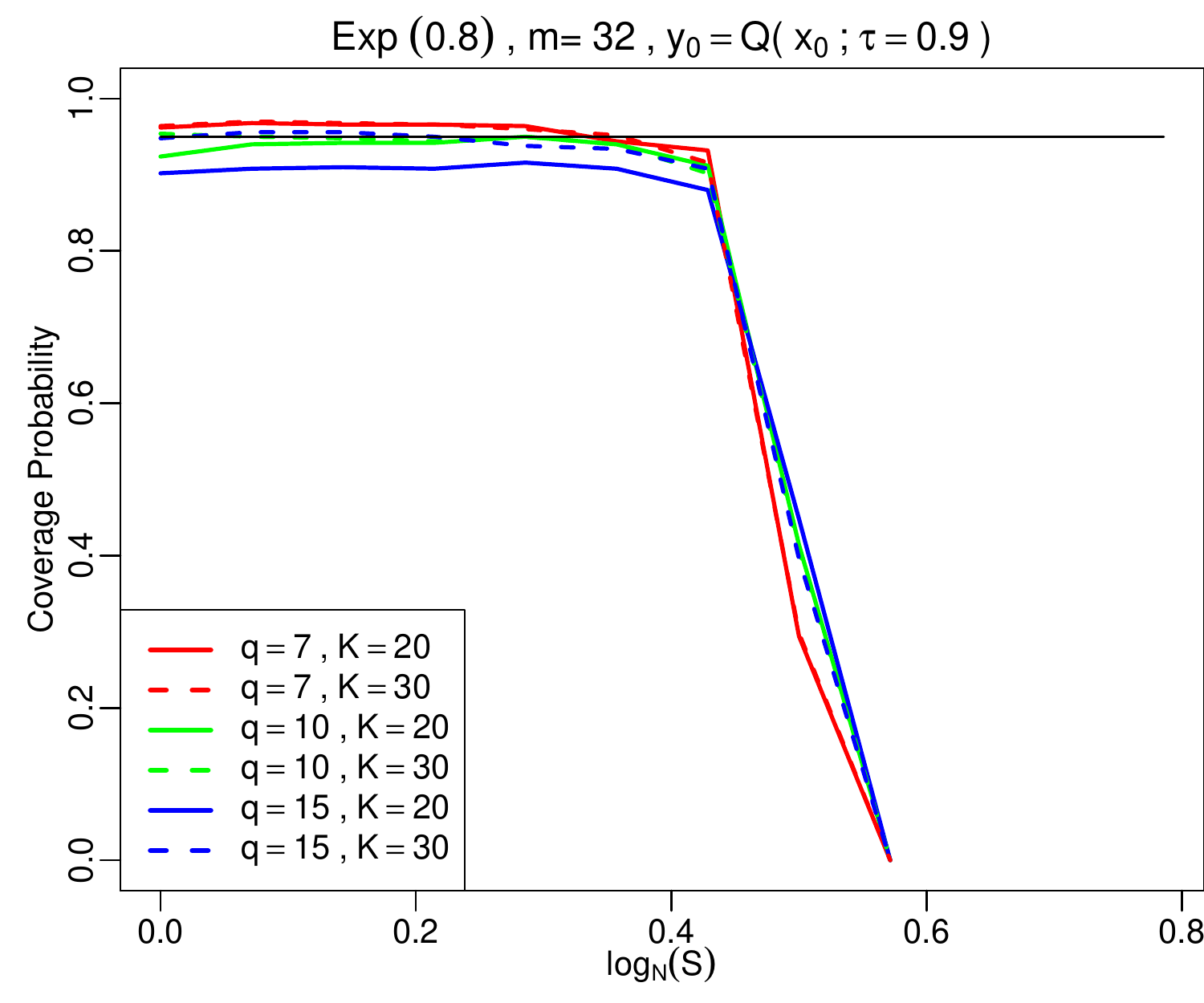}\\
\caption{Coverage probability under exactly the same setting as Figure \ref{fig:simlincovp_fun}, only except for $\varepsilon\sim\mbox{Exp}(\lambda)$.}\label{fig:simlincovp_funexp}
\end{figure}
	\begin{figure}[!ht]
		\centering
		\includegraphics[width=4cm, height = 3.2cm]{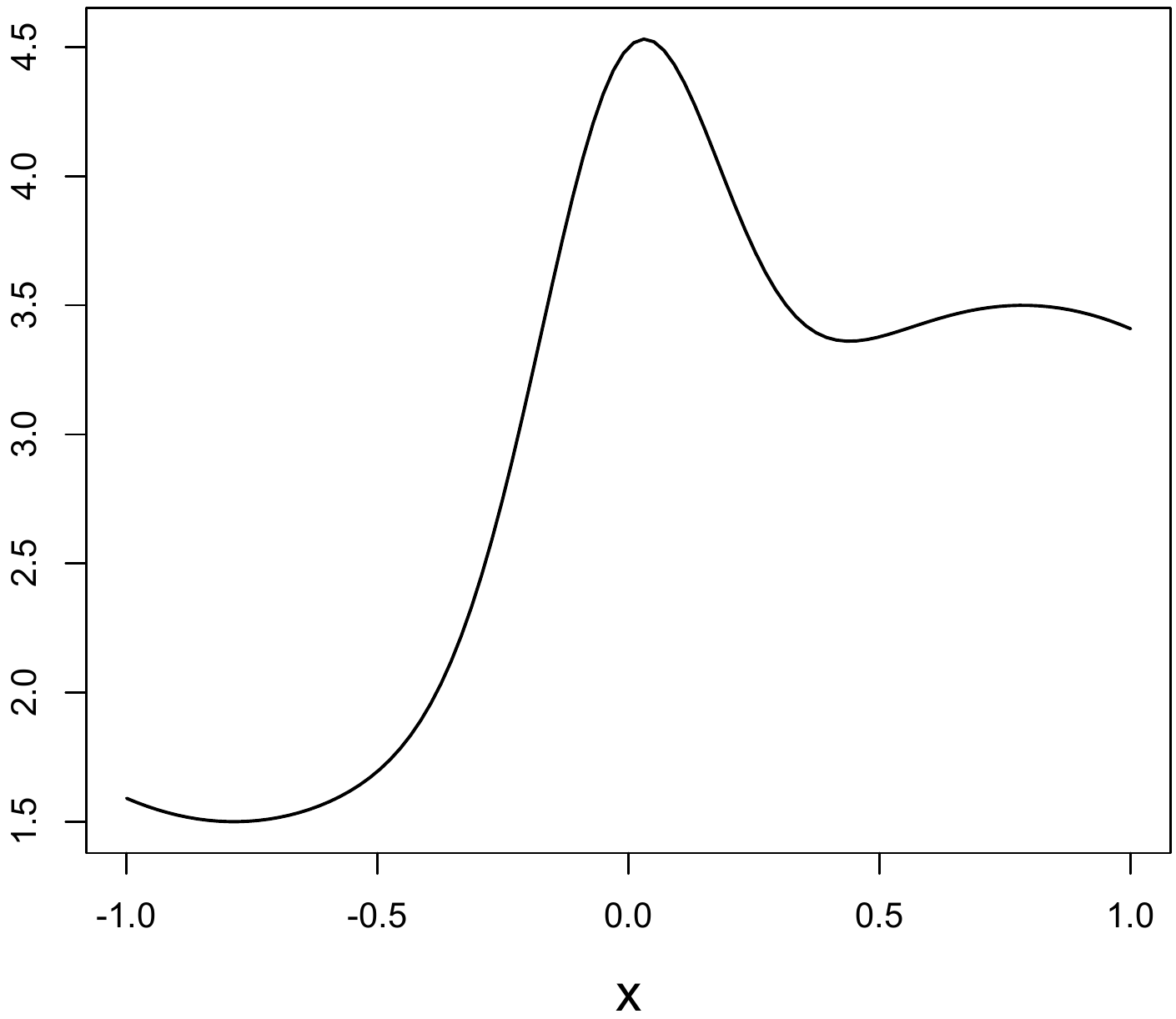}
		\caption{The plot for the function $x\mapsto 2.5 + \sin(2x)+2\exp(-16x^2)$, which is used in model \eqref{eq:npmodel}.}\label{fig:npfun}
	\end{figure}

\newpage
\vspace{-0.6cm}
	\begin{figure}[!ht]	\centering
		\includegraphics[width=4cm, height = 3.2cm]{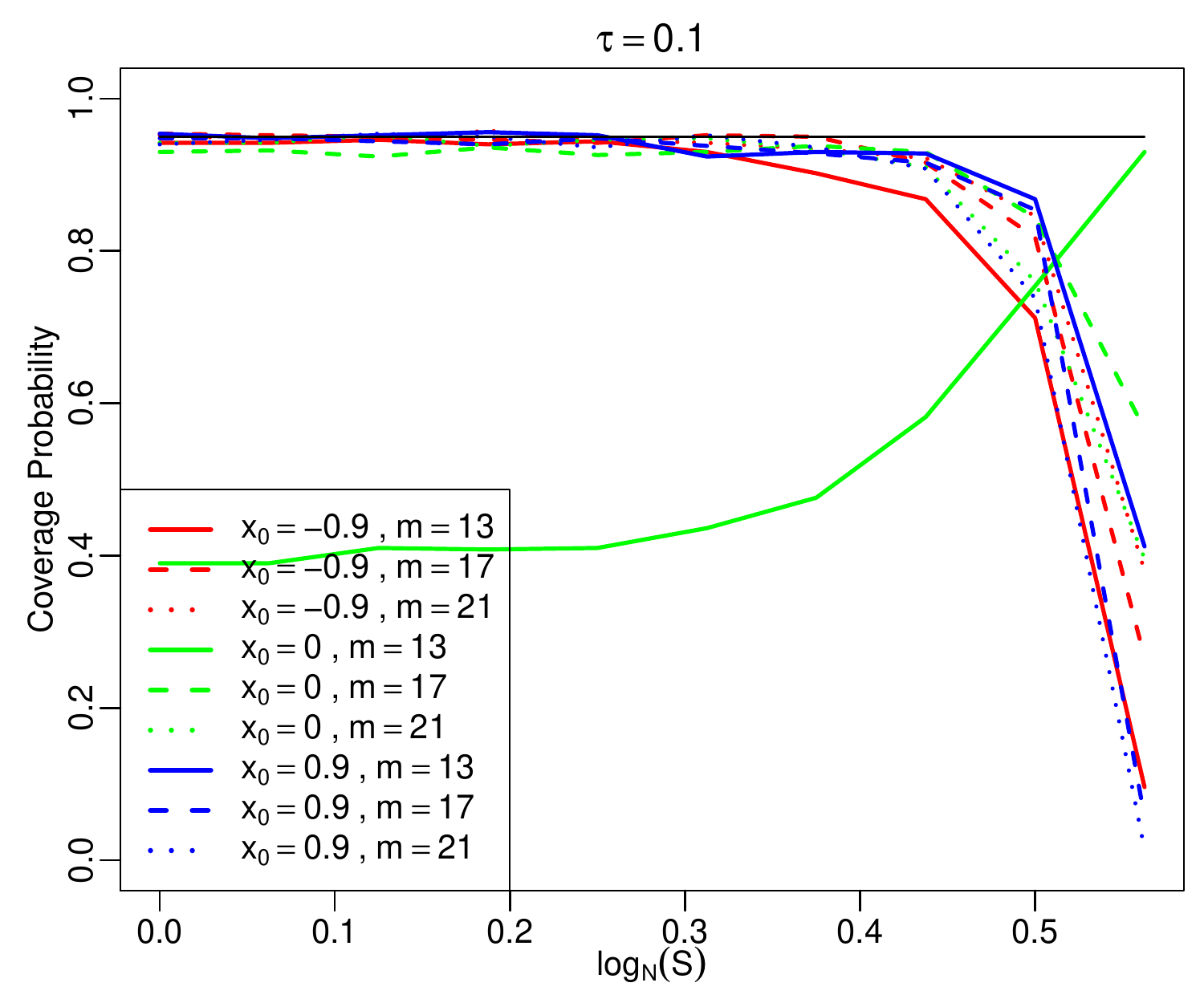}	
		\includegraphics[width=4cm, height = 3.2cm]{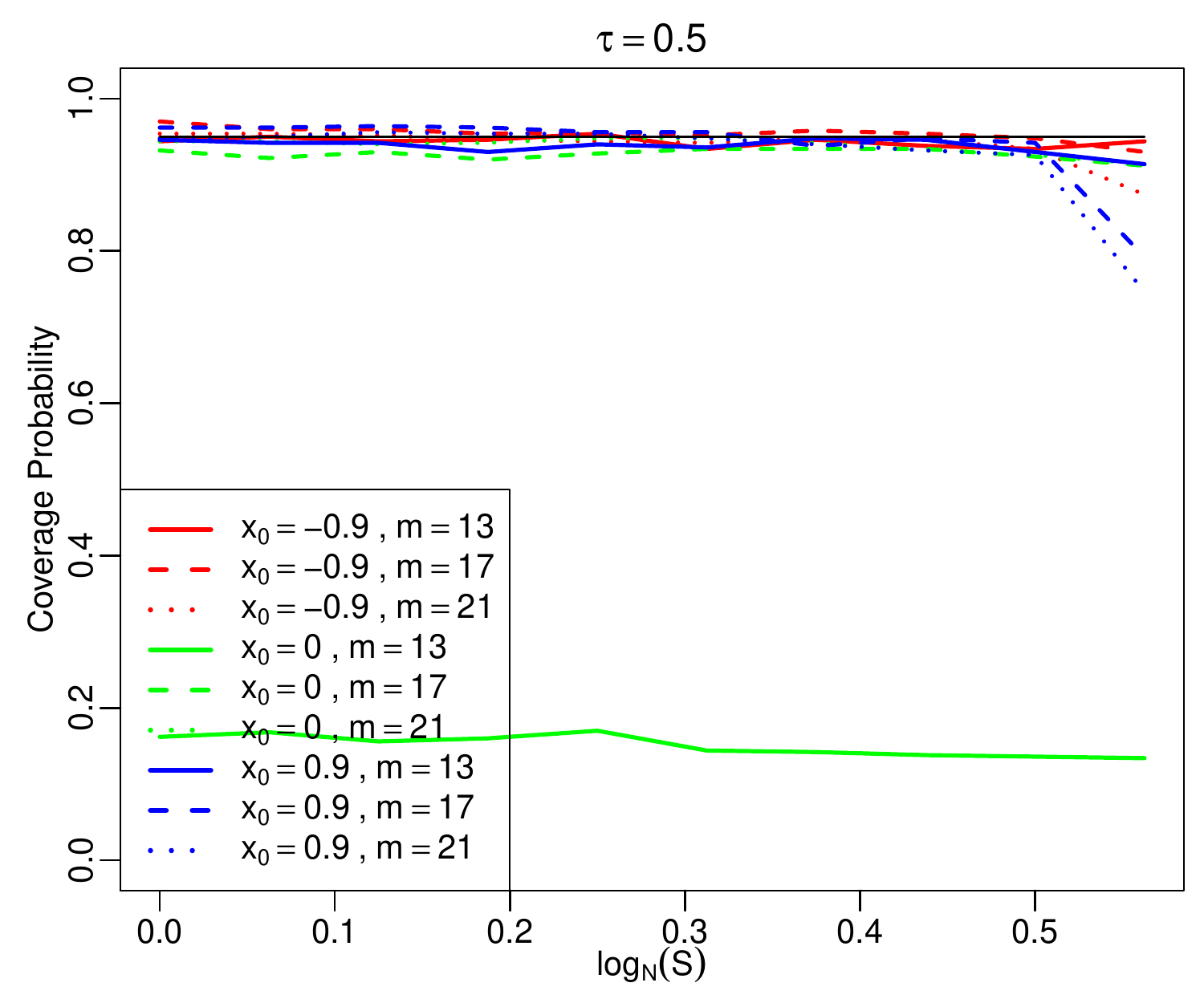}	
		\includegraphics[width=4cm, height = 3.2cm]{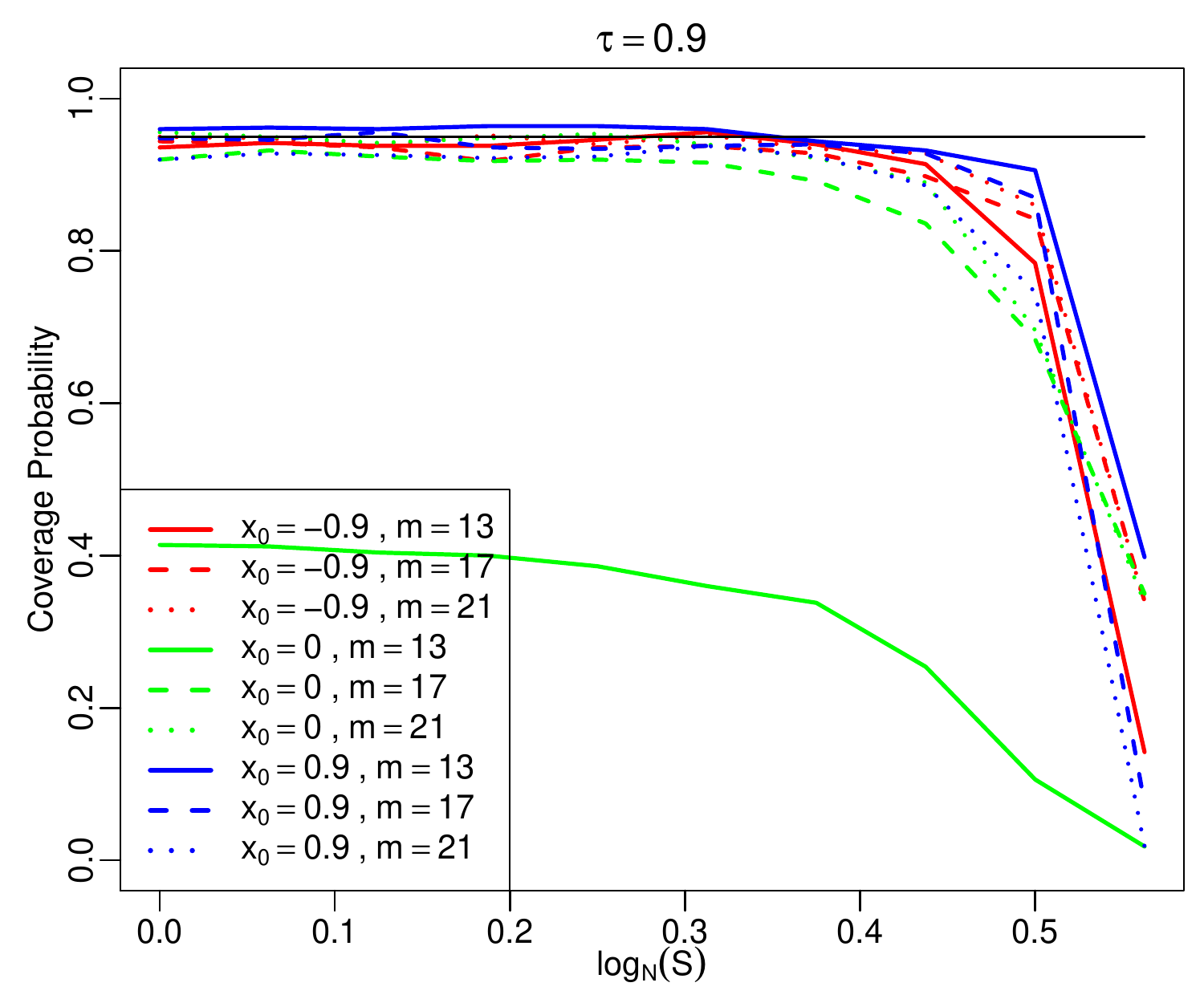}		
		\caption{Coverage probability of the $1-\alpha=95\%$ confidence interval for $Q(x_0;\tau)$ in \eqref{eq:npcovp}: 
		$\big[\ZZ(x_0)^\top \overline{\zb}(\tau) \pm N^{-1/2} \phi_{\sigma}^{-1}(\Phi_\sigma^{-1}(\tau))\sqrt{\tau(1-\tau)} \sigma_0(\ZZ) \Phi^{-1}(1-\alpha/2)\big]$ under the nonlinear model \eqref{eq:npmodel}, where $m=\dim(\ZZ)$ (corresponds to basis expansion in $x$) and $N=2^{16}$.}\label{fig:simnpcovp_fix}	
	\end{figure}
	\begin{figure}[!ht]
		\centering
		\centering {\scriptsize (a) $m=17$}\\	
		\includegraphics[width=4cm, height = 3.2cm]{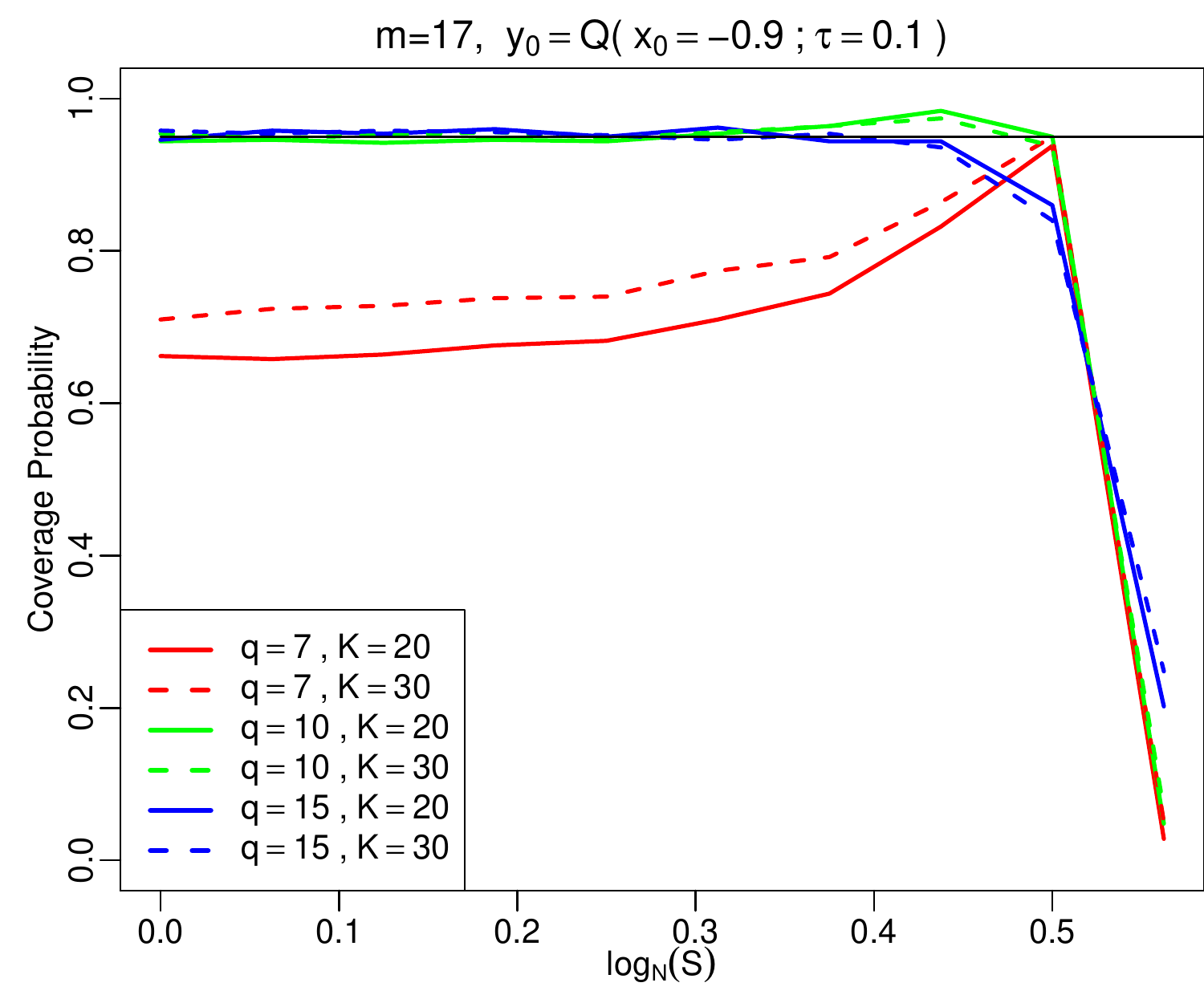}	
		\includegraphics[width=4cm, height = 3.2cm]{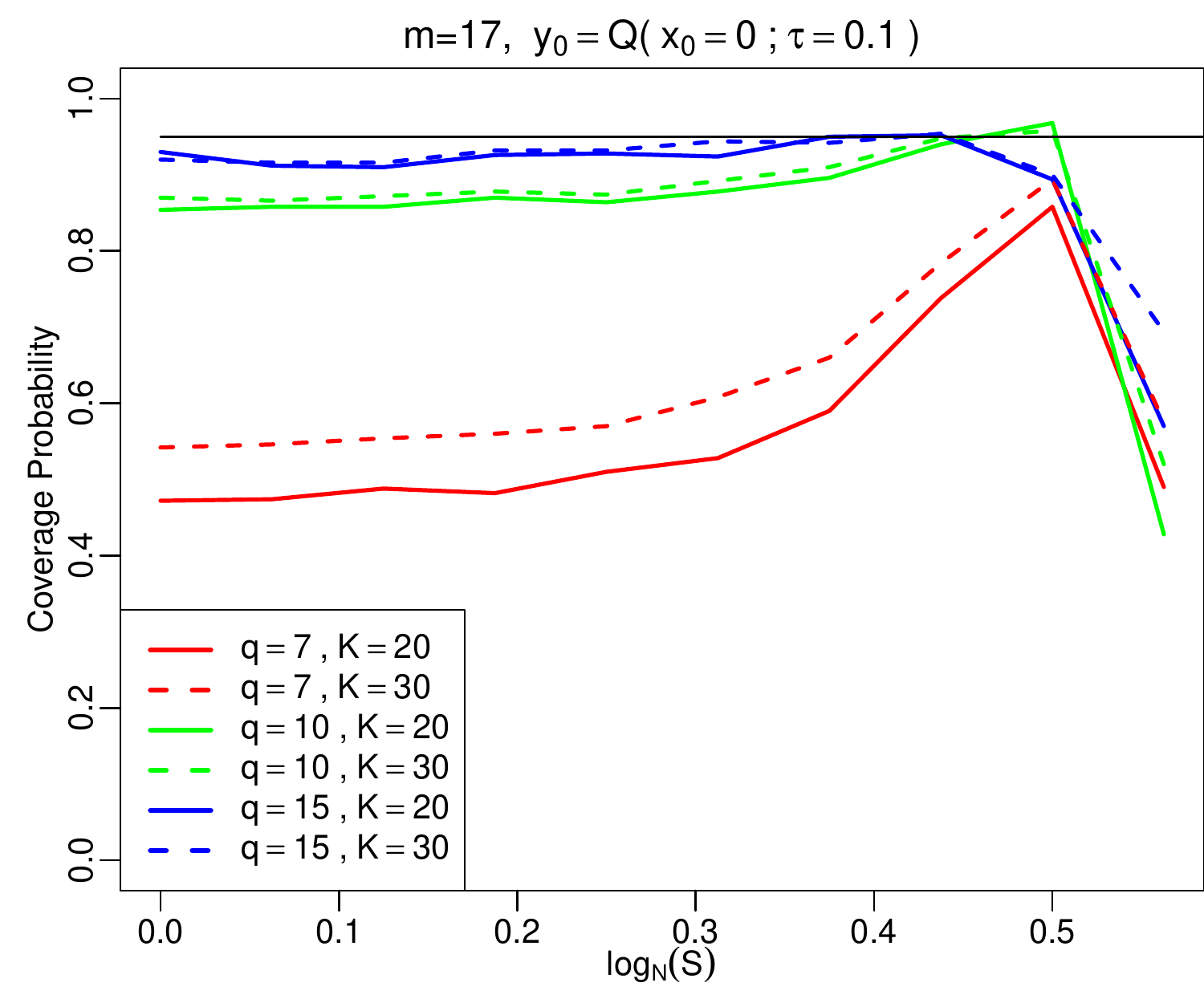}	
		\includegraphics[width=4cm, height = 3.2cm]{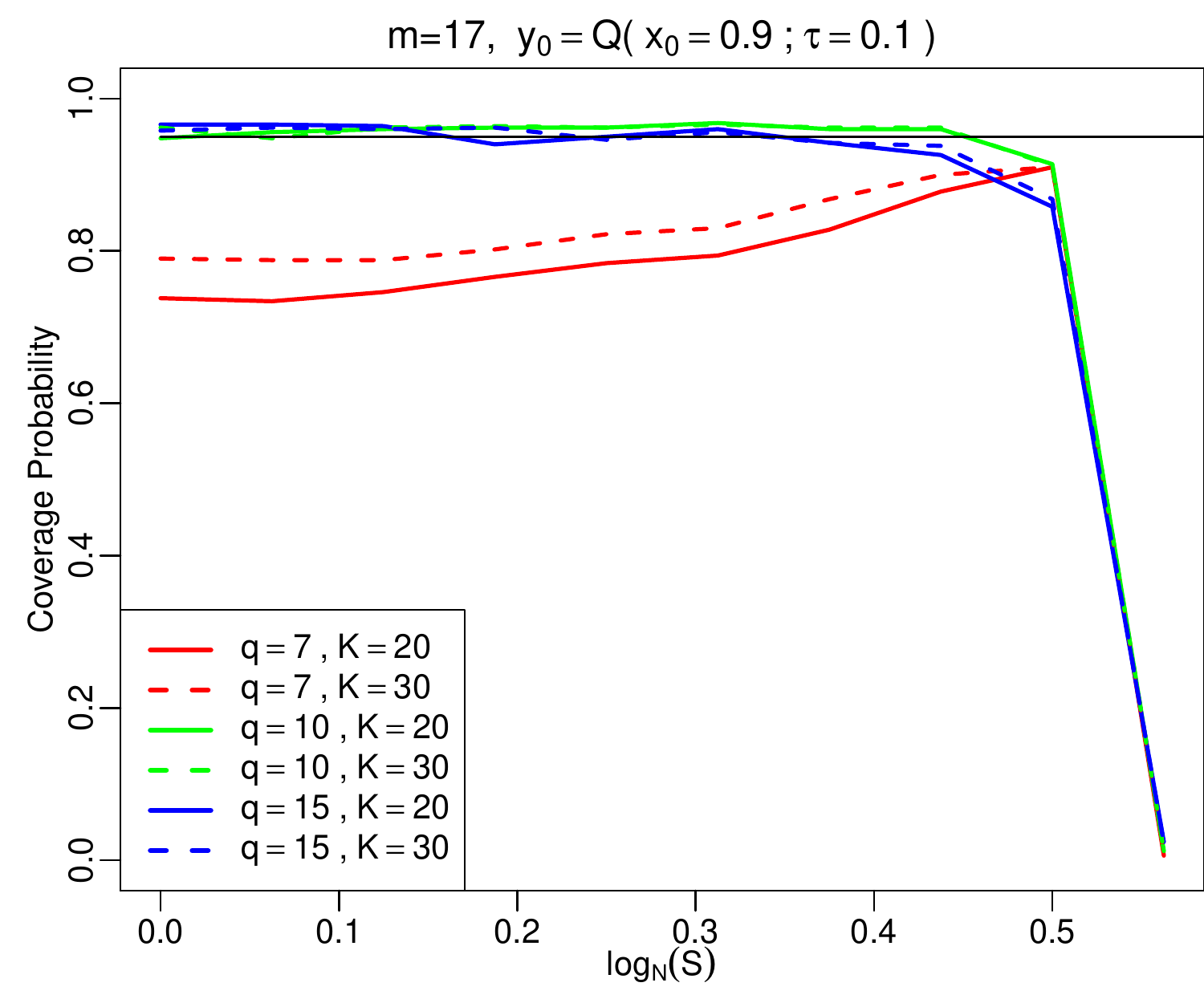}\\
		\centering {\scriptsize (b) $m=23$}\\	
		\includegraphics[width=4cm, height = 3.2cm]{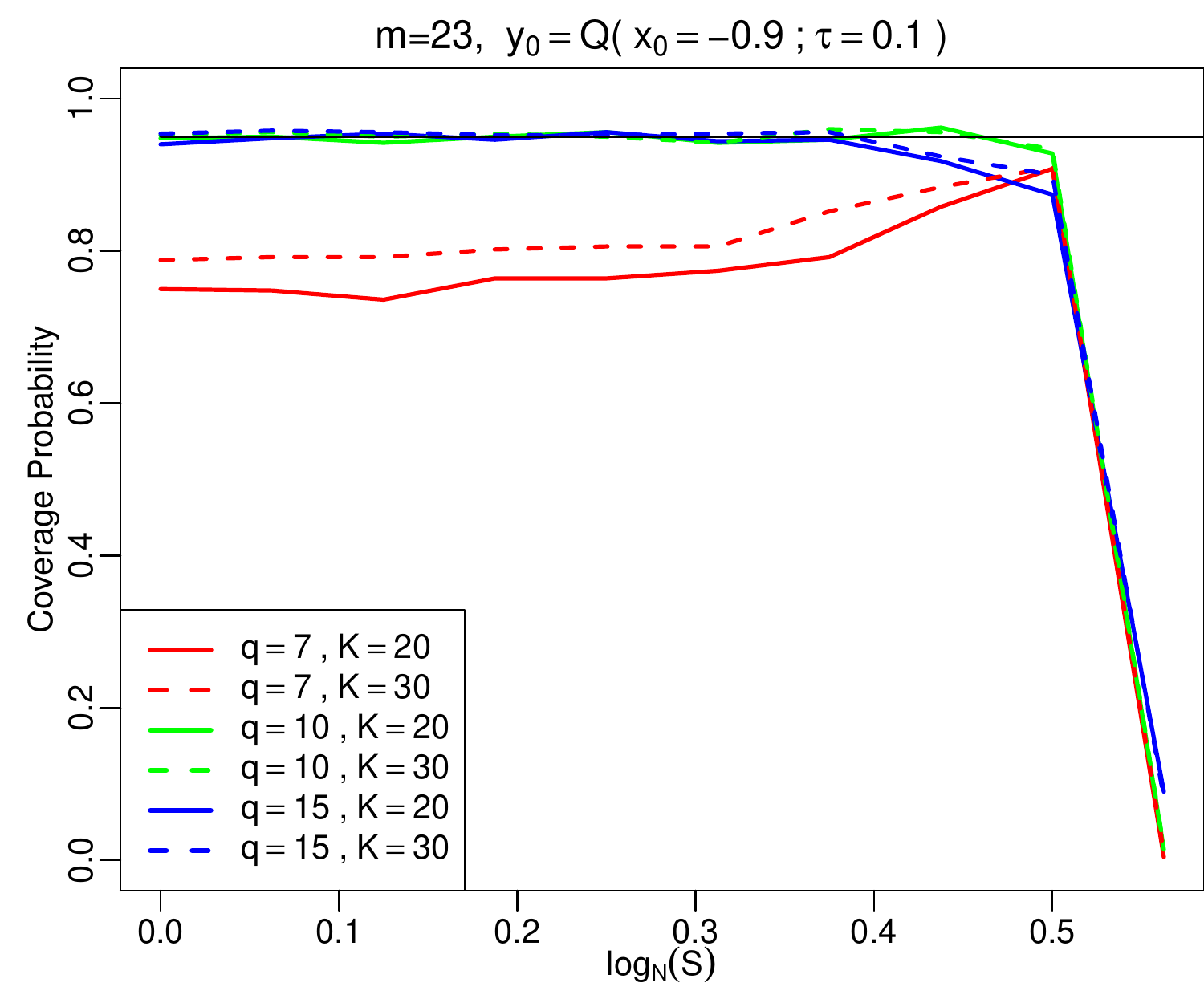}	
		\includegraphics[width=4cm, height = 3.2cm]{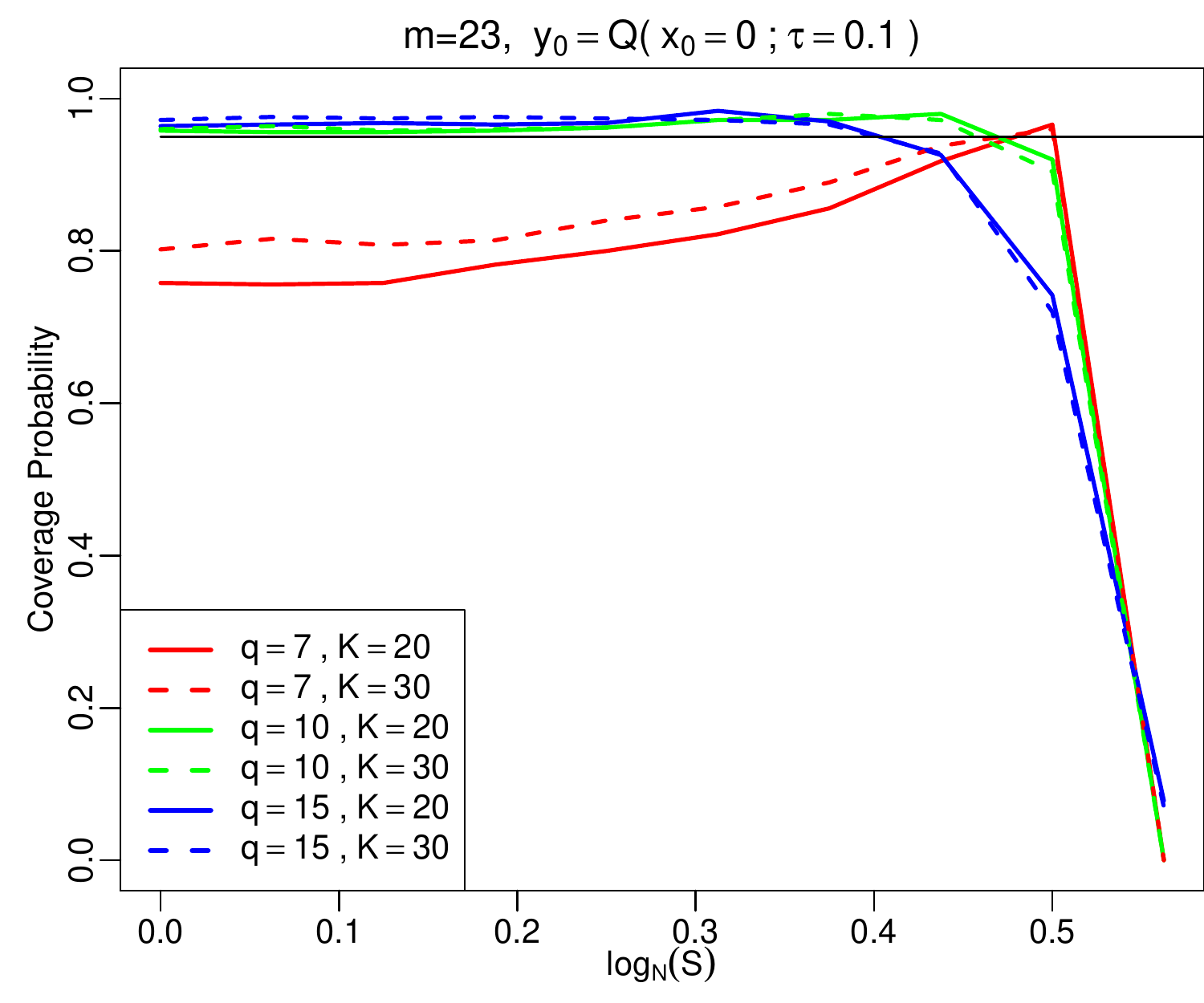}	
		\includegraphics[width=4cm, height = 3.2cm]{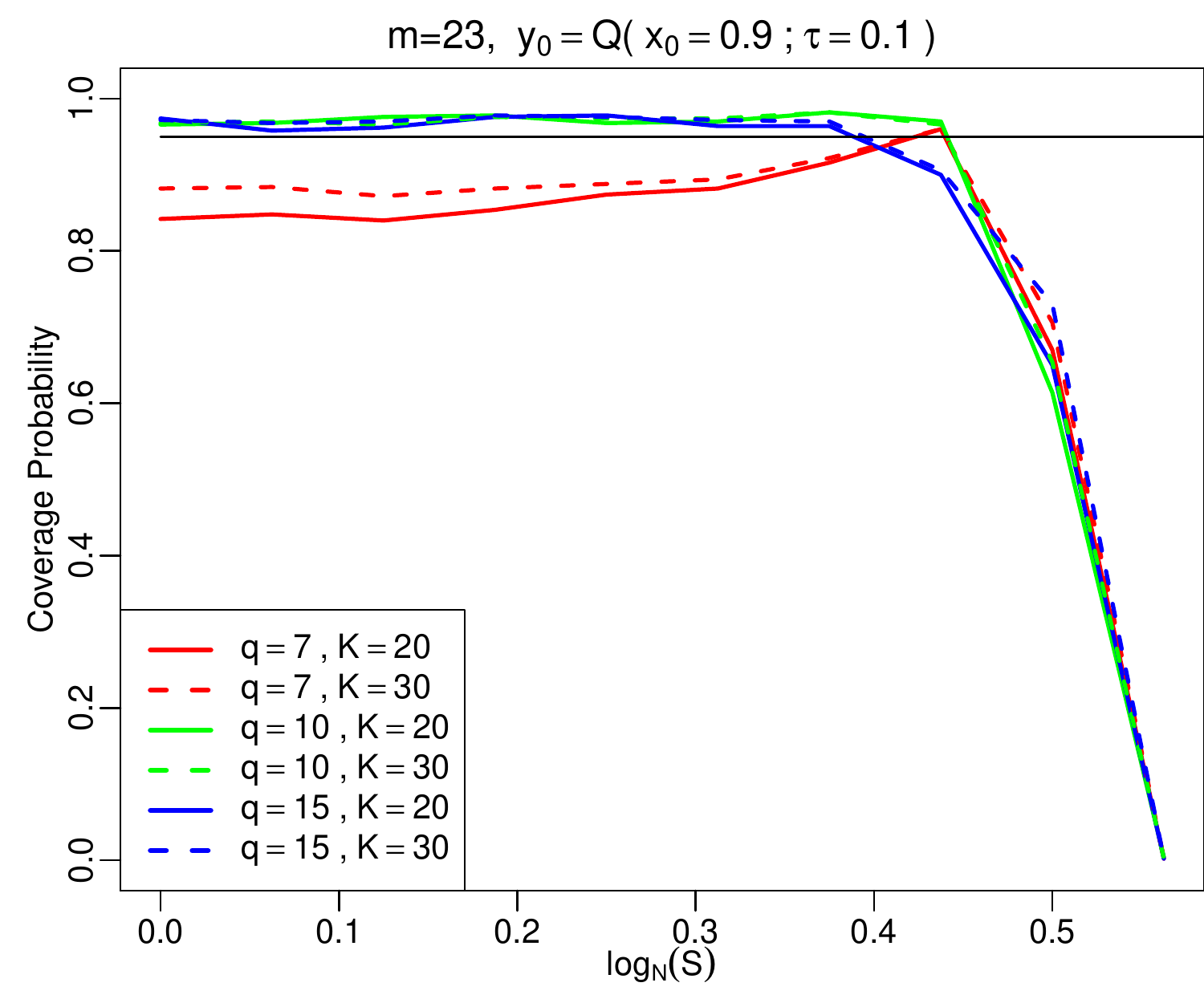}	
		\caption{Coverage probability of the $1-\alpha=95\%$ confidence interval for $F(y_0|x_0)$ in \eqref{eq:npfuncovp}: $\big[\hat F_{Y|X}(Q(x_0;\tau)|x_0) \pm N^{-1/2} \sqrt{\tau(1-\tau)} \sigma_0(\ZZ)\Phi^{-1}(1-\alpha/2)\big]$ under the nonlinear model \eqref{eq:npmodel}, where $m=\dim(\ZZ)$ (corresponds to basis expansion in $x$), $S$ is the number of subsamples, $N=2^{16}$, $q=\dim(\BB)$ (corresponds to projection in $\tau$ direction) and $K$ is the number of the quantile grid points.}\label{fig:simnpcovp_inter}
	\end{figure}

\end{document}